\newcommand{\Rp}{\mbox{\upshape Re}}
\newcommand{\Ip}{\mbox{\upshape Im}\;}
\DeclareMathOperator{\Ci}{Ci}
\DeclareMathOperator{\Si}{Si}
\definecolor{greenPlots}{RGB}{0,153,76}
\DeclareMathOperator{\err}{err}
\DeclareMathOperator{\corr}{corr}
\DeclareMathAlphabet\euscr{U}{eus}{m}{n}
\title{Compressed Super-Resolution I: Maximal Rank Sum-of-Squares}
\author{Augustin Cosse}
\affil{Ecole Normale Sup\'erieure, Ulm, Paris\\
PSL Research University}
\date{December 31, 2019}
\definecolor{greenA}{rgb}{0.1333, 0.5451, 0.1333}
\pgfplotsset{compat=newest}
\newcommand{\sign}{\text{sign}}
\newcommand{\Id}{\text{Id}}
\newcommand{\diag}{\mathrm{diag}}
\newcommand{\scal}[2]{\left\langle #1,#2 \right\rangle}
\def\bs{\ensuremath\boldsymbol}
\def\bs{\ensuremath\boldsymbol}
\newtheorem{theorem}{Theorem}
\newtheorem{proposition}{Proposition}
\newtheorem{lemma}{Lemma}
\numberwithin{equation}{section}
\newcounter{relctr} 
\everydisplay\expandafter{\the\everydisplay\setcounter{relctr}{0}} 
\begin{document}

\renewcommand{\labelitemi}{$\bullet$}

\maketitle

\begin{abstract}

Let $\mu(t) = \sum_{\tau\in S} \alpha_\tau \delta(t-\tau)$ denote an $|S|$-atomic measure defined on $[0,1]$, satisfying $\min_{\tau\neq \tau'}|\tau - \tau'|\geq |S|\cdot n^{-1}$. Let $\eta(\theta) = \sum_{\tau\in S} a_\tau D_n(\theta - \tau) + b_\tau D'_n(\theta - \tau)$, denote the polynomial obtained from the Dirichlet kernel $D_n(\theta) = \frac{1}{n+1}\sum_{|k|\leq n} e^{2\pi i k \theta}$ and its derivative by solving the system $\left\{\eta(\tau) = 1, \eta'(\tau) = 0,\;  \forall \tau \in S\right\}$. We provide evidence that for sufficiently large $n$, $\Delta\gtrsim |S|^2 n^{-1}$, the non negative polynomial $1 - |\eta(\theta)|^2$ which vanishes at the atoms $\tau \in S$, and is bounded by $1$ everywhere else on the $[0,1]$ interval, can be written as a sum-of-squares with associated Gram matrix of rank $n-|S|$. Unlike previous work, our approach does not rely on the Fej\'er-Riesz Theorem, which prevents developing intuition on the Gram matrix, but requires instead a lower bound on the singular values of a (truncated) large ($O(1e10)$) matrix. Despite the memory requirements which currently prevent dealing with such a matrix efficiently, we show how such lower bounds can be derived through Power iterations and convolutions with special functions for sizes up to $O(1e7)$. We also provide numerical simulations suggesting that the spectrum remains approximately constant with the truncation size as soon as this size is larger than $100$. 
\end{abstract}
\hspace{.85cm}\begin{minipage}{14.8cm}\date{\textbf{Acknowledgement.} This work was funded by the Fondation Sciences Math\'ematiques de Paris, the CNRS and the Air Force Office of Scientific research by means of AFOSR Grant FA9550-18-1-7007. AC is grateful to Gabriel Peyr\'e and Ir\`ene Waldspurger for their help and valuable comments. AC also acknowledges funding from the FNRS and the Francqui Foundation.}\end{minipage}

\tableofcontents
\addtocontents{toc}{\setcounter{tocdepth}{1}}

\section{Introduction}

In this paper, we are interested in the problem of recovering a complex $|S|$-atomic measure $\mu(t) = \sum_{\tau\in S} \alpha_\tau \delta(t-\tau)$ from a low pass (with cutoff frequency $\Omega_c = 2\pi n$) version of its spectrum:
\begin{align}
\hat{\mu}(\omega_\ell) &= \int_{0}^1 e^{-2\pi i \omega_\ell t} \mu(dt) = \sum_{\tau \in S} \alpha_\tau e^{i\omega_\ell \tau}, \quad \omega_\ell \in \left\{-\Omega_c, \Omega_c\right\}\label{measureFourierTransform}
\end{align}

One approach  at solving~\eqref{measureFourierTransform}, consists in searching, among all measures fitting the observations, for the one with the smallest total variation~\cite{fernandez2016super}, leading to the following convex (yet infinite dimensional) program
\begin{align}
\min_{\tilde{\mu}}\;\|\tilde{\mu}\|_{\text{\upshape TV}}\quad \text{subject to}\quad \mathcal{F}_n \tilde{\mu} = y\label{TVminimization}
\end{align}
In~\eqref{TVminimization}, we use $\mathcal{F}_n$ denotes the Fourier transform of $\tilde{\mu}$. The convex problem dual to problem~\eqref{TVminimization} is known to be defined as a maximization over bounded trigonometric polynomials, i.e.
\begin{align}
\max_c\; \text{Re}\langle y, c \rangle\quad \text{subject to}\quad \|\mathcal{F}_n^* c\|_\infty \leq 1\label{dualProblem}
\end{align}
The condition $\|\mathcal{F}_n^* c\|_\infty \leq 1$ which appears in the dual~\eqref{dualProblem}, is equivalent to requiring the polynomial $1 - |\mathcal{F}_n^* c|^2$ to be non negative which, in this simple framework, as is formalized by the Fej\'er-Riesz Theorem (see~\cite{fejer}) is known to be equivalent to requiring this polynomial to be a sum of squares $1 - |\mathcal{F}_n^* c| = |s(e^{2\pi i \theta})|^2$. This result has an interesting consequence on the numerical solvability of problem~\eqref{dualProblem} as any sum-of-squares trigonometric polynomial as an associated semidefinite Gram matrix (and vice versa). In this case, we are thus interested in finding a matrix $X\succeq 0$ such that $\psi(\theta)^* X \psi(\theta) = 1 - |\mathcal{F}_n^* c|^2(\theta)$ where $\psi(\theta)$ denote the canonical vector $\psi(\theta) = [e^{-2\pi i n \theta}, \ldots, e^{2\pi i n \theta}]$. This problem can further read as the recovery of a complex matrix $Q\in \mathbb{C}^{n\times n}$ which together with $c$ satisfies conditions~\eqref{conditionsPSDmeasure} below
\begin{align}
\left[\begin{array}{cc}
Q & c^*\\
c & 1\end{array}\right] \succeq 0, \quad \sum_{i=1}^{n-j} Q_{i,i+j} = \left\{\begin{array}{l}
1,\quad  j=0,\\
0, \quad j=1,2,\ldots, n-1
\end{array}
\right.\label{conditionsPSDmeasure}
\end{align}
Following this equivalence between sum-of-squares trigonometric polynomials and semidefinite Gram matrices, the dual to the convex problem over measure can thus read as 
\begin{align}
\max_c\; \text{Re}\langle y, c \rangle\quad \text{subject to}\quad \eqref{conditionsPSDmeasure}\label{dualProblemSDP}
\end{align}
The Fej\'er-Riesz Theorem in this case therefore provides a straightforward relation between the original problem~\eqref{TVminimization} on measures and the semidefinite program, providing in passing an elegant numerical algorithm to solve this problem which at first might have appeared difficult to implement. Because of the equivalence between non negative trigonometric polynomials and sum-of-squares provided by the Fej\'er-Riesz Theorem, the measure $\mu(t)$ can be recovered through the semidefinite program~\eqref{dualProblemSDP} as soon as one can exhibit a non negative trigonometric polynomial of order $n$ taking the value $\sign(\alpha_\tau)$ at the location $\tau\in S$ of the atoms and bounded by one on the $[0,1]$ interval as shown by Proposition~\ref{prop:uniqueRecovery} below which is proved in~\cite{candes2014towards}. 
\begin{proposition}[\label{prop:uniqueRecovery}unique recovery]
Consider the complex measure $\mu = \sum_{\tau \in S} \alpha_\tau\delta(t-\tau)$ and suppose that we observe the samples $\hat{\mu}(\omega_\ell) = \int_0^1 e^{i\omega_\ell t} \mu(dt)$ at frequencies $|\omega_\ell |\leq \Omega_c$. The semidefinite program~\eqref{dualProblemSDPCompressed} has a unique solution corresponding to the polynomial $a(\theta)$ which takes the value $\sign(\alpha_k)$ on $S$ and is stricly bounded by $1$ on $[0,1]\setminus S$, provided that there exists a (dual) trigonometric polynomial,
\begin{align}
\eta(\theta) = \sum_{k|\omega_k\in \Omega} c_k e^{i \omega_k \theta} 
\end{align}
satisfying the following properties
\begin{enumerate}
\item \label{condition1} $|\eta(\theta)|\leq 1$, for all $\theta \in [0,1]$
\item \label{condition2} $\eta(\tau) = \sign(\alpha_\tau)$, for $\tau \in S$
\item \label{condition3}There exists trigonometric polynomials $q_j(\theta) = \sum_{k} b_{j,k} e^{2\pi ik \theta}$ satisfying 
\begin{align}
1 - |\eta(\theta)|^2 = \sum_{j} |q_j(\theta)|^2
\end{align}
\end{enumerate}
The third condition in Proposition~\ref{prop:uniqueRecovery} being induced through Fej\'er-Riesz, from the first two. 
\end{proposition}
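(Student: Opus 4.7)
My approach follows the now-classical dual-certificate / strong-duality argument. First I would observe that condition~\ref{condition3} is precisely what guarantees that $\eta = \mathcal{F}_n^* c$ lifts to a feasible dual for the SDP~\eqref{dualProblemSDP}: by the SOS--Gram correspondence already invoked in the introduction, existence of polynomials $q_j$ with $1-|\eta|^2 = \sum_j|q_j|^2$ is equivalent to existence of a PSD matrix $Q$ satisfying the Toeplitz-type trace constraints in~\eqref{conditionsPSDmeasure}. Thus conditions~\ref{condition1}--\ref{condition3} together produce a feasible $(c,Q)$ for the SDP and, \emph{a fortiori}, a feasible $c$ for the trigonometric dual~\eqref{dualProblem}.

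Next I would verify that $\mu$ attains the TV primal optimum. For any TV-feasible $\tilde\mu$ and any dual-feasible $c$, weak duality gives
\begin{align}
\|\tilde\mu\|_{\TV} \;\geq\; \int |\eta(t)|\,d|\tilde\mu|(t) \;\geq\; \Rp\int\overline{\eta(t)}\,d\tilde\mu(t) \;=\; \Rp\langle y,c\rangle .
\end{align}
Setting $\tilde\mu = \mu$ and using condition~\ref{condition2}, the rightmost expression equals $\sum_{\tau\in S}|\alpha_\tau| = \|\mu\|_{\TV}$, so $\mu$ is primal optimal and the certificate is dual optimal simultaneously.

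Third, and this is where I expect most of the care is needed, I would extract uniqueness. Tightness of the above chain on any other primal optimum $\tilde\mu$ forces $|\eta(t)|=1$ and $e^{-i\arg \eta(t)}\,d\tilde\mu(t) = d|\tilde\mu|(t)$ on $\supp(\tilde\mu)$, pinning both modulus and phase. Strict inequality $|\eta(t)|<1$ away from $S$---which, in the TV-uniqueness argument of~\cite{candes2014towards}, follows from the explicit Dirichlet-kernel construction rather than being postulated separately---then confines $\supp(\tilde\mu)\subseteq S$. Matching the $2n+1$ Fourier samples $\hat{\tilde\mu}(\omega_\ell) = \hat\mu(\omega_\ell)$ reduces to a Vandermonde linear system in $|S|$ unknowns, invertible since the nodes $e^{i\tau}$ are distinct and $|S|\leq n+1$; this forces $\tilde\mu = \mu$. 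Uniqueness of the SDP optimum $a(\theta)$ with the stated sign-interpolation and strict-bound properties then follows by complementary slackness with this unique primal.

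Finally, the genuine obstacle in the whole argument is neither the duality bookkeeping nor the Vandermonde interpolation step but rather the existence of a certificate $\eta$ meeting conditions~\ref{condition1}--\ref{condition3}, and specifically the SOS condition~\ref{condition3}. Under the Fej\'er--Riesz theorem this is of course automatic from conditions~\ref{condition1}--\ref{condition2}, but the main body of the paper is concerned with exhibiting an \emph{explicit and rank-controlled} SOS decomposition, as the existence statement provided by Fej\'er--Riesz alone gives no handle on the Gram matrix $Q$ entering~\eqref{conditionsPSDmeasure}. The proof sketched above therefore only closes the logical loop: it is the constructive step addressed in the sequel, via the Dirichlet-kernel ansatz announced in the abstract, that does the real work.
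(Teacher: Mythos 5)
Your argument is the standard dual-certificate / strong-duality proof, which is exactly the route taken in~\cite{candes2014towards} — the reference the paper cites for this proposition in lieu of re-proving it — so the two coincide. Your aside is also apt: condition~\ref{condition1} as written requires only $|\eta(\theta)|\leq 1$, whereas the uniqueness step genuinely needs strict inequality on $[0,1]\setminus S$; in~\cite{candes2014towards} the strictness is part of the certificate hypotheses and is then delivered by the explicit kernel construction, so the paper's statement of the conditions is slightly loose rather than your proof being incomplete.
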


Despite this attractive connection to semidefinite programming, it remains surprising that the recovery of an $S$-atomic measure with $|S|$ possibly much smaller than $n$ requires solving semidefinite programs of size $n^2$.

Compressed sensing was recently introduced as an approach to bridge this gap. In~\cite{tang2013compressed}, the authors manage to reduce the number of frequency samples needed from $\mathcal{O}(n)$ to only $\mathcal{O}(K)$, through various concentration arguments, and provided that the separation distance satisfies $\Delta\geq 1/n$ and that the signs are drawn uniformly, identically and idenpendently from the unit circle. Recovery of the measure however still requires solving $O(n^2)$ SDPs.

The semidefinite conditions~\eqref{conditionsPSDmeasure} are written on a matrix encoding the coefficient of the trigonometric polynomial $1 - |p(e^{2\pi i \theta})|^2$. It therefore seems intuitively right to assume that the semidefinite formulation following following from a random decimation of the set of samples in~\eqref{dualProblem} should be compressible as well (let's say on the order of $\mathcal{O}(|S|)$). Otherwise the effort of compressed sensing to reduced to match the complexity to the actual number of unknown will be vain. This idea was introduced in~\cite{da2018tight} without a complete proof. The idea of this paper is that when problem~\eqref{dualProblem} can be solved on a reduced set, $T\subseteq \Omega$ of frequencies, that is when considering a formulation of the form
\begin{align}
\min_{\tilde{\mu}}\;\|\tilde{\mu}\|_{\text{\upshape TV}}\quad \text{subject to}\quad \mathcal{F}_{T, n} \tilde{\mu} = y\label{TVminimizationDecimated}
\end{align}
where we let $\mathcal{F}_{T,n}$ to denote the partial Fourier transform, $\mathcal{F}_{T,n} \tilde{\mu} = \left\{\int \tilde{\mu} e^{2\pi i \omega t}\; dt,\; \omega\in \Omega\right\}$, it should be possible to solve the problem through a semidefinite program on the same subset of Fourier coefficients, i.e considering the reduced conditions
\begin{align}
\left[\begin{array}{cc}
A & a^*\\
a & 1\end{array}\right] \succeq 0, \quad \sum_{i=1}^{n-j} \left(\mathcal{S}_T^*A\mathcal{S}_{T}\right)_{i,i+j} = \left\{\begin{array}{l}
1,\quad  j=0,\\
0, \quad j=1,2,\ldots, n-1
\end{array}
\right.\label{conditionsPSDmeasureCompressed}
\end{align}
where $A \in \mathbb{C}^{|\Omega|\times |\Omega|}$, $a\in \mathbb{C}^{|\Omega|}$ now only encodes the considered coefficients (or equivalently the retained set of frequencies) and $\mathcal{S}_{T}:\;\left\{-n,n\right\}\mapsto T$ is used to denote the operator returning the subset of frequencies $\Omega$ from the complete discretized spectrum. The reconstruction problem~\eqref{TVminimizationDecimated} defined with optimal sample complexity would then be solved through a semidefinite program of the form
\begin{align}
\max_a\; \text{Re}\; \langle \mathcal{S}_{T} y, a \rangle\quad \text{subject to}\quad \eqref{conditionsPSDmeasureCompressed}\label{dualProblemSDPCompressed}
\end{align}

Extending the recovery guarantees of~\cite{candes2014towards} or~\cite{tang2013compressed} to a formulation such as~\eqref{dualProblemSDPCompressed} is not straightforward, in particular because of the lack of understanding regarding the structure of the sum-of-squares polynomial provided by the Fej\'er-Riesz Theorem. Althgouh guiaranteeing the equivalence between non negative trigonometric polynomials and SOS, Fej\'er-Riesz comes at a price : it is now impossible to manipulate the SOS certificate to make it fit inside extended (e.g. compressed sensing) frameworks. In this paper we provide an alternative construction of the certificate of Proposition~\ref{prop:uniqueRecovery} which does not rely on Fej\'er-Riesz Theorem and provides an explicit expression of the sum of squares decomposition. This result which constitutes the first step towards a better understanding of the computational complexity needed in the recovery of complex measures through compressed semidefinite programs is summarized through Theorem~\ref{MainTheorem} below.

\begin{theorem}\label{MainTheorem}
let $\mu(t) = \sum_{\tau\in S} \alpha_\tau \delta(t-\tau)$ denote a $S$-atomic measure with atoms satisfying $\min_{\tau\neq \tau'}|\tau - \tau'|\geq \Delta |S|$. We define the polynomial $\eta(\theta)$ from the Dirichlet kernel, $D_n(\theta) = $ and its derivative $\eta(\theta) = \sum_{\tau\in S} a_\tau D_n(\theta - \tau) + \sum_{\tau \in S} b_\tau D_n'(t - \tau)$ by requiring $\left\{\eta(\tau) = 1, \eta'(\tau) = 0, \;  \forall \tau \in S \right\}$. That is $\eta(\theta)$ takes the value $1$ and has zero derivative at the positions of the atoms. 

We let $U = [\psi(e^{2\pi i \tau_0}), \ldots, \psi(e^{2\pi i \tau_{|S|}})]$ and use $\mathcal{P}_U^\perp$ to denote the projector onto the orthogonal complement of $U$. We define the operator $\mathcal{A}\; :\;X \mapsto p(e^{2\pi i \theta})$, $\tilde{\mathcal{A}}^*\; :\;p(e^{2\pi i \theta})\mapsto X\in \mathbb{C}^{n\times n}$ as $\mathcal{A}(X) = \mathcal{T}\left\{\mathcal{P}_U^\perp X\mathcal{P}_U^\perp\right\}$ and $\tilde{\mathcal{A}}^* (p)= \mathcal{P}_U^\perp\mathcal{T}\left(Wp\right)\mathcal{P}_U^\perp$ where $W$ is the diagonal matrix $W = \diag(1, 2, \ldots, n+1, \ldots, 2, 1)$.  Let $\mathcal{M} = \Id - \mathcal{A}\tilde{\mathcal{A}}^*$. We let $\mathcal{Q}_K$ to denote the $O(K)$ matrix whose $[\ell_1, \ell_2]$ entry is defined from $\mathcal{M}$ as $[\mathcal{M}(D_n(\theta - \frac{\ell_2}{2n+1}))](\frac{\ell_1}{2n+1})$ (i.e the expression of $\mathcal{M}$ into the Dirichlet basis). We use $Q_K^\infty$ to denote the limit $\lim_{n\rightarrow \infty} Q_K$. Finally we use $P^\infty $ to encode the orthogonal projector onto $\text{span}(v_K, e_0)$ where $e_K$ is the $2K+1$ zero vector with $1$ at position $K+1$ and $v_K$ is defined as 
\begin{align}
v_K^\infty(k) & = \left\{\begin{array}{ll}
\frac{1}{k} \frac{(-1)^k}{\sqrt{\sum_{k\neq 0} \frac{1}{k^2}}} & \text{for $k\neq 0$} \\
 0 & \text{otherwise}
\end{array}\right.
\end{align}
Let $\sigma_1>\sigma_2>\ldots>\sigma_{2K+1}$ denote the singular values of the matrix $I- \mathcal{Q}_K + P^\infty$. As soon as $\sigma_{2K+1}(I- \mathcal{Q}_K + P^\infty)>.5$ for $K\geq O(1e10)$, there exists a sum of squares polynomial $\sum_{j} |s_j(\theta)|^2$ with associated Gram matrix of rank $n-|S|$ satisfying $1 - \eta(\theta)|^2 = \sum_{j} |s_j(\theta)|^2$
\end{theorem}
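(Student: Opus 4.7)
The plan is to produce the PSD Gram matrix $X$ of the claimed rank as the image of the polynomial $p_0 := 1-|\eta(\theta)|^2$ under a suitable parametrized inverse of the affine map $X\mapsto \psi(\theta)^*X\psi(\theta)$, reduced onto the natural subspace forced by the vanishing of $\eta$ at the atoms. Concretely, since $\eta(\tau)=1$ and $\eta'(\tau)=0$ for every $\tau\in S$, any admissible $X$ must annihilate $\psi(\tau)$, so $X$ lies in the range of $\mathcal{P}_U^\perp$. I would therefore parameterize the unknown Gram matrix as $X = \mathcal{P}_U^\perp\, \mathcal{T}(W p)\, \mathcal{P}_U^\perp = \tilde{\mathcal{A}}^*(p)$ for a trigonometric polynomial $p$ to be determined, and reduce Theorem~\ref{MainTheorem} to the question of finding $p$ with $\mathcal{A}\tilde{\mathcal{A}}^*(p) = p_0$.

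Writing $\mathcal{A}\tilde{\mathcal{A}}^* = \Id - \mathcal{M}$, this becomes the linear equation $(\Id - \mathcal{M})p = p_0$, for which I would seek the solution by Neumann expansion $p = \sum_{k\geq 0}\mathcal{M}^k p_0$. Convergence of that series is equivalent to bounding $\|\mathcal{M}\|$ on the relevant subspace. Here the restriction of $\mathcal{M}$ to the span of the $2K+1$ translated Dirichlet atoms $D_n(\theta - \ell/(2n+1))$ is precisely the matrix $\mathcal{Q}_K$ introduced in the statement, so the operator-norm question reduces to a singular-value estimate on $\Id-\mathcal{Q}_K$. The subtlety is that as $n\to\infty$ the operator $\Id - \mathcal{Q}_K$ has an asymptotic two-dimensional null space spanned by the constant mode $e_K$ and the harmonic-sum direction $v_K^\infty$, which correspond respectively to the trivial scalar shift in $p$ and to the indeterminate zero-derivative mode produced by the derivative part of $\eta$. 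Adding $P^\infty$, the orthogonal projector onto this kernel, removes the degeneracy, and the hypothesis $\sigma_{2K+1}(\Id - \mathcal{Q}_K + P^\infty) > 1/2$ gives a uniform lower bound $\|( \Id - \mathcal{M})v\|\geq \tfrac12 \|v\|$ on the complement. Orthogonality of $p_0 = 1-|\eta|^2$ to $\text{span}(v_K^\infty, e_K)$ is then verified directly from the interpolation conditions $\eta(\tau)=1$, $\eta'(\tau)=0$, placing $p_0$ in the subspace on which $(\Id-\mathcal{M})^{-1}$ is well defined, and producing the desired $p$.

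Given such $p$, I would close the proof by checking positivity and rank. The operator $\tilde{\mathcal{A}}^*$ is the composition of (i) multiplication by the weight $W = \diag(1,\ldots,n+1,\ldots,1)$, which, together with the Toeplitz extraction $\mathcal{T}$, produces a Toeplitz matrix equal to the Gram of the coefficient sequence of $p$ whenever $p\geq 0$, and (ii) conjugation by $\mathcal{P}_U^\perp$, which preserves positive-semidefiniteness. Since $p_0 = 1-|\eta|^2\geq 0$ on $[0,1]$ and the Neumann correction has norm strictly smaller than the leading term by the $1/2$ margin in the singular-value bound, $\tilde{\mathcal{A}}^*(p)\succeq 0$. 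The rank count is then a linear-algebra bookkeeping: $\mathcal{P}_U^\perp$ has corank $|S|$ (one direction $\psi(\tau)$ per atom after the derivative constraints have already been absorbed into the ansatz for $\eta$), so $X$ has rank at most $n-|S|$, and equality is forced by the requirement that $\psi(\theta)^* X \psi(\theta)$ exhaust the polynomial $p_0$ whose degree matches. The main obstacle in this program is the quantitative singular-value estimate itself: for $K$ of order $10^{10}$ one cannot form $\mathcal{Q}_K$ explicitly, so one must combine an asymptotic analysis of its entries as $n\to\infty$ with the power-iteration and convolution-against-special-functions machinery developed in the body of the paper to certify $\sigma_{2K+1}(\Id-\mathcal{Q}_K+P^\infty)>1/2$ — this is the genuinely hard step, and the reason the statement only provides numerical evidence for the largest $K$.
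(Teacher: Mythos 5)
There is a genuine gap at your positivity step, and it is fatal to the plan as written. You parameterize the Gram matrix as $X = \tilde{\mathcal{A}}^*(p) = \mathcal{P}_U^\perp\,\tilde{\mathcal{T}}^*(p)\,\mathcal{P}_U^\perp$ and argue that $\tilde{\mathcal{T}}^*(p)$ (the Toeplitz matrix with $p_k/(n+1-|k|)$ along the $k$-th diagonal) is PSD whenever $p\geq 0$. That is false: the normalization $W$ precisely destroys the Herglotz/Bochner positivity enjoyed by the \emph{unweighted} Toeplitz matrix built from the coefficients of a non-negative polynomial. A concrete counterexample: take $p(\theta) = 1 + \cos(2\pi\theta) \geq 0$ and $n = 2$. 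Then
\begin{align}
\tilde{\mathcal{T}}^*(p) \;=\; \begin{pmatrix} 1/3 & 1/4 & 0 \\ 1/4 & 1/3 & 1/4 \\ 0 & 1/4 & 1/3 \end{pmatrix},
\end{align}
whose smallest eigenvalue is $1/3 - \sqrt{2}/4 < 0$. The object $\tilde{\mathcal{T}}^*(p)$ is merely one matrix with the right diagonal sums; it is not the ``Gram of the coefficient sequence'' of a non-negative $p$ and has no built-in reason to be PSD, and projecting by $\mathcal{P}_U^\perp$ cannot cure this. Your Neumann series also produces a $p$ that is not exactly non-negative (it has a sign-indefinite correction of relative size controlled by $\|\mathcal{M}\|$), which further erodes the argument.

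The paper secures positivity by an entirely different, perturbative mechanism that you should adopt. Rather than seeking $X = \tilde{\mathcal{A}}^*(p)$, it anchors the Gram matrix to the explicitly PSD matrix $\tfrac{1}{n+1}\mathcal{P}_U^\perp$, whose nonzero eigenvalues are all equal to $\tfrac{1}{n+1}$, and writes the target as $\tfrac{1}{n+1}\mathcal{P}_U^\perp + X_{\corr}$, where $X_{\corr}$ is the minimum-Frobenius-norm correction solving $\mathcal{T}\{\mathcal{P}_U^\perp X_{\corr}\mathcal{P}_U^\perp\} = p_{\err}$ with $p_{\err} = (1-|\eta|^2) - q_U^\perp$. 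Positivity then follows once $\|X_{\corr}\|_F < \tfrac{1}{n+1}$, and the Frobenius norm is controlled by the chain $\|X_{\corr}\|_F \leq \lambda_{\min}^{-1/2}(\mathcal{A}\tilde{\mathcal{A}}^*)\,\|p_{\err}\|_W$, which is where Lemma~\ref{deterministicBoundSingularValueLemma} (the lower bound $\lambda_{\min}(\mathcal{A}\tilde{\mathcal{A}}^*)\geq 0.1$, certified numerically via $\mathcal{Q}_K$ and $P^\infty$) and Lemma~\ref{boundOnNormPerrWithoutSampling} ($\|p_{\err}\|_W \lesssim n^{-1}$) enter. Finally, note that the role of $P^\infty$ is not to restore invertibility of $\mathcal{A}\tilde{\mathcal{A}}^*$ on a quotient where $p_0$ lives --- that operator is bounded below on the whole space --- but to regularize the asymptotic matrix $\Id - \mathcal{Q}_K^\infty$ inside the proof-by-contradiction that any eigenpolynomial $p$ of $\mathcal{M}$ with eigenvalue $\geq 0.9$ would give a truncated vector $q_K$ with $q_K(0)=0$ and $|\langle q_K, v_K\rangle|$ small; the orthogonality constraints are on the putative eigenpolynomial, not on $1 - |\eta|^2$.
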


\subsection{Related work}

Super-resolution has been widely used in a variety of frameworks and has consequently led to multiple interpretations.  Mathematically speaking the term \textit{super-resolution} is usually reserved to denote the \textit{attempt at recovering the object outside the band of the instrument or, in other words, at restoring the object beyond the diffraction limit}~\cite{bertero1996super}. In the one dimensional framework, the Rayleigh distance coincides with the Nyquist criterion which both correspond to a bandwidth $\Omega$ satisfying $\Omega\geq \frac{\pi}{\tilde{\Delta}}$ where $\tilde{\Delta}$ is the grid spacing. When the frequency band is given by $[-\text{Nyquist},\text{Nyquist}]$, i.e. when the cutoff frequency is above the Nyquist frequency and there is no loss of information, as reminded in~\cite{donoho1992superresolution, bertero1996super}, the measure can be recovered by a simple Fourier inversion formula
\begin{align}
\alpha_k = \frac{\Delta}{2\pi} \int_{-\pi/\Delta}^{\pi/\Delta} e^{i \omega k \Delta} \mu(\omega)d\omega
\end{align}
or equivalently
\begin{align}
\alpha_k = \sum_{n=-\infty}^\infty \mu(n \frac{\pi}{\Omega}) \text{sinc}\left(\frac{\Omega}{\pi}(t_k - n \frac{\pi}{\Omega} )\right)
\end{align}
Knowing $f_\Omega$ is thus equivalent to knowing $f$ within the resolution limit $\Delta_x = \frac{\pi}{\Omega_c}$. 
The problem really becomes interesting on a reduced frequency band, $[-\Omega_c, \Omega_c]$. In this framework,  the term \textit{super resolution} thus implies some form of \textit{extrapolation of the spectrum} which is only feasible under appropriate priors on the object. One possible condition is the \textit{analyticity of the spectrum} which is a property of \textit{objects vanishing outside some finite region of space}~\cite{bertero1996super}.

The original mathematical framework introduced to describe super-resolution was focusing on non negative measures as such measures do not require any separation condition. In their original report~\cite{donoho1992maximum}, Donoho, Johnstone, Hoch and Stern consider the general problem of recovering a signal $x$ from measurements of the form $y = Kx +z$ when the system is ill posed. Super-resolution then refers to the particular choice $K = F_m$ consisting in the first $m$ rows of the discrete Fourier transform matrix. With this in mind, Donoho introduces the supremum
\begin{align}
\omega(\Delta; \mu) = \sup\{\|\mu - \tilde{\mu}\|_1\;:\; \|K\mu - K\tilde{\mu}\|_2\leq\Delta,\; \text{and}\; \alpha,\tilde{\alpha}\geq 0\}
\end{align}
In this case, the measure $\mu$ is said to admit super-resolution if $\omega(\Delta; \mu)\rightarrow 0$ as $\Delta\rightarrow 0$. The main result of the paper then shows that under nearly blackness assumption of the object ($\mu$ in this case has fewer than $(m-1)/2$ non zero elements), the Lipschitz constant is finite and recovery is possible. 

Those results are particularly interesting as  general super-resolution problem can be expressed from the relation of two lattices $\{k \Delta\}_{k = -\infty}^{\infty}$ and $\{k \tilde{\Delta}\}_{k = -\infty}^{\infty}$. The first lattice, on which the unknown measure is defined, and the second lattice defined from the cut-off frequency. The connection between the two lattices is then formalized through the \textit{super-resolution factor} (SRF) which was introduced by Donoho in 1991. In fact in~\cite{donoho1992maximum} Donoho defines the Rayleigh distance as $n/m$ where $n$ denotes the resolution of the measure lattice and $m$ is used to encode the cut-off frequency.

The general case of signed measures however requires a minimum separation $\Delta$ between the atoms, and the work of Cand\`es and Fernandez-Granda (see~\cite{candes2014towards} and subsequent papers) provided the first insight on the possible precise balance between such a separation distance and the cut-off frequency. In a series of papers, recovery of the measure is respectively certified for a separation $\Delta>2\lambda_c$ ($\Delta>1.87\lambda_c$ in the real framework)~\cite{candes2014towards}, and $\Delta>1.26\lambda_c$~\cite{fernandez2016super}.

What about necessary conditions then?  How far can we bring the spikes close to each other while maintaining the recovery? A first answer to that question was provided in the original paper of Donoho~\cite{donoho1992superresolution} by means of the \textit{upper} and \textit{lower uniform densities} of Beurling~\cite{beurling1989interpolation}, defined for a discrete set $S$ (i.e the support of the measure),
\begin{align}
u.u.d(S) &= \lim_{r\rightarrow \infty} r^{-1}\sup_t \#(S\cap [t,t+r))\\
l.u.d(S) &= \lim_{r\rightarrow \infty} r^{-1}\inf_t \#(S\cap [t,t+r))
\end{align}
Donoho certifies uniqueness of the measure for $u.u.d.(\text{supp}(\mu))<1$, $\Omega\geq 2\pi$ and non uniqueness for $l.u.d>1$ and $\Omega\leq \pi$. Along that line, section 6.1 in~\cite{fernandez2016super} provides numerical evidence that a separation of at least $\Delta>\lambda_c$ is needed. 

The gap between the sufficient $\Delta\geq 1.87\lambda_c$ and the numerical evidence for the necessary condition $\Delta\geq \lambda_c$ was bridged by Moitra in~\cite{moitra2015super}. The main results of this paper show that (i) as soon as $\Delta>\frac{1}{n-1}$, there exists a polynomial time algorithm which recovers the amplitudes and positions of the atoms and \textit{converges, in the presence of noise, to the true values at a rate inversely polynomial in the magnitude of the noise} and (ii) for cut-off frequency and separation distance satisfying $n<(1-\varepsilon)/\Delta$, there always exist a pair of measures $\mu$ $\tilde{\mu}$ with $|T|$ $\Delta$-separated atoms that satisfy
\begin{align}
\left|\sum_{t\in T} \mu_t e^{2\pi i t\omega} - \sum_{t'\in T} \tilde{\mu}_{t'} e^{2\pi i t'\omega} \right|\leq  \exp(-\varepsilon |T|)
\end{align}
The result of~\cite{moitra2015super} is also particularly interesting in that it provides a bound for stable recovery. Recovery of the measure depends on the conditioning of the Vandermonde matrix and the underlying algorithm can be understood as a greedy search for the atoms   

The question of stable recovery motivates the use of semidefinite programs. However such programs are notoriously painful to use on large dimensional problems because of the polynomial increase in the memory requirements and the complexity of SDP solvers. Moreover despite the clear interest of convex programming in terms of stability, this approach currently requires storing $O(\Omega_c)$ matrices, even when the measure is defined by a very small $|S|\ll \Omega_c$ number of atoms. This mathematical curiosity was studied through the lens of compressed sensing in~\cite{tang2013compressed} first (in this paper compressed sensing is used to reduce the number of Fourier coefficients needed from $O(\Omega_c)$ to $O(|S|)$) and~\cite{da2018tight} then. None of these papers however, were able to explain whether one could use $O(|S|)$ semidefinite programs with theoretical garantees on the recovery of the measure. One of the impediments in the quest for reduced semidefinite programs is the dependence of most results upon the famous Fej\'er-Riesz Theorem which provides a straightforward connection between non negative trigonometric polynomials and sum-of-squares which underly the use of semidefinite programs. Fej\'er-Riesz provide a simple and beautiful connection but does not provide any intuition regarding the structure of the sum-of-squares decomposition whose existence it guarantees. The lack of such information is unfortunate regarding the extension of semidefinite programs and their improvement as an undertanding of the structure of the sum-of-squares decomposition would make it possible to use this sum-of-squares as a basis for the definition of new certificates of optimality.

To bridge this gap and make the first step towards theoretical guarantees for compressed semidefinite programming in super resolution, this paper introduces an alternative proof technique that does not rely on the Fej\'er-Riesz Theorem and construct instead an explicit sum-of-squares decomposition for the optimality certificate. 

\section{Proof of Theorem~\ref{MainTheorem}}

%
%



\subsection{Ansatz}

A natural approach at constructing a sum of squares decomposition for the interpolating polynomial $\eta(\theta)$ that appears in Proposition~\ref{prop:uniqueRecovery} would be to start froma weighted version of the orthogonal projector $\mathcal{P}_U = \frac{1}{n+1}\left(I - U(U^*U)^{-1}U^*\right)$ where the columns of $U \in \mathbb{C}^{n\times |S|}$ are given by the canonical vectors 
$$\psi(\theta) \equiv [e^{-2\pi i n \theta}, \ldots, e^{2\pi i n \theta}]^T$$
at the atoms, i.e. $U = [\psi(\tau_1), \ldots, \psi(\tau_n)]$. The underlying polynomial $q_U^\perp(\theta) = \psi(\theta)^*\frac{1}{n+1}\mathcal{P}_U^\perp \psi(\theta)$ clearly satisfies $q_U^\perp (\tau)=\psi(\tau)^* \frac{1}{n+1}\mathcal{P}_U^\perp \psi(\tau) = u_\tau^* \frac{1}{n+1}(I - U(U^*U)^{-1}U^*)u_\tau = 0$. Both the interpolating polynomial $1- |\eta(\theta)|^2$ and the sum-of-squares polynomial $q_U^\perp(\theta)$ vanish at the atoms. To get exact equality, one approach would be to update the PSD matrix $\frac{1}{n+1}\mathcal{P}_U^\perp$ by adding to this matrix a correction $X_{\corr}$. We however want this correction to be such that it does not affect the nullspace of $\frac{1}{n+1}\mathcal{P}_U^\perp$ for any arbitrarily small change to this nullspace might add a negative eigenvalue to the spectrum of the Gram matrix $\frac{1}{n+1}\mathcal{P}_U^\perp$ and hence break the SoS nature of the underlying trigonometric polynomial. In other words, we want the correction to be a small as possible, in particular to have eigenvalues at most on the order of the eigenvalues of $\mathcal{P}_U^\perp$, but also to share the kernel of $\frac{1}{n+1}\mathcal{P}_U^\perp$ to avoid introducing negative eigenvalues. One approach could then be to look for the correction that has the smallest Frobenius norm. Such an approach would read as 
\begin{align}\label{introducingT}
\begin{split}
\min \quad &\|X_{\corr}\|_F^2\\
s.t. \quad & \mathcal{T}\left\{\mathcal{P}_U^\perp X_{\corr} \mathcal{P}_U^\perp\right\} = (1- |\eta(\theta)|^2) - q_U^\perp(\theta)  
\end{split}
\end{align}
In~\eqref{introducingT}, we introduce the notation $\mathcal{T}$ to denote the operator which maps a given Gram matrix onto its corresponding polynomial, i.e. if $ p = \mathcal{T}(H)$ for a given matrix $H\in \mathbb{C}^{(n+1)^2}$ then $p(\theta) = \sum_{|s|\leq n} e^{2\pi i s \theta }$ where $p_{s} = \sum_{(k-\ell)=s} H_{k, \ell}$. In other words, $\mathcal{T}$ is an operator that sums up the elements along the diagonals of $H$. The solution to~\eqref{introducingT} can be written in closed form by using $\mathcal{A}$ to denote the linear map $\mathcal{A}(X)=  \mathcal{T}\left\{\mathcal{P}_U^\perp X \mathcal{P}_U^\perp\right\}$ and relying on the pseudo-inverse, $X_{\corr} = \mathcal{A}^+\left[(1- |\eta(\theta)|^2) - q_U^\perp(\theta)\right]$, with $\mathcal{A}^+ = \mathcal{A}(\mathcal{A}\mathcal{A})^{-1}$. In what follows, we will use the notation $p_{\err}(\theta)$ to refer to the difference $p_{\err}(\theta) = (1- |\eta(\theta)|^2) - q_U^\perp(\theta)$. The  equality $1-|\eta(\theta)|^2 = \psi^*(\theta)(\frac{1}{n+1}\mathcal{P}_U^\perp + X_{\corr})\psi(\theta)$ is now satisfied by construction and we are left with proving that the resulting Gram matrix $\frac{1}{n+1}\mathcal{P}_U^\perp + X_{\corr}$ remains positive semidefinite. The non zero eigenvalues of $\frac{1}{n+1}\mathcal{P}_U^\perp$ have magnitude $\frac{1}{n+1}$. For this last condition to hold, it therefore suffices to show that that the Frobenius norm of the correction $X_{\corr}$ does not exceed $\frac{1}{n+1}$. We now use $W$ to denote the operator whose action on a polynomial $p = \sum_{|k|\leq n} p_k e^{2\pi i k \theta}$ is defined by $q = W p = \sum_{|k|\leq n} \frac{p_k}{n+1-|k|}e^{2\pi i k \theta}$. That is to say $Wp$ returns a polynomial $p$ whose coefficients are scaled according to the number of elements appearing on the corresponding diagonal in the Gram matrix. In particular, this implies that the operator $\tilde{T}^*$ defined by 
\begin{align}
\tilde{\mathcal{T}}^*(p) &\equiv \mathcal{T}(Wp) = \left(\begin{array}{ccc}
\frac{p_0}{n+1}& \ldots & p_n\\
\vdots & \ddots & \vdots\\
p_{-n} & \ldots & \frac{p_0}{n+1}
\end{array}\right)
\end{align}
satisfies $\mathcal{T}\tilde{\mathcal{T}}^*p = p$. To control the largest eigenvalue of $\mathcal{A}^+p_{\err}$, note that
\begin{align}
\|\mathcal{A}^+(p_{\err}) \|_\infty &\stackrel{(a)}{\leq} \|\mathcal{A}^+(p_{\err})\|_F\label{tmp000-3}\\
 &\stackrel{(b)}{\leq} \| \mathcal{A}^* W^{1/2}(W^{1/2}\mathcal{A}\mathcal{A}^*W^{1/2})^{-1}\|_{\text{op}} \|W^{1/2} p_{\err}\|_2\\
&\stackrel{(c)}{\leq}\lambda^{-1/2}_{\min}(W^{1/2}\mathcal{A}\mathcal{A}^*W^{1/2}) \|W^{1/2}p_{\text{err}}\|_2\label{decompositionWithoutReweighting}\\
&\stackrel{(d)}{\leq} \lambda^{-1/2}_{\min}(\mathcal{A}\mathcal{A}^*W) \|p_{\text{err}}\|_W\label{decompositionWithoutReweighting2}\\
&\stackrel{(e)}{\leq} \lambda^{-1/2}_{\min}(\mathcal{A}\tilde{\mathcal{A}}^*) \|p_{\text{err}}\|_W\label{decompositionWithoutReweighting3}
\end{align}
(d) follows from $W^{1/2}\mathcal{A}\mathcal{A}^*W^{1/2}\sim \mathcal{A}\mathcal{A}^*W$ through the similarity transformation $P = W^{1/2}$. This similarity in particular implies $\lambda_{\min}(W^{1/2}\mathcal{A}\mathcal{A}^*W^{1/2}) = \lambda_{\min}(\mathcal{A}\mathcal{A}^*W)\in \mathbb{R}^+$. In (e), we introduce the norm $\|p\|_W$ defined on the vector of coefficients of $p$ as $\sqrt{\sum_{|k|\geq n} \frac{|p_k|^2}{n+1-|k|}}$. The largest singular value of the correction can thus be bounded as
\begin{align}
\left\|X_{\text{corr}}\right\|_F&\leq  \lambda_{\min}^{-1/2}(\mathcal{A}\tilde{\mathcal{A}}^*)   \|p_{\err}\|_W\label{boundingFrobeniusNormDeter}
\end{align}

Bounding each of the two factors in~\eqref{boundingFrobeniusNormDeter} is precisely the point of the two lemmas below. Lemma~\ref{deterministicBoundSingularValueLemma} derives a lower bound on the spectrum of the non decimated normal operator $\mathcal{A}\tilde{\mathcal{A}}^*$. This lemma is proved in section~\ref{proofSingularValueLemma}

\begin{restatable}{lemma}{deterministicBoundSingularValueLemmaKey}
\label{deterministicBoundSingularValueLemma}
Consider the $K$ atomic measure $\mu(t) = \sum_{\tau\in S} \alpha_\tau \delta(t-\tau)$. The operators $\mathcal{A}$ and $\tilde{\mathcal{A}}$ are defined as $\mathcal{A}(X)= \mathcal{T}\left\{\mathcal{P}_U^\perp X\mathcal{P}_U^\perp\right\}$ and $\tilde{\mathcal{A}}(p) = \mathcal{P}_U^\perp \tilde{\mathcal{T}}p \mathcal{P}_U^\perp$ respectively. We define $\mathcal{Q}_K^\infty$ and $P^\infty$ as in the statement of Theorem~\ref{MainTheorem}. As soon as $\sigma_{2K+1}(I- \mathcal{Q}_K + P^\infty)>.5$, the map $\mathcal{A}\tilde{\mathcal{A}}^*$ obeys 
\begin{align}
\lambda_{\min}(\mathcal{A}\tilde{\mathcal{A}}^*) \geq 0.1 
\end{align}
for $n$ sufficiently large.
\end{restatable}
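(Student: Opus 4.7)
The plan is to realize $\mathcal{M} = \text{Id} - \mathcal{A}\tilde{\mathcal{A}}^*$ as a matrix in the translated Dirichlet basis $\{D_n(\theta - \ell/(2n+1))\}_\ell$: by the very definition of $\mathcal{Q}_K$ in Theorem~\ref{MainTheorem}, the restriction of this matrix to $|\ell_1|, |\ell_2|\leq K$ is exactly $\mathcal{Q}_K$, and since sampling at the grid $\{\ell/(2n+1)\}$ is a bijection on trigonometric polynomials of degree at most $n$, the full matrix of $\mathcal{M}$ is $\mathcal{Q}_n$. Bounding $\lambda_{\min}(\mathcal{A}\tilde{\mathcal{A}}^*)$ from below therefore reduces to bounding $\sigma_{\min}(I - \mathcal{Q}_n)$ from below, which I attack in three steps: identify the approximate kernel, transfer the hypothesis to an operator bound, and control the truncation and large-$n$ limit.

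The first step is to identify the two-dimensional exceptional subspace on which the matrix $I - \mathcal{Q}_K$ is nearly singular. The canonical vector $e_0$ corresponds (after shifting to an atom $\tau\in S$) to the Dirichlet kernel at $\tau$, which lies in $\text{Ran}(U)$ and is therefore killed by the outer $\mathcal{P}_U^\perp$ in $\mathcal{A}$; the vector $v_K^\infty$ with entries proportional to $(-1)^k/k$ plays the analogous role for the first derivative at $\tau$ via the identity $D_n'(\ell/(2n+1))\sim (-1)^\ell/\ell$, and is asymptotically annihilated as $n\to\infty$. This is why adding the projector $P^\infty$ onto $\text{span}(v_K^\infty, e_0)$ is necessary in the hypothesis: it lifts the singular values of $I - \mathcal{Q}_K$ in these two degenerate directions back to $O(1)$. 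The hypothesis $\sigma_{2K+1}(I - \mathcal{Q}_K + P^\infty) > 0.5$ is then equivalent to $\|(I-\mathcal{Q}_K)p\|_2 \geq 0.5\|p\|_2$ for every $p \perp \text{Ran}(P^\infty)$.

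To upgrade this finite-matrix statement to the operator bound, I would first quantify the decay of entries $\mathcal{Q}_{\ell_1,\ell_2}$ away from the diagonal. A direct expansion of $\mathcal{M}(D_n(\cdot - \ell_2/(2n+1)))$ using $\mathcal{P}_U^\perp = I - U(U^*U)^{-1}U^*$ together with the near-orthogonality of translated Dirichlet kernels shows that these entries are of order $\min(1, 1/|\ell_i|^2)$, so that truncating from $\mathcal{Q}_n$ down to $\mathcal{Q}_K$ introduces operator-norm error $O(1/K)$. The convergence $\mathcal{Q}_K \to \mathcal{Q}_K^\infty$ as $n\to\infty$ follows from a standard Riemann-sum estimate applied to the representation of each entry as a discrete Fourier sum against $D_n$, with error $O(|S|^2/n)$ under the separation $\Delta \gtrsim |S|^2/n$. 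Finally, the degenerate directions $e_0$ and $v_K^\infty$ lie in (or asymptotically in) the kernel of $\mathcal{A}\tilde{\mathcal{A}}^*$, so they contribute only to the nullspace and do not affect the bound on $\lambda_{\min}(\mathcal{A}\tilde{\mathcal{A}}^*)$ restricted to the range.

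The main obstacle is the very slow $1/k$ decay of $v_K^\infty$: since this sequence is only marginally square-summable, truncating beyond $|k|\leq K$ leaves non-negligible mass on this direction, and the coupling across the cutoff is not damped by the usual $1/|\ell|^2$ envelope that controls the bulk. This is precisely what forces the enormous lower bound $K\gtrsim 10^{10}$ in the hypothesis and makes a purely analytic $n=\infty$ proof out of reach: the singular-value bound on $I-\mathcal{Q}_K + P^\infty$ is the irreducible numerical input. Granted that input, assembling the three steps yields $\|\mathcal{A}\tilde{\mathcal{A}}^* p\|\geq 0.1\|p\|$ for every $p$ in the range of $\mathcal{A}\tilde{\mathcal{A}}^*$ and all sufficiently large $n$, the gap from $0.5$ down to $0.1$ absorbing the $O(1/K)$ truncation loss, the $O(|S|^2/n)$ Riemann-sum error, and the residual coupling between the two-dimensional exceptional subspace and its orthogonal complement.
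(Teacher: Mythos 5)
Your proposal correctly identifies the role of the two degenerate directions $e_0$ and $v_K$ and the reason $P^\infty$ appears in the hypothesis, but the route you take from the finite hypothesis on $\mathcal{Q}_K$ to the operator-level conclusion on $\mathcal{A}\tilde{\mathcal{A}}^*$ has a genuine gap. You propose to view $\mathcal{M}$ as a full $(2n+1)\times(2n+1)$ matrix $\mathcal{Q}_n$ in the Dirichlet sampling basis, argue that the entries of $\mathcal{Q}_n$ decay like $\min(1,1/|\ell_i|^2)$, and conclude that restricting from $\mathcal{Q}_n$ to $\mathcal{Q}_K$ incurs operator-norm error $O(1/K)$. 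That truncation estimate cannot hold as stated: after centering at an atom $\tau_0$, the matrix $\mathcal{Q}_n$ has a second (and third, \dots) cluster of $O(1)$ entries near each index $\ell \approx (2n+1)\tau$ for the remaining atoms $\tau\in S\setminus\{\tau_0\}$, so the tail block of $\mathcal{Q}_n$ is not small in operator norm. Even the single-atom tail, with $\sum_{|\ell|>K}1/\ell^2 \sim 1/K$ as an entrywise envelope, does not straightforwardly give an $O(1/K)$ operator-norm bound on the coupling. The paper avoids this obstacle entirely by arguing by contradiction: if $\lambda_{\min}(\mathcal{A}\tilde{\mathcal{A}}^*)<0.1$, then $\mathcal{M}$ has an eigenpolynomial $p$ with eigenvalue $\lambda>0.9$, and Lemma~\ref{lemmaBoundPmultiAtomic} shows $|p(e^{2\pi i\theta})|\lesssim\min(1,\sum_{\alpha\in S}C_1/(1+n|\theta-\alpha|))$. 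This localization of the \emph{particular} eigenpolynomial, not a uniform operator-norm decay, is what makes the restriction to $\mathcal{Q}_K$ legitimate (Lemmas~\ref{lemma:lemmaTroncatureMultiSpikes}--\ref{lemma:lemmaTroncature2MultiSpikes}): $p$ truncated to a window of width $K/n$ becomes an approximate eigenvector of $\mathcal{Q}_K$ with small residual $O(\log^2K/K)$, which then violates $\sigma_{2K+1}(I-\mathcal{Q}_K+P^\infty)>0.5$. Your proposal lacks this step entirely.

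A secondary imprecision: you state that $e_0$ and $v_K^\infty$ ``lie in the kernel of $\mathcal{A}\tilde{\mathcal{A}}^*$.'' That is not what the paper uses, and I do not believe it is true. What holds is that $\mathcal{P}_U^\perp\psi(\tau)=0$ for every atom $\tau$, so $(\mathcal{A}\tilde{\mathcal{A}}^*p)(\tau)=0$ for \emph{every} $p$, which forces $(\mathcal{M}p)(\tau)=p(\tau)$; thus any eigenpolynomial with $\lambda\neq 1$ must vanish at the atoms (and likewise have vanishing derivative). This is a constraint on the eigenvector, not a statement that those two Dirichlet-basis directions are annihilated by the operator. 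The distinction matters: it is the combination of (i) localization and (ii) the vanishing constraints at atoms that makes the truncated $q_K$ nearly orthogonal to $\operatorname{span}(e_0,v_K)$ and produces the contradiction.
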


Given the lower bound on the eigenvalues of the pseudo inverse, we are left with showing that the norm $\|p\|_W$ never exceeds $(n+1)^{-1}$ and that $\eta(\theta)$ satisfies the conditions of Proposition~\ref{prop:uniqueRecovery}. This is the point of lemmas~\ref{interpolationPolynomialTangRecht} and~\ref{boundOnNormPerrWithoutSampling} below. Lemma~\ref{interpolationPolynomialTangRecht} first certifies that the polynomial $1-|\eta(\theta)|^2 = q_U^\perp(\theta) + \psi(\theta)^*X_{\corr}\psi(\theta) = \psi(\theta)^*\left(\frac{1}{n+1}\mathcal{P}_U^\perp+ X_{\corr}\right)\psi(\theta)$ vanishes at the atoms and is bounded by $1$ on $[0,1]$. Lemma~\ref{boundOnNormPerrWithoutSampling} then controls the $W$-norm of the deviation $q_U^\perp(\theta) - (1 - |\eta(\theta)|^2)$. Those two lemmas are respectively proved in sections~\ref{proofDirichletNoInterpolation} and~\ref{sec:boundPerr} below.

\begin{restatable}{lemma}{interpolationPolynomialCarlosKey}
\label{interpolationPolynomialTangRecht}
Consider the polynomial $\eta(\theta)$ defined as
\begin{align}
\eta(\theta) = \sum_{\tau \in S} a_\tau D_{N}(\theta - \tau) + \sum_{\tau\in S} b_\tau D_{N}(\theta - \tau).
\end{align}
whose coefficients $a_\tau, b_\tau$ are solutions of the system
\begin{align}
\left[\begin{array}{cc}
D_{0} & \frac{1}{\sqrt{\left|D_{N}''(0)\right|}}D_{1}\\
-\frac{1}{\sqrt{\left|D_{N}''(0)\right|}} D_{1} & \frac{1}{\left|D_{N}''(0)\right|} D_{2}\\
\end{array}\right] \left[\begin{array}{c}
a\\
b/\sqrt{\left|D_{N}''(0)\right|}
\end{array}\right]  =  \left[\begin{array}{c}
v\\
0\end{array}\right]
\end{align}
For $(D_{0})_{j,k} =  D_{N}(\tau_j - \tau_k)$, $(D_{1})_{j,k} = D'_{N}(\tau_j - \tau_k)$ and $(D_{2})_{j,k} = D''(\tau_j - \tau_k)$. This polynomial exactly takes the value $1$ at each of the points $\theta \in S$ and is stricly smaller than $1$ otherwise. In particular, is satisfies the condition~\ref{condition1} and~\ref{condition2} of lemma~\ref{prop:uniqueRecovery}. 
\end{restatable}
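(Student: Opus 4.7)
The plan is to follow the classical Candès--Fern\'andez-Granda interpolation strategy. The polynomial $\eta(\theta)$ carries $2|S|$ complex unknowns (the $a_\tau$ and $b_\tau$) and the two conditions $\eta(\tau)=1$, $\eta'(\tau)=0$ at each atom produce exactly $2|S|$ linear constraints, so existence and uniqueness reduce to showing that the block matrix in the statement is invertible. Under the separation hypothesis $\Delta \gtrsim |S|/n$, I would argue that the diagonal blocks $D_0$ and $|D_N''(0)|^{-1} D_2$ are small perturbations of $\pm I$ (their diagonals equal $D_N(0)=1$ and $D_N''(0)/|D_N''(0)|=-1$), while the off-diagonal blocks $|D_N''(0)|^{-1/2} D_1$ have vanishing diagonal (since $D_N'(0)=0$) and small off-diagonal entries by the decay estimates on $D_N'$ at points separated by at least $\Delta$. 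A Gershgorin / Neumann-series estimate then yields invertibility together with the quantitative bounds $a_\tau = 1 + O(\varepsilon)$ and $b_\tau/\sqrt{|D_N''(0)|} = O(\varepsilon)$ for some $\varepsilon$ that shrinks as the separation grows.

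With these coefficient estimates in hand, the interpolation conditions $\eta(\tau)=1$, $\eta'(\tau)=0$ hold by construction; what remains is the strict inequality $|\eta(\theta)|<1$ for $\theta \in [0,1] \setminus S$. I would partition the interval into a \emph{near region} $\mathcal{N} = \bigcup_{\tau \in S}[\tau - \delta, \tau + \delta]$ with $\delta$ of order $1/n$, and its complementary \emph{far region} $\mathcal{F}$.

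On the near region, fix $\tau^* \in S$ and Taylor-expand to second order: using $\eta(\tau^*)=1$ and $\eta'(\tau^*)=0$, one has
\begin{align}
\eta(\theta) = 1 + \tfrac{1}{2}\eta''(\tau^*)(\theta - \tau^*)^2 + R(\theta),
\end{align}
and a perturbation argument built from the coefficient bounds above shows that $\mathrm{Re}(\eta''(\tau^*))$ is close to $D_N''(0)$, hence bounded above by $-c\, n^2$ for an absolute constant $c>0$. Controlling the cubic remainder by $C\, n^3 |\theta-\tau^*|^3$ via uniform third-derivative bounds on $D_N$ and $D_N'$, and choosing $\delta$ small but comparable to $1/n$, makes the quadratic term dominate and yields $\mathrm{Re}(\eta(\theta))<1$, hence $|\eta(\theta)|<1$.

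On the far region, I would invoke the pointwise decay $|D_N(\theta - \tau)| \lesssim 1/(n|\theta - \tau|^2)$ (and the analogue for $D_N'$), sum over $\tau \in S$ using the minimum separation to bound the result by a geometric series, and compare to $1$ to conclude. The main obstacle is the near-region analysis: establishing a sufficiently strong strict negative upper bound on $\mathrm{Re}(\eta''(\tau^*))$ in the presence of perturbations of the coefficients, and then balancing it quantitatively against the cubic remainder on the right length scale so that the near and far estimates match at $\theta = \tau \pm \delta$. Together, these two regimes give $|\eta(\theta)|<1$ on $[0,1]\setminus S$ and therefore establish conditions \ref{condition1} and \ref{condition2} of Proposition~\ref{prop:uniqueRecovery}.
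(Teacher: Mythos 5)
Your overall scaffolding (Gershgorin/Neumann for invertibility, then a near/far split with a concavity argument near atoms and decay far from them) is the same one the paper follows, but there is a genuine logical gap in the near-region step, and two secondary issues that would prevent the estimates from closing.

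The real gap: from $\mathrm{Re}(\eta(\theta))<1$ you infer ``hence $|\eta(\theta)|<1$.'' This implication is false in general (take $z = 0.5+2i$). The polynomial $\eta$ is genuinely complex here because the signs $\alpha_\tau$ are complex, and the paper handles this by working directly with the second derivative of the modulus,
\begin{align}
\frac{d^2|\eta|}{d\theta^2} = \frac{|\eta'|^2 + \eta_R\eta_R'' + \eta_I\eta_I''}{|\eta|} - \frac{(\eta_R\eta_R' + \eta_I\eta_I')^2}{|\eta|^3},
\end{align}
and showing it is negative on the near interval by bounding each of $\eta_R, \eta_I, \eta_R'', \eta_I'', |\eta'|$ separately (see \eqref{boundOnEtaR}, \eqref{ImaginaryEtaAfterSubstitutingBoundDDprime}, \eqref{boundEtaRpprime}, \eqref{boundEtaIpprime}, \eqref{boundModulusEtaPrime}). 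Your Taylor expansion of $\eta$ alone, with a bound only on $\mathrm{Re}(\eta''(\tau^*))$, cannot substitute for this: you would at minimum need $\eta_I$ and $\eta_I''$ to be small, which is where the imaginary-part bounds on the coefficients $\Ip(a_\tau), \Ip(b_\tau)$ enter the argument.

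Two secondary issues. First, the asserted decay $|D_N(\theta-\tau)| \lesssim 1/(n|\theta-\tau|^2)$ is wrong; the Dirichlet kernel decays like $1/(n|\theta-\tau|)$ (cf. \eqref{upperboundDwitha}). This only-polynomial decay is why the paper requires the separation $\Delta \gtrsim \lambda_c\log(S)$ to make the tail sums over the other atoms summable, not merely $\Delta \gtrsim |S|/n$. Second, a two-region split (near region of size $\sim 1/n$ plus decay far away) is not enough: the concavity argument degenerates once the Taylor remainder starts competing, and the raw decay estimate is not $<1$ uniformly right up to the boundary of the near region. The paper uses an intermediate window $[\lambda_c/5, 0.4751\lambda_c]$ controlled by the auxiliary majorant $H_1(\theta)$, whose derivative is shown to remain negative up to $0.4751\lambda_c$, and only past that does the pure decay bound take over. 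Your proposal acknowledges the matching issue as ``the main obstacle'' but offers no mechanism to resolve it; the $H_1$-argument is exactly that mechanism.
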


\begin{restatable}{lemma}{boundOnNormPerrWithoutSamplingKey}
\label{boundOnNormPerrWithoutSampling}
Consider the $K$ atomic measure $\mu(t) = \sum_{\tau\in S} \alpha_\tau \delta(t-\tau)$. We define the polynomial $p_{\err}(\theta)$ as the difference $p_{\err}(\theta) = (1 - |\eta(\theta)|^2) - q^\perp_U(\theta)$ where $q_U^\perp(\theta)$ is the polynomial associated to the projector $q_U^\perp(\theta) = \frac{1}{n+1}\psi(\theta)^*\mathcal{P}_U^\perp \psi(\theta) = \frac{1}{n+1}\psi(\theta)^*(I - U(U^*U)^{-1}U^*)\psi(\theta)$, and $\eta(\theta)$ is the interpolation polynomial defined from the Dirichlet kernel and its derivative, $\eta(\theta) = \sum_{\tau \in S} a_\tau D_n(\theta - \tau) + \sum_{\tau \in S}b_\tau D_n(\theta - \tau)$, by requiring $\left\{\eta(\tau) = 1, \eta'(\tau) = 0, \; \forall \tau \in S\right\}$. For a trigonometric polynomial $p(\theta) = \sum_{|s|\leq n} p_k e^{2\pi i k \theta}$ of order $n$, we define the norm $\|p\|_N$ as the weighted $\ell_2$ norm of the coefficients, $\left\|p\right\|_N = \sqrt{\sum_{|k|\leq n} \frac{|p_{k}|^2}{n+1-|k|}}$. The polynomial $p_{\err}$ obeys
\begin{align}
\|p_{\err}\|_W & \leq n^{-1} 
\end{align}
as soon as $\Delta \gtrsim |S|^2 \cdot n^{-1}$ (up to log factors). 
\end{restatable}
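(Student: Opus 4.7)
The plan is to bound the Fourier coefficients of $p_{\err}$ term-by-term and then sum against the weights $1/(n+1-|\ell|)$ appearing in the $W$-norm. The starting point is the identity $\psi(\theta)^*\psi(\tau)=(n+1)D_n(\tau-\theta)$, which gives
\begin{align*}
q_U^\perp(\theta) = \tfrac{2n+1}{n+1} - \sum_{j,k\in S}[D^{-1}]_{j,k}\,D_n(\theta-\tau_j)\,D_n(\theta-\tau_k), \qquad D_{jk}=D_n(\tau_j-\tau_k),
\end{align*}
while $|\eta(\theta)|^2$ expands analogously as a double sum over $(j,k)\in S\times S$ of products $D_n(\theta-\tau_j)D_n(\theta-\tau_k)$, $D_n(\theta-\tau_j)D_n'(\theta-\tau_k)$ and $D_n'(\theta-\tau_j)D_n'(\theta-\tau_k)$ with coefficients $a_j\overline{a_k},a_j\overline{b_k},b_j\overline{b_k}$ obtained from the interpolation system of Lemma~\ref{interpolationPolynomialTangRecht}. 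The difference $p_{\err}$ is therefore a finite sum of these same building blocks with explicit coefficients.

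Next I would exploit the well-conditioning of both linear systems under $\Delta\gtrsim|S|^2/n$. The entries $D_n^{(r)}(\tau_j-\tau_k)$ for $r=0,1,2$ decay like $n^r/(n\Delta|j-k|)$, so the matrices are strongly diagonally dominant and admit Neumann expansions yielding
\begin{align*}
a_\tau = 1 + O\!\left(\tfrac{|S|}{n\Delta}\right), \qquad b_\tau = O\!\left(\tfrac{1}{n^2\Delta}\right), \qquad [D^{-1}]_{j,k}=\delta_{jk}+O\!\left(\tfrac{1}{n\Delta|j-k|}\right).
\end{align*}
The interpolation conditions $\eta(\tau)=1,\eta'(\tau)=0$ combined with $\mathcal{P}_U^\perp\psi(\tau)=0$ then force the constant and linear Taylor coefficients of $|\eta|^2$ and of $q_U^\perp$ to match exactly at each atom; a direct computation shows moreover that the leading quadratic coefficient on both sides is governed by $D_n''(0)\sim n^2$, giving genuine cancellation to quadratic order. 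Using the Fourier identity $[D_n(\cdot-\tau_j)D_n(\cdot-\tau_k)]_\ell=(n+1)^{-2}\sum_{r+s=\ell}e^{-2\pi i(r\tau_j+s\tau_k)}$, whose modulus is at most $(n+1-|\ell|)/(n+1)^2$ (with additional $n^r$ for each differentiated factor), and combining with the coefficient bounds above, I obtain $|p_{\err,\ell}|\lesssim (n+1-|\ell|)\cdot|S|^2/(n^3\Delta)$. Substituting into the weighted sum,
\begin{align*}
\|p_{\err}\|_W^2 = \sum_{|\ell|\leq n}\frac{|p_{\err,\ell}|^2}{n+1-|\ell|} \lesssim \sum_{|\ell|\leq n}\frac{n+1-|\ell|}{n^6\Delta^2}\cdot|S|^4 \lesssim \frac{|S|^4}{n^4\Delta^2},
\end{align*}
so $\|p_{\err}\|_W\lesssim |S|^2/(n^2\Delta)$, and the requirement $\|p_{\err}\|_W\leq n^{-1}$ collapses exactly to $\Delta\gtrsim|S|^2/n$.

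The main obstacle is executing the cancellation cleanly. Both $1-|\eta(\theta)|^2$ and $q_U^\perp(\theta)$ vanish to second order at each atom with matching leading quadratic coefficients, and showing that this match persists globally (not just in Taylor expansion) requires careful use of the derivative blocks $D_1,D_2$ in the $\eta$-system paired with the derivative consequence $\tfrac{d}{d\theta}[\psi(\theta)^*\mathcal{P}_U^\perp\psi(\theta)]_{\theta=\tau}=0$ of $\mathcal{P}_U^\perp\psi(\tau)=0$. The derivative cross-terms $b_j\overline{a_k}D_n'D_n$ in $|\eta|^2$ are technically the hardest to control because they have no direct counterpart in $q_U^\perp$ and must be absorbed entirely into the smallness of $b_\tau=O(1/(n^2\Delta))$. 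Finally, the off-diagonal Neumann corrections generate the harmonic sum $\sum_{j\neq k}1/|j-k|$, which is where the advertised logarithmic factors in the separation condition enter.
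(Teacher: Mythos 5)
Your decomposition of $q_U^\perp$ and $|\eta|^2$ into double sums of products of (possibly differentiated) Dirichlet kernels is correct, and the Neumann-series bounds $a_\tau = 1 + O(|S|/(n\Delta))$, $b_\tau = O(1/(n^2\Delta))$, $[D^{-1}]_{jk}=\delta_{jk}+O(1/(n\Delta|j-k|))$ are the right ingredients. But the proposal has a real gap in the off-diagonal pairs $j\neq k$. Writing $p_{\err} = \sum_{j,k}\bigl([D^{-1}]_{jk}-a_j\overline{a_k}\bigr)D_n(\theta-\tau_j)D_n(\theta-\tau_k) + (\text{$b$-cross terms})$, the off-diagonal scalar coefficient is
\begin{align*}
[D^{-1}]_{jk}-a_j\overline{a_k} = O\!\left(\tfrac{1}{n\Delta|j-k|}\right) - \bigl(1+O(\tfrac{|S|}{n\Delta})\bigr) = -1 + o(1),
\end{align*}
which is $O(1)$, not $O(1/(n\Delta))$. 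You implicitly multiply the crude Fourier-coefficient bound $(n+1-|\ell|)/(n+1)^2$ by an $O(|S|^2/(n\Delta))$ coefficient total to reach $|p_{\err,\ell}|\lesssim (n+1-|\ell|)|S|^2/(n^3\Delta)$, but the actual coefficient total over $j\neq k$ is $\Theta(|S|^2)$. With the crude bound you used, this gives $|p_{\err,\ell}|\lesssim |S|^2(n+1-|\ell|)/(n+1)^2$ and hence only $\|p_{\err}\|_W\lesssim |S|^2/n$, which does not satisfy $\leq n^{-1}$. The needed factor $(n\Delta)^{-1}$ cannot come from the coefficients; it has to come from the oscillation of $[D_n(\cdot-\tau_j)D_n(\cdot-\tau_k)]_\ell = (n+1)^{-2}e^{-2\pi i\ell\tau_k}\sum_r e^{-2\pi ir(\tau_j-\tau_k)}$, whose modulus is actually $\lesssim \min(n+1-|\ell|,\,1/(2|\tau_j-\tau_k|))/(n+1)^2\lesssim 1/(\Delta(n+1)^2)$ for $j\neq k$. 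That is precisely the step the paper makes explicit by invoking $|\sum_{|k|\leq n}e^{2\pi ik(\tau-\tau')}|\leq \Delta^{-1}$ before summing the Fourier series against the weights.

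The Taylor-matching argument you sketch (both sides vanish to second order at each atom because $\eta(\tau)=1$, $\eta'(\tau)=0$, $\mathcal{P}_U^\perp\psi(\tau)=0$) is a local statement about zeros of $p_{\err}$; it does not translate into a per-frequency bound on $|p_{\err,\ell}|$ with a factor $(n+1-|\ell|)$, and indeed the off-diagonal contribution to $p_{\err,\ell}$ decays like $1/(\Delta n^2)$, roughly flat in $\ell$, not like $(n+1-|\ell|)/n^3$. Once you replace the crude Fourier bound by the oscillation bound for $j\neq k$, the weighted sum produces the harmonic-logarithmic factors you anticipated and the estimate closes at $\Delta\gtrsim |S|^2/n$ (up to logs). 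Note also that the paper avoids the pointwise-then-sum route altogether: it bounds $\|\mathcal{T}\{\eta\eta^*\}-\mathcal{T}\{U\alpha\alpha^*U^*\}\|_N$ directly at the level of the Gram matrix using $\|\mathcal{T}\|_{F\to N}\leq 1$ and Frobenius submultiplicativity, and dispatches the $b$-cross terms via $\|U\alpha\beta^*V^*\|_F\leq\|U\|_F\|\alpha\beta^*\|_F\|V\|_F$ together with $\|V\|_F^2\sim|S|n^3$ and $\|b\|_\infty\lesssim \varepsilon_2'/n$. Your qualitative remark about the $b$-cross terms being ``technically the hardest'' actually points the wrong way: with $\|b\|_\infty = O(1/n)$ and $\|V\|_F^2 = O(|S|n^3)$ they are controlled by a routine Frobenius estimate, while the off-diagonal $a_j\overline{a_k}$ terms are where the argument genuinely needs the Dirichlet oscillation.
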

\

\section{\label{proofSingularValueLemma}Proof of lemma~\ref{deterministicBoundSingularValueLemma}}

To control the smallest eigenvalue of $\mathcal{A}\tilde{\mathcal{A}}^*$, we will show that the largest eigenvalue of the deviation between this operator and the identity is always stricly smaller than $1$ (in the developments below we choose to show that the largest eigenvalue of this deviation never exceeds $0.9$). This in turn implies that the smallest eigenvalue of $\mathcal{A}\tilde{\mathcal{A}}^*$ is lower bounded by $0.1$. We now use $\mathcal{M}$ to denote the deviation $\mathcal{M} = I - \mathcal{A}\tilde{\mathcal{A}}^*$. As we don't impose any constraints on the order of the polynomial, there is no restriction on the size of the matrice representing the linear map $\mathcal{M}$. We thus need to find a way to control the eigenvalues of this operator despite the lack of a bound on its size. It is in fact possible to show that any eigenpolynomial of $\mathcal{M}$, satisfying $\|p\|_\infty\leq 1$ and which would be associated to an eigenvalue $\lambda > 0.9$ satisfies an upper bound of the form $|p(\theta)|\leq \min(1,\sum_{\tau\in S} \frac{C_1}{1+n|\theta - \tau|})$ for some absolute constant $C_1\in \mathbb{R}^+$. The operator $\mathcal{M}$ has an approximately block structure with each polynomial $p_{\tau}(\theta)$ corresponding to the truncation of $p(\theta)$ around the atom $\tau$, being an approximate eigenpolynomial for $\mathcal{M}_\tau$ the corresponding block in $\mathcal{M}$. Thanks to this block structure, and the upper bound on the eigenpolynomial, one can show that the submatrix obtained by truncating the operator $\mathcal{M}$ around the block corresponding to any particular $\tau$, for a sufficiently large (yet constant) truncation size (1e7 in this case), and projecting this operator onto the Dirichlet basis, must have an eigenvector (in that same basis) with a sufficiently large eigenvalue. Since the resulting matrix is finite dimensional we can compute its extremal singular values (we do this by writing the matrix as a series of convolutions with special functions and using power iterations), show that the conditions derived on the eigenvalues of the the truncated operator cannot be satisfied, hence refuting the original assumption $\lambda>0.9$. These ideas are developed below. Section~\ref{boundAAoneatomic} start by developing the expression of $\mathcal{M}$ in the case of a one atomic measure. This expression is extended to the multi-atomic framework through section~\ref{boundAAMultiatomic}. Section~\ref{sec:truncation} uses the bound on the eigenpolynomial derived in section~\ref{boundAAMultiatomic} to derive the order of the truncation required for the truncated polynomial to be sufficiently close to an eigenvector of the truncated block. Given this order, section~\ref{sec:PowerIterations} finally computes the extremal singular values of the truncated block, expressed in the Dirichlet basis, $Q_K$ through convolutions with special (exponential, sine and cosine integral) functions and power iterations. The uncertainty on the singular value can be controled through an a priori error estimate derived from the residual. The python script used to compute the numerical estimates is available at~\url{http://www.augustincosse.com/research}.

\subsection{\label{boundAAoneatomic}One atomic operator}



We start by developing the operator $\mathcal{M}=\mathop{Id}-\mathcal{A}\tilde{\mathcal{A}}^*$ for a one atomic measure.  For a polynomial $p(e^{2\pi it})$ we let $||p||_\infty=\sup_{t\in\mathbb{R}}|p(e^{it})|$.

%
%
%


%

For $\theta \in [-1/2, 1/2]$, we use $\psi(\theta)$ to denote the canonical vector $\psi(\theta)=\left(\begin{smallmatrix}1\\e^{2\pi i\theta}\\\vdots\\e^{2\pi i n\theta}\end{smallmatrix}\right)$. We further use $u$ to denote the vector $u=\psi(0) = \left(\begin{smallmatrix}1\\\vdots\\1\end{smallmatrix}\right)$. From the definition of $\mathcal{A}\tilde{\mathcal{A}}^*$, one can check that we have  
  \begin{equation}
    \mathcal{M}(p)
    = T\left(\frac{1}{n+1}uu^*\tilde{T}^*(p)+\frac{1}{n+1}\tilde{T}^*(p)uu^*-\frac{1}{(n+1)^2}uu^*\tilde{T}^*(p)uu^*\right),\label{ApplicationOfMpOntoPolynomial}
  \end{equation}
  where $T$ encodes the mapping of a polynomial $p$ onto its corresponding Gram matrix
  \begin{equation*}
    \tilde{T}^*(p)=\begin{pmatrix}
      \frac{p_0}{n+1}&\dots&\frac{p_n}{1}\\
      \vdots&\ddots&\vdots\\
      \frac{p_{-n}}{1}&\dots&\frac{p_0}{n+1}
    \end{pmatrix}.
  \end{equation*}
The polynomial whose coefficients are defined by~\eqref{ApplicationOfMpOntoPolynomial} can thus read as 
  \begin{align}
    \mathcal{M}(p)(e^{2\pi i\theta})
    &= \psi(\theta)^* \left(\frac{1}{n+1}uu^*\tilde{T}^*(p)+\frac{1}{n+1}\tilde{T}^*(p)uu^*-\frac{1}{(n+1)^2}uu^*\tilde{T}^*(p)uu^*\right) \psi(\theta)\\
    &= \overline{D(e^{2\pi i\theta})}u^*\tilde{T}^*(p)\psi(\theta) + D(e^{2\pi i\theta})\psi(\theta)^*\tilde{T}^*(p)u - |D(e^{2\pi i\theta})|^2 p(1),\label{decompositionOfMasPolynomial}
  \end{align}
where $D$ here encodes the normalized Dirichlet kernel, $D(e^{2\pi i\theta})=\frac{1}{n+1}\sum_{k=0}^{n}e^{2\pi i k\theta}$. We start by considering the first term in~\eqref{decompositionOfMasPolynomial}. 
\begin{align}
  u^*\tilde{T}^*(p)v(\theta)\label{eq:bounduTv}
  & = \sum_{k=-n}^n \frac{p_k}{n+1-|k|}\underset{t-s=k}{\sum_{0\leq t,s\leq n}}e^{2\pi it\theta}\\
  & = \sum_{k=-n}^{0} \frac{p_k}{n+1+k}\left(\sum_{t=0}^{n+k}e^{2\pi it\theta}\right)
    - \frac{p_0}{n+1}{\color{black} \left(\frac{1 - e^{i2\pi (n+1)\theta}}{1-e^{i2\pi\theta}}\right)} + \sum_{k=0}^n \frac{p_k}{n+1-k}\left(\sum_{t=k}^ne^{2\pi it\theta}\right)\\
  & = \frac{1}{e^{2\pi i\theta}-1}\sum_{k=-n}^{0} \frac{p_k}{n+1+k}\left(e^{2\pi i(n+1+k)\theta}-1\right)
    - \frac{p_0}{n+1}{\color{black} \left(\frac{1 - e^{i2\pi (n+1)\theta}}{1-e^{i2\pi\theta}}\right)} \label{developmentUTtildepsiTheta1}\\
  & \qquad + \frac{e^{2\pi i(n+1)\theta}}{e^{2\pi i\theta}-1}\sum_{k=0}^n \frac{p_k}{n+1-k}\left(1-e^{-2\pi i(n+1-k)\theta}\right)\label{developmentUTtildepsiTheta2}\\
  & = \frac{1}{e^{2\pi i\theta}-1} \frac{1}{2\pi i} \int_0^\theta p_-(e^{2\pi i t})dt
    - \frac{p_0}{n+1}{\color{black} \left(\frac{1 - e^{i2\pi (n+1)\theta}}{1-e^{i2\pi\theta}}\right)} - \frac{e^{2\pi i(n+1)\theta}}{e^{2\pi i\theta}-1}\frac{1}{2\pi i}
    \int_0^{-\theta} p_+(e^{2\pi it})dt,
\end{align}
where $p_-$ et $p_+$ are defined as 
\begin{align*}
p_-(e^{2\pi i\theta})&=e^{2\pi i(n+1)\theta}\sum_{k=-n}^0 p_k e^{2\pi ik\theta};\\
p_+(e^{2\pi i\theta})&=e^{2\pi i(n+1)\theta}\sum_{k=0}^n p_k e^{-2\pi ik\theta}.
\end{align*}
If we further let $\tilde D$ to denote the Dirichlet kernel (without normalization) ($\tilde D(e^{2\pi i\theta})=\sum_{k=0}^n e^{2\pi ik\theta}$), one can check that for all $t$, we have 
\begin{equation*}
p_+(e^{2\pi it}) = e^{2\pi i(n+1)t}p\star \tilde D(e^{-2\pi it}),
\end{equation*}
from which one can write for all $\theta\in[0;0,5]$,
\begin{align}\label{eq:dev_int_p+}
  \frac{1}{e^{2\pi i\theta}-1} \int_0^{-\theta} p_+(e^{2\pi it})dt
  = \frac{1}{e^{2\pi i\theta}-1} \int_0^1 p(e^{2\pi is})\left(\int_0^{-\theta}e^{2\pi i(n+1)t}\tilde D(e^{-2\pi i(s+t)})dt\right)ds.
\end{align}
Similarly, we let 
\begin{align}\label{eq:dev_int_p-}
  \frac{1}{e^{2\pi i\theta}-1} \int_0^{\theta} p_-(e^{2\pi it})dt
  = \frac{1}{e^{2\pi i\theta}-1} \int_0^1 p(e^{-2\pi is})\left(\int_0^{\theta}e^{2\pi i(n+1)t}\tilde D(e^{-2\pi i(s+t)})dt\right)ds.
\end{align}
Any bound on~\eqref{eq:dev_int_p+} gives a bound on~\eqref{eq:dev_int_p-} and vice versa. We can thus focus on controling~\eqref{eq:dev_int_p+}.

%
%
%
%
%
In the developments below, we will also use the notation 
{\color{black}
\begin{align}
F_\theta(s)&=\int_0^{-\theta}e^{2\pi i(n+1)t}\tilde D(e^{-2\pi i(s+t)})dt.\\
& = -\int_{-\theta}^0e^{2\pi i(n+1)t}\tilde D(e^{-2\pi i(s+t)})dt.\label{integralFthetaDef}
\end{align}
for any $\theta\in[0;0,5]$ et every $s\in[-0,5;0,5]$,
}
{\color{black}

\subsection{\label{boundAAMultiatomic}Multi-atomic measures}

\begin{lemma}\label{lemmaBoundPmultiAtomic}
  Let $\lambda\in[0,9;1[$ and let $p$ denote a polynomial with $||p||_\infty\leq 1$ and
  \begin{align}
    \mathcal{M}(p)&=\lambda p\label{eigenPolyReminder}
  \end{align}
  There exists a universal constant $C_1$, such that, as soon as $\Delta \gtrsim \frac{\log^2(s)}{n}$, any eigenpolynomial $p(e^{\pi i \theta})$ of the operator $\mathcal{M}$ with $\lambda\in \lambda\in[0,9;1[$ obeys 
  \begin{align}
    \forall \theta\in\mathbb{R},\quad
    |p(e^{2\pi i\theta})| &\leq \min\left(1,\sum_{\alpha \in S}\frac{C_1}{1+n |\theta - \alpha|) }\right).\label{boundPolynomnialMultiSpikes}
 \end{align}
\end{lemma}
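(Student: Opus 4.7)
The plan is to reduce the multi-atomic case to a superposition of the single-atom calculation already performed in Section~\ref{boundAAoneatomic}. The key structural observation is that the orthogonal projector $\mathcal{P}_U^\perp = I - U(U^*U)^{-1} U^*$ can, under the separation hypothesis, be approximated by the rank-$|S|$ sum of individual one-atom projectors. Since the entries of $U^*U$ are given by $\psi(\tau_j)^*\psi(\tau_k) = (n+1)D_n(\tau_j - \tau_k)$ and decay like $1/(n|\tau_j - \tau_k|)$, the condition $\Delta \gtrsim \log^2(|S|)/n$ ensures $U^*U$ is a small perturbation of $(n+1)I_{|S|}$ (a Gershgorin-type argument on the Dirichlet Gram matrix). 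We may thus write $\mathcal{P}_U^\perp = I - \frac{1}{n+1}\sum_{\tau \in S}\psi(\tau)\psi(\tau)^* + E$, where $E$ collects lower-order coupling contributions.

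Next, I would expand $\mathcal{A}\tilde{\mathcal{A}}^*(p)(e^{2\pi i\theta}) = \psi(\theta)^*\mathcal{P}_U^\perp\, \tilde{T}^*(p)\, \mathcal{P}_U^\perp\psi(\theta)$ using this decomposition. The diagonal (one-atom) contributions reproduce, for each $\tau \in S$, the structure of~\eqref{decompositionOfMasPolynomial} centered at $\tau$:
\begin{equation*}
\overline{D_n(\theta - \tau)}\,\psi(\tau)^*\tilde{T}^*(p)\psi(\theta) + D_n(\theta - \tau)\,\psi(\theta)^*\tilde{T}^*(p)\psi(\tau) - |D_n(\theta - \tau)|^2 p(e^{2\pi i\tau}).
\end{equation*}
Each of these is controlled exactly as in~\eqref{eq:dev_int_p+}--\eqref{eq:dev_int_p-} via the integral representation $F_\theta$ introduced in~\eqref{integralFthetaDef}. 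Using $\|p\|_\infty\leq 1$ to bound the integrals uniformly, and the standard Dirichlet-kernel decay $|D_n(\theta - \tau)|\lesssim 1/(1+n|\theta - \tau|)$, each atom contributes at most $\tilde{C}/(1+n|\theta - \tau|)$ to $|\mathcal{M}(p)(e^{2\pi i\theta})|$.

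Having controlled $|\mathcal{M}(p)(e^{2\pi i\theta})|$ by $\sum_{\tau}\tilde{C}/(1+n|\theta-\tau|)$, I would exploit the eigenvalue relation~\eqref{eigenPolyReminder} to invert: $p(e^{2\pi i\theta}) = \lambda^{-1}\mathcal{M}(p)(e^{2\pi i\theta})$, and with $\lambda \geq 0.9$ this gives the quantitative bound with $C_1 = \tilde{C}/0.9$. The minimum with $1$ in~\eqref{boundPolynomnialMultiSpikes} is automatic from the a priori hypothesis $\|p\|_\infty\leq 1$.

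The main obstacle is handling the cross-terms, both those arising from the off-diagonal part of $(U^*U)^{-1}$ (encoded in $E$) and the interactions $\psi(\tau_j)\psi(\tau_j)^*\tilde{T}^*(p)\psi(\tau_k)\psi(\tau_k)^*$ for $j \neq k$. Each such term is weighted by a factor $D_n(\tau_j-\tau_k) \lesssim 1/(n\Delta)$ together with the two Dirichlet factors $D_n(\theta - \tau_j)$ and $D_n(\theta - \tau_k)$. Summing over pairs yields a total coupling contribution of order $|S|^2/(n\Delta)$ times $\max_\tau 1/(1+n|\theta-\tau|)$, and the choice $\Delta \gtrsim \log^2(|S|)/n$ is dictated precisely by requiring this coupling to be absorbable into the constant $C_1$ uniformly in $\theta$. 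A secondary technical point is ensuring that the integrals arising from~\eqref{eq:dev_int_p+} away from the atoms enjoy the same $1/(1+n|\theta - \tau|)$ decay (rather than a worse $\log n$ factor), which follows from integration by parts in $F_\theta$ using the $(n+1)\theta$ oscillation.
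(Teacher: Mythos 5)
Your plan captures the correct \emph{structure}: expand $\mathcal{P}_U$ via a Neumann series (the paper does exactly this in~\eqref{NeumannSeriesMultispikePart}--\eqref{boundDeviationIminusUUstarMultiatomicOp}), split $\mathcal{M}[p]$ into one-atom blocks plus cross-terms, and absorb the cross-terms into the constant using $\Delta \gtrsim \lambda_c\log|S|$. That matches the paper. However, there is a genuine gap in the central quantitative step: you claim that \emph{``Using $\|p\|_\infty\leq 1$ to bound the integrals uniformly, \ldots each atom contributes at most $\tilde C/(1+n|\theta-\tau|)$''}. This does not work. The inner integral $F_\theta(s)$ controlled by Lemmas~\ref{boundInteriorIntegralFSthetaReal} and~\ref{boundInteriorIntegralFSthetaImaginary} has a $1/s$-type singularity near the origin (and the imaginary part carries $\frac14\log\frac{-s}{\theta-s}$). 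If the only information available on $p$ is $\|p\|_\infty\leq 1$, the outer integration $\int_0^1 |p(e^{2\pi i s})|\,|F_\theta(s)|\,ds$ picks up a $\log n$ factor from the region $|s|\sim 1/n$ to $|s|\sim 1$, not an $n$-independent constant. In other words, the bound you need on $K_p(\tau,\theta)$ already requires the decay of $p$ that the lemma is trying to prove.

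The paper closes this circle with a bootstrap that your proposal omits. One takes $C_1$ to be the \emph{smallest} constant for which the claimed envelope $|p(e^{2\pi i\theta})|\leq\min(1,\sum_\alpha C_1/(1+n|\theta-\alpha|))$ holds (such a finite $C_1$ exists a priori by compactness, but might depend on $n$ and $p$). Feeding that envelope into Lemma~\ref{lemma:boundK} gives $|K_p(\tau,\theta)|\leq M_1'''+M_2\log C_1$, hence $|\mathcal{M}[p](e^{2\pi i\theta})|\leq (3+3\varepsilon)\sum_\tau (M_1'''+M_2\log C_1)/(1+n|\theta-\tau|)$. The eigen-equation $|\lambda||p|=|\mathcal{M}[p]|$ with $|\lambda|\geq 0.9$ and the minimality of $C_1$ force the self-consistency inequality $C_1\leq \frac{3+3\varepsilon}{|\lambda|}(M_1'''+M_2\log C_1)$, which (via the Lambert $W$ function) yields a \emph{universal} upper bound $C_1\leq 2500$. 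Replacing ``bound by $\|p\|_\infty$'' with this fixed-point argument is what makes the constant independent of $n$ and of the particular eigenpolynomial, and it is essential --- without it you only obtain $C_1 = O(\log n)$, which is not what the lemma asserts.
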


{\color{black}

We start by showing that the operator $\mathcal{M} = \mathcal{A}\tilde{\mathcal{A}}^* - I$ resulting from a multi-atomic measure is well approximated by the sum of each of the one atomic operators $\mathcal{M}_s$, provided that the separation distance between the atoms is sufficent (i.e proportionnal to $\lambda_c |T|$ where $|T|$ denotes the number of atoms.)
\begin{align}
\mathcal{M}[p](e^{2\pi i \theta}) &= (I - \mathcal{A}\tilde{\mathcal{A}}^*)[p](e^{i\theta})\\
& =  \psi(\theta)^*\tilde{T}^*(p)\psi(\theta)- \psi(\theta)^* \mathcal{P}_U^\perp \tilde{T}(p) \mathcal{P}_U^\perp \psi(\theta)\\
& = -\psi(\theta)^*\mathcal{P}_U\tilde{T}[p]\psi(\theta) - \psi(\theta)^*\tilde{T}[p]\mathcal{P}_U\psi(\theta) + \psi(\theta)^* \mathcal{P}_U\tilde{T}[p] \mathcal{P}_U\psi(\theta)\label{threeContributionsMultiSpikes}\\
\mathcal{M}[p](e^{2\pi i \theta})  &= Q_1(\theta) + Q_2(\theta) + Q_3(\theta)
\end{align}
where we have introduced the three polynomials $Q_1(\theta)  =  -\psi(\theta)^*\mathcal{P}_U \tilde{T}[p]\psi(\theta)$, $Q_2(\theta) = -\psi(\theta)^*\tilde{T}[p]\mathcal{P}_U\psi(\theta)$ and $Q_3(\theta) = \psi(\theta)^* \mathcal{P}_U\tilde{T}[p]\mathcal{P}_U \psi(\theta)$. The Hermitian matrix $\frac{1}{(n+1)}U^*U$ is invertible if $\|I - \frac{1}{2n+1}U^*U\|_\infty<1$, where $\|A\|_\infty = \max_{\|x\|_\infty\leq 1} \|Ax\|_\infty = \max_i \sum_{j}|a_{ij}|$. In this case the Neumann series converges and we can write 
\begin{align}
(U^*U)^{-1}& = \frac{1}{(2n+1)}I + \frac{1}{(2n+1)}\sum_{k=1}^\infty (I- \frac{1}{(2n+1)}U^*U)^k\label{NeumannSeriesMultispikePart}
\end{align}
Assuming that the atoms are separated by a distance of at least $\Delta$, we can write 
\begin{align}
\left\|I - \frac{1}{(2n+1)}U^*U\right\|_\infty &= \sup_{\tau\in  S} \sum_{\tau'\in S} \frac{1}{(2n+1)} |\langle \psi(\tau), \psi(\tau')\rangle|\\
&\leq \sup_{\tau \in S} \sum_{\tau' \in S} \left|\sum_{k=-n/2}^{n/2} \frac{1}{(n+1)} e^{2\pi i k(\tau_s - \tau_{s'})}\right|\\
&=  \sup_{\tau \in S} \sum_{\tau'\in S} \left|\frac{1}{(2n+1)}  \frac{1-e^{2\pi i (\tau - \tau')(2n+1)} }{1-e^{2\pi i (\tau - \tau')}}\right|\\
&\leq \sup_{\tau \in S}\frac{1}{(2n+1)} \sum_{\tau'\in S} \frac{1}{2 |\tau - \tau'|} \leq \frac{2\log(S)}{(2n+1)\Delta}\label{boundDeviationIminusUUstarMultiatomicOp}
\end{align}
The bound~\eqref{boundDeviationIminusUUstarMultiatomicOp} can be made sufficiently small as soon as $\Delta \geq 2\lambda_c\log(S)$. In particular, note that this bound implies $\sum_{k\geq 1} \|I - \frac{1}{2n+1}U^*U\|_\infty^k\leq \frac{2\log(S)}{(2n+1)\Delta} \frac{1}{1 - \frac{2\log(S)}{n\Delta}}$ which for $\Delta$ sufficiently larger than $\lambda_c\log(S)$ gives $\sum_{k\geq 1} \|I - \frac{1}{2n+1}U^*U\|_\infty^k\leq \varepsilon$. We now use $D_{n}(\theta-\theta_\ell)$ to denote the normalized Dirichlet kernel centered at the position $\theta = \theta_\ell$. Substituting~\eqref{NeumannSeriesMultispikePart} in the expression of $\mathcal{P}_U$, then back in~\eqref{threeContributionsMultiSpikes}. we get 
\begin{align}
\left|\mathcal{M}[p](e^{2\pi i \theta})\right| & \leq  \left|\psi(\theta)^*\frac{1}{2n+1}UU^*\tilde{T}[p]\psi(\theta) -\sum_{k=1}^\infty \psi(\theta)^*\frac{1}{2n+1}U(I- \frac{1}{2n+1}U^*U)^kU^*\tilde{T}[p]\psi(\theta)\right|\label{multiSpikeTerm001}\\
& +\left|\psi(\theta)^*\tilde{T}[p] \frac{1}{2n+1}UU^*\psi(\theta) -\sum_{k=1}^\infty \psi(\theta)^*\tilde{T}[p]\frac{1}{2n+1}U(I- \frac{1}{2n+1}U^*U)^kU^*\psi(\theta)\right|\label{multiSpikeTerm002}\\
& +\left| \psi(\theta)^*\frac{1}{2n+1}UU^*\tilde{T}[p]\frac{1}{2n+1}UU^*\psi(\theta)\right|\label{multiSpikeTerm003} \\
&+\left| \psi(\theta)^*\frac{1}{2n+1}UU^*\tilde{T}[p]U\sum_{k=1}^\infty (I - \frac{1}{2n+1}U^*U)^kU^* \psi(\theta)\right|\label{multiSpikeTerm004}\\
& + \left|\psi(\theta)^*\frac{1}{2n+1}U\sum_{k=1}^\infty(I - \frac{1}{2n+1}U^*U)^kU^*\tilde{T}[p]UU^*\psi(\theta)\right|\label{multiSpikeTerm005}\\
& + \left|\psi(\theta)^*\frac{1}{2n+1}U\sum_{k=1}^\infty (I-\frac{1}{2n+1}U^*U)^k U^* \tilde{T}[p] \sum_{k=1}^\infty (I - \frac{1}{2n+1}U^*U)^k \psi(\theta)\right|\label{multiSpikeTerm006}. 
\end{align}
From this, we can control the moduli of each of the terms above as 
\begin{align}
\eqref{multiSpikeTerm001}& \leq \sum_{\tau\in S}\left|D_n(\theta - \tau)\right| \left|u_{\tau}^*\tilde{T}[p]\psi(\theta)\right| +\sum_{k=1}^\infty \sum_{\tau\in S}\sup_{\tau'\in S} \left|D_n(\theta - \tau)\right| \left\|I-\frac{1}{2n+1}U^*U\right\|_\infty^k\left|u_{\tau'}^*\tilde{T}[p]\psi(\theta)\right|\\
&\leq \sum_{\tau\in S}\left|D_n(\theta - \tau)\right| \sup_{\tau'}\left|u_{\tau'}^*\tilde{T}[p]\psi(\theta)\right|(1+\varepsilon)\label{boundOperatorLeftRight1}\\ 
\eqref{multiSpikeTerm002}&\leq  \sum_{\tau\in S}\left|\psi(\theta)^*\tilde{T}[p] u_{\tau}\right|\left|D_{n}(\theta - \tau)\right| + \sum_{k=1}^\infty\sum_{\tau\in S}\sup_{\tau'\in S} \left|D_n(\theta - \tau)\right|\left|\psi(\theta)^*\tilde{T}[p] u_{\tau'}\right|\left\|I-\frac{1}{2n+1}U^*U\right\|^k_\infty\\
&\leq  \sum_{\tau\in S}\sup_{\tau'\in S}\left|\psi(\theta)^*\tilde{T}[p] u_{\tau'}\right|\left|D_{n}(\theta - \tau)\right| (1+\varepsilon).\label{boundOperatorLeftRight2}
\end{align}
as well as 
\begin{align}
\eqref{multiSpikeTerm003}&\leq \sum_{\tau\in S}\sum_{\tau'\in S} \left|D_n(\theta - \tau)u_\tau^*\tilde{T}[p]u_{\tau'}D_n(\theta -\tau')\right|\label{OperatorMAllSpikesTermaTermtmp1}\\
&\leq \sum_{\tau\in S}\sum_{\tau'\in S} \left|D_n(\theta - \tau)\right|\left|D_n(\theta - \tau')\right| \left|u_\tau^* \tilde{T}[p] u_{\tau'}\right|\\
&\leq \sum_{\tau\in S}\left|D_n(\theta - \tau)\right|\left|u_\tau^* \tilde{T}[p] u_{\tau'}\right|\label{boundDoubleTerm1}\\
\eqref{multiSpikeTerm004}&\leq \left|\sum_{\tau\in S}\sum_{\tau'\in S}\sum_{\tau''\in S}D_n(\theta - \tau)u_\tau^*\tilde{T}[p] u_{\tau'}^* \sum_{k=1}^\infty (I - U^*U)^k[\tau', \tau''] D_n(\theta - \tau'')\right|\\
&\leq \sum_{\tau\in S}\sum_{\tau''\in S}\sum_{k=1}^\infty\sup_{\tau'} \left|D_n(\theta - \tau)\right| \left|u_\tau^*\tilde{T}[p]u_{\tau'}\right| \left|D_n(\theta - \tau'')\right|\left\|I- U^*U\right\|^k_\infty. \\
&\leq \sum_{\tau\in S}\sum_{k=1}^\infty\sup_{\tau'} \left|D_n(\theta - \tau)\right| \left|u_\tau^*\tilde{T}[p]u_{\tau'}\right| \left\|I- U^*U\right\|^k_\infty. \label{boundDoubleTerm2}\\
\eqref{multiSpikeTerm005}&\leq \left|\sum_{\tau\in S}\sum_{\tau''\in S}\sum_{\tau'\in S} D_{n}(\theta - \tau) \left(\sum_{k=1}^\infty \left(I - U^*U\right)^k
[\tau, \tau']\right)u_{\tau'}^*\tilde{T}[p] u_{\tau''} D_n(\theta - \tau'')\right|\\
&\leq \sum_{k=1}^\infty\sum_{\tau''\in S}\sum_{\tau\in S} \left|D_n(\theta - \tau)\right| \sup_{\tau'\in S} \left|u_{\tau'}^*\tilde{T}[p]u_{\tau''}\right|\left|D_n(\theta - \tau'')\right| \left\|I - U^*U\right\|_\infty^k\\
&\leq \sum_{k=1}^\infty\sum_{\tau''\in S} \sup_{\tau'\in S} \left|u_{\tau'}^*\tilde{T}[p]u_{\tau''}\right|\left|D_n(\theta - \tau'')\right| \left\|I - U^*U\right\|_\infty^k\label{boundDoubleTerm3}\\
\eqref{multiSpikeTerm006} &\leq \left|\psi(\theta)^*U\sum_{k'=1}^\infty (I-U^*U)^{k'} U^* \tilde{T}[p] U\sum_{k=1}^\infty (I-U^*U)^kU^* \psi(\theta)\right|\\
&\leq\sum_{\tau,\tau'\in S} \sum_{k=1}^\infty\sum_{k'=1}^\infty |D_n(\theta - \tau)||D_n(\theta - \tau')| \left\|I-U^*U\right\|_\infty^k \sup_{\tau, \tau'}|u_\tau^*\tilde{T}[p]u_{\tau'}|\\
&\leq \sum_{\tau, \tau'\in S}\sum_{k=1}^\infty\sum_{k'=1}^\infty |D_n(\theta - \tau)||D_n(\theta - \tau')| \left\|I-U^*U\right\|_\infty^k\left\|I-U^*U\right\|_\infty^{k'} \sup_{\tau, \tau'}|u_\tau^*\tilde{T}[p]u_{\tau'}|\\
&\leq \sum_{\tau\in S} |D_n(\theta - \tau)|\left(\sum_{k\geq 1} \left\|I-U^*U\right\|_\infty^k\right)^2 \sup_{\tau'', \tau'''}|u_{\tau''}^*\tilde{T}[p]u_{\tau'''}|\label{OperatorMAllSpikesTermaTermtmpLast}
\end{align}
In the lines above, we use $\left(I - U^*U\right)^k[\tau, \tau']$ to denote the $(\tau,\tau')$ entry of the $k^{th}$ power of $\left(I - U^*U\right)$.  Adding~\eqref{boundDoubleTerm1},~\eqref{boundDoubleTerm2} and~\eqref{boundDoubleTerm3} and~\eqref{OperatorMAllSpikesTermaTermtmpLast} together, we get 
\begin{align}
\eqref{multiSpikeTerm003} + \eqref{multiSpikeTerm004} + \eqref{multiSpikeTerm005} + \eqref{multiSpikeTerm006} &\leq  \sum_{\tau\in S}\left|D_n(\theta - \tau)\right|\sup_{\tau', \tau''}\left|u_{\tau'}^* \tilde{T}[p] u_{\tau''}\right|(1+3\varepsilon)
\end{align}
Adding~\eqref{boundOperatorLeftRight1} and~\eqref{boundOperatorLeftRight2} gives the final bound on the modulus $|\mathcal{M}[p](e^{2\pi i \theta})|$, 
\begin{align}
|\mathcal{M}[p](e^{2\pi i \theta})|&\leq \sum_{\tau\in S}\left|D_n(\theta - \tau)\right| \sup_{\tau'}\left|u_{\tau'}^*\tilde{T}[p]\psi(\theta)\right|(1+\varepsilon)\\
&+ \sum_{\tau\in S}\sup_{\tau'\in S}\left|\psi(\theta)^*\tilde{T}[p] u_{\tau'}\right|\left|D_{n}(\theta - \tau)\right| (1+\varepsilon)\\
&+ \sum_{\tau\in S}\left|D_n(\theta - \tau)\right|\sup_{\tau', \tau''}\left|u_{\tau'}^* \tilde{T}[p] u_{\tau''}\right|(1+3\varepsilon)\label{totalBoundOnModulusofMpTighterWithEpsilon}
\end{align}

We introduce the notation $K_p(\tau, \theta)$ to denote the term $K_p(\tau, \theta) = \psi(\theta)\tilde{T}[p]\psi(\tau)$. In particular, note that we have 
\begin{align}
K_p(\tau, \theta) &= \sum_{|s|\leq n}\sum_{(k-\ell) = s} \frac{p_{(k-\ell)}}{n+1 - |k-\ell|}e^{2\pi i k \tau}e^{-2\pi i \ell \theta}\\
& = \sum_{|s|\leq n} \frac{p_s}{n+1-|s|} \sum_{k-\ell = s} e^{2\pi i k \tau}e^{-2\pi i \ell \theta}\\
& = \sum_{|s|\leq n} \frac{p_s}{n+1-|s|} \sum_{(\ell-k) = s} e^{2\pi i \ell \tau}e^{-2\pi i k \theta}\\
& = \sum_{|s|\leq n} \frac{p_s}{n+1-|s|} \sum_{(\ell - k) = -s} e^{-2\pi i \ell \tau} e^{2\pi i k \theta}\\
& = \sum_{|s|\leq n} \frac{p_{-s}}{n+1-|s|} \sum_{(\ell-k)= s} e^{-2\pi i \ell \tau}e^{2\pi i k \theta} = K_{\tilde{p}}(\theta, \tau).  
\end{align}
In the last line, we introduced the notation $\tilde{p}(\theta)$ to denote the reflection of $p(\theta)$ around the origin. I.e. $\tilde{p}(\theta) = \sum_{|s|\leq n} p_{-s}e^{2\pi i s \theta} = p(-\theta)$. Developing $K_p(\tau,\theta)$, we get  

\begin{align}
K_p(\tau, \theta) &= \sum_{s = 1}^n \frac{p_s}{n+1-s} \sum_{-\ell = -n/2 + s}^{n/2} e^{2\pi i k \tau} e^{-2\pi i \ell \theta}+ \sum_{s = -n}^{-1} \frac{p_s}{n+1+s} \sum_{k=-n/2}^{n/2 + s} e^{2\pi i k \tau} e^{-2\pi i \ell \theta}   + \frac{p_0}{n+1}\sum_{k=-n/2}^{n/2} e^{2\pi i k (\theta -\tau)}\\
& = \sum_{s=1}^n \frac{p_s}{n+1-s}\sum_{-\ell = -n/2+s}^{n/2} e^{2\pi i (s - (-\ell))\tau} e^{-2\pi i \ell \theta}+ \sum_{s=-n}^{-1} \frac{p_s}{n+1+s} \sum_{k=-n/2}^{n/2+s} e^{2\pi i k \tau} e^{-2\pi i (k-s) \theta}\\
& + \frac{p_0}{n+1}\sum_{k=-n/2}^{n/2} e^{2\pi i k (\theta -\tau)}\\
& = \sum_{s=1}^n \frac{p_s}{n+1-s} \sum_{\ell' = -n/2+s}^{n/2} e^{2\pi i (s-\ell')\tau} e^{2\pi i \ell'\theta} + \sum_{s=-n}^{-1} \frac{p_s}{n+1+s} \sum_{k = -n/2}^{n/2+s} e^{2\pi i k \tau} e^{-2\pi i (k-s)\theta}\\
&+ \frac{p_0}{n+1}\sum_{k=-n/2}^{n/2} e^{2\pi i k (\theta- \tau)}\\
& = \sum_{s=1}^n \frac{p_s}{n+1-s} \sum_{\ell'=s - n/2}^{n/2} e^{2\pi i s\tau} e^{2\pi i \ell' (\theta - \tau)}+ \sum_{s=-n}^{-1} \frac{p_s}{n+1+s}\sum_{k=-n/2}^{n/2+s} e^{-2\pi i k (\theta - \tau)}e^{2\pi i s \theta}\\
&+ \frac{p_0}{n+1}\sum_{k=-n/2}^{n/2} e^{2\pi i k (\theta- \tau)}\\
& = \sum_{s=1}^n \frac{p_s}{n+1-s} e^{-(n/2) 2\pi i (\theta - \tau) } \sum_{\ell' = s}^{n} e^{2\pi i s\tau}e^{2\pi i \ell' (\theta - \tau)}+ \sum_{s=-n}^{-1}\frac{p_s}{n+1+s}\sum_{k= -n/2}^{n/2+s} e^{2\pi i k \tau} e^{-2\pi i (k-s) \theta} \\
& + \frac{p_0}{n+1}\sum_{k=-n/2}^{n/2} e^{2\pi i k (\theta - \tau)}\\
& = \sum_{s=1}^n \frac{p_s}{n+1-s} e^{-(n/2)2\pi i (\theta - \tau)}\sum_{\ell'=s}^{n} e^{2\pi i s \tau} e^{2\pi i \ell' (\theta - \tau)}+ \sum_{s=-n}^{-1} \frac{p_s}{n+1+s} \sum_{k=-n/2}^{n/2+s} e^{-2\pi i (k-s)\tau} e^{2\pi i k \theta}\\
& + \frac{p_0}{n+1}\sum_{k=-n/2}^{n/2} e^{2\pi i k (\theta - \tau)}\\
& = \sum_{s=1}^n \frac{p_s}{n+1-s} e^{2\pi i s \tau} e^{-(n/2)2\pi i (\theta - \tau)} e^{2\pi i s(\theta - \tau)} \left[\frac{1 - e^{2\pi i (n+1-s)(\theta - \tau)}}{1 - e^{2\pi i (\theta - \tau)}}\right]\\
&+ \sum_{s = -n}^{-1} \frac{p_s}{n+1+s} e^{2\pi i s \tau} e^{-(n/2)2\pi i (\theta - \tau)} \left[\frac{1 - e^{2\pi i (n+1+s)(\theta - \tau)}}{1- e^{2\pi i (\theta - \tau)}}\right]\\
&+ \frac{p_0}{n+1}\sum_{k=-n/2}^{n/2}e^{2\pi i k (\theta - \tau)}\\
& = \sum_{s=0}^n \frac{p_s}{n+1-s} e^{2\pi i s\tau} \left(e^{-(n/2)2\pi i (\theta - \tau)}e^{2\pi i (n+1)(\theta - \tau)}\right)e^{-2\pi i (n+1-s)(\theta - \tau)} \left[\frac{1 - e^{2\pi i (n+1-s)(\theta - \tau)}}{1 - e^{2\pi i (\theta - \tau)}}\right]\label{finalTMPboundMultiSpikeOp1}\\
&+ \sum_{s=-n}^{0} \frac{p_s}{n+1+s} e^{2\pi i s\tau} \left(e^{-(n/2)(2\pi i)(\theta - \tau)}\right) \left[\frac{1 - e^{2\pi i (n+1+s)(\theta - \tau)}}{1-e^{2\pi i (\theta - \tau)}}\right]\\
&- \frac{p_0}{n+1}\sum_{k=-n/2}^{n/2} e^{2\pi i k (\theta - \tau)}. \label{finalTMPboundMultiSpikeOp3}
\end{align}
we now introduce the notations $p_{+,\tau}(e^{2\pi i \theta})$ and $p_{-,\tau}(e^{2\pi i \theta})$ to denote the polynomials 
\begin{align}
p_{+,\tau}(e^{2\pi i \theta})& =e^{2\pi i (n+1) \theta} \sum_{s=0}^n p_s e^{2\pi i s \tau} e^{-2\pi i s \theta}\\
p_{-,\tau}(e^{2\pi i \theta})& = e^{2\pi i (n+1) \theta}\sum_{s=-n}^{0} p_se^{2\pi i s\tau}  e^{2\pi i s \theta}
\end{align}
Now substituting those expressions in~\eqref{finalTMPboundMultiSpikeOp1} to~\eqref{finalTMPboundMultiSpikeOp3}, we get
\begin{align}
K_p(\tau, \theta) &= \frac{e^{-(n/2)2\pi i (\theta - \tau)}e^{2\pi i (n+1)(\theta - \tau)}}{1-e^{2\pi i (\theta - \tau)}} \int_{0}^{-(\theta - \tau)} p_{\tau, +}(e^{2\pi i t}) \; dt + \frac{e^{-(n/2)2\pi i (\theta - \tau)}}{e^{2\pi i (\theta - \tau)}-1} \int_{0}^{\theta-\tau} p_{\tau, -}(e^{2\pi i t})\; dt \\
&- \frac{p_0}{n+1}\sum_{k=-n/2}^{n/2} e^{2\pi i (\theta-\tau)}\end{align}
Note that if we again make use of the notation $\tilde{p}(e^{2\pi i \theta}) = p(-e^{2\pi i \theta})$, we can write the polynomials $p_{+,\tau}(e^{2\pi i \theta})$ and $p_{-,\tau}(e^{2\pi i \theta})$ as
\begin{align}
p_{+, \tau}(e^{2\pi i t}) = e^{2\pi i (n+1)t} p(e^{2\pi i (t+\tau)})\star \tilde{D}(e^{-2\pi i t})\\
 p_{-, \tau}(e^{2\pi i t}) = e^{2\pi i (n+1)t} \tilde{p}(e^{2\pi i (t+\tau)})\star \tilde{D}(e^{-2\pi i t})
\end{align}
From this, and using $D_{n}(\theta)$ to denote the centered, normalized Dirichlet kernel $\frac{1}{n+1}\sum_{k=-n/2}^{n/2}e^{2\pi i \theta}$, we get 
\begin{align}
K_p(\tau, \theta) & =  \frac{e^{-(n/2)2\pi i (\theta - \tau)}e^{2\pi i (n+1)(\theta - \tau)}}{1-e^{2\pi i (\theta - \tau)}} \int_{0}^{1} p_{\tau}(e^{2\pi i s}) \left(\int_{0}^{-(\theta - \tau)} e^{2\pi i (n+1) t} \tilde{D}(e^{-2\pi i k (s+t)})\; dt\right)\; ds \\
&+ \frac{e^{-(n/2)2\pi i (\theta - \tau)}}{e^{2\pi i (\theta - \tau)}-1}\int_{0}^1 p_{\tau}(e^{-2\pi i s}) \left(\int_{0}^{(\theta - \tau)}e^{2\pi i (n+1)t} \tilde{D}(e^{-2\pi i (s+t)}) \; dt\right)\; ds \\
&- \frac{p_0}{n+1}\sum_{k=-n/2}^{n/2} e^{2\pi i (\theta-\tau)}
\end{align}
In particular, the modulus of $|K_p(\tau, \theta)|$ can be bounded as 
\begin{align}
\left|K_p(\tau, \theta)\right| &\leq \frac{1}{|1-e^{2\pi i (\theta - \tau)}|}\left|\int_{0}^{1} p_{\tau}(e^{2\pi i s}) \int_{0}^{-(\theta - \tau)} e^{2\pi i (n+1) t} \tilde{D}(e^{-2\pi i k (s+t)})\; dt\right|\label{firstTermDecompositionModulusKp}\\
&+\frac{1}{|1-e^{2\pi i (\theta - \tau)}|}\left| \int_{0}^1 p_{\tau}(e^{-2\pi i s})\int_{0}^{(\theta - \tau)}e^{2\pi i (n+1)t} \tilde{D}(e^{-2\pi i (s+t)}) \; dt\right|\label{secondTermDecompositionModulusKp} \\
& + p_0\min\left(1, \frac{1}{(n+1)(\theta - \tau)}\right)\label{additionalContributionp0Dirichlet}
\end{align}
Note that for a polynomial of the form~\eqref{boundPolynomnialMultiSpikes}, we can always decompose the first two lines in~\eqref{firstTermDecompositionModulusKp} and~\eqref{additionalContributionp0Dirichlet} as 
\begin{align}
&\frac{1}{|1-e^{2\pi i (\theta - \tau)}|}\left|\int_{0}^{1} p_{\tau}(e^{2\pi i s}) \int_{0}^{-(\theta - \tau)} e^{2\pi i (n+1) t} \tilde{D}(e^{-2\pi i k (s+t)})\; dt\right|\label{SplittingAtomsdiscussionStartingPoint}\\
&\leq \sum_{\alpha \in S}\frac{1}{|1-e^{2\pi i (\theta - \tau)}|} \left|\int_{0}^{1} p_{\alpha, \tau}(e^{2\pi i s}) \int_{0}^{-(\theta - \tau)} e^{2\pi i (n+1) t} \tilde{D}(e^{-2\pi i k (s+t)})\; dt\right|
\end{align}
where $p_{\alpha}(e^{2\pi i \theta})$ is a polynomial bounded as 
\begin{align}
\left|p_\alpha(e^{2\pi i \theta})\right|&\leq \min\left(1, \frac{C_1}{1+n|\theta - \alpha|}\right) 
\end{align}
and consequently $p_{\alpha, \tau}(e^{2\pi i \theta}) = \sum_{|k|\leq n} p_{\alpha}[k] e^{2\pi i k \theta}e^{2\pi i k\tau}$ can be bounded as 
 \begin{align}
\left|p_\alpha(e^{2\pi i \theta})\right|&\leq \min\left(1, \frac{C_1}{1+n|\theta - (\alpha - \tau)|}\right) 
\end{align}
For~\eqref{secondTermDecompositionModulusKp}, a similar reasoning gives 
\begin{align}
&\frac{1}{|1-e^{2\pi i (\theta - \tau)}|}\left| \int_{0}^1 p_{\tau}(e^{-2\pi i s})\int_{0}^{(\theta - \tau)}e^{2\pi i (n+1)t} \tilde{D}(e^{-2\pi i (s+t)}) \; dt\right|\\
&\leq \sum_{\alpha \in S}\frac{1}{|1-e^{2\pi i (\theta - \tau)}|} \left|\int_{0}^{1} p_{\alpha, \tau}(e^{-2\pi i s}) \int_{0}^{-(\theta - \tau)} e^{2\pi i (n+1) t} \tilde{D}(e^{-2\pi i k (s+t)})\; dt\right|
\end{align}
Again, each of the polynomials $p_{\alpha}(e^{-2\pi i s})$ are bounded as 
\begin{align}
\left|p_{\alpha, \tau}(e^{2\pi i (-\theta)})\right| & \leq \min\left(1, \frac{C_1}{1+n|(-\theta) - \alpha + \tau|}\right) \\
&\leq \min\left(1, \frac{C_1}{1+n|\theta - (\tau -\alpha)|}\right)\label{SplittingAtomsdiscussionEndPoint}
\end{align}
Without loss of generality, we can let $\tau' = \tau - \alpha$ and derive bounds of the form $f(|\tau'|)$. We then apply the inverse transform to get the bounds of Lemma~\ref{lemma:boundK} below.

We will control the supremum, $\sup_{\theta} \left|K_p(\tau, \theta)\right| $, as a consequence, we can neglect the translation that appears in the integration bounds as well as on the prefactor, i.e.
\begin{align}
\left|K_p(\tau, \theta)\right| &\leq \sup_{\theta\in [0,1]} \frac{1}{|1-e^{2\pi i \theta}|}\left|\int_{0}^{1} p_{\tau}(e^{2\pi i s}) \int_{0}^{-\theta} e^{2\pi i (n+1) t} \tilde{D}(e^{-2\pi i k (s+t)})\; dt\right|\label{firstSupremum}\\
&+\sup_{\theta\in [0,1]}\frac{1}{|1-e^{2\pi i\theta}|}\left| \int_{0}^1 p_{\tau}(e^{-2\pi i s})\int_{0}^{\theta}e^{2\pi i (n+1)t} \tilde{D}(e^{-2\pi i (s+t)}) \; dt\right| \label{secondSupremumInDefintionKp}\\
& + p_0\min\left(1, \frac{1}{(n+1)\theta}\right)\label{thirdTermDefinitionKp}
\end{align}
Furthermore, note that we have 
\begin{align}
&\sup_{\theta\in [0,1]}\frac{1}{|1-e^{2\pi i \theta}|}\left|\int_{0}^{1} p_{\tau}(e^{2\pi i s}) \int_{0}^{-\theta} e^{2\pi i (n+1) t} \tilde{D}(e^{-2\pi i k (s+t)})\; dt\right| \\
&= \sup_{\theta\in [0,1]} \frac{1}{|1-e^{2\pi i -\theta}|}\left|\int_{0}^{1} p_{\tau}(e^{2\pi i s}) \int_{0}^{\theta} e^{2\pi i (n+1) t} \tilde{D}(e^{-2\pi i k (s+t)})\; dt\right|\\
& =  \sup_{\theta\in [0,1]} \frac{1}{|e^{2\pi i \theta}-1|}\left|\int_{0}^{1} p_{\tau}(e^{2\pi i s}) \int_{0}^{\theta} e^{2\pi i (n+1) t} \tilde{D}(e^{-2\pi i k (s+t)})\; dt\right|
\end{align}
Hence we can thus focus our attention on the first term in~\eqref{firstSupremum}. Lemma~\ref{lemma:boundK} which is proved in section~\ref{sectionProoflemma:boundK} controls the quantity 
\begin{align}
I(\theta; \tau) =  \frac{1}{|e^{2\pi i \theta}-1|}\left|\int_{0}^{1} p_{\tau}(e^{2\pi i s}) \int_{0}^{\theta} e^{2\pi i (n+1) t} \tilde{D}(e^{-2\pi i k (s+t)})\; dt\right|
\end{align}
For a polynomial $p_{\tau}(e^{2\pi i s})$ obeying the bound
\begin{align}
\left|p(e^{2\pi i s})\right|\leq \min\left(1, \frac{C_1}{1+n|s-\alpha|}\right)
\end{align}
For an absolute constant $C_1\in \mathbb{R}$.

\begin{restatable}{lemma}{boundK}
\label{lemma:boundK}
For any integrable function $f\; :\; S^1 \rightarrow \mathbb{C}$, if there exists $\alpha \in \mathbb{R}$, $C_1\geq 1$ such that 
\begin{align}
\forall \theta\in \mathbb{R}, \quad f(e^{2\pi i \theta}) \leq \min(1, \frac{C_1}{1+n|\theta - \alpha|_{mod}}),\label{boundOnFLemmaIntegralTotal}
\end{align}
Then for any $\tau,\tau'$, 
\begin{flalign}
|K_f(\tau,\theta)|&\leq M_1 + M_2\log(C_1) && \text{\upshape if $|\tau - \alpha|_{mod} \leq \frac{C_1}{n}$}\label{constantBoundlemmaK2}\\
&\leq \frac{M_1'}{n|\tau - \alpha|} + \frac{M_2'C_1}{n|\tau - \alpha|} \log(\frac{n|\tau - \alpha|}{C_1}) + \frac{M_3'}{n|\tau - \alpha|}+ \frac{M_4' C_1}{n|\tau - \alpha|} \log^2(|\tau - \alpha|)&& \text{\upshape otherwise}
\label{decreasingBoundLemmaK3}. 
\end{flalign}
\end{restatable}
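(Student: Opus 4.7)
The proof starts from the integral representation
\[
I(\theta;\tau) = \frac{1}{|e^{2\pi i\theta}-1|}\,\left|\int_0^1 f(e^{2\pi is})\,F_\theta(s)\,ds\right|,
\]
with $F_\theta(s) = \int_0^\theta e^{2\pi i(n+1)t}\,\tilde D(e^{-2\pi i(s+t)})\,dt$ already introduced in~\eqref{integralFthetaDef}. The plan is to combine the hypothesized decay of $f$ near $\alpha$ with sharp bounds on the Dirichlet-type auxiliary $F_\theta$, and then to split into two cases according to the position of $\tau$ relative to $\alpha$. The first technical step is to establish a pointwise estimate on $F_\theta(s)$ of the form $|F_\theta(s)| \lesssim 1$ away from the singular locus $s \equiv 0,-\theta$ modulo~$1$, with logarithmic-in-$n$ growth as $s$ approaches this locus, together with a decay estimate $|F_\theta(s)| \lesssim (n|s|_{\mathrm{mod}})^{-1}$ once $|s|_{\mathrm{mod}} \gtrsim 1/n$. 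Both estimates should follow from the standard bound $|\tilde D(e^{-2\pi iu})| \lesssim \min(n+1,\,1/|u|_{\mathrm{mod}})$ combined with an integration by parts that takes advantage of the oscillation $e^{2\pi i(n+1)t}$.

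The second step is a dyadic splitting of the $s$-integration dictated by the decay hypothesis on $f$. On the plateau $|s-\alpha|_{\mathrm{mod}} \le C_1/n$, use $|f| \le 1$ and integrate $|F_\theta(s)|$ directly over an interval of length $O(C_1/n)$, which contributes a constant plus $\log C_1$ (the logarithm arising from the integrable singular behaviour of $F_\theta$ picked up in Step 1). On the tail $|s-\alpha|_{\mathrm{mod}} > C_1/n$, use the finer bound $|f(e^{2\pi is})| \le C_1/(n|s-\alpha|_{\mathrm{mod}})$ and partition into dyadic annuli $\{2^k C_1/n \le |s-\alpha|_{\mathrm{mod}} \le 2^{k+1}C_1/n\}$. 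Each annulus contributes at most a logarithmic factor after integrating $1/(n|s-\alpha|_{\mathrm{mod}})$ against the estimates on $F_\theta$ from Step 1, and summing over $k \le \log_2(n/C_1)$ produces the powers of $\log(n|\tau-\alpha|/C_1)$ appearing on the right-hand side of~\eqref{decreasingBoundLemmaK3}.

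The third step is the two-case analysis governed by the prefactor $1/|e^{2\pi i\theta}-1|$, after noting that (by the change of variable implicit in passing from $I(\theta;\tau)$ to $K_f(\tau,\theta)$) the effective prefactor controlling the sup over $\theta$ scales like $1/|\tau-\alpha|_{\mathrm{mod}}$ in the regime where $\tau$ is far from $\alpha$. When $|\tau-\alpha|_{\mathrm{mod}} \le C_1/n$, the supremum over $\theta$ is attained inside the plateau of $f$, the prefactor cannot provide any decay at a scale finer than $1/C_1$, and the estimates from Steps 1 and 2 collapse to the constant bound $M_1 + M_2\log C_1$ of~\eqref{constantBoundlemmaK2}. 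When $|\tau-\alpha|_{\mathrm{mod}} > C_1/n$, the prefactor contributes the overall $1/(n|\tau-\alpha|)$ decay, and multiplying against the log-corrected bounds from Step 2 yields the four-term estimate in~\eqref{decreasingBoundLemmaK3}; the $\log^2|\tau-\alpha|$ term comes from combining the $\log$ in the dyadic sum with the $\log$ already present in the bound on $F_\theta$. The main obstacle, I expect, lies in Step 1: making the estimate on $F_\theta$ sharp enough and with explicit absolute constants, since near the two singular points of the Dirichlet kernel the naive bound produces $\log n$ growth that must be cancelled by exploiting the oscillation $e^{2\pi i(n+1)t}$, and tracking this cancellation uniformly in $\theta$ is the most delicate part of the argument. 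Once $F_\theta$ is controlled at this level of precision, the remainder is essentially bookkeeping of logarithms.
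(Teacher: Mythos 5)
Your proposal takes a genuinely different route from the paper. Where you plan a dyadic decomposition of the $s$-integration according to the decay scale of $f$, the paper instead partitions the $(s,\theta)$-domain into roughly a dozen explicit regions (Figure~\ref{domainDecomposition1}) chosen according to where the peak of the Dirichlet kernel in $\tilde D(e^{-2\pi i(s+t)})$ sits relative to the integration window $t\in[-\theta,0]$, proves the tailored per-region bounds of Lemmas~\ref{boundInteriorIntegralFSthetaReal}--\ref{boundInteriorIntegralFSthetaImaginary} on $\mathrm{Re}\,F_\theta$ and $\mathrm{Im}\,F_\theta$, and then integrates each against the decay profile of $f$. The dyadic strategy is arguably cleaner; the paper's region-by-region bookkeeping is what lets it extract the explicit numerical constants ($M_1'''=152$, $M_2=76$) that are fed into the rest of the argument, which a dyadic sum would leave harder to pin down.

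That said, there are two genuine gaps in the plan as stated. First, the pointwise estimate in Step~1 is incomplete. One integration by parts on $F_\theta(s)$ against the oscillation $e^{2\pi i(n+1)t}$ produces \emph{two} boundary terms, of sizes roughly $1/(n|s|_{\mathrm{mod}})$ and $1/(n|s-\theta|_{\mathrm{mod}})$, plus a remainder; your decay estimate $|F_\theta(s)|\lesssim(n|s|_{\mathrm{mod}})^{-1}$ keeps only the first. The second is precisely what dominates when $s$ is close to $\theta$ (the paper's region $D_2^+$), and the dyadic annuli centered at $\alpha$ do not align with this second singularity, so it has to be tracked separately. Moreover $\mathrm{Im}\,F_\theta$ carries an additional cotangent contribution that integrates to a $\tfrac14\log(|s|/|s-\theta|)$ term (Lemma~\ref{boundInteriorIntegralFSthetaImaginary}); this is the source of the $\log^2$ factor in~\eqref{decreasingBoundLemmaK3}, so it must appear explicitly in Step~1 or the final exponent on the logarithm is wrong. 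Second, Step~3 does not explain how the prefactor $1/|e^{2\pi i\theta}-1|\sim 1/\theta$ is neutralized for small $\theta$ in the constant regime $|\tau-\alpha|_{\mathrm{mod}}\leq C_1/n$. The remark that ``the prefactor cannot provide any decay at a scale finer than $1/C_1$'' does not address the potential blow-up as $\theta\to 0$; what is actually required, and what the paper obtains by Taylor-expanding $\sin((2n+3)\pi(s+t))$ in the separate ``small $\theta$'' branches of Lemma~\ref{boundInteriorIntegralFSthetaReal}, is that the bounds on $F_\theta$ themselves scale linearly in $\theta$ as $\theta\to0$, so that prefactor and integral compensate each other. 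Without that refinement the constant bound~\eqref{constantBoundlemmaK2} does not follow from the estimates you propose.
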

As the bound~\eqref{boundOnFLemmaIntegralTotal} is the only element in~\eqref{decreasingBoundLemmaK3} that depends on $\tau$ and $\alpha$, without loss of generality we will assume $\tau = 0$. 

In particular, note that lemma~\eqref{lemma:boundK} together with the discussion~\eqref{SplittingAtomsdiscussionStartingPoint} to~\eqref{SplittingAtomsdiscussionEndPoint} imply that for any polynomial bounded as in~\eqref{boundPolynomnialMultiSpikes},
\begin{align}
\left|K_p(\tau, \theta)\right|&\leq M_1 + M_2\log(C_1) +2 \sum_{|\alpha - \tau|>\Delta} \frac{M_1'}{n|\tau-\alpha|} + \frac{M_2' C_1}{n|\tau - \alpha|}\log(\frac{n|\tau - \alpha|}{C_1})\\
& + \frac{M_3'}{n|\tau - \alpha|} \log(n|\tau - \alpha|) + \frac{M_4' C_1}{n|\tau - \alpha|}\log^2(|\tau - \alpha|) + \min(1, \frac{1}{|\theta - \tau|}). 
\end{align}

In particular, if we take $\Delta$ sufficiently large, this lemma implies 
\begin{align}
\left|K_p(\tau, \theta)\right|&\leq M_1 + M_2\log(C_1) + 8+ \min\left(1, \frac{1}{n|\theta - \tau|}\right) \leq (M_1+9) + M_2\log(C_1) \leq M_1''' + M_2\log(C_1). 
\end{align}

%
%
%
Substituting this into~\eqref{totalBoundOnModulusofMpTighterWithEpsilon} 
and using $\sum_{k\geq 1} \|I - \frac{1}{2n+1}U^*U\|_\infty^k \leq \frac{1}{1 - \frac{\log|S|}{(2n+1)\Delta}}\leq 2$ as soon as $\Delta \gtrsim \lambda_c \log|S|$, we get 
\begin{align}
\left|M[p](e^{2\pi i \theta})\right|& \leq (3+3\varepsilon)\sum_{\tau\in S} \left|D_n(\theta - \tau)\right| \left(M_1''' + M_2\log(C_1)\right)
%
\end{align}
as soon as $\Delta \geq 2\lambda_c \log|S|$. Now using $|p(e^{2\pi i \theta})| \leq \min(1, \sum_{\alpha\in S}\frac{C_1}{1+n|\theta - \alpha|})$, as well as $\left|D_n(\theta - \alpha) \right|\leq \frac{2}{1+n|\theta - \alpha|}$, we have 
\begin{align}
|\lambda||p(e^{2\pi i \theta})| = \left|M[p](e^{2\pi i \theta})\right|  \leq (3+3\varepsilon)\sum_{\alpha\in S}\frac{1}{1+n|\theta - \alpha|} \left(M_1'''+M_2\log(C_1)+3\right). 
\end{align}
which implies 
\begin{align}
|p(e^{2\pi i \theta})|\leq \frac{(3+3\varepsilon)}{|\lambda|}\sum_{\alpha\in S} \frac{1}{1+n|\theta - \alpha|} \left(M_1''' + M_2\log(C_1)\right). 
\end{align}
 and hence 
\begin{align}
p(e^{2\pi i \theta})\leq \min\left(1, \frac{(3+3\varepsilon)}{|\lambda|} \sum_{\alpha\in S} \frac{M_1'''+M_2\log(C_1)}{1+n|\theta - \alpha|}\right)
\end{align}
By assumption, since $C_1$ is the smallest constant such that $|p(e^{2\pi i \theta})|\leq \min\left(1, \sum_{\alpha\in S}\frac{C_1}{1+n|\theta - \alpha|}\right)$, we must then have 
\begin{align}
C_1 \leq \frac{(3+3\varepsilon)}{|\lambda|} \left(M_1''' + M_2\log(C_1)\right)
\end{align}

\begin{figure}
\centering
\input{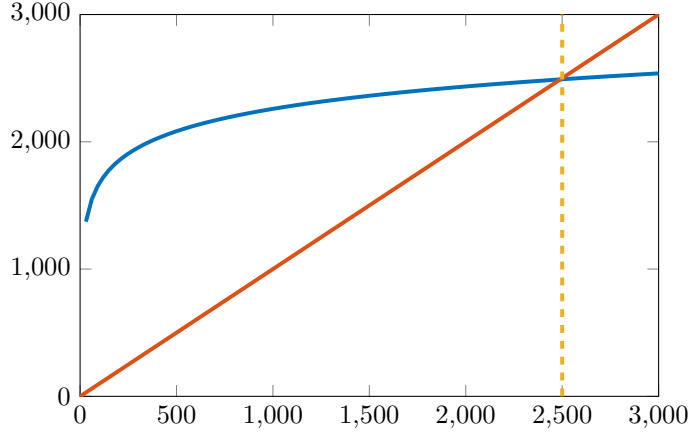}
\caption{$f_1(C_1) = C_1$ and $f_2(C_1) = \frac{(3+3\varepsilon)}{|\lambda|}(M_1''' + M_2\log(C_1))$ for $\varepsilon = .001$}
\end{figure}

Now we let $a_1\equiv 1$, $a_2\equiv -\frac{24M_2}{|\lambda|}$ and $a_3\equiv -\frac{24(M_1+3)}{|\lambda|}$, the solutions of $a_1x + a_2\log(x) + a_3$ can be found through the Lambert W function. Note that we have $r_1\equiv \frac{a_2}{a_1} = \frac{-24M_2}{|\lambda|}$, as well as 
\begin{align}
r_2 &\equiv -\frac{a_3}{a_2} = \left(\frac{(3+3\varepsilon)M_1}{|\lambda|} + \frac{72}{|\lambda|}\right)\frac{|\lambda|}{-(3+3\varepsilon)M_2} = -\left(\frac{M_1}{M_2} + \frac{3}{M_2}\right)
\end{align}
and finally, $r_3\equiv \frac{a_1}{a_2} = \frac{-|\lambda|}{(3+3\varepsilon)M_2}$. From this, we get the solutions
\begin{align}
x_0 &= r_1W_0\left(e^{r2}r_3\right)\\
x_{-1}& = r_1W_{-1}\left(e^{r2}r_3\right).
\end{align}
$W_0$ and $W_{-1}$ are used to respectively denote the $W(x)\geq -1$ and $W(x)\leq -1$ branches of the Lambert function. 

To conclude on the value of $C_1$, we substitute in the expression above the values that are derived for $M_1''''$ and $M_2$ from the proof of lemma~\ref{lemma:boundK} and in particular~\eqref{boundNearKp} (see appendix~\ref{sectionProoflemma:boundK}), $M_1''' = 152$, $M_2 = 76$, as well as our assumed value for $\lambda = 0.9$.  we get 
\begin{align}
r_1 &= \frac{-(3+3\varepsilon)\times 76}{0.9} = -2026\\
r_2& = -\left(\frac{152}{76} + \frac{3}{76}\right) = 2.0395\\
r_3 &= - \frac{-0.9}{(3+3\varepsilon)\times 152} = -2.4671e-04
\end{align}
which gives 
$x_0 = 0.1354$ and $x_{-1} = 2496.7$. In particular, we thus get the condition $C_1 \leq 2500$.

\subsection{\label{sec:truncation}Truncated operator}

\begin{restatable}{lemma}{lemmaTroncatureMultiSpikesKey}
\label{lemma:lemmaTroncatureMultiSpikes}
 Let $\lambda\in[0,9;1[$ and let $p$ to denote a polynomial with $||p||_\infty=1$ and such that
  \begin{equation*}
    \mathcal{M}(p)=\lambda p.
  \end{equation*}
  Let $\theta_0$ to denote the point where $|p(e^{2\pi i\theta_0})|=1$. From the result of lemma~\ref{lemmaBoundPmultiAtomic}, we can always assume that $\theta_0$ belongs to an interval of the form $\left[\tau_s-\frac{C_1}{n};\tau_s+\frac{C_1}{n}\right]$. Without loss of generality we can take $s=0$ and consider a configuration in which $\tau_=0$.

For every integer $K\in\{1,\dots,n\}$, we define $p_K$ and $p_{K,err}$ as follows. We decompose $p$ on the Dirichlet basis (letting $D_0(\theta)= \frac{1}{2n+1}\sum_{s=-n}^n e^{2\pi i s\theta}$) :
  \begin{equation}\label{decompositionpDirichlet}
    \forall \theta\in\mathbb{R},\quad
    p(e^{2\pi i\theta}) = \sum_{k=-n}^n p(e^{2\pi i \frac{k}{2n+1}}) D_0\left(\theta-\frac{k}{2n+1}\right)
  \end{equation}
  and we define for any integer $K\in\{1,\dots,n\}$, the approximations
  \begin{align}
    p_K(e^{2\pi i\theta}) &= \sum_{k=-K}^K p(e^{2\pi i \frac{k}{2n+1}}) D_0\left(\theta-\frac{k}{2n+1}\right);\\
    p_{K,err}(e^{2\pi i\theta}) &= \underset{|k|>K}{\sum_{k=-n}^n} p(e^{2\pi i \frac{k}{2n+1}}) D_0\left(\theta-\frac{k}{2n+1}\right).
  \end{align}
  As soon as $\Delta\gtrsim \log^2(S)\lambda_c$, those polynomials satisfy the following properties (where $c_1,\dots,c_4$ are absolute constants, that do not depend on $n$ nor on $K$). 
  \begin{enumerate}
  \item\label{lemmaTroncatureItem1} $p_K+p_{K,err}=p$.
  \item\label{lemmaTroncatureItem2} For all $\theta\in\left[-\frac{K}{n};\frac{K}{n}\right], |p_{K,err}(e^{2\pi i\theta})|\leq c_1 \frac{\log(1+K)}{1+K}$.
  \item\label{lemmaTroncatureItem3} For all $\theta\in[-0,5;0,5]-\left[-\frac{K}{n};\frac{K}{n}\right]$, $|p_K(e^{2\pi i\theta})| \leq c_1' \frac{\log(1+K)}{1+n|\theta|}$.
  \item\label{lemmaTroncatureItem4} For all $\theta\in[-0,5;0,5]$, $|p_{K,err}(e^{2\pi i\theta})|\leq c_2 \frac{\log(1+K)}{1+K}\min\left(1,\frac{1+K}{1+n |\theta - \tau_0|}\right) + c_2' \min(1, \sum_{\tau\neq \tau_0} \frac{1}{1+n|\theta - \tau|})$.
  \item\label{lemmaTroncatureItem5} For all $\theta\in[-\frac{K}{n};\frac{K}{n}]$, $|\mathcal{M}_1(p_{K,err})|\leq c_3\frac{\log^2(1+K)}{1+K}\frac{1}{1+n|\theta|}$.
  \item\label{lemmaTroncatureItem6} For all $\theta\in[-\frac{K}{n};\frac{K}{n}]$, $|(\mathcal{M}-\mathcal{M}_1)(p)|\leq \frac{c_4}{1+\Delta n}$.
\item\label{lemmaTroncatureItem7} There exists $k\in\{-K,\dots,K\}$ with $|p(e^{2\pi i\frac{k}{2n+1}})| \geq c_5$.
  \end{enumerate}
\end{restatable}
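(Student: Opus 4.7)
The strategy is to combine, throughout, the pointwise bound
\begin{equation*}
|p(e^{2\pi i \theta})| \leq \min\Bigl(1, \sum_{\alpha \in S} \frac{C_1}{1+n|\theta - \alpha|}\Bigr)
\end{equation*}
provided by Lemma~\ref{lemmaBoundPmultiAtomic} with the classical decay $|D_0(\theta)| \leq c\min(1, 1/(1+n|\theta|))$ of the normalized Dirichlet kernel, and to absorb all cross contributions between distinct atoms through the separation assumption $\Delta \gtrsim \log^2(|S|)\lambda_c$.

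Item~\ref{lemmaTroncatureItem1} is immediate from~\eqref{decompositionpDirichlet}, since the translates $\{D_0(\cdot - k/(2n+1))\}_{k=-n}^n$ form an interpolating basis for trigonometric polynomials of degree $\leq n$ at the nodes $k/(2n+1)$. For items~\ref{lemmaTroncatureItem2}--\ref{lemmaTroncatureItem4}, I would start from the triangle inequality
\begin{equation*}
|p_{K,err}(e^{2\pi i \theta})| \leq \sum_{|k|>K} |p(e^{2\pi i k/(2n+1)})|\,|D_0(\theta - k/(2n+1))|
\end{equation*}
and the analogous inequality for $p_K$. Restricting the amplitudes $|p(e^{2\pi i k/(2n+1)})|$ near $\tau_0 = 0$ to $O(1/(1+|k|))$ and inserting the Dirichlet decay, the inner sum relevant to item~\ref{lemmaTroncatureItem2} becomes of the form $\sum_{|k|>K} 1/((1+|k|)(1+|k|-K))$, whose logarithmic size yields the announced $\log(1+K)/(1+K)$ bound. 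Item~\ref{lemmaTroncatureItem3} is the symmetric statement (large $|\theta|$, small $|k|$, Dirichlet decay away from $\theta$) and item~\ref{lemmaTroncatureItem4} is obtained by combining the two regimes and adding, via the separation condition, the contribution from atoms $\tau \neq \tau_0$.

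For item~\ref{lemmaTroncatureItem5}, I would substitute $p_{K,err}$ into the explicit one-atomic formula~\eqref{decompositionOfMasPolynomial} and bound the resulting integrals $\int p_{K,err}(e^{\pm 2\pi i s})\tilde D(e^{-2\pi i(s+t)})\,ds$ with the pointwise estimate from item~\ref{lemmaTroncatureItem4}; the prefactor $D_n(\theta)$ already present in~\eqref{decompositionOfMasPolynomial} produces the $1/(1+n|\theta|)$ decay, while the additional $\log(1+K)$ factor emerges from the integration of the $1/(1+n|s|)$-type estimate. Item~\ref{lemmaTroncatureItem6} examines the cross terms in the multi-atomic expansion~\eqref{threeContributionsMultiSpikes} that are discarded by $\mathcal{M}_1$: each such term carries at least one factor $D_n(\theta - \tau)$ with $\tau \neq \tau_0$, which for $|\theta| \leq K/n \ll \Delta$ is uniformly bounded by $c/(1+n\Delta)$.

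For item~\ref{lemmaTroncatureItem7}, since $\|p\|_\infty = 1$ is attained at some $\theta_0 \in [-C_1/n, C_1/n]$ by Lemma~\ref{lemmaBoundPmultiAtomic}, and since Bernstein's inequality for trigonometric polynomials of degree $n$ yields $\|p'\|_\infty \leq 2\pi n$, the modulus $|p|$ cannot drop by more than a fixed constant over an interval of length $O(1/n)$ around $\theta_0$; hence some sampling node $k/(2n+1)$ with $|k| \lesssim C_1$ satisfies $|p(e^{2\pi i k/(2n+1)})| \geq c_5$ for a universal $c_5$. The main obstacle throughout will be the precise tracking of logarithmic factors in the summation arguments underlying items~\ref{lemmaTroncatureItem2}--\ref{lemmaTroncatureItem5}: isolating the near-atom contribution from the contributions of distant atoms, and absorbing the latter through $\Delta$ while keeping all constants absolute, is the delicate part of the analysis.
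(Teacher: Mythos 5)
Your strategy for items \ref{lemmaTroncatureItem1}--\ref{lemmaTroncatureItem6} is broadly aligned with the paper's: triangle inequality on the interpolation formula, insertion of the pointwise bound of Lemma~\ref{lemmaBoundPmultiAtomic} and of the Dirichlet decay, careful splitting between the atom $\tau_0$ and the distant atoms whose contribution is absorbed by the separation $\Delta$, and then a reduction of items \ref{lemmaTroncatureItem5}--\ref{lemmaTroncatureItem6} to the one-atomic integral estimate (what the paper packages as Lemma~\ref{lemma:boundK}). The proposal is sketchy about the summation $\sum_{|k|>K}$ in item~\ref{lemmaTroncatureItem2} (the paper splits this into three ranges $H_0^{(\tau)},H_1^{(\tau)},H_2^{(\tau)}$ relative to each atom and does a substantial amount of bookkeeping), but the mechanism is the right one.

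The Bernstein argument you propose for item~\ref{lemmaTroncatureItem7} does not work, and the gap is quantitative, not merely a matter of constants. For a trigonometric polynomial $p$ of degree $n$ with $\|p\|_\infty=1$, Bernstein's inequality gives $\|p'\|_\infty\leq 2\pi n$. The sampling nodes are spaced $1/(2n+1)$ apart, so the nearest node to $\theta_0$ lies within $1/(2(2n+1))$, and the worst-case drop is $2\pi n/(2(2n+1))\to\pi/2>1$ as $n\to\infty$. Thus the naive first-order Bernstein argument only yields $|p|\geq 1-\pi/2<0$ at the nearest node, which is vacuous. A second-order (quadratic) refinement near the maximum does not save the argument either: with $\|p''\|_\infty\leq (2\pi n)^2$ the bound at the nearest node is $1-\pi^2/8$, still less than $0$. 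The correct argument, and the one the paper actually uses, goes through the \emph{exact} interpolation identity $1=p(\theta_0)=\sum_k p(e^{2\pi i k/(2n+1)})D_0(\theta_0-k/(2n+1))$: one sets $\eta=\sup_k|p(e^{2\pi i k/(2n+1)})|$, uses the decay of $|p|$ and of $D_0$ to show that the contribution of the nodes with $|k|>K$ (and of distant atoms) to this sum is $o(1)$, and deduces a lower bound on $\eta$ by solving the resulting scalar inequality ($1\leq 42\eta+4\eta\log(1+(C_1-1)/(2\eta)+C_1)$ in the paper, giving $\eta\geq 0.0112$). This interpolation/accounting mechanism is essential; a pure smoothness bound at the sampling scale $1/(2n+1)$ is too weak.
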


\begin{proof}

We start with item~\ref{lemmaTroncatureItem2}. As before, we write 
\begin{align}
|p_{K, \err}(e^{2\pi i \theta})| &\leq \left|\sum_{k\notin [-K, K]} p(e^{2\pi i k \theta}) D_0(\theta - \frac{k}{2n+1})\right|\\
&\leq \sum_{k\notin [-K, K]} \min(1, \sum_{\tau\in S}\frac{C_1}{1+n|\theta - \tau|})\left|D_0(\theta - \frac{k}{2n+1})\right|\\
&\leq \sum_{\tau\in S}\sum_{k\notin [-K, K]} \min(1, \frac{C_1}{1+n|\theta - \tau|})\left|D_0(\theta - \frac{k}{2n+1})\right|\label{sumWithAllpTau}
\end{align}
We now let $p_{K,\err}^\tau(e^{2\pi i \theta})$ to denote each of the terms in the sum~\eqref{sumWithAllpTau}. I.e.
\begin{align}
p_{K,\err}^{(\tau)}(e^{2\pi i \theta})&\equiv \sum_{k\notin [-K, K]} \min(1, \frac{C_1}{1+n|\theta - \tau|})\left|D_0(\theta - \frac{k}{2n+1})\right|
\end{align}
We start by controling the contributions arising from the atoms $\tau\neq \tau_0$. As we consider truncating the operator within the interval $[\tau_0 - \Delta/2, \tau_0+\Delta/2]$, we can assume $\frac{K}{2n+1} < \Delta/2$. We start by proving item~\ref{lemmaTroncatureItem2}. By symmetry, we can focus on the indices $K/n< k\leq n$. We let $k_\tau$ to denote the nearest integer to $(2n+1)\tau$.  
For any atom located on the right of $K/n < \Delta/2$, and any $\theta<K/(2n+1)$, the contribution to $p(e^{2\pi i \frac{k}{2n+1}})$ can read as 
\begin{align}
\left|p_{K,\err}^{(\tau)}(\theta)\right| &\leq  \sum_{k_\tau - C_1}^{k_\tau + C_1} \frac{1}{1+n(\frac{k}{2n+1} - \theta)} + \sum_{k = k_\tau+C_1}^{n} \frac{C_1}{1+n(\frac{k}{2n+1} - \tau)} \frac{1}{1+n(\frac{k}{2n+1} - \theta)} \\
&+ \sum_{k=K}^{k_\tau - C_1} \frac{C_1}{1+n(\tau - \frac{k}{2n+1})} \frac{1}{1+n(\frac{k}{2n+1} - \theta)}
\end{align}
We label each of those terms as $H_0^{(\tau)}(\theta), H_1^{(\tau)}(\theta)$ and $H_2^{(\tau)}(\theta)$. For $H_0^{(\tau)}(\theta)$, if we assume that $K/n<\Delta/2$, we always have $|\tau - \theta|\geq \Delta/2$. Hence we can write 
\begin{align}
\left|H_0^{(\tau)}(\theta)\right| &\leq \frac{2C_1}{1+n|\tau - \theta - \frac{C_1}{n}|}\leq \frac{8C_1}{4+n\Delta}
\end{align}
The second inequality follows from $\Delta/4\geq \frac{C_1}{n}$ which can always be assumed for $n$ large enough. For $H_1^{(\tau)}$, we write 
\begin{align}
\left|H_1^{(\tau)}(\theta)\right|&\leq \frac{C_1}{1+n|\tau - \theta - \frac{C_1}{n}|}  +\frac{C_1}{n|\tau - \theta|} \int_{k_\tau + C_1}^n \left(\frac{1}{1+n (\frac{k}{2n+1} - \tau)}  - \frac{1}{1+n(\frac{k}{2n+1} - \theta)}\right) \; dk\\
&\leq \frac{C_1}{1+n|\tau - \theta - \frac{C_1}{n}|}  +\frac{C_1}{1+n|\tau - \theta|}\left|\log(\frac{1/n + \frac{n}{2n+1} - \tau}{1/n + C_1/n}) - \log(\frac{1/n + \frac{n}{2n+1} - \theta}{1/n + \tau - \theta + C_1/n})\right|\\
&\leq \frac{C_1}{1+n|\tau - \theta - \frac{C_1}{n}|}  +\frac{C_1}{n|\tau - \theta|} \left[\log(1+ \frac{\tau - \theta}{1/n + \frac{n}{2n+1} - \tau}) + \log(1+ \frac{\tau - \theta}{1/n + C_1/n})\right]
\end{align}
Finally for $H_2^{(\tau)}(\theta)$, a similar reasoning gives 
\begin{align}
\left|H_2^{(\tau)}(\theta) \right|&\leq \sum_{k=K}^{k_\tau - C_1} \frac{C_1}{1+n(\tau - \frac{k}{2n+1})} \frac{1}{1+n(\frac{k}{2n+1} - \theta)} \\
& = C_1 \sum_{k=K}^{k_\tau - C_1} \left(\frac{1}{1+n(\tau - \frac{k}{2n+1})} + \frac{1}{1+n (\frac{k}{2n+1} - \theta)}\right) \frac{1}{2+n|\theta - \tau|}\\
&\leq \frac{1}{1+C_1} \frac{C_1}{n|\tau - \theta| + 2} + \frac{C_1}{n|\tau - \theta|+2} \int_{k_\tau-C_1}^K \frac{1}{1+n(\tau - \frac{k}{2n+1})}\; dk\\
&+ \frac{C_1}{n|\tau - \theta|+2} + \frac{C_1}{n|\tau - \theta|+2}\int_{K}^{k_\tau - C_1} \frac{1}{1+n|\frac{k}{2n+1} - \theta|} \; dk\\
&\lesssim \frac{1}{n|\tau - \theta|+2} + \frac{C_1}{n|\tau - \theta|+2} \frac{2n+1}{n}\log(1+ \frac{\tau - \frac{K}{2n+1} - \frac{C_1}{2n+1}}{\frac{1}{n}+ \frac{C_1}{2n+1}})\\
&+ \frac{C_1}{n|\tau - \theta|+2} + \frac{C_1}{n|\tau - \theta|+2} \frac{2n+1}{n} \log(1+ \frac{\tau - \frac{C_1}{2n+1} - \frac{K}{2n+1}}{\frac{1}{n} + \left|\frac{K}{2n+1} - \theta\right|})\\
&\lesssim \frac{1}{n|\tau - \theta|+2} + \frac{3C_1}{n|\tau - \theta|+2} \left(\log(n|\tau - \theta|) + \log(2)\right)\\
&+ \frac{C_1}{n|\tau - \theta|+2} + \frac{3C_1}{n|\tau - \theta|+2} \left(\log(n|\tau - \theta|)+ \log(2)\right)
\end{align}
Summing over the atoms, we get 
\begin{align}
\sum_{\tau=1}^S \sum_{\ell=0}^2 \left|H_\ell^{(\tau)}(\theta)\right| &\lesssim \frac{8C_1}{1+n\Delta} + \frac{2C_1}{n\Delta}\log |S| +\frac{4C_1}{n\Delta} +  \frac{2C_1}{n\Delta}\log|S| + \sum_{\tau\in S} \frac{3C_1}{n|\tau - \theta|} \left( \log(n|\tau - \theta|)+\log(2)\right) \\
&+ \frac{8C_1}{n\Delta} + \frac{2C_1}{n\Delta}\log|S| + \sum_{\tau\in S} \frac{3C_1}{n|\tau - \theta|} \left[\log(n|\tau  - \theta|) + \log(K) + \log(2)\right]\\
&\stackrel{(a)}{\lesssim }\frac{20C_1}{n\Delta} + \frac{C_1}{n\Delta}\log|S| \left(6+12\log(2)+6\log(K)\right) + \frac{12C_1}{n\Delta}\log|S|\log(n\Delta)\label{boundOnPKerr}
\end{align}
In $(a)$ we use the fact that the function $\log(x)/x$ is decreasing as soon as $x\geq 10$. We can thus always bound $\frac{\log(n|\tau - \theta|)}{n|\tau - \theta|}$ by $\log(\ell \Delta n)/(\ell n\Delta)$. The last bound~\eqref{boundOnPKerr} can be made smaller than $\varepsilon$ as soon as $\Delta \gtrsim \lambda_c \log|S|^2 \varepsilon^{-1}$. In particular, taking $\varepsilon = c_1\frac{1+K}{\log(1+K)}$ for any sufficiently small constant $c_1$ gives the item. Most of the contribution is arising from the atom $\tau_0$ used to truncate the operator. The contribution from this last atom can be bounded as 
\begin{align}
p_{K,\err}^{(\tau_0)}(\theta) & = \sum_{|k|\geq K} \frac{C_1}{1+n\left|\frac{k}{2n+1}\right|} \frac{1}{1+n\left|\theta - \frac{k}{2n+1}\right|}\label{boundpKerrJenaimArreatom0}\\
&= \sum_{k\geq K} \frac{C_1}{n\theta} \left(\frac{1}{1+n\frac{k}{2n+1}} - \frac{1}{1+n\left(\frac{k}{2n+1} - \theta\right)}\right)\\
&+ \sum_{k\leq -K} \frac{C_1}{n\theta} \left(\frac{1}{1+n(-\frac{k}{2n+1})} - \frac{1}{1+n(\theta - \frac{k}{2n+1})}\right)
\end{align}
we focus on the indices $k\geq K$ (the case $k\leq -K$ is identical). For those indices,we can write 
\begin{align}
&\sum_{k\geq K} \frac{C_1}{n\theta} \left(\frac{1}{1+n\frac{k}{2n+1}} - \frac{1}{1+n\left(\frac{k}{2n+1} - \theta\right)}\right) \\
&\leq \frac{C_1}{n|\theta|} \left(\frac{2}{2+K} + \frac{2n+1}{n}\int_{K}^n \frac{n/(2n+1)}{1+n\frac{k}{2n+1}}\; dk - \frac{2n+1}{n} \int_{K}^n \frac{n/(2n+1)}{1+n \left(\frac{k}{2n+1} - \theta\right)}\; dk\right)\\
&\leq \frac{C_1}{n|\theta|}\left(\frac{2}{2+K} 2\log(1+ \frac{\theta}{1/n+1/2-\theta}\vee \frac{-\theta}{1/2+1/n}) + 2\log(1+ \frac{\theta}{1/n + \frac{K}{2n+1} - \theta}\vee \frac{-\theta}{1/n + \frac{K}{2n+1}})\right)\label{lastLinejenaimarre}
\end{align}
To bound the last line, we make the distinction between the case $\theta\leq \frac{K}{2n+1}\frac{1}{2}$ in which we have 
\begin{align}
\eqref{lastLinejenaimarre}&\leq \frac{2C_1}{1+K} + \frac{8C_1}{n} + \frac{4C_1}{K}
\end{align}
and the case $\theta >\frac{K}{2n+1}\frac{1}{2}$, in which we have 
\begin{align}
\eqref{lastLinejenaimarre}&\leq \frac{2C_1}{2+K} + \frac{8C_1}{n} + \frac{4C_1}{K}\log(K)\label{boundJenAImarre22}
\end{align}
The total bound on the modulus $|p_{K,\err}^{(\tau_0)}(\theta)|$ can thus be obtained by multiplying~\eqref{boundJenAImarre22} by two,
\begin{align}
\eqref{boundpKerrJenaimArreatom0}&\leq \frac{4C_1}{2+K} + \frac{16C_1}{n} + \frac{8C_1}{K}\log(K)\lesssim  \frac{4C_1}{2+K} + \frac{8C_1}{K}\log(K)
\end{align}
Finally note that when $|\theta|<1/n$ we simply write 
\begin{align}
\left|p_{K, \err}^{(\tau_0)}(\theta)\right|&\leq \sum_{|k|\geq K} \frac{C_1}{1+n|\frac{k}{2n+1}|} \frac{1}{1+n\left|\frac{k}{2n+1} - \frac{1}{n}\right|}\\
&\leq \sum_{|k|\geq K} C_1\frac{1}{1+n\left|\frac{k}{2n+1} - \frac{1}{n}\right|}\\
&\leq \sum_{|k|\geq K} C_1 \left|\frac{2}{k}\right|\leq \frac{4C_1}{K}.
\end{align}
This concludes the proof for item~\ref{lemmaTroncatureItem2}, and gives the value of the constant $c_1\equiv \frac{4C_1}{\log(K)} + 8C_1$.

For item~\ref{lemmaTroncatureItem2}, note that for $\theta > K/n$, we have 
\begin{align}
\left|p_K(e^{2\pi i \theta})\right|  &\leq \sum_{k=-C_1}^{C_1} \frac{1}{1+n|\theta - \frac{k}{2n+1}|}\label{term1Item3LemmaConstant200001}\\
&+ \sum_{k=C_1}^K \sum_{\tau\in S} \frac{1}{1+n |\tau - \frac{k}{2n+1}|} \frac{1}{1+n \left|\theta - \frac{k}{2n+1}\right|} \label{term2Item3LemmaConstant200002}\\
&+ \sum_{k=-K}^{-C_1}\sum_{\tau\in S} \frac{1}{1 + n|\tau - \frac{k}{2n+1}|} \frac{1}{1+n \left|\theta - \frac{k}{2n+1}\right|}\label{term3Item3LemmaConstant200003}
\end{align}
For the first term, from $\theta>K/n>2C_1/n$, we can write 
\begin{align}
\eqref{term1Item3LemmaConstant200001} \leq \frac{2C_1}{1+n|\theta - \frac{C_1}{2n+1}|}\leq \frac{4C_1}{2+n|\theta|}\label{boundItem31}
\end{align}
For~\eqref{term2Item3LemmaConstant200002} and~\eqref{term3Item3LemmaConstant200003}, we have
\begin{align}
\eqref{term2Item3LemmaConstant200002} & \leq \sum_{\tau\in S\setminus \tau_0} \sum_{k=C_1}^K \frac{C_1}{n|\tau - \theta|} \left(\frac{1}{1+n(\tau - \frac{k}{2n+1})} - \frac{1}{1+n(\theta - \frac{k}{2n+1})}\right)\label{boundPK1a}\\
&+ \sum_{k=C_1}^K \frac{C_1}{n|\theta|} \left(\frac{1}{1+n\frac{k}{2n+1}} + \frac{1}{1+n(\theta - \frac{k}{2n+1})}\right)\label{boundPK1b}
\end{align}
For each of those two terms, one can write 
\begin{align}
\eqref{boundPK1a}&\leq \sum_{k=C_1}^K \frac{C_1}{n|\theta| + 2} \left[\frac{1}{1+\frac{n}{2n+1}C_1} + \int_{C_1}^K \frac{1}{1+n \frac{k}{2n+1}}\; dk\right]\\
&+ \sum_{k=C_1}^K \frac{C_1}{n|\theta|+2} \left[\frac{1}{1+n\left(\theta - \frac{k}{2n+1}\right)} + \int_{C_1}^K \frac{1}{1+n \left(\theta - \frac{k}{2n+1}\right)}\; dk\right]\\
&\leq  \frac{C_1}{n|\theta|+2} \left[\frac{3}{3+C_1} + 3\log(1+ \frac{K-C_1}{C_1})\right]\\
&+ \frac{C_1}{n|\theta|+2} \left[1+ 3\log(1+ \frac{\theta - \frac{C_1}{2n+1}}{1/n + \theta - \frac{K}{2n+1}})\right]\\
&\leq \frac{C_1}{n|\theta|+2} \left[2+3\log(\frac{K}{C_1}) + 3\log(K)\right].\label{finalBoundPositiveKZeroAtom}
\end{align}
as well as 
\begin{align}
\eqref{boundPK1b}&\leq\sum_{\tau>0}\sum_{k=C_1}^K \frac{C_1}{1+n\left(\tau - \frac{k}{2n+1}\right)} \frac{1}{1+n\left(\theta - \frac{k}{2n+1}\right)}\\
&+ \sum_{\tau<0}\sum_{k=C_1}^{K} \frac{C_1}{1+n\left(\frac{k}{2n+1} - \tau\right)} \frac{1}{1+n(\theta - \frac{k}{2n+1})}\\
&\leq \sum_{\tau>0}\sum_{k=C_1}^{K} \frac{C_1}{n|\theta - \tau|} \left(\frac{1}{1+n(\tau - \frac{k}{2n+1})} - \frac{1}{1+n(\theta - \frac{k}{2n+1})}\right)\\
&+ \sum_{\tau<0}\sum_{k=C_1}^{K} \frac{C_1}{n|\theta - \tau|} \left(\frac{1}{1+n(\frac{k}{2n+1} - \tau)} + \frac{1}{1+n(\theta - \frac{k}{2n+1})}\right)\\
&\leq \sum_{\tau<0} \frac{C_1}{n|\theta - \tau|} \left(\frac{1}{1+n(\tau - \frac{K}{2n+1})} + \int_{C_1}^K \frac{1}{1+n(\tau - \frac{k}{2n+1})}\; dk\right)\\
&+ \sum_{\tau>0} \frac{C_1}{n|\theta - \tau|} \left(\frac{1}{1+n(\theta - \frac{K}{2n+1})} + \int_{C_1}^K \frac{1}{1+n(\theta - \frac{k}{2n+1})}\; dk\right)\\
&+\sum_{\tau<0} \frac{C_1}{n|\theta - \tau|} \left(\frac{1}{1+n(\frac{C_1}{2n+1}-\tau)} + \int_{C_1}^K \frac{1}{1+n(\frac{k}{2n+1} - \tau)}\; dk\right)\\
&+\sum_{\tau<0} \frac{C_1}{n|\theta - \tau|} \left(\frac{1}{1+n(\theta - \frac{K}{2n+1})} + \int_{C_1}^K \frac{1}{1+n(\theta - \frac{k}{2n+1})}\; dk\right)\\
&\leq \sum_{\tau>0} \frac{C_1}{n|\theta - \tau|} \left(\frac{2}{n\Delta} + \frac{2n+1}{n} \log(\frac{1/n + \tau - \frac{K}{2n+1}}{1/n+\tau - \frac{C_1}{2n+1}})\right)\\
& + \sum_{\tau>0} \frac{C_1}{n|\theta - \tau|}\left(1 + \frac{2n+1}{n} \log(\frac{1/n + \theta - \frac{K}{2n+1}}{1/n + \theta - \frac{C_1}{2n+1}})\right)\\
&+\sum_{\tau<0}\frac{C_1}{n|\theta - \tau|} \left(\frac{2}{n\Delta} + \frac{2n+1}{n} \log(\frac{1/n + \frac{K}{2n+1} - \tau}{1/n + \frac{C_1}{2n+1}-\tau})\right)\\
&+ \sum_{\tau<0} \frac{C_1}{n|\theta - \tau|} \left(1+ \frac{2n+1}{n} \log(\frac{1/n + \theta - \frac{K}{2n+1}}{1/n + \theta - \frac{C_1}{2n+1}})\right)\\
& \stackrel{(a)}{\leq } \sum_{\tau>0} \frac{2C_1}{n|\theta - \tau|+1}\left(\frac{2}{n\Delta}+1 + 3\log(1+ \frac{K-C_1}{2n+1}\frac{1}{1/n + \tau - \frac{K}{2n+1}}) + 3\log(1+\frac{K-C_1}{2n+1}\frac{1}{1/n + \theta - \frac{K}{2n+1}})\right)\\
&+\sum_{\tau<0}\frac{2C_1}{n|\theta - \tau|+1} \left(\frac{2}{n\Delta} + 1+3\log(1+ \frac{K-C_1}{2n+1} \frac{1}{1/n+\tau - \frac{K}{2n+1}}) + 3\log(1+\frac{K-C_1}{2n+1}\frac{1}{1/n+\theta - \frac{K}{2n+1}})\right)\\
&\leq \sum_{\tau\in S\setminus \tau_0} \frac{2C_1}{n|\theta - \tau|+1} \left(\frac{2}{n\Delta}+1 +6\log 2\right).\label{finalBoundPositiveKNonZeroAtom}
\end{align}
In $(a)$ we use $|\theta - \tau|\geq 1/n$. 
The case $|\theta-\tau|<\frac{1}{n}$ only happens for the atom closest to $\theta$. Moreover since $|\tau - \theta|<\frac{1}{n}$, we necessarily have $\theta>\Delta/2$. In this case we thus write  
\begin{align}
\sum_{k=C_1}^{K}\frac{1}{1+n\left(\tau - \frac{k}{2n+1}\right)}\frac{1}{1+n\left(\theta - \frac{k}{2n+1}\right)}
&\leq \sum_{k=C_1}^{K} \frac{C_1}{1+n(\theta - \frac{1}{n} - \frac{k}{2n+1})} \frac{1}{1+n(\theta - \frac{k}{2n+1})}\\
&\leq \sum_{k=C_1}^K \frac{C_1}{(1+n(\theta - \frac{1}{n} - \frac{k}{2n+1}))^2}\\
&\leq \frac{C_1}{(1+n(\theta - \frac{1}{n} - \frac{K}{2n+1}))^2} + \int_{C_1}^K \frac{C_1}{(1+n(\theta - \frac{1}{n} - \frac{k}{2n+1}))^2}\; dk\\
&\leq  \frac{2}{n\Delta} \frac{2C_1}{n|\theta| +2} + \frac{3C_1}{1+n\left(\theta - \frac{1}{n} - \frac{K}{2n+1}\right)} + \frac{3C_1}{1+n\left(\theta - \frac{1}{n} - \frac{C_1}{n}\right)}\label{lastLineTauEqualTheta1}
\end{align}
To control~\eqref{lastLineTauEqualTheta1}, we use $|\theta - \tau|<1/n$ as well as $|\tau|>\Delta$ and require $\Delta - \frac{2}{n} - \frac{K}{2n+1}<(\Delta+1/n)(0.9)$ which always holds for $\Delta$ and $n$ sufficiently large.
\begin{align}
\sum_{k=C_1}^{K}\frac{1}{1+n\left(\tau - \frac{k}{2n+1}\right)}\frac{1}{1+n\left(\theta - \frac{k}{2n+1}\right)}&\leq \frac{2}{n\Delta} \frac{2C_1}{n|\theta|+1} + \frac{6C_1}{(0.9)(1+n|\theta|)}\\
&\leq \frac{2}{n\Delta} \frac{2C_1}{n|\theta|+1} + \frac{6.7 C_1}{1+n|\theta|}\label{finalBoundThetaCloseToTauPositiveeK}
\end{align}
Grouping~\eqref{finalBoundThetaCloseToTauPositiveeK} as well as~\eqref{finalBoundPositiveKNonZeroAtom} and~\eqref{finalBoundPositiveKZeroAtom}, we get 
\begin{align}
\eqref{term2Item3LemmaConstant200002}& \leq  \frac{C_1}{n|\theta|+1} \left(8.7 + 3\log(\frac{K}{C_1}) + 3\log(K)\right) + \sum_{\tau\neq 0} \frac{2C_1}{n|\theta - \tau|+1} \left(2+6\log(2)\right).\label{finalBoundKpositive}
\end{align}
A similar reasoning holds for~\eqref{term3Item3LemmaConstant200003}. We have 
\begin{align}
\eqref{term3Item3LemmaConstant200003} & \leq \sum_{k=-K}^{-C_1} \frac{C_1}{1+n|\frac{k}{2n+1}|} \frac{1}{1+n\left|\theta - \frac{k}{2n+1}\right|}\label{thirdSumBoundPKThetaLargerThanK1}\\
&+\sum_{k=-K}^{-C_1} \sum_{\tau\in S\setminus \tau_0} \frac{C_1}{1+n|\tau - \frac{k}{2n+1}|} \frac{1}{1+n|\theta - \frac{k}{2n+1}|}\label{thirdSumBoundPKThetaLargerThanK2}
\end{align}
\begin{align}
\eqref{thirdSumBoundPKThetaLargerThanK1}&\leq \sum_{k=-K}^{-C_1} \left(\frac{1}{1-n\frac{k}{2n+1}} - \frac{1}{1+n\left(\theta - \frac{k}{2n+1}\right)}\right) \frac{C_1}{n|\theta|}\\
&\leq \left[\frac{1}{1+\frac{C_1}{2n+1}n} + \int_{C_1}^{K} \frac{1}{1+ n\frac{k}{2n+1}} \; dk\right]\frac{C_1}{n\theta}\\
&+ \left[\frac{1}{1+n(\theta + \frac{C_1}{2n+1})} - \int_{C_1}^{K} \frac{1}{1+ n\frac{k}{2n+1}}\; dk\right]\frac{C_1}{n|\theta|}\\
&\leq \left[\frac{3}{3+C_1} + \frac{2n+1}{n} \log(\frac{1/n + \frac{K}{2n+1}}{1/n + \frac{C_1}{2n+1}})\right] \frac{2C_1}{n|\theta|+1}\\
&+ \left[\frac{1}{1+K} + \frac{2n+1}{n}\log(\frac{\frac{1}{n} + \frac{K}{2n+1}}{\frac{1}{n} + \frac{C_1}{2n+1}})\right] \frac{2C_1}{n|\theta|+1}\\
&\leq \left[\frac{3}{3+C_1} + \frac{1}{1+K} + 6\log(\frac{K}{C_1})\right]\frac{2C_1}{n|\theta|+1}.\label{boundOnSpikeZeroNegativeK}
\end{align}
\begin{align}
\eqref{thirdSumBoundPKThetaLargerThanK2}&\leq \sum_{k=-K}^{-C_1} \sum_{\tau>0} \frac{C_1}{1+n\left(\tau - \frac{k}{2n+1}\right)} \frac{1}{1+n(\theta - \frac{k}{2n+1})}\\
&+ \sum_{\tau<0}\sum_{k=-K}^{-C_1} \frac{C_1}{1+n(\frac{k}{2n+1} - \tau)} \frac{1}{1+n(\theta - \frac{k}{2n+1})}\\
&\leq \sum_{k=-K}^{-C_1}\sum_{\tau>0} \left[\frac{1}{1+n(\tau - \frac{k}{2n+1})} - \frac{1}{1+n(\theta - \frac{k}{2n+1})}\right] \frac{C_1}{n|\theta - \tau|}\label{pKboundMultiAtom1Term3}\\
&+ \sum_{k=-K}^{-C_1}\sum_{\tau<0} \left[\frac{1}{1+n(\frac{k}{2n+1} - \tau)} + \frac{1}{1+n(\theta - \frac{k}{2n+1})}\right]\frac{C_1}{n|\theta - \tau|}.\label{pKboundMultiAtom2Term3}
\end{align}
For each of those terms, we have 
\begin{align}
\eqref{pKboundMultiAtom1Term3}&\leq \sum_{\tau>0} \left[\frac{1}{1+n\left(\tau + \frac{C_1}{2n+1}\right)} + \int_{C_1}^{K} \frac{1}{1+n\left(\tau + \frac{k}{2n+1}\right)}\; dk\right]\\
&+ \sum_{\tau>0} \left[\frac{1}{1+n(\theta + \frac{C_1}{1+n\left(\theta + \frac{C_1}{2n+1}\right)})} + \int_{C_1}^K\frac{1}{1+n(\theta + \frac{k}{2n+1})}\; dk\right]\frac{C_1}{n|\theta - \tau|}\\
&\leq \sum_{\tau>0} \frac{C_1}{n|\theta - \tau|} \left[\frac{1}{1+K} + \frac{2n+1}{n} \log(\frac{1/n + \tau + \frac{C_1}{2n+1}}{1/n + \tau+\frac{K}{2n+1}})\right]\\
&+ \sum_{\tau>0} \frac{C_1}{n|\theta - \tau|} \left[\frac{1}{1+K} + \frac{2n+1}{n}\log(\frac{1/n + \theta + \frac{K}{2n+1}}{1/n + \theta + \frac{C_1}{2n+1}})\right]\\
&\leq \sum_{\tau>0} \frac{C_1}{n|\theta - \tau|} \left[\frac{2}{1+K} + 3\log(1+ \frac{K-C_1}{2n+1}\frac{1}{1/n + \tau + \frac{C_1}{2n+1}}) + 3\log(1+ \frac{K-C_1}{2n+1}\frac{1}{1/n+\theta + \frac{C_1}{n}})\right]\\
&\leq \sum_{\tau>0} \frac{C_1}{n|\theta - \tau|}\left[\frac{2}{2+K}+6\log(2)\right].
\end{align}
Similarly for~\eqref{pKboundMultiAtom2Term3} we have 
\begin{align}
\eqref{pKboundMultiAtom2Term3}&\leq \sum_{\tau<0} \left[\frac{1}{1+n\left(\frac{-K}{2n+1} - \tau\right)} + \int_{-K}^{-C_1} \frac{1}{1+n\left(\frac{k}{2n+1} - \tau\right)}\; dk\right] \frac{C_1}{n|\theta - \tau|}\\
&+ \sum_{\tau<0} \left[\frac{1}{1+n\left(\theta + \frac{C_1}{2n+1}\right)} + \int_{C_1}^K \frac{1}{1+n\left(\theta + \frac{k}{2n+1}\right)}\; dk\right]\frac{C_1}{n|\theta - \tau|}\\
&\leq \sum_{\tau<0} \left[\frac{3}{3+K} + 3\log(\frac{1/n + \frac{-C_1}{2n+1} - \tau}{1/n - \frac{K}{2n+1} - \tau})\right] \frac{C_1}{n|\theta - \tau|}\\
&+\sum_{\tau<0 }  \left[\frac{1}{1+K} + 3\log(\frac{1/n + \theta + \frac{K}{2n+1}}{1/n + \theta + \frac{C_1}{2n+1}})\right] \frac{C_1}{n|\theta - \tau|}\\
&\leq \sum_{\tau<0} \left(\frac{4}{1+K} + 6\log(2)\right)\frac{C_1}{n|\theta - \tau|}
\end{align}
Grouping those two terms, we get 
\begin{align}
\eqref{thirdSumBoundPKThetaLargerThanK2}&\leq \sum_{\tau\neq 0} \frac{C_1}{n|\theta - \tau|} \left(\frac{4}{1+K} + 6\log(2)\right)\label{boundNegativeKAllTau}
\end{align}
Finally when $|\theta - \tau|<\frac{1}{n}$, we proceed as before and write 
\begin{align}
\sum_{k=-K}^{-C_1} \frac{C_1}{1+n|\theta - \frac{1}{n} - \frac{k}{2n+1}|} \frac{1}{1+n|\theta - \frac{k}{2n+1}|}
&\leq \sum_{k=-K}^{-C_1} \frac{1}{(1+n(\theta - \frac{1}{n} - \frac{k}{2n+1}))^2}\\
&\leq \frac{4C_1}{1+n\Delta}\frac{1}{1+n|\theta|} + \int_{-K}^{-C_1} \frac{1}{(1+n(\theta - \frac{1}{n} - \frac{k}{2n+1}))^2}\; dk\\
&\leq \frac{4C_1}{n\Delta} \frac{1}{1+n|\theta|} +3C_1 \left[\frac{1}{1+n(\theta - \frac{1}{n} - \frac{K}{2n+1})} + \frac{1}{(1+n(\theta - \frac{1}{n} - \frac{C_1}{2n+1}))}\right]\\
&\stackrel{(a)}{\leq} \frac{4C_1}{n\Delta} \frac{1}{1+n|\theta|} + \frac{6C_1}{0.9(1+n|\theta|)}\label{specialCaseTauIsThetaNegativeK}
\end{align}
The last line in~\eqref{specialCaseTauIsThetaNegativeK} follows from $|\theta - \tau|<1/n$ as well as $\tau>\Delta$ and the assumption $(\Delta - \frac{2}{n} - \frac{K}{2n+1})\geq (\Delta+1/n)0.9$ which can always be achieved for $\Delta$ and $n$ sufficiently large. Grouping~\eqref{specialCaseTauIsThetaNegativeK} together with~\eqref{boundNegativeKAllTau} and~\eqref{boundOnSpikeZeroNegativeK}, we get 
\begin{align}
\eqref{term3Item3LemmaConstant200003} &\leq \frac{6.7C_1+1}{1+n|\theta|} + \frac{2C_1}{n|\theta|+1} \left(\frac{3}{3+C_1} + \frac{1}{1+K} + 6\log(\frac{K}{C_1})\right) + \sum_{\tau\neq 0} \frac{C_1}{n|\theta - \tau|+1} \left(\frac{8}{1+K} + 12\log(2)\right).\label{finalBoundKnegative}
\end{align}

The final bound on $|p_{K}(\theta)|$ for $\theta>\frac{K}{n}$ follows from combining~\eqref{boundItem31}, ~\eqref{finalBoundKpositive} and~\eqref{finalBoundKnegative}
\begin{align}
\left|p_K(e^{2\pi i \theta})\right|& \leq \frac{4C_1}{1+n|\theta|} +\frac{C_1}{n|\theta|+1}\left(8.7 + 3\log(\frac{K}{C_1}) + 3\log(K)\right) + \sum_{\tau\neq 0} \frac{2C_1}{n|\theta - \tau|+1} \left(2+6\log(2)\right)\\
&+ \frac{C_1}{n|\theta|+1} \left(6.7 + \frac{1}{C_1} + \frac{6}{3+K} + \frac{2}{1+K} + 6\log(K/C_1)\right) + \sum_{\tau\neq 0} \frac{C_1}{n|\theta - \tau|+1}\left(\frac{8}{K+1} + 12\log(2)\right)\\
&\leq \frac{C_1}{1+n|\theta|} \left(21 + 9\log(K/C_1) + 3\log(K)\right) + \sum_{\tau\neq 0} \frac{C_1}{n|\theta|+1}\left(5+24\log(2)\right).
\end{align}
From which we thus recover item~\ref{lemmaTroncatureItem2} with $c_1'$ bounded as 
\begin{align}
c_1'&\equiv \frac{C_1}{\log(K+1)}\left(21 + 9\log(K/C_1) + 3\log(K)\right)
\end{align}

To derive item~\ref{lemmaTroncatureItem4}, note that if we let $\chi_{\mathcal{C}}$ to denote the indicator function of the set $\mathcal{C}$, we have 
\begin{align}
p_{K, \err}(\theta) & = p_{K, \err}(\theta)\chi_{[-K/n, K/n]} + \left(p(\theta)- p_K(\theta)\right) \chi_{[-1/2, -K/n]\cup [K/n, 1/2]}
\end{align}
From this decomposition, we get 
\begin{align}
&|p_{K,\err}(\theta)| \leq \frac{c_1\log(1+K)}{1+K}\chi_{[-K/n,K/n]} + |p(e^{2\pi i \theta})  - p_{K}(e^{2\pi i \theta})|\chi_{[-1/2,-K/n]\cup [K/n, 1/2]}(\theta)\\
&\leq \frac{c_1\log(1+K)}{1+K}\chi_{[-K/n,K/n]}  + \left[ \min(1,\sum_{\tau} \frac{C_1}{1+n|\theta - \tau|})\right]\chi_{[-1/2,-K/n]\cup [K/n, 1/2]}(\theta)\\
& +\left[ c'_1 \frac{\log(1+K)}{1+n|\theta|} + c_2'\sum_{\tau\neq 0} \frac{1}{1+n|\theta - \tau|}\right]\chi_{[-1/2,-K/n]\cup [K/n, 1/2]}(\theta)\\
&\leq \frac{c_1\log(1+K)}{1+K}\min(1, \frac{K+1}{1+n|\theta - \tau_0|}) + \min(1,\sum_{\tau} \frac{C_1}{1+n|\theta - \tau|})\chi_{[-1/2,-K/n]\cup [K/n, 1/2]}(\theta) \\
&+ c'_1 \frac{\log(1+K)}{1+K} \min(1, \frac{1+K}{1+n|\theta - \tau_0|}) +  c_2'\sum_{\tau\neq 0} \frac{1}{1+n|\theta - \tau|}\\
&\leq \frac{(c_1 + c_1')\log(1+K)}{1+K}\min(1, \frac{K+1}{1+n|\theta - \tau_0|}) + \frac{C_1}{1+n|\theta|}\chi_{[-1/2,-K/n]\cup [K/n, 1/2]}(\theta) + \min(1, \sum_{\tau\neq \tau_0} \frac{C_1}{1+n|\theta - \tau|})\\
&+ c_2' \min\left(1, \sum_{\tau\neq 0} \frac{1}{1+n|\theta - \tau|}\right)\\
&\leq \frac{(c_1 + c_1' + C_1/\log(K+1))\log(1+K)}{1+K}\min(1, \frac{K+1}{1+n|\theta - \tau_0|}) \\
&+\min(1, \sum_{\tau\neq \tau_0} \frac{C_1}{1+n|\theta - \tau|}) + c_2' \min(1, \frac{1}{1+|\theta - \tau|})\\
 \label{boundOnpKerr}
\end{align}
Items~\ref{lemmaTroncatureItem5} and~\ref{lemmaTroncatureItem6} both follow from applying the bounds~\eqref{constantBoundlemmaK2} and~\eqref{decreasingBoundLemmaK3} of Lemma~\ref{lemma:boundK} to the bound obtained in item~\ref{lemmaTroncatureItem4} above. In particular, applying the result of this lemma to the bound~\eqref{boundOnpKerr} on $p_{K, err}(\theta)$ gives 
\begin{align}
\left|\mathcal{M}_1[p_{K,err}](\theta)\right| &\leq \frac{(c_1+c_1'+C_1/\log(K+1))(2M_1''' + 2M_2\log(1+K) +1)\log(1+K)}{1+K} + O(\frac{1}{n\Delta})\label{boundM1pKerrConstant}\\
&\lesssim \frac{2C_1(21 + 9\log(K/C_1) + 3\log(K))(M_1''' + M_2\log(1+K))}{1+K}\label{boundM1pKerrConstant}
\end{align}

Finally to get item~\ref{lemmaTroncatureItem7}, we let $\eta = \sup_{k} p(e^{2\pi i \frac{k}{2n+1}})$ from which we have 
\begin{align}
p(e^{2\pi i \frac{k}{2n+1}}) \leq \min\left(\eta, \sum_{\tau\in S} \frac{C_1}{1+n|\frac{k}{2n+1} - \tau|}\right)
\end{align}
Since the polynomial $p(e^{2\pi i \theta})$ is normalized, i.e $\|p\|_\infty = 1$, there exists a $\theta_0 \in [\tau - \frac{C_1}{n}, \tau+\frac{C_1}{n}]$ such that $p(e^{2\pi i \theta_0})=1$. Without loss of generality we can assume that such a $\theta_0$ belongs to the interval $[\tau_0-\frac{C_1}{n}, \tau_0+\frac{C_1}{n}]$, and hence, as we centered the problem with respect to $\tau_0$, in the interval $[-\frac{C_1}{n}, \frac{C_1}{n}]$. Splitting $p(e^{2\pi i \theta})$ into $p_K(e^{2\pi i \theta})$ and $p_{K, \err}(e^{2\i i \theta})$, we have 
\begin{align}
\left|p(e^{2\pi i \theta})\right|&\leq \sum_{k=-K}^K \sum_{\tau\in S} \min(\eta, \frac{C_1}{1+n\left|\frac{k}{2n+1} - \tau\right|}) \frac{1}{1+n|\theta - \frac{k}{2n+1}|}\label{pKerrBadSum11}\\
&+ \sum_{k\notin [-K, K]} \sum_{\tau\in S} \min(1, \frac{C_1}{1+n\left|\frac{k}{2n+1} - \tau\right|}) \frac{1}{1+n\left|\theta - \frac{k}{2n+1}\right|} \label{pKerrInSplittingForEta}
\end{align}
Starting with~\eqref{pKerrInSplittingForEta}, we can write 
\begin{align}
\eqref{pKerrInSplittingForEta}&\leq \sum_{k\notin [-K,K]} \sum_{\tau\neq 0} \min(1, \frac{C_1}{1+n|\frac{k}{2n+1} - \tau|}) \frac{1}{1+n\left|\theta - \frac{k}{2n+1}\right|}\label{pKerrInSplittingForEta111}\\
&+ \sum_{k\notin [-K,K]} \frac{C_1}{1+n\frac{k}{2n+1}} \frac{1}{1+n\left|\theta - \frac{k}{2n+1}\right|}\label{pKerrInSplittingForEta112}
\end{align}
We start by bounding~\eqref{pKerrInSplittingForEta112}. For this term, we first consider the case $\theta>1/n$. In this case we have 
\begin{align}
\eqref{pKerrInSplittingForEta112}&\leq \sum_{k=K}^n \frac{C_1}{1+n\frac{k}{2n+1}} \frac{1}{1+n\left(\frac{k}{2n+1} - \theta\right)}\label{Kpositive1Term3}\\
& +\sum_{k=-n}^{-K} \frac{C_1}{1-n\frac{k}{2n+1}} \frac{1}{1+n\left(\theta - \frac{k}{2n+1}\right)}\label{Knegative1Term3}
\end{align}
For each of those terms, using bounds on the harmonic numbers, one can write 
\begin{align}
\eqref{Knegative1Term3}&\leq \frac{C_1}{n|\theta|} \left(\frac{1}{1+n\frac{K}{2n+1}} + \int_{K}^n \frac{1}{1+n\frac{k}{2n+1}}\; dk - \int_{K}^n \frac{1}{1+n\left(\theta + \frac{k}{2n+1}\right)}\; dk\right)\\
&\leq \frac{C_1}{n|\theta|} \frac{3}{3+K} + \frac{3C_1}{n|\theta|} \left(\log(\frac{1/n + \frac{n}{2n+1}}{1/n + \frac{K}{2n+1}}) - \log(\frac{1/n + \theta + \frac{n}{2n+1}}{1/n + \theta + \frac{K}{2n+1}})\right)\\
&\leq \frac{C_1}{n|\theta|}\frac{3}{3+K} + \frac{3C_1}{n|\theta|} \left(\log(\frac{1/n + \frac{n}{2n+1}+\theta}{1/n + \frac{n}{2n+1}}) + \log(\frac{1/n + \theta + \frac{K}{2n+1}}{1/n + \frac{K}{2n+1}})\right)\\
&\leq \frac{3C_1}{3+K} + \frac{4C_1}{n} + \frac{6C_1}{K}. 
\end{align}
Similarly we have
\begin{align}
\eqref{Kpositive1Term3}&\leq \frac{C_1}{n|\theta|}\frac{1}{1+n\frac{K}{2n+1}} + \frac{C_1}{n\theta}\left[\int_{K}^n \frac{1}{1+n\frac{k}{2n+1}}\; dk -  \int_{K}^{n} \frac{1}{1+n(\frac{k}{2n+1} - \theta)}\; dk\right]\\
&\leq \frac{C_1}{n|\theta|} \frac{3}{3+K} + \frac{C_1}{n\theta}\left[\log(\frac{1/n + \frac{n}{2n+1}}{1/n + \frac{K}{2n+1}}) - \log(\frac{1/n + \frac{n}{2n+1} - \theta}{1/n + \frac{K}{2n+1} - \theta})\right]\\
&\leq \frac{C_1}{n|\theta|}\frac{3}{3+K} + \left[\log(\frac{1/n + \frac{n}{2n+1}}{1/n + \frac{n}{2n+1} - \theta}) + \log(\frac{1/n + \frac{K}{2n+1} - \theta}{1/n + \frac{K}{2n+1}})\right]\frac{C_1}{n\theta}\\
&\leq \frac{C_1}{n|\theta|}\frac{3}{3+K} + \left[\log(1+\frac{\theta}{1/n + \frac{n}{2n+1} - \theta}) + \log(1+ \frac{\theta}{1/n + \frac{K}{2n+1} - \theta})\right]\frac{C_1}{n\theta}\\
&\leq \frac{C_1}{n|\theta|} \frac{3}{3+K} + \frac{C_1}{n} + \frac{3C_1}{K}
\end{align}
Whenever $|\theta|\geq 1/n$. When $|\theta|\leq 1/n$, we write 
\begin{align}
\sum_{k\notin [-K,K]} \frac{C_1}{1+n\frac{k}{2n+1}}\frac{1}{1+n\left(\frac{k}{2n+1} - \frac{1}{n}\right)}&\leq \sum_{k\notin [-K,K]} \frac{C_1}{(1+n(\frac{k}{2n+1} - \frac{1}{n}))^2}\\
&\leq \frac{C_1}{1+n(\frac{K}{2n+1} - \frac{1}{n})} + \frac{2n+1}{n} \left[\frac{C_1}{1+n(\frac{n}{2n+1} - \frac{1}{n})} - \frac{C_1}{1+n(\frac{K}{2n+1} - \frac{1}{n})}\right]
\end{align}
We now bound~\eqref{pKerrInSplittingForEta111}. Without loss of generality, we can focus on the atoms located on the right of the $[-K,K]$ interval. By symmetry, the bound on the sum~\eqref{pKerrInSplittingForEta111} can then be obtained by multiplying this bound by two. Using $k_\tau$ to denote the index associated with the position $\tau$, i.e. $\tau \approx \frac{k_\tau}{2n+1}$, we decompose the sum into the following four contributions
\begin{align}
\eqref{pKerrInSplittingForEta111}&\leq \sum_{\tau>0}\sum_{k=k_\tau - C_1}^{k_\tau+C_1} \frac{1}{1+n(\frac{k}{2n+1} - \theta)}\label{term1B1}\\
&+ \sum_{\tau>0}\sum_{k = k_\tau + C_1}^{n} \frac{C_1}{1+n(\frac{k}{2n+1} - \tau)} \frac{1}{(\frac{k}{2n+1} - \theta)n+1}\label{term2B1}\\
&+ \sum_{\tau>0}\sum_{k=K}^{k_\tau-C_1} \frac{C_1}{1+n\left(\frac{k}{2n+1} - \tau\right)} \frac{1}{1+n\left(\frac{k}{2n+1} - \theta\right)}\label{term3B1}\\
&+ \sum_{\tau>0}\sum_{k<-K} \frac{C_1}{1+n\left(\tau - \frac{k}{2n+1}\right)} \frac{1}{1+n\left(\theta - \frac{k}{2n+1}\right)}\label{term4B1}
\end{align}
For each of those terms, we have 
\begin{align}
\eqref{term1B1}&\leq \sum_{\tau>0} \frac{1}{1+n\left|\tau - \frac{C_1}{n} - \theta\right|} + \int_{k_\tau-C_1}^{k_\tau+C_1} \frac{1}{1+n\left(\frac{k}{2n+1} - \theta\right)}\; dk\\
&\lesssim \sum_{\tau>0} \frac{4}{n(\tau - \theta - \frac{C_1}{n})} + \sum_{\tau>0} 3\log(\frac{1/n + (\tau + \frac{C_1}{n}) - \theta}{1/n + \tau - \frac{C_1}{n} - \theta})\\
&\lesssim \sum_{\tau>0} \frac{4}{n|\tau - \theta - \frac{C_1}{n}|} +3\log(1+ \frac{2C_1}{n} \frac{1}{1/n + \tau - \frac{C_1}{n} - \theta})\\
&\lesssim \frac{4}{n\Delta}\log|S| + 24\frac{C_1}{n\Delta}\log|S|.
\end{align}
\begin{align}
\eqref{term2B1}&\leq \sum_{\tau>0} \sum_{k=k_\tau + C_1}^n \frac{C_1}{n|\theta - \tau|} \left[\frac{1}{1+n\left(\frac{k}{2n+1} - \tau\right)} - \frac{1}{\left(\frac{k}{2n+1} - \theta\right)n+1}\right]\\
&\leq \sum_{\tau>0} \frac{C_1}{n|\theta - \tau|} \left[\frac{1}{1+n\frac{C_1}{n}} + \int_{k_\tau + C_1}^{n} \frac{1}{1+n\left(\frac{k}{2n+1} - \tau\right)}\; dk - \int_{k_\tau +C_1}^{n} \frac{1}{1+n\left(\frac{k}{2n+1} - \theta\right)}\; dk\right]\\
&\lesssim \sum_{\tau>0} \frac{C_1}{n|\theta - \tau|} \left[\frac{1}{1+C_1} + \frac{2n+1}{n}\log(\frac{1/n + 1/2 - \tau}{1/n + \frac{C_1}{n}}) - \frac{2n+1}{n}\log(\frac{1/n + 1/2 - \theta}{1/n + \tau + \frac{C_1}{n} - \theta})\right]\\
&\leq \sum_{\tau>0} \frac{C_1}{n|\theta - \tau|} \left[\frac{1}{1+C_1} + \frac{2n+1}{n} \log(1+ \frac{\tau - \theta}{1/n + 1/2-\tau}) + \frac{2n+1}{n}\log(1+ \frac{\tau - \theta}{1/n + \frac{C_1}{n}})\right]\\
&\lesssim \sum_{\tau>0} \frac{C_1}{n|\theta - \tau|}\frac{1}{1+C_1} + \frac{3C_1}{n|\theta - \tau|} \log(|\tau - \theta|n) + \frac{3C_1}{n|\theta - \tau|} \log(|\tau - \theta|n)\\
&\lesssim \frac{C_1}{n\Delta}\log|S| \frac{1}{1+C_1} + \frac{6C_1}{n\Delta} \left[\log^2|S| + \log(n\Delta)\log|S|\right].
\end{align}
\begin{align}
\eqref{term3B1}&\leq \sum_{\tau>0} \sum_{k=K}^{k_\tau - C_1} \frac{C_1}{n|\theta - \tau|} \left[\frac{1}{1+n(\tau - \frac{k}{2n+1})} + \frac{1}{1+n(\frac{k}{2n+1} - \theta)}\right]\\
&\sum_{\tau>0}  \frac{C_1}{n|\theta - \tau|} \left[\frac{1}{1+C_1} + \int_{k_\tau-C_1}^{K} \frac{1}{1+n\left(\right)} + \int_{K}^{k_\tau - C_1} \frac{1}{1+n\left(\frac{k}{2n+1} - \theta\right)}\; dk\right]\\
&\lesssim \sum_{\tau>0} \frac{C_1}{n|\theta - \tau|} \left[\frac{1}{1+C_1} + \log(\frac{1/n + \frac{C_1}{n}}{1/n+\tau - \frac{K}{n}}) - \log(\frac{1/n +\tau - \frac{C_1}{n} - \theta}{1/n + \frac{K}{n} - \theta})\right]\\
&\lesssim \sum_{\tau>0} \frac{C_1}{n|\theta - \tau|}\left[\frac{1}{1+C_1} + 2\log(n|\theta - \tau|)\right]\\
&\lesssim \frac{2C_1}{n\Delta}\frac{1}{1+C_1}\log|S| + \frac{2C_1}{n\Delta}\left(\log(n\Delta)\log|S| + \log^2|S|\right).
\end{align}
Finally for~\eqref{term4B1}, we write 
\begin{align}
\eqref{term4B1}& = \sum_{\tau >0} \sum_{k=-n}^{-K} \frac{C_1}{1+n\left(\tau - \frac{k}{2n+1}\right)} \frac{1}{1+n\left(\theta - \frac{k}{2n+1}\right)}\\
&\leq \sum_{\tau>0} \sum_{k=-n}^{-K} \frac{C_1}{1+n|\theta - \tau|}\left[\frac{1}{1+n(\theta - \frac{k}{2n+1})} - \frac{1}{1+n(\tau - \frac{k}{2n+1})}\right]\\
&\leq \sum_{\tau>0} \frac{C_1}{1+n|\theta - \tau|} \left[\frac{1}{1+n(\theta + \frac{K}{2n+1} )} + \int_{K}^n \frac{1}{1+n(\theta + \frac{k}{2n+1})}\; dk - \int_{K}^n \frac{1}{1+n(\tau + \frac{k}{2n+1})}\; dk\right]\\
&\lesssim \sum_{\tau>0} \frac{C_1}{1+n\left|\theta - \tau\right|} \left[\frac{1}{1+K} + \log(\frac{1/n + \theta + 1/2}{1/n + \theta + \frac{K}{2n+1}}) - \log(\frac{1/n + \tau + 1/2}{1/n + \tau + \frac{K}{2n+1}})\right]\\
&\lesssim \sum_{\tau>0} \frac{C_1}{1+n|\theta - \tau|} \left[\frac{1}{1+K} + \log(1+ \frac{\tau - \theta}{1/n+\theta + 1/2}) + \log(1+\frac{\tau-\theta}{1/n + \theta + \frac{K}{2n+1}})\right]\\
&\lesssim \frac{C_1}{n\Delta}\frac{1}{1+K}\log|S| + 2\frac{C_1}{n\Delta}\left(\log^2|S| + \log(n\Delta)\log|S|\right).
\end{align}
The sum in~\eqref{pKerrInSplittingForEta111} can thus be made arbitrarily small provided that we take $\Delta$ and $n$ large enough. We now bound~\eqref{pKerrBadSum11}. Again we split this sum into a contribution arising from the atoms $\tau$ that are located outside $[-K,K]$ and the contribution arising from $\tau_0$. By symmetry we can focus on the atoms 
\begin{align}
&\sum_{k=-K}^{K} \sum_{\tau>0} \frac{C_1}{1+n|\tau - \frac{k}{2n+1}|}\frac{1}{1+n|\frac{k}{2n+1} - \theta|}\\
& = \sum_{k=k_\theta}^K \sum_{\tau>0} \frac{C_1}{1+n(\tau - \frac{k}{2n+1})} \frac{1}{1+n\left(\frac{k}{2n+1} - \theta\right)} \\
&+ \sum_{k=-K}^{k_\theta} \sum_{\tau>0} \frac{C_1}{1+n\left(\tau - \frac{k}{2n+1}\right)}\frac{1}{1+n\left(\theta - \frac{k}{2n+1}\right)}\\
&\leq \sum_{k=k_\theta}^{K} \sum_{\tau>0} \frac{C_1}{n|\theta - \tau|}\left(\frac{1}{1+n\left(\tau - \frac{k}{2n+1}\right)} + \frac{1}{1+n\left(\frac{k}{2n+1} - \theta\right)}\right)\\
&+ \sum_{k=-K}^{k_\theta} \sum_{\tau>0} \frac{C_1}{n|\theta - \tau|} \left(\frac{1}{1+n(\tau - \frac{k}{2n+1})} - \frac{1}{1+n\left(\theta - \frac{k}{2n+1}\right)}\right)\\
&\lesssim \sum_{\tau>0} \frac{C_1}{n|\theta - \tau|} \left[1+\frac{1}{1+n\left(\tau - \frac{K}{2n+1}\right)} + \int_{K}^{k_\theta} \frac{1}{1+n\left(\tau - \frac{k}{2n+1}\right)}\; dk + \int_{k_\theta}^K \frac{1}{1+n\left(\frac{k}{2n+1} - \theta\right)}\; dk\right]\\
&+ \sum_{\tau>0}\frac{C_1}{n|\theta - \tau|} \left[\frac{1}{1+n|\tau - \theta|} + \int_{k_\theta}^{-K} \frac{1}{1+n(\tau - \frac{k}{2n+1})}\; dk - \int_{k_\theta}^{-K} \frac{1}{1+n\left(\theta - \frac{k}{2n+1}\right)}\; dk\right]\\
&\lesssim \sum_{\tau>0 } \frac{C_1}{n|\theta - \tau|} \left[\frac{1}{1+n|\theta - \tau|} + \log(\frac{1/n + \tau - \theta}{1/n + \tau - \frac{K}{2n+1}}) + \log(\frac{1/n +\frac{K}{2n+1} - \theta}{1/n})\right]\\
&+\sum_{\tau>0} \frac{C_1}{n|\theta - \tau|} \frac{C_1}{n|\theta - \tau|} \left[\frac{1}{1+n|\theta - \tau|} + \log(\frac{1/n + \tau + \frac{K}{2n+1}}{1/n + \tau - \theta}) -\log(\frac{1/n + \theta + \frac{K}{2n+1}}{1/n})\right]\\
&\lesssim \sum_{\tau>0}\frac{C_1}{n|\theta - \tau|} \left[\frac{1}{1+n|\tau - \theta|} + 2\log(n|\tau - \theta|)\right]\\
&+ \sum_{\tau>0} \frac{C_1}{n|\theta - \tau|} \left[\frac{1}{1+n|\tau - \theta|} +\log(n|\tau - \theta|) + \log(\frac{|\tau - \theta|}{K})\right]\\
&\lesssim 2\frac{C_1}{n\Delta }\log|S| + 2\frac{C_1}{n\Delta}\left( \log(n\Delta)\log|S|+\log^2|S|\right).
\end{align}
We conclude by bounding the contribution arising from $\tau_0$. Note that by assumption we always have $\theta\in [-\frac{C_1-1}{\eta}, \frac{C_1-1}{\eta}]$ as we necessarily have $\theta\in \left[-\frac{C_1-1}{n}, \frac{C_1-1}{n}\right].$. From this, we can decompose the last contribution as  
\begin{align}
\sum_{k=-K}^{K} \min(\eta, \frac{C_1}{1+n|\frac{k}{2n+1}|})\frac{1}{1+n\left|\theta - \frac{k}{2n+1}\right|}&= \sum_{k=-K}^{-\frac{C_1-1}{\eta}} \frac{C_1}{1+n\frac{-k}{2n+1}} \frac{1}{1+n\left(\theta - \frac{k}{2n+1}\right)}\label{term1DecompositionpErr111}\\
&+ \sum_{k = -\frac{C_1-1}{\eta}}^{k_\theta} \eta \frac{1}{1+n(\theta - \frac{k}{2n+1})} + \sum_{k=k_\theta}^{\frac{C_1-1}{\eta}} \eta \frac{1}{1+n(\frac{k}{2n+1} - \theta)}\label{term2and3DecompositionpErr111}\\
&+ \sum_{k=\frac{C_1-1}{\eta}}^{K} \frac{C_1}{1+n\frac{k}{2n+1}} \frac{1}{1+n\left(\frac{k}{2n+1} - \theta\right)}\label{term4DecompositionpErr111}
\end{align}
We start by bounding~\eqref{term1DecompositionpErr111} and~\eqref{term4DecompositionpErr111}. For those two terms, we have 
\begin{align}
\eqref{term1DecompositionpErr111}&\leq\sum_{k=-K}^{-\frac{C_1-1}{\eta}} \frac{C_1}{n|\theta|} \left(\frac{1}{1+n\left(\theta - \frac{k}{2n+1}\right)} - \frac{1}{1+n\left(-\frac{k}{2n+1}\right)} \right)\\
&\leq \frac{C_1}{n|\theta|} \left(\frac{1}{1+n\left(\theta + \frac{C_1-1}{\eta}\frac{1}{2n+1}\right)} + \int_{\frac{C_1-1}{\eta}}^K \frac{1}{1+n(\theta + \frac{k}{2n+1})}\; dk\right) \\
& - \frac{C_1}{n|\theta|} \int_{\frac{C_1-1}{\eta}}^{K} \frac{1}{1+n\frac{k}{2n+1}}\;dk\\
&\leq \frac{C_1}{n|\theta|}\left(\frac{1}{1+n|\theta| + \frac{n}{2n+1}\frac{C_1-1}{\eta}} + \log(\frac{1/n + \theta + \frac{K}{2n+1}}{1/n + \theta +\frac{C_1-1}{\eta}\frac{1}{2n+1}}) - \log(\frac{1/n + \frac{K}{2n+1}}{1/n + \frac{C_1-1}{\eta}\frac{1}{2n+1}})\right)\label{boundFromKtoC1minus1Eta}
\end{align}
To control the last line we make the distinction between the case $\theta<-\frac{C_1-1}{\eta}\frac{1}{2n+1}\frac{1}{2}$ and $\theta>-\frac{C_1-1}{\eta}\frac{1}{2n+1}\frac{1}{2}$. In the former, we write 
\begin{align}
\eqref{boundFromKtoC1minus1Eta}&\leq \frac{C_1}{n|\theta|} \left(\frac{1}{1+n\theta + \frac{n}{2n+1}\frac{C_1-1}{\eta}} + \log(1+\frac{\theta}{1/n + \frac{C_1-1}{\eta}\frac{1}{2n+1}})\vee \log(1+\frac{-\theta}{1/n + \frac{C_1-1}{\eta}\frac{1}{2n+1} + \theta}) \right)\\
&+ \frac{C_1}{n|\theta|}\left( \log(1+\frac{\theta}{1/n + \frac{K}{2n+1}})\vee \log(1+ \frac{-\theta}{1/n + \frac{K}{2n+1} + \theta})\right)\\
&\leq 2\frac{C_1}{C_1-1}\frac{2n+1}{n}\eta + 2\frac{C_1}{C_1-1}\frac{2n+1}{n}\eta + 2\frac{C_1}{K}\frac{2n+1}{n}.
\end{align}
In the latter, we have 
\begin{align}
\eqref{boundFromKtoC1minus1Eta}&\leq 2\frac{C_1}{C_1-1}\eta\frac{2n+1}{n} +2\frac{C_1}{C_1-1} \frac{2n+1}{n}\eta \log(C_1-1) + 2\frac{C_1}{K}\frac{2n+1}{n}\eta.
\end{align}
From those two bounds we thus have 
\begin{align}
\eqref{boundFromKtoC1minus1Eta}&\leq 36\eta.\label{boundnegativeKC1minus1Eta112}
\end{align}

A similar reasoning can be applied to~\eqref{term4DecompositionpErr111}. For this term, we have 
\begin{align}
\sum_{k = \frac{C_1-1}{\eta}}^{K} \frac{C_1}{1+n \frac{k}{2n+1}} \frac{1}{1+n\left(\frac{k}{2n+1} - \theta\right)}
&\leq \sum_{k=\frac{C_1-1}{\eta}}^{K} \frac{C_1}{n\theta} \left(\frac{1}{1+n\frac{k}{2n+1}} - \frac{1}{1+n\left(\frac{k}{2n+1} - \theta\right)}\right)\\
&\leq \frac{C_1}{n\theta} \left(\frac{1}{1+n\frac{C_1-1}{\eta}\frac{1}{2n+1}} + \int_{\frac{C_1-1}{\eta}}^{K} \frac{1}{1+n\frac{k}{2n+1}}\; dk\right)\\
&+ \frac{C_1}{n\theta} \int_{\frac{C_1-1}{\eta}}^{K} \frac{1}{1+n\left(\frac{k}{2n+1} - \theta\right)}\; dk\\
&\leq \frac{C_1}{n\theta} \left(\frac{1}{1+n\frac{C_1-1}{\eta}\frac{1}{2n+1}}+ \log(\frac{1/n + \frac{K}{2n+1}}{1/n + \frac{n}{2n+1}\frac{C_1-1}{\eta}}) - \log(\frac{1/n + \frac{K}{2n+1} - \theta}{1/n + \frac{C_1-1}{\eta}\frac{1}{2n+1} - \theta})\right)\\
&\leq \frac{C_1}{C_1-1}\frac{2n+1}{n}\eta + \frac{C_1}{n\theta} \left(\log(1+\frac{\theta}{1/n + \frac{K}{2n+1} - \theta}\vee \frac{-\theta}{1/n + \frac{K}{2n+1}})\right)\label{boundTMPpKtoC1minus1Eta2222bb}\\
&+ \frac{C_1}{n\theta} \log(1+\frac{\theta}{1/n + \frac{C_1-1}{\eta}\frac{1}{2n+1} - \theta}\vee \frac{-\theta}{1/n + \frac{n}{2n+1}\frac{C_1-1}{\eta}})\label{boundTMPpKtoC1minus1Eta2222}
\end{align}
To control the sum~\eqref{boundTMPpKtoC1minus1Eta2222bb}+\eqref{boundTMPpKtoC1minus1Eta2222}, as in the case $-K\leq k\leq -\frac{C_1-1}{\eta}$ we consider the two frameworks $\theta\geq \frac{C_1-1}{\eta}\frac{1}{2n+1}\frac{1}{2}$ and $\theta<\frac{C_1-1}{\eta}\frac{1}{2n+1}$. In the first case, we write 

\begin{align}
\eqref{boundTMPpKtoC1minus1Eta2222}&\leq \frac{C_1}{C_1-1}\frac{2n+1}{n}\eta + 2\frac{C_1}{K}\frac{2n+1}{n} + 2\frac{C_1}{C_1-1}\eta\frac{2n+1}{n}\log(C_1)
\end{align}

When $\theta<\frac{C_1-1}{\eta}\frac{1}{2n+1}\frac{1}{2}$, we write 
\begin{align}
\eqref{boundTMPpKtoC1minus1Eta2222}&\leq \frac{C_1}{C_1-1}\frac{2n+1}{n}\eta + 2\frac{C_1}{K}\frac{2n+1}{n} + 2\frac{C_1}{C_1-1}\frac{2n+1}{n}\eta. 
\end{align}
For~\eqref{term2and3DecompositionpErr111}, each of the two terms can respectively be bounded as
\begin{align}
\sum_{k = -\frac{C_1-1}{\eta}}^{k_\theta} \eta \frac{1}{1+n(\theta - \frac{k}{2n+1})} &\leq \eta + \int_{k_\theta}^{-\frac{C_1-1}{\eta}} \frac{1}{1+n(\theta - \frac{k}{2n+1})}\; dk\\
&\leq \eta + \frac{2n+1}{n} \log(\frac{1/n +\theta + \frac{C_1-1}{\eta}\frac{1}{2n+1}}{1/n})\\
&\leq \eta + \frac{2n+1}{n} \log(1 + \frac{1}{2}\frac{C_1-1}{\eta} + C_1)
\end{align}
Similarly we have

\begin{align}
\sum_{k = k_\theta}^{\frac{C_1-1}{\eta}} \eta\frac{1}{1+n(\frac{k}{2n+1} - \theta)} &\leq \eta + \int_{k_\theta}^{\frac{C_1-1}{\eta}} \frac{1}{1+n(\frac{k}{2n+1} - \theta)}\; dk\\
&\leq \eta + \frac{2n+1}{n} \log(\frac{1/n + \theta +  \frac{C_1-1}{\eta}\frac{1}{2n+1}}{1/n})\\
&\leq \eta + \frac{2n+1}{n} \log(1 + \frac{1}{2}\frac{C_1-1}{\eta} + C_1)
\end{align}
Combining those bounds with~\eqref{boundnegativeKC1minus1Eta112}, noting that $\theta$ cannot simultaneously be larger than $\frac{C_1-1}{\eta}\frac{1}{2n+1}\frac{1}{2}$ and smaller than $-\frac{C_1-1}{\eta}\frac{1}{2n+1}\frac{1}{2}$ we can thus write 
\begin{align}
\eqref{term1DecompositionpErr111}+\eqref{term2and3DecompositionpErr111}+\eqref{term4DecompositionpErr111}&\leq 42\eta + 4\eta\log(1+\frac{C_1-1}{\eta}\frac{1}{2} + C_1).  
\end{align}

Note that when $|\theta|<1/n$ both in the case $\leq k\leq $ we can write 
\begin{align}
\sum_{k=\frac{C_1-1}{\eta}}^{K} \frac{1}{1+n\left(\frac{k}{2n+1} - \theta\right)}\frac{1}{1+n\frac{k}{2n+1}}&\leq \sum_{k=\frac{C_1-1}{\eta}}^{K} \frac{1}{\left(1+n\left(\frac{k}{2n+1} - \frac{1}{n}\right)\right)^2}\\
&\leq \frac{1}{1+n\left(\frac{C_1-1}{\eta}\frac{1}{2n+1} - \frac{1}{n}\right)} \\
&+ \frac{2n+1}{n}\left(\frac{1}{1+n\left(\frac{C_1-1}{\eta}\frac{1}{2n+1} - \frac{1}{n}\right)} - \frac{1}{1+n\left(\frac{K}{2n+1} - \frac{1}{n}\right)}\right)
\end{align}

Hence the total upper bound on the sum~\eqref{pKerrBadSum11}+\eqref{pKerrInSplittingForEta} can read
\begin{align}
~\eqref{pKerrBadSum11}+\eqref{pKerrInSplittingForEta} &\leq 42\eta + 4\eta\log(1+\frac{C_1-1}{\eta}\frac{1}{2} + C_1).  
\end{align}
From which we get the lower bound on $\eta$
\begin{align}
1\leq 42\eta + 4\eta\log(1+\frac{C_1-1}{\eta}\frac{1}{2} + C_1)
\end{align}
which follows from using $p(e^{2\pi i \theta})=1$. The obtained lower bound is depicted in Fig.~\ref{logEtaFigure}.

\begin{figure}
\centering
%
%
\definecolor{mycolor1}{rgb}{0.00000,0.44700,0.74100}%
\definecolor{mycolor2}{rgb}{0.85000,0.32500,0.09800}%
\definecolor{mycolor3}{rgb}{0.92900,0.69400,0.12500}%
\begin{tikzpicture}

\begin{axis}[%
width=3.028in,
height=2in,
at={(1.011in,0.642in)},
scale only axis,
unbounded coords=jump,
xmin=0,
xmax=0.05,
xlabel style={font=\color{white!15!black}},
xlabel={$\eta$},
ymin=0,
ymax=4.5,
axis background/.style={fill=white},
legend style={legend cell align=left, align=left, draw=white!15!black}
]
\addplot [color=mycolor1, line width=1.5pt]
  table[row sep=crcr]{%
0	nan\\
0.000505050505050505	0.0509550726238766\\
0.00101010101010101	0.0991136272486162\\
0.00151515151515152	0.146219185422375\\
0.00202020202020202	0.192642347200401\\
0.00252525252525253	0.238559119652818\\
0.00303030303030303	0.28407316616786\\
0.00353535353535354	0.329252981982402\\
0.00404040404040404	0.374147247707404\\
0.00454545454545455	0.41879236028665\\
0.00505050505050505	0.463216569633058\\
0.00555555555555556	0.507442442510397\\
0.00606060606060606	0.55148842262111\\
0.00656565656565657	0.595369866939937\\
0.00707070707070707	0.639099761269353\\
0.00757575757575758	0.682689230314506\\
0.00808080808080808	0.726147911146181\\
0.00858585858585859	0.76948423294516\\
0.00909090909090909	0.812705630706294\\
0.0095959595959596	0.855818711313695\\
0.0101010101010101	0.898829384561548\\
0.0106060606060606	0.94174296790951\\
0.0111111111111111	0.984564271242656\\
0.0116161616161616	1.02729766619111\\
0.0121212121212121	1.0699471433731\\
0.0126262626262626	1.11251636008243\\
0.0131313131313131	1.1550086803347\\
0.0136363636363636	1.19742720874435\\
0.0141414141414141	1.23977481937672\\
0.0146464646464646	1.2820541804735\\
0.0151515151515152	1.32426777576419\\
0.0156565656565657	1.36641792293325\\
0.0161616161616162	1.40850678970205\\
0.0166666666666667	1.45053640789893\\
0.0171717171717172	1.49250868582232\\
0.0176767676767677	1.53442541914827\\
0.0181818181818182	1.57628830059055\\
0.0186868686868687	1.6180989284866\\
0.0191919191919192	1.65985881445467\\
0.0196969696969697	1.7015693902445\\
0.0202020202020202	1.74323201388493\\
0.0207070707070707	1.78484797521634\\
0.0212121212121212	1.82641850088317\\
0.0217171717171717	1.86794475885066\\
0.0222222222222222	1.90942786250117\\
0.0227272727272727	1.95086887435805\\
0.0232323232323232	1.99226880947828\\
0.0237373737373737	2.03362863854995\\
0.0242424242424242	2.07494929072611\\
0.0247474747474748	2.11623165622226\\
0.0252525252525253	2.15747658870181\\
0.0257575757575758	2.19868490747057\\
0.0262626262626263	2.23985739949912\\
0.0267676767676768	2.28099482128949\\
0.0272727272727273	2.32209790060099\\
0.0277777777777778	2.36316733804808\\
0.0282828282828283	2.40420380858206\\
0.0287878787878788	2.44520796286685\\
0.0292929292929293	2.48618042855823\\
0.0297979797979798	2.5271218114948\\
0.0303030303030303	2.56803269680817\\
0.0308080808080808	2.60891364995917\\
0.0313131313131313	2.64976521770602\\
0.0318181818181818	2.69058792901008\\
0.0323232323232323	2.73138229588403\\
0.0328282828282828	2.77214881418707\\
0.0333333333333333	2.81288796437114\\
0.0338383838383838	2.85360021218192\\
0.0343434343434343	2.89428600931794\\
0.0348484848484849	2.93494579405095\\
0.0353535353535354	2.97557999181029\\
0.0358585858585859	3.01618901573389\\
0.0363636363636364	3.05677326718814\\
0.0368686868686869	3.097333136259\\
0.0373737373737374	3.13786900221607\\
0.0378787878787879	3.17838123395164\\
0.0383838383838384	3.2188701903963\\
0.0388888888888889	3.25933622091266\\
0.0393939393939394	3.29977966566864\\
0.0398989898989899	3.34020085599156\\
0.0404040404040404	3.38060011470436\\
0.0409090909090909	3.42097775644498\\
0.0414141414141414	3.46133408796992\\
0.0419191919191919	3.50166940844307\\
0.0424242424242424	3.54198400971056\\
0.0429292929292929	3.5822781765625\\
0.0434343434343434	3.6225521869825\\
0.0439393939393939	3.66280631238546\\
0.0444444444444444	3.70304081784447\\
0.044949494949495	3.74325596230747\\
0.0454545454545455	3.78345199880401\\
0.045959595959596	3.823629174643\\
0.0464646464646465	3.86378773160158\\
0.046969696969697	3.90392790610595\\
0.0474747474747475	3.94404992940431\\
0.047979797979798	3.98415402773244\\
0.0484848484848485	4.02424042247239\\
0.048989898989899	4.06430933030445\\
0.0494949494949495	4.10436096335294\\
0.05	4.14439552932598\\
};

\addplot [color=mycolor2, line width=1.5pt]
  table[row sep=crcr]{%
0	1\\
0.000505050505050505	1\\
0.00101010101010101	1\\
0.00151515151515152	1\\
0.00202020202020202	1\\
0.00252525252525253	1\\
0.00303030303030303	1\\
0.00353535353535354	1\\
0.00404040404040404	1\\
0.00454545454545455	1\\
0.00505050505050505	1\\
0.00555555555555556	1\\
0.00606060606060606	1\\
0.00656565656565657	1\\
0.00707070707070707	1\\
0.00757575757575758	1\\
0.00808080808080808	1\\
0.00858585858585859	1\\
0.00909090909090909	1\\
0.0095959595959596	1\\
0.0101010101010101	1\\
0.0106060606060606	1\\
0.0111111111111111	1\\
0.0116161616161616	1\\
0.0121212121212121	1\\
0.0126262626262626	1\\
0.0131313131313131	1\\
0.0136363636363636	1\\
0.0141414141414141	1\\
0.0146464646464646	1\\
0.0151515151515152	1\\
0.0156565656565657	1\\
0.0161616161616162	1\\
0.0166666666666667	1\\
0.0171717171717172	1\\
0.0176767676767677	1\\
0.0181818181818182	1\\
0.0186868686868687	1\\
0.0191919191919192	1\\
0.0196969696969697	1\\
0.0202020202020202	1\\
0.0207070707070707	1\\
0.0212121212121212	1\\
0.0217171717171717	1\\
0.0222222222222222	1\\
0.0227272727272727	1\\
0.0232323232323232	1\\
0.0237373737373737	1\\
0.0242424242424242	1\\
0.0247474747474748	1\\
0.0252525252525253	1\\
0.0257575757575758	1\\
0.0262626262626263	1\\
0.0267676767676768	1\\
0.0272727272727273	1\\
0.0277777777777778	1\\
0.0282828282828283	1\\
0.0287878787878788	1\\
0.0292929292929293	1\\
0.0297979797979798	1\\
0.0303030303030303	1\\
0.0308080808080808	1\\
0.0313131313131313	1\\
0.0318181818181818	1\\
0.0323232323232323	1\\
0.0328282828282828	1\\
0.0333333333333333	1\\
0.0338383838383838	1\\
0.0343434343434343	1\\
0.0348484848484849	1\\
0.0353535353535354	1\\
0.0358585858585859	1\\
0.0363636363636364	1\\
0.0368686868686869	1\\
0.0373737373737374	1\\
0.0378787878787879	1\\
0.0383838383838384	1\\
0.0388888888888889	1\\
0.0393939393939394	1\\
0.0398989898989899	1\\
0.0404040404040404	1\\
0.0409090909090909	1\\
0.0414141414141414	1\\
0.0419191919191919	1\\
0.0424242424242424	1\\
0.0429292929292929	1\\
0.0434343434343434	1\\
0.0439393939393939	1\\
0.0444444444444444	1\\
0.044949494949495	1\\
0.0454545454545455	1\\
0.045959595959596	1\\
0.0464646464646465	1\\
0.046969696969697	1\\
0.0474747474747475	1\\
0.047979797979798	1\\
0.0484848484848485	1\\
0.048989898989899	1\\
0.0494949494949495	1\\
0.05	1\\
};

\addplot [color=mycolor3, dashed, line width=1.5pt]
  table[row sep=crcr]{%
0.0112	0\\
0.0112	0.5\\
0.0112	1\\
0.0112	1.5\\
0.0112	2\\
0.0112	2.5\\
0.0112	3\\
0.0112	3.5\\
0.0112	4\\
0.0112	4.5\\
};

\end{axis}
\end{tikzpicture}%
\caption{\label{logEtaFigure}$f_1(\eta) = 1$ and $f_2(\eta)= 42\eta + 4\eta\log(1+\frac{C_1-1}{2\eta} + C_1)$ for $\eta\in [0, 0.05]$. The lower bound for $\eta$ is achieved at $\eta^* = 0.0112$.}
\end{figure}
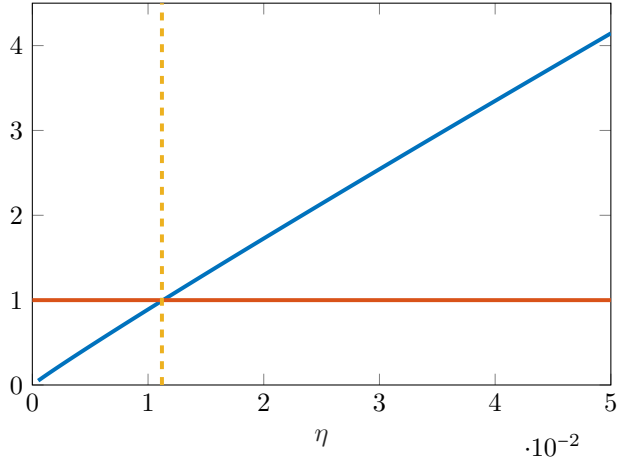

\end{proof}

The next lemma uses the eigenpolynomial $p$ to derive an approximate eigenpolynomial on the restriction $\mathcal{M}_1$ of the whole operator $\mathcal{M}$ to the atom located at $0$ (or equivalently $\tau = \tau_0$).

\begin{restatable}{lemma}{lemmaTroncaturetwoMultiSpikesKey}
\label{lemma:lemmaTroncature2MultiSpikes}
  Let us assume that there exists a real number $\lambda\in[0,9;1[$ and a polynomial $p$ with $||p||_\infty=1$ and such that
  \begin{equation*}
    \mathcal{M}(p)=\lambda p.
  \end{equation*}
  For all $K\in\{1,\dots,n\}$, we define $p_K$ and $p_{K,err}$ as in Lemma~\ref{lemma:lemmaTroncatureMultiSpikes}. 

We use $P_K$ to denote the projector that maps (orthogonally in the sense of the $\ell^2$ inner product) any polynomial on the subspace generated by the $D_0\left(\theta-\frac{k}{2n+1}\right)$ pour $k=-K,\dots,K$.
    
 We then have 
  \begin{align}
    ||P_K\mathcal{M}_1(p_K)-\lambda p_K||_{\ell^2} & \leq \frac{c_5}{\sqrt{n}}\frac{\log^2(1+K)}{1+K}\label{boundTruncationOpMk}
  \end{align}
  as well as 
  \begin{align}
    ||p_K||_{\ell^2} \geq \frac{c_4}{\sqrt{n}}.\label{boundOnTruncatedPK}
  \end{align}

If we use $q_K$ the vector encoding the coefficients of the polynomial $p_K$ in the basis generated by the polynomials $D_0\left(\theta-\frac{k}{2n+1}\right)$ for $k=-K,\dots,K$, and let $\mathcal{Q}_K$ to denote the matrix $P_K\mathcal{M}(p_K)$ in that same basis, the vector $q_K$ as well as the operator $Q_K$ must obey the following properties 
  \begin{enumerate}
  \item\label{item1LemmaMatVecTronques} $||q_K||_2 \geq c_4$.
  \item\label{item2LemmaMatVecTronques}$||\mathcal{Q}_Kq_K-\lambda q_K||_2 \leq c_5 \frac{\log^2(1+K)}{1+K}$.
  \item $q_K(0)=0$. \label{item:orth_qK_1}
  \item $|\scal{q_K}{v_K}|\leq \frac{c_6}{1+K}||v_K||$, où $v_K$ est le vecteur défini par
    \begin{align*}
      \forall k\in\{-K,\dots,K\}-\{0\},&\quad v_K(k)=\frac{(-1)^{k+1}}{\sin\left(\pi\frac{k}{2n+1}\right)},\\
      &\quad v_k(0) = 0.
    \end{align*} \label{item:orth_qK_2}
  \end{enumerate}
\end{restatable}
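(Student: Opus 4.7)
The plan is to reduce the polynomial estimates~\eqref{boundTruncationOpMk}--\eqref{boundOnTruncatedPK} to the vector estimates (items~\ref{item1LemmaMatVecTronques}--\ref{item2LemmaMatVecTronques}) through the Parseval-type identity
\begin{equation*}
\|q\|_{L^2}^2 \;=\; \frac{1}{2n+1}\sum_{|k|\leq n}\bigl|q(e^{2\pi i k/(2n+1)})\bigr|^2,
\end{equation*}
valid for every trigonometric polynomial of degree at most $n$: this follows from $\int D_0(\theta-a)D_0(\theta-b)\,d\theta=\delta_{ab}/(2n+1)$ at the sample grid, so the basis $\{D_0(\cdot-k/(2n+1))\}_{|k|\leq n}$ is $L^2$-orthogonal and interpolating (up to the factor $1/\sqrt{2n+1}$). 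The lower bound~\eqref{boundOnTruncatedPK} is then an immediate consequence of item~\ref{lemmaTroncatureItem7}: a single $|k^\star|\leq K$ with $|p(e^{2\pi i k^\star/(2n+1)})|\geq c_5$ already forces $\|q_K\|_2\geq c_5$, hence $\|p_K\|_{\ell^2}\geq c_5/\sqrt{2n+1}$.

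For~\eqref{boundTruncationOpMk}, I would plug $p=p_K+p_{K,err}$ and $\mathcal{M}=\mathcal{M}_1+(\mathcal{M}-\mathcal{M}_1)$ into $\mathcal{M}(p)=\lambda p$ to obtain
\begin{equation*}
\mathcal{M}_1(p_K)-\lambda p_K \;=\; \lambda p_{K,err}-\mathcal{M}_1(p_{K,err})-(\mathcal{M}-\mathcal{M}_1)(p),
\end{equation*}
and apply $P_K$. The first term on the right drops out because $p_{K,err}$ vanishes at every sample point $k/(2n+1)$ with $|k|\leq K$, so $P_K p_{K,err}=0$. Items~\ref{lemmaTroncatureItem5} and~\ref{lemmaTroncatureItem6} then control the $\ell^2$ norms of the remaining two pieces evaluated on the $2K+1$ samples in $[-K/n,K/n]$: item~\ref{lemmaTroncatureItem5} yields
\begin{equation*}
\|P_K\mathcal{M}_1(p_{K,err})\|_{\ell^2}^2 \;\leq\; \frac{1}{2n+1}\Bigl(\tfrac{c_3\log^2(1+K)}{1+K}\Bigr)^2\sum_{|k|\leq K}\tfrac{1}{(1+|k|/2)^2} \;\lesssim\; \frac{1}{n}\Bigl(\tfrac{\log^2(1+K)}{1+K}\Bigr)^2,
\end{equation*}
and item~\ref{lemmaTroncatureItem6} yields $\|P_K(\mathcal{M}-\mathcal{M}_1)(p)\|_{\ell^2}\lesssim \sqrt{K/n}/(1+\Delta n)$, which fits under the same rate in the regime where $\Delta n$ is large enough relative to $K$, a regime ensured by the hypothesis $\Delta\gtrsim|S|^2/n$ and $K=O(10^{10})$ of Theorem~\ref{MainTheorem}.

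For items~\ref{item:orth_qK_1}--\ref{item:orth_qK_2} I would exploit that any $\mathcal{M}$-eigenvector with $\lambda<1$ lies in the range of $\mathcal{A}$, since $\mathcal{A}\tilde{\mathcal{A}}^*(p)=(1-\lambda)p$. Because $\mathcal{A}(X)=\mathcal{T}(\mathcal{P}_U^\perp X\mathcal{P}_U^\perp)$ and $\mathcal{P}_U^\perp\psi(\tau)=0$ for every $\tau\in S$, every such $p(\theta)=\psi(\theta)^*X\psi(\theta)$ vanishes at the atoms together with its derivative: differentiating and evaluating at $\tau$ gives $p'(\tau)=2\,\Rp(\psi(\tau)^*X\psi'(\tau))=0$. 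Centering at $\tau_0=0$, both $p(0)=0$ and $p'(0)=0$, so $q_K(0)=p_K(0)=p(0)=0$, which is item~\ref{item:orth_qK_1}. For item~\ref{item:orth_qK_2} I would use the closed form $D_0'(k/(2n+1))=\pi(-1)^k/\sin(\pi k/(2n+1))$, obtained by differentiating $D_0(\theta)=\sin((2n+1)\pi\theta)/((2n+1)\sin(\pi\theta))$ and using $\sin(\pi k)=0$, $\cos(\pi k)=(-1)^k$. Combined with $D_0(-\theta)=D_0(\theta)$, this gives $D_0'(-k/(2n+1))=\pi v_K(k)$ for $k\neq 0$, so $p'(0)=0$ rewrites as
\begin{equation*}
\pi\,\langle q_K,v_K\rangle \;=\; -\pi\sum_{K<|k|\leq n} p(e^{2\pi i k/(2n+1)})\,v(k).
\end{equation*}
The tail is then controlled by combining the decay $|p(e^{2\pi i k/(2n+1)})|\lesssim C_1/(1+|k|)$ from Lemma~\ref{lemmaBoundPmultiAtomic} with the crude bound $|v(k)|\leq (2n+1)/(2|k|)$, giving $|\langle q_K,v_K\rangle|\lesssim n/K$; since $\|v_K\|=\Theta(n)$ (using $1/\sin^2(\pi k/(2n+1))\asymp (2n+1)^2/k^2$ on the summation range), this matches the required $c_6\|v_K\|/(1+K)$.

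The main technical obstacle is the aggregation step in~\eqref{boundTruncationOpMk}: summing the uniform bound of item~\ref{lemmaTroncatureItem6} over the $2K+1$ sample points produces a factor $\sqrt{K/n}$ that must be absorbed into the target rate $\log^2(1+K)/(\sqrt{n}(1+K))$, and this forces a quantitative compatibility $\Delta n\gtrsim K^{3/2}/\log^2(1+K)$ between the three scales. This is precisely the regime carved out by the separation hypothesis $\Delta\gtrsim|S|^2/n$ and the truncation scale $K=O(10^{10})$ prescribed by Theorem~\ref{MainTheorem}, together with the separation-dependent constants tracked through item~\ref{lemmaTroncatureItem6}.
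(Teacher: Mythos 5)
Your proposal follows essentially the same route as the paper's proof: Parseval on the interpolation grid to convert between $L^2$ of the polynomial and $\ell^2$ of its samples, item~\ref{lemmaTroncatureItem7} for the lower bound, the decomposition $\mathcal{M}_1 p_K-\lambda p_K=-\mathcal{M}_1 p_{K,\mathrm{err}}+(\mathcal{M}-\mathcal{M}_1)p+\lambda p_{K,\mathrm{err}}$ with $P_K p_{K,\mathrm{err}}=0$, and the fact that $\mathcal{P}_U^\perp\psi(\tau)=0$ forces $p(\tau_0)=p'(\tau_0)=0$, which after sampling produces the orthogonality relations in items~\ref{item:orth_qK_1}--\ref{item:orth_qK_2}. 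Your account of the $(\mathcal{M}-\mathcal{M}_1)p$ piece is in fact slightly more careful than the paper's own: you correctly track that the uniform bound $|(\mathcal{M}-\mathcal{M}_1)p|\leq c_4/(1+\Delta n)$ from item~\ref{lemmaTroncatureItem6} aggregates to $\sqrt{K/n}/(1+\Delta n)$ in $\ell^2$ over the $2K+1$ samples, whereas the paper simply absorbs it as $O(\log^2|S|/(n\Delta))$ without the $\sqrt{K}$ factor; your observation that the regime $\Delta n\gtrsim K^{3/2}/\log^2(1+K)$ is the right compatibility constraint is the correct reading of what the argument actually uses.

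The one genuine gap is in your tail estimate for item~\ref{item:orth_qK_2}. You write that Lemma~\ref{lemmaBoundPmultiAtomic} gives $|p(e^{2\pi i k/(2n+1)})|\lesssim C_1/(1+|k|)$, but it does not: the lemma yields the multi-atomic bound $|p(e^{2\pi i\theta})|\leq\min\bigl(1,\sum_{\alpha\in S}\frac{C_1}{1+n|\theta-\alpha|}\bigr)$, and for $K<|k|\leq n$ with $k/(2n+1)$ close to another atom $\alpha\neq\tau_0$ the value $|p(e^{2\pi i k/(2n+1)})|$ is $O(1)$, not $O(1/|k|)$. The paper handles this by splitting $\sum_{|k|>K}p(e^{2\pi i k/(2n+1)})v(k)$ into the contribution of the central atom (yielding the $\sim(2n+1)C_1/K$ leading term) plus the contributions from each $\alpha\neq\tau_0$, and shows the latter are $O\bigl((2n+1)\tfrac{\log^2|S|+\log|S|\log(n\Delta)}{n\Delta}\bigr)$, hence subdominant once $\Delta$ is large enough. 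Without that accounting, the claimed $|\langle q_K,v_K\rangle|\lesssim n/K$ does not follow from the lemma as stated; you would need to add the same separation-driven tail control that the paper carries out.
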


\begin{proof}

We start by showing~\eqref{boundTruncationOpMk} and~\eqref{boundOnTruncatedPK}. First note that for any polynomial $q(e^{2\pi i \theta}) = \sum_{k=-n}^n q_ke^{2\pi i k \theta}$, we have
\begin{align}
P_K q &= \sum_{k=-K}^{K} q(e^{2\pi i \frac{k}{2n+1}}) D_0(\theta - \frac{k}{2n+1})\\
& = \sum_{k=-K}^{K} q(e^{2\pi i \frac{k}{2n+1}})  \sum_{s=-n}^{n} \frac{1}{2n+1} e^{2\pi i (\theta - \frac{k}{2n+1})s}\\
& = \sum_{s = -n}^{n} e^{2\pi i \theta s} \left\{\sum_{k=-K}^{K} q(e^{2\pi i \frac{k}{2n+1}}) \frac{1}{2n+1} e^{-2\pi i \frac{k}{2n+1}s}\right\}
\end{align}
From this, the $\ell_2$ norm of the polynomial $P_K q$ can read as 
\begin{align}
\|P_K q\|_{\ell_2} &= \sqrt{\sum_{s=-n}^n \left|\left\{\sum_{k=-K}^{K} q(e^{2\pi i \frac{k}{2n+1}}) \frac{1}{2n+1} e^{2\pi i\left(-\frac{k}{2n+1}\right)}\right\}\right|^2}\label{BoundNorml2MPK}
\end{align}
Now note that the norm
\begin{align}
\left|\mathcal{P}_K\mathcal{M}_1p_K - \lambda p_K\right|&\leq \left|\mathcal{P}_K(\mathcal{M}_1 - \mathcal{M})p - \mathcal{P}_K\mathcal{M}_1 p_{K, \err} + \mathcal{P}_K\mathcal{M}p - \lambda p_K\right|\\
&\leq \left|\mathcal{P}_K(\mathcal{M}_1 - \mathcal{M})p\right| + \left|\mathcal{M}_1 p_{K,\err}\right|\\
&\leq O(\frac{\log^2|S|}{n\Delta}) \\
&+ \frac{(c_1+c_1'+C_1/\log(K+1))(2M_1''' +2 M_2\log(1+K)+1)\log(1+K)}{1+K}\frac{1}{1+n|\theta|} \label{boundM1minusLambdaPk}\\
&\leq c_5\frac{\log^2(1+K)}{1+K}\frac{1}{1+n|\theta|}\label{boundM1pKminusLambdapK}.
\end{align}
~\eqref{boundM1minusLambdaPk} follows from~\eqref{boundM1pKerrConstant} and the result of lemma~\ref{lemma:lemmaTroncatureMultiSpikes}. Taking $q(\theta) =( \mathcal{P}_K(\mathcal{M}_1 - \mathcal{M})p)(\theta)$ and substituting~\eqref{boundM1pKminusLambdapK} into~\eqref{BoundNorml2MPK}, we get 
\begin{align}
\left\|\mathcal{P}_K \mathcal{M}_1 p_K - \lambda p _K\right\|_{\ell_2}&\leq \frac{1}{\sqrt{2n+1}}c_5\frac{\log^2(1+K)}{1+K} \log(3+K)  + O(\frac{\log^2|S|}{n\Delta})
\end{align}
item~\ref{item2LemmaMatVecTronques} above follows the same idea. Since we compare the coefficients in the Dirichlet basis, the log disappears. Since $(\mathcal{Q}_Kq_K)_k = \left(\mathcal{P}_K\mathcal{M}_1p_K\right)(\frac{k}{2n+1})$, we can write 
\begin{align}
\left\|\mathcal{Q}_Kq_K - \lambda q_K\right\|_2^2 &\leq \sum_{k=-K}^K \left|(\mathcal{P}_K\mathcal{M}_1p_K)(\frac{k}{2n+1}) - \lambda p_K(\frac{k}{2n+1})\right|^2\\
&\leq \sum_{k=-K}^{K}\left| c_5\frac{\log^2(1+K)}{1+K}\frac{1}{1+n\frac{k}{2n+1}}\right|^2\\
&\leq \sum_{k=-K}^{K}\left| c_5\frac{\log^2(1+K)}{1+K}\frac{2}{2+k}\right|^2\\
&\leq 4c_5^2\left(\frac{\log^4(1+K)}{(1+K)^2}\right)\zeta(2)
\end{align}
for $n$ sufficiently large. From which we have 
\begin{align}
\left\|\mathcal{Q}_Kq_K - \lambda q_K\right\|_2&\leq 2c_5\left(\frac{\log^2(1+K)}{(1+K)}\right)\sqrt{\zeta(2)}\\
&\leq 2\sqrt{\zeta(2)} \frac{(c_1+c_1'+C_1/\log(K+1))(2M_1''' +2 M_2\log(1+K)+1)\log(1+K)}{1+K}
\end{align}

Item~\ref{item1LemmaMatVecTronques} follows from the lower bound on $\eta$ derived in lemma~\ref{lemmaTroncatureItem7} (see item~\ref{lemma:lemmaTroncatureMultiSpikes} in that same lemma or Fig.~\ref{logEtaFigure}). Together those imply $\|q_K\|_2\geq .0112$.

Items~\ref{item:orth_qK_1} and~\ref{item:orth_qK_2} are direct consequences of the defintion of $\mathcal{A}\tilde{\mathcal{A}}^*$. From the definition of the eigenpolynomial $p(e^{2\pi i \theta})$, we have 
\begin{align}
\lambda p(\tau) =
\mathcal{A}\tilde{\mathcal{A}}[p](\tau)  = \psi(\tau)^*\mathcal{P}_U^\perp \tilde{\mathcal{T}}[p]\mathcal{P}_U^\perp\psi(\tau) = 0, \quad \text{for any $\tau\in S$}
\end{align}
as well as 
\begin{align}
\lambda p'(\theta) &= (\psi(\theta)^*)'\mathcal{P}_U^\perp \tilde{T}^*[p] \mathcal{P}_U^\perp \psi(\theta) + \psi(\theta)^*\mathcal{P}_U^\perp \tilde{T}^*[p] \mathcal{P}_U^\perp \psi'(\theta)
\end{align}
Taking this last equation at $\theta = \tau$, we get 
\begin{align}
\lambda p'(\tau) &= \left.(\psi(\theta)^*)'\mathcal{P}_U^\perp \tilde{T}^*[p] \mathcal{P}_U^\perp \psi(\theta) + \psi(\theta)^*\mathcal{P}_U^\perp \tilde{T}^*[p] \mathcal{P}_U^\perp \psi'(\theta)\right|_{\theta = \tau} = 0.
\end{align}

%
%

Taking $\tau=\tau_0 = 0$ implies 
  \begin{align*}
    0 &= \sum_{k=-n}^n p(e^{2\pi i \frac{k}{2n+1}})D_0'\left(-\frac{k}{2n+1}\right)\\
      &= \pi \sum_{k=-n}^n p(e^{2\pi i \frac{k}{2n+1}})\frac{(-1)^{k+1}}{\sin\left(\frac{\pi k}{2n+1}\right)}.
  \end{align*}
  From which we recover the first part of item~\ref{item:orth_qK_2} with $(v_K)_k = \frac{(-1)^{k+1}}{\sin\left(\frac{\pi k}{2n+1}\right)}$. To conclude, we use the result of lemma~\ref{lemma:boundK} together with the lower bound $|\sin(\pi x)|\geq 2|x|$, $x\in[-\frac{1}{2}, \frac{1}{2}]$. We start by bounding the contributions arising from the atoms $\tau>0$, the contribution $\tau<0$ can be bounded following the exact same approach. We consider the following decomposition,

\begin{align}
\sum_{|k|\geq K} \min(1, \sum_{\tau>0} \frac{C_1}{1+n\left|\frac{k}{2n+1} - \tau\right|}) \frac{\pi(2n+1)}{2k}
&\leq \sum_{\tau>0} \sum_{k = k_\tau - C_1-1}^{k_\tau + C_1+1} \frac{\pi(2n+1)}{2k}\label{term100001}\\
&+ \sum_{\tau >0} \sum_{k=K}^{k_\tau - C_1-1} \frac{C_1}{1+n(\tau - \frac{k}{2n+1})} \frac{\pi(2n+1)}{2k}\label{term100002}\\
&+ \sum_{\tau>0} \sum_{k=-n}^{-K} \frac{C_1}{1+n(\tau - \frac{k}{2n+1})} \frac{\pi(2n+1)}{2(-k)}\label{term100003}\\
&+\sum_{\tau>0} \sum_{k = k_\tau+C_1+1}^{n} \frac{C_1}{1+n(\frac{k}{2n+1} - \tau)}\frac{\pi (2n+1)}{2k}\label{term100004}
\end{align}

Using the bounds on the harmonic numbers, we can write the sum as
\begin{align}
&\sum_{|k|\geq K} \min(1, \sum_{\tau>0} \frac{C_1}{1+n\left|\frac{k}{2n+1} - \tau\right|}) \frac{\pi(2n+1)}{2k}\\
&\lesssim \frac{\pi(2n+1)}{2} \frac{4}{(2n+1)\Delta} \log|S| + \frac{\pi}{2}\frac{8(C_1-1)}{(2n+1)\Delta}\\
&+ \sum_{\tau>0} \sum_{k=K}^{k_\tau - C_1-1} \frac{\pi C_1 (2n+1)}{2} \left(\frac{n/(2n+1)}{1+n(\tau - \frac{k}{2n+1})} + \frac{1}{k}\right)\frac{1}{1+n\tau}\\
&+ \sum_{\tau>0} \sum_{k<-K} \frac{\pi C_1(2n+1)}{2} \left(\frac{n/(2n+1)}{1+n(\tau - \frac{k}{2n+1})} - \frac{1}{-k}\right) \frac{1}{1+n\tau}\\
&+ \sum_{\tau>0} \sum_{k=k_\tau+C_1+1}^n \frac{\pi C_1(2n+1)}{2} \left(\frac{n/(2n+1)}{1+n(\frac{k}{2n+1} - \tau)} - \frac{1}{k}\right)\frac{1}{n\tau - 1}\\
&\leq  \frac{\pi(2n+1)}{2} \frac{4}{(2n+1)\Delta}\log|S| + \frac{\pi}{2} \frac{8(C_1-1)}{(2n+1)\Delta}\\
&+\sum_{\tau>0} \frac{\pi C_1(2n+1)}{2} \left(\frac{1}{C_1-1} - \int_{K}^{k_\tau - C_1-1} \frac{-\frac{n}{2n+1}}{1+n\left(\tau - \frac{k}{2n+1}\right)}\; dk\right)\frac{1}{1+n\tau}\\
&+ \sum_{\tau>0}\frac{\pi C_1(2n+1)}{2} \left( \frac{1}{K} + \int_{K}^{k_\tau - C_1-1} \frac{1}{k}\; dk\right)\frac{1}{1+n\tau}\\
&+ \sum_{\tau>0} \frac{\pi C_1(2n+1)}{2} \left(\frac{1}{C_1-1} - \int_{-n}^{-K} \frac{-\frac{n}{2n+1}}{1+n(\tau - \frac{k}{2n+1})}\; dk\right)\frac{1}{1+n\tau}\\
&+ \sum_{\tau>0} \frac{\pi C_1(2n+1)}{2} \left(\frac{1}{K} + \int_{-n}^{-K} \frac{-1}{k}\; dk\right)\frac{1}{1+n\tau}\\
&+\sum_{\tau>0} \frac{\pi C_1 (2n+1)}{2} \left(\frac{1}{C_1-1} + \int_{k_\tau + C_1-1}^n \frac{\frac{n}{2n+1}}{1+n\left(\frac{k}{2n+1} - \tau\right)}\; dk\right)\frac{1}{n\tau-1}\\
&+ \sum_{\tau>0} \frac{\pi C_1(2n+1)}{2} \left(-\int_{k_\tau}^{K} \frac{1}{k}\; dk\right)\frac{1}{n\tau-1}\\
&\leq  \frac{\pi(2n+1)}{2} \frac{4}{(2n+1)\Delta}\log|S| + \frac{\pi}{2} \frac{8(C_1-1)}{(2n+1)\Delta}\\
&+\sum_{\tau>0} \frac{\pi C_1 (2n+1)}{2} \left(\frac{1}{C_1-1} + \frac{1}{K} -\log(\frac{1/n + (C_1-1)/(2n+1)}{1/n + \tau - \frac{K}{2n+1}}) + \log(\frac{\tau - \frac{C_1-1}{2n+1}}{\frac{K}{2n+1}})\right)\frac{1}{1+n\tau}\label{logTermDerivativeDirichlet1}\\
&+ \sum_{\tau>0} \frac{\pi C_1(2n+1)}{2} \left(\frac{1}{C_1-1} + \frac{1}{K} - \log(\frac{1/n + \tau + \frac{K}{2n+1}}{1/n + \tau + 1/2}) + \log(\frac{K}{n})\right)\frac{1}{1+n\tau}\label{logTermDerivativeDirichlet2}\\
&+\sum_{\tau>0} \frac{\pi C_1(2n+1)}{2} \left(\frac{1}{C_1-1} + \log(\frac{1/n + 1/2-\tau}{1/n + \frac{C_1-1}{2n+1}}) - \log(\frac{n}{k_\tau - C_1-1})\right)\frac{1}{n\tau-1}\label{logTermDerivativeDirichlet3}
\end{align}
The logs appearing in the sums~\eqref{logTermDerivativeDirichlet1} to~\eqref{logTermDerivativeDirichlet3} can finally be controled by noting that
\begin{align}
\eqref{logTermDerivativeDirichlet1}&\leq \sum_{\tau>0} \frac{\pi C_1 (2n+1)}{2} \left[\frac{1}{C_1-1} + \frac{1}{K} + \log(1+\frac{(2n+1)\tau - K}{C_1-1}) + \log(\frac{(2n+1)\tau - C_1-1}{K})\right] \frac{1}{1+n\tau}\\
&\lesssim \sum_{\tau>0} (2n+1)\frac{1}{n|\tau|}\log(n|\tau|)\\
\eqref{logTermDerivativeDirichlet2}&\leq \sum_{\tau>0} \frac{\pi(2n+1)C_1}{2} \left(\frac{1}{C_1-1} + \frac{1}{K} + \log(1+ \frac{\tau(2n+1) + 1}{K}) + \log(2)\right)\\
&\lesssim \sum_{\tau>0} (2n+1)\frac{1}{n|\tau|}\log(n|\tau|)
\end{align}
For~\eqref{logTermDerivativeDirichlet3}, first note that $(1/2 -\tau+1/n) (\tau - \frac{C-1-1}{n})>1/n$ hence $\log(\frac{(1/n + 1/2-\tau) (\tau - \frac{C_1-1}{2n+1})}{1/n + \frac{C_1-1}{2n+1}})>0$. In particular, we have $\log(\frac{(1/n + 1/2-\tau) (\tau - \frac{C_1-1}{2n+1})}{1/n + \frac{C_1-1}{2n+1}}) \lesssim \log(n|\tau|)$, from which one can write 
\begin{align}
\eqref{logTermDerivativeDirichlet2}&\leq \sum_{\tau>0} \frac{\pi C_1 (2n+1)}{2} \left[\frac{1}{C_1-1} + \log(n|\tau|)\right] \frac{1}{n\tau - 1}
\end{align}
Combining those bounds, one can thus bound the whole sum~\eqref{term100001} to~\eqref{term100004} as 
\begin{align}
\sum_{|k|\geq K} \min(1, \sum_{\tau>0} \frac{C_1}{1+n\left|\frac{k}{2n+1} - \tau\right|}) \frac{\pi(2n+1)}{2k}
&\lesssim (2n+1) \frac{\log^2|S| + \log|S|\log(n\Delta)}{n\Delta}
\end{align}
The term arising from the atom used to define the truncations, $p_K, \mathcal{M}_K$ can be bounded as
\begin{align}
\sum_{|k|\geq K} \frac{C_1}{1+n \left|\frac{k}{2n+1}\right|} \frac{(2n+1)\pi}{2k} \leq (2n+1)\frac{3C_1 \pi}{K}. 
\end{align}

%
Using the bounds above, we can bound the inner product $|\langle q_K, v_K \rangle|\leq ( \frac{3C_1\pi}{2K} + \varepsilon)(2n+1)$. On the other hand, using upper and lower bounds on the sine, we can write 
\begin{align}
\frac{2n+1}{\pi}\leq \sqrt{\sum_{|k|\leq n}  \left|\frac{1}{\pi\frac{k}{2n+1}}\right|^2} \leq \|v_K\|_{\ell_2} &= \sqrt{\sum_{|k|\leq n} \left|\frac{(-1)^k}{\sin(\pi \frac{k}{2n+1})}\right|^2}\leq \sqrt{\sum_{|k|\leq n}  \left|\frac{1}{2\frac{k}{2n+1}}\right|^2}\leq \sqrt{\zeta(2)} \frac{2n+1}{2}
\end{align}
Combining this with the bound on the inner product $|\langle v_K, q_K\rangle|$ derived above, we get 
\begin{align}
\left|\langle q_K, v_K\rangle \right|&\leq \|v_K\|_{2} \pi \frac{3C_1 \pi}{K}
\end{align}

This concludes the proof of lemma~\ref{lemma:lemmaTroncature2MultiSpikes}.

%
%

\end{proof}

Moreover, if we use $q$ to denote the normalized vector $q_K/\|q_K\|_2$, we have the bound
\begin{align}
\|\mathcal{Q}_Kq - \lambda q\|_2&\leq 2\sqrt{\zeta(2)} \frac{(c_1+c_1'+C_1/\log(K+1))(2M_1''' +2 M_2\log(1+K)+1)\log(1+K)}{1+K}\|q_K\|^{-1}_2\\
&\leq 2(.0112)^{-1}\sqrt{\zeta(2)} \frac{(c_1+c_1'+C_1/\log(K+1))(2M_1''' +2 M_2\log(1+K)+1)\log(1+K)}{1+K}
\end{align}
The second line follows from the estimate $|p(e^{2\pi i \frac{k}{2n+1}})|\geq \eta$ of lemma~\ref{lemma:lemmaTroncatureMultiSpikes}. 
\begin{align}
\|(\mathcal{Q}_K - \Id )q \|_2&\leq 0.1 + 2(.0112)^{-1}\sqrt{\zeta(2)} \frac{(c_1+c_1'+C_1/\log(K+1))(2M_1''' +2 M_2\log(1+K)+1)\log(1+K)}{1+K}
\end{align}
As shown in Fig.~\ref{finalBoundOnK}, taking $K = 1e13$ gives an upper bound on the deviation $\|(\mathcal{Q}_K - \Id)  q_K\|\leq 0.1 + .015$

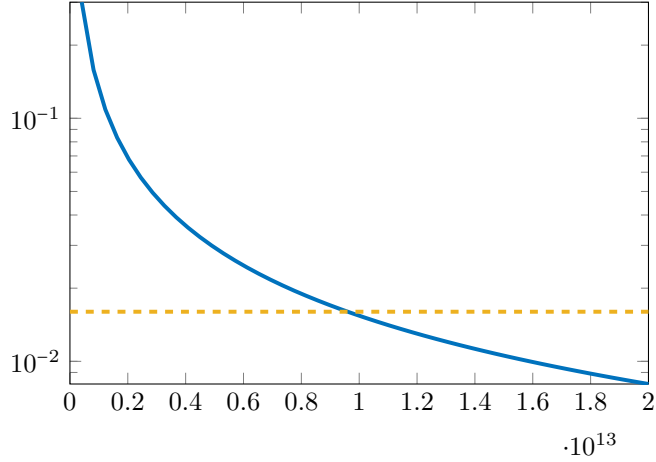
\begin{figure}
\centering
%
%
\definecolor{mycolor1}{rgb}{0.00000,0.44700,0.74100}%
\definecolor{mycolor2}{rgb}{0.85000,0.32500,0.09800}%
\definecolor{mycolor3}{rgb}{0.92900,0.69400,0.12500}%
\begin{tikzpicture}

\begin{axis}[%
width=3.028in,
height=2in,
at={(1.011in,0.642in)},
scale only axis,
unbounded coords=jump,
xmin=0,
xmax=20000000000000,
ymode=log,
ymin=0.00806752452784543,
ymax=0.300463472262949,
yminorticks=true,
axis background/.style={fill=white},
legend style={legend cell align=left, align=left, draw=white!15!black}
]
\addplot [color=mycolor1, line width=1.5pt]
  table[row sep=crcr]{%
1	nan\\
408163265307.102	0.300463472262949\\
816326530613.204	0.158205124585731\\
1224489795919.31	0.108642825467797\\
1632653061225.41	0.0831917005898\\
2040816326531.51	0.0676239259499021\\
2448979591837.61	0.0570874555580331\\
2857142857143.71	0.0494673305888693\\
3265306122449.82	0.0436916414782799\\
3673469387755.92	0.0391580980051531\\
4081632653062.02	0.0355017877713484\\
4489795918368.12	0.0324884993690769\\
4897959183674.22	0.029960897214361\\
5306122448980.33	0.0278093052354992\\
5714285714286.43	0.0259548913203706\\
6122448979592.53	0.024339511988896\\
6530612244898.63	0.0229193273482885\\
6938775510204.73	0.021660646824965\\
7346938775510.84	0.0205371455015342\\
7755102040816.94	0.0195279504338301\\
8163265306123.04	0.0186162950978402\\
8571428571429.14	0.0177885542494961\\
8979591836735.24	0.0170335392266148\\
9387755102041.35	0.0163419751337809\\
9795918367347.45	0.0157061073364924\\
10204081632653.6	0.0151194013844557\\
10612244897959.7	0.014576311438057\\
11020408163265.8	0.014072099599707\\
11428571428571.9	0.0136026935402731\\
11836734693878	0.0131645732618596\\
12244897959184.1	0.012754680261129\\
12653061224490.2	0.0123703440817588\\
13061224489796.3	0.0120092224874485\\
13469387755102.4	0.0116692523931471\\
13877551020408.5	0.0113486093602298\\
14285714285714.6	0.0110456739588473\\
14693877551020.7	0.0107590036746726\\
15102040816326.8	0.0104873093209534\\
15510204081632.9	0.0102294351337505\\
15918367346939	0.0099843418955065\\
16326530612245.1	0.00975109256198718\\
16734693877551.2	0.00952883996923105\\
17142857142857.3	0.00931681627711886\\
17551020408163.4	0.00911432386953722\\
17959183673469.5	0.00892072748160392\\
18367346938775.6	0.00873544736489184\\
18775510204081.7	0.00855795333419716\\
19183673469387.8	0.00838775956580778\\
19591836734693.9	0.0082244200387223\\
20000000000000	0.00806752452784543\\
};

\addplot [color=mycolor3, dashed, line width=1.5pt]
  table[row sep=crcr]{%
1	0.016\\
1052631578948.32	0.016\\
2105263157895.63	0.016\\
3157894736842.95	0.016\\
4210526315790.26	0.016\\
5263157894737.58	0.016\\
6315789473684.89	0.016\\
7368421052632.21	0.016\\
8421052631579.53	0.016\\
9473684210526.84	0.016\\
10526315789474.2	0.016\\
11578947368421.5	0.016\\
12631578947368.8	0.016\\
13684210526316.1	0.016\\
14736842105263.4	0.016\\
15789473684210.7	0.016\\
16842105263158.1	0.016\\
17894736842105.4	0.016\\
18947368421052.7	0.016\\
20000000000000	0.016\\
};

\end{axis}

\end{tikzpicture}%
\caption{\label{finalBoundOnK}$f(K) = 2(.0112)^{-1}\sqrt{\zeta(2)} \frac{(c_1+c_1'+C_1/\log(K+1))(2M_1''' +2 M_2\log(1+K)+1)\log(1+K)}{1+K}$.}
\end{figure}

For that value of $K$, the proof of lemma~\ref{lemma:lemmaTroncature2MultiSpikes} also gives the upper bound $|\langle q, v_K \rangle| = |\langle \frac{q_K}{\|q_K\|}, v_K \rangle| \leq \|q_K\|^{-1}\|v_K\|_2 \frac{3\pi^2C_1}{K}\leq \|v_K\|1e^{-6}$. From this, as soon as we assume an eigenvalue of $\mathcal{A}\tilde{\mathcal{A}}^*$ smaller than $.1$, the following four conditions must necessairily be satisfied

\begin{enumerate}
\item \label{propertyFinalKNonAssympt1} $||q||_2=1$.
\item \label{propertyFinalKNonAssympt2} $||(\mathcal{Q}_K-\mathop{Id})q||_2 \leq 0,1 + \epsilon_1$.
\item \label{propertyFinalKNonAssympt3} $q(0)=0$.
\item \label{propertyFinalKNonAssympt4} $|\scal{q}{v_K}|\leq \varepsilon_2 ||v_K||$.
\end{enumerate}

where $\varepsilon_1 = .016$ and $\varepsilon_2 = 1e^{-6}$. 

%
%

As we have assumed that for all $N$, there always exists a $f_c = n\geq N$ with $\lambda(\mathcal{M})\geq 0.9$
(i.e. by contradiction of $\exists$ $\Omega_c$ or $N$ such that for all $n\geq N$, $\lambda(\mathcal{M})\leq 0.9$), we can always build a subsequence $n_1, n_2, \ldots, n_R$ with corresponding $(Q_K^n, v_K^n)$ such that properties~\ref{propertyFinalKNonAssympt1} to~\ref{propertyFinalKNonAssympt4} above are satisfied for each element in the sequence. In particular, that subsequence necessarily converges to a pair $(Q_K^n, v_K^n)$ satisfying those same properties and since $\lim_{R\rightarrow \infty} (Q_K^{n_R}, v_K^{n_R})$ is well defined, the limit $\lim_{n\rightarrow \infty} (Q_K^n, v_K^n)$ also satisfies properties~\ref{propertyFinalKNonAssympt1} to~\ref{propertyFinalKNonAssympt4} (i.e. The limit of a convergent sequence in a topological space equals the limit of any subsequence of it.). The corresponding vector $q_\infty\in\mathbb{R}^{2K+1}$ must then satisfy


\begin{enumerate}
\item \label{propertyFinalKAssympt1} $||q_\infty||=1$.
\item \label{propertyFinalKAssympt2} $||(\mathcal{Q}_K^\infty-\mathop{Id})q_\infty||_2\leq 0,1 + \epsilon$. 
\item \label{propertyFinalKAssympt3} $q_\infty(0)=0$.
\item \label{propertyFinalKAssympt4} $|\scal{q_\infty}{w_\infty}|\leq \epsilon$, 
\end{enumerate}

Where the limit $w_\infty \equiv \lim_{n\rightarrow \infty} \frac{v_K}{\|v_K\|}$ can be derived by first noting that
\begin{align}
\frac{v_K(k)}{\|v_K\|} = \frac{(-1)^{k+1}}{\sin(\pi \frac{k}{2n+1})} \frac{1}{\sqrt{\sum_{k\neq 0} \frac{1}{\sin^2(\pi \frac{k}{2n+1})}}} = \frac{(-1)^{k+1}}{\pi \frac{k}{2n+1} - \left(\frac{\pi k}{2n+1}\right)^3 + h.o.t} \frac{1}{\sqrt{\sum_{k\neq 0} \frac{1}{\left(\frac{\pi k}{2n+1}\right)^2 + h.o.t}}}
\end{align}
and then taking the limit, which gives
\begin{align}
v_K^\infty(k) & = \left\{\begin{array}{ll}
\frac{1}{k} \frac{(-1)^k}{\sqrt{\sum_{k\neq 0} \frac{1}{k^2}}} & \text{for $k\neq 0$} \\
 0 & \text{otherwise}
\end{array}\right.
\end{align}

From this, if we let $P^\infty\in\mathbb{R}^{\{-K,\dots,K\}}$ to denote the orthogonal projector on $\text{span}(e_0,w_\infty)$ ($e_0$ is the zero vector in which the coordinate corresponding to the zero frequency has been set to $1$). From items~\ref{propertyFinalKAssympt1} to~\ref{propertyFinalKAssympt4} above, we can write $||P^\infty q_\infty||\leq \epsilon$ and the asymptotic vector $q^{\infty}$ as well as the asymptotic operator $\mathcal{Q}_K\infty$ must satisfy
\begin{align*}
  ||(\mathop{Id}-\mathcal{Q}_K^\infty+P^\infty)q_\infty||
  & \leq ||(\mathop{Id}-\mathcal{Q}_K^\infty)q_\infty|| + ||P^\infty q_\infty||\\
  & \leq 0,1 + \epsilon_1 + \varepsilon_2.
\end{align*}
From the definition of the least singular value, $\sigma_{n}(A) = \min_{\|\bs x\|=1}\| A \bs x\|$, implies that the operator $(Id - \mathcal{Q}^\infty_K + P^\infty)$ must have at least one singular value smaller than $0.1 + 2\varepsilon$. Disproving the lower bound on the eigenvalues of $\mathcal{A}\tilde{\mathcal{A}}^*$ (hence concluding the contradiction) can thus be done by proving that the asymptotic matrix $(Id - \mathcal{Q}^\infty_K + P^\infty)$ has its singular values above $0.1+\varepsilon$.

In practice, all the singular values of this truncated matrix are above $0.5$. Verifying this on a matrix of size $K = 1e^{13}$ is challenging. The next two sections focus on reducing the numerical complexity by (1) using the decay in the entries of the asymptotic matrix $\mathcal{Q}^\infty_K$ to further truncate this matrix and (2) write the product $\mathcal{Q}_K^\infty q$ for any vector $q$ as convolutions (which can then be applied through FFTs) with special functions. The first simplification is carried out in section~\ref{entryDecay}. The second is discussed and then used to perform power iterations in section~\ref{sec:PowerIterations}. 

\subsection{\label{asymptoticOperator}Asymptotic operator}

We now derive the expression of the asymptotic operator $\mathcal{Q}_K^\infty$. The matrix 
\begin{align}
Q_K^\infty &= \lim_{n\rightarrow \infty} Q_K^n = \lim_{n\rightarrow \infty} \mathcal{P}_K
\end{align} 
is computed from the one atomic operator by setting $p = D_0(\theta - \frac{\ell_2}{2n+1})$ and taking the value of this operator at $\theta = \frac{\ell_1}{2n+1}$. To compute the limit $\lim_{n\rightarrow }Q_K^n[\ell_1, \ell_2]$ of each $[\ell_1, \ell_2]$ entry, we go back to the expression of the one atomic operator derived in section~\ref{boundAAoneatomic} and write 
\begin{align}
Q_K^\infty[\ell_1,\ell_2] & = \lim_{n\rightarrow \infty} \overline{D}(e^{2\pi i \frac{\ell_1}{2n+1}})u^*\tilde{T}[D_0(s - \frac{\ell_2}{2n+1})] \psi(\frac{\ell_1}{2n+1})\\
&+ \lim_{n\rightarrow \infty} D(e^{2\pi i \frac{\ell_1}{2n+1}}) \psi(\frac{\ell_1}{2n+1}) \tilde{T}^*[D_0(s - \frac{\ell_2}{2n+1})] u\\
& - \lim_{n\rightarrow \infty} |D(e^{2\pi i \frac{\ell_1}{2n+1}})|^2 D_0(-\frac{\ell_2}{2n+1})\\
& = \lim_{n\rightarrow \infty} \overline{D}(e^{2\pi i \frac{\ell_1}{2n+1}}) \lim_{n\rightarrow \infty} u^*\tilde{T}[D_0(s - \frac{\ell_2}{2n+1})] \psi(\frac{\ell_1}{2n+1})\\
&+ \lim_{n\rightarrow \infty} D(e^{2\pi i \frac{\ell_1}{2n+1}}) \lim_{n\rightarrow \infty} \psi(\frac{\ell_1}{2n+1}) \tilde{T}^*[D_0(s-\frac{\ell_2}{2n+1})]u\\
& - \lim_{n\rightarrow \infty} \left|D(e^{2\pi i \frac{\ell_1}{2n+1}})\right|^2 D_0(-\frac{\ell_2}{2n+1}).
\end{align}
We now use $Q^\infty_{K,1}[\ell_1, \ell_2]$, $Q_{K,2}^\infty[\ell_1, \ell_2]$ as well as $Q_{K,3}^\infty[\ell_1, \ell_2]$ to denote each of the terms
\begin{align}
Q_{K,1}^\infty& = \lim_{n\rightarrow \infty} u^*\tilde{T}[D_0(s - \frac{\ell_2}{2n+1})] \psi(\frac{\ell_1}{2n+1})\\
Q_{K,2}^\infty& = \lim_{n\rightarrow \infty} \psi(\frac{\ell_1}{2n+1})\tilde{T}[D_0(s- \frac{\ell_2}{2n+1})]u\\
Q_{K,3}&  =\lim_{n\rightarrow \infty} \left|D(e^{2\pi i \frac{\ell_1}{2n+1}})\right|^2 D_0(-\frac{\ell_2}{2n+1}). 
\end{align}
Recall that $D(e^{2\pi i \theta}) = \frac{\sin((n+1)\pi \theta)}{\sin(\pi \theta)}$ from which we have
\begin{align}
D(e^{2\pi i \frac{\ell_1}{2n+1}})& = \frac{1}{n+1}\frac{\sin((2n+1)\pi \frac{\ell_1}{2n+1})}{\sin(\pi \frac{\ell_1}{2n+1})} = \left\{\begin{array}{ll} 
2\frac{\sin(\pi \ell_1/2)}{\pi \ell_1}& \text{when $\ell_1\neq 0$ }\\
1 & \text{otherwise}. \end{array}\right.\label{boundDirichletOnlyAssymptotic}
\end{align}
Then note that
\begin{align}
\left|D(e^{2\pi i \theta})\right|^2 D_0(\theta - \frac{\ell_2}{2n+1}) & = \frac{4\sin^2(\pi \frac{\ell_1}{2})}{\pi^2 \ell_1^2} \frac{\sin(\pi \ell_2)}{\pi \ell_2}\label{boundDirichletModulusSquared}  
\end{align}
when $\ell_1, \ell_2\neq 0$. When either $\ell_1$ or $\ell_2$ vanishes, we use 
\begin{align}
\left|D(e^{2\pi i \theta})\right|^2 D_0(\theta - \frac{\ell_2}{2n+1}) & = \left\{\begin{array}{ll}
\frac{\sin(\pi \ell_2)}{\pi \ell_2}& \text{when $\ell_1 = 0, \ell_2\neq 0$}\\
\frac{4\sin^2(\pi \ell_1/2)}{\pi^2\ell_1^2} & \text{when $\ell_2 = 0, \ell_1\neq 0$}\\
1 & \text{when $\ell_1 = \ell_2 = 0$}
\end{array}\right.
\end{align}
In the one atomic case, we have $Q_{K,1}^\infty = Q_{K,2}^\infty$. One can thus focus on $Q_{K,1}^\infty$. Recall that for a general polynomial $p$, the term $u^*\tilde{T}[p]\psi(\theta)$ can expand as 
\begin{align}
&\lim_{n\rightarrow \infty}u^* \tilde{T}[D_0(s- \frac{\ell_2}{2n+1})]\psi(\frac{\ell_1}{2n+1})\\
& =\lim_{n\rightarrow \infty} \sum_{s=0}^n \frac{p_s}{n+1-|s|} \sum_{t= -n/2+s}^{n/2} e^{2\pi i t \theta} + \sum_{s= -n}^0 \frac{p_s}{n+1-|s|} \sum_{t = -n/2}^{n/2+s} e^{2\pi i t\theta}\\
& = \lim_{n\rightarrow \infty}\sum_{s=0}^n \frac{p_s}{n+1-|s|} e^{-\pi i n \theta} \left(\sum_{t= s}^{n} e^{2\pi i t\theta}\right) +\sum_{s=-n}^0 \frac{p_s}{n+1-|s|} e^{-\pi i n \theta} \left(\sum_{t=0}^{n+s} e^{2\pi i  t \theta}\right)\\
& = \lim_{n\rightarrow \infty}\sum_{s=0}^n \frac{p_s}{n+1-|s|} e^{-\pi i n \theta} e^{2\pi i s\theta} \left(\sum_{t=0}^{n-s} e^{2\pi i t \theta}\right) + \sum_{s=-n}^0 \frac{p_s}{n+1-|s|} e^{-\pi i n \theta} \left(\sum_{t=0}^{n+s} e^{2\pi i t \theta}\right)\\
& = \lim_{n\rightarrow \infty}\sum_{s=0}^n \frac{p_s}{n+1-|s|} e^{-\pi i n \theta} e^{2\pi i s \theta} \left(\frac{1 - e^{2\pi i (n-s+1)\theta}}{1 - e^{2\pi i \theta}}\right)+ \sum_{s=-n}^0 \frac{p_s}{n+1-|s|} e^{-\pi i n \theta} \left(\frac{1 - e^{2\pi i (n+s+1)\theta}}{1-e^{2\pi i \theta}}\right)\\
& = \lim_{n\rightarrow \infty}\frac{e^{-\pi i n \theta}}{1-e^{2\pi i \theta}} \sum_{s=0}^n \frac{p_s}{n+1-s} e^{2\pi i s \theta} \left(1 - e^{2\pi i (n+1-s)\theta }\right)+ \frac{e^{-\pi i n \theta}}{1 - e^{2\pi i \theta}} \sum_{s=-n}^{0} \frac{p_s}{n+1+s} \left(1 - e^{2\pi i (n+1+s)\theta}\right)
\end{align}
Now letting $\theta = \frac{\ell_1}{2n+1}$ and using $p(\theta) = D_0(\theta - \frac{\ell_2}{2n+1})$, we get 
\begin{align}
\lim_{n\rightarrow \infty}Q_{K,1}^{n}[\ell_1, \ell_2]& =\lim_{n\rightarrow \infty}\mathcal{P}_K\left(u^*\tilde{T}[D_0(\theta - \frac{\ell_2}{2n+1})]\psi(\frac{\ell_1}{2n+1})\right)[\frac{\ell_1}{2n+1}]\\
& =  \lim_{n\rightarrow \infty} \frac{e^{-\pi in \frac{\ell_1}{2n+1}}}{\frac{2\pi i \ell_1}{2n+1}} \sum_{s=0}^{n} \frac{1}{2n+1} \frac{e^{-2\pi i \frac{\ell_2}{2n+1}s}}{n+1-s} e^{2\pi i s \frac{\ell_1}{2n+1}} \left(1 - e^{2\pi i (n-s+1)\frac{\ell_1}{2n+1}}\right)\\
&+ \lim_{n\rightarrow \infty}\frac{e^{-\pi i n \frac{\ell_1}{2n+1}}}{2\pi i \frac{\ell_1}{2n+1}} \sum_{s=-n}^0 \frac{1}{2n+1} \frac{e^{-2\pi i \frac{\ell_2}{2n+1}}}{n+1+s} \left(1 - e^{2\pi i (n+1+s)\frac{\ell_1}{2n+1}}\right)\\
& = \lim_{n\rightarrow \infty}\frac{e^{-\pi i n \frac{\ell_1}{2n+1}}}{2\pi i \ell_1} \sum_{s=0}^{n}  \frac{e^{\frac{n+1}{2n+1} (\ell_1-\ell_2)2\pi i }}{n+1-s} \left(e^{\frac{2\pi i }{2n+1}(\ell_2-\ell_1)(n+1-s)} - e^{\frac{2\pi i (n+1-s)\ell_2}{2n+1}}\right)\\
&+\lim_{n\rightarrow \infty} \frac{e^{-\pi i n \frac{\ell_1}{2n+1}}}{2\pi i \ell_1} \sum_{s=-n}^{0} \frac{e^{-2\pi i \frac{\ell_2}{2n+1}s}}{n+1+s} \left(1 - e^{2\pi i (n+1+s)\frac{\ell_1}{2n+1}}\right)\\
& = \frac{e^{-\pi i n \frac{\ell_1}{2n+1}}}{2\pi i \ell_1} e^{\frac{n+1}{2n+1}(\ell_1-\ell_2)2\pi i } \int_0^1 \frac{1}{s} \left(e^{2\pi i (\ell_2-\ell_1)s} - e^{2\pi i \ell_2s}\right)\; ds\\
&+ \lim_{n\rightarrow \infty}\frac{e^{-\pi i n \frac{\ell_1}{2n+1}}}{2\pi i \ell_1} \sum_{s=-n}^{0} \frac{e^{2\pi i \ell_2 \frac{n+1}{2n+1}}}{n+1+s} \left[e^{-2\pi i \frac{ (n+1+s)}{2n+1}} - e^{2\pi i (n+1+s)\frac{(\ell_1-\ell_2)}{2n+1}}\right]\\
& = \frac{e^{-\pi i n \frac{\ell_1}{2n+1}}}{2\pi i \ell_1} e^{\frac{n+1}{2n+1}(\ell_1-\ell_2)2\pi i } \int_0^1 \frac{1}{s} \left(e^{2\pi i (\ell_2-\ell_1)s} - e^{2\pi i \ell_2s}\right)\; ds\\
&+\frac{e^{-\pi i n \frac{\ell_1}{2n+1}}}{2\pi i \ell_1} e^{2\pi i \ell_2 \frac{(n+1)}{2n+1}} \int_0^1 \frac{1}{s}\left[e^{-2\pi i \ell_2 s} - e^{2\pi i (\ell_1 - \ell_2)s}\right]\; ds\label{lastLineExpressionQ1Kinf}
\end{align}
From~\eqref{lastLineExpressionQ1Kinf}, we thus get 
\begin{align}
Q_{K,1}^\infty[\ell_1, \ell_2]& = \frac{e^{-\pi i n\frac{\ell_1}{2n+1}}}{2\pi i \ell_1} e^{\frac{n+1}{2n+1} (\ell_1-\ell_2)2\pi i }\int_0^1 \frac{1}{s} \left[e^{\pi i (\ell_1-\ell_2)s} - e^{\pi i \ell_2 s}\right]\; ds\\
&+ \frac{e^{-\pi i n \frac{\ell_1}{2n+1}}}{2\pi i \ell_1} e^{2\pi i \frac{\ell_2 (n+1)}{2n+1}} \int_0^1 \frac{1}{s} \left[e^{-\pi i \ell_2 s} - e^{\pi i (\ell_1-\ell_2)s}\right]\; ds
\end{align}

To replace the integral with convolutions, we now use $\Ci(z) = -\int_{z}^\infty \frac{\cos(t)}{t}\; dt$, $\Si(z) = \int_0^z \frac{\sin(t)}{t}\; dt$. Finally, we use $\Gamma[a, z]$ to denote the incomplete Gamma function $\Gamma[a, z] = \int_z^\infty t^{a-1}e^{-t}$. Whenever $a = 0$, this integral reduces to the exponential integral. I.e. $\Gamma[0,z] = \int_z^\infty \frac{e^{-t}}{t}\; dt = E_1(z)$. $\gamma$ here denotes the Euler–Mascheroni constant, $\gamma\approx 0.577$. 

\begin{itemize}
\item When $\ell_1, \ell_2\neq 0$, we have $\lim_{n\rightarrow \infty} D_n(e^{2\pi i \frac{\ell_1}{2n+1}}) = \frac{2}{\pi \ell_1}\sin(\pi \ell_1/2)$ as well as 
\begin{align}
Q_{K,1}^\infty[\ell_1, \ell_2]& = -e^{-\pi i \ell_1/2} \frac{1}{2\pi i \ell_1} e^{(\ell_1-\ell_2)\pi i } \int_{0}^1  \frac{1}{s} \left[e^{\pi i (\ell_2-\ell_1)s} - e^{\pi i \ell_2 s}\right]\; ds\\
& - e^{-\pi i \ell_1/2} \frac{1}{2\pi i \ell_1} e^{\pi i \ell_2} \int_{0}^1 \frac{1}{s} \left[e^{-\pi i \ell_2 s } - e^{\pi i (\ell_1 - \ell_2)s}\right]\; ds\\
& = -e^{-\pi i \ell_1/2} \frac{1}{2\pi i \ell_1} e^{(\ell_1-\ell_2)\pi i } \left[\Ci((\ell_1-\ell_2)\pi) -\Ci(\ell_2\pi ) - \log(\ell_1 - \ell_2) + \log(\ell_2)\right] \label{term1ExpressionQ1Kinf}\\
& -e^{-\pi i \ell_1/2} \frac{1}{2\pi i \ell_1} e^{(\ell_1-\ell_2)\pi i } \left[ - i \Si((\ell_1 - \ell_2)\pi) - i \Si(\ell_2 \pi)\right]\\
&- e^{-\pi i \ell_1/2} \frac{1}{2\pi i \ell_1} e^{\pi i \ell_2} \left[ \Gamma[0, -i(\ell_1 - \ell_2)\pi] - \Gamma[0, i \ell_2 \pi ] + \log(-i(\ell_1 - \ell_2)) - \log(i \ell_2)\right]. \label{term3ExpressionQ1Kinf}
\end{align}

\item When $\ell_1 = 0$, $\ell_2\neq 0$, we have $\lim_{n\rightarrow \infty} D_n(e^{2\pi i \frac{\ell_1}{2n+1}})=1$ as well as
\begin{align}
Q_{K,1}^\infty[\ell_1, \ell_2]= Q_{K,2}^\infty[\ell_1, \ell_2]&  =\frac{1}{2} e^{-\pi i \ell_2} \frac{1}{\pi i \ell_2} \left[e^{\pi i \ell_2} - 1\right] + \frac{1}{2} e^{\pi i \ell_2} \frac{(-1)}{\pi i \ell_2} \left[e^{-\pi i \ell_2} - 1\right]
\end{align}
\item When $\ell_1\neq 0$ and $\ell_2=0$, we have $\lim_{n\rightarrow \infty} D_n(e^{2\pi i \frac{\ell_1}{2n+1}})=\frac{2}{\pi \ell_1}\sin(\pi \ell_1/2)$ as well as 
\begin{align}
Q_{K,1}^\infty[\ell_1, \ell_2]&= - e^{-\pi i \ell_1/2} \frac{1}{2\pi i \ell_1} e^{(\ell_1 - \ell_2)\pi i } \int_{0}^1 \frac{1}{s} \left[e^{\pi i (\ell_2 - \ell_1)s} - e^{\pi i \ell_2 s}\right]\; ds\\
& - \frac{1}{2\pi i \ell_1}e^{-\pi i \ell_1/2} e^{\pi i \ell_2} \int_{0}^1 \frac{1}{s} \left[e^{-\pi i \ell_2 s} - e^{\pi i (\ell_1 - \ell_2)s}\right]\; ds\\
& = - e^{-\pi i \ell_1/2} \frac{1}{2\pi i \ell_1} e^{(\ell_1 - \ell_2)\pi i } \left[-\gamma + \Ci(\ell_1\pi) - \log(\ell_1\pi) - i\Si(\ell_1\pi) \right]\\
&- \frac{1}{2\pi i \ell_1}e^{-\pi i \ell_1/2} e^{\pi i \ell_2} \left[\gamma + \frac{i\pi}{4} - \Ci(-\ell_1\pi) + \frac{\log(-\ell_1)}{2} + \frac{1}{2}\log(i\ell_1) + \log(\pi) -i \Si(\ell_1\pi)\right]
\end{align}
\item Finally, when $\ell_1=  \ell_2 =  0$, we use $\lim_{n\rightarrow \infty} D_n(e^{2\pi i \frac{\ell_1}{2n+1}})=1$, as well as $Q_{K,1}[0,0]^\infty = Q_{K,2}[0,0]^\infty = 1$. 
\end{itemize}

Grouping~\eqref{term1ExpressionQ1Kinf} to~\eqref{term3ExpressionQ1Kinf} together with~\eqref{boundDirichletOnlyAssymptotic} and~\eqref{boundDirichletModulusSquared} gives the result of the lemma
\begin{align}
\left(Q_K^\infty x\right)[\ell_1] &= -2e^{-\pi i \ell_1/2} \frac{1}{2\pi i \ell_1} e^{(\ell_1-\ell_2)\pi i } \left(2\frac{\sin(\pi \ell_1/2)}{\pi \ell_1}\right)\left[\Ci((\ell_1-\ell_2)\pi) -\Ci(\ell_2\pi ) \right]x[\ell_2]\\
& -2e^{-\pi i \ell_1/2} \frac{1}{2\pi i \ell_1} e^{(\ell_1-\ell_2)\pi i }\left(2\frac{\sin(\pi \ell_1/2)}{\pi \ell_1}\right) \left[- \log(\ell_1 - \ell_2) + \log(\ell_2)\right] x[\ell_2]\\
& -2e^{-\pi i \ell_1/2} \frac{1}{2\pi i \ell_1} e^{(\ell_1-\ell_2)\pi i }\left(2\frac{\sin(\pi \ell_1/2)}{\pi \ell_1}\right) \left[ - i \Si((\ell_1 - \ell_2)\pi) - i \Si(\ell_2 \pi)\right]x[\ell_2]\\
&- 2e^{-\pi i \ell_1/2} \frac{1}{2\pi i \ell_1} e^{\pi i \ell_2}\left(2\frac{\sin(\pi \ell_1/2)}{\pi \ell_1}\right) \left[ \Gamma[0, -i(\ell_1 - \ell_2)\pi] - \Gamma[0, i \ell_2 \pi ] \right]x[\ell_2]\\
&- 2e^{-\pi i \ell_1/2} \frac{1}{2\pi i \ell_1} e^{\pi i \ell_2}\left(2\frac{\sin(\pi \ell_1/2)}{\pi \ell_1}\right)\left[ \log(-i(\ell_1 - \ell_2)) - \log(i \ell_2)\right]x[\ell_2]. \\
& - \frac{4\sin^2(\pi \frac{\ell_1}{2})}{\pi^2 \ell_1^2} \frac{\sin(\pi \ell_2)}{\pi \ell_2}x[\ell_2].
\end{align}

\subsection{\label{entryDecay}Entry Decay}

A first reduction in the numerical complexity of $\mathcal{Q}_K^\infty$ can be obtained by noting that the entries of $Q^\infty[\ell_1, \ell_1]$ decay sufficiently fast with respect to the row and column indices.

We start by showing that the contribution of the upper and lower blocks ($|\ell_1|\geq K_1$ for a sufficiently large $K_1$) to the product $\|Q_K^\infty q\|$ can be neglected. To see this, recall that the product $Q q$ reads as 
\begin{align}
\left|Q_{K,1}^{\infty}[\ell_1, \ell_2]q[\ell_2]\right|& \leq \left|\sum_{\ell_2} \frac{1}{\pi^2\ell_1^2} \int_0^1 \frac{1}{s}\left[e^{\pi i (\ell_2-\ell_1)s} - e^{\pi i \ell_2 s}\right]\; ds \; q[\ell_2]\right|\\
&\leq \left|\sum_{\ell_2} \frac{1}{\pi^2\ell_1^2} \int_{0}^{\frac{1}{\ell_1^2\pi^2}} e^{\pi \ell_2 s} \frac{1}{s}\left[e^{\pi i \ell_1 s} - 1\right]\; ds \; q[\ell_2]\right|\label{singularityTermEll11} \\
&+ \left| \sum_{\ell_2} \frac{1}{\pi^2\ell_1^2} \int_{\frac{1}{\ell_1^2 \pi^2}}^{1} e^{\pi \ell_2 s} \frac{1}{s}\left[e^{\pi i \ell_1 s} - 1\right]\; ds\;  q[\ell_2] \right|
\end{align}
Using a Taylor expansion, one can bound the first term above as
\begin{align}
\eqref{singularityTermEll11}& \leq \left|\sum_{\ell_2} \frac{1}{\pi^2\ell_1^2} \int_{0}^{\frac{1}{\ell_1^2\pi^2}} e^{\pi\ell_2 i s} \left[\pi i \ell_1 + (\ell_1\pi i)^2 s + \text{h.o.t}\right]\; ds\;  q[\ell_2]\right|\\
&\lesssim  \sum_{\ell_2} \frac{1}{\pi^2\ell_1^2} \frac{1}{\pi^2\ell_1^2} (\ell_1\pi + 1) q[\ell_2]\\
&\lesssim \frac{1}{\pi^3\ell_1^3} (C_1 + C_1\log(K)) \|q_K\|_2\\
&\lesssim \frac{1}{\pi^3\ell_1^3} 100 (C_1 + C_1\log(K))
\end{align}
Squaring and taking the sum with respect to $\ell_1$ to get the $\ell_2$-norm, we get 
\begin{align}
\sqrt{\sum_{\ell_1\geq K_1} \left|\eqref{singularityTermEll11}\right|^2}&\leq \sqrt{\sum_{\ell_1\geq K_1}\left( \frac{1}{\pi^3\ell_1^3} (C_1+C_1\log(K)) \right)^2}\\
& \leq \sqrt{\frac{\zeta(2)}{\pi^6 K_1^4} 1e4\left(C_1 + C_1\log(K)\right)}\\
&\leq \frac{\sqrt{\zeta(2)}}{\pi^3 K_1^{2}} 100 (C_1 + C_1\log(K))
\end{align}
We multiply this term by $8$ to account for the two integrals appearing in the two terms $Q_{K,1}$ and $Q_{K,2}$, as well as the two blocks $\ell_1\geq K_1$ and $\ell_1<-K_1$, thus getting the upper bound $B_1$
\begin{align}
B_1&\leq \frac{4\sqrt{\zeta(2)}}{\pi^3 K_1^{2}} 100 (C_1 + C_1\log(K))
\end{align}
This bound can be made sufficiently small (i.e smaller than $1e-4$) as soon as $K_1\geq 1e6$. For the second term, we write 
\begin{align}
&\left|\sum_{\ell_2} \frac{1}{\pi^2\ell_1^2} \int_{\frac{1}{\ell_1^2 \pi^2}}^1 \frac{1}{s} \left[e^{\pi i (\ell_2-\ell_1)s} - e^{\pi i \ell_2 s}\right]\; ds\; q[\ell_2]\right|\\
&\leq \left|\sum_{\ell_2} \frac{1}{\pi^2\ell_1^2} \int_{\frac{1}{\pi^2\ell_1^2}}^1 \frac{\cos(\pi (\ell_2-\ell_1)s) - \cos(\pi \ell_2 s)}{s}\; ds\; q[\ell_2]\right|\label{noSingularityTerm11}\\
& +\left| \sum_{\ell_2} \frac{1}{\pi^2\ell_1^2} \int_{\frac{1}{\pi^2\ell_1^2}}^{1} \frac{\sin(\pi(\ell_2-\ell_1)s) - \sin(\pi \ell_1 s)}{s} \; ds\; q[\ell_2]\right|
\end{align}
For the first term (the sine integral follows the same idea), we have 
\begin{align}
\eqref{noSingularityTerm11}&\leq \left| \sum_{\ell_2} \frac{1}{\pi^2\ell_1^2} \left(\int_{\frac{\pi (\ell_2-\ell_1)}{\ell_1^2\pi^2}}^\infty  \frac{\cos(t)}{t}\; dt - \int_{\frac{\pi \ell_2}{\pi^2\ell_1^2}}^\infty  \frac{\cos(t)}{t}\; dt\right)\; q[\ell_2]\right|\label{firstContributionLargeEll1Cos}\\
& + \left|\sum_{\ell_2} \frac{1}{\pi^2\ell_1^2} \left(-\int_{\pi (\ell_2 - \ell_1)}^\infty \frac{\cos(t)}{t}\; dt + \int_{\pi \ell_2}^\infty \frac{\cos(t)}{t}\; dt\right)\; q[\ell_2]\right|
\end{align}
The last two terms can be bounded from the definition of the cosine integral (resp sine integral) by noting that for both the sine and cosine integral, we have (in the case of $\Ci(x)$ it follows directly from integration by part) $\int_x^\infty \cos(t)\; dt\leq \frac{2}{|x|}$ as well as $\left|\Si(x) - \pi/2\right|\leq \frac{1}{|x|}$. 
\begin{align}
\left|\sum_{\ell_2} \frac{1}{\pi^2\ell_1^2} \left(-\int_{\pi (\ell_2 - \ell_1)}^\infty \frac{\cos(t)}{t}\; dt + \int_{\pi \ell_2}^\infty \frac{\cos(t)}{t}\; dt\right)q[\ell_2]\right| &\leq \left|\sum_{\ell_2}\frac{1}{\pi^2\ell_1^2} \left(\frac{1}{(\ell_2-\ell_1)} + \frac{1}{\ell_2}\right) q[\ell_2]\right|\\
&\leq \frac{2}{\pi^3\ell_1^2} \left(C_1 + C_1\log(K)\right) \|q_K\|_2^{-1}
\end{align}
Squaring and summing to get the contribution to the norm, we get
\begin{align}
&\sqrt{\sum_{\ell_1\geq K_1}\left(\sum_{\ell_2} \frac{1}{\pi^2\ell_1^2} \left(-\int_{\pi (\ell_2 - \ell_1)}^\infty \frac{\cos(t)}{t}\; dt + \int_{\pi \ell_2}^\infty \frac{\cos(t)}{t}\; dt\right)q[\ell_2]\right)^2} \\
&\leq \sqrt{\sum_{\ell_1\geq K_1} \frac{4}{\pi^6\ell_1^4} (C_1+C_1\log(K))^2} \|q_K\|_2^{-1}\\
&\leq \frac{1}{\pi^3K_1^2} (C_1+C_1\log(K))100
\end{align}
The total contribution (including the two integrals, the two terms $Q_{K,1}$ and $Q_{K,2}$ and the sine integral) can thus be bounded by $B_2$ with
\begin{align}
B_2&\leq \frac{16}{\pi^3K_1^2} (C_1+C_1\log(K))100
\end{align}
This contribution can be made sufficiently small (smaller than $.01$) as soon as $K_1\geq 1e6$.
For the first contribution in~\eqref{firstContributionLargeEll1Cos}, we consider two cases for each integral. Either $\frac{(\ell_2 - \ell_1)}{\ell_1^2\pi}>1$ in this case the integral is bounded by $1$ (the sine integral is always bounded by $2$) or $\frac{(\ell_2 - \ell_1)}{\ell_1^2\pi}<1$, in which case we use the expansion 
\begin{align}
\int_{x}^\infty \frac{\cos(t)}{t}\; dt = \gamma + \log(x) + \sum_{n=1}^\infty \frac{(-1)^n x^{2n}}{2n(2n)!} \leq \gamma + 2+ \log(x).
\end{align}
A similar expansion holds for the sine integral. Using this expansion, we can thus bound this term as
\begin{align}
\eqref{firstContributionLargeEll1Cos}&\lesssim \sum_{\ell_2} \frac{1}{\pi^2\ell_1^2} \left(2\gamma+ 4\log(\ell_1) + 2\log(\pi) + \log(\ell_2-\ell_1) + \log(\ell_2) \right) \min\left(1, \frac{C_1}{1+n|\frac{k}{2n+1}|}\right)\|q_K\|^{-1}_2\\
&\leq \left(C_1 + 3C_1\log(K)\right) \frac{1}{\pi^2\ell_1^2} \left(6\log(K) + 3.5 \right)100
\end{align}
Squaring and taking the sum, then multiplying the bound by $16$ we get 
\begin{align}
16\sqrt{\sum_{\ell_1\geq K_1} \left(\eqref{firstContributionLargeEll1Cos}\right)^2}&\leq 16\frac{\left(C_1 + 3C_1\log(K)\right) \left(6\log(K) + 3.5 \right)100}{\pi^2 K_1^2}
\end{align}
That contibution can be made smaller than $.01$ as soon as $K_1\geq 1e6$.

We now consider the left and right blocks $\left\{ \ell_2\geq K_1, |\ell_1|\leq K_1\right\}$ and $\left\{\ell_2\leq -K_1, |\ell_1|\leq K_1\right\}$. For these blocks,  we start by treating the singularity using integration by parts (i.e. stationnary phase),
\begin{align}
\frac{2}{\pi\ell_1^2 2}\int_{0}^{\frac{1}{\pi^2\ell_1^2}} \frac{1}{s}e^{\pi i \ell_2 s} \left[e^{\pi i \ell_1 s} - 1\right]\; ds\; q[\ell_2]
&\leq \frac{1}{\pi^2 \ell_1^2} \left|\frac{e^{\pi i \ell_2}}{\ell_2\pi } \left[\pi i \ell_1 + \frac{(\pi i \ell_1)^2 s}{2!} + \ldots\right]\right|_0^{\frac{1}{\pi^2\ell_1^2}}\; q[\ell_2]\\
& + \frac{1}{\pi \ell_1^2} \int_0^{\frac{1}{\pi^2\ell_1^2}} \frac{e^{\pi i \ell_2 s}}{\ell_2 \pi} \left[ \frac{(\pi i \ell_1)^2}{2!} +  \frac{(\pi i \ell_1)^3 2s}{3!} + \ldots \right]\; ds\; q[\ell_2]
\end{align}
Then we use 
\begin{align}
\left|\pi i \ell_1 + \frac{(\pi i \ell_1)^2 s}{2!} + \ldots\right|_{s=\frac{1}{\pi^2\ell_1^2}} &\leq \left|\pi i \ell_1 + \frac{1}{2!} + \sum_{k=3}^\infty \frac{(\pi i \ell_1)^k}{k!} s^{k-1}\right|_{s = \frac{1}{\pi^2\ell_1^2}}\\
&\leq \pi \ell_1 + \frac{1}{2!} + \sum_{k=3}^{\infty} (\pi i \ell_1)^k \left(\frac{1}{(\pi i \ell_1)^2}\right)^{k-1}\\
&\leq \pi \ell_1 + \frac{1}{2!} + \sum_{k=3}^{\infty} \frac{(\pi \ell_1)^k}{(\pi \ell_1)^{2k-2}}\\
&\leq \pi \ell_1 + \frac{1}{2} + \sum_{k\geq 3} \frac{1}{(\pi \ell_1)^{k-2}}\\
&\leq \pi \ell_1 + \frac{1}{2} + \sum_{k\geq 1} \frac{1}{\pi^k}\\
&\leq \pi \ell_1 + \frac{1}{2} + \frac{1}{\pi} \frac{1}{1-\frac{1}{\pi}}\\
&\leq \pi \ell_1 + \frac{1}{2} + \frac{1}{\pi} \frac{\pi}{\pi -1}\\
&\leq \pi\ell_1 + \frac{1}{2} + \frac{1}{\pi -1}
\end{align}
From this, the total contribution for each $\ell_1$ reads as
\begin{align}
\sum_{\ell_2\geq K_1}\frac{1}{\pi^2\ell_1} \frac{2}{\pi \ell_2} \frac{C_1}{\ell_2} \left(\pi + \frac{1}{2\ell_1} + \frac{1}{(\pi -1)\ell_1}\right)\|q_K\|_2^{-1} \leq \frac{2}{\pi^3 \ell_1} \frac{C_1}{K_1}\left(\pi +\frac{1}{2} + \frac{1}{\pi-1}\right)100
\end{align}
Squaring and taking the sum over $\ell_1$ gives the norm
\begin{align}
\sqrt{\sum_{\ell_1\leq K_1} \left(\frac{2}{\pi^3 \ell_1} \frac{C_1}{K_1}\left(\pi +\frac{1}{2} + \frac{1}{\pi-1}\right)100\right)^2}\leq \frac{2}{\pi^3} \frac{100C_1}{K_1}\left(\pi +\frac{1}{2} + \frac{1}{\pi-1}\right)\sqrt{\zeta(2)}
\end{align}
Multiplying by $8$ gives the final bound 
\begin{align}
B_3\leq \frac{16}{\pi^3} \frac{C_1}{K_1}\left(\pi +\frac{1}{2} + \frac{1}{\pi-1}\right)\sqrt{\zeta(2)} \leq \frac{8.5e4}{K_1}
\end{align}
This bound can be made smaller than $0.01$ as soon as $K_1$ is larger than $1e7$. 

We now control the integral on $[\frac{1}{(\pi \ell_1)^2}, 1]$. As for the blocks $\left\{|\ell_1|\geq K_1\right\}$, we write 
\begin{align}
\left|\frac{2}{\pi\ell_1^2 2}\int_{\frac{1}{\pi^2\ell_1^2}}^{1} \frac{1}{s}e^{\pi i \ell_2 s} \left[e^{\pi i \ell_1 s} - 1\right]\; ds\; q[\ell_2]\right|
&\leq \left|\frac{1}{\pi^2 \ell_1^2} \left( \int_{\frac{\pi (\ell_2 - \ell_1)}{\pi^2\ell_1^2}}^{\infty} \frac{\cos(t)}{t}\; dt - \int_{\frac{\pi \ell_2}{\pi^2\ell_1^2}}^{\infty} \frac{\cos(t)}{t}\; dt\right)q[\ell_2]\right|\\
&+ \left| \frac{1}{\pi^2\ell_1^2} \left(\int_{\pi (\ell_2 - \ell_1)}^\infty \frac{\cos(t)}{t}\; dt - \int_{\pi \ell_2}^{\infty} \frac{\cos(t)}{t}\; dt\right)q[\ell_2]\right|\label{boundDirectLargexCosint}\\
&+ \left|\frac{1}{\pi^2 \ell_1^2} \left( \int_{\frac{\pi (\ell_2 - \ell_1)}{\pi^2\ell_1^2}}^{\infty} \frac{\sin(t)}{t}\; dt - \int_{\frac{\pi \ell_2}{\pi^2\ell_1^2}}^{\infty} \frac{\sin(t)}{t}\; dt\right)q[\ell_2]\right|\\
&+ \left|\frac{1}{\pi^2\ell_1^2} \left(\int_{\pi (\ell_2 - \ell_1)}^\infty \frac{\sin(t)}{t}\; dt - \int_{\pi \ell_2}^{\infty} \frac{\sin(t)}{t}\; dt\right)q[\ell_2]\right|.\label{boundDirectLargexSinint}
\end{align}
Any bound on the cosine gives a bound on the sine following the discussion above. As before, ~\eqref{boundDirectLargexCosint} and~\eqref{boundDirectLargexSinint} can be bounded by using  the $1/|x|$ upper bound on the cosine (resp. $\pi/2 + 1/|x|$ bound on the sine)
\begin{align}
\eqref{boundDirectLargexCosint}&\leq \sum_{\ell_2\geq K_1}\frac{1}{\pi^3\ell_1^2}\left( \frac{2C_1}{\ell_2}\frac{1}{\ell_2-\ell_1} + \frac{2C_1}{\ell_2^2}\right)\|q_K\|_2^{-1}\leq  \frac{C_1}{K_1} (1+\log(K))\frac{1}{\pi^3\ell_1^2}100
\end{align}
Squaring, summing over $\ell_1$ and taking the square root to get the norm, then multiplying by $16$ to account for the appearance of the terms in both blocks and both $Q_{K,1}$ and $Q_{K,2}$ we get a total contribution
\begin{align}
B_4 \equiv 16\sqrt{\sum_{\ell_1\leq K_1} \left( \frac{C_1}{K_1} \log(K)\frac{1}{\pi^3\ell_1^2}\right)^2}&\leq \sqrt{\zeta(2)} \frac{C_1}{K_1} \log(K)\frac{100}{\pi^3} \leq \frac{1.35e5}{K_1}
\end{align}
This bound can be made smaller than $.02$ as soon as $K_1\geq 1e7$. 

For the remaining integrals we make the distinction between the case $(\ell_2 - \ell_1)>\pi \ell_1^2$ and $(\ell_2 - \ell_1)<\pi \ell_1^2$. In the first case, note that since $\ell_2\leq K$ we necessarily have $\ell_1\leq \frac{1}{2\pi} + \frac{1}{2\pi}\sqrt{K} \frac{\sqrt{K^{-1} + 4\pi}}{2\pi}\leq \frac{1}{2\pi} + \sqrt{K} 0.56$. Using this bound on the cosine integrals above, and noting again that $\int_{x}^\infty \frac{\cos(t)}{t}\; dt\leq \frac{2}{|x|}$, we can write 
\begin{align}
\sum_{\ell_2\geq K_1} \frac{2}{\pi^2\ell_2} \frac{\pi^2\ell_1^2}{\pi(\ell_2 - \ell_1)}\frac{C_1}{\ell_2} &\leq \sum_{\ell_2\geq K_1} \frac{2}{\pi (\ell_2-\ell_1)} \frac{C_1}{\ell_2}\\
&\stackrel{(a)}{\leq } \sum_{\ell_2\geq K_1} \frac{2}{\pi\ell_2 0.9} \frac{C_1}{\ell_2}
\end{align}
$(a)$ follows from $\ell_1\leq \sqrt{K}$ and $\ell_2\geq K_1$ and assuming the upper bound $K\leq 1e12$ and $K_1\geq 1e7$, we always have $\ell_2-\ell_1\geq 0.9\ell_2$. Squaring this last contribution and summing over $\ell_1$ to get a bound on the norm, we have 
\begin{align}
\sqrt{\sum_{\ell_1\leq K_1} \left(\frac{2}{\pi\ell_2 0.9} \frac{C_1}{\ell_2}\right)^2 }\leq \sqrt{\frac{1}{2\pi} + \sqrt{K} 0.56}\left(\frac{2}{ 0.9\pi} \frac{C_1}{K_1}\right) \leq \frac{2.36e6}{K_1}.
\end{align}
The last bound follows from using $K\leq 1e13$. Multiplying the bound by $32\times \|q_K\|_2^{-1}\leq 3200$, we get 
\begin{align}
\frac{7.54e9}{K_1}
\end{align}
Taking $K_1\geq 7.54e10$ gives an upper bound of $0.1$.

Whenever $\ell_2-\ell_1 \leq \pi\ell_1^2$, since $\ell_2\geq K_1$ we necessarily have $K_1\leq \ell_1^2 + \ell_1$ which implies $|\ell_1|\geq \frac{-1 + \sqrt{1 + 4K_1\pi}}{2\pi}$. Hence we can write 
\begin{align}
 \frac{1}{\pi^2\ell_1^2} C_1\log(K) \left(\gamma + \ln(K)+3\ln(K_1) + \frac{1}{4}\right)
\end{align}
Squaring and summing over $\ell_1$, we get 
\begin{align}
&\sum_{\ell_1\geq \frac{-1 + \sqrt{1 + 4K_1\pi}}{2\pi}} \left(\frac{1}{\pi^2\ell_1^2} C_1\log(K) \left(\gamma + \ln(K)+3\ln(K_1) + \frac{1}{4}\right)\right)^2\\
&\leq \frac{1}{N}\left(\frac{1}{\pi^2} C_1\log(K) \left(\gamma + \ln(K)+3\ln(K_1) + \frac{1}{4}\right)\right)
\end{align}

where $N = \left(\frac{-1+\sqrt{1+4K_1\pi}}{2\pi}\right)^2  = \frac{1+4K_1\pi}{4\pi^2} + \frac{1}{4\pi^2} - \frac{1}{\pi} \sqrt{1+4K_1\pi}\geq \frac{1+4K_1\pi}{8\pi^2} + \frac{1}{4\pi^2}$. Hence we get 
\begin{align}
&\frac{1}{\frac{1+4K_1\pi}{8\pi^2} + \frac{1}{4\pi^2}} \left(\frac{1}{\pi^2} C_1\log(K) \left(\gamma + \log(\pi) +2\log(K_1)+ \frac{1}{4}\right)\right)\\
&\leq \frac{2\pi}{K_1} \frac{C_1\log(K)\left(\gamma + \log(\pi) +2\log(K_1)+ \frac{1}{4}\right)}{\pi^2}\\
&\leq \frac{4.55e5}{K_1}.
\end{align}

The last bound follows from using $K=1e13$. Multiplying the bound by $32\|q_K\|_2^{-1}$, we get a total bound of 
\begin{align}
B_6&\leq \frac{1.46e9}{K_1}.
\end{align}
Taking $K_1\geq 1.46e10$ hence gives a bound of $0.1$ on that contribution as well. From the discussion above, the numerical complexity can thus be reduced from $O(1e13)$ to $O(1e10)$. 

\subsection{\label{sec:PowerIterations}Power iterations}

To conclude, we provide an illustration of how Power iterations can be applied to derive a lower bound on the smallest sinngular value of the matrix $(I - Q_K^\infty + P^\infty)$ for $K$ up to $1e7$ (which is what current memory limitations enable to achieve). We apply such iterations for $K=1e7$, first to compute the largest eigenvalue of the hermitian operator $(I - Q_K^\infty + P^\infty)^*(I - Q_K^\infty + P^\infty)$, then to compute the largest eigenvalue of the shifted operator $\lambda_{\max}I - (I - Q_K^\infty + P^\infty)*(I - Q_K^\infty + P^\infty)$ (hence getting an estimate for the smallest eigenvalue of $(I - Q_K^\infty + P^\infty)*(I - Q_K^\infty + P^\infty)$). In order to control the accuracy of the estimate returned by the Power Method, we compute an a posteriori error estimate from the residual $\eta \equiv (I - Q_K^\infty + P^\infty)\bs x^{t} - \mu^{(t)}\bs x^{(t)}$ for iterates $(\bs x^{(t)}, \mu^{(t)})$ returned by the Power iterations. This residual gives a bound on the distance of the iterate returned by the Power method and the losest eigenvalue from the spectrum of $(I - Q_K^\infty + P^\infty)$. Proposition~\ref{ThFranklin} below (Theorem 5 in~\cite{isaacson2012analysis})

\begin{proposition}\label{ThFranklin}
Let $A$ be a Hermitian matrix of order $n$ and have eigenvalues $\left\{\lambda_i\right\}$. For an approximated eigenpair $\left\{\bs x, \lambda\right\}$, we define the residual $\bs \eta$ as $\bs \eta\equiv A\bs x - \lambda \bs x.$ Then 
\begin{align}
\min_{i} |\lambda - \lambda_i| \leq \frac{\|\bs \eta\|_2}{\|\bs x\|_2} \label{posterioriErrorBound}
\end{align}
\end{proposition}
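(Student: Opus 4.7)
The plan is to use the spectral theorem for Hermitian matrices to diagonalize $A$ and then reduce the bound to a weighted $\ell_2$ comparison. First I would invoke the spectral decomposition $A = \sum_i \lambda_i \bs v_i \bs v_i^*$ where $\{\bs v_i\}_{i=1}^n$ is an orthonormal basis of eigenvectors of $A$ (which exists precisely because $A$ is Hermitian). This step is standard and is the only structural ingredient needed.

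Next, I would expand the approximate eigenvector in this basis as $\bs x = \sum_i c_i \bs v_i$ with $c_i = \langle \bs v_i, \bs x \rangle$, so that $\|\bs x\|_2^2 = \sum_i |c_i|^2$. Applying $A$ and subtracting $\lambda \bs x$ then gives
\begin{align}
\bs \eta = A\bs x - \lambda \bs x = \sum_i (\lambda_i - \lambda) c_i \bs v_i.
\end{align}
Because the $\bs v_i$ are orthonormal, taking norms yields
\begin{align}
\|\bs \eta\|_2^2 = \sum_i |\lambda_i - \lambda|^2 \, |c_i|^2 \geq \left(\min_i |\lambda_i - \lambda|^2\right) \sum_i |c_i|^2 = \left(\min_i |\lambda_i - \lambda|\right)^2 \|\bs x\|_2^2.
\end{align}
Taking square roots and dividing by $\|\bs x\|_2$ gives the claim~\eqref{posterioriErrorBound}.

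The proof is essentially a one-line calculation once the spectral decomposition is invoked; there is no real obstacle, and no separation/growth condition is needed because the Hermitian structure automatically gives the orthonormal eigenbasis that makes the residual norm a convex combination of the $|\lambda_i - \lambda|^2$. The only subtlety to mention is that the bound is sharp and that strict equality holds exactly when $\bs x$ is concentrated on the eigenspace realizing the minimum, which is what justifies using $\|\bs \eta\|_2/\|\bs x\|_2$ as an a posteriori error indicator in the Power iteration procedure of Section~\ref{sec:PowerIterations}.
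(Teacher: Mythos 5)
Your proof is correct and is the standard textbook argument (the same one given in Isaacson--Keller, which the paper cites without reproducing the proof): diagonalize $A$ via the spectral theorem, expand $\bs x$ in the orthonormal eigenbasis, and bound the weighted $\ell_2$ sum from below by pulling out $\min_i|\lambda_i-\lambda|^2$. The only implicit hypothesis worth stating is $\bs x\neq 0$, which is needed for the division and is automatic for an approximate eigenpair.
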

In particular, this theorem implies that that if we iterate until $\|A\bs x_{(k)} - \lambda^t\bs x^{(k)}\|_2\leq \varepsilon \|\bs x^{(k)}\|_2$, then we have $|\lambda_1|(1-\varepsilon) \leq |\lambda|\leq |\lambda_1|(1+\varepsilon)$.

\begin{figure}
%
%
\definecolor{mycolor1}{rgb}{0.00000,0.44700,0.74100}%
\definecolor{mycolor2}{rgb}{0.85000,0.32500,0.09800}%
\begin{tikzpicture}
\hspace{1cm}
\begin{axis}[%
width=2.016in,
height=2.016in,
at={(1.5in,2.011in)},
scale only axis,
point meta min=9.86545967944245e-21,
point meta max=1,
axis on top,
xmin=0.5,
xmax=41.5,
y dir=reverse,
ymin=0.5,
ymax=41.5,
axis background/.style={fill=white},
title={Modulus of $Q_K^\infty$},
legend style={legend cell align=left, align=left, draw=white!15!black},
colormap={mymap}{[1pt] rgb(0pt)=(0.2422,0.1504,0.6603); rgb(1pt)=(0.25039,0.164995,0.707614); rgb(2pt)=(0.257771,0.181781,0.751138); rgb(3pt)=(0.264729,0.197757,0.795214); rgb(4pt)=(0.270648,0.214676,0.836371); rgb(5pt)=(0.275114,0.234238,0.870986); rgb(6pt)=(0.2783,0.255871,0.899071); rgb(7pt)=(0.280333,0.278233,0.9221); rgb(8pt)=(0.281338,0.300595,0.941376); rgb(9pt)=(0.281014,0.322757,0.957886); rgb(10pt)=(0.279467,0.344671,0.971676); rgb(11pt)=(0.275971,0.366681,0.982905); rgb(12pt)=(0.269914,0.3892,0.9906); rgb(13pt)=(0.260243,0.412329,0.995157); rgb(14pt)=(0.244033,0.435833,0.998833); rgb(15pt)=(0.220643,0.460257,0.997286); rgb(16pt)=(0.196333,0.484719,0.989152); rgb(17pt)=(0.183405,0.507371,0.979795); rgb(18pt)=(0.178643,0.528857,0.968157); rgb(19pt)=(0.176438,0.549905,0.952019); rgb(20pt)=(0.168743,0.570262,0.935871); rgb(21pt)=(0.154,0.5902,0.9218); rgb(22pt)=(0.146029,0.609119,0.907857); rgb(23pt)=(0.138024,0.627629,0.89729); rgb(24pt)=(0.124814,0.645929,0.888343); rgb(25pt)=(0.111252,0.6635,0.876314); rgb(26pt)=(0.0952095,0.679829,0.859781); rgb(27pt)=(0.0688714,0.694771,0.839357); rgb(28pt)=(0.0296667,0.708167,0.816333); rgb(29pt)=(0.00357143,0.720267,0.7917); rgb(30pt)=(0.00665714,0.731214,0.766014); rgb(31pt)=(0.0433286,0.741095,0.73941); rgb(32pt)=(0.0963952,0.75,0.712038); rgb(33pt)=(0.140771,0.7584,0.684157); rgb(34pt)=(0.1717,0.766962,0.655443); rgb(35pt)=(0.193767,0.775767,0.6251); rgb(36pt)=(0.216086,0.7843,0.5923); rgb(37pt)=(0.246957,0.791795,0.556743); rgb(38pt)=(0.290614,0.79729,0.518829); rgb(39pt)=(0.340643,0.8008,0.478857); rgb(40pt)=(0.3909,0.802871,0.435448); rgb(41pt)=(0.445629,0.802419,0.390919); rgb(42pt)=(0.5044,0.7993,0.348); rgb(43pt)=(0.561562,0.794233,0.304481); rgb(44pt)=(0.617395,0.787619,0.261238); rgb(45pt)=(0.671986,0.779271,0.2227); rgb(46pt)=(0.7242,0.769843,0.191029); rgb(47pt)=(0.773833,0.759805,0.16461); rgb(48pt)=(0.820314,0.749814,0.153529); rgb(49pt)=(0.863433,0.7406,0.159633); rgb(50pt)=(0.903543,0.733029,0.177414); rgb(51pt)=(0.939257,0.728786,0.209957); rgb(52pt)=(0.972757,0.729771,0.239443); rgb(53pt)=(0.995648,0.743371,0.237148); rgb(54pt)=(0.996986,0.765857,0.219943); rgb(55pt)=(0.995205,0.789252,0.202762); rgb(56pt)=(0.9892,0.813567,0.188533); rgb(57pt)=(0.978629,0.838629,0.176557); rgb(58pt)=(0.967648,0.8639,0.16429); rgb(59pt)=(0.96101,0.889019,0.153676); rgb(60pt)=(0.959671,0.913457,0.142257); rgb(61pt)=(0.962795,0.937338,0.12651); rgb(62pt)=(0.969114,0.960629,0.106362); rgb(63pt)=(0.9769,0.9839,0.0805)},
colorbar
]
\addplot [forget plot] graphics [xmin=0.5, xmax=41.5, ymin=0.5, ymax=41.5] {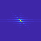};
\end{axis}

\begin{axis}[%
width=1.603in,
height=2.454in,
at={(5in,1.742in)},
scale only axis,
point meta min=0,
point meta max=1,
xmin=1,
xmax=41,
ymin=0.58139991904902,
ymax=1.36511725949153,
axis background/.style={fill=white},
title={Spectrum of $\text{\upshape Id} - Q_K^\infty + P^\infty$},
legend style={legend cell align=left, align=left, draw=white!15!black}
]
\addplot [color=mycolor1, draw=none, mark=asterisk, mark options={solid, mycolor1}, mark size=3pt, line width=.7pt]
  table[row sep=crcr]{%
1	1.36511725949153\\
2	1.14552857075169\\
3	1.02191995928725\\
4	1.0153253478922\\
5	1.00630196182876\\
6	1.00515335673159\\
7	1.00265692079567\\
8	1.00233832975967\\
9	1.00129611928752\\
10	1.00119434605329\\
11	1.00067097058316\\
12	1.00063239651377\\
13	1.00034303451186\\
14	1.00032826306925\\
15	1.00015943485809\\
16	1.00015375293409\\
17	1.00005693760009\\
18	1.00005523003641\\
19	1.00000841567084\\
20	1.00000817175586\\
21	1\\
22	0.996984046278199\\
23	0.996971614128155\\
24	0.995605479245064\\
25	0.995589525504715\\
26	0.993711995009559\\
27	0.993698203515983\\
28	0.990925709121733\\
29	0.990912576011023\\
30	0.986657659223269\\
31	0.986517890701875\\
32	0.97977251939067\\
33	0.979128980549852\\
34	0.967869381438925\\
35	0.965518309218185\\
36	0.943940068736234\\
37	0.937328437844638\\
38	0.905402080944446\\
39	0.885240110562967\\
40	0.65703159328013\\
41	0.58139991904902\\
};
\addlegendentry{Singular values}

\addplot [color=mycolor2, draw=none, mark=asterisk, mark options={solid, mycolor2}, mark size=3pt, line width=.7pt]
  table[row sep=crcr]{%
1	0.666633330796082\\
2	0.833238891790422\\
3	0.899748711503905\\
4	0.933240136045353\\
5	0.951729559418231\\
6	0.964608852696851\\
7	0.971401207692496\\
8	1.00605149707723\\
9	0.978813678094083\\
10	0.981510624157415\\
11	0.986471049758124\\
12	0.987651385713735\\
13	0.990962395504706\\
14	0.991538892955537\\
15	0.998866221630176\\
16	0.993767586933855\\
17	0.99407529885827\\
18	0.995807522992904\\
19	0.99563111964584\\
20	0.9970753796812\\
21	0.996968322987335\\
22	1\\
23	1\\
24	0.999999999999999\\
25	1\\
26	0.999999999999999\\
27	1\\
28	1\\
29	0.999999999999999\\
30	0.999999999999999\\
31	1\\
32	1\\
33	1\\
34	1\\
35	1\\
36	1\\
37	1\\
38	1\\
39	1\\
40	0.999999999999999\\
41	1\\
};
\addlegendentry{Eigenvalues}

\end{axis}

\end{tikzpicture}%
\caption{\label{DistributionSingularValues}Truncation of $Q^\infty$ for $K = 40$ and corresponding spectrum of $Id - Q_K^\infty + P^\infty$. The minimum singular value of $Id - Q_K^\infty + P^\infty$ takes the value $0.6754$.}
\end{figure}

\begin{figure}
\centering
\input{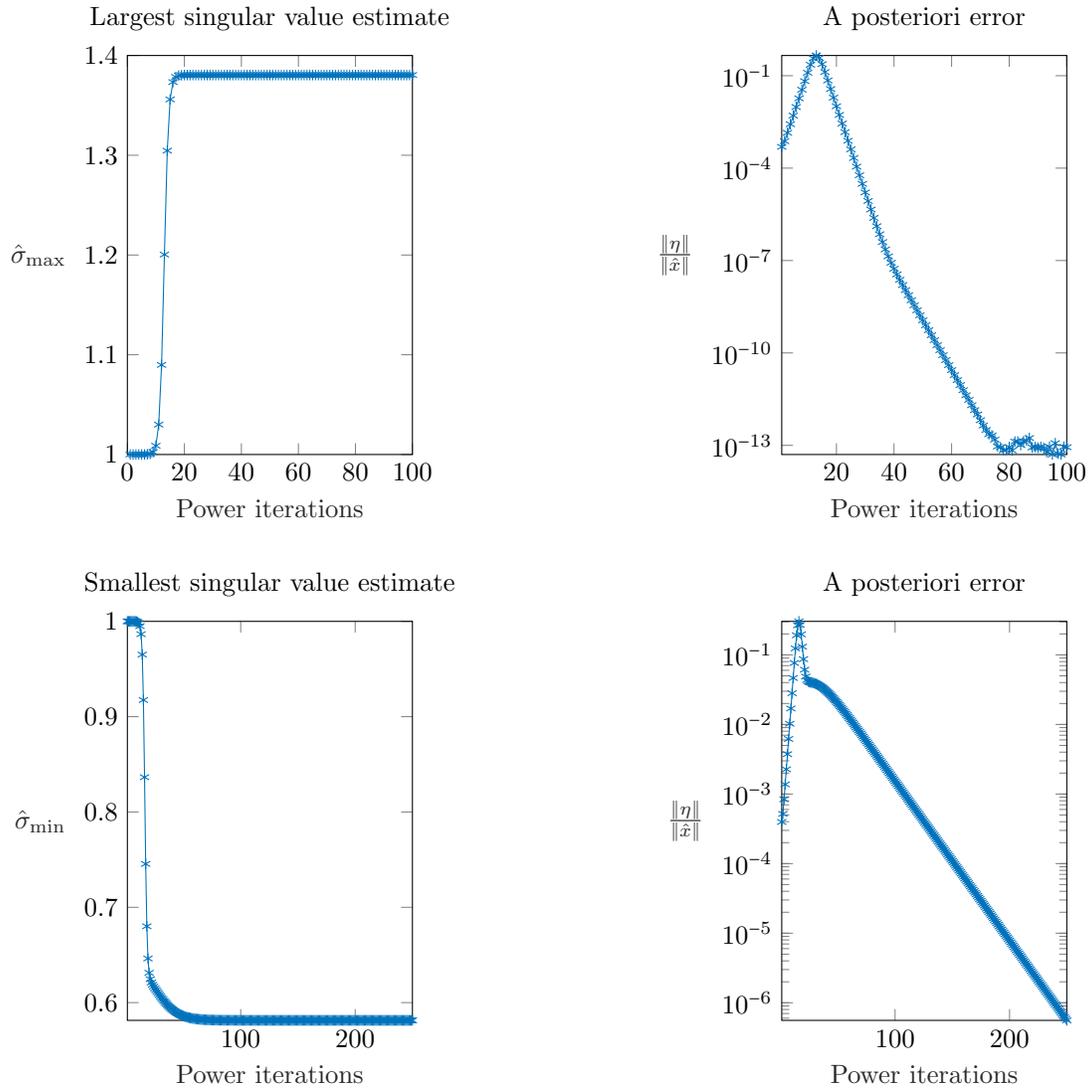}
\caption{Estimates on the largest (Top) and smallest (Bottom) singular value and corresponding a posteriori error bounds~\eqref{posterioriErrorBound} (we use $\bs x$ and $\mu$ to denote the iterates of the power method) obtained through the Power Iterations on the matrices $M = (I-Q_K^\infty + P^\infty)^*(I-Q_K^\infty + P^\infty)$ and $\lambda_{\max}I - (I-Q_K^\infty + P^\infty)^*(I-Q_K^\infty + P^\infty)$. The code that was used to generate the figures is available at \url{http://www.augustincosse.com/research}}
\end{figure}

\section{\label{proofDirichletNoInterpolation}Proof of lemma~\ref{interpolationPolynomialTangRecht}}

The proof is a transposition of the proof of Proposition 2.1  in~\cite{candes2014towards} to the Dirichlet kernel. We therefore save ourselves some of the details. Proving the conditions on $\eta(\theta)$ can be done following the approach in this paper and is summarized by lemma~\ref{lemmaUndecimatedPolynonialSatisfiesConditionsDualCert} below. Consider the interpolating polynomial $\eta(\theta)$ defined for the $S$-atomic measure $\mu(t) = \sum_{\tau \in S} \alpha_\tau \delta(t-\tau)$, as
\begin{align}
\eta(\theta) = \sum_{\tau \in S} a_\tau D_N(\theta - \tau) + \sum_{\tau\in S} b_\tau D'_N(\theta - \tau)\label{simpleInterpolatingPolynomialEtaDirichlet}
\end{align}
where $D_N(\theta) = \frac{1}{2n+1} \sum_{k=-n}^n e^{2\pi i k \theta}$ denotes the normalized Dirichlet kernel and the coefficients $a, b$ satisfy the linear system
\begin{align}
\left[\begin{array}{cc}
D_{0} & \frac{1}{\sqrt{\left|D_{N}''(0)\right|}}D_{1}\\
-\frac{1}{\sqrt{\left|D_{N}''(0)\right|}} D_{1} & -\frac{1}{\left|D_{N}''(0)\right|} D_{2}\\
\end{array}\right] \left[\begin{array}{c}
a\\
b\sqrt{\left|D_{N}''(0)\right|}
\end{array}\right]  =  \left[\begin{array}{c}
v\\
0\end{array}\right]\label{linearSystemInteroplationNonDecimated}
\end{align} 
with $(D_{0})_{j,k} =  D_{N}(\tau_j - \tau_k)$, $(D_{1})_{j,k} = D'_{N}(\tau_j - \tau_k)$ and $(D_{2})_{j,k} = D_N''(\tau_j - \tau_k)$.

\begin{lemma}\label{lemmaUndecimatedPolynonialSatisfiesConditionsDualCert}
Consider the $S$-atomic measure $\mu$ with support $S\subset \mathbb{R}$, i.e. $\mu(t) = \sum_{\tau\in S}\delta(t-\tau)$.  Let $\Delta$ denote the minimum separation distance between the atoms $\Delta  = \min_{\tau\neq \tau'}|\tau - \tau'|$ where $|\tau - \tau'|$ denotes the modulo $1$ distance.  As soon as
$\Delta\gtrsim \lambda_c \log(S)$, the system~\eqref{linearSystemInteroplationNonDecimated} is invertible, the coefficients $a$ and $b$ are thus well defined and the polynomial $\eta(\theta)$ defined in~\eqref{simpleInterpolatingPolynomialEtaDirichlet} satisfies conditions~\ref{condition1} and~\ref{condition2} in proposition~\ref{prop:uniqueRecovery}. 
\end{lemma}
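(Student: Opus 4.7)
The plan is to follow the three-step strategy of Candès--Fernandez-Granda~\cite{candes2014towards}, adapted from the squared Fejér kernel to the Dirichlet kernel, paying close attention to the slower off-support decay of $D_N$.

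First I would establish invertibility of the system~\eqref{linearSystemInteroplationNonDecimated}. Writing the $2|S|\times 2|S|$ block matrix as $\Id - E$ after normalizing the diagonal (note $D_N(0)=1$, $D_N'(0)=0$, and $D_N''(0)$ is negative of order $n^2$, so the normalization by $\sqrt{|D_N''(0)|}$ in the statement puts all four blocks on a comparable scale), I would bound the $\|\cdot\|_\infty$ norm of $E$ by summing the decay estimates
\begin{align*}
|D_N(\tau - \tau')| &\lesssim \frac{1}{1+n|\tau-\tau'|}, \quad
\frac{|D_N'(\tau-\tau')|}{\sqrt{|D_N''(0)|}}\lesssim \frac{1}{1+n|\tau-\tau'|}, \quad
\frac{|D_N''(\tau-\tau')|}{|D_N''(0)|}\lesssim \frac{1}{1+n|\tau-\tau'|}
\end{align*}
over the $\Delta$-separated atoms. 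The row sums are then bounded by $\sum_{k\geq 1}\frac{C}{1+n k\Delta} \lesssim \frac{\log|S|}{n\Delta}$, which is $<1/2$ as soon as $\Delta\gtrsim \lambda_c\log|S|$, so the Neumann series converges and yields $a_\tau = 1 + O(\log|S|/(n\Delta))$ and $b_\tau/\sqrt{|D_N''(0)|} = O(\log|S|/(n\Delta))$. This also immediately gives condition~\ref{condition2} of Proposition~\ref{prop:uniqueRecovery} by the very definition of the linear system.

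Next, for condition~\ref{condition1} in a neighborhood of each atom $\tau_0 \in S$, I would exploit $\eta(\tau_0)=1$, $\eta'(\tau_0)=0$, together with a uniform upper bound of the form $|\eta''(\theta)|\leq |D_N''(0)|(1 - \delta)$ for $\theta$ in an $O(1/n)$ neighborhood of $\tau_0$ and some $\delta>0$. Writing $|\eta|^2$ and differentiating twice, one obtains $(|\eta|^2)''(\tau_0) = 2\Rp\overline{\eta(\tau_0)}\eta''(\tau_0) = 2\eta''(\tau_0) < 0$, and the bound on $\eta''$ extended to a neighborhood forces $(|\eta|^2)''(\theta)<0$ throughout. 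This gives $|\eta(\theta)|^2 < 1$ strictly for $0<|\theta-\tau_0|\leq c/n$ for some absolute $c$. The bound on $\eta''$ is obtained by decomposing $\eta''(\theta) = a_{\tau_0}D_N''(\theta-\tau_0) + (\text{contribution from other atoms})$ and using the $|D_N^{(k)}(\tau-\tau')|\lesssim n^{k-1}/(n|\tau-\tau'|)$ bounds to control the tails.

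Finally, for $\theta$ outside the union of small neighborhoods of the atoms, I would bound $|\eta(\theta)|$ directly by the triangle inequality: $|\eta(\theta)|\leq \sum_{\tau\in S}(|a_\tau||D_N(\theta-\tau)| + |b_\tau||D_N'(\theta-\tau)|)$. Using $|a_\tau|\leq 1+O(\log|S|/(n\Delta))$, $|b_\tau|\lesssim\sqrt{|D_N''(0)|}^{-1}\log|S|/(n\Delta)$, and the decay estimates, the sum is bounded by $\sum_{\tau}\frac{C}{1+n|\theta-\tau|}$. The main obstacle, and the reason the condition $\Delta\gtrsim \lambda_c\log|S|$ appears (rather than the smaller separation in the Fejér case), is precisely this: the Dirichlet kernel decays only like $1/(n|\theta-\tau|)$, so summing over $|S|-1$ other atoms yields a logarithmic factor which must be absorbed by taking $\Delta$ slightly larger than $\lambda_c$. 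Once this is done, the tail sum is $o(1)$ and the contribution of the nearest atom, properly handled by the near-atom Taylor argument, yields $|\eta(\theta)|<1$. Matching the two regimes at $|\theta-\tau_0|\sim c/n$ (so that both bounds are active simultaneously) closes the argument.
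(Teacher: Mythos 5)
Your plan is essentially the same as the paper's: invertibility via Gershgorin and a Neumann series giving $\|a\|_\infty\leq 1+\varepsilon$ and $\|b\|_\infty\lesssim \varepsilon/f_c$, then a concavity argument near each atom, then tail estimates away from the atoms with a Taylor bound for the nearest atom's kernel and $1/(n|\theta-\tau|)$ decay for the others. Two points to tighten before this would be airtight. First, the identity $(|\eta|^2)''(\tau_0)=2\Rp\overline{\eta(\tau_0)}\eta''(\tau_0)=2\eta''(\tau_0)$ silently treats $\eta''(\tau_0)$ as real, but the coefficients $a_\tau, b_\tau$ are complex; you must bound the real and imaginary parts separately, showing $\eta_R''<0$ large in magnitude while $|\eta_I|$, $|\eta_I''|$, and $|\eta'|$ are small so that $(|\eta|)''<0$ (the paper carries out exactly this decomposition). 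Second, the matching between the near-atom and far-atom regimes is the delicate part: at $|\theta-\tau_0|\sim\lambda_c/5$ the raw decay $|D_N(\theta-\tau_0)|\lesssim 1/(n|\theta-\tau_0|)$ is still $O(1)$ and does not alone give $|\eta|<1$, so one needs an explicit intermediate interval (the paper uses $[\lambda_c/5, \,0.4751\lambda_c]$) where a Taylor/Faulhaber upper bound on $D_N$ is shown to be monotone below $1$; your phrase "near-atom Taylor argument" gestures at this, but it deserves to be spelled out as a third regime rather than merged into "matching".
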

  
\begin{proof}
First note that we have $D_N(0) = 1$, $D_N'(0) = 0$ as well as $D_N''(0)  = \frac{-4\pi^2n(n+1)}{3}$. Moreover, the first 3 derivatives of $D_N(\theta)$ can expand as 
\begin{align}
D_N(\theta) &= \frac{1}{2n+1} \frac{\sin((2n+1)\pi\theta)}{\sin(\pi \theta)}\label{boundDNderivativeOrder0}\\
D_N'(\theta)& = \frac{1}{2n+1} \left(\frac{(2n+1)\pi \cos((2n+1)\pi \theta)\sin(\pi \theta) - \sin((2n+1)\pi \theta) \pi \cos(\pi \theta)}{\sin^2(\pi \theta)}\right)\\
& = \frac{\pi}{\sin(\pi \theta)} \left(\cos((2n+1)\pi \theta) - \frac{\sin((2n+1)\pi \theta)}{(2n+1)} \frac{\cos(\pi \theta)}{\sin(\pi \theta)}\right)\label{boundDNderivativeOrder1}\\
D_N''(\theta)& = \pi \left(\frac{\pi\sin(\pi \theta(2n+1))}{(2n+1)} - \pi \sin(\pi \theta (2n+1)) (2n+1)\right.\\
&\left. - \frac{\pi\cos(\pi \theta)\cos(\pi \theta (2n+1))}{\sin(\pi \theta)}  + \frac{\pi \cos^2(\pi \theta)\sin(\pi \theta(2n+1))}{\sin^2(\pi \theta)(2n+1)}\right) \frac{1}{\sin(\pi \theta)} \\
& - \frac{\pi^2 \cos(\pi \theta)}{\sin^2(\pi \theta)} \left\{\cos(\pi \theta(2n+1)) - \frac{\cos(\pi \theta) \sin(\pi \theta(2n+1))}{\sin(\pi \theta) (2n+1)}\right\}\\
& = \frac{\pi^2 (2n+1)}{\sin(\pi \theta)} \left\{\frac{\sin(\pi \theta)}{(2n+1)^2} - \sin(\pi \theta(2n+1)) - \frac{\cos(\pi \theta)\cos(\pi \theta (2n+1))}{\sin(\pi \theta) (2n+1)}\right.\\ 
&\left.+ \frac{\cos^2(\pi \theta) \sin(\pi \theta(2n+1))}{\sin^2(\pi \theta) (2n+1)^2} -\frac{\cos(\pi \theta) \cos(\pi \theta(2n+1))}{\sin(\pi \theta) (2n+1)} + \frac{\cos(\pi \theta) \sin(\pi \theta (2n+1) )}{ \sin^2(\pi \theta) (2n+1)^2}\right\}\label{boundDNderivativeOrder2}\\
D_N'''(\theta) & = \frac{3\pi^3\cos(\pi \theta (2n+1))}{\sin(\pi \theta)} - \frac{\pi^3 \cos(\pi \theta (2n+1))(2n+1)^2}{\sin(\pi \theta)} + \frac{6\pi^3 \cos^2(\pi \theta)\cos(\pi \theta(2n+1))}{\sin^3(\pi \theta)} \\
&- \frac{6\pi^3 \cos^3(\pi \theta)\sin(\pi \theta (2n+1))}{\sin^4(\pi \theta)(2n+1)} + \frac{3\pi^3 \cos(\pi \theta) \sin(\pi \theta(2n+1))(2n+1)}{\sin^2(\pi \theta)}\\
& - \frac{5\pi^3\cos(\pi \theta) \sin(\pi \theta(2n+1))}{\sin^2(\pi \theta)(2n+1)}
\label{boundDNderivativeOrder3}
\end{align}
From those expressions, using $\sin(\pi \theta ) \geq 2 \left|\theta\right|_{\text{mod}\; 1}$, we get the upper bounds
\begin{align}
D_N'(\tau - \tau') &\leq \frac{\pi}{|\tau-\tau'|_{\text{mod}\;1}} \left(1+ \frac{1}{(2n+1)|\tau - \tau'|_{\text{mod}\; 1}}\right)\label{boundDirichletOrder1simplified}\\
D_N''(\tau - \tau') & \leq \frac{\pi^2(2n+1)}{|\tau - \tau'|_{\text{mod}\; 1}} \left(\frac{1}{(2n+1)^2} + 1  + \frac{2}{|\tau - \tau'|_{\text{mod}\; 1}(2n+1)} \right|
\label{boundDirichletOrder2simplified}\\
D_N'''(\tau - \tau') &\leq 24\pi^3\left\{ \frac{(2n+1)^2}{\sin(\pi (\tau - \tau'))} \vee \frac{1}{\sin^3(\pi (\tau - \tau'))}  \right\}\label{boundDNderivativeOrder3afterCleaning}
\end{align}
Also note that using~\eqref{boundDNderivativeOrder0} to~\eqref{boundDNderivativeOrder2}, and introducing the quantity $a(\theta)\equiv \frac{2}{\pi \left(1 - \frac{\pi^2 |\theta|^2}{6}\right)}$, we can derive bounds similar to Lemma 2.6 in~\cite{candes2014towards}. In this case, we simply write 

\begin{align}
D_N(\theta) \leq \left\{\begin{array}{ll}
\frac{1}{2(2f_c+1)|\theta|} & \lambda_c \leq |\theta|\leq \frac{\sqrt{2}}{\pi}\\
\frac{a(\theta)}{2(2f_c+1)|\theta|}& \frac{\sqrt{2}}{\pi}\leq |\theta|\leq \frac{1}{2}.  
\end{array}\right.\label{upperboundDwitha}
\end{align}
\begin{align}
D'_N(\theta) \leq \left\{\begin{array}{ll}
\frac{\pi}{|\theta|} & \lambda_c \leq |\theta|\leq \frac{\sqrt{2}}{\pi}\\
\frac{\pi a(\theta)}{|\theta|}& \frac{\sqrt{2}}{\pi}\leq |\theta|\leq \frac{1}{2}.  
\end{array}\right.\label{upperboundDprimewitha}
\end{align}
as well as 
\begin{align}
D''_N(\theta) \leq \left\{\begin{array}{ll}
\frac{3\pi^2(2f_c+1)}{|\theta|}& \lambda_c \leq |\theta|\leq \frac{\sqrt{2}}{\pi}\\
\frac{3\pi^2 (2f_c+1)a(\theta)}{|\theta|}& \frac{\sqrt{2}}{\pi}\leq |\theta|\leq \frac{1}{2}.  
\end{array}\right.\label{upperboundDprimeprimewitha}
\end{align}
If we use $\Delta  \leq \min_{\tau \neq \tau'}|\tau - \tau'|_{\text{mod}\; 1}$, we also have 
\begin{align}
D_N(\tau - \tau') &\leq \frac{1}{4\Delta f_c }\\
\frac{D_N'(\tau - \tau')}{\sqrt{|D_N''(0)|}} & \leq  \frac{2\pi f_c}{\Delta f_c} \frac{\sqrt{3}}{2\pi f_c}\\
\frac{D_N''(\tau - \tau')}{|D_N''(0)|} &\leq \frac{12\pi^2 f_c^2}{\Delta f_c} \frac{3}{4\pi^2 f_c^2}
\end{align}
And hence 
\begin{align}
\left\|D_0 - I\right\|_\infty &\leq \frac{\log(S)}{4 f_c \Delta}\label{boundNormD0}\\
\left\|\frac{D_1}{\sqrt{|D_N''(0)|}}\right\|_\infty &\leq \frac{\sqrt{3}\log(S)}{\Delta f_c}\\
\left\|\frac{D_2}{|D_N''(0)|} - I\right\|_\infty &\leq \frac{9\log(S)}{4 \Delta f_c} \label{boundNormD2}
\end{align}
From this, using Gershgorin (see~\cite{tang2013compressed} for details), we can write 
\begin{align}
\left\|I-D\right\| &\leq \max\left\{\|I- D_0\|_\infty + \left\|\frac{1}{\sqrt{|D_N''(0)|}} D_1\right\|_\infty, \left\|\frac{1}{\sqrt{|D_N''(0)|}}D_1\right\|_\infty + \left\|I - \left( - \frac{1}{|D_N''(0)|} D_2\right)\right\|_\infty\right\}\\
&\leq \max \left\{\frac{\log(S)}{4f_c\Delta} + \frac{\sqrt{3}\log(S)}{\Delta f_c},  \frac{\sqrt{3}\log(S)}{\Delta f_c} + \frac{9\log(S)}{4\Delta f_c}\right\}\label{operatorNormD}
\end{align}
In particular, this last line can be made sufficiently smaller than $1$ as soon as $\Delta \geq \lambda_c \log(S)$. The matrix $D$ is invertible and we have $\|D\|_{\text{op}} \leq \|I-D\| + \|I\|\leq 1+ \varepsilon$. The system~\eqref{linearSystemInteroplationNonDecimated} is thus invertible as soon as $\Delta$ is sufficiently larger than $\lambda_c \log(S)$ and the solution $(a,b)$ is well defined. Using the Schur complement, we can express this solution as
\begin{align}
\left[\begin{array}{c}
a\\
b
\end{array}\right] = \left[\begin{array}{c}
I\\
-\sqrt{|D_N''(0)|}D_2^{-1}D_1\end{array}\right]\left(D_0 - D_1D_2^{-1}D_1\right)^{-1} v \label{expressionAandB001}
\end{align}

where $v$ is the vector defined as $v = [\sign(\alpha_1), \ldots, \sign(\alpha_{|S|})]$. Using the bounds~\eqref{boundNormD0} to~\eqref{boundNormD2}, and following~\cite{candes2014towards}, we can write  
\begin{align}
\left\|a\right\|_\infty &\leq \left\|(D_0 - D_1D_2^{-1}D_1)^{-1}\right\|_\infty\|v\|_\infty\label{boundAlpha}\\
\left\|\sqrt{|D_N''(0)|}\; b\right\|_\infty &\leq \sqrt{|D_N''(0)|} \|D_2^{-1}D_1\|_\infty\left\|(D_0 - D_1D_2^{-1}D_1)^{-1}\right\|_\infty\|v\|_\infty\label{boundBeta}
\end{align}
Note that (see~\cite{candes2014towards} for details)
\begin{align}
\left\|\left(\frac{D_2}{D_N''(0)}\right)^{-1}\right\|_\infty \leq \frac{1}{1 - \left\|I- \frac{D_2}{D_N''(0)}\right\|_\infty} \leq \frac{1}{1- \frac{9\log(S)}{4\Delta f_c}} \equiv h 
\end{align}
which can be made sufficiently close to one by taking $\Delta$ sufficiently larger than $\lambda_c\log(S)$. we also have 
\begin{align}
\left\|I- (D_0 - D_1D_2^{-1}D_1)\right\|_\infty &\leq \|I- D_0\|_\infty + \left\|D_1\right\|_\infty^2 \|D_2^{-1}\|_\infty \\
&\leq \frac{\log(S)}{4\Delta f_c} + \left(\sqrt{3} \frac{\log(S)}{\Delta f_c}\right)^2 \frac{1}{1- \frac{9\log(S)}{4\Delta f_c}} 
\end{align}
In particular, using a Neumann series, we can write 
\begin{align}
\left(D_0 - D_1D_2^{-1}D_1\right)^{-1} = &=\sum_{k=0}^\infty \left(I - D_1D_2^{-1}D_1\right)^k = I + \sum_{k=1}^\infty \left(I - D_1D_2^{-1}D_1\right)^k\label{NeumannWithoutNorm}
\end{align}
which implies a natural bound on the norm of this inverse,
\begin{align}
\|(D_0 - D_1D_2^{-1}D_1)^{-1}\|_\infty &\leq 1+ \sum_{k=1}^\infty \left\|I - D_1D_2^{-1}D_1\right\|_\infty^k \leq 1+ \frac{h}{1-h}. 
\end{align}
Note that from~\eqref{NeumannWithoutNorm} and~\eqref{expressionAandB001}, we also have 
\begin{align}
|\Ip(\alpha_\tau)| \leq \frac{h}{1-h}, \quad \Rp(\alpha_\tau) \geq 1 - \frac{h}{1-h}. 
\end{align}
Again, this last bound can be made sufficiently small by taking $\Delta$ larger than $\lambda_c\log(S)$. Substituting those two bounds in~\eqref{boundAlpha} and~\eqref{boundBeta}, we get 
\begin{align}
\left\|a\right\|_\infty &\leq 1 + \left(\frac{\log(S)}{4\Delta f_c} + \left(\sqrt{3} \frac{\log(S)}{\Delta f_c}\right)^2 \frac{1}{1- \frac{9\log(S)}{4\Delta f_c}} \right)\\
&\leq 1 + \varepsilon_1\label{boundA}\\
\left\|b\right\|_\infty &\leq \frac{1}{\sqrt{|D_N''(0)|}}\left(\frac{\log(S)}{4\Delta f_c} + \left(\sqrt{3} \frac{\log(S)}{\Delta f_c}\right)^2 \frac{1}{1- \frac{9\log(S)}{4\Delta f_c}} \right)\frac{1}{1- \frac{9\log(S)}{4\Delta f_c}}  \left(\frac{\sqrt{3}\log(S)}{\Delta f_c}\right)^2\\
&\leq \frac{\varepsilon_2 \sqrt{3}}{2\pi\sqrt{f_c(f_c+1)}}\leq \frac{\varepsilon_2' }{f_c}\label{boundB}
\end{align}

We now show how to control the modulus $|\eta(\theta)|$. We will use the following bounds which are transposition of the bounds from lemma 2.3 in~\cite{candes2014towards} to the Dirichlet kernel. As in this last paper, the bounds follow from series expansion of the kernel and its derivatives around the origin.
\begin{align}
D_n(\theta) &\geq 1 - \frac{4\pi^2}{3} n(n+1)\frac{\theta^2}{2}\label{boundTaylorDn}\\
D_n'(\theta)& \leq \frac{4\pi^2}{3} n(n+1)\theta\label{boundTaylorDn1}\\
D_n''(\theta)&\leq -\frac{4\pi^2 n (n+1)}{3} + \frac{16n(n+1)(3n^2 + 3n-1)}{15}\frac{\theta^2}{2}\\
&\leq -\frac{4\pi^2 n (n+1)}{3}  + \frac{8n(n+1)(3n^2 +3n-1) \theta^2}{15}\label{boundTaylorDn2}\\
\left|D_n''(\theta)\right|&\leq \frac{4\pi^2 n (n+1)}{3}\label{boundTaylorDn2mod}\\
\left|D_n'''(\theta)\right|&\leq \frac{16 \pi^3 n(n+1)(3n^2 +3n-1) \theta}{15}\label{boundTaylorDn3}
\end{align}


In~\eqref{boundTaylorDn3} we use the Faulhaber's formula (see~\cite{faulhaberacademia,bernoulli1713ars})
\begin{align}
\sum_{k=1}^n k^m = \frac{1}{m+1} \sum_{k=0}^m (-1)^k {m+1\choose k} B_k n^{m+1-k}\label{Faulhabers}
\end{align}
where the $B_k$ are the Bernoulli numbers of the first kind ($B_1 = -\frac{1}{2}$). In particular, we have 
\begin{align}
\sum_{k=1}^n k^4 = \frac{n(n+1)(2n+1)(3n^2 + 3n-1)}{30}
\end{align}
By definition of $a$ and $b$ we always $\eta(\tau)\leq 1$ as well as $\eta'(\tau) = 0$ for all $\tau\in S$. To show that $|\eta(\theta)|<1$ for all $\theta \notin S$, we start by showing that $\eta''(\theta)<0$ in the interval $(\tau, \tau \pm \lambda_c/5]$, that is to say for $|\theta - \tau| \in (0,\lambda_c/5)$. Since we have $\eta'(\tau) = 0$, to show $|\eta(\theta)| < 1$ on $(\tau, \tau+\lambda_c/5)$, it suffices to show that $\eta''(\theta)$ is negative on that interval. The second derivative expands as 
\begin{align}
\frac{d^2|\eta|(\theta)}{d\theta^2} = \frac{|\eta'|^2 + \eta_R\eta_R'' + \eta_I\eta_I''}{|\eta|} - \frac{(\eta_R\eta_R' + \eta_I\eta_I')^2}{|\eta|^3}
\end{align}
We thus need to show that $\eta''(\theta)$ is negative on $(\tau, \tau + \lambda_c/5]$. Without loss of generatlity, we only focus on $(\tau_0, \tau_0+\tau + \lambda_c/5)$. The result remains true for any $\tau\in S$. Following~\cite{candes2014towards}, and labelling as $\tau_0, a_0, b_0$ the nearest atom to $\theta$ and its corresponding coefficients in $\eta(\theta)$, we have 
\begin{align}
\eta_R(\theta) \geq \Rp(a_0) D_N(\theta - \tau_0) - \|a\|_\infty \left|\sum_{\tau\neq \tau_0} D_N(\theta - \tau)\right| - \|b\|_\infty |D_N'(\theta)| - \|b\|_\infty \left|\sum_{\tau\neq \tau_0} D_N(\theta - \tau)\right|
\end{align}
Now using~\eqref{boundTaylorDn} as well as~\eqref{boundTaylorDn1} and the fact that $\theta \leq \frac{\Delta}{2}$, and noting that $a(\theta)\leq 1.1$, we get 
\begin{align}
|\eta_R(\theta)|\geq (1+\varepsilon_1) \left(1 - \frac{4\pi^2}{3} f_c(f_c+1)\frac{\theta^2}{2}\right) -2 (1+\varepsilon_1) \left(\frac{\log(S)}{\Delta f_c} + \frac{4}{\Delta f_c}\right) - 2\frac{\varepsilon_2}{f_c} \left(\frac{2\pi\log(S)}{\Delta}\right) - \varepsilon_2 \frac{8\pi^2}{3}. \geq 0.71\label{boundOnEtaR}
\end{align}
as soon as $f_c\geq 100$ and for $\theta\leq \frac{f_c}{5}$. For $\eta_I(\theta)$, we write 
\begin{align}
\left|\eta_I(\theta)\right| &\leq \left|\sum_{\tau\in S} \Ip(a_\tau) D_N(\theta - \tau) + \sum_{\tau\in S}\Ip(b_\tau)D_N'(\theta - \tau)\right|\\
&\leq \left|\Ip(a_{\tau_0})\right| |D_N(\theta - \tau_0)| + \|a\|_\infty \sum_{\tau\neq \tau_0}\left|D_N(\theta - \tau)\right| + \|b\|_\infty \left|D_N'(\theta - \tau_0)\right| + \|b\|_\infty \left|\sum_{\tau \neq \tau_0} D_N'(\theta - \tau)\right| \label{ImaginaryEtaAfterSubstitutingBoundDDprimeBeforeSubstitution}\\
&\leq \frac{h}{1-h} + (1+\varepsilon_1) \left(\frac{2\log(S)}{\Delta f_c} + \frac{2}{\Delta f_c}\right) +\frac{\varepsilon_2'}{f_c}\left(\frac{4\pi^2}{3} n(n+1)\right)\frac{1}{2f_c} + \frac{\varepsilon_2'}{f_c} \left(2.2\frac{\log(S)}{\Delta} + \frac{2}{\Delta}\right) \label{ImaginaryEtaAfterSubstitutingBoundDDprime}
\end{align}
\eqref{ImaginaryEtaAfterSubstitutingBoundDDprime} follows from substituting~\eqref{boundTaylorDn} to~\eqref{boundTaylorDn1} as well as~\eqref{upperboundDwitha} to~\eqref{upperboundDprimewitha} inside~\eqref{ImaginaryEtaAfterSubstitutingBoundDDprimeBeforeSubstitution}. This last line can be made sufficiently small provided that $\Delta$ is taken sufficiently larger than $\lambda_c\log(S)$. Similar results hold for $\eta_R''(\theta)$ and $\eta_I''(\theta)$ for which we have 
\begin{align}
\eta_R''(\theta) =  \sum_{\tau\in S} \Rp(a_\tau) D_N''(\theta - \tau) + \sum_{\tau \in S} \Rp(b_\tau) D_N'''(\theta - \tau)
\end{align}
From this we have 
\begin{align}
\eta_R''(\theta) &= \sum_{\tau \in S} \Rp (\alpha_\tau) D_N''(\theta - \tau) + \sum_{\tau \in S} \Rp(\beta)D_N'(\theta - \tau)\\
&\leq \Rp(\alpha_0) D_N''(\theta - \tau_0) + \sum_{\tau - \tau_0} \|a\|_\infty \left|D_N''(\theta - \tau)\right| + \|b\|_\infty \left|D_N'''(\theta - \tau_0)\right| + \sum_{\tau \neq \tau_0} \left|D_N'''(\theta - \tau)\right|\label{expressionetaprimeprime}
\end{align}
Using $\Rp(\alpha_0) \leq 1 - \frac{h}{1-h}$ as well as 
\begin{align}
D_N''(\theta - \tau_0)& \leq -\frac{4\pi^2n(n+1)}{3}+ \frac{8n(n+1)(3n^2 + 3n-1)\theta^2}{15}\\
&\leq \frac{-4\pi^2 n (n+1)}{3} + \frac{8n(n+1)}{15} \frac{3n(n+1)}{25}, \quad \text{when $\theta \leq \frac{\lambda_c}{5}$}\\
&\leq \left(-\frac{4\pi^2}{3} + \left(\frac{8}{15} \frac{3}{25} (1.01)\right)\right)n(n+1), \quad \text{as soon as $n\geq 100$.}
\end{align}
Using~\eqref{boundDirichletOrder2simplified} and~\eqref{boundAlpha}, we can thus control the first term in~\eqref{expressionetaprimeprime} as
\begin{align}
&\Rp(\alpha_0) D_N''(\theta - \tau_0) + \sum_{\tau - \tau_0} \|a\|_\infty \left|D_N''(\theta - \tau)\right|\\
&\leq \left(1 - \frac{h}{1-h}\right) n(n+1) \left(-\frac{4\pi^2}{3} + 0.0646\right) + \sum_{\tau\neq \tau_0} \|a\|_\infty \left|D_N''(\theta - \tau)\right|\\
&\leq -13 n(n+1) + \left(1+\varepsilon_1\right) \frac{4\pi^2(2n+1)n}{n\Delta}\left(2 + 2\log(S)\right)\label{boundDerivativeInterpolatingPolyOrder2}
\end{align}
which can be made less than $-13f_c(f_c+1)$ as soon as $\Delta \geq 12\pi^2 (1+\varepsilon_1)(2+2\log(S))\lambda_c$

For the third order derivatives, recall that from~\eqref{boundDNderivativeOrder3afterCleaning}, we have 
\begin{align}
D_N'''(\theta - \tau)&\leq \frac{\pi^3 f_c^3}{8f_c^3|\theta|^3_{\text{mod}\, 1}} \left(16 + 9\pi^2 + 9\pi\right), \quad \theta \leq \lambda_c.
\end{align}
This last line can be made less than $\varepsilon_3f_c^2$ as soon as $\Delta \geq 2\lambda_c \log(S)(16 + 9\pi^2+9\pi) $. Together with $\|b\|_\infty \leq \varepsilon_2/f_c$ and $|D_N'''(\theta - \tau_0)|\leq \frac{16 n (n+1)}{15} \frac{3n(n+1)}{25n}$, we can thus write 
\begin{align}
\left|\sum_{\tau} b_\tau D_N'''(\theta - \tau)\right|&\leq \|b\|_\infty D_N'''(\theta - \tau_0) + \sum_{\tau\neq \tau_0}\|b\|_\infty \left|D_N'''(\theta - \tau)\right|\\
& \leq \varepsilon_2' \frac{\pi^3f_c^2}{8}\left(\frac{8}{\Delta^3f_c^3} + \frac{2\log(S)}{f_c^3\Delta^3}\right) (16 + 9\pi^2+9\pi)  + \varepsilon_2' \frac{16}{15} \frac{3.03}{25} f_c(f_c+1)\label{boundDerivativeInterpolatingPolyOrder3}
\end{align}
The last line holds as soon as $f_c\geq 100$. Finally, grouping~\eqref{boundDerivativeInterpolatingPolyOrder2} and~\eqref{boundDerivativeInterpolatingPolyOrder3}, we get 
\begin{align}
\eta_R''(\theta) \leq -13 f_c(f_c+1) + \varepsilon_2'' f_c(f_c+1) < 0\label{boundEtaRpprime}
\end{align}
A similar reasoning holds for the derivative of the imaginary part, for which we have 
\begin{align}
\left|\eta_I''(\theta)\right| &\leq \Ip(a_0) \left|D_N''(\theta - \tau_0)\right| + \|a\|_\infty \sum_{\tau \neq \tau_0} \left|D_N''(\theta - \tau_0)\right|\\
& + \Ip(b_0) \left|D_N''(\theta - \tau_0)\right| + \left\|b\right\|_\infty\sum_{\tau \neq \tau_0} \left|D_N'''(\theta  -\tau)\right|\\
&\leq \frac{h}{1-h} \frac{4\pi^2n(n+1)}{3} + \frac{\varepsilon_2'}{f_c} \frac{16n(n+1)(3n^2 + 3n - 1)}{15}\frac{1}{5f_c}\\
&+ \left(1+ \frac{h}{1+h}\right) \frac{4\pi^2 (2n+1)}{\Delta f_c}f_c \left(2+ 2\log(S)\right) + \frac{\varepsilon_2'}{f_c} \frac{\pi^3 f_c^3}{8f_c^3} \left(2+ 2\log(S)\right) (16+9\pi^2 + 9\pi).\label{boundEtaIpprime}
\end{align}
provided that $\theta \leq f_c/5$. Finally for $|\eta'(\theta)|$, we write
\begin{align}
\left|\eta'(\theta)\right| &\leq \left|\sum_{\tau \in S} a_\tau D_N'(\theta - \tau) + \sum_{\tau\in S} b_\tau D_N''(\theta - \tau)\right|\\
&\leq \|a\|_\infty \left|D_N'(\theta - \tau)\right| + \sum_{\tau \neq \tau_0} \|a\|_\infty \left|D_N'(\theta - \tau)\right|\\
&+ \|b\|_\infty \left|D_N'(\theta - \tau_0)\right| + \sum_{\tau \neq \tau_0} \|b\|_\infty \left|D_N''(\theta - \tau)\right|\\
&\stackrel{(a)}{\leq } (1+\varepsilon_1)\frac{4\pi^2 n (n+1)\theta}{3} + (1+\varepsilon_1)\frac{2\pi f_c}{f_c} \left(2+2\log(S)\right)\label{tmpStepBoundModulusEtaprime1}\\
&+ \frac{\varepsilon_2'}{f_c} \frac{4\pi^2n(n+1)}{3} + \frac{4\varepsilon_2'}{f_c} \frac{\pi^2 (2n+1)f_c}{f_c\Delta} \left(2+ 2\log(S)\right)\label{tmpStepBoundModulusEtaprime2}\\
&\stackrel{(b)}{\leq } (1+\varepsilon_1) \left( \frac{8\pi^2 }{15} + \frac{1}{6\pi}\right) f_c + \varepsilon_2' \left(1+ \frac{8\pi^2}{3}\right)f_c\label{boundModulusEtaPrime}
\end{align}
(a) follows from~\eqref{boundDirichletOrder1simplified} and~\eqref{boundDirichletOrder2simplified} ~\eqref{boundTaylorDn1} and~\eqref{boundTaylorDn2mod} as well as~\eqref{boundA} and~\eqref{boundB}.  (b) follows from $\theta \leq \lambda_c/5$ as well as $n\geq 1$ and $\Delta \geq 12\lambda_c (2+2\log(S)) \pi^2$. 

To conclude, we must show that $|\eta(\theta)|<1$ on every $(\tau_\ell+\lambda_c/5, \tau_\ell + \frac{\tau_{\ell+1} - \tau_\ell}{2}]$ as well as every $\theta \in [\tau_\ell + \frac{\tau_{\ell-1} - \tau_\ell}{2}, \tau_\ell - \lambda_c/5]$. To bound the modulus, we follow~\cite{candes2014towards} and use a series expansion around the origin for $D_N(\theta - \tau_0)$. Relying once again on Faulhaber's formula~\eqref{Faulhabers}, we write 
\begin{align}
D_N(\theta) & \leq 1 - \frac{8\pi^2 n (n+1) \theta^2}{12} + \frac{2}{2n+1}  \frac{16 \pi^4 n(n+1)(2n+1)(3n^2 + 3n - 1)}{30}\frac{\theta^4}{24}
\end{align}
Recall that we have 
\begin{align}
|\eta(\theta)| &= \left|\sum_{\tau\in S} a_\tau D_N(\theta - \tau) + \sum_{\tau\in S} b_\tau D_N(\theta - \tau)\right|\\
&\leq \left|a_0 D_N(\theta - \tau_0) + b_0 D_N'(\theta - \tau_0) + \sum_{\tau\neq \tau_0} a_\tau D_N(\theta - \tau) + \sum_{\tau\neq \tau_0} D_N'(\theta - \tau)\right|\label{boundEtaFinalremainingThetaLargerThanlambdaCon5}
\end{align}
where $a = (a_\tau)_{\tau\in S}$ and $b = (b_\tau)_{\tau\in S}$ are solutions to~\eqref{linearSystemInteroplationNonDecimated}. Using the bound on $\|a\|_\infty$ derived above, we define $H_1(\theta)$ as the bound 
\begin{align}
H_1(\theta) \equiv \|a\|_\infty \left[1 - \frac{4\pi^2 n (n+1)\theta^2}{6} + \frac{4}{3} \theta^4 \pi^4 \frac{n(n+1)(3n^2 + 3n - 1)}{30} \right] + \|b\|_\infty \frac{4\pi^2}{3} n (n+1)
\end{align}
Note that we have 
\begin{align}
H_1(\lambda_c/5) &\leq \left(1+ \frac{h}{1+h}\right)\left[1 - \frac{4\pi^2 n (n+1)}{25 n^2 3} + \frac{4}{3} \left(\frac{\pi}{5}\right)^4 \frac{n(n+1)(3n^2 + 3n-1)}{30}\right] + \frac{\varepsilon_2'}{n}\frac{4\pi^2}{3}n(n+1)\\
& = \left(1+ \frac{h}{1+h}\right) \left[1 - \frac{4\pi^2}{75} \left(1+ \frac{1}{n}\right) + \frac{4}{3} \left(\frac{\pi}{5}\right)^4 \frac{3}{30} + \frac{4}{3} \left(\frac{\pi}{5}\right)^4 \left(\frac{6}{n} + \frac{2}{n^2} - \frac{1}{n^3}\right)\right] + \frac{\varepsilon_2'}{n}\frac{4\pi^2}{3}n(n+1)\frac{1}{5n}\label{tmpBoundH1}
\end{align}
in particular, depending on the bound we fix on $\varepsilon_2'$ we can make~\eqref{tmpBoundH1} sufficiently close to $0.5518$ (corresponding to $\varepsilon_2' = 0$). I.e. 
\begin{align}
H_1(\lambda_c/5) &\leq 0.5784, \quad \text{as soon as $\varepsilon_2'\leq 0.01$}\label{boundH1a}\\
H_1(\lambda_c/5) &\leq 0.8442, \quad \text{as soon as $\varepsilon_2'\leq 0.1$}\label{boundH1b}\\
\end{align}
The derivative of $H_1(\theta)$ can expand similarly as 
\begin{align}
H_1'(\theta) &= - \|a\|_\infty \frac{4 \pi^2}{3} \theta n(n+1) \left[1 - \frac{4\theta^2 \pi^2 (3n^2 + 3n - 1)}{30}\right] + \frac{\varepsilon_2'}{n}\frac{4\pi^2}{3} (n+1)n \\
&\leq -\|a\|_\infty \frac{4}{3} \pi^2 \theta n(n+1) \left[0.9 - \frac{4\theta}{}\right]
\end{align}
In the last line, for simplicity, we assume $\varepsilon_2' \leq \|a\|_\infty 0.1 n\theta$ for all $\theta \geq \lambda_c/5$ which means $\varepsilon_2' \leq 0.02$. From this, the derivative remains negative for all $\theta = \frac{\zeta}{n}$ satisfying 
\begin{align}
\left(0.9 - \frac{4\zeta^2}{n^2}\pi^2 \frac{(3n^2 + 3n - 1)}{30}\right)>0
\end{align}
which implies $\zeta\leq \sqrt{\frac{0.9}{1.01}4\pi^2}$ and hence $\theta \leq 0.4751\lambda_c$.  From this we therefore know that $H_1(\theta)$ remains less than~\eqref{boundH1a} or~\eqref{boundH1b} depending on the choice of $\varepsilon_2'$ and remains smaller than this upper bound on the whole interval $[\lambda_c/5, 0.4751 \lambda_c]$. Using the bounds derived above, we also have 
\begin{align}
\left|\sum_{\tau \neq \tau_0} D_N(\theta - \tau) + \sum_{\tau\neq \tau_0} b_\tau D_N'(\theta - \tau)\right| &\leq (1+\varepsilon_1) \frac{1}{2\Delta f_c} (2 + 2\log(S)) + \varepsilon_2' \frac{\pi}{\Delta f_c} \left(2+2\log(S)\right).
\end{align}
As we saw above, this last quantity can be made sufficiently small by taking $\Delta$ sufficiently larger than $\lambda_c(2+2\log(S))(1+\varepsilon_1 + \varepsilon_2')$. Combining this with the bounds~\eqref{boundH1a} or~\eqref{boundH1b} on $H_1$, and substituting those expressions in~\eqref{boundEtaFinalremainingThetaLargerThanlambdaCon5}, we get 
\begin{align}
|\eta(\theta)| < \eqref{boundH1a} + O(\frac{\log(S)}{\Delta f_c}) < 1, \quad \text{for all $\theta \in [\lambda_c/5, 0.4751\lambda_c]$}
\end{align}
as soon as $\Delta$ is sufficiently larger than $\lambda_c \log(S)$. On the remaining $\tau_0 + [0.4751\lambda_c, \Delta/2]$, we simply use 
\begin{align}
|\eta(\theta)|& \leq \|a\|_\infty \sum_{\tau \in S} \left|D_N(\theta - \tau)\right| + \sum_{\tau\in S} \|b\|_\infty \left|D'_N(\theta - \tau)\right|\label{boundModulusEtaOnRemainingIntervalAlmostFinished}\\
&\leq  \frac{(1+\varepsilon_1)}{4f_c|\tau - \tau_0|} + \frac{(1+\varepsilon_1)}{2\Delta f_c} (2+2\log(S))+ \frac{\varepsilon_2'}{f_c}\frac{\pi }{|\tau-\tau_0|} + \frac{\varepsilon_2' \pi}{f_c \Delta}\left(2+2\log(S)\right)\\
&\leq \frac{(1+\varepsilon_1)}{(4)(0.4)}  + \frac{(1+\varepsilon_1)}{2\Delta f_c}(2+2\log(S)) + \frac{\varepsilon_2' \pi}{0.4} + \frac{\varepsilon_2' \pi}{f_c\Delta} (2+2\log(S))\label{boundEtaRemainingInterval}
\end{align}
In~\eqref{boundEtaRemainingInterval} we use $|\tau - \tau_0|\geq 0.4\lambda_c$. This last bound remains true on $[\Delta/2, \tau_0 + \frac{\tau_1 - \tau_0}{2}]$ as well as $[\tau_0 - \Delta/2]$. In particular, provided that $\varepsilon_2' \leq \frac{0.4}{10\pi}$ and that $\varepsilon_1<0.1$, ~\eqref{boundEtaRemainingInterval} can be made smaller than $(1.1)( 0.6250) + 0.1 < 0.7875 $. The remaining two terms in~\eqref{boundModulusEtaOnRemainingIntervalAlmostFinished} can be made arbitrarily small by taking the minimum separatino distance sufficiently larger than $(2+2\log(S)) \lambda_c$. This concludes the proof of lemma~\ref{lemmaUndecimatedPolynonialSatisfiesConditionsDualCert}.


\end{proof}

\section{\label{sec:boundPerr}Proof of lemma~\ref{boundOnNormPerrWithoutSampling}}

\boundOnNormPerrWithoutSamplingKey*

Recall that we have $(U^*U)^{-1} = \frac{1}{n+1}I + \sum_{k=1}^\infty \frac{1}{n+1}( I - \frac{1}{n+1}U^*U)^k$ as $U^*U$ is invertible. Using this expression, we get 
\begin{align}
\frac{1}{n+1}\mathcal{P}_U = \frac{1}{n+1}U(U^*U)^{-1}U^*&= \frac{1}{(n+1)^2}\sum_{k=0}^\infty U(I - \frac{1}{n+1}U^*U)^kU^*
\end{align}
To control the norm $\|\cdot \|_N$, note that we can write
\begin{align}
&\frac{1}{n+1}\left\|\mathcal{T}\left(\frac{1}{n+1}U\left(I - \frac{1}{n+1}U^*U\right)^kU^*\right)\right\|_N \\
&\leq \frac{1}{n+1}\|\mathcal{T}\|_{\|\cdot \|_F\rightarrow N} \left\|\sum_{k=1}^\infty\frac{1}{n+1}U\left(I - \frac{1}{n+1}U^*U\right)^kU^*\right\|_F, \quad \forall k\geq 1\label{perrStep1}
\end{align}
We further have $\|\mathcal{T}\|_{\|\cdot \|\rightarrow N} = \sup_{\|T\|_F\leq 1}\left\|\mathcal{T}(T)\right\|_N\leq 1$. 
%
From~\eqref{perrStep1}. The sub-multiplicativity of the Frobenius norm does the rest.
\begin{align}
\left\|\frac{1}{n+1} \sum_{k\geq 1} U\left(I - \frac{1}{n+1}U^*U\right)^kU^*\right\|_F  &\leq  
\sum_{k\geq 1} \frac{1}{n+1}  \|U\|^2_F \left\|I - \frac{1}{n+1}U^*U\right\|_F^{k} \leq |S|\sum_{k \geq 1} \left\|I - \frac{1}{n+1}U^*U\right\|_F^{k}\label{deviationToIdentity}
\end{align}
Using $\|I - \frac{1}{n+1}U^*U\|_F \leq \sqrt{|S|}\|I - \frac{1}{n+1}U^*U\|_\infty \leq \frac{\sqrt{|S|}\log|S|}{\Delta n}$, we can write 
\begin{align}
\frac{1}{n+1}\left\|\mathcal{T}\left(\frac{1}{n+1}U\left(I - \frac{1}{n+1}U^*U\right)^kU^*\right)\right\|_N  \lesssim \frac{1}{n} \left( \frac{|S|^{3/2}\log|S|}{\Delta n} \right)
\end{align}
as soon as $\Delta \gtrsim |S|^2 n^{-1} $ up to log factors. We now control the norm $N(|\eta(\theta)|^2 - n^{-2}\psi(\theta)^*UU^*\psi(\theta))$. First notice that we have 
\begin{align}
\eta\eta^* = \frac{1}{n^2} \left[U, V\right] \left[\begin{array}{cc}
\alpha\alpha^* & \alpha \beta^*\\
\beta\alpha^* & \beta\beta^*
\end{array}\right] \left[U, V\right]
\end{align}
where we let $V$ to denote the matrix whose columns are given by the derivative of the Dirichlet kernel at the atoms $\tau\in S$. We start by bounding the difference $|\eta(\theta)|^2 - \psi(\theta)^*U\alpha\alpha^*U^*\psi(\theta)$. For this first term, we have 
\begin{align}
\left\|\mathcal{T}\left\{\eta\eta^*\right\} - \mathcal{T}\left\{U\alpha\alpha^*U^*\right\}\right\|^2_N &= \sum_{|s|\leq n}\frac{1}{n+1-|s|}\frac{1}{n^4} \left|\sum_{k+\ell=s} \sum_{\tau\in S}\sum_{\tau'\in S} \left(\delta_{\tau, \tau'} - \alpha_\tau\alpha_{\tau'}\right)\left(\sum_{k = -n/2+s}^{n/2}e^{2\pi i k (\tau - \tau')}\right) e^{-2\pi i s \tau}\right|^2\\
& \stackrel{(a)}{\leq } \frac{1}{n^4\Delta^2} \sum_{|s|\leq n} \frac{1}{n+1-|s|} \left|\sum_{\tau, \tau'\in S}|\delta_{\tau, \tau'} - \alpha_\tau \alpha_{\tau'}|\right|^2\label{normNbound1}
\end{align}
In (a) we use $\left|\sum_{|k|\leq n}\right|\leq \Delta^{-1}$. To control~\eqref{normNbound1}, we use~\eqref{NeumannWithoutNorm} together with~\eqref{expressionAandB001} which gives $\alpha = (M_0 - M_1M_2^{-1}M_1)^{-1}\sigma $, $\sigma \equiv \sign(\mu) = (\sign(\alpha_\tau))_{\tau\in S}$, and hence
\begin{align}
\alpha\alpha^* = \sigma\sigma^*+ \sum_{k\geq 1}M^k\sigma\sigma^* + \sum_{k\geq 1}\sigma\sigma^*M^k + \sum_{k',k\geq 1} M^k\sigma\sigma^*M^{k'}
\end{align}
where we use $M\equiv I-\left(M_0 - M_1M_2^{-1}M_1\right)$. Noting that $\sum_{\tau, \tau'\in S} |\delta_{\tau, \tau'} - \alpha_\tau\alpha_{\tau'}|\leq |S|\|\alpha\alpha^*\|_\infty$, and using 
\begin{align}
\|\alpha\alpha^*\|_\infty &\leq  \left\|\sigma\sigma^*\right\|_\infty + \sum_{k\geq 1} \left\|M\right\|^k_\infty \left\|\sigma\sigma^*\right\|_\infty + \sum_{k\geq 1} \left\|\sigma\sigma^*\right\|_\infty \left\|M\right\|_\infty^k + \sum_{k',k\geq 1} \left\| M\right\|_\infty^k \left\|\sigma\sigma^*\right\|_\infty \left\|M\right\|_\infty^{k'}\\
&\leq |S| + |S| \left(2\left\|M\right\|_\infty^k  + \left(\left\|M\right\|_\infty^k\right)^2 \right)\\
&  \stackrel{(a)}{\leq }S + S \left( \frac{\log |S|}{4\Delta n} +  6 \frac{\log|S|}{(\Delta n)^2} + \left( \frac{\log |S|}{4\Delta n} +  6\frac{\log|S|}{(\Delta n)^2}\right)^2\right)\\
&\lesssim |S|
\end{align}
(a) follows from~\eqref{NeumannWithoutNorm} and $\Delta \gtrsim n \log{|S|}$. Substituting this back in~\eqref{normNbound1} gives 
\begin{align}
\left\|\mathcal{T}\left\{\eta\eta^*\right\} - \mathcal{T}\left\{U\alpha\alpha^*U^*\right\}\right\|^2_N&\leq \frac{1}{n^2}\frac{\log n}{(n\Delta)^2} |S|^4
\end{align}
we thus get the desired result as soon as $\Delta\geq |S|^2n^{-1}\log(n)$. For the remaining three terms $\left\|\mathcal{T}\left\{n^{-2} U\alpha \beta^* V^*\right\}\right\|_N^2$, $\left\|n^{-2} V\beta\alpha^* U^*\right\|_N^2$ and $\left\|\mathcal{T}\left\{n^{-2} V\beta\beta^* V^*\right\}\right\|_N^2$, we simply use $\left\|\mathcal{T}|\right\|^2_{F\rightarrow N}\leq 1$ as well as the bounds~\eqref{boundA} and~\eqref{boundB} and $\|\alpha \|_\infty$ and $\left\|\beta\right\|_\infty$. For both the first and second term, this gives 
\begin{align}
\left\|\mathcal{T}\left\{n^{-2} U\alpha \beta^* V^*\right\}\right\|_N^2&\leq \left\|U\right\|_F^2 \left\|V\right\|_F^2 \left\|a\right\|_\infty^2\|b\|_\infty^2\stackrel{(a)}{\leq } \frac{1}{n^4} \left(|S|^2n^4\right) |S|^2 \frac{(\varepsilon_2')}{n^2}\stackrel{(b)}{\lesssim } n^{-2}\\
\end{align}
as soon as $\Delta \geq |S|^2\log^2|S|$. (a) above follows from $\|V\|_F^2 \lesssim  |S|\sum_{|k|\leq n} k^2 = \frac{n(n+1)(2n+1)}{6}\lesssim n^3$. $(b)$ follows from~\eqref{boundB} and noting that $\varepsilon_2' = O(\log|S|(\Delta n)^{-1})$. For $\left\|\mathcal{T}\left\{V\beta\beta^* V^*\right\}\right\|_N^2$, the same reasoning yields 
\begin{align}
\left\|n^{-2}\mathcal{T}\left\{Vbb^* V^*\right\}\right\|_N^2&\leq n^{-4}\|V\|_F^4 \left\|b\right\|_\infty^4 |S|^2 \lesssim n^{-2}
\end{align}
as soon as $\Delta \geq n |S|^2\log|S|$. This concludes the proof of lemma~\ref{boundOnNormPerrWithoutSampling}.

\appendix

\section{\label{integrationLemmas}Integration auxilliary lemmas}


\startcontents[short]



\printcontents[short]{}{2}{\addtocontents{ptc}{\setcounter{tocdepth}{2}}}

\subsection{\label{sectionProoflemma:boundK}Proof of lemma~\ref{lemma:boundK}}

Before proving the lemma, we recall the statement below. 

\boundK*

We will start by proving the bound~\eqref{constantBoundlemmaK2}. We then show how to derive~\eqref{decreasingBoundLemmaK3}. For the first bound, we don't need to take into account the relative position of the maximum of the polynomial. For~\eqref{decreasingBoundLemmaK3}, for a polynomial $p(e^{2\pi i \theta})$ upper bounded as in~\eqref{boundOnFLemmaIntegralTotal}, we have 
\begin{align}
&\sup_{\theta\in [0,1]} \frac{1}{|1-e^{2\pi i \theta}|}\left|\int_{0}^{1} p_{\tau}(e^{2\pi i s}) \int_{0}^{-\theta} e^{2\pi i (n+1) t} \tilde{D}(e^{-2\pi i k (s+t)})\; dt\right|\\
&\leq \sup_{\theta\in [0,1]} \frac{1}{|1-e^{2\pi i \theta}|}\int_{0}^{1} \left|p_{\tau}(e^{2\pi i s}) \right| \left|\int_{0}^{-\theta} e^{2\pi i (n+1) t} \tilde{D}(e^{-2\pi i k (s+t)})\; dt\right|\\
&\leq \sup_{\theta\in [0,1]} \frac{1}{|1-e^{2\pi i \theta}|}\int_{0}^{1} \min\left(1, \frac{C_1}{1+n|\theta - (\alpha - \tau)|_{\text{mod}}}\right)  \left|\int_{0}^{-\theta} e^{2\pi i (n+1) t} \tilde{D}(e^{-2\pi i k (s+t)})\; dt\right|\; ds\label{TotalOuterIntegral}
\end{align}

We now let $F(s; \theta)$  to denote the modulus of the inner integral, i.e. 
\begin{align}
F(s; \theta)& \equiv  \left|\int_{0}^{-\theta} e^{2\pi i (n+1) t} \tilde{D}(e^{-2\pi i k (s+t)})\; dt\right|
\end{align}

To derive the bounds~\eqref{constantBoundlemmaK2} and~\eqref{decreasingBoundLemmaK3}, we will rely on the following lemma which controls the quantity $F(s; \theta)$ on the domain $(s, \theta)$ defined below. 

To state this lemma, we split the domain as in Fig.~\ref{domainDecomposition1} below and introduce the subdomains $D_1^+, D_2^+, D_3^+$ and $D_0^+$  as
\begin{align}
D_0^+ &= \left\{(s,\theta)\in [0,\frac{1}{2}]\times [0,\frac{1}{2}] \;|\;|s|,|\theta|>\frac{1}{2n+3},\; s<\theta-\frac{1}{2n+3}\right\}\label{domainD0plus}\\
D_1^+ &= \left\{(s,\theta)\in [0,\frac{1}{2}]\times [0,\frac{1}{2}] ,\;|\;|s|\leq \frac{1}{2n+3},\;|\theta|\leq \frac{1}{2n+3}\right\}\\
&\cup \left\{(s,\theta),\;|\; |s|\leq \frac{1}{2n+3}, \frac{1}{2n+3}\leq|\theta|\leq s+\frac{1}{2n+3}\right\}\\
D_2^+& =\left\{(s,\theta)\in [0,\frac{1}{2}]\times [0,\frac{1}{2}] ,\; |\;|s|,|\theta|>\frac{1}{2n+3},\;|s-\theta|<\frac{1}{2n+3}\right\}\\
D_3^+& =\left\{(s,\theta)\in [0,\frac{1}{2}]\times [0,\frac{1}{2}] ,\; |\;|s|<\frac{1}{2n+3},\;|\theta|,|s-\theta|>\frac{1}{2n+3}\right\}\\
D_4^+& = \left\{(s,\theta)\in [0,\frac{1}{2}]\times [0,\frac{1}{2}] \;|\;|s|,|\theta|>\frac{1}{2n+3},\; |s-\theta|>\frac{1}{2n+3},\; -s<-\theta\right\}\label{domainD4plus}
\end{align}

as well as 
\begin{align}
D_0^{-} &= \left\{(s,\theta) \in [-\frac{1}{2},0]\times [0,\frac{1}{2}]\;|\;|s|>\frac{1}{2n+3},\;s-\theta\geq -1/2\right\}\label{definitionD0minus}\\
D_1^{-} &= \left\{(s,\theta)\in [-\frac{1}{2},0]\times [0,\frac{1}{2}]\;|\;|s|\leq \frac{1}{2n+3},\;s-\theta\geq -1/2, |\theta|>\frac{1}{2n+3}\right\}\\
D_2^{-} &= \left\{(s,\theta)\in [-\frac{1}{2},0]\times [0,\frac{1}{2}]\;|\;|s|, |\theta|\leq \frac{1}{2n+3}\right\}\\
D_3^{-} &= \left\{(s,\theta)\in [-\frac{1}{2},0]\times [0,\frac{1}{2}],\; -1+\frac{1}{2n+3}\leq s-\theta \leq -1/2, |s|\geq \frac{1}{2n+3}\right\}\label{definitionD3minus}\\
D_4^{-} &= \left\{(s,\theta)\in [-\frac{1}{2},0]\times [0,\frac{1}{2}],\; s-\theta \leq -1/2, |s|\leq \frac{1}{2n+3}\right\}\\
D_5^{-} &= \left\{(s,\theta)\in [-\frac{1}{2},0]\times [0,\frac{1}{2}],\; -1\leq (s-\theta)\leq -1+\frac{1}{2n+3}\right\}\label{definitionD5minus}
\end{align}

\begin{figure}\centering
\definecolor{forestGreen}{RGB}{34,139,34} 
\definecolor{mycolor2}{rgb}{0.85000,0.32500,0.09800}%
\definecolor{myOrange}{RGB}{255,165,0}
\definecolor{myYellow}{RGB}{255,255,0}
\definecolor{myViolet}{RGB}{199,21,133}

{\color{black}
\begin{tikzpicture}
\draw[->,ultra thick] (1,0)--(8,0) node[right]{$s$};
\draw[->,ultra thick] (4.5,-1)--(4.5,4) node[above]{$\theta$};
\draw[ultra thick] (2.5,0) rectangle (6.5,2);
\draw (4.5,0)--(6.5,2) ;
\draw (4.5,0.2)--(6.3,2) ;
\draw (4.7,0.2)--(4.7,2) ;

\draw[fill = blue, fill opacity=0.4, color = blue]  (4.5,2) -- (4.7,2) -- (4.7,0.2) -- (4.5,0.2) -- cycle;
\draw[fill = myOrange, fill opacity=0.4, color = myOrange]  (4.5,0.2) -- (6.3,2) -- (6.5,2) -- (4.7,0.2) -- cycle;
\draw[fill = myOrange, fill opacity=0.4, color = myOrange]  (4.5,0) -- (6.5,2) -- (6.5,1.8) -- (4.7,0) -- cycle;

\draw[fill = red, fill opacity=0.4, color = red]  (4.7,0.4) -- (4.7,2) -- (6.3,2) -- cycle;

\draw[fill = forestGreen, fill opacity=0.4, color = forestGreen]  (4.7,0) -- (6.5,0) -- (6.5,1.8) -- cycle;

\draw[fill=MidnightBlue,color = MidnightBlue]  (4.5,0.2) -- (4.7,0.2) -- (4.7,0) --(4.5,0) -- cycle;

\node at (2.5,-0.3) {$-0.5$};
\node at (6.5,-0.3) {$0.5$};
\node at (4.2,-0.3) {$0$};

\draw[color =MidnightBlue ]  (4.7,0.1) -- (5.5,-0.5);
\node at (5.7,-0.7) {\textcolor{MidnightBlue}{$D_1^+$}};

\node at (3.5,0.9) {$D^-$};

\node at (4.1,2.3) {$1/2$};

\node at (4.9,2.3) {\textcolor{blue}{$D_3^+$}};
\node at (5.6,2.3) {\textcolor{red}{$D_0^+$}};
\node at (6.5,2.3) {\textcolor{myOrange}{$D_2^+$}};

\node at (5.9,0.5) {\textcolor{forestGreen}{$D_4^+$}};


\end{tikzpicture}}
\hspace{1cm}
{\color{black}
\begin{tikzpicture}
\draw[->,ultra thick] (1,0)--(8,0) node[right]{$s$};
\draw[->,ultra thick] (5.5,-1)--(5.5,4) node[above]{$\theta$};
\draw (2.5,0) rectangle (5.5,3);
\draw (2.5,0)--(5.5,3) ;

\draw[ultra thick, fill = forestGreen, fill opacity=0.4, color = forestGreen]  (2.5,0) -- (2.5,3) -- (5.5,3) -- cycle;
\draw[fill=MidnightBlue]  (2.5,2.7) -- (2.5,3) -- (2.8,3) -- cycle;

\draw[fill=red]  (5.2,3) -- (5.5,3) -- (5.2,2.7) -- cycle;

\draw[fill = myOrange, fill opacity=0.4, color = myOrange, line width = 1.5pt]  (5.5,0.3) -- (5.2,0.3)  -- (5.2,0) -- (5.5,0)  -- cycle;

\node at (2.5,-0.3) {$-0.5$};

\node at (4.2,-0.3) {$0$};

\draw[fill=red, fill opacity=0.4, color = red] (2.5,0) -- (5.2,0) -- (5.2,2.7) -- cycle;
\node at (4.3,1) {\textcolor{red}{$D_0^-$}};

\draw[fill=blue, fill opacity=0.4, color = blue]  (5.2,3) -- (5.5,3) -- (5.5,0.3) -- (5.2,0.3) -- cycle;

\draw[color = forestGreen, line width =1pt]  (3.8,1.8) -- (3.2,3.3);
\node at (3.2,3.5) {\textcolor{forestGreen}{$D_3^-$}};

\node at (2.4,3.5) {\textcolor{MidnightBlue}{$D_5^-$}};
\node at (5.1,3.5) {\textcolor{myViolet}{$D_4^-$}};

\node at (6,1.7) {\textcolor{blue}{$D_1^-$}};

\node at (6,0.4) {\textcolor{myOrange}{$D_2^-$}};

\end{tikzpicture}}
\caption{\label{domainDecomposition1}$(s,\theta)$-Domain decomposition for the integral~\eqref{integralFthetaDef}. We consider $5$+$6$ subdomains depending on whether $s$ is non negative (subdomains $D_i^+$) or negative (subdomains $D_i^-$). The subdomains $D_2^+$, $D_0^+$, $D_3^+$ and $D_1^+$ correspond to the configurations shown in Fig.~\ref{frameworkDirichlet}. The expression of the line separating $D_3^-$ from $D_0^-$ is given by $\theta>\frac{1}{2} + s$ and the one of the line separating $D_4^+$ from $D_0^+$ is given by $\theta = s$.}
\end{figure}
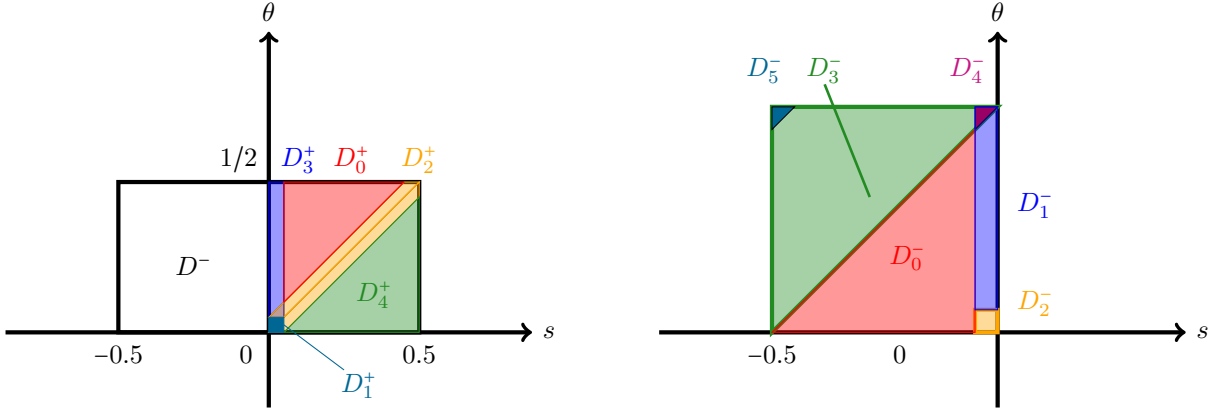

\begin{lemma}\label{boundInteriorIntegralFSthetaReal}
We define the constant $c$ as $c\equiv \frac{1}{4(2n+3)}(1/2 + 1/\pi)$. Consider the function $F(s; \theta)$ defined as 
\begin{align}
F(s; \theta)&=\int_0^{-\theta}e^{2\pi i(n+1)t}\tilde D(e^{-2\pi i(s+t)})dt.
\end{align}
and let $F_{R,D_\lambda}^{\sigma}$, $\sigma\in \left\{+,-\right\}$ denote the restriction of the real part of $F(s; \theta)$ to the subdomain $D_\lambda^\sigma$. Those restrictions obey the following upper bounds
\begin{align}
F_{R,D_0}^-&\leq  c \left(\frac{1}{-s} + \frac{1}{\theta-s}\right) + \frac{\theta}{2},&& \text{when $\theta \geq \frac{1}{2n+3}$}\\
&\leq \frac{3}{2} \frac{\theta}{\theta-s} + \frac{\theta}{8(2n+3)s(s-\theta)} + \frac{\theta}{2}, && \text{when $\theta\leq \frac{1}{2n+3}$}\\
F_{R,D_1}^-&\leq \frac{\pi}{4} + c \left(\frac{1}{-s + \frac{1}{2n+3}} + \frac{1}{\theta-s}\right) + \frac{\theta}{2}\\
F_{R,D_2}^-&\leq \frac{(2n+3)\pi \theta}{4} + \frac{\theta}{2}\\
F_{R,D_3}^-&\leq c \left(2+ \frac{1}{1+s-\theta}\right) + c\left(\frac{1}{-s} + 2\right) + \frac{\theta}{2}\\
F_{R,D_4}^-&\leq c\left(2+ \frac{1}{1+s-\theta}\right) + \frac{\pi}{4} + c \left(\frac{1}{\frac{1}{2n+3}-s} + 2\right)\\
F_{R,D_5}^-&\leq \frac{\pi}{4} + c\left(2+ \frac{1}{1+s-\theta + \frac{1}{2n+3}}\right) + c\left(2+ \frac{1}{-s}\right) + \frac{\theta}{2}\\
F_{R, D_0}^+& \leq \frac{\pi}{2} + c \left(2(2n+3) + \frac{1}{s} + \frac{1}{\theta-s}\right) + \frac{\theta}{2}\\
F_{R, D_1}^+& \leq \frac{\pi (2n+3)\theta}{4} + \frac{\theta}{2}\\
F_{R, D_2}^+& \leq \frac{\pi}{2} + c\left( \frac{1}{s} + \frac{1}{|s-\theta + \frac{2}{2n+3}|}\right) + \frac{\theta}{2}\\
F_{R, D_3}^+&\leq  \frac{(2n+3)\pi}{4} \left(s + \frac{1}{2n+3}\right) + c\left((2n+3) + \frac{1}{|s-\theta|}\right) + \frac{\theta}{2}\\
F_{R, D_4}^+&\leq \frac{3}{2} \frac{\theta}{\theta-s} + \frac{\theta}{8(2n+3)s(s-\theta)} + \frac{\theta}{2},&&\text{when $\theta \leq \frac{1}{2n+3}$}\\
&\leq c\left(\frac{1}{s} + \frac{1}{s-\theta}\right) + \frac{\theta}{2},&& \text{when $\theta \geq \frac{1}{2n+3}$}\\
\end{align}
\end{lemma}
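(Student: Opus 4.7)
The plan is to reduce $F(s;\theta)$ to a single oscillatory integral in $u$, split it into an easily integrable piece and a highly oscillatory piece, and then perform a case analysis by subdomain. Applying the substitution $u=s+t$ together with the closed form $\tilde D(e^{-2\pi iu})=e^{-i\pi nu}\sin(\pi(n+1)u)/\sin(\pi u)$ and the factorisation $e^{i\pi(n+2)u}\sin(\pi(n+1)u)=(e^{i\pi(2n+3)u}-e^{i\pi u})/(2i)$ yields
\[
F(s;\theta)\;=\;e^{-2\pi i(n+1)s}\int_{s}^{s-\theta}\frac{e^{i\pi(2n+3)u}-e^{i\pi u}}{2i\sin(\pi u)}\,du
\;=:\;e^{-2\pi i(n+1)s}\bigl(G_1(s;\theta)-G_2(s;\theta)\bigr).
\]
The piece $G_2$ is explicit, since $e^{i\pi u}/(2i\sin(\pi u))=1/2-(i/2)\cot(\pi u)$; whenever the path $[s-\theta,s]$ avoids integer points one has $\operatorname{Re}(G_2)=-\theta/2$ exactly, which after multiplication by the unit-modulus phase produces the $+\theta/2$ appearing in every claimed bound.

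For $G_1$ the highly oscillatory factor $e^{i\pi(2n+3)u}$ is exploited through one integration by parts. On any subinterval on which $1/\sin(\pi u)$ is smooth, the boundary term is bounded by $1/\bigl(2\pi(2n+3)|\sin(\pi u)|\bigr)\leq 1/\bigl(4\pi(2n+3)|u|\bigr)$ at each endpoint (using $|\sin(\pi u)|\geq 2|u|$), while the remaining integral, controlled by $\bigl|\tfrac{d}{du}(1/(2i\sin(\pi u)))\bigr|\leq \pi/(8u^2)$, contributes at most $\bigl(1/(8(2n+3))\bigr)(1/|u_1|+1/|u_2|)$ on $[u_1,u_2]$ with endpoints bounded away from $0$. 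Adding the two estimates gives $|G_1|\leq c\bigl(1/|u_1|+1/|u_2|\bigr)$ with $c=(1/2+1/\pi)/(4(2n+3))$, exactly the constant in the statement. This already handles the two purely smooth cases $D_0^-$ and the large-$\theta$ branch of $D_4^+$, on which the path $[s-\theta,s]$ never meets a singularity of $1/\sin(\pi u)$.

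When the interval $[s-\theta,s]$ meets or crosses a singularity of $1/\sin(\pi u)$, at $u=0$ or (by $1$-periodicity of $\tilde D$) at $u=-1$, the split above is inadmissible on the singular piece and I would partition $[s-\theta,s]$ into a \emph{singular window} of radius $1/(2n+3)$ around the singularity and its complement. On the window the pointwise bound $|\sin(\pi(n+1)u)/\sin(\pi u)|\leq \pi(n+1)/2$ (from $|\sin x|\leq|x|$ in the numerator and $|\sin(\pi u)|\geq 2|u|$ in the denominator) gives a contribution of size $(\pi(n+1)/2)\times(\text{window length})$, producing $\pi(n+1)/(2n+3)\approx \pi/2$ for a fully-interior window ($D_0^+$, $D_2^+$, $D_1^-$, $D_4^-$, $D_5^-$), $(2n+3)\pi\theta/4$ when the whole integration interval lies inside the window ($D_1^+$, $D_2^-$), or an intermediate $(2n+3)\pi(s+1/(2n+3))/4$ for a partially-interior window ($D_3^+$, $D_3^-$). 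Off the window, the integration-by-parts bound above still applies but with one endpoint clamped at $\pm 1/(2n+3)$, which accounts for the denominators $1/(\tfrac{1}{2n+3}-s)$, $1/(1+s-\theta)$, $1/(s-\theta+2/(2n+3))$ and the additive $(2n+3)$ terms inside the $c(\cdots)$ groups.

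The wrap-around sub-cases $D_3^-,D_4^-,D_5^-$ correspond to the path approaching or crossing the \emph{other} singularity at $u=-1$; they reduce to the previous analysis after the shift $u\mapsto u+1$, to which $\tilde D$ is invariant, so that $1+s-\theta$ replaces $-s$ in the integration-by-parts estimates. Assembling, for each of the eleven subdomains, the three building blocks (a) the $-\theta/2$ contribution of $\operatorname{Re}(G_2)$, (b) the integration-by-parts estimate on the smooth part of $G_1$ with constant $c$, and (c) the trivial $(\pi(n+1)/2)\times(\text{window length})$ bound on any singular window, reproduces the eleven claimed inequalities mechanically. The main obstacle is purely combinatorial bookkeeping: for each subdomain one must identify which endpoints are interior, which are clamped to $\pm 1/(2n+3)$ or $\pm 1/2$, whether a singular window is partial, full, or absent, and verify that the orientations yield the exact absolute-value denominators $-s$, $\theta-s$, $\tfrac{1}{2n+3}-s$, $1+s-\theta$ listed in the statement.
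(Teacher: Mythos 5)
Your decomposition is essentially the paper's, written in complex-exponential rather than sine/cosine form: the substitution $u=s+t$, the split into an explicitly integrable $e^{i\pi u}/(2i\sin(\pi u))$ piece (giving $\theta/2$) and an oscillatory $e^{i\pi(2n+3)u}/(2i\sin(\pi u))$ piece, the integration by parts producing the constant $c=\tfrac{1}{4(2n+3)}\bigl(\tfrac12+\tfrac1\pi\bigr)$, the pointwise bound on the singular window, and periodicity to handle the wrap-around cases all coincide with the paper's argument. The cosmetic gains of the exponential form do not change the estimates.

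The genuine gap is the small-$\theta$ branches of $F_{R,D_0}^-$ and $F_{R,D_4}^+$, which your three-building-block assembly cannot reproduce. Bounding the two boundary terms of the integration by parts separately via $|\sin(\pi u)|\geq 2|u|$ gives $c\bigl(\tfrac{1}{|s|}+\tfrac{1}{|s-\theta|}\bigr)$, i.e. the large-$\theta$ bound, which does \emph{not} decay as $\theta\to 0$. The claimed small-$\theta$ bounds $\tfrac{3}{2}\tfrac{\theta}{\theta-s}+\tfrac{\theta}{8(2n+3)s(s-\theta)}+\tfrac{\theta}{2}$ carry an explicit factor $\theta$ in every term; obtaining that factor requires exploiting the cancellation between the two boundary values rather than triangle-inequality-ing them. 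The paper does this by expanding
\begin{align}
\cos\bigl((2n+3)\pi s\bigr)\sin\bigl(\pi(s-\theta)\bigr)-\sin(\pi s)\cos\bigl((2n+3)\pi(s-\theta)\bigr)
\end{align}
via product-to-sum identities into a combination of sines of small ($O(\theta)$) arguments, giving a bound $\lesssim \pi^2 s\theta(6n+9)$; in your complex notation this is a first-order expansion in $\theta$ of the phase $e^{i\pi(2n+3)(s-\theta)}$ and the denominator $\sin(\pi(s-\theta))$ before dividing. Without it, the $\theta^{-1}$ prefactor from $|1-e^{2\pi i\theta}|^{-1}$ in the outer integral of Lemma~\ref{lemma:boundK} is not cancelled as $\theta\to 0$ on these two subdomains, so this is a step that would fail, not merely "combinatorial bookkeeping."
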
 
An equivalent series of bounds hold for the imaginary part. 
\begin{lemma}\label{boundInteriorIntegralFSthetaImaginary}
We define the constant $c$ as $c\equiv \frac{1}{4(2n+3)}(1/2 + 1/\pi)$. Consider the function $F(s; \theta)$ defined as 
\begin{align}
F(s; \theta)&=\int_0^{-\theta}e^{2\pi i(n+1)t}\tilde D(e^{-2\pi i(s+t)})dt.
\end{align}
and let $F_{I,D_\lambda}^{\sigma}$, $\sigma\in \left\{+,-\right\}$ denote the restriction of the imaginary part of $F(s; \theta)$ to the subdomain $D_\lambda^\sigma$. Those restrictions obey the following upper bounds
\begin{align}
F_{I,D_0}^-&\leq \frac{3}{2}\frac{\theta}{s-\theta} + \frac{\theta}{8(2n+3) s(s-\theta)} + \frac{1}{4} \log(\frac{-s}{\theta-s}) && \text{when $\theta \leq \frac{1}{2n+3}$} \\
&\leq c\left(\frac{1}{|s|} + \frac{1}{|s-\theta|}\right) + \frac{1}{4} \log(\frac{-s}{\theta-s})&& \text{when $\theta \geq \frac{1}{2n+3}$}\\
F_{I,D_1}^-&\leq c\left(\frac{1}{\theta-s} + \frac{1}{1/n - s}\right) + \frac{\pi(n+1)}{4n} + \log(\frac{\theta-s}{1/n - s}) \\
F_{I,D_2}^-&\leq \frac{\pi(n+1)\theta}{4}\\
F_{I,D_3}^-&\leq \frac{1}{4} \left(\log(\frac{1}{-s}) + \log(\frac{1}{1+s-\theta})\right) + 2\log(2) + c\left(4+ \frac{1}{-s} + \frac{1}{1+s-\theta}\right) \\
F_{I,D_4}^-&\leq \frac{(n+1)\pi}{4(n)} + c \left(\frac{1}{1+s-\theta} + \frac{1}{-s+1/n}+  4\right) \left|\log(\frac{1/2}{1+s-\theta}) + \log(\frac{1/2}{-s+1/n})\right|\\
F_{I,D_5}^-&\leq \frac{\pi}{2} + \frac{1}{4} \log(\frac{1/2}{-s} + \frac{1/2}{1+s-\theta+1/n}) + c\left(2+ \frac{1}{1+s-\theta + 1/n} + \frac{1}{-s}\right) \\
F_{I, D_0}^+& \leq c\left(\frac{1}{s} + \frac{1}{\theta-s}\right) + \log(\frac{s}{\theta-s}\vee \frac{\theta-s}{s})\\
F_{I, D_1}^+& \leq \frac{(n+1)\pi \theta}{2}\\
F_{I, D_2}^+&\leq  \frac{(n+1)\pi}{2n} + c\left(\frac{1}{|s|} + \frac{1}{|s-\theta + 2/n|}\right) + \frac{1}{4} \left|\log(\frac{s}{s-\theta + \frac{2}{n}})\right| \\
F_{I, D_3}^+&\leq c\left(\frac{1}{\theta-s} + \frac{1}{2/n - s}\right) + \frac{1}{4} \log(\frac{\theta-s}{2/n - s}) + \frac{\pi(n+1)}{n}\\
F_{I, D_4}^+&\leq \frac{4\pi^2\theta}{8(s-\theta)}  + \frac{\theta}{8(2n+3) s (s-\theta)} + \frac{1}{4} \log(\frac{s}{s-\theta}), && \text{when $\theta \leq \frac{1}{2n+3}$}\\
&\leq c\left(\frac{1}{s} + \frac{1}{s-\theta}\right) + \frac{1}{4} \log(\frac{s}{s-\theta}), && \text{when $\theta \geq \frac{1}{2n+3}$}
\end{align}
\end{lemma}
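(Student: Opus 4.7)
The plan is to bound $F(s;\theta)=\int_0^{-\theta}e^{2\pi i(n+1)t}\tilde D(e^{-2\pi i(s+t)})\,dt$ by exploiting the closed form $\tilde D(e^{-2\pi i (s+t)})=e^{-i\pi n(s+t)}\sin(\pi(n+1)(s+t))/\sin(\pi(s+t))$, so that (up to the outer phase $e^{-i\pi n s}$) the integrand is a highly oscillatory function of $t$ with a single potential singularity at $t=-s$. The key observation is that the $(s,\theta)$-domain decomposition of Fig.~\ref{domainDecomposition1} is engineered precisely so that, in each subdomain, the singularity $t=-s$ is either (i) separated from $[-\theta,0]$ by at least $1/(2n+3)$, (ii) inside the interval but separated from both endpoints by $1/(2n+3)$, or (iii) within $1/(2n+3)$ of an endpoint or of the full integration window being tiny. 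Each case is handled by one of three standard techniques.

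On the ``generic'' subdomains $D_0^+$, $D_2^+$, $D_3^+$, $D_4^+$, $D_0^-$, $D_3^-$, $D_4^-$, where the singularity is at distance $\geq 1/(2n+3)$ from the endpoints (and possibly inside the interval but well-separated from them), I will apply integration by parts against $e^{i\pi(n+2)t}$, picking up the factor $1/(\pi(n+2))$ together with boundary terms of the form $(\sin(\pi(n+1)(s+t))/\sin(\pi(s+t)))\big|_{t=0,-\theta}$ and an interior term $\int\partial_t(\sin(\pi(n+1)(s+t))/\sin(\pi(s+t)))\,dt$. Using $|\sin(\pi x)|\geq 2|x|_{\mathrm{mod}\,1}$ on the denominator at the endpoints produces the $c\bigl(|s|^{-1}+|\theta-s|^{-1}\bigr)$ prefactors with $c=(1/2+1/\pi)/[4(2n+3)]$, while the interior derivative, integrated against $dt/\sin(\pi(s+t))$, yields the logarithmic contributions $\tfrac14\log(|s|/|\theta-s|)$ via $\int dt/(s+t)=\log(\cdot)$ after the $\sin\to 2|\cdot|$ lower bound. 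When the singularity lies inside $(-\theta,0)$ (the $D_0^+$ case), I will further split at $t=-s\pm 1/(2n+3)$: the two bulk pieces are treated by integration by parts as above, while on the $2/(2n+3)$-window around $t=-s$ I use the uniform bound $|\tilde D|\leq n+1$ to get an $O(1)$ contribution absorbed into $c\cdot(2n+3)$.

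The remaining small subdomains $D_1^+, D_1^-, D_2^-, D_4^-, D_5^-$ are the ``trivial length'' cases where either $|\theta|\lesssim 1/(2n+3)$ or $|s|\lesssim 1/(2n+3)$ and the singularity is close to an endpoint; there I use the brute bound $|\tilde D(e^{-2\pi i(s+t)})|\leq n+1$ together with $|{-}\theta|\leq 1/(2n+3)$ to get $|F|\leq \pi(n+1)\theta/2$, which is the $\pi(n+1)\theta/4$ type bound appearing in $F_{I,D_2}^-$ and $F_{I,D_1}^+$. For the subdomains $D_3^-$--$D_5^-$ (which capture the ``wrap-around'' singularity $s+t\equiv -1$ arising because $\tilde D$ is periodic and the integration range extends into negative $t$), I will use exactly the same three-way split but centered at the translated singularity $t=-s-1$, which, when interpreted modulo $1$, still lies within $1/(2n+3)$ of the endpoint $-\theta$. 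This is why the bounds for $D_3^-,D_4^-,D_5^-$ involve $1/(1+s-\theta)$ and $1/(1+s-\theta+1/n)$ rather than $1/(\theta-s)$.

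The main obstacle is not any single estimate, but rather the uniform bookkeeping across the eleven subdomains: every integration-by-parts boundary term must be tracked with the explicit constant $c$, every logarithm must be matched to the precise endpoint ratio, and the partitioning must be consistent at the interfaces between adjacent subdomains (for example, between $D_0^+$ with $\theta\leq 1/(2n+3)$ and $D_4^+$ with $\theta\leq 1/(2n+3)$, where the integrand is smooth but the interval is tiny, requiring a second-order Taylor expansion of $\sin(\pi(n+1)(s+t))$ around $t=0$ to expose the improved $(3/2)\theta/(\theta-s)$ scaling). A second subtle point specific to the imaginary part (as opposed to the real part in Lemma~\ref{boundInteriorIntegralFSthetaReal}) is that after extracting $\mathrm{Im}[e^{-i\pi n s}\cdot(\cdots)]$, the cosine/sine factors from the outer phase do not cancel the boundary oscillations, so extra $\log$-terms of the form $\frac14\log(\cdot)$ survive. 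Once these conventions are fixed, each subdomain reduces to a mechanical application of the splitting-plus-IBP scheme outlined above, yielding the stated bounds.
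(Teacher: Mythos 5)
There is a genuine gap, and it concerns the mechanism that produces the $\tfrac{1}{4}\log(\cdot)$ terms. After separating out the outer phase, the imaginary part of $F$ is
\begin{align*}
\Ip F(s;\theta) = \int_{-\theta}^{0}\frac{\sin(\pi(n+2)(s+t))\,\sin(\pi(n+1)(s+t))}{\sin(\pi(s+t))}\,dt,
\end{align*}
and the paper's first move is the product-to-sum identity $\sin(\pi(n+2)x)\sin(\pi(n+1)x)=\tfrac12\bigl[\cos(\pi x)-\cos(\pi(2n+3)x)\bigr]$. This exposes a \emph{non-oscillatory} cotangent piece $\tfrac12\int\cot(\pi(s+t))\,dt$, which — after $|\sin(\pi x)|\geq 2|x|$ — is exactly the source of the $\tfrac14\log(|s|/|s-\theta|)$ terms. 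The remaining $\cos(\pi(2n+3)(s+t))/\sin(\pi(s+t))$ term is then integrated by parts in the direction that antidifferentiates the fast cosine, gaining a genuine $1/(2n+3)$ and producing the boundary constant $c=\frac{1}{4(2n+3)}(1/2+1/\pi)$ together with the $1/|s|^2$-type interior term.

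Your plan skips this decomposition and applies integration by parts directly on $\int e^{i\pi(n+2)t}\cdot\frac{\sin(\pi(n+1)(s+t))}{\sin(\pi(s+t))}\,dt$ against $e^{i\pi(n+2)t}$, differentiating the ratio. If you compute $\partial_t\bigl[\sin(\pi(n+1)(s+t))/\sin(\pi(s+t))\bigr]$ you get a term with a factor $\pi(n+1)$ from the numerator and a term with $1/\sin^2$ from the denominator; the prefactor $1/(\pi(n+2))$ that the IBP buys is cancelled by the $\pi(n+1)$ in the first term, so there is no effective gain from the oscillation there, and neither term integrates to a logarithm — you obtain $1/|s|$-type quantities instead. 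The $\tfrac14\log$ simply does not emerge from this IBP scheme, and the boundary constant you would obtain, $\frac{1}{2\pi(n+2)}$, is not $c$. Since the explicit constants of this lemma are consumed downstream (in the $\Phi^{\sigma}_\lambda$ integrals of Lemma~\ref{lemma:boundK} and ultimately in the fixed-point bound on $C_1$), reproducing the exact form matters. Your structural observations about the domain decomposition, the wrap-around on $D_3^-,D_4^-,D_5^-$, and the brute $|\tilde D|\leq n+1$ bound on the tiny subdomains $D_1^+, D_2^-$ are all correct and align with the paper; the missing step is the product-to-sum expansion of the imaginary integrand, which is what cleanly separates the cotangent (non-oscillatory, gives the logs) from the $\cos(\pi(2n+3)(s+t))/\sin(\pi(s+t))$ piece (oscillatory, gives the $c/|s|$ terms after IBP).
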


\subsubsection{Small $|\tau - \alpha|$ bound}

Let $F(s; \theta)$ be defined as $F(s; \theta) \equiv \int_{0}^{-(\theta - \theta_\ell)}  e^{2\pi i (n+1) t} \tilde{D}(e^{-2\pi i (s+t)})\; dt$. We now use $\Phi_\lambda^\sigma(\theta)$, to denote the associated integral
\begin{align}
\Phi_\lambda^\sigma \equiv \frac{1}{|1-e^{2\pi i \theta}|}\int_{D_\lambda^\sigma(\theta)} p(e^{2\pi i s}) \; F(s; \theta)\;  ds
\end{align}
for any subdomain $D_{\lambda}^{\sigma}$ where $\sigma = \left\{+,-\right\}$ following from the decomposition~\eqref{domainD0plus} to~\eqref{domainD4plus} as well as~\eqref{definitionD0minus} to~\eqref{definitionD5minus} illustrated in Fig.~\ref{domainDecomposition1}.

\begin{lemma}\label{lemmaTroncD3minusCons}
$ \left| \Phi_3^-\right| \leq 8.4 +14\log(C_1).$
\end{lemma}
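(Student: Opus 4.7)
The plan is to combine the pointwise bound on the oscillatory kernel $F(s;\theta)$ from Lemmas~\ref{boundInteriorIntegralFSthetaReal} and~\ref{boundInteriorIntegralFSthetaImaginary} restricted to $D_3^-$ with the envelope $|p(e^{2\pi is})|\leq \min(1,C_1/(1+n|s-\alpha|_{\mathrm{mod}}))$, integrate over the $s$--slice $D_3^-(\theta)=\{s:(s,\theta)\in D_3^-\}$, and finally divide by $|1-e^{2\pi i\theta}|\geq 4\theta$.

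\textbf{Step 1} (pointwise bound on $|F|$). Summing the $D_3^-$ entries of the two preceding lemmas gives $|F(s;\theta)|\leq 2c\bigl(4+\tfrac{1}{-s}+\tfrac{1}{1+s-\theta}\bigr)+\tfrac{1}{4}\bigl(\log\tfrac{1}{-s}+\log\tfrac{1}{1+s-\theta}\bigr)+\tfrac{\theta}{2}+2\log 2,$ with $c=\frac{1}{4(2n+3)}(1/2+1/\pi)$. Because the buffers $-s\geq \frac{1}{2n+3}$ and $1+s-\theta\geq\frac{1}{2n+3}$ are built into the definition of $D_3^-$, each of the nominally singular terms $c/(-s)$ and $c/(1+s-\theta)$ is bounded by the absolute constant $c(2n+3)=\frac{1}{4}(1/2+1/\pi)$, and the log factors are bounded pointwise by $\log(2n+3)$.

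\textbf{Step 2} (integration against $|p|$). I would split $D_3^-(\theta)$ into the spike window $W_\alpha=\{|s-\alpha|_{\mathrm{mod}}\leq C_1/n\}\cap D_3^-(\theta)$, where $|p|\leq 1$, and its complement, where $|p|\leq C_1/(n|s-\alpha|_{\mathrm{mod}})$. On $W_\alpha$, using the primitive $\int\log(1/u)\,du=u(1-\log u)$ twice (one substitution $u=-s$, one $u=1+s-\theta$), the contribution is a small absolute constant times the length of $W_\alpha$ which is at most $2C_1/n$; on the complement the factor $1/|s-\alpha|$ turns the singular log integrand $\log(1/|s|)/|s-\alpha|$ and its twin into absolutely integrable functions with closed--form primitives. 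Combining and using $\int\min(1,C_1/(1+n|s-\alpha|))\,ds\leq (2/n)(C_1-1+C_1\log(n/(2C_1)))$ gives $\int_{D_3^-(\theta)}|p|\,|F|\,ds\leq A_1+A_2\log C_1$ for explicit constants $A_1,A_2$ independent of $n$.

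\textbf{Step 3} (the $1/\theta$ prefactor). The slice $D_3^-(\theta)$ has width $\min(\theta,1-\theta)+O(1/n)$, coming from $s\in[\max(-1/2,\theta-1+\tfrac{1}{2n+3}),\min(-\tfrac{1}{2n+3},\theta-1/2)]$. So when $\theta\leq 1/2$ is small, the slice shrinks proportionally to $\theta$ and the factor $|1-e^{2\pi i\theta}|^{-1}\leq 1/(4\theta)$ is absorbed; when $\theta\geq 1/4$ the prefactor $1/(4\theta)\leq 1$ is itself bounded. Writing $\Phi_3^-(\theta)=\tfrac{1}{|1-e^{2\pi i\theta}|}\int_{D_3^-(\theta)}p(e^{2\pi is})F(s;\theta)\,ds$ and plugging in the Step~2 bound (taking the supremum over $\theta$ of width/$\theta$, which is a constant) produces the final estimate $8.4+14\log C_1$ once the numerical constants from Lemmas~\ref{boundInteriorIntegralFSthetaReal} and~\ref{boundInteriorIntegralFSthetaImaginary} are tracked faithfully.

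\textbf{Main obstacle.} The argument is not conceptually deep but numerically delicate: a crude application of the above program gives an $O(1)+O(\log C_1)$ bound with implicit constants much larger than $8.4$ and $14$. Landing on these specific numbers requires using the \emph{exact} forms of the bounds in the two preceding lemmas (not merely big-$O$), exploiting the fact that the $D_3^-$ buffers make the singular terms uniformly bounded rather than merely integrable, and carefully identifying which log terms come with the $1/n$ factor $c$ and which do not.
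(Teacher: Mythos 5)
Your strategy has the right shape---the pointwise kernel bounds of Lemmas~\ref{boundInteriorIntegralFSthetaReal} and~\ref{boundInteriorIntegralFSthetaImaginary}, the decaying envelope of $p$, and the cancellation of the $|1-e^{2\pi i\theta}|^{-1}$ prefactor by the width of the $s$-slice (which is at most $\theta$)---but Step~2 skips the one feature that makes the estimate delicate, so it does not establish the claimed numbers. The difficulty is the interaction between the spike location and the points where the log terms in $F$ are large. On $D_3^-(\theta)$ the envelope is $\min(1,C_1/(1+n|s-\alpha_0|))$ with $\alpha_0$ a free parameter in $[-C_1/n,C_1/n]$, while $\log\tfrac{1}{-s}$ and $\log\tfrac{1}{1+s-\theta}$ peak at the two endpoints $s=\theta-\tfrac12$ and $s=-\tfrac12$ of the slice. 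When $\theta$ is close to $\tfrac12$ the right endpoint is within $O(C_1/n)$ of the origin, so the flat-top window $W_\alpha$ can sit exactly where $\log\tfrac{1}{-s}\approx\log(2n+3)$. A proof must therefore bound the worst case uniformly over the unknown $\alpha_0$; your Step~2 never takes such a supremum, and the Step~1 remark that the log factors are ``bounded pointwise by $\log(2n+3)$'' is a dead end (it yields an $n$-dependent, not constant, estimate). The paper's proof handles this by placing $\sup_{0\le\tau\le\theta}$ over the effective spike centre in two of the six integrals into which it splits the log contribution $Z$---that device, or something equivalent, is what your sketch is missing.

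A second structural mismatch: the paper does not lump $F_R$ and $F_I$. It uses $F_{I,D_3^-}\le F_{R,D_3^-}+\tfrac14\log\tfrac{1}{-s}+\tfrac14\log\tfrac{1}{1+s-\theta}+2\log 2$, so $|F|\le 2F_R+(\text{log part})$. The $F_R$ piece is then a one-line supremum (the $c/(-s)$ and $c/(1+s-\theta)$ terms are uniformly $\le c'$ thanks to the $1/(2n+3)$ buffers, the slice width cancels $1/(\pi\theta)$), giving $\le 0.5224$; only the log part requires the six-term decomposition, giving $\le 7.15+14\log C_1$, whence $2(0.5224)+7.15+14\log C_1\le 8.4+14\log C_1$. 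Tracking everything through a single integral, as your proposal does, will overshoot $8.4$. So your approach is at the right level, but the supremum over the spike position and the $F_R$-versus-log separation are the two ingredients that produce the specific constants, and both are absent.
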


\begin{lemma}\label{lemmaTroncD0minusCons}
$\left| \Phi_0^-\right|\leq 18 + 8\log(C_1).$
\end{lemma}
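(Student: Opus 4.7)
\textbf{Proof sketch of Lemma~\ref{lemmaTroncD0minusCons}.}
The plan is to mimic the strategy used for $\Phi_3^-$ in the previous lemma, but now keeping careful track of the fact that on $D_0^-$ both $|s|$ and $|\theta-s|$ are bounded below by $\frac{1}{2n+3}$, so the singular factors of $F_{R,D_0}^-$ and $F_{I,D_0}^-$ are already controllable. First I would split the integrand into real and imaginary contributions,
\begin{align}
|\Phi_0^-|\;\leq\;\frac{1}{|1-e^{2\pi i\theta}|}\int_{D_0^-(\theta)} |p(e^{2\pi i s})|\bigl(F^-_{R,D_0}(s;\theta)+F^-_{I,D_0}(s;\theta)\bigr)\,ds,
\end{align}
replace the Dirichlet prefactor by $|1-e^{2\pi i\theta}|\geq 4|\theta|$, and plug in the two-regime bounds on $F^-_{R,D_0}$ and $F^-_{I,D_0}$ from Lemmas~\ref{boundInteriorIntegralFSthetaReal} and~\ref{boundInteriorIntegralFSthetaImaginary}. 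This reduces matters to estimating a handful of elementary integrals of the form $\int |p(e^{2\pi is})|\,g(s,\theta)\,ds$ where $g(s,\theta)$ is $c/(-s)$, $c/(\theta-s)$, $\theta/2$, or the logarithmic factor $\tfrac14\log(-s/(\theta-s))$.

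For each such integrand I would split the domain of integration in $s$ into the near region $|s-\alpha|\leq C_1/n$, on which we use the trivial bound $|p|\leq 1$, and the far region $|s-\alpha|>C_1/n$, on which we use the decay bound $|p|\leq C_1/(1+n|s-\alpha|)$. The near region has length $O(C_1/n)$ and produces an $O(1)$ contribution after multiplication by the $c=O(1/n)$ prefactors and division by $\theta\geq 1/(2n+3)$; the far region, after one integration, produces the familiar logarithmic contribution $\tfrac{C_1}{n\cdot\theta}\log(\cdot)$, which again combines with $1/\theta\lesssim n$ to give an $O(\log C_1)$ term. The two sub-case treatment ($\theta\geq 1/(2n+3)$ versus $\theta\leq 1/(2n+3)$) is needed to absorb the extra $\theta/((2n+3)s(s-\theta))$ factor that appears in the short-$\theta$ bound for $F^-_{R,D_0}$, $F^-_{I,D_0}$: in that regime $\theta\cdot(2n+3)\leq 1$, so the whole contribution is $O(1)$.

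The logarithmic imaginary factor $\tfrac14\log(-s/(\theta-s))$ is the piece that carries the $\log(C_1)$ dependence of the final bound. Indeed, after writing $\log(-s/(\theta-s))=\log(1+\theta/(-s-\theta))$ and integrating against $C_1/(1+n|s-\alpha|)$ over $s\in[-1/2,-(2n+3)^{-1}]$, a direct computation gives
\begin{align}
\frac{1}{|1-e^{2\pi i\theta}|}\int |p(s)|\,\tfrac14\log\!\tfrac{-s}{\theta-s}\,ds \;\lesssim\; 2\log(C_1)+O(1),
\end{align}
and the remaining terms produce a constant of at most $18$. Summing the contributions from all subintegrals and using the same constants as in Lemma~\ref{lemmaTroncD3minusCons} yields the stated bound $|\Phi_0^-|\leq 18+8\log(C_1)$.

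The hard part, as in the proof of Lemma~\ref{lemmaTroncD3minusCons}, will be keeping the numerical constants small enough to land exactly at the advertised $(18,8)$; the individual pieces are straightforward but there are many of them, and each log-containing piece must be estimated in the two sub-regions of $D_0^-$ separately to avoid spurious $\log n$ factors. Everything else follows by routine bookkeeping, so no new analytic idea beyond Lemmas~\ref{boundInteriorIntegralFSthetaReal} and~\ref{boundInteriorIntegralFSthetaImaginary} is required.
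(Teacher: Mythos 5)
Your overall strategy matches the paper's: split $F$ into real and imaginary parts, use the two-regime ($\theta\gtrless\frac{1}{2n+3}$) bounds on $F^-_{R,D_0}$, $F^-_{I,D_0}$, and then split the $s$-integral into a near region (length $O(C_1/n)$, $|p|\leq 1$) and a far region (use the $C_1/(1+n|\cdot|)$ decay). The paper does exactly this via its $Z_C, Z_R, Z_L$ decomposition, with $Z_L$ being a supremum over possible peak locations of $p$, which you elide but which is implicit in your near/far split.

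However, you misattribute where the $\log(C_1)$ dependence actually comes from. You claim the imaginary logarithmic factor $\tfrac14\log(-s/(\theta-s))$ is ``the piece that carries the $\log(C_1)$ dependence'' and that ``the remaining terms produce a constant of at most $18$.'' In the paper's bookkeeping the opposite is true: the imaginary contribution is bounded by roughly $0.8 + 0.7\log(C_1)$ (see the three estimates around~\eqref{boundK1FD0minusImag}--\eqref{boundK3FD0minusImag}), while the real part in the small-$\theta$ regime is bounded by $8.6 + 3.6\log(C_1)$ (see~\eqref{boundSmallThetaD0minusReal}). The final constant comes from $2\cdot(8.6 + 3.6\log C_1) + (0.8 + 0.7\log C_1)$, so the doubled real part carries essentially all of both the $18$ and the $8\log C_1$. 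If you carried out your plan believing the real part gave only an $O(1)$ constant you would find yourself over budget. (There is also a small algebra slip: $\log\!\bigl(\tfrac{-s}{\theta-s}\bigr) = \log\!\bigl(1 - \tfrac{\theta}{\theta-s}\bigr)$, not $\log\!\bigl(1 + \tfrac{\theta}{-s-\theta}\bigr)$, but this has no bearing on the estimate.) With the attribution corrected, the rest of your outline coincides with the paper's proof.
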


\begin{lemma}\label{lemmaTroncD0plusCons}
$ \left|\Phi_0^+\right|\leq 9.5 + 5\log(C_1).$
\end{lemma}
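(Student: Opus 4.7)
My plan is to bound $|\Phi_0^+(\theta)|$ uniformly in $\theta$ by combining the pointwise estimates of $F_R$ and $F_I$ on $D_0^+$ from Lemmas~\ref{boundInteriorIntegralFSthetaReal} and~\ref{boundInteriorIntegralFSthetaImaginary} with the assumed decay of $p$ near and away from $\alpha$. Recall we are in the regime $\tau=0$, $|\alpha|\le C_1/n$. On $D_0^+$ we have $\theta\ge 2/(2n+3)$ and $s,\theta-s\ge 1/(2n+3)$, hence $\frac{1}{|1-e^{2\pi i\theta}|}\le\frac{1}{4\theta}$. Because $c(2n+3)=(1/2+1/\pi)/4$, the singular contributions $c/s$ and $c/(\theta-s)$ are bounded uniformly by $(1/2+1/\pi)/4$, so the pointwise estimates collapse to $|F(s;\theta)|\le F_R+F_I\le B+\bigl|\log\tfrac{s}{\theta-s}\bigr|$ for an explicit absolute constant $B=\pi/2+(1/2+1/\pi)+1/4$.

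Next I split the $s$-integration according to the location of $\alpha$, writing $I_n=\{s:|s-\alpha|\le (C_1-1)/n\}$ (the ``near'' region on which $|p|\le 1$) and $I_f$ its complement (on which $|p|\le C_1/(n|s-\alpha|)$). For the constant piece $B\int_{D_0^+}|p|\,ds$, the near portion contributes at most $2B(C_1-1)/n$ and the far portion at most $(2BC_1/n)\log(n\theta/C_1)$ via the standard harmonic integral $\int du/|u-\alpha|$. To handle the prefactor $1/(4\theta)$ I distinguish two cases. When $\theta\le C_1/n$ the entire integration interval lies inside $I_n$, giving $B\int|p|\,ds\le B\theta$ and hence an $O(1)$ contribution after division by $4\theta$. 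When $\theta>C_1/n$ set $x=n\theta/C_1>1$; then $\frac{C_1}{n\theta}(1+\log x)=(1+\log x)/x\le 1+1/e$, again yielding an $O(B)$ contribution. A numerical tracking of the constants produces the purely $C_1$-independent part of the target bound.

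The logarithmic piece $\int |p|\,|\log(s/(\theta-s))|\,ds$ is the delicate term and is where the $5\log C_1$ factor appears. I bound $|\log(s/(\theta-s))|\le |\log s|+|\log(\theta-s)|$ and treat each summand symmetrically. On $I_n$ one integrates $|\log s|$ over an interval of length $\le 2(C_1-1)/n$ centered at $\alpha\in[-C_1/n,C_1/n]$: using $s\ge 1/(2n+3)$, this produces a contribution of order $\frac{C_1}{n}\log\!\bigl(\max(|\alpha|,1/n)\cdot n\bigr)$, dominated by $\frac{C_1}{n}\log C_1$. On $I_f$ one integrates $|\log s|/|s-\alpha|$, and by the partial fraction $\frac{1}{s\,|s-\alpha|}$ together with $\int \log(u)\,du/u=\tfrac12\log^2 u$ one gets terms of the form $\tfrac{C_1}{n}\log^2(n\theta/C_1)$; after division by $\theta$ and the substitution $x=n\theta/C_1$ these become $\frac{\log^2 x}{x}$ in the regime $x>1$, a bounded function. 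The analogous estimate for $|\log(\theta-s)|$ follows from the change of variable $s\mapsto \theta-s$ (which swaps the roles of the two boundary singularities). Summing all contributions and collecting explicit numerical constants yields $|\Phi_0^+|\le 9.5+5\log C_1$.

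The main obstacle is purely bookkeeping: every ingredient is an elementary integral, but the constants $9.5$ and $5$ are sharp only if one carries the explicit value $c=\tfrac{1}{4(2n+3)}(\tfrac12+\tfrac1\pi)$ through every step and carefully treats the edge configurations in which the near interval around $\alpha$ is truncated by the endpoints $1/(2n+3)$ or $\theta-1/(2n+3)$, or in which $\alpha$ falls outside the integration range entirely. These edge cases never enlarge the $\log C_1$ coefficient but do contribute to the additive constant, and so must be handled with care.
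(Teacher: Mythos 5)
Your overall decomposition of $D_0^+$ into a near region $I_n$ and a far region $I_f$, and the treatment of the constant piece $B\int_{D_0^+}|p|$, are in the same spirit as the paper (which also splits the imaginary-part integral into a central piece where $|p|\le 1$, a decaying piece where $|p|\le C_1/(1+n|\cdot|)$, and a supremum over translates). The minor issue that your $B$ omits the additional $c(1/s+1/(\theta-s))$ contribution coming from $F_{I,D_0}^+$ (so $B$ should be $\pi/2+\tfrac{3}{2}(\tfrac12+\tfrac1\pi)+\tfrac14$) is just bookkeeping.

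The genuine gap is in the treatment of the logarithmic term. The bound $\bigl|\log\tfrac{s}{\theta-s}\bigr|\le|\log s|+|\log(\theta-s)|$ destroys the scale-invariant cancellation that makes this piece controllable. When $\theta$ is small (of order $C_1/n$, which is a relevant regime since $D_0^+$ requires only $\theta\ge 2/(2n+3)$), both $|\log s|$ and $|\log(\theta-s)|$ are of order $\log(1/\theta)\sim\log n$ on \emph{all} of $D_0^+$, including at $s=\theta/2$ where $\log(s/(\theta-s))=0$. Integrating your upper bound over $D_0^+$ (which has length $\le\theta$) with $|p|\le 1$ and dividing by the prefactor $\sim 1/(4\theta)$ produces a contribution $\sim\log(1/\theta)$, which is unbounded in $n$. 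So the argument as written cannot deliver any bound of the form $c_1+c_2\log C_1$. Your statement that the near-interval piece is ``of order $\tfrac{C_1}{n}\log(\max(|\alpha|,1/n)n)\le\tfrac{C_1}{n}\log C_1$'' is not correct: since $I_n$ can touch the endpoint $s=1/(2n+3)$, the factor is in fact $\log(2n+3)$, not $\log C_1$, and the subsequent $x$-substitution does not rescue it.

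What the paper does instead is note that on each half of $D_0^+$ the quantity $\log(s/(\theta-s))$ has a fixed sign, and then use the elementary bound $\log\bigl(\tfrac{\theta-s}{s}\bigr)\le\log\tfrac{\theta}{s}\le\tfrac{\theta}{s}$ (resp.\ its mirror image on $[\theta/2,\theta-1/(2n+3)]$). This lets the factor of $\theta$ cancel the prefactor $1/(\pi\theta)$ exactly, turning the log term into a genuine $\int|p|/s\,ds$ (resp.\ $\int|p|/(\theta-s)\,ds$). It is \emph{this} integral, taken over the near interval of length $\sim 2(C_1-1)/n$ abutting the endpoint $s=1/(2n+3)$, that produces $\log\bigl(1+2\tfrac{C_1-1}{n}(2n+3)\bigr)\approx\log(6C_1)$, and the $5\log C_1$ coefficient in the final bound emerges from adding up five such pieces. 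Your approach would need to be replaced by this one-sided log-to-power trick (or an equivalent scale-invariant estimate such as the exact integral $\int_0^1|\log(u/(1-u))|\,du=2\log 2$ combined with a more careful accounting of $|p|$) to be salvageable.
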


\begin{lemma}\label{lemmaTroncD4plusCons}
$ \left| \Phi_4^+\right| \leq 17 + 11\log(C_1).$
\end{lemma}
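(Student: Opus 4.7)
The plan is to bound $|\Phi_4^+|$ by combining the pointwise estimates from Lemmas~\ref{boundInteriorIntegralFSthetaReal} and~\ref{boundInteriorIntegralFSthetaImaginary} for the real and imaginary parts of $F(s;\theta)$, together with the hypothesis $|p(e^{2\pi i s})|\leq \min(1, C_1/(1+n|s-\alpha|))$ from~\eqref{boundOnFLemmaIntegralTotal} and the standard estimate $|1-e^{2\pi i\theta}|^{-1}\leq 1/(4\theta)$. First I would combine the two component bounds into a single envelope valid on the $\theta$-slice of $D_4^+$ (namely $s\in(\theta+\tfrac{1}{2n+3},\tfrac{1}{2})$ with $\theta\geq \tfrac{1}{2n+3}$):
\begin{align*}
|F(s;\theta)|\;\leq\; 2c\left(\tfrac{1}{s}+\tfrac{1}{s-\theta}\right)+\tfrac{\theta}{2}+\tfrac{1}{4}\log\tfrac{s}{s-\theta}.
\end{align*}
The key observation is that $c=O(1/n)$ while $\theta\geq \tfrac{1}{2n+3}$, so $c/\theta$ is bounded by an absolute constant; this is precisely what allows the prefactor $1/(4\theta)$ to be absorbed against the singular contributions $c/s$ and $c/(s-\theta)$ without creating any $n$-dependence.

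Next I would split the $s$-integral according to the position of the peak $\alpha$ of the polynomial. Three configurations matter: (i) $\alpha\notin(\theta+\tfrac{1}{2n+3},\tfrac{1}{2})$, in which case only the decaying tail of $|p|$ contributes; (ii) $\alpha$ lies in the interior with $\alpha-\theta\gtrsim C_1/n$, so the plateau $\{|s-\alpha|\leq C_1/n\}$ on which $|p|\leq 1$ is disjoint from the singularity $s=\theta$; and (iii) the delicate case where $\alpha$ is within $C_1/n$ of $\theta$, so plateau and singularity overlap.

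In each case I would treat the four pieces of $|F|$ separately. The constant term $\theta/2$, once multiplied by $1/(4\theta)$, contributes at most $\int|p|\,ds\leq 1/2 + (2C_1/n)\log(n)$ which is uniformly bounded; the terms $c/s$ and $c/(s-\theta)$, after absorbing the prefactor, reduce to integrals of the shape $\int \min(1,\tfrac{C_1}{1+n|s-\alpha|})\cdot\tfrac{1}{|s-\beta|}\,ds$ for $\beta\in\{0,\theta\}$, which by the same partial-fraction splitting used in the proofs of Lemmas~\ref{lemmaTroncD3minusCons}--\ref{lemmaTroncD0plusCons} give constants independent of $n$ plus an $O(\log C_1)$ contribution arising from the plateau of $|p|$; finally the log term $\tfrac{1}{4}\log\tfrac{s}{s-\theta}$ integrated against $|p|/(4\theta)$ is the genuine source of the $11\log(C_1)$ factor, because in configuration (iii) the plateau of $|p|$, which has width $2C_1/n$, covers the logarithmic singularity at $s=\theta$, and a change of variable $u=n(s-\theta)$ yields an integral of the form $\int_{1}^{C_1}\log(1+1/u)\,du=O(\log C_1)$ after division by $\theta\geq 1/(2n+3)$.

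The main obstacle will be the bookkeeping in configuration (iii), where one must track the logarithmic singularity at $s=\theta$ and the plateau of $|p|$ at $s=\alpha$ simultaneously. This requires splitting the $s$-integral at three scales ($|s-\theta|\sim 1/n$, $|s-\alpha|\sim C_1/n$, and $|s-\alpha|\sim|\alpha-\theta|$), on each of which the dominant contribution can be estimated by Abel-type summation or direct substitution; the same technique, applied verbatim from the analysis of $\Phi_3^-$ and $\Phi_0^-$, yields absolute constants for the non-logarithmic pieces. Summing the real and imaginary contributions, the two non-trivial configurations of $\alpha$, and the two singular endpoints, and using generous rounding, produces the claimed bound $|\Phi_4^+|\leq 17+11\log(C_1)$.
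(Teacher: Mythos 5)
Your overall strategy tracks the paper's: combine the pointwise estimates from Lemmas~\ref{boundInteriorIntegralFSthetaReal} and~\ref{boundInteriorIntegralFSthetaImaginary}, absorb the $1/(\pi\theta)$ prefactor against $c = O(1/n)$ and the $\theta$-linear pieces, split the $s$-integral by the position of the peak $\alpha$, and integrate term by term. The paper's explicit decomposition is $Z = Z_C + Z_R + Z_L$ (a fixed plateau interval near $s = \theta + \tfrac{1}{2n+3}$, the integral against the decaying envelope of $|p|$ anchored at that edge, and a supremum over right-translates of the peak), which is morally the same as your configurations (i)--(iii); your envelope $|F|\le 2c(1/s + 1/(s-\theta)) + \theta/2 + \tfrac14\log(s/(s-\theta))$ is consistent with the large-$\theta$ lines of the two auxiliary lemmas.

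There are, however, two misreadings that would bite you in the bookkeeping. First, you identify the $\tfrac14\log(s/(s-\theta))$ term as ``the genuine source of the $11\log(C_1)$.'' In the paper's accounting this is the \emph{smallest} $\log(C_1)$ contribution: the bound on the real part alone is $8 + 5\log(C_1)$ (from the $c/s$ and $c/(s-\theta)$ integrals over the plateau, together with the $\theta/2$ piece), it is then doubled to cover $F_I$ since $F_I \le F_R + \tfrac14\log(\cdot)$, and the log term contributes only $\tfrac14\varphi(\theta) \le \tfrac14(3.2 + 2\log(C_1))$, i.e. roughly $0.5\log(C_1)$ out of the total $10.5\log(C_1)$. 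Your change of variable $u = n(s-\theta)$ applied to the log term alone will not surface the dominant $\log(C_1)$ coefficient; it appears when you integrate $c/(s-\theta)$ against the plateau of $|p|$ of width $\sim C_1/n$ starting at the singularity, giving $\int \tfrac{ds}{s-\theta} \approx \log(1 + 2(C_1-1)/n\cdot(2n+3)) = O(\log C_1)$, already with the $c/\theta$ prefactor absorbed. Second, the paper additionally bounds $F_{R,D_4}^+$ using the alternative ``small-$\theta$'' form $\tfrac32\tfrac{\theta}{s-\theta} + \tfrac{\theta}{8(2n+3)s(s-\theta)} + \tfrac{\theta}{2}$, routes both this and the log term through the auxiliary quantity $\varphi(\theta) = \tfrac{1}{|1-e^{2\pi i\theta}|}\int \tfrac{\theta\, p(e^{2\pi i s})}{s-\theta}\,ds$, and then takes the maximum. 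If you skip this (your sketch restricts to $\theta \ge \tfrac{1}{2n+3}$ from the outset), you would not reproduce the $8 + 5\log(C_1)$ real-part bound, and the final constant you obtain would not match $17 + 11\log(C_1)$, even if it remains a valid (possibly tighter) estimate.
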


\begin{lemma}\label{lemmaTroncSmallDomainsNegativeCons}
$\sum_{\lambda\in\left\{1,2,4,5\right\}} \left| \Phi_\lambda^+\right| \leq 4.8.$
\end{lemma}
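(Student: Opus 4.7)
The plan is to treat each of the four subdomains $D_1^-, D_2^-, D_4^-, D_5^-$ separately and exploit the fact that, unlike $D_0^-$ and $D_3^-$, each of them is a thin strip of width $O(1/n)$ in the $s$ direction (or, in the case of $D_5^-$, a strip near the wrap-around line $s-\theta=-1$). Because the integration region is so narrow, I will be able to bound the polynomial factor trivially by $|p(e^{2\pi i s})|\leq 1$ (the full decreasing bound $C_1/(1+n|s-\alpha|)$ is not needed; this is precisely why the final constant $4.8$ will be independent of $C_1$ and hence much smaller than the $C_1\log(C_1)$-type constants appearing in Lemmas~\ref{lemmaTroncD3minusCons}--\ref{lemmaTroncD4plusCons}).

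For each of the four strips I would then insert the corresponding real- and imaginary-part bounds from Lemma~\ref{boundInteriorIntegralFSthetaReal} and Lemma~\ref{boundInteriorIntegralFSthetaImaginary}, combined with the elementary estimate $|1-e^{2\pi i\theta}|\geq 4|\theta|$ for $|\theta|\le 1/2$. Schematically:
\begin{itemize}
\item \textbf{$D_2^-$:} both $|s|,|\theta|\leq (2n+3)^{-1}$, so $F_{R,D_2}^- + |F_{I,D_2}^-|\lesssim n\theta$ and the slice has width $(2n+3)^{-1}$. The prefactor $(4\theta)^{-1}\leq (2n+3)/4$ is cancelled by the $\theta$ in $F$, and the $s$-integration produces the factor $(2n+3)^{-1}$. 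The net contribution is $O(n^{-1})$, negligible.
\item \textbf{$D_1^-$:} $|s|\leq (2n+3)^{-1}$, $|\theta|\geq (2n+3)^{-1}$. Here $F$ is bounded by $O(1)$ plus a $\log\bigl((\theta-s)/(1/n-s)\bigr)$ term; integrating that log over an interval of width $(2n+3)^{-1}$ gives a constant (the explicit antiderivative of $\log$ shows the $n$ cancels). The prefactor is at most $(2n+3)/4$, yielding a constant contribution.
\item \textbf{$D_4^-, D_5^-$:} analogous, using periodicity of $\widetilde D$ so that the wrap-around singularity at $s-\theta=-1$ is treated exactly like the singularity at $s=\theta$ in the previous case.
\end{itemize}
Each individual strip yields an explicit numerical bound; summing the four bounds and checking they remain $\leq 4.8$ is a direct numerical verification.

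The main technical obstacle is controlling the logarithmic terms in the $F$-bounds (the $\tfrac14\log(\cdot/\cdot)$ contributions in Lemma~\ref{boundInteriorIntegralFSthetaImaginary}): although these are of size $O(\log n)$ pointwise, they multiply the factor $(4\theta)^{-1}\lesssim n$ only on an $s$-region of width $1/n$. Consequently I would use the explicit antiderivative $\int \log(a+bs)\, ds = (a+bs)\log(a+bs) - (a+bs)$ to integrate in closed form and verify that the prefactor and domain width jointly neutralize the growth in $n$. A secondary obstacle is bookkeeping: four strips, each contributing a real and an imaginary part, so I would tabulate them to make the final arithmetic leading to the constant $4.8$ transparent and independent of the uniform-decay constant $C_1$.
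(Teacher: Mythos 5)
Your proposal follows essentially the same route as the paper's proof: each of the four strips $D_1^-, D_2^-, D_4^-, D_5^-$ is treated separately, the polynomial factor is bounded trivially by $1$, the $O(1/n)$ width of each strip in $s$ is played against the prefactor $|1-e^{2\pi i\theta}|^{-1}\lesssim \theta^{-1}$, and the logarithmic terms in the $F_{I,D_\lambda}^-$ bounds are handled via explicit antiderivatives so the $n$-dependence cancels. The paper carries out exactly this arithmetic (real parts totalling about $1.1$, imaginary parts about $3.7$), and your observation that $\theta$ is pinned near $1/2$ on $D_4^-$ and $D_5^-$ so the prefactor is $O(1)$ there is also the mechanism the paper uses.
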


\begin{lemma}\label{lemmaTroncSmallDomainsPositiveCons}
$\sum_{\lambda=1}^3 \left| \Phi_\lambda^+\right| \leq 13.$
\end{lemma}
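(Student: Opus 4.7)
The plan is to control each of $\Phi_1^+,\Phi_2^+,\Phi_3^+$ separately, using the trivial bound $|p(e^{2\pi is})|\leq 1$ rather than the decaying bound $\tfrac{C_1}{1+n|s-\alpha|}$. This is legitimate here because each of the three domains has $s$-width at most $\tfrac{2}{2n+3}$ at any fixed $\theta$, so the factor $C_1$ never appears and we land on an absolute constant (in contrast to Lemmas~\ref{lemmaTroncD0plusCons} and~\ref{lemmaTroncD4plusCons}, where the long $s$-intervals genuinely force a $\log(C_1)$ factor). Throughout I will use the elementary estimate $\tfrac{1}{|1-e^{2\pi i\theta}|}\leq \tfrac{1}{4|\theta|}$ for $\theta\in(0,1/2]$, combined with the pointwise bounds on $F_R$ and $F_I$ supplied by Lemmas~\ref{boundInteriorIntegralFSthetaReal} and~\ref{boundInteriorIntegralFSthetaImaginary}.

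For $\Phi_1^+$, the constraints $|s|\leq \tfrac{1}{2n+3}$ and $|\theta|\leq s+\tfrac{1}{2n+3}$ give $|\theta|\leq \tfrac{2}{2n+3}$, and the lemmas provide $|F|\leq \bigl(\tfrac{\pi(2n+3)}{4}+\tfrac{1}{2}+\tfrac{(n+1)\pi}{2}\bigr)\theta$. Hence the integrand $\tfrac{|F|}{|1-e^{2\pi i\theta}|}$ is bounded by a constant multiple of $n$, and integrating over an $s$-interval of length $\tfrac{2}{2n+3}$ yields an $O(1)$ contribution. For $\Phi_2^+$, both $s$ and $\theta$ are bounded below by $\tfrac{1}{2n+3}$ and $|s-\theta|\leq \tfrac{1}{2n+3}$, so again the $s$-slice has length $\tfrac{2}{2n+3}$. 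The bounds on $F$ then read $|F|\lesssim 1+\tfrac{c}{s}+\tfrac{c}{|s-\theta+2/(2n+3)|}+|\log(s/(s-\theta+2/(2n+3)))|$; the $1/s$ and $1/|s-\theta+2/(2n+3)|$ singularities integrate to $\log$ factors which, combined with $c=O(1/n)$ and the $O(1/n)$ slice width, remain $O(1)$, and the prefactor $1/|1-e^{2\pi i\theta}|\leq 1/(4\theta)$ is controlled by $\theta\geq 1/(2n+3)$ giving at most $(2n+3)/4$, which is absorbed by the $O(1/n)$ factors from $c$ and the slice.

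For $\Phi_3^+$, which is the most delicate, $|s|\leq \tfrac{1}{2n+3}$ forces an $s$-slice of length $\tfrac{2}{2n+3}$, while $\theta,|s-\theta|\geq \tfrac{1}{2n+3}$. The bounds give $F_R\lesssim (2n+3)(s+\tfrac{1}{2n+3})+c\bigl((2n+3)+\tfrac{1}{|s-\theta|}\bigr)+\theta/2$ and $F_I\lesssim c\bigl(\tfrac{1}{\theta-s}+\tfrac{1}{2/n-s}\bigr)+\tfrac{1}{4}\log\tfrac{\theta-s}{2/n-s}+\tfrac{\pi(n+1)}{n}$. The leading $(2n+3)s$ term integrates over $s\in[-\tfrac{1}{2n+3},\tfrac{1}{2n+3}]$ to $O(1/n)$, killing the prefactor $(2n+3)/(4\theta_{\min})=O(n)$. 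The $\tfrac{c}{|s-\theta|}$ singularity, although formally large, is cut off because $s$ lies near $0$ while $\theta\geq 1/(2n+3)$, so $|s-\theta|\geq$ a multiple of $\theta$ and the $s$-integral contributes $O(1/n)\cdot O(1/\theta)$, again absorbed after multiplying by $1/\theta$ and integrating in $\theta$ up to an overall constant.

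The main obstacle is purely a bookkeeping one: the interplay between the prefactor $1/|1-e^{2\pi i\theta}|$, the $1/s$- and $1/(s-\theta)$-singularities in $F$, and the $O(1/n)$-length slices must be tracked sharply enough to yield the stated constant $13$ rather than a larger numerical value. I expect the cleanest organization is to split each $\Phi_\lambda^+$ into its real and imaginary pieces, bound each contribution separately using the triangle inequality and the integrals $\int\tfrac{ds}{|s-\theta|}$, $\int\tfrac{ds}{|s|}$ computed exactly on the thin slices, and then verify by summing the resulting explicit constants that $|\Phi_1^+|+|\Phi_2^+|+|\Phi_3^+|\leq 13$. No new structural idea is required beyond those already used in Lemmas~\ref{lemmaTroncD0plusCons} and~\ref{lemmaTroncD4plusCons}; the savings compared to those lemmas come entirely from replacing the decaying $C_1/(1+n|s-\alpha|)$ bound on $|p|$ by the trivial bound $1$.
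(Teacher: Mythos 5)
Your plan matches the paper's proof of this lemma: because each of $D_1^+,D_2^+,D_3^+$ is a stripe of $s$-width $O\bigl(\tfrac{1}{2n+3}\bigr)$, the paper likewise replaces the decaying bound on $|p|$ by the trivial bound $1$, splits each $\Phi_\lambda^+$ into real and imaginary contributions, plugs in the pointwise bounds of Lemmas~\ref{boundInteriorIntegralFSthetaReal} and~\ref{boundInteriorIntegralFSthetaImaginary}, and integrates; the $1/s$ and $1/(\theta-s)$ singularities come with the prefactor $c=O(1/(2n+3))$ and therefore contribute only $O(1)$. The paper's bookkeeping yields roughly $5.2$ for the sum of real parts and $7.7$ for the imaginary parts, giving the advertised $13$, so there is no additional $\log C_1$ factor, exactly as you anticipate.

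Where your sketch is a bit optimistic is the treatment of $\Phi_3^+$. The remark that on $D_3^+$ one has $|s-\theta|\geq$ a multiple of $\theta$ is only valid when $\theta\geq \tfrac{2}{2n+3}$; for $\theta\in\bigl[\tfrac{1}{2n+3},\tfrac{2}{2n+3}\bigr]$ the stripe $D_3^+$ degenerates (its upper boundary and the set where $|s-\theta|\leq \tfrac{1}{2n+3}$ overlap), and the paper handles this regime separately — splitting the $D_3^+$ integral differently, introducing a separate bound for the overlap $D_2^+\cap D_3^+$ (contributing $\leq 1/2$ to the imaginary part), and adjusting the $D_2^+$ integration limits from $[\theta-\tfrac{1}{2n+3},\theta+\tfrac{1}{2n+3}]$ to $[\tfrac{1}{2n+3},\theta+\tfrac{1}{2n+3}]$. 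This is exactly the sort of bookkeeping that decides whether the final constant is $13$ rather than something larger, so it should be made explicit rather than deferred. With that caveat, the structural content of your proposal is the same as the paper's.
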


The total bound on the supremum~\eqref{firstSupremum} when $|\tau- \alpha|<1/n$ is thus given by
\begin{align}
\sup_{\theta\in [0,1]}\frac{1}{|1-e^{2\pi i \theta}|}\left|\int_{0}^{1} p_{\tau}(e^{2\pi i s}) \int_{0}^{-\theta} e^{2\pi i (n+1) t} \tilde{D}(e^{-2\pi i k (s+t)})\; dt\right|  &\leq \sum_{\lambda = 0}^5 \left|\Phi_\lambda^-\right| + \sum_{\lambda = 0}^4 \left|\Phi_\lambda^+\right| 
\end{align}
Multiplying this bound by $2$ and adding $1$ to get the total bound on the sum $~\eqref{firstSupremum}+\eqref{secondSupremumInDefintionKp}+\eqref{thirdTermDefinitionKp}$, gives the first part of lemma~\ref{lemma:boundK}
\begin{align}
\sup_{\theta \in [0,1]} |K_f(\tau, \theta)| \leq 2\left(71+38\log(C_1)\right)+1 \leq 143 + 76\log(C_1) \label{boundNearKp}
\end{align}

\subsubsection{Large $|\tau - \alpha|$ bound}

The bound for large values of $|\tau - \alpha|$ is controled through the following six lemmas which are respectively proved in sections~\ref{sectionProofLemmaD3minusLarge}, ~\ref{proofD0minusLarge}, ~\ref{sectionproofD0plusLarge}, ~\ref{sectionProofD4plusLarge}, ~\ref{proofSmallerDomainsLarge} as well as~\ref{proofD2plusLarge}

\begin{lemma}\label{lemmaTroncD3minus}
On $D_3^-$, we have 
\begin{align}
 \Phi(|\tau|)\leq 44\frac{C_1}{n|\tau|}  + 21 \frac{C_1}{n|\tau|} \log(\frac{n|\tau|}{C_1}) + 10\frac{C_1}{n|\tau|} \log(n|\tau|) + 8 \frac{C_1}{n|\tau|} \log^2(n|\tau|). \label{boundD3minusLargeTauMinusAlphaLemmaFinal}
\end{align}
\end{lemma}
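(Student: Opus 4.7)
I would base the proof on the pointwise bounds for $F(s;\theta)$ on $D_3^-$ given in Lemmas~\ref{boundInteriorIntegralFSthetaReal} and~\ref{boundInteriorIntegralFSthetaImaginary}, which together imply
\begin{align*}
|F(s;\theta)| \;\lesssim\; \tfrac{1}{4}\log\tfrac{1}{|s|} + \tfrac{1}{4}\log\tfrac{1}{|1+s-\theta|} + c\!\left(\tfrac{1}{|s|} + \tfrac{1}{|1+s-\theta|}\right) + O(1),
\end{align*}
with $c = \tfrac{1}{4(2n+3)}(\tfrac12 + \tfrac1\pi)$. Substituting this into the definition of $\Phi_3^-$ reduces the problem to bounding integrals of $|p(e^{2\pi i s})|$ against each of the kernels above, then dividing by $|1-e^{2\pi i\theta}| \geq 4|\theta|$. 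On $D_3^-$ the variable $s$ lies near $-1/2$, in fact $s \in [-1/2,\, -1/2+\theta]$ for $\theta\leq 1/2$, so the $s$-measure shrinks linearly in $\theta$ and cancels the $1/\theta$ prefactor; the supremum over $\theta$ is therefore finite and can be handled by bounding each integral uniformly in $\theta$.

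I would then insert $|p(e^{2\pi i s})| \leq \min(1,\, C_1/(1 + n|s-\alpha'|))$, where after the reduction $\tau = 0$ the parameter $\alpha' = \alpha - \tau$ has modulus $|\tau - \alpha|$. The integration splits naturally at $|s - \alpha'| = C_1/n$: on the inner window I would use the cap $|p| \leq 1$, and on the complement the decay cap $C_1/(n|s-\alpha'|)$. The constant-kernel and $1/|\cdot|$ contributions integrate to terms of size $C_1/(n|\tau|)$ and $(C_1/(n|\tau|))\log(n|\tau|)$ respectively. The logarithmic-kernel contributions are the source of the $\log^2$ term: a change of variable $u = s - \alpha'$ reduces them to integrals of the form $\int C_1/(1+n|u|)\,\log(1/|u+\beta|)\,du$, which after splitting at $|u|=C_1/n$ produce the $(C_1/(n|\tau|))\log^2(n|\tau|)$ and $(C_1/(n|\tau|))\log(n|\tau|/C_1)$ summands of the claimed bound.

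The main obstacle will be tracking numerical constants through these splittings to recover the exact coefficients $44$, $21$, $10$, $8$. Two subtleties complicate the bookkeeping: the kernel $1/|1+s-\theta|$ is singular at the lower endpoint of the $s$-domain (where the defining constraint $1+s-\theta \geq 1/(2n+3)$ of $D_3^-$ becomes active) and must be integrated using this cutoff; and, because of the circle wrap-around, the case $\alpha' \approx -1/2$, in which the polynomial peak sits inside $D_3^-$ even though $|\tau-\alpha|$ is large, must be absorbed into the $\log^2$ estimate rather than discarded as small. Both are routine but demand patience.
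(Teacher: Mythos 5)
Your proposal follows essentially the same route as the paper's proof. You correctly identify the key structural ingredients: that the $s$-measure of $D_3^-$ is $O(\theta)$ and cancels the $1/|1-e^{2\pi i\theta}|$ prefactor; that the pointwise bounds of Lemmas~\ref{boundInteriorIntegralFSthetaReal} and~\ref{boundInteriorIntegralFSthetaImaginary} reduce the problem to integrating the constant, $1/|\cdot|$, and logarithmic kernels against the envelope $\min(1, C_1/(1+n|s-\alpha'|))$ split at $|s-\alpha'|=C_1/n$; that the $\log^2$ summand comes precisely from the logarithmic kernels hitting the logarithmic window integral; and that the two bookkeeping subtleties are the cutoff $1+s-\theta\geq 1/(2n+3)$ and the wrap-around case where the peak sits inside $D_3^-$. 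This is exactly the case decomposition the paper runs through (peak to the right of $D_3^-$, peak to the left through the circle, peak inside the strip), separately for the real-part kernels $c/(-s)+c/(1+s-\theta)$ and the imaginary-part $\log$ kernels; the remaining work is indeed the constant chase you flag.
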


\begin{lemma}\label{lemmaTroncD0minus}
On $D_0^-$, we have 
\begin{align}
\Phi(|\tau|)\leq  \frac{19C_1}{n|\tau|} + 14\frac{C_1}{n|\tau|}\log(n|\tau|).\label{boundD0minusLargeTauMinusAlphaLemmaFinal}
\end{align}
\end{lemma}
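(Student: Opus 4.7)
My plan is to combine the pointwise bound on $F(s;\theta)$ supplied by Lemmas~\ref{boundInteriorIntegralFSthetaReal} and~\ref{boundInteriorIntegralFSthetaImaginary} on $D_0^-$ with the decay estimate $|p(e^{2\pi i s})|\leq \min\bigl(1,\tfrac{C_1}{1+n|s-\alpha|}\bigr)$, exploiting $|1-e^{2\pi i\theta}|\geq 4|\theta|$ for $|\theta|\leq 1/2$. On the main branch $\theta\geq 1/(2n+3)$ the bound reads
\begin{align*}
|F(s;\theta)|\leq c\Bigl(\tfrac{1}{|s|}+\tfrac{1}{|\theta-s|}\Bigr)+\tfrac{1}{4}\log\tfrac{-s}{\theta-s}+\tfrac{\theta}{2},
\end{align*}
with $c=(1/2+1/\pi)/(4(2n+3))=O(1/n)$; the complementary branch $\theta<1/(2n+3)$ is handled identically, since the replacement terms $\tfrac{3\theta}{2(s-\theta)}$ and $\tfrac{\theta}{8(2n+3)s(s-\theta)}$ carry an extra factor of $\theta$ that cancels against $1/|1-e^{2\pi i\theta}|$ and contributes strictly less.

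After the WLOG translation placing the atom at the origin (so $\alpha$ becomes $0$ for the polynomial), I will let $|\tau|$ denote the distance between $\theta$ and the peak, and split $D_0^-(\theta)=[\theta-\tfrac12,-\tfrac{1}{2n+3}]$ into a peak zone $I_p=\{|s-\alpha|\leq C_1/n\}\cap D_0^-$, on which $|p|\leq 1$, and a bulk zone $I_b=D_0^-\setminus I_p$, on which $|p|\leq C_1/(n|s-\alpha|)$. Because we work in the regime $|\tau|\gtrsim C_1/n$, the peak is cleanly separated from each of the three potential singularities at $s=0$, $s=\theta$, and the right endpoint $s=-1/(2n+3)$, so each zone can be treated independently.

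The estimation on $I_p$ is immediate: the bounded integrals $\int_{|s-\alpha|\leq C_1/n}\tfrac{ds}{|s|}$ and $\int_{|s-\alpha|\leq C_1/n}\tfrac{ds}{|\theta-s|}$ are each $O(C_1/(n|\tau|))$, and the log factor from $F$ is bounded by $\log(n|\tau|)+O(1)$ on this set. On $I_b$, after a further two-region split at $s=0$ and $s=\theta$ to fix the signs, I would apply the partial-fraction identity $\tfrac{1}{|s-\alpha||s|}=\tfrac{1}{|\alpha|}\bigl(\tfrac{1}{|s-\alpha|}+\tfrac{1}{|s|}\bigr)$ (and its analogue with $|\theta-s|$ in place of $|s|$), reducing every contribution to one-dimensional integrals of the form $\int \tfrac{ds}{|s-\alpha|}$, $\int \tfrac{ds}{|s|}$, $\int \tfrac{ds}{|\theta-s|}$ on a bounded interval. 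Each such integral produces exactly one $\log(n|\tau|)+O(1)$ factor, the prefactor $c\cdot\tfrac{1}{|\alpha|}$ combines with $1/|\theta|\leq O(1/|\tau|)$ into a clean $O(1/(n|\tau|))$ coefficient (absorbing the extra $1/n$ in $c$), and the resulting total is of the form $\tfrac{C_1}{n|\tau|}+\tfrac{C_1}{n|\tau|}\log(n|\tau|)$ with explicit constants. Integrating the $\tfrac{1}{4}\log\tfrac{-s}{\theta-s}$ piece of $F$ against $|p|$ on $I_b$ is treated in the same spirit: the log is uniformly bounded by $\log(n|\tau|)+O(1)$ on the relevant subregion, and $\int_{I_b}|p|\,ds$ is itself $O(C_1\log(n|\tau|)/n)$, giving one more contribution of the same order.

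The main obstacle is not any single integral---each follows from the partial-fraction identity and $\int_a^b ds/|s|=\log(b/a)$---but the bookkeeping: one must separately track the corners $s\to 0^-$, $s\to\theta^+$, and $s\to -1/(2n+3)$, verify that the $\theta\gtrless 1/(2n+3)$ branches of the $F$-bound are invoked consistently across each subregion, and check that no pairing accidentally produces a $\log^2(n|\tau|)$ factor (as it did for the $D_3^-$ estimate of Lemma~\ref{lemmaTroncD3minus}, but cannot here because the $F$-bound on $D_0^-$ carries only a single logarithm). Summing the explicit constants across every piece---the peak zone, the four sign-regions of the bulk zone, and the contribution from the $\log$-term in $F$---and bounding them uniformly by convenient integer constants delivers the claimed inequality $\Phi(|\tau|)\leq \tfrac{19 C_1}{n|\tau|}+\tfrac{14 C_1}{n|\tau|}\log(n|\tau|)$.
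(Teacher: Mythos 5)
Your plan follows essentially the same strategy as the paper's own proof of Lemma~\ref{lemmaTroncD0minus}: invoke the pointwise bounds from Lemmas~\ref{boundInteriorIntegralFSthetaReal} and~\ref{boundInteriorIntegralFSthetaImaginary} on $D_0^-$, split the domain relative to the atom into a ``peak'' interval of width $O(C_1/n)$ and a bulk where $|p|\leq C_1/(n|s-\tau|)$, apply the partial-fraction identity $\frac{1}{|s-\tau||s|}=\frac{1}{|\tau|}(\frac{1}{|s-\tau|}+\frac{1}{|s|})$ (and its analogue with $|\theta-s|$), and use the small-$\theta$ variant of the $F$-bound, whose explicit factor of $\theta$ cancels the $1/\theta$ prefactor coming from $1/|1-e^{2\pi i\theta}|$. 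Your observation that $D_0^-$ carries only a single logarithm, unlike $D_3^-$, is also the structural reason the final bound has no $\log^2$ term, and it matches the outcome in the text.

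Two small points in the plan are slightly loose but not fatal. First, on $D_0^-$ the variable $s$ always satisfies $s\leq -1/(2n+3)<0<\theta$, so the ``further two-region split at $s=0$ and $s=\theta$ to fix the signs'' is unnecessary; the relevant case split is instead on the position of $\tau$ relative to the interval $[-1/2+\theta,-1/(2n+3)]$ (atom to the left, atom to the right, atom inside), which is what the paper does. Second, on the peak zone the $\log\bigl(\tfrac{-s}{\theta-s}\bigr)$ factor is bounded by $O(\log n)$ rather than directly by $\log(n|\tau|)+O(1)$; the conversion requires the extra bookkeeping step $\log n = \log(n|\tau|) + \log(1/|\tau|)$ together with $\log(1/|\tau|)\leq 1/|\tau|$, which is exactly how the paper absorbs the discrepancy into the $C_1/(n|\tau|)$ term. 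You also do not mention explicitly the reflection/modulo-$1$ correction (replacing $\tau$ by $1-\tau$ and doubling certain contributions), which the paper carries out to assemble the final constants $19$ and $14$. With these refinements your outline coincides with the paper's argument.
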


\begin{lemma}\label{lemmaTroncD0plus}
On $D_0^+$, we have 
\begin{align}
\Phi(|\tau|)\leq 25 \frac{C_1}{n|\tau|} + 27\frac{C_1}{n|\tau|}\log(n|\tau|) + \frac{C_1}{n|\tau|}\log^2(n|\tau|)\label{boundD0plusLargeTauMinusAlphaLemmaFinal}
\end{align}
\end{lemma}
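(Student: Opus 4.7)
The strategy is to combine the pointwise bound on $|F(s;\theta)|$ restricted to $D_0^+$, furnished by Lemmas~\ref{boundInteriorIntegralFSthetaReal} and~\ref{boundInteriorIntegralFSthetaImaginary}, with the assumption $|p(e^{2\pi i s})| \leq \min(1, C_1/(1+n|s-\alpha|))$, and then integrate piece by piece in $s$. Since $|1-e^{2\pi i\theta}|^{-1} \leq 1/(4\theta)$ on $[0,1/2]$, I would first write
\begin{align*}
|\Phi_0^+(\theta)| \leq \frac{1}{4\theta}\int_{D_0^+(\theta)} |p(e^{2\pi i s})|\,|F(s;\theta)|\,ds,
\end{align*}
and split $|F|$ into three qualitatively distinct contributions identified by the two lemmas: (i) the $O(1)$ pieces $\pi/2 + c\cdot 2(2n+3) + \theta/2$, (ii) the singular pieces $c/s + c/(\theta-s)$ which are of order $1/(ns)$ and $1/(n(\theta-s))$, and (iii) the logarithmic piece $\tfrac{1}{4}\log(\max(s/(\theta-s),(\theta-s)/s))$. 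Each piece is then integrated against $|p|$ separately.

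Next I would split the $s$-integration into a ``near'' region $\{|s-\alpha|\leq C_1/n\}\cap D_0^+(\theta)$ and a ``far'' region. On the near region (which is empty unless $\alpha\in [0,1/2]$) I would use the trivial bound $|p|\leq 1$, so that the near contribution of a piece of $|F|$ is controlled by the measure $O(C_1/n)$ of that region times the pointwise value of the piece. On the far region I would use $|p|\leq C_1/(n|s-\alpha|)$, and control each product integral via the partial fraction identity
\begin{align*}
\frac{1}{|s-\alpha|\,|s|} \leq \frac{1}{|\alpha|}\Bigl(\frac{1}{|s-\alpha|}+\frac{1}{|s|}\Bigr),
\end{align*}
(and the analogous identity for $1/(|s-\alpha|(\theta-s))$ obtained by pivoting around $\alpha - \theta$), where $|\alpha|$ plays the role of $|\tau-\alpha|=|\tau|$ after the WLOG reduction $\tau=0$ of Lemma~\ref{lemma:boundK}. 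Each resulting integral is then a sum of elementary logarithmic contributions $\int_{1/n}^{1/2} ds/s$, $\int_{1/n}^{\theta-1/n} ds/(\theta-s)$, and $\int ds/|s-\alpha|$, each bounded by $\log(n|\tau|)$ up to absolute constants.

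The three terms in the final bound will then emerge as follows: the $O(1)$ pieces of $|F|$ contribute $O(C_1/(n|\tau|))$ after using $1/(ns)\cdot 1/\theta$ is controlled on $D_0^+$ (where $\theta\gtrsim 1/n$); the singular $c/s$ and $c/(\theta-s)$ pieces, combined with the outer $1/\theta$ and the partial fraction expansion, contribute $O((C_1/(n|\tau|))\log(n|\tau|))$; finally, the logarithmic piece of $|F|$ multiplied by the decay $C_1/(n|s-\alpha|)$ and integrated produces $O((C_1/(n|\tau|))\log^2(n|\tau|))$, since $\int_{C_1/n}^{1/2} \log(ns)/(n|s-\alpha|)\,ds$ iterates a logarithm. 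Summing the three contributions and tracking the absolute constants carefully yields the announced inequality.

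The main obstacle is the bookkeeping of the relative location of $\alpha$ with respect to the two intrinsic singularities of $|F|$ at $s=0$ and $s=\theta$. Depending on whether $\alpha$ lies to the left of $0$, inside $(1/(2n+3),\theta-1/(2n+3))$, or to the right of $\theta$, one has to pivot the partial fraction decomposition differently, and in the middle case the three-way interaction between the factors $1/s$, $1/(\theta-s)$ and $1/|s-\alpha|$ inside a single integrand is what produces the $\log^2(n|\tau|)$ term. The sup over $\theta$ has to be taken at the end by monitoring that on $D_0^+$ the outer $1/\theta$ can always be absorbed into the $1/n$ coming from the coefficient $c = (1/2+1/\pi)/(4(2n+3))$ appearing in the singular pieces of $|F|$, which is precisely why the final bound scales as $1/(n|\tau|)$ rather than $1/(n\theta|\tau|)$.
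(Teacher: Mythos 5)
Your plan matches the paper's proof closely: both decompose $|F|$ on $D_0^+$ into the constant piece $d \equiv \pi/2 + 2c(2n+3) + \theta/2$, the singular pieces $c(1/s + 1/(\theta-s))$, and the logarithmic piece $\log(s/(\theta-s)\vee(\theta-s)/s)$ coming from $F_I$; both split the $s$-integral into a central window of measure $O(C_1/n)$ around the atom (where $|p|\leq 1$) and an outer region (where $|p|\leq C_1/(n|s-\tau|)$); both pivot the resulting integrals by partial fractions; and both carry out the case analysis over $\tau<\frac{1}{2n+3}$, $\tau>\theta-\frac{1}{2n+3}$, and $\tau\in[\frac{1}{2n+3},\theta-\frac{1}{2n+3}]$. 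One small imprecision in your wording: the $\log^2(n|\tau|)$ term does \emph{not} arise from a ``three-way interaction'' of $c/s$, $c/(\theta-s)$, and $1/|s-\tau|$ --- those singular pieces carry the prefactor $c\sim 1/(4(2n+3))$ and therefore, after partial fractions and the $1/\theta$ absorption you describe, contribute only a single $\log$. The second logarithm comes exclusively from the $O(1)$-magnitude piece $\log(s/(\theta-s)\vee(\theta-s)/s)$ paired with the log of the atom's decay integral, exactly as you describe (correctly) in your earlier paragraph, and this is also how the paper produces it (e.g.\ in~\eqref{boundDomainD0plusImagFramework2b} and~\eqref{FinalBoundImagCentralDomainD0plusImag}).
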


\begin{lemma}\label{lemmaTroncD4plus}
On $D_4^+$, we have 
\begin{align}
\Phi(|\tau|) \leq 38 \frac{C_1}{n|\tau|} + 31\frac{C_1}{n|\tau|}\log(n|\tau|) +  6\frac{C_1}{n|\tau|}\log^2(n|\tau|)\label{boundD4plusLargeTauMinusAlphaLemmaFinal}
\end{align}
\end{lemma}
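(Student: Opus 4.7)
The plan is to run the same dissection already used for the earlier subdomains ($D_0^-$, $D_3^-$, $D_0^+$): insert the pointwise bound on the inner kernel $F$ from Lemmas~\ref{boundInteriorIntegralFSthetaReal}--\ref{boundInteriorIntegralFSthetaImaginary}, factor out the prefactor, and then carry out a one dimensional estimate in $s$ after partitioning according to where the peak of $p$ lies. Throughout I will assume $|\tau-\alpha|_{\text{mod}}\gg C_1/n$ and, without loss of generality, translate so that $\tau=0$ (so $|\alpha|$ plays the role of $|\tau|$ appearing in the statement).

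First, on $D_4^+$ one has $s>\theta\geq(2n+3)^{-1}$ together with $s-\theta\geq(2n+3)^{-1}$. Applying the bounds of Lemma~\ref{boundInteriorIntegralFSthetaReal} and Lemma~\ref{boundInteriorIntegralFSthetaImaginary} gives
\begin{align*}
|F(s;\theta)|\;\leq\; c\!\left(\tfrac{1}{s}+\tfrac{1}{s-\theta}\right)+\tfrac{\theta}{2}+\tfrac{1}{4}\left|\log\tfrac{s}{s-\theta}\right|,
\end{align*}
and the prefactor is controlled by $1/|1-e^{2\pi i\theta}|\leq 1/(4\theta)$. The task therefore reduces to bounding
\begin{align*}
\frac{1}{4\theta}\int_{D_4^+(\theta)}\!\min\!\left(1,\tfrac{C_1}{1+n|s-\alpha|}\right)\!\left[c\!\left(\tfrac{1}{s}+\tfrac{1}{s-\theta}\right)+\tfrac{\theta}{2}+\tfrac{1}{4}\log\tfrac{s}{s-\theta}\right]\,ds,
\end{align*}
uniformly in the admissible $\theta$, and then taking the supremum over $\theta$.

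Next I would split the $s$-integral according to whether the peak $s=\alpha$ falls inside $D_4^+(\theta)$, to its left, or to its right; and within each case according to whether $|s-\alpha|\leq C_1/n$ (use the flat bound $p\leq 1$) or $|s-\alpha|>C_1/n$ (use the decaying bound $p\leq C_1/(n|s-\alpha|)$). The central quantitative tools are the partial fraction / harmonic number estimates already deployed for $D_0^+$ and $D_3^-$, of the form
\begin{align*}
\int_a^b\frac{ds}{(1+n|s-\alpha|)(s-\beta)}\;\lesssim\;\frac{1}{n|\alpha-\beta|}\log\!\left(1+n|\alpha-\beta|\right),
\end{align*}
together with the analogous identity against $\log(s/(s-\theta))$, which produces a second logarithmic factor. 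The contribution of the flat piece $\theta/2$, after division by $4\theta$, yields a constant that is shrunk upon integrating the decaying factor of $p$ over $D_4^+$, giving a term of size $C_1/(n|\tau|)$. The terms coming from $c/s$ and $c/(s-\theta)$ each contribute $O(C_1/(n|\tau|)\log(n|\tau|))$ once the peak of $p$ is handled exactly as in Lemma~\ref{lemmaTroncD0plus}.

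The main obstacle is the logarithmic term $\log(s/(s-\theta))$ near the lower edge $s-\theta\downarrow(2n+3)^{-1}$, where the integrand is as large as $\log(n\theta)$. Pairing it with the decay $C_1/(1+n|s-\alpha|)$ and integrating under the assumption $|\tau-\alpha|\gg C_1/n$ produces the dominant
\begin{align*}
\frac{C_1}{n|\tau|}\log(n|\tau|)\log(n|\tau|/C_1),
\end{align*}
which is precisely the source of the $\log^2(n|\tau|)$ contribution distinguishing~\eqref{boundD4plusLargeTauMinusAlphaLemmaFinal} from the bound for $D_0^+$. Collecting the constants from the three blocks (respectively of orders $c$, $c$, and $1/4$, multiplied by the number of cases in each splitting) and absorbing them into the three summands $38$, $31$, $6$ yields~\eqref{boundD4plusLargeTauMinusAlphaLemmaFinal}. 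Because the integrand structure on $D_4^+$ is virtually the same as on $D_0^+$ modulo the extra $\log(s/(s-\theta))$, the book-keeping can be organized by simply adding the corresponding $\log^2$ correction to the estimate already derived for $D_0^+$.
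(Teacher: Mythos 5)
Your overall strategy — insert the pointwise bounds on $F$ from Lemmas~\ref{boundInteriorIntegralFSthetaReal}--\ref{boundInteriorIntegralFSthetaImaginary}, pull out the $O(1/\theta)$ prefactor, and split the $s$-integral according to whether the eigenpolynomial's peak lies inside, to the left of, or to the right of $D_4^+(\theta)$ — matches the paper, as do the partial-fraction/harmonic-number estimates. However, there are two genuine problems.

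First, the claim that $\log(s/(s-\theta))$ is ``the source of the $\log^2$ contribution distinguishing $D_4^+$ from $D_0^+$'' is factually wrong. Look at Lemma~\ref{boundInteriorIntegralFSthetaImaginary}: $F_{I,D_0}^+$ already contains the term $\log(s/(\theta-s)\vee(\theta-s)/s)$, and accordingly the final bound for $D_0^+$ in Lemma~\ref{lemmaTroncD0plus} already has a $\log^2(n|\tau|)$ term (with constant $1$). For $D_4^+$ the $\log^2$ constant rises to $6$, but the term is not new, and the constants in \emph{all three} summands change ($25\to 38$, $27\to 31$, $1\to 6$). So the organizing principle ``simply add the $\log^2$ correction to the $D_0^+$ estimate'' does not reproduce the lemma and misrepresents what $D_4^+$ requires.

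Second, the genuinely distinguishing feature of $D_4^+$ is not a new logarithm but the geometry of the $s$-slice: for $D_0^+$ one integrates over $[\frac{1}{2n+3},\theta-\frac{1}{2n+3}]$, whose length is $O(\theta)$ and therefore vanishes as $\theta$ shrinks to the domain boundary, automatically compensating the prefactor $1/\theta$; for $D_4^+$ one integrates over $[\theta+\frac{1}{2n+3},\tfrac12]$, whose length stays $O(1)$. That is exactly why the paper (in the remarks following~\eqref{D4term2}) introduces the separate bound $F_{R,D_4}^+\leq (\tfrac32+\tfrac18)\frac{\theta}{s-\theta}+\frac{\theta}{2}$ with an explicit factor $\theta$ in front, and in the proof of Lemma~\ref{lemmaTroncD4plus} runs a dedicated ``small $\theta$'' case using that bound. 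Your sketch uses only the $c(1/s + 1/(s-\theta))+\theta/2$ version and never addresses this regime, so the step from the integral in $s$ to a bound uniform in $\theta$ is not actually closed. You would need to reproduce the $\theta$-proportional form of $F$ (or demonstrate why the constraint $\theta>\tfrac{1}{2n+3}$ built into the definition of $D_4^+$ renders it unnecessary with your constants) before the constants $38$, $31$, $6$ can be justified.
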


\begin{lemma}\label{lemmaTroncD2plus}
On $D_2^+$, we have 
\begin{align}
\Phi(|\tau|)\leq 32\frac{C_1}{n|\tau|} + 12\log(n|\tau|)\frac{C_1}{n|\tau|}. 
\label{boundD2plusLargeTauMinusAlphaLemmaFinal}
\end{align}
\end{lemma}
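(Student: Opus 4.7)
The plan is to bound $\Phi_2^+$ by decomposing the integrand using the explicit bounds on the real and imaginary parts of $F(s;\theta)$ on $D_2^+$ provided by Lemmas~\ref{boundInteriorIntegralFSthetaReal} and~\ref{boundInteriorIntegralFSthetaImaginary}. The geometry of $D_2^+$ is particularly convenient: for each fixed $\theta\in(1/(2n+3),1/2]$ the $s$-slice is a thin strip of width at most $2/(2n+3)$ centered at $\theta$, with both $s$ and $\theta$ bounded below by $1/(2n+3)$. Consequently the denominators $s$, $|s-\theta+2/(2n+3)|$, and $s-\theta+2/n$ appearing in the bounds on $F_{R,D_2}^+$ and $F_{I,D_2}^+$ all stay of order $1/n$ from above and present no true singularity over the $s$-integration.

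First I would fix $\theta$ and use $|s-\theta|\leq 1/(2n+3)$ to replace the envelope $|p(e^{2\pi i s})|\leq C_1/(1+n|s-\alpha|)$ by $2C_1/(1+n|\theta-\alpha|)$ at the cost of an absolute constant, since $n|s-\alpha|$ and $n|\theta-\alpha|$ differ by at most a universal constant under the constraint $n|s-\theta|\leq n/(2n+3)$. What remains is the integral $\int_{D_2^+(\theta)}|F(s;\theta)|\,ds$, which I split into four contributions following the two sub-bounds: (i) the pointwise constants $\pi/2+\theta/2$ and $(n+1)\pi/(2n)$ each integrate to $O(1/n)$ over a strip of length $O(1/n)$; (ii) the terms $c/s$ with $c=(1/2+1/\pi)/(4(2n+3))$ are bounded pointwise by $c(2n+3)=(1/2+1/\pi)/4$, giving an integrated contribution of $O(1/n)$; (iii) the mildly singular terms $c/|s-\theta+2/(2n+3)|$ and $c/|s-\theta+2/n|$ are integrable with $\int_{-1/(2n+3)}^{1/(2n+3)}du/|u+2/(2n+3)|=\log 3$, producing $c\log 3=O(1/n)$; and (iv) the logarithmic imaginary contribution $(1/4)\bigl|\log\bigl(s/(s-\theta+2/n)\bigr)\bigr|$, whose argument lies in an interval of the form $[c_1 n\theta,\,c_2 n\theta]$ for absolute constants $c_1,c_2$, yielding an integrated bound of $(\log(n\theta)+O(1))/(2(2n+3))$.

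Combining these four contributions gives
\begin{align*}
\int_{D_2^+(\theta)} \bigl|F(s;\theta)\bigr|\,ds \;\lesssim\; \frac{1+\log(n\theta)}{n}.
\end{align*}
Dividing by $|1-e^{2\pi i\theta}|\geq 4\theta$, valid throughout $D_2^+$, gives the pointwise bound
\begin{align*}
\frac{1}{|1-e^{2\pi i\theta}|}\int_{D_2^+(\theta)} |p(e^{2\pi i s})||F(s;\theta)|\,ds \;\lesssim\; \frac{C_1\bigl(1+\log(n\theta)\bigr)}{n\theta\bigl(1+n|\theta-\alpha|\bigr)}.
\end{align*}
With $\tau=0$ after the translation so that $|\tau|$ stands for $|\tau-\alpha|=|\alpha|$, I then take the supremum over $\theta$ by analyzing three regimes: for $\theta$ small compared to $|\alpha|$, $1/\theta\lesssim n$ and $1/(1+n|\theta-\alpha|)\lesssim 1/(n|\alpha|)$, so the product is of order $C_1(1+\log(n|\alpha|))/(n|\alpha|)$; for $\theta$ comparable to $|\alpha|$ the factor $1/(1+n|\theta-\alpha|)$ is $O(1)$ and $1/\theta\lesssim 1/|\alpha|$, producing the dominant contribution $C_1(1+\log(n|\alpha|))/(n|\alpha|)$; for $\theta$ much larger than $|\alpha|$ the decay $1/(n^2\theta^2)$ kicks in and the contribution is strictly better.

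The main obstacle will be book-keeping the explicit numerical constants $32$ and $12$ in the statement. Each of the four contributions from the decomposition of $F$ comes with its own absolute constant involving $\pi$, $\log 3$, and the prefactor $c$, and the replacement of $n|s-\alpha|$ by $n|\theta-\alpha|$ introduces a further multiplicative factor that must be tracked carefully. Following the same template as in Lemmas~\ref{lemmaTroncD3minus}--\ref{lemmaTroncD4plus}, grouping the $\log$-free contributions into one coefficient and the $\log(n|\tau|)$ contributions into the other, should produce the claimed constants.
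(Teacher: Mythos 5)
Your proposal is correct and takes essentially the same route as the paper: on the width-$O(1/n)$ strip $D_2^+$ the envelope is frozen at $s=\theta$ and the $F_{R,D_2}^+$ / $F_{I,D_2}^+$ bounds are integrated to give $O\bigl((1+\log(n\theta))/n\bigr)$, after which dividing by $|1-e^{2\pi i\theta}|\geq 4\theta$ and taking the $\theta$-supremum produces the form $C_1\bigl(a+b\log(n|\tau|)\bigr)/(n|\tau|)$. The paper keeps the envelope inside the $s$-integral and explicitly treats the three cases $\tau>\theta+2C_1/n$, $\tau<\theta-2C_1/n$, and $|\tau-\theta|<2C_1/n$, which is the same regime analysis as your $\theta$-supremum argument phrased differently; the numerical constants $32$ and $12$ then come from the bookkeeping you rightly flag as the remaining work.
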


\begin{lemma}\label{lemmaTroncSmallerDomains}
Let $D^-\equiv D_5^-\cup D_4^- \cup D_1^- \cup D_2^-$ and $D^+ \equiv D_3^+\cup D_1^+$, then we can write
\begin{align}
\Phi(|\tau|) \leq 37\frac{C_1}{n|\tau|} + \frac{11}{n|\tau|} + \frac{1}{n|\tau|}\log(n|\tau|). \label{boundDminusAndPlusSmallDomainsplusLargeTauMinusAlphaLemmaFinal}
\end{align}
\end{lemma}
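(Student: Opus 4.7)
The plan is to bound the contribution of each of the six subdomains $D_5^-, D_4^-, D_1^-, D_2^-$ and $D_3^+, D_1^+$ separately and then sum. Unlike the larger subdomains $D_0^{\pm}, D_3^-, D_4^+, D_2^+$ handled in Lemmas~\ref{lemmaTroncD3minus}--\ref{lemmaTroncD2plus}, each of the six domains here has extent at most $O(1/n)$ in at least one variable (they are either the thin strips $|s|\leq \frac{1}{2n+3}$, $|\theta|\leq\frac{1}{2n+3}$, $|s-\theta|\leq\frac{1}{2n+3}$, or the sliver $-1\leq s-\theta\leq -1+\frac{1}{2n+3}$). The plan exploits this thinness rather than the tail decay of $p$, which is what drives the constant prefactor $37$ in the stated bound.

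First I would observe that since we work in the large-separation regime $|\tau-\alpha|\gtrsim 1/n$, every point $s$ inside any of the six subdomains satisfies $|s-\alpha|\gtrsim |\tau-\alpha|$: indeed these domains all live in a strip of width $O(1/n)$ around either $s=0$ or $s=\theta-1$, while $\alpha$ is at distance $\gtrsim|\tau|$ from $\tau=0$ (the center of the truncation). Hence uniformly on these subdomains
\begin{align*}
|p(e^{2\pi i s})|\;\leq\;\frac{C_1}{1+n|s-\alpha|}\;\lesssim\;\frac{C_1}{n|\tau|},
\end{align*}
so $\frac{C_1}{n|\tau|}$ can be factored out of each integral. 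What remains is to integrate the $F$-bounds from Lemmas~\ref{boundInteriorIntegralFSthetaReal}--\ref{boundInteriorIntegralFSthetaImaginary} against the weight $\frac{1}{|1-e^{2\pi i\theta}|}\leq \frac{1}{2|\theta|}$ over a region of width $O(1/n)$.

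Next, for each of the six domains I would substitute the corresponding $F_R$/$F_I$-bound and integrate term by term. All integrals reduce to elementary ones of the form $\int \frac{ds}{s}$, $\int \frac{ds}{(s-a)^2}$, or $\int \log|s-a|\,ds$ over intervals of length $\frac{1}{n}$ (or, for $\theta$-integration, over intervals where $\frac{1}{|\theta|}$ is integrable because $F$ carries a compensating factor of $\theta$ or $(n+1)\theta$). Each such integration yields either a constant $\times \frac{1}{n}$ (absorbed into $\frac{C_1}{n|\tau|}$ after re-multiplying by $\frac{C_1}{n|\tau|}$) or, at worst, a single logarithm. On $D_1^+$ and $D_2^-$ the bound $F\leq (n+1)\pi\theta/2$ or $(2n+3)\pi\theta/4$ cancels the $1/|\theta|$ weight exactly; multiplying by the $\frac{1}{n}$ width of the strip then gives an $O(1)$ contribution after the $\frac{C_1}{n|\tau|}$ factor is restored. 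On $D_3^+$ the $F_I$-bound contains the term $\tfrac{1}{4}\log(\frac{\theta-s}{2/n-s})$, and this is the only contribution that produces the extra $\frac{1}{n|\tau|}\log(n|\tau|)$ term in the statement, since integrating this logarithm over $|s|\leq\frac{1}{2n+3}$ and then against $\frac{1}{|\theta|}$ over $\theta\in[\frac{1}{2n+3},1]$ produces a single $\log(n|\tau|)$. The $\frac{11}{n|\tau|}$ summand without a $C_1$ comes from the ``$\theta/2$'' additive pieces of the $F_R$-bounds, which contribute $\int \frac{1}{|\theta|}\cdot\frac{\theta}{2}\,ds\sim\frac{1}{n}$ on each strip and are not multiplied by the bound on $p$ via a separate argument.

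The main technical obstacle is bookkeeping rather than analysis: one must verify that the logarithms at the boundaries of $D_5^-$ (near $s-\theta=-1+\frac{1}{2n+3}$) and $D_4^-$ (near $s=-\frac{1}{2n+3}$), after integration on a width-$\frac{1}{n}$ sliver, contribute only $O(1)\cdot\frac{C_1}{n|\tau|}$ and not an additional $\log(n|\tau|)$. This is true because the logarithmic singularities are integrable and the overall domain length is $\frac{1}{n}$: concretely, $\int_{-1/2-\theta}^{-1/2-\theta+1/n}\log\bigl(\frac{1/2}{1+s-\theta+1/n}\bigr)\,ds = O(\frac{1}{n})$ uniformly in $\theta$. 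No single estimate is deep, but tallying the constants from all six domains carefully is what yields precisely the advertised $37\,C_1 + 11 + \log(n|\tau|)$ prefactor structure in~\eqref{boundDminusAndPlusSmallDomainsplusLargeTauMinusAlphaLemmaFinal}.
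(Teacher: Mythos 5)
Your overall plan—bound each of the six thin subdomains separately and tally—matches the paper's structure, but the central shortcut you propose does not hold, and your attributions of where the non-$C_1$ and logarithmic pieces of the stated bound come from are incorrect.

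The claim that $|s-\alpha|\gtrsim|\tau-\alpha|$ uniformly over all six subdomains fails on $D_5^-$. There $s$ is pinned near $-1/2$ (since $\theta\geq 1/2-\frac{1}{2n+3}$ forces $s\in[-1/2,\,-1/2+\frac{1}{2n+3}]$), and $\alpha$ can also sit near $\pm 1/2$ on the torus when $|\tau-\alpha|$ is close to its maximal value $1/2$; in that configuration $|s-\alpha|_{\mathrm{mod}\,1}$ can be as small as $O(1/n)$ while $|\tau-\alpha|\approx 1/2$, so $|p(e^{2\pi is})|$ can be $\Theta(1)$ rather than $\lesssim\frac{C_1}{n|\tau|}$. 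Replacing $|p|$ by the too-small $\frac{C_1}{n|\tau|}$ on $D_5^-$ would produce an estimate that is spuriously a full factor of $n$ better than the truth. The paper handles $D_4^-$ and $D_5^-$ by using the crude bound $|p|\leq 1$ and extracting the $\frac{1}{|\tau|}$ factor from the $\frac{1}{\pi\theta}$ prefactor (using $\theta\gtrsim 1/2\geq|\tau|$), not from the decay of $p$; this is precisely why those contributions appear \emph{without} a $C_1$ factor and produce the $\frac{11}{n|\tau|}$ summand—not the additive $\theta/2$ pieces you point to.

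Your attribution of the $\frac{1}{n|\tau|}\log(n|\tau|)$ term to $D_3^+$ is also unsupported: in the paper, the $D_3^+$ imaginary-part contribution is $O(C_1/(n|\tau|))$ with no logarithm (its $\tfrac14\log(\tfrac{\theta-s}{2/n-s})$ piece produces only ratios of boundedly separated quantities after integration). The logarithm actually comes from the $D_4^-$ and $D_5^-$ imaginary parts. Relatedly, your claim that $\int_{-1/2-\theta}^{-1/2-\theta+1/n}\log\bigl(\frac{1/2}{1+s-\theta+1/n}\bigr)\,ds=O(1/n)$ uniformly in $\theta$ is false: when the argument of the log descends to $O(1/n)$ near the endpoint, the integral is
\begin{align*}
\int_0^{1/n}\log\!\Bigl(\frac{1/2}{u}\Bigr)\,du \;=\; \frac{1}{n}\bigl(\log n + 1 - \log 2\bigr) \;=\; O\!\Bigl(\frac{\log n}{n}\Bigr),
\end{align*}
which is exactly the source of the $\log(n|\tau|)$ term you are trying to argue away. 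So the bookkeeping step you flag as ``true'' is actually where the logarithm enters. The strategy of factoring out the $p$-bound is sound on $D_1^\pm,D_2^-,D_3^+,D_4^-$ where $s$ stays near $0$, but a separate (and looser, $C_1$-free) argument is required on $D_5^-$, and the logarithmic loss there and on $D_4^-$ must be tracked rather than dismissed.
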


Proceeding as in the case $|\tau - \alpha|>\Delta$, we derive the total bound on $|K_p(\tau, \theta)|$ by multiplying the sum $\eqref{boundD3minusLargeTauMinusAlphaLemmaFinal} + \eqref{boundD0minusLargeTauMinusAlphaLemmaFinal} + \eqref{boundD0plusLargeTauMinusAlphaLemmaFinal} + \eqref{boundD4plusLargeTauMinusAlphaLemmaFinal} + \eqref{boundD2plusLargeTauMinusAlphaLemmaFinal} + \eqref{boundDminusAndPlusSmallDomainsplusLargeTauMinusAlphaLemmaFinal}$ by $2$ and adding $1$ to account for~\eqref{thirdTermDefinitionKp}.  First combining the results of lemmas~\eqref{lemmaTroncD3minus} to~\ref{lemmaTroncD2plus} we get
\begin{align}
 &\sup_{\theta\in [0,1]} \frac{1}{|1-e^{2\pi i \theta}|}\left|\int_{0}^{1} p_{\tau}(e^{2\pi i s}) \int_{0}^{-\theta} e^{2\pi i (n+1) t} \tilde{D}(e^{-2\pi i k (s+t)})\; dt\right|\\
&\leq \frac{195C_1+11}{n|\tau|} +\frac{21C_1}{n|\tau|}\log(\frac{n|\tau|}{C_1}) +  \frac{94}{n|\tau|} + \frac{25C_1}{n|\tau|} \log^2(n|\tau|).
\end{align}
As explained in section~\ref{boundAAMultiatomic}, we can then multiply this contribution by two, replace the modulus $|\tau|$ by $|\tau - \alpha|$, and add the contribution from~\eqref{additionalContributionp0Dirichlet}, to get the total bound on $|K_p(\tau, \theta)|$ for a polynomial bounded as in~\eqref{boundPolynomnialMultiSpikes}
\begin{align}
\left|K_p(\tau, \theta)\right| &\leq  2\sum_{\alpha\in S}\frac{390C_1 + 22}{n|\tau-\alpha|} + \frac{42C_1}{n|\tau-\alpha|} \log(\frac{n|\tau-\alpha|}{C_1}) + \frac{188}{n|\tau-\alpha|} + \frac{50 C_1}{n|\tau-\alpha|} \log^2(n|\tau-\alpha|) + \min(1, \frac{1}{n|\theta - \tau|}). \label{boundFarKp}
\end{align}
Combining this bound with~\eqref{boundNearKp}, for any $\tau$ and any polynomial $p$ satisfying
\begin{align}
p(e^{2\pi i \theta})&\leq \min\left(1, \sum_{\alpha\in S} \frac{C_1}{1+|\theta - \alpha|}\right),
\end{align}
\begin{align}
\left|K_p(\tau, \theta)\right|  &\leq 2\sum_{|\alpha - \tau|>\Delta}\frac{390C_1 + 22}{n|\tau-\alpha|} + \frac{42C_1}{n|\tau-\alpha|} \log(\frac{n|\tau-\alpha|}{C_1}) + \frac{188}{n|\tau-\alpha|} + \frac{50 C_1}{n|\tau-\alpha|} \log^2(n|\tau-\alpha|)\\
& + \min(1, \frac{1}{n|\theta - \tau|})+ \left(143 + 76\log(C_1)\right).
\end{align}

This concludes on the second part of lemma~\ref{lemma:boundK}.

\subsection{Proof of lemma~\ref{boundInteriorIntegralFSthetaReal}}
We will again use the following bounds on $\sin(\pi(t+s))$.
\begin{align}
2|s+t|\leq |\sin(\pi(s+t))|\leq \pi|s+t|\quad\mbox{whenever $-1/2\leq (s+t)\leq 1/2$}\label{boundSineSand}
\end{align}

We first write 
\begin{align}
|F_{\theta}(s)| &= \left|\int_{0}^{-\theta} e^{2\pi i (n+1)t} \sum_{k=0}^{n} e^{-i2\pi (s+t)k}dt\right|\\
& = \left|\int_{0}^{-\theta} e^{2\pi i (n+1)t} \frac{1 - e^{-i2\pi (n+1)(s+t)}}{1 -e^{-i2\pi (s+t)} }dt\right|\\
& = \left|\int_{0}^{-\theta} e^{2\pi i (n+1)t} \frac{e^{-i\pi(n+1)(s+t)}}{e^{-i\pi(s+t)}} \frac{e^{i\pi (n+1)(s+t)} - e^{-i\pi (n+1)(s+t)}}{e^{i\pi (s+t)} -e^{-i\pi (s+t)} }dt \right| \\
&= \left|\int_{0}^{-\theta} e^{2\pi i (n+1)t} e^{-i\pi n(s+t)} \frac{e^{i\pi (n+1)(s+t)} - e^{-i\pi (n+1)(s+t)}}{e^{i\pi (s+t)} -e^{-i\pi (s+t)} }dt \right| \\
&= \left|\int_{0}^{-\theta} e^{2\pi i (n+1)(t+s)} e^{-i\pi n(s+ t)} e^{-2\pi i (n+1)s} \frac{e^{i\pi (n+1)(s+t)} - e^{-i\pi (n+1)(s+t)}}{e^{i\pi (s+t)} -e^{-i\pi (s+t)} }dt \right| \\
& = \left|\int_{0}^{-\theta} e^{\pi i (n+2)(t+s)} e^{-\pi i (2n+2)s} \frac{e^{i\pi (n+1)(s+t)} - e^{-i\pi (n+1)(s+t)}}{e^{i\pi (s+t)} -e^{-i\pi (s+t)} }dt \right| \\
& = \left|\int_{0}^{-\theta} e^{\pi i (n+2)(t+s)} \frac{e^{i\pi (n+1)(s+t)} - e^{-i\pi (n+1)(s+t)}}{e^{i\pi (s+t)} -e^{-i\pi (s+t)} }dt \right| \label{moduleIntegralTotal}\\
& = \left|\int_{-\theta}^{0} H(t; s,\theta)\; dt\right|  
\end{align}
Where we define $H(t; s,\theta)$ as
\begin{align}
H(t; s,\theta)\equiv \frac{e^{i\pi (n+1)(s+t)} - e^{-i\pi (n+1)(s+t)}}{e^{i\pi (s+t)} -e^{-i\pi (s+t)} }e^{\pi i (n+2)(t+s)} 
\end{align}

Let us use $F_R$ and $F_I$ to denote the real and imaginary part of $F_\theta(s)$. We will successively bound those two parts from which we have 
\begin{align}
|F_{\theta}(s)| \leq \left|\text{Re}\int_{0}^{-\theta} H(t; s,\theta)\right| +  \left|\text{Im}\int_{0}^{-\theta} H(t; s,\theta)\right| \label{integralRealImaginaryDecomposition}
\end{align}
For this real part, note that we have 
\begin{align}
F_R = &\Rp \int_{0}^{-\theta} e^{\pi i (n+2)(t+s)} \frac{e^{i\pi (n+1)(s+t)} - e^{-i\pi (n+1)(s+t)}}{e^{i\pi (s+t)} -e^{-i\pi (s+t)} }dt\\
& = \int_{0}^{-\theta} \frac{\cos(\pi (n+2)(t+s))\sin(\pi (n+1)(s+t))}{\sin(\pi(s+t))}\;dt\\
& = \int_{0}^{-\theta} \frac{\sin((2n+3)\pi (t+s)) - \sin(\pi (t+s))}{2\sin(\pi(s+t))}\; dt\\
|F_R|& \leq \left|\int_{0}^{-\theta} \frac{\sin((2n+3)\pi (t+s)) }{2\sin(\pi(s+t))}\;dt\right| + \frac{|\theta|}{2}\label{bound1}
\end{align}

Recall that we use $F^+_{R,D_0},F^+_{R,D_1},F^+_{R,D_2},F^+_{R,D_3}$ and $F^+_{R,D_4}$ to denote the corresponding contributions to the real part of the integral~\eqref{integralFthetaDef}. 

We start with $D_0^+$. On this first subdomain, we have $s\leq \theta -\frac{1}{2n+3}$ so that the Dirichlet peak is located inside the interval $[-\theta,0]$. We are in the second configuration of Fig.~\ref{frameworkDirichlet}. We therefore need to treat separately the small interval around the peak.
\begin{align}
F_{R,D_0}^+ &\leq \left|\int_{0}^{-\theta} \frac{\sin((2n+3)\pi (t+s)) }{2\sin(\pi(s+t))}\;dt\right| \\
&\leq  \left|\int_{-\theta}^{-s-1/(2n+3)} \frac{\sin((2n+3)\pi (t+s))}{2\sin(\pi (t+s))}\; dt\right|+ \left|\int_{-s-1/(2n+3)}^{-s+1/(2n+3)} \frac{\sin((2n+3)\pi (s+t))}{2\sin(\pi (s+t))}\; dt\right| \\
&+ \left|\int_{-s+1/(2n+3)}^{0} \frac{\sin((2n+3)\pi (s+t))}{2\sin(\pi (s+t))}\; dt\right| + \frac{|\theta|}{2}\\
&\leq \left|\int_{-\theta}^{-s-1/(2n+3)} \frac{\sin((2n+3)\pi (t+s))}{2\sin(\pi (t+s))}\; dt\right|+ \left|\int_{-s-1/(2n+3)}^{-s+1/(2n+3)} \frac{(2n+3)\pi |s+t|}{4|s+t|}\; dt\right| \\
&+ \left|\int_{-s+1/(2n+3)}^{0} \frac{\sin((2n+3)\pi (s+t))}{2\sin(\pi (s+t))}\; dt\right| + \frac{|\theta|}{2}\\
&\leq \left|\int_{-\theta}^{-s-1/(2n+3)} \frac{\sin((2n+3)\pi (t+s))}{2\sin(\pi (t+s))}\; dt\right|\\
&+ \frac{\pi}{2} + \left|\int_{-s+1/(2n+3)}^{0} \frac{\sin((2n+3)\pi (s+t))}{2\sin(\pi (s+t))}\; dt\right|+\frac{|\theta|}{2}\label{integralTmpD0}
\end{align}
For the $[-\theta,-s-\frac{1}{2n+3}]$, and $[-s+\frac{1}{2n+3}, 0]$ intervals, one can integrate by part
\begin{align}
&\left|\int_{-\theta}^{-s-\frac{1}{2n+3}} \frac{\sin(2n+3)\pi(s+t)}{2\sin(\pi (s+t))}\; dt \right|\\
&\leq \left|\left|\frac{\cos((2n+3)\pi(t+s))}{2(2n+3)\pi \sin(\pi (s+t))}\right|_{-\theta}^{-s-\frac{1}{2n+3}}\right. \left.+ \left|\int_{-\theta}^{-s-\frac{1}{2n+3}} \frac{\cos((2n+3)\pi (t+s))\cos(\pi(s+t))}{2(2n+3) \sin^2(\pi (t+s))} \right|\right|\\
& \leq \left|- \frac{\cos((2n+3)\pi (s-\theta))}{2(2n+3)\pi \sin(\pi (s-\theta))} + \frac{\cos(\pi)}{2(2n+3)\pi \sin(-\pi /(2n+3))}\right|\\
& + \left|\int_{-\theta}^{-s-\frac{1}{2n+3}} \frac{\cos((2n+3)\pi (t+s))\cos(\pi(s+t))}{2(2n+3) \sin^2(\pi (t+s))} \right|\label{continuousExplanation}
\end{align}
From~\eqref{continuousExplanation} we can bound the modulus as 
\begin{align}
&\left|\int_{-\theta}^{-s-\frac{1}{2n+3}} \frac{\sin(2n+3)\pi(s+t)}{2\sin(\pi (s+t))}\; dt \right|\\
&\leq \left|- \frac{\cos((2n+3)\pi (s-\theta))}{2(2n+3)\pi \sin(\pi (s-\theta))} +\frac{1}{4\pi}\right|  + \left|\int_{-\theta}^{-s-\frac{1}{2n+3}} \frac{|\cos((2n+3)\pi (s+t))\cos(\pi (s+t))|}{2(2n+3)|\sin^2(\pi(s+t))|}\; dt\right|\\
& \leq  \left|- \frac{\cos((2n+3)\pi (s-\theta))}{2(2n+3)\pi \sin(\pi (s-\theta))} +\frac{1}{4\pi }\right|+ \left|\int_{-\theta}^{-s-\frac{1}{2n+3}} \frac{1}{8 (s+t)^2 (2n+3)}\; dt\right|\label{tmpD0LeftInterval}\\
& \leq  \left|- \frac{\cos((2n+3)\pi (s-\theta))}{2(2n+3)\pi \sin(\pi (s-\theta))} +\frac{1}{4\pi}\right| + \frac{1}{8}+ \frac{1}{8(\theta-s)(2n+3)}\\
& \leq  \frac{1}{4(2n+3)\pi |\theta-s|} +\frac{1}{4\pi}+\frac{1}{8} + \frac{1}{8(\theta-s)(2n+3)}\\
&\leq \frac{1}{4(2n+3)}\left((2n+3) +\frac{1}{(\theta-s)}\right)\left(\frac{1}{2}+ \frac{1}{\pi}\right)\\
& = c \left((2n+3)+ \frac{1}{\theta-s}\right) = \frac{c'}{2n+3}\left((2n+3)+ \frac{1}{\theta-s}\right)\label{bound1D0}
\end{align}
The third line~\eqref{tmpD0LeftInterval} follows from the lower bound on the sine as well as the fact that $|t+s|\leq 1/2$ on $D_0^+$. In the lines above we also defined $c$ as 
\begin{align}
c &\equiv  \frac{1}{(2n+3)}c' = \frac{1}{4(2n+3)}\left(\frac{1}{\pi}+ \frac{1}{2}\right)\label{definitionc}\\
c' & \equiv \frac{1}{4}\left(\frac{1}{2}+ \frac{1}{\pi}\right) \label{definitioncprime}.
\end{align}
A similar reasoning holds on the interval $[-s+\frac{1}{2n+3}, 0]$ and gives 
\begin{align}
\left|\int_{-s+\frac{1}{2n+3}}^0 \frac{\sin((2n+3)\pi(s+t))}{2\sin(\pi(s+t))} \; dt\right| & = \left|\frac{-\cos(\pi)}{(2n+3)2\pi \sin(\pi/(2n+3))} + \frac{\cos((2n+3)\pi s)}{(2n+3)2\pi \sin(\pi s)}\right|\\
&+ \left|\int_{-s+\frac{1}{2n+3}}^0 \frac{\cos((2n+3)\pi (s+t))\cos(\pi (s+t))}{2\sin^2(\pi (s+t))(2n+3)}\; dt\right|\\
& \leq \frac{1}{4\pi} + \frac{1}{(2n+3)4\pi s} + \frac{1}{8s(2n+3)} + \frac{1}{8}\\
&\leq \frac{1}{4(2n+3)}\left(\frac{1}{2} + \frac{1}{\pi}\right)\left((2n+3)+ \frac{1}{s}\right)\label{bound2D0}
\end{align}
Substituting~\eqref{bound1D0},~\eqref{bound2D0} into~\eqref{integralTmpD0}, the integral on $D_0^+$ is thus bounded as 
\begin{align}
F_{R, D_0}^+ &\leq \frac{\pi}{2} + \frac{1}{4(2n+3)}\left((2n+3) +\frac{1}{(\theta-s)}\right)\left(\frac{1}{2}+ \frac{1}{\pi}\right) +\frac{1}{4(2n+3)}\left(\frac{1}{2} + \frac{1}{\pi}\right)\left((2n+3)+ \frac{1}{s}\right) + \frac{\theta}{2}\\
&\leq \frac{\pi}{2} + \frac{1}{4(2n+3)}\left(\frac{1}{2}+\frac{1}{\pi}\right)\left(2(2n+3)+ \frac{1}{s} + \frac{1}{\theta-s}\right) + \frac{\theta}{2}
\end{align}
On $D_1^+$, the integral reduces to integrating the Dirichlet peak on an interval of length $\mathcal{O}(|\theta|)$. We are in the last configuration in Fig.~\ref{frameworkDirichlet}.
\begin{align}
F_{R,D_1}^+ \leq \left|\int_{-\theta}^0 \frac{\sin((2n+3)\pi (s+t))}{2\sin(\pi(s+t))}\right| \leq \frac{\pi(2n+3)\theta}{4} + \frac{\theta}{2}
\end{align}
On $D_2^+$, we need to remove the small part of the $[-\theta,0]$ interval that contains the Dirichlet peak. We are in the first configuration of Fig.~\ref{frameworkDirichlet}. This gives 
\begin{align}
F_{R,D_2}^+ &\leq \left|\int_{-\theta}^{-\theta+\frac{2}{2n+3}} \frac{\sin((2n+3)\pi(s+t))}{2\sin(\pi(s+t))}\; dt\right|\\
& + \left|\int_{-\theta+\frac{2}{2n+3}}^0 \frac{\sin((2n+3)\pi (s+t))}{2\sin(\pi (s+t))}\; dt\right| + \frac{\theta}{2}\\
& \leq \left|\frac{(2n+3)\pi}{4}\right|_{-\theta}^{-\theta+\frac{2}{2n+3}} + \left|\frac{\cos((2n+3)\pi (s+t))}{(2n+3)\pi 2\sin(\pi (s+t))}\right|_{-\theta+\frac{2}{2n+3}}^0\\
& + \left|\int_{-\theta+\frac{2}{2n+3}}^0 \frac{\cos((2n+3)\pi (s+t))\cos(\pi (s+t))}{(2n+3)2\sin^2(\pi (s+t))}\; dt\right| + \frac{\theta}{2}\\
&\leq \frac{\pi}{2} + \left(\frac{1}{s} + \frac{1}{\left|s-\theta + \frac{2}{2n+3}\right|}\right)\frac{1}{(2n+3)4}\left(\frac{1}{2}+\frac{1}{\pi}\right) + \frac{\theta}{2}
\end{align}
On $D_3^+$, we are in third framework from Fig.~\ref{frameworkDirichlet}. We use a similar reasoning and treat separately the subinterval $[-s-\frac{1}{2n+3}, 0]$ on which the peak is located, thus getting 
\begin{align}
F_{R,D_3}^+ = & \left|\int_{-\theta}^{-s-\frac{1}{2n+3}} \frac{\sin((2n+3)\pi (s+t))}{2\sin(\pi (s+t))}\; dt + \int_{-s-\frac{1}{2n+3}}^0 \frac{\sin((2n+3)\pi (t+s))}{2\sin(\pi(s+t))}\; dt\right| + \frac{\theta}{2}\\
&\leq \frac{(2n+3)\pi}{4}\left(s+\frac{1}{2n+3}\right) + \left|\frac{\cos((2n+3)\pi (s+t))}{(2n+3)2\pi \sin(\pi (s+t)) }\right|_{-\theta}^{-s-\frac{1}{2n+3}}\\
&+ \left|\int_{-\theta}^{-s-\frac{1}{2n+3}} \frac{\cos((2n+3)\pi (s+t))\cos(\pi (s+t))}{(2n+3)2\sin^2(\pi (s+t))}\; dt \right| + \frac{\theta}{2}\\
& \leq \frac{(2n+3)\pi}{4}\left(s+\frac{1}{2n+3}\right) + \left(\frac{1}{2} + \frac{1}{\pi}\right) \frac{1}{4(2n+3)}\left((2n+3) + \frac{1}{|s-\theta|}\right) + \frac{\theta}{2}
\end{align}
This bound is well defined provided that $|s-\theta|>1/(2n+3)$ which always holds on $D_3^+$. Finally, on $D_4^+$, the peak is located outside the interval and we can simply integrate on the whole $[-\theta,0]$ interval. The bound however requires a little more calculations in order to ensure that it can compensate for the prefactor in~\eqref{eq:dev_int_p+}. I.e on the other subdomains, the bound does not have to exhibit an explicit dependence in $\theta$ as the domains themselves have a vanishing measure when $\theta\rightarrow 0$. $D_4^+$ always has, however, a non zero area, including when $\theta\rightarrow 0$ and we must thus derive an explicit bound in $\theta$ to avoid a blow-up when multiplying by the prefactor in~\eqref{eq:dev_int_p+}.
\begin{align}
&F_{R,D_4}^+ = \left|\int_{-\theta}^0 \frac{\sin((2n+3)\pi (t+s))}{2\sin(\pi (t+s))}\; dt\right|  + \frac{\theta}{2} \\
&\leq \left|\left|\frac{\cos((2n+3)\pi (t+s))}{2(2n+3)\pi\sin(\pi (t+s))}\right|_{-\theta}^0\right| +\left| \int_{-\theta}^0 \frac{\cos((2n+3)\pi (t+s))\cos(\pi(s+t))}{2(2n+3)\sin^2(\pi (t+s))}\; dt\right|+ \frac{\theta}{2}\label{D4term2a}\\
\end{align}

\begin{align}
&F_{R,D_4}^+ = \left|\int_{-\theta}^0 \frac{\sin((2n+3)\pi (t+s))}{2\sin(\pi (t+s))}\; dt\right| + \frac{\theta}{2}  \\
&\leq \left| \left|\frac{\cos((2n+3)\pi (t+s))}{2(2n+3)\pi\sin(\pi (t+s))}\right|_{-\theta}^0\right|+\frac{\theta}{2}
+\left| \int_{-\theta}^0 \frac{\cos((2n+3)\pi (t+s))\cos(\pi(s+t))}{2(2n+3)\sin^2(\pi (t+s))}\; dt\right|\\
& \leq\left| \frac{\cos((2n+3)\pi s) \sin(\pi (s-\theta))  - \sin(\pi s)\cos((2n+3)\pi (s-\theta))}{2(2n+3)\sin(\pi (s-\theta))\sin(\pi s)}\right| + \frac{\theta}{2}\label{D4term1}\\
& +\left| \int_{-\theta}^0 \frac{\cos((2n+3)\pi (t+s))\cos(\pi(s+t))}{2(2n+3)\sin^2(\pi (t+s))}\; dt\right|\label{D4term2}
\end{align}
Developing the numerator in~\eqref{D4term1}, we get 
\begin{align}
&\cos((2n+3)\pi s)\sin(\pi (s-\theta)) - \sin(\pi s)\cos((2n+3)\pi (s-\theta))\label{step1DevelopmentSmalltheta} \\
=&  \cos((2n+3)\pi s )\left[\sin(\pi s)\cos(\pi \theta) - \sin(\pi \theta)\cos(\pi s)\right]\\
& - \sin(\pi s)\left[\cos((2n+3)\pi s)\cos(\pi (2n+3)\theta) + \sin((2n+3)\pi s)\sin((2n+3)\pi \theta)\right]\\
=&  -\cos((2n+3)\pi s ) \sin(\pi \theta)\cos(\pi s)\\
& - \sin(\pi s)\sin((2n+3)\pi s)\sin((2n+3)\pi \theta)\\
&-\sin(\pi s)\cos((2n+3)\pi s) \left[\cos((2n+2)\pi \theta)\cos(\pi \theta) - \sin((2n+2)\pi \theta)\sin(\pi \theta)\right]\\
& +\cos((2n+3)\pi s)\sin(\pi s)\cos(\pi \theta) \\
& =  -\cos((2n+3)\pi s ) \sin(\pi \theta)\cos(\pi s)\label{finalTrigonometricCooking1}\\
& - \sin(\pi s)\sin((2n+3)\pi s)\sin((2n+3)\pi \theta)\\
& + \sin(\pi s)\cos((2n+3)\pi s)\sin((2n+2)\pi \theta)\sin(\pi \theta)\\
& +\cos((2n+3)\pi s) \sin(\pi s)\cos(\pi \theta)(1 - \cos((2n+2)\pi \theta))\label{finalTrigonometricCookingend}
\end{align}
Now taking the modulus and using $|\sin(x)|\leq |x|$ as well as $1-\cos(x) = 2\sin^2(x/2)$, and $\cos((2n+3)\pi s) \leq (2n+3)\pi |s|$ from $1/(2n+3)\leq s\leq 1/2$, the numerator in~\eqref{D4term1} can be bounded as 
\begin{align}
&\left|\cos((2n+3)\pi s)\sin(\pi (s-\theta)) - \sin(\pi s)\cos((2n+3)\pi (s-\theta))\right| \\
& \leq (2n+3)\pi^2 s \theta  + (2n+3)\pi^2 s \theta  + \pi^2 s \theta + \pi^2 s\theta (n+1)\\
&\leq \pi^2 s \theta (6n+9)\label{stepEndDevelopmentSmalltheta} 
\end{align}
Substituting this bound back into~\eqref{D4term1}, noting that both $|s|$ as well as $|\theta-s|$ are less than $1/2$ we get 
\begin{align}
\left|\left|\frac{\cos((2n+3)\pi (t+s))}{2(2n+3)\pi\sin(\pi (t+s))}\right|_{-\theta}^0 \right|\leq \frac{\pi^2 s \theta (6n+9)}{2(2n+3)\pi^2 s (s-\theta)} \leq \frac{3}{2}{\color{green}\pi/2}\frac{\theta}{s-\theta}\label{refinedBoundSmallthetaFirstTerm}
\end{align}
For the second term in~\eqref{D4term2a}, we can more simply write 
\begin{align}
\left|\int_{-\theta}^0 \frac{\cos((2n+3)\pi (t+s))\cos(\pi(s+t))}{2(2n+3)\sin^2(\pi (t+s))}\; dt\right|&\leq \left|\int_{-\theta}^0 \frac{1}{8(2n+3)(t+s)^2}\; dt\right|\\
&\leq\left|\frac{1}{8(2n+3)(s)} - \frac{1}{8(2n+3)(s-\theta)}\right|\\
&\leq \frac{\theta}{8(2n+3)s|s-\theta|}
\end{align}

%
%
%

The total bound on $D_4^+$ is thus given by 


\begin{align}
F_{R,D_4}^+ \leq \frac{3}{2}\frac{\theta}{\theta-s} + \frac{\theta}{8(2n+3)s|s-\theta|} + \frac{\theta}{2} \label{D4term2}
\end{align}

For large values of $\theta$, we will use the bound

\begin{align}
&F_{R,D_4}^+ = \left|\int_{-\theta}^0 \frac{\sin((2n+3)\pi (t+s))}{2\sin(\pi (t+s))}\; dt\right| + \frac{\theta}{2}  \\
&\leq \left|\left|\frac{\cos((2n+3)\pi (t+s))}{2(2n+3)\pi\sin(\pi (t+s))}\right|_{-\theta}^0\right| +\left| \int_{-\theta}^0 \frac{\cos((2n+3)\pi (t+s))\cos(\pi(s+t))}{2(2n+3)\sin^2(\pi (t+s))}\; dt\right|+ \frac{\theta}{2}\\
&\leq \frac{\theta}{2} + \left(\frac{1}{2} + \frac{1}{\pi}\right)\frac{1}{4(2n+3)} \left(\frac{1}{s} + \frac{1}{s-\theta}\right)\label{largeThetaBoundD4+}
\end{align}



\begin{figure}\centering
\hspace{0.1cm}\input{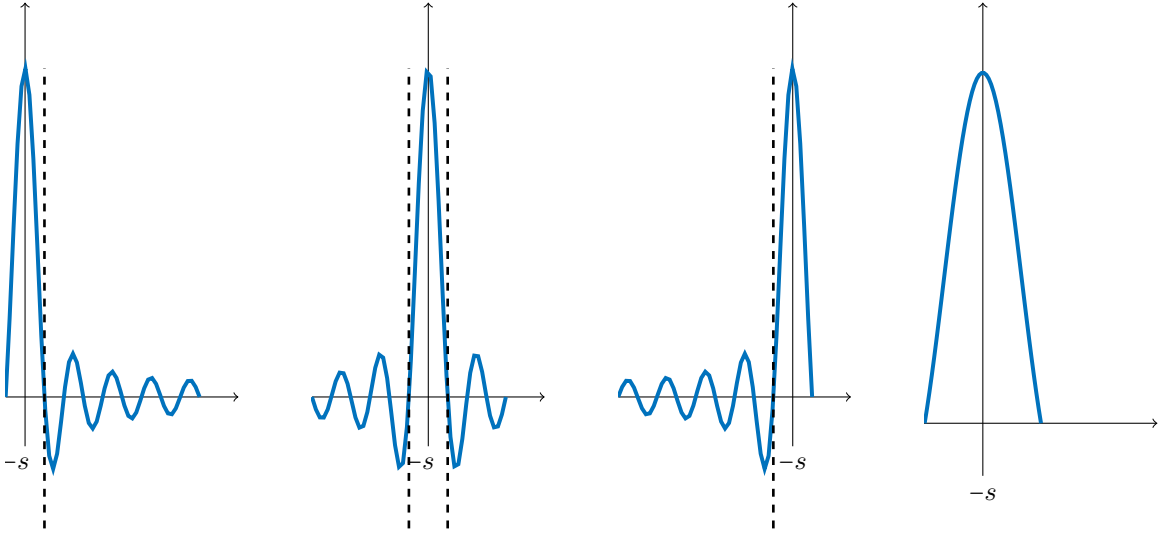}
\caption{\label{frameworkDirichlet}The four frameworks that we consider to control the integral~\eqref{moduleIntegralTotal}. From Left to Right, $|-\theta+s| < 1/(2n+3)$ with $|\theta|\geq 1/(2n+3)$, $-\theta + 1/(2n+3)\leq -s\leq -1/(2n+3)$, $-1/(2n+3)\leq -s$ and $\theta \leq 1/(2n+3)$. Those four frameworks respectively correspond to the subdomains $D_2^+$, $D_0^+$, $D_3^+$ and $D_1^+$ in Fig.~\ref{domainDecomposition1}}\label{totalBoundconfigurationTwo}
\end{figure}



In $D_0^-$, the peak is located sufficiently far from the interval $[-\theta,0]$ so that we can simply integrate over this whole interval. To get a bound proportional to $\theta$, we use the symmetry of the kernel, together with a reasoning similar to the one we used for $D_4^+$, 
\begin{align}
F_{R,D_0}^- &= \left| \int_{-\theta}^0 \frac{\sin((2n+3)\pi (s+t))}{2\sin(\pi (s+t))}\; dt\right| + \frac{\theta}{2}\\
&\leq \left|\left|\frac{\cos((2n+3)\pi (s+t))}{2(2n+3)\pi \sin(\pi (s+t))}\right|_{-\theta}^0 - \int_{-\theta}^0 \frac{\cos((2n+3)\pi (s+t))\cos(\pi (s+t))}{2(2n+3)\sin^2(\pi (s+t))} \; dt\right| + \frac{\theta}{2}\\
&\leq \left|\left|\frac{1}{2\pi (2n+3)2|s+t|}\right|_{-\theta}^0 + \left|\frac{1}{8(2n+3)(s+t)}\right|_{-\theta}^0\right| + \frac{\theta}{2}\\
&\leq \frac{1}{4(2n+3)} \left(\frac{1}{\pi}+\frac{1}{2}\right) \left[\frac{1}{|s|} + \frac{1}{|s-\theta|}\right] + \frac{\theta}{2}\\
&\leq \frac{1}{4(2n+3)} \left(\frac{1}{\pi} + \frac{1}{2}\right)\left[\frac{1}{-s}+\frac{1}{\theta-s}\right]+ \frac{\theta}{2}\label{boundFD0minusReal}
\end{align}
Again when $\theta$ is smaller than $\frac{1}{2n+3}$ we will need a tighted bound. In this case, we therefore apply the same reasoning as in~\eqref{step1DevelopmentSmalltheta} to~\eqref{stepEndDevelopmentSmalltheta} which gives  

\begin{align}
F_{R,D_0}^- &\leq \frac{3}{2} \frac{\theta}{\theta-s} + \left(\frac{1}{8(2n+3)s} - \frac{1}{8(2n+3)(s-\theta)}\right) + \frac{\theta}{2}\\
& = \frac{3}{2} \frac{\theta}{\theta-s}  + \frac{\theta}{8(2n+3)s(s-\theta)} + \frac{\theta}{2}\label{boundNegativeDomainSmallTheta}
\end{align}

On $D_1^-$, we remove the sub-interval located near $0$, which we integrate separately 
\begin{align}
F_{R,D_1}^- &\leq \left|\int_{-\theta}^{-\frac{1}{2n+3}} \frac{\sin((2n+3)\pi (s+t))}{2\sin(\pi (s+t))}\; dt\right| + \left|\int_{-\frac{1}{2n+3}}^0 \frac{\sin((2n+3)\pi (s+t))}{2\sin(\pi (s+t))}\; dt\right| + \frac{\theta}{2}\\
&\leq \frac{\pi}{4} + \left|\frac{\cos((2n+3)\pi (s+t))}{2\pi (2n+3)\sin(\pi (s+t))}\right|_{-\theta}^{-\frac{1}{2n+3}}  + \int_{-\theta}^{-\frac{1}{2n+3}} \frac{\cos((2n+3)\pi (s+t))\cos(\pi (s+t))}{(2n+3)\sin^2(\pi (s+t))}\; dt + \frac{\theta}{2}\\
&\leq \frac{\pi}{4} + \frac{1}{4(2n+3)}\left(\frac{1}{|s-\frac{1}{2n+3}|} + \frac{1}{|\theta-s|}\right)\left(\frac{1}{\pi} + \frac{1}{2}\right) + \frac{\theta}{2}\\
&\leq \frac{\pi}{4} + \frac{1}{4(2n+3)}\left(\frac{1}{-s+\frac{1}{2n+3}} + \frac{1}{(\theta-s)}\right)\left(\frac{1}{\pi} + \frac{1}{2}\right) + \frac{\theta}{2}
\end{align}
We use a specific bound on $D_2^-$ to be able to compensate for the denominator in~\eqref{eq:dev_int_p+},
\begin{align}
F_{R,D_2}^-  = \left|\int_{-\theta}^0 \frac{\sin((2n+3)\pi(s+t))}{2\sin(\pi(s+t))}\; dt\right| + \frac{\theta}{2}\leq \frac{(2n+3)\pi \theta}{4} + \frac{\theta}{2}
\end{align}
On $D_3^-$, we split the integral between a first subdomain on which $t+s <-1/2$ and a second subdomain on which $t+s>-1/2$. On the first subdomain, in order to apply the bounds~\eqref{boundSineSand}, we must first replace the sines using $|\sin(x)| = |\sin(\pi +x)|$, to make sure that the angle lies in the range $[-\frac{\pi}{2}, \frac{\pi}{2}]$ and then apply the bound. We write 
\begin{align}
F_{R,D_3}^- & = \left|\int_{-\theta}^{-\frac{1}{2} - s} \frac{\sin((2n+3)\pi(s+t))}{2\sin(\pi (s+t))}\; dt\right| + \left| \int_{-\frac{1}{2}-s}^0 \frac{\sin((2n+3)\pi(s+t))}{2\sin(\pi (s+t))}\; dt\right| + \frac{\theta}{2}\\
&\leq  \left|\frac{\cos((2n+3)\pi(s+t))}{2\pi (2n+3)\sin(\pi (s+t))}\right|_{-\theta}^{-1/2-s} + \left|\int_{-\theta}^{-1/2-s} \frac{\cos((2n+3)\pi (s+t))\cos(\pi(s+t))}{(2n+3)2\sin^2(\pi (s+t))}\; dt\right|\label{partBeforeMinus1/2} \\
&+ \left|\frac{\cos((2n+3)\pi(s+t))}{2\pi (2n+3)\sin(\pi (s+t))}\right|_{-1/2-s}^{0} + \left|\int_{-1/2-s}^{0} \frac{\cos((2n+3)\pi (s+t))\cos(\pi (s+t))}{(2n+3)2\sin^2(\pi (s+t))}\; dt\right|+\frac{\theta}{2}\label{partAfterMinus1/2}
\end{align}
For~\eqref{partAfterMinus1/2}, we thus use $|\sin(\pi t)|\geq 2t$, for~\eqref{partBeforeMinus1/2} we use $|\sin(\pi t)| = |\sin(\pi + \pi t)|\geq 2|1+(t+s)|$. This gives 
\begin{align}
F_{R,D_3}^- & \leq \left|\frac{1}{\pi 4(2n+3)\pi |1+(s+t)|}\right|_{-\theta}^{-\frac{1}{2}-s} + \left|\frac{1}{(2n+3)8|1+(s+t)|}\right|_{-\theta}^{-\frac{1}{2}-s}\\
& +\left|\frac{1}{4\pi (2n+3)|s+t|}\right|_{-\frac{1}{2}-s}^0 + \left|\frac{1}{8(2n+3)|s+t|}\right|_{-\frac{1}{2}-s}^0 + \frac{\theta}{2} \\
&\leq \frac{1}{4(2n+3)}\left(\frac{1}{\pi}+\frac{1}{2}\right)\left(2 + \frac{1}{1+s-\theta}\right)\label{FD3minusa}\\
&+ \frac{1}{4(2n+3)} \left(\frac{1}{\pi}+\frac{1}{2}\right) \left(\frac{1}{-s} + 2\right) + \frac{\theta}{2}\label{FD3minusb}
\end{align}

Note that on $D_3^-$, we don't need to seprately deal with the "small $\theta$" case as the bound on the integral are given by $-1/2$ and $-1/2+\theta$. Hence, even a constant integrand would give $\int_{D_3^-}1\; ds \leq O(\theta)$.

Finally we need to treat the remaining two triangles $D_{4}^-$ and $D_{5}^-$. As we work on the torus, $D_5^-$ is equivalent to having the peak right on the left of $-\theta$ (i.e. $-\theta+s\approx 1$) and we must therefore treat separately the integral on the sub-interval $[-\theta, -\theta+\frac{1}{2n+3}]$ and the contribution on the interval $[-\theta+\frac{1}{2n+3}, 0]$. 

On $D_4^-$, the reasoning is the same as for $D_{1}^-$, except we need to split the integral between a first contribution on which $s+t<-1/2$ and a second contribution on which $s+t>-1/2$. For those two subdomains, we thus write 

\begin{align}
F_{R,D_5}^-  &= \left|\int_{-\theta}^{0} \frac{\sin((2n+3)\pi (s+t))}{2\sin(\pi (s+t))}\; dt\right| + \frac{\theta}{2} \\
&\leq \left|\int_{-\theta}^{-\theta+\frac{1}{2n+3}} \frac{\sin((2n+3)\pi (t+s))}{2\sin(\pi (s+t))}\; dt\right| + \left|\int_{-\theta+\frac{1}{2n+3}}^{-\frac{1}{2}- s} \frac{\sin((2n+3)\pi (t+s))}{2\sin(\pi (t+s))}\; dt\right| \\
& + \left|\int_{-\frac{1}{2}-s}^0 \frac{\sin((2n+3)\pi (t+s))}{2\sin(\pi (t+s))}\; dt\right|+ \frac{\theta}{2}\\
&\leq \frac{\pi}{4} + \left|\frac{\cos((2n+3)\pi (t+s))}{2\pi (2n+3)\sin(t+s)}\; dt\right|_{-\theta+\frac{1}{2n+3}}^{-1/2-s} +\left| \int_{-\theta+\frac{1}{2n+3}}^{-1/2-s} \frac{\cos((2n+3)\pi (t+s))\cos(\pi (s+t))}{2(2n+3)\sin^2(s+t)}\; dt\right|\label{boundPeakD5}\\
& + \left|\frac{\cos((2n+3)\pi (t+s))}{2(2n+3)\pi \sin(\pi (t+s))}\right|_{-1/2-s}^0 + \left|\int_{-1/2-s}^0 \frac{\cos((2n+3)\pi(t+s))\cos(\pi (t+s))}{2(2n+3)\sin^2(\pi(t+s))}\; dt\right|+\frac{\theta}{2}\\
&\leq \frac{\pi}{4} + \frac{1}{4(2n+3)} \left(\frac{1}{\pi} + \frac{1}{2}\right)\left(2 + \frac{1}{1+s-\theta+\frac{1}{2n+3}}\right)+ \frac{1}{4(2n+3)} \left(\frac{1}{\pi}+ \frac{1}{2}\right)\left(2+ \frac{1}{-s}\right) + \frac{\theta}{2}\label{boundRealPartF5minus}
\end{align}
In~\eqref{boundPeakD5} we use $|\sin((2n+3)\pi (t+s))|\leq (2n+3)\pi (1+t+s)$. 

On $D_4^-$,  the idea is similar and we split the integral as
\begin{align}
F_{R,D_4}^- &= \left|\int_{-\theta}^0 \frac{\sin((2n+3)\pi (s+t))}{2\sin(\pi (s+t))}\; dt\right| + \frac{\theta}{2}\\
&\leq \left|\int_{-\theta}^{-s-\frac{1}{2}} \frac{\sin((2n+3)\pi (s+t))}{2\sin(\pi (s+t))}\right|_{-\theta}^{-s-\frac{1}{2}} + \left|\int_{-s-\frac{1}{2}}^{-\frac{1}{2n+3}} \frac{\sin((2n+3)\pi (s+t))}{2\sin(\pi (s+t))}\; dt\right|\\
& + \left|\int_{-\frac{1}{2n+3}}^{0} \frac{\sin((2n+3)\pi (s+t))}{2\sin(\pi(s+t))}\; dt\right|+\frac{\theta}{2}\\
&\leq \left|\left| -\frac{\cos((2n+3)\pi (s+t))}{(2n+3)\pi 2\sin(\pi (s+t))}\right|_{-\theta}^{-s-\frac{1}{2}} - \frac{1}{2}\int_{-\theta}^{-s-\frac{1}{2}} \frac{\cos((2n+3)\pi (s+t))\cos(\pi (s+t))}{(2n+3)\sin^2(\pi (s+t))}\right|+\frac{\theta}{2}\\
& +\left|\left|-\frac{\cos((2n+3)\pi (s+t))}{(2n+3)2\pi \sin(\pi (s+t))}\right|_{-s-\frac{1}{2}}^{-\frac{1}{2n+3}} -\int_{-s-\frac{1}{2}}^{-\frac{1}{2n+3}} \frac{\cos((2n+3)\pi (s+t))\cos(\pi (s+t))}{(2n+3)2\sin^2(\pi (s+t))}\; dt\right| \\
& + \frac{\pi}{4}+ \frac{\theta}{2}\\
&\leq \frac{1}{4(2n+3)}\left(\frac{1}{\pi} + \frac{1}{2}\right)\left(2 + \frac{1}{1+(s-\theta)}\right) + \frac{\pi}{4} + \frac{1}{4(2n+3)} \left(\frac{1}{\pi} + \frac{1}{2}\right)\left(\frac{1}{\frac{1}{2n+3}-s} + 2\right)+\frac{\theta}{2}\label{boundD4minusImag}
%
%
%
\end{align}

\subsection{Proof of lemma~\ref{boundInteriorIntegralFSthetaImaginary}}


We now bound the imaginary part~\eqref{moduleIntegralTotal} (i.e the second term in~\eqref{integralRealImaginaryDecomposition}). This imaginary part reads as 
\begin{align}
\text{Im} F_{s}(\theta) = \int_{-\theta}^0 \frac{\sin((n+2)(t+s))\sin((n+1)\pi (s+t))}{\sin(\pi (s+t))}\; dt
\end{align}
and the integrand which is shown in Fig.~\ref{integrandImagPart1} exhibits odd symmetry. We can thus always remove the interval around $-s$ where the denominator vanishes except when the zero of the denominator is located on the border of the $[-\theta,0]$ interval. 

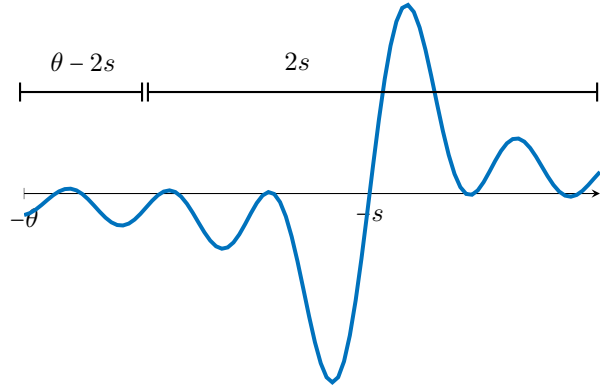
\begin{figure}\centering
%
%
\definecolor{mycolor1}{rgb}{0.00000,0.44700,0.74100}%
\begin{tikzpicture}

\begin{axis}[%
axis lines=left, xtick=\empty, ytick=\empty,
axis x line = middle,
axis y line = none,
width=6.028in,
height=4.754in,
at={(1.011in,0.642in)},
scale only axis,
xmin=-0.5,
xmax=0,
ymin=-10,
ymax=10,
axis background/.style={fill=white},
scale = .5,
xtick={-0.5,-0.2},
xticklabels={$-\theta$,$-s$}]
\addplot [color=mycolor1, forget plot, line width=1.5pt]
  table[row sep=crcr]{%
-0.5	-0.951056516295152\\
-0.494949494949495	-0.862124131472331\\
-0.48989898989899	-0.706303325079882\\
-0.484848484848485	-0.504239936309846\\
-0.47979797979798	-0.283215466070806\\
-0.474747474747475	-0.0737491585197326\\
-0.46969696969697	0.0943046026673774\\
-0.464646464646465	0.195672427771345\\
-0.45959595959596	0.21308684987934\\
-0.454545454545455	0.139765701859675\\
-0.44949494949495	-0.0193989824181942\\
-0.444444444444444	-0.248131471225891\\
-0.439393939393939	-0.520547198075322\\
-0.434343434343434	-0.804227518579083\\
-0.429292929292929	-1.06423876048722\\
-0.424242424242424	-1.26760913035262\\
-0.419191919191919	-1.38770401005518\\
-0.414141414141414	-1.40793567670496\\
-0.409090909090909	-1.32430996391131\\
-0.404040404040404	-1.14644353402997\\
-0.398989898989899	-0.896866732532385\\
-0.393939393939394	-0.60863732162867\\
-0.388888888888889	-0.321504446441852\\
-0.383838383838384	-0.0770535161591636\\
-0.378787878787879	0.0865931182485866\\
-0.373737373737374	0.139867046517389\\
-0.368686868686869	0.0659839952957157\\
-0.363636363636364	-0.136257847021278\\
-0.358585858585859	-0.451528092651716\\
-0.353535353535354	-0.849068242886721\\
-0.348484848484849	-1.28581016221702\\
-0.343434343434343	-1.71122015457523\\
-0.338383838383838	-2.07335581928893\\
-0.333333333333333	-2.32545197313814\\
-0.328282828282828	-2.43225651541669\\
-0.323232323232323	-2.37533120160388\\
-0.318181818181818	-2.15661969128787\\
-0.313131313131313	-1.79975883645151\\
-0.308080808080808	-1.34885149571217\\
-0.303030303030303	-0.864704134497587\\
-0.297979797979798	-0.418828289678295\\
-0.292929292929293	-0.0857776538038256\\
-0.287878787878788	0.0653901904154333\\
-0.282828282828283	-0.020396770224185\\
-0.277777777777778	-0.376743436590212\\
-0.272727272727273	-1.01020085298427\\
-0.267676767676768	-1.89728676530458\\
-0.262626262626263	-2.98456622423803\\
-0.257575757575758	-4.19192665324895\\
-0.252525252525252	-5.41883894000637\\
-0.247474747474747	-6.55306621295504\\
-0.242424242424242	-7.48100183652719\\
-0.237373737373737	-8.09861974716759\\
-0.232323232323232	-8.32192698492381\\
-0.227272727272727	-8.0958323102165\\
-0.222222222222222	-7.40048551443066\\
-0.217171717171717	-6.25438595667471\\
-0.212121212121212	-4.71388100692077\\
-0.207070707070707	-2.86904162917515\\
-0.202020202020202	-0.836274314232606\\
-0.196969696969697	1.25163390214571\\
-0.191919191919192	3.25707388596119\\
-0.186868686868687	5.04964579319545\\
-0.181818181818182	6.51748353562192\\
-0.176767676767677	7.57676620109598\\
-0.171717171717172	8.17849913529064\\
-0.166666666666667	8.31193086803655\\
-0.161616161616162	8.00431956672304\\
-0.156565656565657	7.31713681598484\\
-0.151515151515151	6.33915720771173\\
-0.146464646464646	5.17719119869634\\
-0.141414141414141	3.94544363718286\\
-0.136363636363636	2.75459841920456\\
-0.131313131313131	1.70172994007524\\
-0.126262626262626	0.862026346614488\\
-0.121212121212121	0.283092647653609\\
-0.116161616161616	-0.0176911100991568\\
-0.111111111111111	-0.0526169116614754\\
-0.106060606060606	0.139897699357913\\
-0.101010101010101	0.501339273678664\\
-0.095959595959596	0.960856383283907\\
-0.0909090909090909	1.44383598665082\\
-0.0858585858585859	1.88033759122603\\
-0.0808080808080808	2.21248049819495\\
-0.0757575757575757	2.40003273029356\\
-0.0707070707070707	2.42368096658663\\
-0.0656565656565656	2.28574080572518\\
-0.0606060606060606	2.00836303357722\\
-0.0555555555555556	1.62957042336981\\
-0.0505050505050505	1.19769001761075\\
-0.0454545454545455	0.7649032863824\\
-0.0404040404040404	0.380705513481915\\
-0.0353535353535354	0.0860413011138937\\
-0.0303030303030303	-0.0912288433571098\\
-0.0252525252525252	-0.13905940406763\\
-0.0202020202020202	-0.0619082484354972\\
-0.0151515151515151	0.120606661355202\\
-0.0101010101010101	0.376831197484868\\
-0.00505050505050503	0.667526820411641\\
0	0.951056516295153\\
};
\end{axis}
\draw [black, thick, |-|] (4.2, 6) -- (10.2, 6);
\draw [black, thick, |-|] (2.5, 6) -- (4.15, 6);
\node at (3.35, 6.4) {$\theta-2s$};
\node at (6.2, 6.4) {$2s$};
\end{tikzpicture}%
\caption{\label{integrandImagPart1}Imaginary part of the integral~\eqref{integralFthetaDef}. The interval $[-\theta,0]$ is divided into the sub-interval $[-2s,0]$ on which the function is even, and on which the integral thus vanishes, and the remaining interval $[-\theta,-2s]$ which is bounded below using finite order expansions. }
\end{figure}

We keep the same domain decomposition as for the real part (see Fig.~\ref{domainDecomposition1}), up to to ligth changes (we might for example replace the boundaries in $1/(2n+3)$ with boundaries at $1/n$ as the period of the numerator is not $1/(2n+3)$ anymore). We start with $D_1^+$. Just as before, on that subdomain, we integrate by using upper and lower bounds on the sines at the numerator and denominator 
\begin{align}
|F_{D_1}^+|\leq \left|\int_{-\theta}^0 \frac{\sin((n+1)\pi (s+t))\sin((n+2)\pi (s+t))}{\sin(\pi (s+t))}\; dt\right| \leq \frac{(n+1)\pi}{2}\theta \label{boundImaginaryFD1plus}
\end{align}
On $D_3^+$ and $D_2^+$, we remove the subinterval located near the zero of the denominator and then integrate over the remaining interval. We use a reasoning similar to the one we used for the real part and decompose the integral as 
\begin{align}
&\int_{a}^b \frac{\sin((n+1)\pi (s+t))\sin(\pi (n+2)(s+t))}{\sin(\pi (s+t))}\; dt \\
&= \int_{a}^b -\frac{\cos((2n+3)\pi(t+s))}{2\sin(\pi (s+t))}\; dt + \int_{a}^b \frac{\cos(\pi (s+t))}{2\sin(\pi (s+t))}\; dt\\
& \leq  \left|\frac{\sin((2n+3)\pi (s+t))}{4\pi (2n+3)\sin(\pi (s+t))}\right|_{a}^b + \int_{a}^b \frac{\sin((2n+3)\pi (s+t))\cos(\pi (s+t))}{2(2n+3)\sin^2(\pi (s+t))}\; dt \\
&+ \int_{a}^b \frac{\cos(\pi (s+t))}{2\sin(\pi (s+t))}\; dt\\
& \leq \left(\frac{\sin((2n+3)\pi (s+b))}{2\pi (2n+3)\sin(\pi (s+b))} - \frac{\sin((2n+3)\pi (s+a))}{4\pi (2n+3)\sin(\pi (s+a))}\right) + \int_{a}^b \frac{\sin((2n+3)\pi (s+t))\cos(\pi (s+t))}{2(2n+3)\sin^2(\pi (s+t))}\; dt \\
&+ \int_{a}^b \frac{\cos(\pi (s+t))}{2\sin(\pi (s+t))}\; dt\\
&\leq \frac{1}{4\pi(2n+3)|s+b| } + \frac{1}{4\pi(2n+3) |s+a|} + \left|\int_{a}^b \frac{1}{8(2n+3)(s+t)^2}\; dt\right|\\
&+ \int_{a}^b \frac{\cos(\pi (s+t))}{2\sin(\pi (s+t))}\; dt\\
&\leq \frac{1}{4(2n+3)}\left(\frac{1}{\pi}+\frac{1}{2}\right)\left(\frac{1}{|s+b|}+ \frac{1}{|s+a|}\right)+ \int_{a}^b \frac{\cos(\pi (s+t))}{2\sin(\pi (s+t))}\; dt\\
\end{align}
In this case we thus have an additional term given by the integral of the cotangent so we add a bound of the form
\begin{align}
\left|\int_{a}^b \frac{\cos(\pi (s+t))}{2\sin(\pi (s+t))}\; dt\right| \leq \int_{a}^b \left|\frac{1}{4|s+t|}\right|\; dt\\
\end{align}
From this, we can write 
\begin{align}
&\left|F_{D_2}^+\right|\leq \left|\int_{-\theta}^{-\theta+\frac{2}{n}} \frac{(n+1)\pi}{2}\; dt\right| + \int_{-\theta+\frac{2}{n}}^0 \frac{\sin((n+1)\pi (s+t))\sin((n+2)\pi (s+t))}{\sin(\pi (s+t))}\; dt\\
&\leq \frac{(n+1)\pi}{2n} + \frac{1}{4(2n+3)} \left(\frac{1}{\pi}+\frac{1}{2}\right)\left(\frac{1}{|s|} + \frac{1}{|s-\theta+ \frac{2}{n}|} \right) + \frac{1}{4}\left|-\log((s-\theta)  + \frac{2}{n})+\log(s)\right|\label{boundImaginaryFD2plus}
\end{align}

When on $D_3^+$, we remove the subinterval $[-\frac{2}{n},0]$ and split the integral as 

\begin{align}
\left|F_{D_3}^+\right| &\leq \left|\int_{-\theta}^{0} \frac{\sin((n+1)\pi (s+t))\sin((n+2)\pi (s+t))}{\sin(\pi (s+t))}\; dt \right|\\
&\leq \left|\int_{-\theta}^{-\frac{2}{n}} \frac{\sin((n+1)\pi (s+t))\sin((n+2)\pi (s+t))}{\sin(\pi (s+t))}\; dt\right| + \left|\int_{-\frac{2}{n}}^{0} \frac{(n+1)\pi |s+t|}{|2 (s+t)|}\; dt \right|\\
&\leq \frac{1}{4(2n+3)}\left(\frac{1}{\pi}+\frac{1}{2}\right) \left(\frac{1}{(\theta-s)} + \frac{1}{\frac{2}{n}-s}\right) + \frac{1}{4} \left|\log(\theta-s) - \log(\frac{2}{n}-s)\right|\\
&\leq \frac{1}{4(2n+3)}\left(\frac{1}{\pi}+\frac{1}{2}\right) \left(\frac{1}{\theta-s} + \frac{1}{\frac{2}{n}-s}\right) + \frac{1}{4} \log(\frac{\theta-s}{\frac{2}{n}-s}) + \frac{\pi (n+1)}{n}\label{boundFD3plusImag}
\end{align}
%

We then bound the integral on $D_0^+$.  Using the odd symmetry of the integrand, we only need to integrate over $[-\theta, -2s]$ whenever $-s\in [-\theta/2,0]$ and over $[\theta-2s,0]$ whenever $-s\in [-\theta,-\theta/2]$. The two frameworks are equivalent by symmetry of the integrand. In this first case we write 
\begin{align}
&\left|\int_{-\theta}^0 \frac{\sin(\pi (n+2)(t+s))\sin((n+1)\pi (t+s))}{2\sin(\pi (s+t))}\; dt\right|\\
&\leq \left|\int_{-\theta}^{-2s} \frac{\cos(\pi (2n+3)(t+s))}{2\sin(\pi (t+s))}\; dt - \int_{-\theta}^{-2s} \frac{\cos(\pi (t+s))}{2\sin(\pi (t+s))}\; dt\right|\\
&\leq \left|\left| \frac{\sin(\pi (2n+3)(t+s))}{2\pi (2n+3)\sin(\pi (t+s))}\right|_{-\theta}^{-2s} - \int_{-\theta}^{-2s} \frac{\sin(\pi (2n+3)(t+s))\cos(\pi (s+t))}{2(2n+3)\sin^2(s+t)}\; dt\right|\\
& + \left|-\int_{-\theta}^{-2s} \frac{\cos(\pi (t+s))}{2\sin(\pi(s+t))}\; dt\right|\\
& \leq \frac{1}{(2n+3)4}\left(\frac{1}{\pi} + \frac{1}{2}\right) \left(\frac{1}{|s|} + \frac{1}{|\theta-s|}\right)+ \left|\int_{-\theta}^{-2s} \frac{1}{4|s+t|}\right|\\
&\leq \frac{1}{(2n+3)4}\left(\frac{1}{\pi} + \frac{1}{2}\right) \left(\frac{1}{|s|} + \frac{1}{|\theta-s|}\right) +\frac{1}{4}\left| \log(\frac{s}{\theta-s})\right|\\
&\leq \frac{1}{(2n+3)4}\left(\frac{1}{\pi} + \frac{1}{2}\right) \left(\frac{1}{|s|} + \frac{1}{|\theta-s|}\right) + \frac{1}{4} \log(\frac{\theta - s}{s})
\end{align}

The last line follows from $\theta-s>s$. When $-s\in [-\theta,-\theta/2]$, we get 

\begin{align}
&\left|\int_{-\theta}^0 \frac{\sin((n+2)\pi (s+t))\sin((n+1)\pi (s+t))}{\sin(\pi (s+t))}\; dt\right|\\
&\leq \left|\int_{\theta-2s}^0 \frac{\cos((2n+3)\pi (s+t)) - \cos(\pi (s+t))}{2\sin(\pi (s+t))}\; dt\right|\\
& \left|\int_{\theta-2s} ^0\frac{1}{4|s+t|}\; dt\right| + \left|\frac{\sin((2n+3)\pi (t+s))}{2\pi (2n+3)\sin(\pi (t+s))}\right|_{\theta-2s}^0\\
& +\left| \int_{\theta-2s}^{0} \frac{\sin((n+1)\pi (s+t))\sin((n+1)\pi (s+t))}{2\sin^2(\pi (s+t))(2n+3)}\; dt\right|\\
&\frac{1}{4}\log(\frac{s}{\theta-s}) + \frac{1}{4(2n+3)}\left(\frac{1}{\pi} + \frac{1}{2}\right)\left(\frac{1}{s} + \frac{1}{\theta-s}\right)
\end{align}
The last line follows from $\theta-s<s$.  The general bound on $D_0^+$ thus reads as 

\begin{align}
F_{D_0^+}\leq \frac{1}{4(2n+3)}\left(\frac{1}{\pi} + \frac{1}{2}\right)\left(\frac{1}{s} + \frac{1}{\theta-s}\right) + \log(\frac{s}{\theta-s}\vee \frac{\theta-s}{s})\label{FD0Imaginary}
\end{align}

On $D_4^+$, provided, as $|\theta-s|>1/n$, we integrate directly. As for the real part, we will also need a tighted bound for the small $\theta$ to compensate for the prefactor $|1-e^{2\pi i \theta}|$ appearing in front of~\eqref{eq:dev_int_p+}. 

%
%
%
%
%
%
%
%
%
%
%

\begin{align}
F_{D_4}^+ &= \left|\int_{-\theta}^0 \frac{\sin((n+2)\pi (s+t))\sin(\pi (n+1)(s+t))}{2\sin(\pi(s+t))}\; dt\right|\\
&\leq \left|\int_{-\theta}^0 \frac{\cos((2n+3)\pi(s+t))}{2\sin(\pi (s+t))}\; dt\right| + \left|\frac{1}{2}\int_{-\theta}^0 \cot(\pi (s+t))\right|\\
&\leq \left|\frac{\sin((2n+3)\pi (s+t))}{2\pi(2n+3)\sin(\pi (s+t))}\right|_{-\theta}^0 + \left|\int_{-\theta}^0 \frac{\sin((2n+3)\pi (s+t))\cos(\pi (s+t))}{2\sin^2(\pi (s+t))}\; dt\right|+\frac{1}{4}\left|\log(\frac{s}{s-\theta}) \right|\\
&\leq \left|\frac{\sin((2n+3)\pi (s+t))}{2\pi(2n+3)\sin(\pi (s+t))}\right|_{-\theta}^0 + \frac{1}{8(2n+3)}\left|\frac{1}{s} - \frac{1}{s-\theta}\right| +\frac{1}{4}\left|\log(\frac{s}{s-\theta}) \right| \label{temporaryBoundD4plus1}
\end{align}

As before, for small $\theta$, we will also need the bound to be decreasing with $\theta$ in order to compensate for the prefactor which is $\mathcal{O}(\theta^{-1})$. We thus apply a reasoning similar to the one used for the real part

\begin{align}
&\frac{\sin((2n+3)\pi s)}{2(2n+3)\sin(\pi s)} - \frac{\sin((2n+3)\pi (s-\theta))}{2(2n+3)\sin(\pi(s-\theta))}\\
& = \frac{\sin((2n+3)\pi s)\sin(\pi (\theta-s)) - \sin((2n+3)\pi (s-\theta))\sin(\pi s)}{2(2n+3)\sin(\pi s)\sin(\pi (s-\theta))}\label{temporaryBoundD4plus2}
\end{align}
In particular, developing the numerator as for the real part, we get 
%
%
%

\begin{align}
\left|\frac{\sin((2n+3)\pi (s+t))}{2(2n+3)\sin(\pi (s+t))}\right|_{-\theta}^0 \leq \frac{4 \pi\theta}{8(s-\theta)}
\end{align}
%
%
%
%
%
%

Substituting this bound in~\eqref{temporaryBoundD4plus1}, we finally get 
\begin{align}
F_{D_4}^+ &\leq \frac{4 \pi^2\theta}{8(s-\theta)} + \frac{1}{8(2n+3)}\left|\frac{1}{s} - \frac{1}{s-\theta}\right| +\frac{1}{4}\left|\log(\frac{s}{s-\theta}) \right|\\
&\leq \frac{4 \pi^2\theta}{8(2n+3)s(s-\theta)} + \frac{1}{8}\frac{\theta}{s-\theta} +\frac{1}{4}\left|\log(\frac{s}{s-\theta}) \right|\\
&\leq \frac{4\pi\theta}{8(s-\theta)}+\frac{\theta}{8(2n+3)s(s-\theta)} + \frac{1}{4}\log(\frac{s}{s-\theta})\label{boundD4plusImag}
\end{align}

%
%

%
%

As for the real case, we will also need a specific bound for large $\theta$. In the large $\theta$ regime ($\theta\geq \frac{1}{2n+3}$),  as before, we turn to the simpler bound 
\begin{align}
F_{D_4}^+ \leq \frac{1}{4(2n+3)}\left(\frac{1}{\pi}+\frac{1}{2}\right)\left(\frac{1}{s} +\frac{1}{s-\theta}\right) + \frac{1}{4}\log(\frac{s}{s-\theta})\label{largeThetaImaginaryD4+}
\end{align}

We now bound the imaginary part on the negative domain. Again, we keep the decomposition of Fig.~\ref{domainDecomposition1}. we start with $D_0^-$.  On this subdomain, we obtain the bound almost directly, integrating by parts and noting that the bound $|\sin(\pi (s+t))|\leq |s+t|$ remains valid as $-1/2<s+t$ in this subdomain.

\begin{align}
F_{D_0}^-& = \left|\int_{-\theta}^0 \frac{\sin((n+1)\pi (s+t))\sin((n+2)\pi (s+t))}{\sin(\pi(s+t))}\; dt\right|\\
&\leq \left|\int_{-\theta}^0 \frac{\cos((2n+3)\pi (s+t))}{2\sin(\pi (s+t))}\; dt \right| + \frac{1}{2}\left|\int_{-\theta}^0 \cot(\pi (s+t))\; dt\right| \\
&\leq \frac{1}{4}\left|\log(\frac{-s}{\theta-s})\right| + \frac{1}{4(2n+3)}\left(\frac{1}{|s|} + \frac{1}{|s-\theta|}\right)\left(\frac{1}{\pi} + \frac{1}{2}\right)
\end{align}

The first term follows from $s-\theta < 0$ and $s<0$.

{\color{black} When dealing with small values of $\theta$, we use the same reasoning as in~\eqref{step1DevelopmentSmalltheta} to~\eqref{refinedBoundSmallthetaFirstTerm} and introduce a refined bound. 
%


\begin{align}
F_{D_0}^-& = \left|\int_{-\theta}^0 \frac{\sin((n+1)\pi (s+t))\sin((n+2)\pi (s+t))}{\sin(\pi(s+t))}\; dt\right|+ \frac{1}{2}\left|\int_{-\theta}^0 \cot(\pi (s+t))\; dt\right|\\
&\leq \left| \left|\frac{\sin((2n+3)\pi (s+t))}{(2n+3)\pi2\sin(\pi s)}\right|_{-\theta}^0 - \int_{-\theta}^0 \frac{\sin((2n+3)\pi (s+t))\cos(\pi (s+t))}{2(2n+3)\sin^2(\pi (s+t))}\; dt\right|\\
& + \frac{1}{2}\left|\int_{-\theta}^0 \cot(\pi (s+t))\; dt\right|\\
&\leq \left| \frac{\sin((2n+3)\pi s)}{(2n+3)\pi 2 \sin(\pi s)} - \frac{\sin((2n+3)\pi (s-\theta))}{(2n+3)\pi 2\sin(\pi s )}\right| + \left|\frac{1}{8(2n+3)(s)} - \frac{1}{8(2n+3)(s-\theta)}\right|\\
& + \frac{1}{2}\left|\int_{-\theta}^0 \cot(\pi (s+t))\; dt\right|\\
&\leq \frac{3 \theta}{2(s-\theta)} + \frac{\theta}{8(2n+3)s(s-\theta)} + \frac{1}{4}\left|\log(\frac{-s}{\theta-s})\right|\label{trueBoundD0minusImag}
\end{align}

}

On $D_2^-$, the interval $[-\theta,0]$ is of size $\mathcal{O}(1/n)$ and the denominator vanishes near the interval, we replace the integrand by using appropriate upper and lower bounds on the numerator and denominator. A direct comparison with previous results gives 
\begin{align}
F_{D_2}^- &=\left| \int_{-\theta}^0 \frac{\sin((n+1)\pi (s+t))\sin((n+2)\pi (s+t))}{2\sin(\pi (s+t))}\; dt\right|\leq \frac{\pi (n+1)\theta}{4}
\end{align}

On $D_1^-$ we treat separately the subintervals $[-\theta,-1/n]$ and $[-1/n,0]$ (which is near the vanishing denominator). Following the same approach as for the real part, we get 
\begin{align}
F_{D_1}^- &= \left|\int_{-\theta}^{-1/n} \frac{\sin((n+1)\pi (s+t)) \sin((n+2)\pi (s+t))}{2\sin(\pi (s+t))}\; dt\right| + \left|\int_{-1/n}^0 \frac{\sin((n+1)\pi (s+t))\sin((n+2)\pi (s+t))}{2\sin(\pi (s+t))}\right|\\
&\leq \left|\int_{-\theta}^{-1/n} \frac{\sin((n+1)\pi (s+t)) \sin((n+2)\pi (s+t))}{2\sin(\pi (s+t))}\; dt\right|  + \frac{\pi (n+1)}{4}\frac{1}{n}\\
&\leq \left|\int_{-\theta}^{-1/n} \frac{\cos((2n+3)\pi (s+t))}{\sin(\pi (s+t))}\; dt\right| +\left| \int_{-\theta}^{-1/n} \cot(\pi (s+t))\; dt\right| + \frac{\pi (n+1)}{4}\frac{1}{n}\\
&\leq \left|\log(\frac{|s-\frac{1}{n}|}{|s-\theta|})\right| + \frac{1}{(2n+3)4}\left(\frac{1}{\pi} + \frac{1}{2}\right)\left(\frac{1}{|s - 1/n|} + \frac{1}{|s-\theta|}\right) + \frac{\pi (n+1)}{4}\frac{1}{n}\\
&\leq \log(\frac{\theta-s}{\frac{1}{n}-s}) + \frac{1}{4(2n+3)}\left(\frac{1}{\pi} + \frac{1}{2}\right)\left(\frac{1}{\theta-s} + \frac{1}{1/n-s}\right) + \frac{\pi (n+1)}{4n}\label{boundFD1minusImaginary}
\end{align}
Noting that on that subdomain, we always have $-\theta<-1/n$ as well as $s<0$.

Finally we give a separate treatment to each of the subdomains $D_3^-$, $D_5^-$ and $D_4^-$ for which we must adapt the lower bounds on the denominator of the integrand to account for the case $t+s<-\frac{1}{2}$. Starting with $D_3^-$, we split the integral between $-\theta\leq t\leq -s-\frac{1}{2}$ and $-s-\frac{1}{2}\leq t \leq 0$, and use appropriate lower bounds

\begin{align}
F_{D_3}^-  &= \left|\int_{-\theta}^0 \frac{\sin((n+1)\pi (t+s))\sin(\pi (n+2)(t+s))}{2\sin(\pi (t+s))}\; dt\right|\\
& \leq \left|\int_{-\theta}^{-s-\frac{1}{2}} \frac{\sin((n+1)\pi (t+s))\sin((n+2)\pi (t+s))}{2\sin(\pi (s+t))}\; dt\right.\\
& + \left.\int_{-s-\frac{1}{2}}^0 \frac{\sin((n+1)\pi (s+t))\sin(n+2)\pi (s+t)}{2\sin(\pi (s+t))}\; dt\right|\\
&\leq \left|\int_{-\theta}^{-1/2-s}\frac{\cos(2n+3)\pi(s+t)}{2\sin(\pi (s+t))}\; dt - \frac{1}{2}\int_{-\theta}^{-1/2-s} \cot(\pi (s+t))\; dt\right.\\
& + \left.\int_{-s-\frac{1}{2}}^0 \frac{\cos((2n+3)\pi (s+t))}{2\sin(\pi (s+t))}\; dt -\frac{1}{2} \int_{-s-\frac{1}{2}}^0 \cot(\pi (s+t))\; dt\right|\\
&\leq \left|\int_{-\theta}^{-1/2-s} \frac{\cos((2n+3)\pi (s+t))}{2\sin(\pi (s+t))}\; dt + \int_{-s-1/2}^{0} \frac{\cos((2n+3)\pi (s+t))}{\sin(\pi (s+t))}\; dt\right|\\
& + \frac{1}{4}\left|\int_{-1/2-s}^{0} \frac{1}{|s+t|}\; dt +\int_{-\theta}^{-1/2-s} \frac{1}{|1+s+t|}\; dt \right|\\
&\leq \left|\int_{-\theta}^{-1/2-s} \frac{\cos((2n+3)\pi (s+t))}{2\sin(\pi (s+t))}\; dt + \int_{-s-1/2}^{0} \frac{\cos((2n+3)\pi (s+t))}{\sin(\pi (s+t))}\; dt\right|\\
& + \frac{1}{4}\left|\int_{-1/2-s}^{0} \frac{1}{-s-t}\; dt +\int_{-\theta}^{-1/2-s} \frac{1}{(1+s+t)}\; dt \right|\\
&\leq \left|\int_{-\theta}^{-1/2-s} \frac{\cos((2n+3)\pi (s+t))}{2\sin(\pi (s+t))}\; dt + \int_{-s-1/2}^{0} \frac{\cos((2n+3)\pi (s+t))}{\sin(\pi (s+t))}\; dt\right|\\
& + \frac{1}{4}\left|\left| -\log(-s-t)\right|_{-1/2-s}^{0} +\left| \log(1+s+t)\right|_{-\theta}^{-1/2-s} \; dt \right|\\
&\leq \left|\int_{-\theta}^{-1/2-s} \frac{\cos((2n+3)\pi (s+t))}{2\sin(\pi (s+t))}\; dt + \int_{-s-1/2}^{0} \frac{\cos((2n+3)\pi (s+t))}{\sin(\pi (s+t))}\; dt\right|\\
& + \frac{1}{4}\left|-\log(-s) + \log(1/2)  + \log(1/2) - \log(1+s-\theta) \right|\\
&\leq \left|\int_{-\theta}^{-1/2-s} \frac{\cos((2n+3)\pi (s+t))}{2\sin(\pi (s+t))}\; dt + \int_{-s-1/2}^{0} \frac{\cos((2n+3)\pi (s+t))}{\sin(\pi (s+t))}\; dt\right|\\
 &+\frac{1}{4}\left(\log(2) - \log(1+s-\theta)\right) + \frac{1}{4} \left(-\log(-s) +\log(2)\right)\label{tightBoundDecreaseLemmaTronc}
\end{align}

%

The final bound on $F_{D_3}^-$ is thus given by
\begin{align}
\label{trueBoundD3minusImag}
\begin{split}
F_{D_3}^- &\leq \frac{1}{4}|\log(-s) + \frac{1}{4}\log(1+s-\theta)| + |2\log(2)| +   \frac{1}{4(2n+3)}\left(\frac{1}{\pi} + \frac{1}{2}\right)\left(2+ \frac{1}{-s}\right) \\
&+ \frac{1}{4(2n+3)}\left(\frac{1}{\pi} + \frac{1}{2}\right)\left(\frac{1}{1+s-\theta} + 2\right) 
\end{split}
\end{align}

On $D_5^-$, we use a similar decomposition, noting that, as we are working on the torus, we now have a subinterval for which $\pi (s+t) \approx \pi$. We thus treat independently a small interval of length $\mathcal{O}(1/n)$ located near $-\theta$.

\begin{align}
\left|F_{D_5}^-\right| &\leq \left|\int_{-\theta}^{-\theta+\frac{1}{n}} \frac{\sin((n+1)\pi (s+t))\sin(\pi (n+2)(s+t))}{2\sin(\pi (s+t))} \; dt\right|\\
& + \left|\int_{-\theta+\frac{1}{n}}^{0} \frac{\sin((n+1)\pi (s+t))\sin(\pi (n+2)(s+t))}{2\sin(\pi (s+t))}\right|\\
&\leq \left|\int_{-\theta+\frac{1}{n}}^{0} \frac{1}{2}\frac{\cos((2n+3)\pi (s+t))}{\sin(\pi (s+t))}\; dt\right| + \left|\int_{-\theta+\frac{1}{n}}^{0} \frac{\cos(\pi (s+t))}{2\sin(\pi (s+t))}\; dt\right|\\
& + \left|\frac{\pi (n+1) \left|s+t\right|}{4 n\left|s+t\right|}\right|
\end{align}
We then need to split the remaining integral between the interval on which $s+t<-\frac{1}{2}$ and the remaining interval on which $s+t>-\frac{1}{2}$.
\begin{align}
\left|F_{D_5}^-\right| &\leq \left|\int_{-\theta+\frac{1}{n}}^{-\frac{1}{2}-s} \frac{1}{2} \frac{\cos((2n+3)\pi (s+t))}{\sin(\pi (1+s+t))}\; dt\right| + \left|\int_{-\theta+\frac{1}{n}}^{-s-\frac{1}{2}} \frac{\cos(\pi (1+s+t))}{2\sin(\pi (1+s+t))}\; dt\right|\label{FD5-debut}\\
& + \left|\int_{-s-\frac{1}{2}}^0 \frac{1}{2}\frac{\cos((2n+3)\pi (s+t))}{\sin(\pi (s+t))}\right| + \left|\int_{-s-\frac{1}{2}}^0 \frac{1}{2}\frac{\cos(\pi (s+t))}{\sin(\pi (s+t))}\; dt\right|\\
&+ \frac{(n+1)\pi}{4}\\
&\leq 2\frac{\pi }{4} +  \frac{1}{4}\left|\log(\frac{-s}{1/2})\right| + \frac{1}{4}\left|\log(\frac{1/2}{1+s-\theta+\frac{1}{n}})\right|\\
&+\frac{1}{4(2n+3)} \left(\frac{1}{\pi}+\frac{1}{2}\right)\left(\frac{1}{1/2} + \frac{1}{1+s-\theta+\frac{1}{n}}\right)\\
& + \frac{1}{4(2n+3)} \left(\frac{1}{\pi}+\frac{1}{2}\right)\left(\frac{1}{1/2} + \frac{1}{-s}\right)\label{FD5-fin}
\end{align}

For $D_4^-$, we use a similar decomposition except that we now need to remove the small subinterval located around $0$ where the denominator is nearly vanishing.

\begin{align}
\left|F_{D_4^-} \right| &\leq \left|\int_{-\theta}^{-s-\frac{1}{2}} \frac{\sin((n+1)\pi (1+s+t))\sin(\pi (n+2)\pi (1+s+t))}{2\sin(\pi (1+s+t))}\; dt \right| \\
&+ \left|\int_{-s-\frac{1}{2}}^{-\frac{1}{n}} \frac{\sin((n+1)\pi (s+t))\sin((n+2)\pi(s+t))}{2\sin(\pi (s+t))}\; dt\right|\\
& + \left|\int_{-\frac{1}{n}}^{0} \frac{\sin((n+1)\pi(s+t))\sin((n+2)\pi(s+t))}{2\sin(\pi(s+t))}\; dt\right|\\
& \leq \frac{1}{n}\frac{(n+1)\pi}{4}\\
&+ \left|\int_{-s-\frac{1}{2}}^{-\frac{1}{n}} \frac{\cos((2n+3)\pi (s+t))}{2\sin(\pi (s+t))}\; dt\right| + \left|\int_{-s-\frac{1}{2}}^{-\frac{1}{n}} \frac{\cos(\pi (s+t))}{\sin(\pi (s+t))}\; dt \right|\\
& +  \left|\int_{-\theta}^{-s-\frac{1}{2}} \frac{\cos((2n+3)\pi (s+t))}{2\sin(\pi (1+s+t))}\; dt\right| + \left|\int_{-\theta}^{-s-\frac{1}{2}} \frac{\cos(\pi (s+t))}{\sin(\pi (1+s+t))}\; dt \right|\\
&\leq \frac{1}{n}\frac{(n+1)\pi}{4}\\ 
& + \frac{1}{4(2n+3)}\left(\frac{1}{2}+\frac{1}{\pi}\right) \left(\frac{1}{1+s-\theta} + \frac{1}{1/2}\right) + \left|\log(\frac{1/2}{1+s-\theta})\right|\\
&+ \frac{1}{4(2n+3)}\left(\frac{1}{2}+\frac{1}{\pi}\right) \left(\frac{1}{-s+\frac{1}{n}} + \frac{1}{1/2}\right) + \left|\log(\frac{1/2}{-s+\frac{1}{n}})\right|
\end{align}

\subsection{Proof of lemma~\ref{lemmaTroncD3minusCons} ($D_3^-$, small $|\tau - \alpha|$)}

For $D_3^-$, recall that the real part can be controled as  
\begin{align}
F_{R, D_3^-}(s) &\leq c\left(4 + \frac{1}{-s} + \frac{1}{1+s-\theta}\right) + \frac{\theta}{2}\label{rappelSecondContribution}
\end{align}
For the real part, we simply use the fact that the integration interval has a length equal to $\theta$.  
\begin{align}
\frac{1}{|1-e^{2\pi i \theta}|}\int_{-1/2}^{-1/2+\theta} |p_\ell(s) | \cdot F_{D_3^-}(s)\; ds & \leq \frac{1}{\pi \theta} \sup_{s \in [-1/2, -1/2+\theta]}\left\{|p_\ell (s) | \cdot F_{D_3^-}(s)\right\}\\
&\leq \frac{1}{\pi} \left( (4c + \frac{\theta}{2}) + \frac{2c}{(1/2 - \theta) \vee \frac{1}{2n+3}} \right)\\
&\leq \frac{1}{\pi}\left((4c+\frac{\theta}{2})+ 6c'\right)\\
&\leq 0.5224\label{boundRealPartD3minusSmallTauMinusAlpha}
\end{align}
For the imaginary part, recall that we have 
\begin{align}
F_{I,D_3^-} \leq F_{R,D_3^-} + \frac{1}{4} \log(\frac{-1}{s}) + \frac{1}{4} \log(\frac{1}{1+s-\theta}) +2\log(2)\label{reminderBoundImaginaryFD3minus}
\end{align}
We denote the total integral arising from the imaginary part as $Z$ and consider the following decomposition,
\begin{align}
Z & =\frac{1}{4|e^{2\pi i \theta}-1|}\int_{-\frac{1}{2}+\theta - 2\frac{C_1-1}{n}}^{-\frac{1}{2}+\theta } -\frac{1}{s} \; ds + \frac{1}{4|e^{2\pi i \theta}-1|}\int_{-\frac{1}{2}}^{-\frac{1}{2}+ 2\frac{C_1-1}{n}} \frac{1}{1+s-\theta}\; ds\label{CorrectionFD3minusConstantContribution}\\
&+\frac{1}{4|e^{2\pi i \theta}-1|} \int_{-\frac{1}{2}+\theta}^{-\frac{1}{2}} - \frac{1}{s} \frac{C_1}{1+n(-\frac{1}{2} + \theta + \frac{C_1-1}{n}-s)}\; ds\label{CorrectionFD3minusThirdTerm}\\
&+ \frac{1}{4|e^{2\pi i \theta}-1|}\int_{-\frac{1}{2}}^{-\frac{1}{2}+\theta} \frac{1}{1+s-\theta} \frac{C_1}{1+n(s- (-\frac{1}{2} - \frac{C_1-1}{n}))}\; ds\label{CorrectionFD3minusFourthTermCentral}\\
&+ \sup_{0\leq \tau\leq \theta}\frac{1}{4|e^{2\pi i \theta}-1|} \int_{-\frac{1}{2}}^{-\frac{1}{2}+\tau} \frac{1}{1+s-\theta} \frac{C_1}{1+n(-\frac{1}{2} + \tau + \frac{C_1-1}{n} - s)}\; ds\label{CorrectionFD3minusFourthTermCentral2}\\
&+ \sup_{0\leq \tau \leq \theta}\frac{1}{4|e^{2\pi i \theta}-1|} \int_{-\frac{1}{2}+\theta - \tau}^{-\frac{1}{2}+\theta} \frac{1}{-s} \frac{C_1}{1+n(s-(-\frac{1}{2}+\theta -\tau - \frac{C_1-1}{n}))}+ \frac{2\log(2)}{\pi}.\label{CorrectionFD3minusFourthTermCentral3}
\end{align}

We now control each of the terms above. First note that we can safely assume  $\theta \geq \frac{C_1-1}{n}$, otherwise, we simply compute the correction as 
\begin{align}
\frac{1}{\pi \theta} \int_{-\frac{1}{2}}^{-\frac{1}{2}+\theta} \log(-s) + \log(1+s-\theta) + 2\log(2)\; ds &= \frac{1}{\pi \theta} \left\{2\log(1+ \frac{\theta}{1/2-\theta}) +2\log(2)\theta\right\}\label{ImagboundCorrectionFD3minusStart0001}\\
&\leq \frac{2}{\pi} \frac{1}{1/2-\theta} + \frac{2\log(2)}{\pi}\\
&\leq \frac{8}{\pi}+\frac{2\log(2)}{\pi}
\end{align}

For the first integral in~\eqref{CorrectionFD3minusConstantContribution}, we can write 
\begin{align}
\frac{1}{4|e^{2\pi i \theta} - 1|}\int_{-\frac{1}{2}+ \theta - 2\frac{C_1-1}{n}}^{-\frac{1}{2}+\theta}  -\frac{1}{s} \; ds &\leq \frac{1}{4\pi \theta} \int_{-\frac{1}{2}+\theta-2\frac{C_1-1}{n}}^{-\frac{1}{2}+\theta} - \frac{1}{s}\; ds\\
&\leq \frac{1}{4\pi \theta} \log(1+2\frac{C_1-1}{n}\frac{1}{1/2-\theta}) \\
&\leq \frac{1}{4}\left(\frac{8}{\pi} + \frac{4}{\pi}\log(C_1)\right) \label{additionalExplanationNeeded000100}
\end{align}
In~\eqref{additionalExplanationNeeded000100}, we make the distinction between $\theta \leq 1/4$ and $\theta \geq 1/4$. In the first case, the integral can be upper bounded by $8/\pi$. In the second case, using the definition of $D_3^-$ and hence $1/2 - \theta \geq \frac{1}{2n+3}$ the integral is upper bounded by $(4/\pi) \log(1+ 6C_1 - 5)$.

For the second term in~\eqref{CorrectionFD3minusConstantContribution}, we have 
\begin{align}
\frac{1}{4|e^{2\pi i \theta}-1|}\int_{-\frac{1}{2}}^{-\frac{1}{2} + 2\frac{C_1-1}{n}} \frac{1}{1+s-\theta}\; ds &\leq \frac{1}{4\pi \theta} \log(1+ 2\frac{C_1-1}{n}\frac{1}{1/2-2\frac{C_1-1}{n}})\leq \frac{8}{4\pi}
\end{align}
The last line holds as soon as $n\geq 8(C_1-1)$ and follows from $\log(1+x)\leq x$ for any $x\geq 0$. 

For the third integral~\eqref{CorrectionFD3minusThirdTerm} we have 
\begin{align}
&\frac{1}{4|e^{2\pi i \theta}-1|}\int_{-\frac{1}{2}}^{-\frac{1}{2}+\theta} \frac{1}{-s} \frac{C_1}{1+n(-\frac{1}{2} + \theta + \frac{C_1-1}{n}-s)}\; ds\\
 & \leq \frac{1}{4\pi \theta} \int_{-\frac{1}{2}}^{-\frac{1}{2} + \theta}\left(-\frac{1}{s} + \frac{-n}{1+n(-\frac{1}{2} + \theta + \frac{C_1-1}{n} -s)}\right)\frac{1}{-\frac{1}{n} + \frac{1}{2} - \theta - \frac{C_1-1}{n}}\\
&\leq \frac{1}{4\pi \theta} \left[\log(\frac{1/2}{1/2-\theta}) + \log(\frac{C_1/n}{C_1/n + \theta})\right] \frac{1}{\frac{1}{2} - \theta - \frac{C_1}{n}}\frac{C_1}{n}\label{expressionWhichRepeats1}
\end{align}
To control the last line, we consider two cases. 
\begin{itemize}
\item Either $|1/2 - \theta - \frac{C_1}{n}|\geq 1/4$. This necessarily implies either $1/2 - \theta \geq 1/4+ \frac{C_1}{n}$ which gives
\begin{align}
&\frac{1}{4|e^{2\pi i \theta}-1|} \int_{-1/2}^{-1/2+\theta} \frac{1}{-s} \frac{C_1}{1+n(-1/2 + \theta + \frac{C_1-1}{n} - s)}\; ds\label{startBounexpressionWhichRepeats1}\\
&\leq \frac{1}{4\pi \theta} \left[\log(\frac{1/2}{1/2-\theta}) + \log(\frac{C_1/n}{C_1/n + \theta})\right] \frac{1}{1/2 - \theta - \frac{C_1}{n}} \frac{C_1}{n}\\
&\leq \frac{2}{\pi \theta} \left[\log(1+ \frac{\theta}{1/2-\theta}) + \log(1+ \frac{\theta}{C_1/n})\right] \frac{C_1}{n}\\
&\leq \frac{2}{\pi}\label{tmpBoundD3minusImagCascadeOfBounds}
\end{align}
\item When $|\frac{1}{2} - \theta - \frac{C_1}{n}| \leq 1/4$, we write
\begin{align}
&\frac{1}{4|e^{2\pi i \theta}-1|} \int_{-1/2}^{-1/2+\theta} \frac{1}{-s} \frac{C_1}{1+n(-1/2 + \theta + \frac{C_1-1}{n} - s)}\; ds\\
& \leq \frac{1}{4\pi \theta} \left|\log(\frac{1/2}{1/2-\theta}) + \log(\frac{C_1/n}{C_1/n + \theta})\right|\frac{1}{1/2 - \theta - \frac{C_1}{n}} \frac{C_1}{n}\\
&\leq \frac{1}{4\pi \theta} \left|\log(1+ \frac{1/2 - C_1/n - \theta}{C_1/n + \theta}\vee \frac{C_1/n-1/2 + \theta}{1/2-\theta})\right| \frac{C_1}{n} \frac{1}{|1/2 - \theta - C_1/n|}\label{lastLineFramework3D3minusImag00}\\
&+ \frac{1}{4\pi \theta}\left|\log(1+ \frac{1/2 - \theta - \frac{C_1}{n}}{C_1/n}\vee \frac{C_1/n - 1/2 + \theta}{1/2 - \theta}) \right| \frac{1}{|1/2 - \theta - \frac{C_1}{n}|} \frac{C_1}{n}\label{lastLineFramework3D3minusImag01}
\end{align}
To bound the sum~\eqref{lastLineFramework3D3minusImag00} + \eqref{lastLineFramework3D3minusImag01}, we consider the following two frameworks:
\begin{itemize}
\item If $(1/2- \theta)\geq \frac{C_1}{n}$, then we have 
\begin{align}
~\eqref{lastLineFramework3D3minusImag00} + \eqref{lastLineFramework3D3minusImag01} &\leq \frac{1}{4\pi \theta} \left|\log(1+ \frac{1/2 - C_1/n - \theta}{C_1/n + \theta}) + \log(1+ \frac{1/2 - \theta - \frac{C_1}{n}}{C_1/n})\right| \frac{1}{|1/2 - \theta - \frac{C_1}{n}|} \frac{C_1}{n}\\
&\leq \frac{2}{\pi}
\end{align}
\item Or $(1/2-\theta) \leq C_1/n$. In this case, 
\begin{enumerate}[a)]
\item If $(1/2 - \theta)\leq \frac{C_1}{n}\frac{1}{2}$, we control $|1/2 - \theta - \frac{C_1}{n}|$ as $|1/2 - \theta - \frac{C_1}{n}|\geq \frac{C_1}{2n}$, and upper bound the sum $~\eqref{lastLineFramework3D3minusImag00} + \eqref{lastLineFramework3D3minusImag01} $ as  
\begin{align}
~\eqref{lastLineFramework3D3minusImag00} + \eqref{lastLineFramework3D3minusImag01}  \leq \frac{4}{4\pi \theta} \left|\log(1+ 3C_1)\right| \leq \frac{8}{\pi}\left[ \log(4)+ \log(C_1)\right]\label{lastLineFramework3D3minusImagA}
\end{align}
\item If $(1/2 - \theta) \geq \frac{C_1}{2n}$, we use $\log(1+x) \leq x$ to cancel the prefactor $|1/2 - \theta - \frac{C_1}{n}|$ and write 
\begin{align}
~\eqref{lastLineFramework3D3minusImag00} + \eqref{lastLineFramework3D3minusImag01}  \leq \frac{8}{4\pi} \left|\frac{4}{C_1/n}\right| \frac{C_1}{n} \leq \frac{32}{4\pi}\label{lastLineFramework3D3minusImagB}
\end{align}
\end{enumerate}
Combining~\eqref{lastLineFramework3D3minusImagA}, ~\eqref{lastLineFramework3D3minusImag00} and~\eqref{lastLineFramework3D3minusImag01}, we get 
\begin{align}
~\eqref{lastLineFramework3D3minusImag00} + \eqref{lastLineFramework3D3minusImag01} \leq \left\{\eqref{lastLineFramework3D3minusImagA} \vee \eqref{lastLineFramework3D3minusImagB}\right\}  \leq \frac{8}{\pi} \left(\log(4)+ \log(C_1)\right).
\end{align}
Combining this with~\eqref{tmpBoundD3minusImagCascadeOfBounds}, we get 
\begin{align}
\eqref{CorrectionFD3minusThirdTerm} = \frac{1}{4|e^{2\pi i \theta}-1|} \int_{-1/2}^{-1/2+\theta} \frac{1}{-s} \frac{C_1}{1+n(-1/2 + \theta + \frac{C_1-1}{n} - s)}\; ds&\leq \frac{8}{\pi} \left(\log(4)+ \log(C_1)\right).\label{finishBounexpressionWhichRepeats1}
\end{align}
\end{itemize}

\end{itemize}

For~\eqref{CorrectionFD3minusFourthTermCentral}, we make the distinction between the cases $\theta \geq \frac{1}{4}$ and $\theta\leq \frac{1}{4}$. In the latter case, we bound the correction as 
\begin{align}
\eqref{CorrectionFD3minusFourthTermCentral} \leq \frac{1}{\pi \theta}\int_{-\frac{1}{2}}^{-\frac{1}{2}+\theta} \frac{1}{1+s-\theta} \; ds \leq \frac{4}{\pi}
\end{align}
When $\theta\geq\frac{1}{4}$, we have 
\begin{align}
\eqref{CorrectionFD3minusFourthTermCentral} &\leq \frac{1}{4\pi \theta} \int_{-\frac{1}{2}}^{-\frac{1}{2} + \theta} \left(-\frac{1}{1+s-\theta} + \frac{n}{1+n(s-(-\frac{1}{2} - \frac{C_1-1}{n}))}\right) \frac{C_1}{n} \frac{1}{(1/2-\theta) - \frac{C_1}{n}}\; ds\\
&\leq \frac{1}{4\pi \theta} \left|-\log(\frac{1/2}{1/2 - \theta}) + \log(\frac{\theta + \frac{C_1}{n}}{C_1/n})\right| \frac{C_1}{n} \frac{1}{1/2 - \theta - \frac{C_1}{n}}
\end{align}
This last expression is the same as~\eqref{expressionWhichRepeats1} and we can thus reuse~\eqref{startBounexpressionWhichRepeats1} to~\eqref{finishBounexpressionWhichRepeats1} which gives 
\begin{align}
\eqref{CorrectionFD3minusFourthTermCentral} &\leq \frac{8}{\pi} \left(\log(4)+ \log(C_1)\right)
\end{align}

We now bound the two suprema. For~\eqref{CorrectionFD3minusFourthTermCentral2}, we have 
\begin{align}
\eqref{CorrectionFD3minusFourthTermCentral2}\leq \frac{1}{\pi \theta}\sup_\tau \int_{-\frac{1}{2}}^{-\frac{1}{2}+\tau} \frac{C_1}{n}\left(-\frac{1}{1+s-\theta} + \frac{n}{1+n(s - (-\frac{1}{2} - \frac{C_1-1}{n}))}\right)\frac{1}{(\frac{1}{2} - \theta) - \frac{C_1}{n}}
\end{align}
To bound the last line, we again consider two cases, depending on whether $\frac{C_1}{n}\geq \frac{1}{2}-\theta$ or not. Following this distinction, we can write 
\begin{align}
\eqref{CorrectionFD3minusFourthTermCentral2}&\leq \frac{1}{\pi \theta} \left(\log(\frac{\tau + \frac{C_1}{n}}{\frac{1}{2} + \tau - \theta}) + \log(\frac{C_1/n}{1/2-\theta})\right)\frac{1}{(1/2-\theta) - \frac{C_1}{n}}\frac{C_1}{n},\quad \text{ $C_1/n \geq 1/2-\theta$}\label{tmp0005001}\\
\eqref{CorrectionFD3minusFourthTermCentral2}&\leq \frac{1}{\pi \theta} \left(\log(\frac{1/2 +\tau - \theta}{\tau+ C_1/n}) + \log(\frac{1/2 - \theta}{C_1/n})\right),\quad \text{ $C_1/n \leq 1/2-\theta$}\label{tmp0005002}
\end{align}

For~\eqref{tmp0005001}, we make one more distinction. Either we have $(1/2-\theta)\leq \frac{C_1}{2n}$, in this case, we write 
\begin{align}
\eqref{tmp0005001}&\leq \left(\log(1+ \frac{C_1}{n}(2n+3)) + \log(1+ \frac{C_1}{n}(2n+3))\right) \frac{8}{\pi}\\
&\leq 2\left(\log(4) + \log(C_1)\right)\frac{8}{\pi}
\end{align}
Or we have $C_1/n\leq (1/2-\theta)$. In this case we write 
\begin{align}
\eqref{tmp0005001}\leq\frac{1}{\pi \theta} \left(\frac{2n}{C_1} + \frac{2n}{C_1}\right) \frac{C_1}{n}\leq \frac{16}{\pi}
\end{align}

Finally, when $C_1/n\leq 1/2-\theta$, ~\eqref{tmp0005002} can be bounded as 
\begin{align}
\eqref{tmp0005002}&\leq \frac{1}{\pi \theta} \left(\log(\frac{1/2+\tau - \theta}{\tau + \frac{C_1}{n}}) + \log(\frac{1/2-\tau}{\frac{C_1}{n}})\right) \frac{1}{(1/2-\theta) - \frac{C_1}{n}} \frac{C_1}{n}\\
&\leq \frac{1}{\pi \theta}\left(\log(1+ \frac{1/2 - \theta - \frac{C_1}{n}}{\tau+\frac{C_1}{n}}) + \log(1+ \frac{1/2-\tau - \frac{C_1}{n}}{\frac{C_1}{n}})\right)\frac{1}{(1/2-\theta) - \frac{C_1}{n}}\frac{C_1}{n}\\
&\leq \frac{8}{\pi}
\end{align}

For the second supremum, we set $\theta\geq \frac{1}{4}$ and simply use 

\begin{align}
\eqref{CorrectionFD3minusFourthTermCentral3}&\leq \sup_{\tau} \frac{1}{\pi \theta} \left[-\log(\frac{1/2-\theta}{1/2-\theta + \tau}) + \log(\frac{1/n + \tau + \frac{C_1-1}{n}}{C_1/n})\right]\frac{C_1}{n} \frac{1}{\frac{C_1}{n}+\tau + \frac{1}{2} - \theta}\\
&\leq \frac{1}{\pi \theta}\left|\log(\frac{1/2 - \theta + \tau}{C_1/n}) + \log(\frac{\tau + C_1/n}{1/2-\theta})\right|\frac{C_1}{n} \frac{1}{\frac{C_1}{n}+\tau + \frac{1}{2} - \theta}\\
&\leq \left[\left(\frac{1/2-\theta + \tau}{C_1/n} \vee \log(3C_1)\right)+ \left(\log(3C_1)\vee \frac{1/2-\theta}{C_1/n}\right)\right] \\
&\leq \frac{8}{\pi}\log(3C_1)
\end{align}

When $\theta\leq \frac{1}{4}$, we use 
\begin{align}
\eqref{CorrectionFD3minusFourthTermCentral3} \leq\frac{1}{\pi \theta} \int_{-\frac{1}{2}}^{-\frac{1}{2}+\theta} \frac{1}{-s}\; ds \leq \frac{1}{\pi \theta} \log(1+ \frac{\theta}{\frac{1}{2} - \theta}) \leq \frac{4}{\pi}.\label{ImagboundCorrectionFD3minusLast0001}
\end{align}

Combining the bounds derived above, we can write 
\begin{align}
Z&\leq  \frac{1}{4} \left(\frac{8}{\pi}  + \frac{4}{\pi}\log(C_1)\right) + \frac{8}{4\pi} + \frac{16}{\pi}\left(\log(4) + \log(C_1)\right)\\
& + 2\left(\log(4)+ \log(C_1)\right) \frac{8}{\pi} + \frac{8}{\pi} \left(\log(3) + \log(C_1)\right) + \frac{2\log(2)}{\pi}\\
&\leq \frac{4}{\pi} \left(\frac{2\log(2)}{4} + 1+ \frac{8}{\pi}\log(4) + \frac{2}{\pi}\log(3)\right) + \frac{41}{\pi}\log(C_1)\\
&\leq 7.15 +14\log(C_1).
\end{align}

Combining this with twice the upper bound~\eqref{boundRealPartD3minusSmallTauMinusAlpha} on the real part gives the result of lemma~\ref{lemmaTroncD3minusCons}.

\subsection{Proof of lemma~\ref{lemmaTroncD0minusCons} ($D_0^-$, small $|\tau - \alpha|$)}

Recall that when $\theta \geq \frac{1}{2n+3}$, the real part of the restriction of $F(s, \theta)$ (as illustrated in Fig.~\ref{domainDecomposition1}) to $D_0^-$ is bounded as 
\begin{align}
F_{R,D_0}^- &\leq \frac{1}{4(2n+3)} \left(\frac{1}{\pi} + \frac{1}{2}\right)\left[\frac{1}{-s}+\frac{1}{\theta-s}\right]+ \frac{\theta}{2} \leq \frac{1}{2(2n+3)} \left(\frac{1}{\pi} + \frac{1}{2}\right)\frac{1}{-s} + \frac{\theta}{2} 
\end{align}
The contribution of the $D_0^-$ to the integral~\eqref{eq:dev_int_p+} then reads as   
\begin{align}
Z& = \frac{1}{|e^{2\pi i \theta}-1|}\int_{-\frac{1}{2n+3}-2\frac{C_1-1}{n}}^{-\frac{1}{2n+3}} F_{R,D_0}^-(s)\; ds\label{partialZ0minusRealLargeThetatmp0001a}\\
& + \frac{1}{|e^{2\pi i \theta}-1|}\int_{-\frac{1}{2n+3}}^{-\frac{1}{2}+\theta} F_{R,D_0^-}(s)\frac{C_1}{1+n(-\frac{1}{2n+3}+\frac{C_1-1}{n} -s) + k'\Delta n}\; ds\label{partialZ0minusRealLargeThetatmp0002a}\\
&+ \frac{1}{|e^{2\pi i \theta}-1|} \sup_{-\frac{1}{2}+\theta\leq \tau\leq -\frac{1}{2n+3}} \int_{\tau}^{-\frac{1}{2n+3}} F_{R,D_0}^-(s) \frac{C_1}{1+n(s - \tau - \frac{C_1-1}{n}) + k'n\Delta}\; ds\label{partialZ0minusRealLargeThetatmp0003a}
\end{align}

We label each of the three integrals that appear in~\eqref{partialZ0minusRealLargeThetatmp0001a}, \eqref{partialZ0minusRealLargeThetatmp0002a} and \eqref{partialZ0minusRealLargeThetatmp0003a} as $Z_C$, $Z_R$ and $Z_L$ respectively. One can write  
\begin{align}
Z_C&\leq \frac{1}{\pi \theta} c \left|\log(\frac{\frac{1}{2n+3}}{\frac{1}{2n+3} + 2\frac{C_1-1}{n}})\right| + \frac{1}{2\pi}\frac{2(C_1-1)}{n} \leq  \frac{c'}{\pi}\left(\log(6)+\log(C_1)\right) + \frac{1}{2\pi}\label{partialZ0minusAD0mainD0minusRealLargeTheta}
\end{align}

\begin{align}
Z_R&\leq \frac{1}{|e^{2\pi i \theta}-1|}  \int_{-\frac{1}{2n+3}}^{-\frac{1}{2} + \theta} \left[\frac{c'}{2n+3}\frac{2}{-s} + \frac{\theta}{2}\right] \frac{C_1}{1+n\left(\frac{C_1-1}{n}-\frac{1}{2n+3} - s\right)}\; ds + \frac{C_1}{n}\frac{1}{2\pi}\\
&\leq \frac{c'}{\pi} \frac{C_1}{n}  \int_{-\frac{1}{2n+3}}^{-\frac{1}{2}+\theta}\left(\frac{1}{-s} - \frac{1}{\frac{1}{n}+\frac{C_1-1}{n}-\frac{1}{2n+3} -s + k'\Delta}\right)\frac{1}{\frac{C_1}{n} - \frac{1}{2n+3}}\; ds \\
&+ \int_{-\frac{1}{2n+3}}^{-\frac{1}{2}+\theta}\frac{C_1}{n}\frac{1}{2\pi} \left|\log(\frac{1}{n}+ \frac{C_1-1}{n}-\frac{1}{2n+3} -s) \right|_{-\frac{1}{2n+3}}^{-\frac{1}{2}+\theta}\\
&\leq \frac{c'}{\pi} \left(-\log(\frac{1/2-\theta}{\frac{1}{2n+3}}) + \log(\frac{\frac{C_1}{n} - \frac{1}{2n+3} + \frac{1}{2} - \theta}{\frac{C_1}{n} + k'\Delta})\right)\frac{1}{\frac{C_1}{n} - \frac{1}{2n+3} + k'\Delta}\\
& +  \frac{C_1}{n}\frac{1}{2\pi} \log(\frac{\frac{C_1}{n} - \frac{1}{2n+3} + \frac{1}{2} - \theta + k'\Delta}{\frac{C_1}{n} })\\
&\leq  \frac{c'}{\pi} \left(\log(3C_1 ) + \log(1+ \frac{C_1/n  - \frac{1}{2n+3}}{1/2-\theta})\right)  \frac{C_1}{n}+ \frac{C_1}{n} \frac{1}{2\pi} \sum_{k'=0}^S \frac{1/2-\theta}{\frac{C_1}{n} +k'\Delta}\\
&\leq \frac{c'}{\pi} \left(2(\log(3) + \log(C_1))\right)\frac{C_1}{C_1-1} \\
&+ \frac{c'}{\pi} \left(2\log(3) + \log(2) +2\log(C_1)\right)\frac{C_1}{n\Delta}\\
&+ \frac{C_1}{n} \frac{1}{4\pi} + \frac{C_1}{n\Delta} \frac{1}{2\pi}\\
&\leq \frac{c'}{\pi}2(\log(3)+\log(C_1)) + \frac{2c'}{\pi} (\log(3) + \log(2) + \log(C_1))+ \frac{3}{4\pi}\label{partialZ0minusBD0mainD0minusRealLargeTheta}
\end{align}
The last line holds as soon as $\Delta \geq \lambda_c C_1^2$. Finally if we let $\euscr{T}\equiv [\underline{\tau}, \overline{\tau}]$ with 
\begin{align}
\underline{\tau}& \equiv -\frac{1}{2n+3} - \frac{\Delta}{2} - \frac{C_1-1}{n}\\
\overline{\tau}&\equiv -\frac{1}{2n+3} -\frac{C_1-1}{n}
\end{align}
we can write the last correction as
\begin{align}
Z_L&\leq \sup_{\tau \in \euscr{T}} \frac{2c'}{\pi} \int_{\tau + \frac{C_1-1}{n}}^{-\frac{1}{2n+3}} \left(\frac{1}{-s} + \frac{1}{\frac{1}{n}+ (s-\tau) }\right) \frac{C_1}{n} \frac{1}{\frac{C_1}{n}-\tau }\; ds\\
& + \frac{1}{2\pi} \sup_{\tau \in \euscr{T}} \int_{\tau + \frac{C_1-1}{n}}^{-\frac{1}{2n+3}} \frac{C_1}{n} \frac{1}{\frac{1}{n}+ (s-\tau )}\; ds\\
& \leq\sup_{\tau \in \euscr{T}}   \frac{2c'}{\pi} \left( - \log(\frac{1/(2n+3)}{-\tau - \frac{C_1-1}{n}}) + \log(\frac{1/n - \frac{1}{2n+3} - \tau }{\frac{C_1}{n} })\right)\frac{C_1}{n}\frac{1}{\frac{C_1}{n}-\tau }\\
&+ \frac{1}{2\pi} \sup_{\tau \in \overline{\tau}}   \frac{C_1}{n} \log(\frac{1/n - \frac{1}{2n+3} - \tau }{C_1/n })\\
&\leq \frac{2c'}{\pi} \sup_{\tau \in \euscr{T}} \left[\log(\frac{(1/n - \frac{1}{2n+3} - \tau)(2n+3)}{C_1}) + \log(C_1) + \log(\frac{-\tau - \frac{C_1-1}{n}}{C_1/n})\right]\frac{C_1}{n} \frac{1}{C_1/n - \tau}\\
&+  \frac{2c'}{\pi} \frac{C_1}{n}\log(1+ \frac{n\Delta}{2C_1})\\
&\leq \frac{6c'}{\pi}+ \frac{c'}{\pi}\log(C_1) + \frac{c'}{\pi}\left( \log(3) + \log(C_1)\right). \\
&+ \frac{2c'}{\pi}\frac{C_1}{n\Delta}\left(\log(2)+ \log(\frac{n\Delta}{C_1})\right)\\
&\leq \frac{c'}{\pi} \left(8 + 2\log(2)+ \log(3)\right) + \frac{2c'}{\pi}\log(C_1). 
 \label{partialZ0minusCD0mainD0minusRealLargeTheta}
\end{align}
The last bound holds as soon as $\Delta \geq \lambda_c C_1$. 
Combining~\eqref{partialZ0minusAD0mainD0minusRealLargeTheta}, ~\eqref{partialZ0minusBD0mainD0minusRealLargeTheta} and~\eqref{partialZ0minusCD0mainD0minusRealLargeTheta}, we get 
\begin{align}
Z &\leq \frac{6c'}{\pi}\log(C_1) + \frac{2}{\pi} + \frac{c'}{\pi} \left(6\log(3) + 5\log(2) + 8\right)\\
&\stackrel{\eqref{definitioncprime}}{\leq}  0.4\log(C_1) + 2\label{lastBoundPartialZ0CentralDomainD0minus}
\end{align}

}

The bound for $\theta\leq \frac{1}{2n+3}$ on the real part of $F(s; \theta)$, on $D_0^-$, is given by 
\begin{align}
F_{R,D_0^-} = \frac{3}{2}\frac{\theta}{\theta-s} + \frac{\theta}{8(2n+3)s(s-\theta)} + \frac{\theta}{2} \leq \theta \left( \frac{3}{2} + \frac{1}{8}\right)\frac{1}{\theta-s} + \frac{\theta}{2}
\end{align}
This bound is decreasing away from $s = 0$, and the worst case configuration thus corresponds to concentrating the atom near $s = -\frac{1}{2n+3}$. 

As in the $\theta\geq \frac{1}{2n+3}$ regime, we control the integral through the following decomposition, whose three terms are respectively labeled $Z_C$, $Z_R$ and $Z_L$,
\begin{align}
Z& = \frac{1}{|e^{2\pi i \theta}-1|}\int_{-\frac{1}{2n+3}-2\frac{C_1-1}{n}}^{-\frac{1}{2n+3}} F_{R,D_0}^-(s)\; ds\label{partialZ0minusRealLargeThetatmp0001}\\
& + \frac{1}{|e^{2\pi i \theta}-1|}\int_{-\frac{1}{2n+3}}^{-\frac{1}{2}+\theta} F_{R,D_0^-}(s)\frac{C_1}{1+n(-\frac{1}{2n+3}+\frac{C_1-1}{n} -s) + k'\Delta n}\; ds\label{partialZ0minusRealLargeThetatmp0002}\\
&+ \frac{1}{|e^{2\pi i \theta}-1|} \sup_{-\frac{1}{2}+\theta\leq \tau\leq -\frac{1}{2n+3}} \int_{\tau}^{-\frac{1}{2n+3}} F_{R,D_0}^-(s) \frac{C_1}{1+n(s - \tau - \frac{C_1-1}{n}) + k'n\Delta}\; ds\label{partialZ0minusRealLargeThetatmp0003}
\end{align}

\begin{align}
Z_C&\leq  \frac{1}{|e^{2\pi i \theta}-1|} \left[\frac{3}{2}+ \frac{1}{8}\right] \theta \int_{-\frac{1}{2n+3} - 2\frac{C_1-1}{n}}^{-\frac{1}{2n+3}} \frac{1}{s-\theta}\;ds + \frac{\theta}{|e^{2\pi i \theta}-1|}\frac{1}{2} \int_{-\frac{1}{2n+3} - 2\frac{C_1-1}{n}}^{-\frac{1}{2n+3}} 1\; ds\\
&\leq \frac{1}{\pi}\left[\frac{3}{2}+ \frac{1}{8}\right] \left\{\log(\frac{\theta + \frac{1}{2n+3}}{\theta + \frac{1}{2n+3}+2\frac{C_1-1}{n}}) + \frac{1}{2\pi} 2\frac{C_1-1}{n}\right\}\\
&\leq \frac{1}{\pi}\left(\frac{3}{2}+ \frac{1}{8}\right)\log(6C_1) + \frac{1}{2\pi}\label{domainD0minusRealTermaCorrectionCentral}
\end{align}
For the second correction, we get 
\begin{align}
Z_R &= \frac{1}{|e^{2\pi i \theta}-1|} \int_{-\frac{1}{2}+\theta}^{-\frac{1}{2n+3}} \left(\frac{3}{2} + \frac{1}{8(2n+3)}\sup_{\sigma\in [-\frac{1}{2}+\theta, -\frac{1}{2n+3}]} \frac{1}{\sigma}\right) \theta \int_{-\frac{1}{2}+\theta}^{-\frac{1}{2n+3}} \left(\frac{1}{s-\theta} + \frac{\theta}{2}\right) \frac{C_1}{1+n(-\frac{1}{2n+3} + \frac{C_1-1}{n} - s)}\; ds\\
&\leq \frac{1}{\pi} \left(\frac{3}{2} + \frac{1}{8}\right) \int_{-\frac{1}{2}+\theta}^{-\frac{1}{2n+3}} \frac{1}{s-\theta} \frac{C_1}{1+n(-\frac{1}{2n+3} + \frac{C_1-1}{n} - s)} \; ds\\
&+ \frac{1}{2\pi} \frac{C_1}{n} \left|\log(\frac{C_1/n}{C_1/n - \frac{1}{2n+3} + 1/2 - \theta})\right|\\
&\leq \frac{1}{\pi} \left(\frac{3}{2}+ \frac{1}{8}\right) \int_{-\frac{1}{2}+\theta}^{-\frac{1}{2n+3}} \left(\frac{1}{\theta-s} - \frac{n}{1+n(-\frac{1}{2n+3} + \frac{C_1-1}{n} - s)}\right)\frac{C_1}{1-n\theta + n(\frac{C_1-1}{n} - \frac{1}{2n+3})}\; ds\\
&+ \frac{1}{2\pi} \frac{C_1}{n}\log(2)\\
&\leq \frac{1}{\pi}\left(\frac{3}{2}+ \frac{1}{8}\right) \left|-\log(\frac{\theta + \frac{1}{2n+3}}{1/2})+ \log(\frac{C_1/n}{C_1/n - \frac{1}{2n+3} + \frac{1}{2}-\theta})\right| \frac{C_1}{n}\frac{1}{|\frac{C_1}{n} - \theta - \frac{1}{2n+3}|}\\
&\leq \frac{\log(2)}{4}\label{repere1}\\
&+ \frac{1}{\pi}\left(\frac{3}{2}+ \frac{1}{8}\right) \left\{\log(1+ \frac{\frac{1}{2n+3}-\frac{C_1}{n} + \theta}{1/2-\theta - \frac{1}{2n+3} + \frac{C_1}{n}})\vee \log(1+ \frac{C_1/n - \frac{1}{2n+3} - \theta}{1/2})\right\} \frac{C_1}{n}\frac{1}{|\frac{C_1}{n} - \theta - \frac{1}{2n+3}|}\label{lineEnnuyante1FD0minusSMallTheta001}\\
&+ \frac{1}{\pi}\left(\frac{3}{2}+ \frac{1}{8}\right)\left\{\log(1+ \frac{C_1/n - \theta - \frac{1}{2n+3}}{\theta + \frac{1}{2n+3}}) \vee \log(1+ \frac{\theta + \frac{1}{2n+3} - \frac{C_1}{n}}{C_1/n})\right\} \frac{1}{\left|\theta + \frac{1}{2n+3} - \frac{C_1}{n}\right|} \frac{C_1}{n}\label{lineEnnuyante1FD0minusSMallTheta002}
\end{align}
We now bound~\eqref{lineEnnuyante1FD0minusSMallTheta001} and~\eqref{lineEnnuyante1FD0minusSMallTheta002} separately. For the first line, we have 
\begin{align}
\eqref{lineEnnuyante1FD0minusSMallTheta001} \leq \frac{C_1}{n} \left\{\frac{n}{C_1-1} + 2\right\}\leq 4
\end{align}
For the second line, to bound the first log, we consider the following distinction: either $C_1/n \geq 2\left( \theta + \frac{1}{2n+3}\right)$, or $\frac{C_1}{n}\leq 2(\theta + \frac{1}{2n+3})$. In the former case, we write,
\begin{align}
\log(1+ \frac{\frac{C_1}{n} - \theta - \frac{1}{2n+3}}{\theta + \frac{1}{2n+3}}) \frac{C_1}{n}\frac{1}{|\theta + \frac{1}{2n+3} - \frac{C_1}{n}|} \leq \frac{C_1}{n}\log(1+3C_1) \frac{2}{C_1/n}\leq 2\log(4C_1)
\end{align}
Whenever $\frac{C_1}{n}\leq 2(\theta + \frac{1}{2n+3})$, we use
\begin{align}
&\log(1+ \frac{C_1/n - \theta - \frac{1}{2n+3}}{\theta + \frac{1}{2n+3}}) \frac{1}{|\theta + \frac{1}{2n+3} - \frac{C_1}{n}|}\\
&\leq \frac{C_1}{n} \frac{C_1/n - \theta - \frac{1}{2n+3}}{\theta + \frac{1}{2n+3}} \frac{1}{\theta + \frac{1}{2n+3} - \frac{C_1}{n}}\\
&\leq \frac{C_1}{n} \frac{2n}{C_1}\leq 2
\end{align}
Substituting this into~\eqref{repere1} to~\eqref{lineEnnuyante1FD0minusSMallTheta002}, we get 
\begin{align}
Z_R\leq  \frac{\log(2)}{4} + \frac{1}{\pi}\left(\frac{3}{2}+ \frac{1}{8}\right) \left(4 + 4\log(2) +2+ 2\log(C_1)\right)\label{domainD0minusRealTermbCorrectionCentral}
\end{align}

Finally for the supremum, we write 
\begin{align}
Z_L& = \frac{1}{|e^{2\pi i \theta}-1|} \sup_{0\leq \tau \leq \frac{1}{2}-\theta}  \left(\frac{3}{2} + \frac{1}{8(2n+3)} \sup_{\sigma\in [-\frac{1}{2n+3}-\tau, -\frac{1}{2n+3}]} \frac{1}{-\sigma}\right)\int_{-\frac{1}{2n+3}-\tau}^{-\frac{1}{2n+3}} \frac{1}{s-\theta} \frac{C_1}{1+n(s-(-\frac{1}{2n+3} - \tau - \frac{C_1-1}{n}))} \; ds\\
&\leq \frac{1}{\pi}\left(\frac{3}{2}+ \frac{1}{8}\right) \sup_{\tau \in [0,\frac{1}{2}-\theta]} \int_{-\frac{1}{2n+3}-\tau}^{-\frac{1}{2n+3}}  \left(\frac{1}{\theta-s} + \frac{n}{1+n(s-(-\frac{1}{2n+3} - \tau - \frac{C_1-1}{n}))}\right) \frac{C_1}{1+n\theta + (\frac{1}{2n+3} + \tau + \frac{C_1-1}{n})n}\; ds\\
&+ \frac{1}{2\pi} \sup_{0\leq \tau \leq \frac{1}{2}-\theta} \frac{C_1}{n} \log(\frac{\tau+ \frac{C_1}{n}}{C_1/n})\\
&\leq \frac{1}{\pi}\left(\frac{3}{2}+ \frac{1}{8}\right) \sup_{0\leq \tau \leq \frac{1}{2}-\theta} \left|\log(\frac{\frac{1}{2n+3}+\tau+\theta}{\theta + \frac{1}{2n+3}}) + \log(\frac{C_1/n + \tau}{C_1/n})\right| \frac{C_1/n}{\theta + \frac{1}{2n+3} + \tau + \frac{C_1}{n}}+ \frac{1}{4\pi}\\
&\leq \frac{1}{\pi}\left(\frac{3}{2}+ \frac{1}{8}\right) \sup_{\tau} \log(\frac{\theta+ \frac{1}{2n+3}+\tau}{C_1/n} \frac{C_1/n}{\theta+\frac{1}{2n+3}})\\
& +\frac{1}{\pi}\left(\frac{3}{2}+ \frac{1}{8}\right) + \frac{1}{4\pi}\\
&\leq \frac{1}{\pi}\left(\frac{3}{2}+ \frac{1}{8}\right)\left\{\log(\frac{\theta+\frac{1}{2n+3} + \tau}{C_1/n}\vee \frac{C_1/n}{\theta+\frac{1}{2n+3} + \tau}) + \log(\frac{C_1/n}{\theta+\frac{1}{2n+3}}\vee \frac{\theta+\frac{1}{2n+3}}{C_1/n}) \right\} \frac{C_1/n}{\theta + \frac{1}{2n+3} + \tau + \frac{C_1}{n}}\\
& + \frac{1}{\pi}\left(\frac{3}{2}+ \frac{1}{8}\right) + \frac{1}{4\pi}\\
&\leq  2\log(3C_1) + 2+ \frac{1}{\pi}\left(\frac{3}{2}+ \frac{1}{8}\right) + \frac{1}{4\pi}\label{domainD0minusRealTermcCorrectionCentral}
\end{align}

Grouping~\eqref{domainD0minusRealTermcCorrectionCentral},~\eqref{domainD0minusRealTermbCorrectionCentral} and~\eqref{domainD0minusRealTermaCorrectionCentral}, we get the correction
\begin{align}
Z&\leq  8.6 + 3.6\log(C_1)\label{boundSmallThetaD0minusReal}
\end{align}
Taking the maximum of~\eqref{boundSmallThetaD0minusReal} and~\eqref{lastBoundPartialZ0CentralDomainD0minus}, we can bound the real part on both domains as $Z\leq  8.6 + 3.6\log(C_1)$. To conclude on $D_0^-$, we control the imaginary part. The additional contribution appearing in the imaginary part is the same when $\theta \geq \frac{1}{2n+3}$ and when $\theta \leq \frac{1}{2n+3}$. We can once again decompose the contribbution into the following three terms
\begin{align}
Z &= \frac{1}{|e^{2\pi i \theta}-1|} \int_{-\frac{1}{2n+3} - 2\frac{C_1-1}{n}}^{-\frac{1}{2n+3}} \log(1+\frac{\theta}{-s}) \; ds \\
&+ \frac{1}{|e^{2\pi i \theta}-1|} \int_{-\frac{1}{2}+\theta}^{-\frac{1}{2n+3}} \log(1+\frac{\theta}{-s})\frac{C_1}{1+n(-\frac{1}{2n+3} + \frac{C_1-1}{n}-s)}\; ds\\
&+ \frac{1}{|e^{2\pi i \theta}-1|}\sup_{-\frac{1}{2}+\theta\leq \tau \leq -\frac{1}{2n+3}} \int_{\tau}^{-\frac{1}{2n+3}} \log(1+ \frac{\theta}{-s}) \frac{C_1}{1+(s - (\tau - \frac{C_1-1}{n}))}\; ds
\end{align}
We use $Z_C$, $Z_R$ and $Z_L$ to denote each of those terms. Each of those terms can be controled as follows 
\begin{align}
Z_C = \frac{1}{|e^{2\pi i \theta}-1|}\int_{-\frac{1}{2n+3} - 2\frac{C_1-1}{n}}^{-\frac{1}{2n+3}} \log(1+ \frac{\theta}{-s})\; ds &\leq \frac{1}{4\pi} \log(1+2\frac{C_1-1}{n}\frac{1}{\frac{1}{2n+3}})\\
&\leq \frac{1}{4\pi}\left(\log(6) + \log(C_1)\right)\label{boundK1FD0minusImag}
\end{align}
\begin{align}
Z_R& = \frac{1}{|e^{2\pi i \theta}-1|\int_{-\frac{1}{2}+\theta}}^{-\frac{1}{2n+3}} \log(1+\frac{\theta}{-s}) \frac{C_1}{1+n(-\frac{1}{2n+3} + \frac{C_1-1}{n}-s)}\; ds\\
&\leq  \frac{1}{4\pi} \int_{-\frac{1}{2}+\theta}^{-\frac{1}{2n+3}} \left\{\frac{1}{-s} - \frac{n}{1+n(-\frac{1}{2n+3} + \frac{C_1-1}{n}-s)}\right\} \frac{1}{\frac{C_1-2}{n} - \frac{1}{2n+3}}\\
&\leq  \frac{C_1/n}{\frac{C_1-3}{n}} \frac{1}{4\pi} \left\{-\log(\frac{\frac{1}{2n+3}}{1/2-\theta}) + \log(\frac{C_1}{1+n(-\frac{1}{2n+3} + \frac{1}{2}-\theta + \frac{C_1-1}{n})})\right\}\\
&\leq \frac{1}{4\pi}\frac{C_1}{C_1-3} \left\{\log(3C_1) = \log(1+ \frac{C_1/n}{1/2-\theta})\right\}\\
&\leq \frac{1}{4\pi}\frac{C_1}{C_1-3}\left\{\log(3C_1) + \log(3C_1+1)\right\}\\
&\leq \frac{1}{\pi} \left(\log(4)+ \log(C_1)\right) \label{boundK2FD0minusImag}
\end{align}

The last line holds as soon as $C_1\geq 4$. Finally, for the supremum, we write 
\begin{align}
Z_L& = \frac{1}{|e^{2\pi i \theta}-1|} \sup_{-\frac{1}{2}+\theta \leq \tau \leq -\frac{1}{2n+3}} \int_{\tau}^{-\frac{1}{2n+3}} \frac{1}{4}\log(1+ \frac{\theta}{-s}) \frac{C_1}{1+n(s - (\tau - \frac{C_1-1}{n}))}\; ds\\
&\leq \frac{1}{4\pi} \sup_{-\frac{1}{2}+ \theta\leq \tau \leq -\frac{1}{2n+3}} \int_{\tau}^{-\frac{1}{2n+3}} \left(\frac{1}{-s} + \frac{n}{1+n(s - (\tau - \frac{C_1-1}{n}))}\right) \frac{1}{1+n(-\tau + \frac{C_1-1}{n})}\; ds\\
&\leq \frac{1}{4\pi} \sup_{-\frac{1}{2}+\theta \leq \tau \leq -\frac{1}{2n+3}} \left\{-\log(\frac{\frac{1}{2n+3}}{-\tau}) + \log(\frac{1/n - \frac{1}{2n+3} - \tau +\frac{C_1-1}{n}}{C_1/n})\right\} \frac{C_1}{1+n(-\tau + \frac{C_1-1}{n})}\\
&\leq \frac{1}{4\pi} \sup_{-\frac{1}{2}+ \theta\leq \tau \leq -\frac{1}{2n+3}}\left\{\log(\frac{C_1}{n(-\tau)}) \vee \log(\frac{-\tau}{\frac{C_1}{n}})\right\}\frac{C_1}{n(-\tau) + C_1}\\
&+ \frac{1}{4\pi} \sup_{-\frac{1}{2}+\theta\leq \tau \leq -\frac{1}{2n+3}} \log(\frac{1/n-\frac{1}{2n+3} - \tau + \frac{C_1-1}{n}}{\frac{1}{2n+3}})\\
&\leq \frac{1}{4\pi}\sup_{-\frac{1}{2}+\theta\leq \tau \leq -\frac{1}{2n+3}} \left\{\log(3C_1) +1\right\} + \frac{1}{4\pi} \sup_{-\frac{1}{2}+\theta\leq \tau \leq -\frac{1}{2n+3}} \log(\frac{C_1-1}{n}(2n+3)) \frac{C_1}{n(-\tau)+C_1}\\
&\leq \frac{1}{4\pi} \left\{\log(3C_1) + 1\right\} + \frac{1}{4\pi} \log(3C_1)\\
&\leq \frac{1}{2\pi} \left\{\log(3)+\log(C_1)\right\} + \frac{1}{4\pi}\label{boundK3FD0minusImag}
\end{align}
The total bound on the imaginary part is then given by
\begin{align}
Z&\leq \eqref{boundK1FD0minusImag} + \eqref{boundK2FD0minusImag} +   \eqref{boundK3FD0minusImag} \leq 0.8 + 0.7\log(C_1)\label{boundImaginaryPartD0minusSmallTauMinusTheta}
\end{align}

Now grouping this bound with twice the bound on the real part, we get the contribution for $|\tau-\alpha|<1/n$ for the subdomain $D_0^-$, 
\begin{align}
Z&\leq 2\eqref{boundSmallThetaD0minusReal} + \eqref{boundImaginaryPartD0minusSmallTauMinusTheta}\leq 18 + 8\log(C_1).
\end{align}

\subsection{Proof of lemma~\ref{lemmaTroncD0plusCons} ($D_0^+$, small $|\tau - \alpha|$)}

We now deal with $F_{D_0}^+$. Recall that we have 
\begin{align}
F_{D_0}^+ \leq \left(\frac{\pi}{2}+ \frac{1}{4(2n+3)}\left(\frac{1}{2}+ \frac{1}{\pi}\right) \left(2(2n+3) + \frac{1}{s} + \frac{1}{\theta-s}\right)\right)+ \frac{\theta}{2} \; ds
\end{align}
as well as 
\begin{align}
 F_{I,D_0}^+& \leq \frac{1}{4(2n+3)} \left(\frac{1}{\pi}+ \frac{1}{2}\right) \left(\frac{1}{s}+ \frac{1}{\theta-s}\right) + \log(\frac{s}{\theta-s} \vee \frac{\theta-s}{s})\\
&\leq F_{R,D_0}^+ + \log(\frac{s}{\theta-s} \vee \frac{\theta-s}{s})\label{boundFromRealToImagFD0plus}
\end{align}
The domain of integration in this case has length $\theta - \frac{1}{2n+3}$. For the real part, we thus simply take the supermum of $p(e^{2\pi i \theta})$ over this interval,
\begin{align}
&\frac{1}{|1-e^{2\pi i \theta}|}\int_{s\in D_0^+} p(e^{2\pi i \theta}) F_{D_0^+}(s\;\theta) \; ds \\
&\leq \sup_{s \in [\frac{1}{2n+3}, \theta - \frac{1}{2n+3}]} \frac{1}{\pi \theta}\left[ \left(\frac{\pi}{2}+ \frac{1}{4(2n+3)}\left(\frac{1}{2}+ \frac{1}{\pi}\right) \left(2(2n+3) + \frac{1}{s} + \frac{1}{\theta-s}\right)\right)+ \frac{\theta}{2} \right] (\theta - \frac{1}{2n+3})\\
&\leq \frac{1}{2} + \frac{1}{\pi}\left(\frac{3}{2} + \frac{1}{\pi}\right)\label{boundRealPartD0plusLemmaTroncNear}
\end{align}

For the imaginary part, we have 
\begin{align}
\frac{1}{|e^{2\pi i \theta} - 1|}\int_{s\in D_0^+} p(e^{2\pi i \theta}) \log(\frac{s}{\theta - s}\vee \frac{\theta-s}{s}) \; ds & = \frac{1}{|e^{2\pi i \theta}-1|} \int_{\frac{1}{2n+3}}^{\theta/2} p(e^{2\pi i \theta}) \log(\frac{\theta-s}{s}) \; ds\\
& + \frac{1}{|e^{2\pi i \theta}-1|}\int_{\theta/2}^{\theta - \frac{1}{2n+3}} p(e^{2\pi i \theta}) \log(\frac{s}{\theta-s})\; ds
\end{align}
We then use $Z$ and $W$ to denote each of those terms. For both terms, we consider the following decomposition,
\begin{align}
Z&\leq  \frac{1}{|e^{2\pi i \theta} - 1|} \int_{\frac{1}{2n+3}}^{\frac{1}{2n+3}+2\frac{C_1-1}{n}} \log(\frac{\theta-s}{s})\; ds\\
&+ \frac{1}{|e^{2\pi i \theta}-1|} \int_{\frac{1}{2n+3}}^{\theta/2} \log(\frac{\theta-s}{s})\frac{C_1}{1+n(s- (\frac{1}{2n+3} - \frac{C_1-1}{n}))}\; ds\\
&+ \frac{1}{|e^{2\pi i \theta} - 1|} \sup_{0\leq \tau \leq \theta/2} \int_{\frac{1}{2n+3}}^{\frac{1}{2n+3} + \tau} \log(\frac{\theta-s}{s}) \frac{C_1}{1+n\left(\frac{1}{2n+3} + \tau + \frac{C_1-1}{n} -s\right)} \; ds
\end{align}
as well as 
\begin{align}
W& = \frac{1}{|e^{2\pi i \theta}-1|} \int_{\theta - \frac{1}{2n+3} - 2\frac{C_1-1}{n}}^{\theta - \frac{1}{2n+3}} \log(\frac{s}{\theta-s})\; ds \\
&+ \frac{1}{|e^{2\pi i \theta} - 1|} \int_{\theta/2}^{\theta - \frac{1}{2n+3}} \log(\frac{s}{\theta-s}) \frac{C_1}{1+n(\theta-\frac{1}{2n+3}-s)}\; ds\\
&+ \frac{1}{|e^{2\pi i \theta}-1|} \sup_{0\leq \tau \leq\theta/2} \int_{\theta - \frac{1}{2n+3} - \tau}^{\theta - \frac{1}{2n+3}} \log(\frac{s}{\theta-s}) \frac{C_1}{1+n(s- \left(\theta - \frac{1}{2n+3} - \tau - \frac{C_1-1}{n}\right))}\; ds
\end{align}
Starting with $Z$, using $\log(\frac{\theta-s}{s})\leq \log(\theta/s)\leq \theta/s$, we have 
\begin{align}
Z&\leq \frac{1}{|e^{2\pi i \theta}-1|} \int_{\frac{1}{2n+3}}^{\frac{1}{2n+3} + 2\frac{C_1-1}{n}} \log(\frac{\theta-s}{s})\; ds \\
&+ \frac{1}{|e^{2\pi i \theta}-1|}\int_{\frac{1}{2n+3}}^{\theta/2} \log(\frac{\theta-s}{s}) \frac{C_1}{1+n(s- (\frac{1}{2n+3} - \frac{C_1-1}{n}))} \; ds\\
&+ \frac{1}{|e^{2\pi i \theta}-1|} \sup_{0\leq \tau \leq \theta/2} \int_{\frac{1}{2n+3}}^{\frac{1}{2n+3} + \tau} \log(\frac{\theta - s}{s}) \frac{C_1}{1+n(\frac{1}{2n+3} + \tau + \frac{C_1-1}{n}-s)}\; ds\\
& \leq \frac{1}{\pi} \log(1+ 2\frac{C_1-1}{n}(2n+3))+ \frac{1}{\pi} \int_{\frac{1}{2n+3}}^{\theta/2} \left(\frac{1}{s} + \frac{-1}{1/n + s - \frac{1}{2n+3} + \frac{C_1-1}{n}}\right) \frac{C_1/n}{\frac{C_1-1}{n}} \; ds\\
&+ \frac{1}{\pi} \sup_{0\leq \tau \leq \theta/2} \int_{\frac{1}{2n+3}}^{\frac{1}{2n+3}+\tau} \left(\frac{1}{s} + \frac{1}{\frac{C_1}{n} + \frac{1}{2n+3} + \tau - s}\right) \frac{C_1/n}{\frac{C_1}{n} + \frac{1}{2n+3} + \tau}\; ds\\
&\leq \frac{1}{\pi} \left(\log(6) + \log(C_1)\right)+ \frac{1}{\pi} \left[\log(\frac{\theta/2}{\frac{1}{2n+3}}) - \log(\frac{\theta/2 - \frac{1}{2n+3} + \frac{C_1}{n}}{C_1/n})\right] \frac{C_1}{C_1-1}\\
&+ \frac{1}{\pi} \sup_{0\leq \tau \leq \frac{\theta}{2}} \left|\log(\frac{\frac{1}{2n+3} + \tau}{\frac{1}{2n+3}}) - \log(\frac{C_1/n}{C_1/n + \tau})\right| \frac{C_1/n}{C_1/n + \frac{1}{2n+3} + \tau}\\
&\leq \frac{1}{\pi} \left(\log(6)+ \log(C_1)\right)+ \frac{1}{\pi} \left|\log(1+ \frac{C_1/n}{\theta/2}) + \log(\frac{C_1}{n}(2n+3))\right| \frac{C_1}{C_1-1}\\
&+\frac{1}{\pi}\sup_{0\leq \tau\leq \theta/2} \left|\log(3n(\frac{1}{2n+3} + \tau)) + \log(\frac{C_1/n + \tau}{C_1/n})\right| \frac{C_1/n}{\frac{C_1}{n} + \frac{1}{2n+3} + \tau}\\
&\leq \frac{1}{\pi} \left(\log(6) + \log(C_1)\right) + \frac{4}{\pi}\log(4C_1)  \\
&+ \frac{1}{\pi}\sup_{0\leq \tau \leq \theta/2} \left|\log(\frac{n}{C_1} \left(\frac{1}{2n+3}+\tau\right)) + \log(3)+ \log(C_1) + \log(\frac{C_1/n+\tau}{C_1/n})\right| \frac{C_1/n}{C_1/n + \frac{1}{2n+3} + \tau}\\
&\leq \frac{1}{\pi} \left(\log(6)+ \log(C_1)\right)+ \frac{2}{\pi} \left(2\log(4)+ 2\log(C_1)\right) + \frac{1}{\pi} \left(2+ \log(3)+ \log(C_1)\right). \label{tmpExplanationD0plusImag001}
\end{align}
The bound~\eqref{tmpExplanationD0plusImag001} holds as soon as $C_1\geq 2$ which implies $C_1/(C_1-1)\leq 2$ and $\Delta \geq 4(C_1-1)\lambda_c$. Together those last two lines can thus be made less than 
\begin{align}
Z&\leq  \frac{1}{\pi} \left(\log(6)+ 4\log(4)+ 2+ \log(3)\right) + \frac{6}{\pi}\log(C_1)\\
& = 3.5 + 2\log(C_1). \label{boundZD0plusSmallLemmaTronc}
\end{align}
For $W$, we get 
\begin{align}
W & = \frac{1}{|e^{2\pi i \theta}-1|} \int_{\theta - \frac{1}{2n+3} - 2\frac{C_1-1}{n}}^{\theta - \frac{1}{2n+3}} \log(\frac{s}{\theta-s})\; ds\\
&+ \frac{1}{|e^{2\pi i \theta}-1|} \int_{\theta/2}^{\theta - \frac{1}{2n+3}} \log(\frac{s}{\theta-s}) \frac{C_1}{1+n(\theta - \frac{1}{2n+3} - s)}\; ds\\
&+ \frac{1}{|e^{2\pi i \theta}-1|} \sup_{0\leq \tau \leq \frac{\theta}{2}} \int_{\theta - \frac{1}{2n+3} - \tau}^{\theta - \frac{1}{2n+3}} \log(\frac{s}{\theta-s}) \frac{C_1}{1+n(s - (\theta - \frac{1}{2n+3} - \tau - \frac{C_1-1}{n}))} \; ds\\
&\leq \frac{1}{\pi} \log(\frac{2\frac{C_1-1}{n} + \frac{1}{2n+3}}{\frac{1}{2n+3}})+ \frac{1}{\pi} \int_{\theta/2}^{\theta - \frac{1}{2n+3}} \left(\frac{1}{\theta-s} + \frac{-1}{\frac{1}{n} + \left(\theta - \frac{1}{2n+3} - s + \frac{C_1-1}{n}\right)}\right) \frac{C_1}{C_1-1}\; ds\\
&+ \frac{1}{\pi} \sup_{0 \leq \tau \leq \theta/2} \int_{\theta - \frac{1}{2n+3} - \tau}^{\theta - \frac{1}{2n+3}} \left(\frac{1}{\theta-s} + \frac{1}{\frac{1}{n} + \left(s - (\theta - \frac{1}{2n+3} - \tau - \frac{C_1-1}{n})\right)}\right) \frac{C_1/n}{\frac{C_1}{n} - \frac{1}{2n+3} + \tau}\; ds\\
&\leq \frac{1}{\pi} \left(\log(6)+ \log(C_1)\right)+ \frac{1}{\pi} \left|-\log(\frac{\frac{1}{2n+3}}{\theta/2}) + \log(\frac{C_1/n}{C_1/n + \frac{\theta}{2} - \frac{1}{2n+3}})\right| \frac{C_1}{C_1-1}\\
&+ \frac{1}{\pi} \sup_{0\leq \tau \leq \theta/2} \left|\log(\frac{\frac{1}{2n+3}}{\frac{1}{2n+3}+\tau}) + \log(\frac{\tau + \frac{C_1}{n}}{C_1/n})\right| \frac{C_1/n}{\frac{C_1-1}{n}+\tau}\\
&\leq \frac{1}{\pi} \left(\log(6)+ \log(C_1)\right)+ \frac{1}{\pi} \left(\log(3C_1) + \log(1+ \frac{C_1}{n} \frac{2}{\theta})\right)\\
&+ \frac{1}{\pi} \sup_{0\leq \tau \leq \theta/2} \left(\log(3C_1) + \log(1+ \frac{C_1/n}{\tau + \frac{1}{2n+3}})\right) \frac{C_1/n}{\frac{C_1-1}{n} + \tau}\\
&\stackrel{(a)}{\leq } \frac{1}{\pi} \left(\log(6)+ \log(C_1)\right)+ \frac{2}{\pi} \left(\log(3)+ 2\log(C_1) + \log(7)\right)\\
&+ \frac{2}{\pi} \left(\log(3)+2 \log(C_1) + \log(4)\right)
\end{align}
$(a)$ holds as soon as $C_1\geq 2$. From those lines, we have 
\begin{align}
W& \leq \frac{1}{\pi} \left(\log(6)+ 4\log(3)+ 2\log(7) + \log(4)\right) + \frac{9}{\pi}\log(C_1)\\
& \leq 3.7 + 3\log(C_1).\label{boundWD0plusSmallLemmaTronc}
\end{align}
Combining~\eqref{boundWD0plusSmallLemmaTronc}, ~\eqref{boundZD0plusSmallLemmaTronc} as well as~\eqref{boundRealPartD0plusLemmaTroncNear} gives the result of the lemma. 

\subsection{Proof of lemma~\ref{lemmaTroncD4plusCons}, $D_4^+$, small $|\tau - \alpha|$}

We start by controling the real part (for both the $\theta\geq \frac{1}{2n+3}$ as for the $\theta \leq \frac{1}{2n+3}$ regimes). We then show how to control the imaginary part. 

On $D_4^+$, in the large $\theta$ regime, we have the bound
\begin{align}
F_{R, D_4}^+&\leq \frac{\theta}{2} + \left(\frac{1}{2} + \frac{1}{\pi}\right)\frac{1}{4(2n+3)} \left(\frac{1}{s} + \frac{1}{s-\theta}\right)\label{reminderBoundFD4plusReal}
\end{align}

The ``small $\theta$" bound on $D_4^+$ is given by 
\begin{align}
F_{R,D_4}^+ &\leq \frac{3}{2}\frac{\theta}{|\theta-s|} + \frac{\theta}{8(2n+3)s|s-\theta|} + \frac{\theta}{2} 
\end{align}

In the ``large $\theta$" regime, we control the integral through the upper bound $Z$ which we split into the following three contributions 
\begin{align}
Z & = Z_C + Z_R + Z_L\label{totalCorrectionDomainD4plusRealLargeThetaCentral}\\
& = \int_{\theta + \frac{1}{2n+3}}^{\theta + \frac{1}{2n+3} + 2\frac{C_1-1}{n}} 1\cdot \left(\frac{\theta}{2} +c\left(\frac{1}{s} + \frac{1}{s-\theta}\right)\right)\; ds\label{part1CorrectionCentralD4minus}\\
&+ \int_{\theta+\frac{1}{2n+3}}^{1/2} \frac{C_1}{1+n(s - (\theta + \frac{1}{2n+3} - \frac{C_1-1}{n}))} \left\{\frac{\theta}{2} + c\left(\frac{1}{s} + \frac{1}{s-\theta}\right)\right\}\; ds\label{part2CorrectionCentralD4minus}\\
&+ \sup_{0\leq \tau \leq \frac{1}{2}} \frac{1}{\pi \theta} \int_{\theta + \frac{1}{2n+3}}^{\theta + \frac{1}{2n+3} + \tau +} \left\{\frac{\theta}{2} + c \left(\frac{1}{s} + \frac{1}{s-\theta}\right)\right\} \frac{C_1}{1+n(\theta + \frac{1}{2n+3} + \tau +\frac{C_1-1}{n} -s )}\; ds\label{part3CorrectionCentralD4minus}
\end{align}
We now control each of those terms. Starting with $Z_C$, we can write
\begin{align}
Z_C& = \frac{1}{\pi \theta} \int_{\theta + \frac{1}{2n+3}}^{\theta + \frac{1}{2n+3} + 2\frac{C_1-1}{n}} \frac{\theta}{2} + c\left(\frac{1}{s} + \frac{1}{s-\theta}\right) \; ds\\
&\leq\frac{1}{2\pi} \frac{2(C_1-1)}{n} + \frac{c'}{\pi}\left[\log(1+ 2\frac{C_1-1}{n}\frac{1}{\theta + \frac{1}{2n+3}})+ \log(1+ 2\frac{C_1-1}{n} (2n+3))\right]\\
&\leq \frac{1}{2\pi} \frac{2(C_1-1)}{n} + \frac{c'}{\pi}2\log(1+6(C_1-1))\label{partialZ4plusaDomainD4plusRealLargeTheta}
\end{align}
For $Z_R$, we have 
\begin{align}
Z_R& = \frac{1}{\pi \theta} \int_{\theta+\frac{1}{2n+3}}^{1/2} c\left(\frac{1}{s}+ \frac{1}{s-\theta}\right) \frac{C_1}{1+n(s - (\theta + \frac{1}{2n+3} - \frac{C_1-1}{n}) ) }\; ds\\
& + \frac{1}{\pi \theta}\int_{\theta+\frac{1}{2n+3}}^{1/2} \frac{\theta}{2} \frac{C_1}{1+n(s - (\theta+\frac{1}{2n+3} - \frac{C_1-1}{n}) )}\; ds\\
& = \frac{1}{\pi \theta}\int_{\theta+\frac{1}{2n+3}}^{1/2} \frac{c}{s} \frac{C_1}{1+n(s - (\theta + \frac{1}{2n+3} - \frac{C_1-1}{n} ))}\; ds\\
& + \frac{1}{\pi \theta} \int_{\theta+\frac{1}{2n+3}}^{1/2}\frac{c}{s-\theta} \frac{C_1}{1+n(s - (\theta + \frac{1}{2n+3} - \frac{C_1-1}{n}))}\\
&+ \frac{1}{\pi \theta}\int_{\theta+\frac{1}{2n+3} }^{1/2} \frac{\theta}{2} \frac{C_1}{1+n(s - (\theta+\frac{1}{2n+3} - \frac{C_1-1}{n}))}\\
& =H_{1} +  H_2 +  H_0.
\end{align}

First note that for $H_1$, we have
\begin{align} 
H_1& \leq \frac{c'}{\pi} \int_{\theta + \frac{1}{2n+3}}^{1/2} \left(\frac{1}{s} + \frac{-n}{1+n(s- (\theta + \frac{1}{2n+3} - \frac{C_1-1}{n}))}\right) \frac{C_1}{|1 - n(\theta + \frac{1}{2n+3} - \frac{C_1-1}{n})|}\; ds\\
&\leq \frac{c'}{\pi} \left|\log(\frac{1/2}{\theta + \frac{1}{2n+3}}) - \log(\frac{1/2 - \theta - \frac{1}{2n+3} + \frac{C_1}{n}}{C_1/n})\right| \frac{C_1/n}{|\frac{C_1}{n} - \theta - \frac{1}{2n+3}|}
\label{tmplemmaTroncD4plus0001}
\end{align}
To bound~\eqref{tmplemmaTroncD4plus0001}, we make the distinction betwee $\theta < \frac{C_1-1}{2n}$ and $\theta \geq \frac{C_1-1}{2n}$. In the former case, we have  
\begin{align}
H_1& \leq  \frac{c'}{\pi} \left|\log(\frac{1/2}{\theta + \frac{1}{2n+3}}) - \log(\frac{1/2 - \theta - \frac{1}{2n+3} + \frac{C_1}{n}}{C_1/n})\right| \frac{2C_1}{n}\frac{n}{C_1-1}\\
&\stackrel{(a)}{\leq} \frac{4c'}{\pi} \log(\frac{C_1/n}{2(\frac{1}{2n+3}\wedge \frac{C_1-1}{n})\frac{1}{4}}) \\
&\stackrel{(b)}{\leq} \frac{4c'}{\pi} \left(\log(6)+ \log(C_1)\right).   \label{boundH1aD4plusLemmaTroncLargetheta}
\end{align}
In $(b)$, we use $C_1\geq 2$. In $(a)$, we use 
\begin{align}
\inf_{0\leq \theta \leq 1/2} \left(\theta + \frac{1}{2n+3}\right) \left(\frac{1}{2} - \theta - \frac{1}{2n+3} + \frac{C_1}{n}\right) &\geq \frac{1}{2n+3}\left(\frac{1}{2} - \frac{1}{2n+3}+ \frac{C_1-1}{n}\right) \wedge \left(\frac{1}{2}+ \frac{1}{2n+3}\right) \frac{C_1-1}{n}\\
&\geq \left(\frac{1}{2n+3} \wedge \frac{C_1-1}{n}\right)\frac{1}{4}. 
\end{align}
as well as 
\begin{align}
\sup_{0\leq \theta \leq \frac{C_1-1}{2n}} \left(\theta + \frac{1}{2n+3}\right)\left(\frac{1}{2}  - \theta - \frac{1}{2n+3} + \frac{C_1}{n}\right) \leq \frac{C_1}{2n}
\end{align}
When $\theta \geq \frac{C_1-1}{2n}$, we write 
\begin{align}
H_1 &\stackrel{(a)}{\leq} \frac{c}{\pi \theta} \left|\log(\frac{1/2}{\theta + \frac{1}{2n+3}}) - \log(\frac{1/2 - \theta - \frac{1}{2n+3} + \frac{C_1}{n}}{C_1/n})\right| \frac{C_1/n}{|\frac{C_1}{n} - \theta - \frac{1}{2n+3}|}\\
&\stackrel{(b)}{\leq} \frac{2 nc}{\pi (C_1-1)} \left(\log(1+ \frac{\theta + \frac{1}{2n+3} - \frac{C_1}{n}}{1/2 - \theta - \frac{1}{2n+3} + \frac{C_1}{n}} \vee \frac{-\theta - \frac{1}{2n+3} + \frac{C_1}{n}}{1/2})\right) \frac{C_1/n}{C_1/n - \theta - \frac{1}{2n+3}}\\
&+ \frac{2nc}{\pi (C_1-1)} \left(\log(1+ \frac{\theta + \frac{1}{2n+3} - \frac{C_1}{n}}{C_1/n}\vee \frac{C_1/n - \theta - \frac{1}{2n+3}}{\theta + \frac{1}{2n+3}})\right) \frac{C_1/n}{C_1/n - \theta - \frac{1}{2n+3}}\\
&\stackrel{(c)}{\leq} \frac{c}{\pi} \frac{2n}{(C_1-1)} \frac{C_1}{n} \left(\frac{1}{1/2 - \theta - \frac{1}{2n+3} + \frac{C_1}{n}}\vee 2\right) + \frac{2 nc}{\pi} \frac{C_1}{C_1-1}\left(\frac{n}{C_1} \vee \frac{1}{\theta + \frac{1}{2n+3}}\right) \\
&\stackrel{(d)}{\leq} \frac{4c'}{\pi} \left(\frac{1}{C_1-1}\vee \frac{2}{2n+3}\right) + \frac{4c'}{\pi} \left(\frac{1}{C_1}\vee 1\right)\\
&\stackrel{(e)}{\leq} \frac{8c'}{\pi} \label{boundH1bD4plusLemmaTroncLargetheta}
%
\end{align}
$(e)$ holds as soon as $C_1\geq 2$. 
A similar reasoning applies to $H_2$. In this case, we have 
\begin{align}
H_2 &\leq \frac{c'}{\pi}  \int_{\theta + \frac{1}{2n+3}}^{1/2} \left(\frac{1}{s-\theta} + \frac{-n}{1+n(s - (\theta + \frac{1}{2n+3} - \frac{C_1-1}{n}) )}\right)\frac{C_1}{|1-n(\frac{1}{2n+3} - \frac{C_1-1}{n}) |}\\
&\leq \frac{c'}{\pi} \left|\log(\frac{1/2 - \theta}{\frac{1}{2n+3}}) - \log(\frac{1/2 - \theta + \frac{C_1}{n} - \frac{1}{2n+3}}{C_1/n })\right|\frac{C_1}{|1-n(\frac{1}{2n+3} - \frac{C_1-1}{n}) |}\\
&\leq \frac{c'}{\pi}\left|\log(\frac{C_1/n}{1/(2n+3)}) + \log(\frac{1/2 - \theta + \frac{C_1}{n} - \frac{1}{2n+3}}{1/2 - \theta})\right| \frac{C_1/n}{\frac{C_1-1}{n}}\\
&\leq \frac{c'}{\pi} \frac{C_1}{C_1-1}\left( \log(3) + \log(C_1)\right) + \frac{c'}{\pi} \frac{C_1}{C_1-1} \log(1 + \frac{C_1}{n}\frac{1}{1/2 - \theta})\\
&\leq \frac{4c'}{\pi} \left(\log(4) + \log(C_1)\right) \label{boundH2aD4plusLemmaTroncLargetheta}
\end{align}
The last line holds as soon as $C_1\geq 2$ and $\frac{C_1}{n\Delta}\leq 1$. 

For $H_0$, we write 
\begin{align}
H_0 & = \frac{1}{\pi \theta}\int_{\theta + \frac{1}{2n+3}}^{1/2} \frac{\theta}{2} \frac{C_1}{1+n(s - (\theta+\frac{1}{2n+3} - \frac{C_1-1}{n}))}\; ds\\
& = \frac{C_1}{n}\frac{1}{2\pi} \log(\frac{1/n + \frac{1}{2} - (\theta + \frac{1}{2n+3} - \frac{C_1-1}{n} )}{C_1/n })\\
&\leq \frac{1}{2\pi}  \frac{C_1}{n} \log(1+ \frac{1/2 - \theta - \frac{1}{2n+3}}{\frac{C_1}{n} })\\
&\leq \frac{C_1}{n}\frac{n}{C_1}(1/2-\theta)\label{boundH0D4plusLemmaTroncLargetheta}
\end{align}
Combining the results above, the integral~\eqref{part2CorrectionCentralD4minus} can be controled as 
\begin{align}
Z_R &\leq \left(\eqref{boundH1aD4plusLemmaTroncLargetheta}\vee \eqref{boundH1bD4plusLemmaTroncLargetheta}\right) + \eqref{boundH2aD4plusLemmaTroncLargetheta} + \eqref{boundH0D4plusLemmaTroncLargetheta} \\
& = \frac{1}{2} + \frac{4c'}{\pi} \log(4)+ \frac{8c'}{\pi} + \frac{8c'}{\pi}\log(C_1)\\
&\leq 1.4 + 0.6\log(C_1). \label{partialZ4plusaDomainD4plusRealLargeThetaZR}
\end{align}

We now control the last integral~\eqref{part3CorrectionCentralD4minus}, we consider the decomposition
\begin{align}
\sup_{0\leq \tau \leq 1/2 - \theta} \frac{1}{\pi \theta} \int_{\theta + \frac{1}{2n+3}}^{\theta + \frac{1}{2n+3} + \tau} \left\{\frac{\theta}{2} + c \left(\frac{1}{s} + \frac{1}{s-\theta}\right)\frac{C_1}{1+n(\theta + \frac{1}{2n+3} + \tau -s )}\right\}\; ds\leq R_0 + R_1 + R_2 
\end{align}
Where we respectively define $R_0$, $R_1$ and $R_2$ as
\begin{align}
R_0 &\equiv \sup_{0\leq \tau \leq 1/2 - \theta}\frac{1}{\pi \theta} \int_{\theta + \frac{1}{2n+3}}^{\theta + \frac{1}{2n+3} + \tau}\frac{\theta}{2} \frac{C_1}{1+n\left(\theta + \frac{1}{2n+3}+\tau + \frac{C_1-1}{n} - s \right)}\; ds\\
R_1 &\equiv \sup_{0\leq \tau \leq 1/2 - \theta}\frac{1}{\pi \theta} \int_{\theta + \frac{1}{2n+3}}^{\theta + \frac{1}{2n+3} + \tau}\frac{c}{s} \frac{C_1}{1+n\left(\theta + \frac{1}{2n+3}+\tau + \frac{C_1-1}{n} - s \right)}\; ds\\
R_2 &\equiv \sup_{0\leq \tau \leq 1/2 - \theta}\frac{1}{\pi \theta}\int_{\theta + \frac{1}{2n+3}}^{\theta + \frac{1}{2n+3} + \tau} \frac{c}{s-\theta} \frac{C_1}{1+n\left(\theta + \frac{1}{2n+3}+\tau + \frac{C_1-1}{n} - s \right)}\; ds
\end{align}
We then bound each term as follows. Starting with $\partial R_0$, we have 
\begin{align}
R_0 &\leq \sup_{0\leq \tau \leq 1/2 - \theta}\frac{1}{\pi \theta} \int_{\theta + \frac{1}{2n+3}}^{\theta + \frac{1}{2n+3} + \tau} \frac{\theta}{2} \frac{C_1}{1+n\left(\theta + \frac{1}{2n+3}+\tau + \frac{C_1-1}{n} - s \right)}\; ds\\
&\leq \sup_{0\leq \tau \leq 1/2 - \theta} \frac{1}{\pi \theta}\frac{\theta}{2} \frac{C_1}{n}\log(\frac{C_1/n }{C_1/n + \tau })\\
&\leq \frac{1}{\pi \theta}\sup_{0\leq \tau \leq 1/2 - \theta} \frac{\theta}{2} \frac{C_1}{n} \log(1+ \frac{\tau}{C_1/n})\\
&\leq \frac{1}{4\pi}\label{partialR0aD4plusRealLargeTheta}
\end{align}

For $R_1$, we have
\begin{align}
R_1& \leq \frac{c'}{\pi}\sup_{0\leq \tau \leq 1/2 - \theta}\int_{\theta+\frac{1}{2n+3}}^{\theta+\frac{1}{2n+3}+\tau} \frac{1}{s}\frac{C_1}{1+n(\theta+\frac{1}{2n+3}+\tau + \frac{C_1-1}{n}-s )}\\
& = \frac{c'}{\pi}\sup_{0\leq \tau \leq 1/2 - \theta} \int_{\theta+\frac{1}{2n+3}}^{\theta+\frac{1}{2n+3}+\tau} \left\{\frac{1}{s}+ \frac{1}{1/n + \left(\theta + \frac{1}{2n+3}+\tau + \frac{C_1-1}{n}-s \right)}\right\} \frac{C_1/n}{(\theta + \frac{1}{2n+3} + \tau + \frac{C_1}{n} )}\\
& = \frac{c'}{\pi}\sup_{0\leq \tau \leq 1/2 - \theta} \left\{\log(\frac{\theta + \frac{1}{2n+3}+\tau}{\theta + \frac{1}{2n+3}}) - \log(\frac{C_1/n }{ \tau + \frac{C_1}{n}})\right\} \frac{C_1/n}{\theta + \frac{1}{2n+3} + \tau + \frac{C_1}{n}}
\end{align}

We then consider two cases:
\begin{itemize}
\item Either $\theta\geq \frac{C_1}{n}$. In this case, we use 
\begin{align}
R_1&\leq \frac{c'}{\pi}\sup_{0\leq \tau \leq 1/2 - \theta} \left\{\log(\frac{\theta + \frac{1}{2n+3}+\tau}{\theta + \frac{1}{2n+3}}) - \log(\frac{C_1/n}{\frac{1}{n}+ \tau + \frac{C_1-1}{n}})\right\}\frac{C_1/n}{\theta + \frac{1}{2n+3} + \tau + \frac{C_1}{n}}\\
&\leq \frac{c'}{\pi}\frac{C_1}{n}\left\{\log(\frac{\theta + \frac{1}{2n+3} + \tau}{C_1/n}) + \log(\frac{\tau + \frac{C_1}{n}}{\theta + \frac{1}{2n+3}})\right\}\frac{1}{\theta + \frac{1}{2n+3}+\tau + \frac{C_1}{n}}\\
&\leq \frac{2c'}{\pi}\label{partialR1aD4plusRealLargeTheta}
\end{align}
\item When $\theta<\frac{C_1}{n}$, we can write 
\begin{align}
R_1& \leq \sup_{0\leq \tau \leq 1/2 - \theta} \frac{c'}{\pi}\left\{\log(\frac{\theta + \frac{1}{2n+3}+\tau}{\theta + \frac{1}{2n+3}}) - \log(\frac{C_1/n }{\frac{1}{n}+ \tau + \frac{C_1-1}{n} })\right\}\frac{C_1/n}{\theta + \frac{1}{2n+3}+ \tau + \frac{C_1}{n}}\\
&\leq  \sup_{0\leq \tau \leq 1/2 - \theta} \frac{c'}{\pi}\log(\frac{\theta + \tau + \frac{1}{2n+3}}{C_1/n} \vee \frac{C_1/n}{\theta + \tau + \frac{1}{2n+3}}) \frac{C_1/n}{\theta + \frac{1}{2n+3} + \tau + \frac{C_1}{n}}\\
&+  \sup_{0\leq \tau \leq 1/2 - \theta}\frac{c'}{\pi }\log(\frac{\tau + \frac{C_1}{n}}{\theta + \frac{1}{2n+3}}\vee \frac{\theta + \frac{1}{2n+3}}{C_1/n}) \frac{C_1/n}{\theta + \frac{1}{2n+3} + \tau + \frac{C_1}{n}}\\
&\leq \frac{c'}{\pi} \left(1\vee \log(3) + \log(C_1)\right) + \frac{c'}{\pi} \left(1 \vee 1+ \log(3)+ \log(C_1)\right)\\
&\leq \frac{2c'}{\pi}(1+\log(3)+\log(C_1)).\label{partialR1bD4plusRealLargeTheta}
\end{align}
\end{itemize}
The bound on $R_2$ follows from 
\begin{align}
R_2&\leq \sup_{0\leq \tau \leq 1/2 - \theta}  \frac{1}{\pi \theta}\int_{\theta + \frac{1}{2n+3}}^{\theta + \frac{1}{2n+3}+\tau} \left(\frac{1}{2}+\frac{1}{\pi}\right)\frac{1}{4(2n+3)}\frac{1}{s-\theta} \frac{C_1}{1+n(\theta + \frac{1}{2n+3} + \tau + \frac{C_1-1}{n}-s )}\; ds\\
&\leq \sup_{0\leq \tau \leq 1/2 - \theta} \frac{c'}{\pi} \int_{\theta+\frac{1}{2n+3}}^{\theta+\frac{1}{2n+3}+\tau} \left\{\frac{1}{s-\theta} + \frac{1}{\theta+\frac{1}{2n+3}+ \tau + \frac{C_1}{n}-s }\right\}\frac{C_1}{n}\frac{1}{\frac{1}{2n+3}+\tau + \frac{C_1}{n}}\\
&\leq \frac{c'}{\pi} \frac{C_1}{n}\left\{\log(\frac{\frac{1}{2n+3}+\tau}{\frac{1}{2n+3}}) + \log(\frac{\tau+\frac{C_1}{n} }{C_1/n })\right\}\frac{1}{\tau+\frac{1}{2n+3}+\frac{C_1}{n}}\\
&\leq \frac{c'}{\pi} \log( \frac{\frac{1}{2n+3}+\tau}{C_1/n}\vee \frac{C_1/n}{\frac{1}{2n+3} + \tau})\frac{C_1/n}{\tau + \frac{1}{2n+3}+C_1/n}\\
&+ \frac{c'}{\pi} \log(\frac{\tau + C_1/n}{\frac{1}{2n+3}}\vee \frac{\frac{1}{2n+3}}{\tau +C_1/n})\frac{C_1/n}{\tau + \frac{1}{2n+3} + C_1/n}\\
&\leq \frac{c'}{\pi} \left(1\vee \log(3)+\log(C_1)\right) + \frac{c'}{\pi} \left(1+\log(3)+\log(C_1)\vee 1\right)\\
& \leq \frac{2c'}{\pi} \left(1+ \log(3) + \log(C_1)\right)\label{boundpartialR2D4plus RealLargeTheta}
\end{align}

Combining~\eqref{partialR0aD4plusRealLargeTheta}, ~\eqref{partialR1aD4plusRealLargeTheta}~\eqref{partialR1bD4plusRealLargeTheta} as well as~\eqref{boundpartialR2D4plus RealLargeTheta}, we get 
\begin{align}
Z_L &\leq  R_0 + R_1  +R_2 \leq \frac{1}{4\pi} + \frac{4c'}{\pi} \left(\log(3)+1\right) + \frac{4c'}{\pi} \log(C_1)\label{boundpartialZ4cDomainD4plusRealLargeTheta}
\end{align}
Grouping~\eqref{boundpartialZ4cDomainD4plusRealLargeTheta},~\eqref{partialZ4plusaDomainD4plusRealLargeThetaZR} and~\eqref{partialZ4plusaDomainD4plusRealLargeTheta}, we have 
\begin{align}
Z&\leq 2.5+ \log(C_1). \label{boudTotalLargeThetaD4plusLemmaTroncReal}
\end{align}

Both the ``small $\theta$" bound and the imaginary part are functions of the ratio $\frac{\theta}{s-\theta}$. We thus focus on controling the integral $\varphi(\theta)$ defined as
\begin{align}
\varphi(\theta) = \frac{1}{|1 - e^{2\pi i \theta}|} \int_{\theta + \frac{1}{2n+3}}^{1/2} \frac{\theta p(e^{2\pi i s})}{s - \theta}\; ds 
\end{align}
For this integral, proceceding as before, we get 
\begin{align}
|\varphi(\theta)| & \leq \frac{1}{\pi \theta} \int_{\theta + \frac{1}{2n+3}}^{\theta + \frac{1}{2n+3} +2\frac{C_1-1}{n}} \frac{1}{s-\theta}\; ds\\
&+ \frac{1}{\pi} \int_{\theta + \frac{1}{2n+3}}^{1/2} \frac{1}{s-\theta} \frac{C_1}{1+n (s - (\theta + \frac{1}{2n+3} - \frac{C_1-1}{n}))} \; ds\\
&+ \frac{1}{\pi} \sup_{0\leq \tau \leq \frac{1}{2}-\theta} \int_{\theta + \frac{1}{2n+3}}^{\theta + \frac{1}{2n+3} + \tau} \frac{1}{s-\theta} \frac{C_1}{1+n(\frac{1}{2n+3} + \theta + \tau + \frac{C_1-1}{n} - s)}\; ds\\
&\leq \frac{1}{\pi} \log(\frac{\frac{1}{2n+3} + 2\frac{C_1-1}{n}}{\frac{1}{2n+3}}) \\
&+ \frac{1}{\pi} \left|\log(s-\theta) - \log(1/n + s- (\theta + \frac{1}{2n+3} - \frac{C_1-1}{n}))\right|_{\theta + \frac{1}{2n+3}}^{1/2} \frac{C_1/n}{C_1/n - \frac{1}{2n+3}}\\
&+ \frac{1}{\pi} \sup_{0\leq \tau \leq 1/2-\theta} \left|\log(s-\theta) - \log(\frac{1}{2n+3} + \theta + \tau + \frac{C_1}{n} - s)\right|_{\theta + \frac{1}{2n+3}}^{\theta + \frac{1}{2n+3}+\tau} \frac{C_1/n}{\frac{1}{2n+3} + \tau +\frac{C_1}{n}}\\
&\leq \frac{1}{\pi} \left(\log(6)+ \log(C_1)\right)+ \frac{1}{\pi} \left|\log(\frac{1/2 - \theta}{\frac{1}{2n+3}}) - \log(\frac{1/2 - \theta - \frac{1}{2n+3} + \frac{C_1}{n}}{C_1/n})\right| \frac{C_1}{C_1-1}\\
&+ \frac{1}{\pi} \sup_{0\leq \tau \leq 1/2 - \theta} \left|\log(\frac{\tau + \frac{1}{2n+3}}{\frac{1}{2n+3}}) - \log(\frac{C_1/n}{\tau + C_1/n})\right| \frac{C_1/n}{\frac{1}{2n+3} + \tau + \frac{C_1}{n}}\\
&\leq \frac{1}{\pi} \left(\log(6)+ \log(C_1)\right)+ \frac{1}{\pi} \left(\log(3C_1) + \log(4C_1)\right) \frac{C_1}{C_1-1}\\
&+ \frac{1}{\pi} \sup_{0\leq \tau \leq 1/2 - \theta} \left|\log(\frac{\tau + C_1/n}{C_1/n}) + \log(3)+ \log(C_1) + \log(\frac{\tau + \frac{1}{2n+3}}{C_1/n})\right| \frac{C_1/n}{\frac{1}{2n+3} + \tau + C-1/n}\\
&\leq \frac{1}{\pi} \left(\log(6)+ 2\log(3)+ 2\log(4) + 2 + \log(3)\right) + \frac{6}{\pi}\log(C_1) \\
&\leq 3.2 + 2\log(C_1). \label{phiThetaTMPD4plusLemmatronc}
\end{align}
Now using the bound on $\varphi(\theta)$ as well as the ``small $\theta$" bound and the bound on the imaginary part for the integral $ \int_{0}^{-(\theta - \theta_\ell)}  e^{2\pi i (n+1) t} \tilde{D}(e^{-2\pi i (s+t)})\; dt$ on $D_4^+$,
\begin{align}
F_{R,D_4}^+ &\leq \frac{3}{2}\frac{\theta}{|\theta-s|} + \frac{\theta}{8(2n+3)s|s-\theta|} + \frac{\theta}{2}, \quad \text{when $\theta \leq \frac{1}{2n+3}$}, 
\end{align}
\begin{align}
F_{I,D_4}^+ \leq F_{R, D_4}^+ + \frac{1}{4}\log(\frac{s}{s-\theta})\label{tmp00047}
\end{align}
using $s\geq \frac{1}{2n+3}$ on $D_4^+$ as well as $\frac{1}{|e^{2\pi i \theta}-1|}\int_{\theta + \frac{1}{2n+3}}^{1/2} \frac{\theta p(e^{2\pi i s})}{2}\; ds \leq \frac{1}{4\pi}$, we get 
\begin{align}
 \int_{\theta+ \frac{1}{2n+3}}^{1/2} \frac{p(e^{2\pi i s})}{|1-e^{2\pi i \theta}|}\; F_{R,D_4}^+(s, \theta)\; ds & \leq \left(\frac{3}{2}+ \frac{1}{8}\right) \varphi(\theta) + \frac{1}{4\pi }\\
&\leq \left(\frac{3}{2}+ \frac{1}{8}\right) \eqref{phiThetaTMPD4plusLemmatronc}+ \frac{1}{4\pi }\\
& \leq 5.3 + 3.5 \log(C_1), \quad \forall \theta \leq \frac{1}{2n+3}\label{boudTotalLargeThetaD4plusLemmaTroncRealSmallTheta}
\end{align} 
A corresponding result holds for the imaginary part. Using the bound on $\varphi(\theta)$, one can write 
\begin{align}
\int_{\theta + \frac{1}{2n+3}}^{1/2} \frac{p(e^{2\pi i s})}{|1-e^{2\pi i \theta}|}\; F_{I,D_4}^+(s)\; ds \leq \int_{\theta + \frac{1}{2n+3}}^{1/2} \frac{p(e^{2\pi i s})}{|1-e^{2\pi i \theta}|}\; F_{R,D_4}^+(s)\; ds  + \frac{1}{4} \varphi(\theta)\label{boudTotalD4plusImag}
\end{align}
Combining~\eqref{boudTotalLargeThetaD4plusLemmaTroncRealSmallTheta} and~\eqref{boudTotalLargeThetaD4plusLemmaTroncReal}, we get 
\begin{align}
\int_{\theta+ \frac{1}{2n+3}}^{1/2} \frac{p(e^{2\pi i s})}{|1-e^{2\pi i \theta}|}\; F_{R,D_4}^+(s; \theta)\; ds \leq 8+ 5\log(C_1)\label{totalBoundRealD4plusLemmaTronclargeDeviation}
\end{align}
Now adding~\eqref{boudTotalD4plusImag}, we have 
\begin{align}
\int_{\theta+ \frac{1}{2n+3}}^{1/2} \frac{p(e^{2\pi i s})}{|1-e^{2\pi i \theta}|}\; F(s; \theta)\; ds& \leq 2\left(8+ 5\log(C_1)\right) + \frac{1}{4}\left(3.2 + 2\log(C_1)\right)\\
&\leq 17 + 11\log(C_1). 
\end{align}

\subsection{Proof of lemma~\ref{lemmaTroncSmallDomainsNegativeCons} ($D_5^-, D_4^-, D_1^-$ and $D_{2}^-$, small $|\tau-\alpha|$)}

Before proceeding with those lemmas, we recall the bounds on $F_{D_i}$ for both real and imaginary parts that were derived in section~\eqref{boundAAoneatomic}

\begin{align}
F_{R,D_1^-}& = \frac{\pi}{4} + \frac{1}{4(2n+3)}\left(\frac{1}{-s+\frac{1}{2n+3}} + \frac{1}{\theta-s}\right)\left(\frac{1}{\pi}+ \frac{1}{2}\right) + \frac{\theta}{2}\\
F_{R,D_2^-}& = \frac{(2n+3)\pi \theta}{4} + \frac{\theta}{2}\\
F_{R,D_4^-} & = \frac{1}{4(2n+3)}\left(\frac{1}{\pi}+\frac{1}{2}\right) \left(2+ \frac{1}{1+(s-\theta)}\right) + \frac{\pi}{4}+ \frac{1}{4(2n+3)}\left(\frac{1}{\pi}+ \frac{1}{2}\right)\left(\frac{1}{\frac{1}{2n+3}-s}+2\right) + \frac{\theta}{2}\\
F_{R,D_5^-}& = \frac{\pi}{4} + \frac{1}{4(2n+3)}\left(\frac{1}{\pi}+ \frac{1}{2}\right) \left(2+ \frac{1}{1+s-\theta + \frac{1}{2n+3}}\right) + \frac{1}{4(2n+3)}\left(\frac{1}{\pi} + \frac{1}{2}\right)\left(2+\frac{1}{-s}\right)  + \frac{\theta}{2}
\end{align}

for the real part and 

\begin{align}
F_{I,D_1^-}(s)& = \log(\frac{\theta - s}{\frac{1}{n}-s}) + \frac{1}{4(2n+3)} \left(\frac{1}{\pi}+ \frac{1}{2}\right)\left(\frac{1}{\theta-s} + \frac{1}{1/n -s}\right) + \frac{\pi(n+1)}{4n}\\
F_{I,D_2^-}(s) &= \frac{\pi (n+1)\theta}{4}\\
F_{I,D_4^-}(s)& = \frac{1}{n} \frac{(n+1)\pi}{4} + \frac{1}{4(2n+3)}\left(\frac{1}{2}+ \frac{1}{\pi}\right) \left(\frac{1}{1+s-\theta} +2\right) \\
&+ \left|\log(\frac{1/2}{1+s-\theta})\right| + \frac{1}{4(2n+3)}\left(\frac{1}{2}+ \frac{1}{\pi}\right) \left(\frac{1}{-s+\frac{1}{n}} + 2\right) + \left|\log(\frac{1/2}{-s+\frac{1}{n}})\right|\\
F_{I,D_5^-}(s)& = \frac{\pi}{2} + \frac{1}{4} \left|\log(\frac{-s}{1/2})\right| + \frac{1}{4}\left|\log(\frac{1/2}{1+s-\theta+\frac{1}{n}})\right|+ \frac{1}{4(2n+3)}\left(\frac{1}{\pi}+ \frac{1}{2}\right)\left(2+ \frac{1}{1+s-\theta + \frac{1}{n}}\right)\\
&+ \frac{1}{4(2n+3)} \left(\frac{1}{\pi}+ \frac{1}{2}\right)\left(2+ \frac{1}{-s}\right) 
\end{align}
for the imaginary part. Using those bounds, we first control the integral of the real part in section~\ref{realPartStripesMultiMinus}. The integral of the imaginary part is then addressed in section~\eqref{imaginaryPartSmallStripesFDminus}.  

\subsubsection{\label{realPartStripesMultiMinus}Real part}

%

We now bound the contributions from $D_1^-$. for this subdomain, we have 
\begin{align}
\int_{D_1^-} F_{R,D_1^-}\; ds &\leq \frac{1}{\pi \theta} \int_{-\frac{1}{2n+3}}^0\frac{\pi}{4} + \frac{1}{4(2n+3)} \left(\frac{1}{-s+\frac{1}{2n+3}} + \frac{1}{\theta-s}\right) \left(\frac{1}{\pi}+ \frac{1}{2}\right) + \frac{\theta}{2}\; ds\\
&\leq \frac{1}{4} + \frac{1}{4\pi \theta (2n+3)} \left[\log(2)+ \log(1+ \frac{1}{(2n+3)\theta})\right] \left(\frac{1}{\pi} + \frac{1}{2}\right) + \frac{1}{2n+3}\frac{1}{2\pi }\\
&\leq \frac{1}{4} + \frac{\log(2)}{2\pi}\left(\frac{1}{\pi}+ \frac{1}{2}\right) +\frac{1}{2n+3}\frac{1}{2\pi}\\
&\leq 0.4\label{boundFD1minusMS}
\end{align}

Again we use the fact that $\theta \geq \frac{1}{2n+3}$. For $D_2^-$, we have 

\begin{align}
\frac{1}{|1-e^{2\pi i \theta}|}\int_{D_2^-} \frac{(2n+3)\pi \theta}{4} + \frac{\theta}{2} \; ds\leq \frac{1}{4} + \frac{1}{2\pi} \frac{1}{2n+3} \leq 0.3\label{boundFD2minusMS}
\end{align}

Similarly for $D_{4}^-$ and $D_5^-$, we have 
\begin{align}
\frac{1}{|1-e^{2\pi i \theta}|}\int_{D_4^-} F_{D_4}^-\; ds &\leq \frac{1}{\pi \theta} \frac{1}{4(2n+3)}\left(\frac{1}{\pi} + \frac{1}{2}\right) \left(4+ \frac{\pi}{4} + \frac{\theta}{2}\right) \frac{1}{2n+3}\\
&+ \frac{1}{\pi \theta} \frac{1}{4(2n+3)} \left(\frac{1}{\pi} + \frac{1}{2}\right) \left\{\log(1/2) - \log(1-\frac{1}{2n+3} - \theta)\right\}\\
&+ \frac{1}{\pi \theta} \left(\frac{1}{\pi}+ \frac{1}{2}\right) \frac{1}{4(2n+3)} \left|\log(1/2-\theta + \frac{1}{2n+3}) - \log(\frac{2}{2n+3})\right|\\
&\leq \frac{1}{\pi(1/2 - \frac{1}{2n+3})} \frac{1}{4(2n+3)} \left(\frac{1}{\pi}+ \frac{1}{2}\right) \left(4+ \frac{\pi}{4} + \frac{\theta}{2}\right) \frac{1}{2n+3}\\
&+ \frac{1}{\pi (1/2- \frac{1}{2n+3})}\frac{1}{4(2n+3)} \left(\frac{1}{\pi} + \frac{1}{2}\right) \log(1+ \frac{1}{1/2 - \frac{1}{2n+3}})\\
& + \frac{1}{\pi (1/2 - \frac{1}{2n+3})} \left(\frac{1}{\pi} + \frac{1}{2}\right) \frac{1}{4(2n+3)} \log(2)\\
&\leq \frac{1}{\pi} \frac{14}{5} \frac{1}{28} \left(\frac{1}{\pi} + \frac{1}{2}\right) \log(4)\\
&+ \frac{1}{\pi} \frac{14}{5}\frac{1}{28} \left(\frac{1}{\pi}+ \frac{1}{2}\right) \log(2)\\
&+ \frac{1}{\pi}\frac{14}{5} \frac{1}{28} \left(\frac{1}{\pi}+ \frac{1}{2}\right) \log(2)\\
&\leq \frac{1}{\pi} \frac{14}{5} \frac{1}{28} \left(\frac{1}{\pi} + \frac{1}{2}\right) \left(\frac{9}{2}+ \frac{\pi}{4}\right) \frac{1}{7} + 3\log(2) \frac{1}{\pi} \frac{14}{5} \frac{1}{28} \left(\frac{1}{\pi}+ \frac{1}{2}\right)\\
&\leq 0.1\label{boundFD4minusMS}
\end{align}

\begin{align}
\frac{1}{|1-e^{2\pi i \theta}|}\int_{D_5^-} F_{D_5}^- \; ds & \leq  \frac{1}{\pi \theta}\int_{-\frac{1}{2}}^{-1+\theta + \frac{1}{2n+3}} \frac{\pi}{4} + \frac{1}{4(2n+3)}\left(\frac{1}{\pi} + \frac{1}{2}\right) 4 + \frac{\theta}{2} \; ds\\
& + \frac{1}{\pi \theta} \frac{1}{4(2n+3)}\left(\frac{1}{\pi}+ \frac{1}{2}\right)\left|\log(1+s-\theta + \frac{1}{2n+3})\right|_{-\frac{1}{2}}^{-1+\theta + \frac{1}{2n+3}}\\
&+ \left|\log(-s) \frac{1}{4(2n+3)} \left(\frac{1}{\pi} + \frac{1}{2}\right)\right|_{-\frac{1}{2}}^{-1+\theta + \frac{1}{2n+3}}\\
&\leq \frac{1}{\pi (1/2-\frac{1}{2n+3})} \left[\frac{\pi}{4} + \frac{1}{4(2n+3)}\left(\frac{1}{\pi} + \frac{1}{2}\right)4 + \frac{\theta}{2}\right] \left(-\frac{1}{2} + \theta + \frac{1}{2n+3}\right)\\
&+ \frac{1}{\pi (1/2 - \frac{1}{2n+3})} \frac{1}{4(2n+3)} \left(\frac{1}{\pi} + \frac{1}{2}\right) \left[\log(\frac{2}{2n+3}) - \log(\frac{1}{2} - \theta + \frac{1}{2n+3})\right]\\
&+ \frac{1}{\pi (1/2 - \frac{1}{2n+3})} \frac{1}{4(2n+3)} \left(\frac{1}{\pi}+ \frac{1}{2}\right) \log(\frac{1-\theta - \frac{1}{2n+3}}{1/2})\\
&\leq \frac{3}{\pi} \left[\frac{\pi}{4} + \frac{1}{7} \left(\frac{1}{\pi} + \frac{1}{2}\right) + \frac{\theta}{2}\right]\frac{1}{7}\\
&+ \frac{3}{\pi} \frac{1}{28} \left(\frac{1}{\pi}+ \frac{1}{2}\right) \log(2) + \frac{3}{\pi} \frac{1}{28}\left(\frac{1}{\pi}+ \frac{1}{2}\right) \log(1+ \frac{1}{1/2 - \frac{1}{2n+3}})\\
& \leq \frac{3}{\pi} \left[\frac{\pi}{4}+ \frac{1}{7}\left(\frac{1}{\pi}+ \frac{1}{2}\right) + \frac{1}{4}\right]\frac{1}{7}
+ \frac{3}{\pi} \left[\frac{1}{28}\left(\frac{1}{\pi} + \frac{1}{2}\right)3\log(2)\right]\\
&\leq 0.22\label{boundFD5minusMS}
\end{align}
In the lines above, we assumed $n\geq 2$. Combining~\eqref{boundFD1minusMS},~\eqref{boundFD2minusMS},~\eqref{boundFD4minusMS} and~\eqref{boundFD5minusMS}, we get 
\begin{align}
&\sum_{\lambda\in\left\{1,2,4,5\right\}}\left|\int_{s\in D_\lambda^+} \frac{p(e^{2\pi i s})}{|1 - e^{2\pi i\theta}|}  \left|\Rp \left(\int_{0}^{-\theta}e^{2\pi i (n+1)t} \tilde{D}(e^{-2\pi i (s+t)})\; dt\right)\right|\; ds\right| \leq 1.1\label{imaginaryDiverseNegativeLemmaTroncNearNearRealPart}
\end{align}

\subsubsection{\label{imaginaryPartSmallStripesFDminus}Imaginary part}

%

We now derive the bounds on the imaginary parts. Using the expressions that were just recalled above, we can write 
\begin{align}
\frac{1}{\pi \theta}\int_{-\frac{1}{2n+3}}^0 F_{I,D_2^-}(s)\; ds \leq \frac{(n+1)}{(2n+3)}\frac{1}{4}\leq \frac{1}{2} \label{boundFD2MSImaginary}
\end{align}

\begin{align}
\frac{1}{\pi \theta}\int_{D_{1}^-} F_{I,D_1^-}(s)\; ds &= \frac{1}{\pi \theta} \left|\frac{1}{4(2n+3)}\left(\frac{1}{\pi} + \frac{1}{2}\right) \log(\theta-s) + \log(\frac{1}{n}-s)\right|_{-\frac{1}{2n+3}}^0 \\
&+ \frac{\pi (n+1)}{4n }\frac{1}{\pi \theta} \frac{1}{2n+3} \\
&+ \frac{1}{\pi \theta} \int_{-\frac{1}{2n+3}}^0 \log(\frac{\theta-s}{\frac{1}{n}-s})\; ds\\
&\leq \frac{1}{\pi \theta} \frac{1}{4(2n+3)} \left(\frac{1}{\pi}+\frac{1}{2}\right) \log(1+ \frac{1}{2n+3}\frac{1}{\theta}) + \log(1+ \frac{1}{2n+3}n) \\
&+ \frac{1}{\pi \theta} \frac{\pi}{2} \frac{1}{2n+3}\\
&+\frac{1}{\pi \theta} \left|\log(1+ \frac{1/n-\theta}{\theta-s}) \vee \log(1+ \frac{\theta-\frac{1}{n}}{\frac{1}{n} -s})\right|_{-\frac{1}{2n+3}}^0\\
&\leq \frac{1}{\pi \theta} \frac{1}{4(2n+3)} \left(\frac{1}{\pi}+ \frac{1}{2}\right) 2\log(2) + \frac{1}{2}\\
&+ \frac{1}{\pi} \frac{2n+3}{n} \log(1+ \frac{1}{2n+3}\frac{1}{\theta}) + \frac{1}{\pi} \log(1+ \frac{1}{2n+3} n)\\
&\leq \frac{1}{2\pi} \left(\frac{1}{\pi}+ \frac{1}{2}\right) \log(2) + \frac{1}{2} + \frac{3}{\pi} \log(2) + \frac{1}{\pi} \log(2) \\
&\leq \frac{1}{2\pi} \left(\frac{1}{\pi} + \frac{1}{2}\right) \log(2) + \frac{1}{2} + \frac{4}{\pi}\log(2)\\
&\leq 1.5\label{boundFD1minusImaginary}
\end{align}

For $D_{4}^-$, we have 
\begin{align}
\frac{1}{\pi \theta} \int_{-\frac{1}{2n+3}}^{-\frac{1}{2}+\theta} F_{D_4}^-(s)\; ds &\leq \frac{1}{\pi \theta} \int_{-\frac{1}{2n+3}}^{-\frac{1}{2}+\theta} \frac{1}{n} \frac{(n+1)\pi}{n} + 4\left(\frac{1}{4(2n+3)}\right)\left(\frac{1}{2} + \frac{1}{\pi}\right) \; ds\\
&+ \frac{1}{\pi \theta} \left|\log(1+s-\theta) - \log(-s+\frac{1}{n})\right|_{-\frac{1}{2n+3}}^{-\frac{1}{2} + \theta} \frac{1}{4(2n+3)} \left(\frac{1}{\pi}+ \frac{1}{2}\right)\\
&+ \frac{1}{2}\log(1+s-\theta) + \frac{1}{2}\log(-s+\frac{1}{n})\\
&\leq \frac{1}{\pi(\frac{1}{2} - \frac{1}{2n+3})} \left[\frac{2\pi}{n} + \frac{1}{7}\left(\frac{1}{2}+ \frac{1}{\pi}\right)\right] \frac{1}{2n+3}\\
&+ \frac{1}{\pi(1/2- \frac{1}{2n+3})} \frac{1}{4(2n+3)}\left(\frac{1}{\pi}+ \frac{1}{2}\right) \left[\log(\frac{1/2}{1-\frac{1}{2n+3} - \theta}) - \log(\frac{\frac{1}{2}-\theta+\frac{1}{n}}{\frac{1}{2n+3} + \frac{1}{n}})\right]\\
&+ \frac{1}{2} \frac{1}{\pi(1/2 - \frac{1}{2n+3})} \left[\log(\frac{1/2}{1- \frac{1}{2n+3} - \theta}) - \log(\frac{1/2-\theta +\frac{1}{n}}{\frac{1}{2n+3} + \frac{1}{n}})\right]\\
&+ \frac{1}{2} \frac{1}{\pi(\frac{1}{2}-\frac{1}{2n+3})}\left[\log(\frac{1/2}{1-\frac{1}{2n+3} - \theta}) + \log(\frac{1/2-\theta+\frac{1}{n}}{\frac{1}{2n+3}+ \frac{1}{n}})\right]\\
&\leq \frac{1}{\pi} 3 \left[\pi + \frac{1}{7}\left(\frac{1}{2}+ \frac{1}{\pi}\right)\right] \frac{1}{7}\\
&+ \frac{3}{\pi} \frac{1}{28} \left(\frac{1}{\pi}+ \frac{1}{2}\right) \left[\log(1+ \frac{3}{2n+3}) + \log(1+ \frac{n}{2n+3})\right]\\
&+ \frac{3}{\pi}\frac{1}{2} \left[\log(1+ \frac{3}{2n+3}) + \log(1+ \frac{n}{2n+3})\right]\\
&\leq 1.3 \label{boundFD4minusImaginary}
\end{align}

Finally for $F_{D_5}^-$, we have 

\begin{align}
\frac{1}{|1-e^{2\pi i \theta}|}\int_{D_5^-} F_{D_5^-}(s)\; ds & \leq \frac{1}{\pi \theta}\int_{-\frac{1}{2}}^{-1+\theta + \frac{1}{2n+3}} \frac{\pi}{2} + \frac{1}{4} \frac{1}{2}\frac{1}{-s} + \frac{1}{4}\frac{1}{2}\frac{1}{1+s-\theta + \frac{1}{n}}\; ds\\
&+\frac{1}{\pi \theta} \int_{-\frac{1}{2}}^{-1+\theta +\frac{1}{2n+3}} \frac{1}{4(2n+3)} \left(\frac{1}{\pi}+ \frac{1}{2}\right) \left(2+ \frac{1}{1+s-\theta + \frac{1}{n}}\right)\; ds\\
&  + \frac{1}{\pi \theta}\int_{-\frac{1}{2}}^{-1+\theta + \frac{1}{2n+3}} \frac{1}{4(2n+3)} \left(\frac{1}{\pi}+ \frac{1}{2}\right) \left(2+ \frac{1}{-s}\right)\; ds\\
&\leq \frac{1}{\pi \theta} \int_{-\frac{1}{2}}^{-1+\theta + \frac{1}{2n+3}} \frac{\pi}{2} + \frac{1}{4(2n+3)}\left(\frac{1}{\pi}+ \frac{1}{2}\right)4\; ds\\
&+ \frac{1}{\pi \theta}\left|\frac{1}{8}\log(-s) + \frac{1}{8}\log(1+s-\theta + \frac{1}{n})\right|_{-\frac{1}{2}}^{-1+\theta + \frac{1}{2n+3}}\\
&+ \frac{1}{\pi \theta} \left| \log(-s) + \log(1+s-\theta + \frac{1}{n})\right|_{-\frac{1}{2}}^{-1+\theta + \frac{1}{2n+3}} \frac{1}{4(2n+3)}\left(\frac{1}{\pi}+ \frac{1}{2}\right)\\
&\leq \left(\frac{\pi}{2}+ \frac{1}{4(2n+3)} \left(\frac{1}{\pi}+ \frac{1}{2}\right)4\right) \frac{1}{2n+3} \frac{1}{\pi (1/2 - \frac{1}{2n+3})}\\
&+ \frac{1}{8}\left[\log(\frac{1-\theta - \frac{1}{2n+3}}{1/2}) + \log(\frac{\frac{1}{n} + \frac{1}{2n+3}}{\frac{1}{2} - \theta + \frac{1}{n}})\right]\frac{1}{\pi \theta}\\
&+ \frac{1}{\pi \theta} \frac{1}{4(2n+3)}\left(\frac{1}{\pi}+ \frac{1}{2}\right) \left|\log(\frac{1-\theta - \frac{1}{2n+3}}{\frac{1}{2}}) + \log(\frac{\frac{1}{n}+\frac{1}{2n+3}}{\frac{1}{2} - \theta + \frac{1}{n}})\right|\\
&\leq \left[\frac{1}{4} + \frac{1}{28}\left(\frac{1}{\pi}+ \frac{1}{2}\right)\right]\log(2)\frac{14}{5\pi}\\
&+ \frac{14}{35\pi} \left(\frac{\pi}{2} + \frac{1}{7}\left(\frac{1}{\pi}+ \frac{1}{2}\right)\right)\\
&\leq 0.4\label{boundFD5minusImaginary}
\end{align}


Combining~\eqref{boundFD2MSImaginary}~\eqref{boundFD1minusImaginary}~\eqref{boundFD4minusImaginary},and~\eqref{boundFD5minusImaginary} gives 
\begin{align}
\sum_{\lambda\in\left\{1,2,4,5\right\}}\left|\int_{s\in D_\lambda^+} \frac{p(e^{2\pi i s})}{|1 - e^{2\pi i\theta}|}  \left|\Ip \left(\int_{0}^{-\theta}e^{2\pi i (n+1)t} \tilde{D}(e^{-2\pi i (s+t)})\; dt\right)\right|\; ds\right| \leq 3.7\label{imaginaryDiverseNegativeLemmaTroncNearNear}
\end{align}
Combining~\eqref{imaginaryDiverseNegativeLemmaTroncNearNear} and~\eqref{imaginaryDiverseNegativeLemmaTroncNearNearRealPart} gives the result of the lemma. 


\subsection{Proof of lemma~\ref{lemmaTroncSmallDomainsPositiveCons} ($D_1^+$, $D_2^+$, $D_3^+$, small $|\tau-\alpha|$)}

On each of the stripes $D_1^+, D_2^+$ and $D_3^+$, one can simply integrate the bounds $F_{D_1}^+$, $F_{D_2}^+$ and $F_{D_3}^+$, assuming a constant (one) bound on the polynomial. Before deriving the bounds on the integral~\eqref{eq:dev_int_p+} (or~\eqref{eq:dev_int_p-}) for the subdomains, we recall the expressions of each of the $F_{D_i}^+$, $i\in \left\{1,2,3\right\}$

\begin{align}
F_{R, D_1}^+&\leq \frac{\pi (2n+3)\theta}{4} + \frac{\theta}{2}\\
F_{R, D_2}^+ &\leq \frac{\pi}{2} + \left(\frac{1}{s} + \frac{1}{|s-\theta + \frac{2}{2n+3}|}\right) \left(\frac{1}{2n+3} \right)\frac{1}{4}\left(\frac{1}{2} + \frac{1}{\pi}\right) + \frac{\theta}{2}\\
F_{R, D_3}^+ &\leq \frac{(2n+3)\pi}{4} \left(s + \frac{1}{2n+3}\right) + \left(\frac{1}{2}+ \frac{1}{\pi}\right)\frac{1}{4(2n+3)} (2n+3)
\end{align}

as well as 
\begin{align}
F_{I,D_1^+} &\leq \frac{(n+1)\pi \theta}{2}\label{recallFD1plusImag}\\
F_{I,D_2^+}& \leq \left(\frac{n+1}{2n}\right)\pi + \frac{1}{4(2n+3)} \left(\frac{1}{\pi}+ \frac{1}{2}\right) \left(\frac{1}{-s} + \frac{1}{s-\theta + \frac{2}{n}}\right) + \frac{1}{4}(-\log(s-\theta + \frac{2}{n}) +\frac{1}{4}\log(s)\label{recallFD2plusImag}\\
F_{I,D_3^+} &\leq \frac{1}{4(2n+3)} \left(\frac{1}{\pi}+ \frac{1}{2}\right) \left(\frac{1}{\theta-s} + \frac{1}{\frac{2}{n}-s}\right) + \frac{1}{4} \log(\frac{\theta-s}{\frac{2}{n}-s}) + \frac{\pi(n+1)}{n}\label{recallFD3plusImag}
\end{align}

We start by controlling the real part. Control on the imaginary part is then derived in section~\ref{imaginaryPartFDplusSmallStripesMS}.

\subsubsection{\label{realPartSmallStripesFDplusMS}Real part}

Starting with $F_{D_2}^+$, we have (note that here we always assume $\theta\geq \frac{2}{2n+3}$ otherwise we are first integrating over $D_3^+$ and then over $[\frac{1}{2n+3}, \theta+\frac{1}{2n+3}]$) (see the domain decomposition in Fig.~\ref{domainDecomposition1}. We treat this case below). 
\begin{align}
\frac{1}{|e^{2\pi i \theta}-1|} \left|\int_{D_{2}^+} F_{R,D_2}^+(s)\; ds \right|& \leq \frac{1}{\pi \theta} \int_{\theta - \frac{1}{2n+3}}^{\theta + \frac{1}{2n+3}} \frac{\pi}{2} + \left(\frac{1}{s} + \frac{1}{|s - \theta + \frac{2}{2n+3}|}\right) \frac{1}{2n+3}\frac{1}{4} \left(\frac{1}{2}+ \frac{1}{\pi}\right) + \frac{\theta}{2}\; ds \\
&\leq \pi\frac{2}{2n+3} \frac{1}{\pi \theta} + \frac{1}{\pi \theta} \log(1+ \frac{2}{2n+3} \frac{1}{\theta - \frac{1}{2n+3}})\frac{c'}{2n+3} \\
&+ \frac{1}{\pi \theta} 2\log(3/2) \frac{c'}{2n+3} + \frac{1}{\pi \theta} \frac{2}{2n+3}\frac{\theta}{2} 
\end{align}

In the last line we use the fact that on $D_2^+$, we always have $\theta\geq \frac{1}{2n+3}$ as well as $c = \frac{1}{2n+3}\left(\frac{1}{2}+ \frac{1}{\pi}\right)\frac{1}{4} \frac{1}{2n+3}$. Together this gives 
\begin{align}
\frac{1}{|e^{2\pi i \theta}-1|}\left|\int_{D_{2}^+} F_{R,D_2}^+ (s)\; ds\right| \leq 2+ \frac{\log(3)}{\pi}c' + \frac{2}{\pi}\log(3/2) + \frac{2\pi}{2n+3}\leq 3.6 \label{boundRealFD2plusMS}
\end{align}

For $D_3^+$, we have 
\begin{align}
\frac{1}{|e^{2\pi i \theta}-1|}\left| \int_{D_3^+} F_{R, D_3}^+ (s) \; ds\right|& \leq \frac{1}{\pi \theta} \int_{0}^{\frac{1}{2n+3}} \frac{(2n+3)\pi}{4} \left(s + \frac{1}{2n+3}\right) + \left(\frac{1}{2}+ \frac{1}{\pi}\right)\frac{1}{4}\; ds\\
&\leq \frac{1}{\pi} \left(\frac{1}{2}+ \frac{1}{\pi}\right) \frac{1}{4} + \frac{1}{8} + \frac{1}{4}\\
&\leq 0.5\label{boundRealFD3plusMS}
\end{align}

For $F_{D_1}^+$, we get 
\begin{align}
\frac{1}{|1-e^{2\pi i \theta}|}\left|\int_{D_1^+} F_{R,D_1}^+(s) \; ds\right| &\leq \int_{0}^{\frac{1}{2n+3}} \frac{2n+3}{4} + \frac{1}{2\pi}\; ds\leq \frac{1}{4} + \frac{1}{2\pi} \frac{1}{2n+3}\leq 0.3\label{boundRealFD1plusMS}
\end{align}

Now when integrating for $\theta \in [\frac{1}{2n+3}, \frac{2}{2n+3}]$, we keep the bound on $D_3^+$ and the integral on $D_2^+$ simply turns into 
\begin{align}
\frac{1}{|e^{2\pi i \theta} - 1|}\left|\int_{D_2^+} F_{R, D_2^+}\; ds\right| &\leq \frac{1}{\pi \theta} \int_{\frac{1}{2n+3}}^{\theta + \frac{1}{2n+3}} \frac{\pi}{2} + \left(\frac{1}{s}+ \frac{1}{|s-\theta + \frac{2}{2n+3}|}\right)\frac{1}{4}\frac{1}{2n+3} \left(\frac{1}{2}+ \frac{1}{\pi}\right) + \frac{\theta}{2}\; ds\\
&\leq \frac{1}{\pi \theta} \left\{\frac{\pi}{2}\theta + \left[\log(\frac{\theta+\frac{1}{2n+3}}{\frac{1}{2n+3}})+2 \log(\frac{3}{2n+3}\frac{1}{\frac{3}{2n+3} - \theta})\right] \frac{c'}{2n+3} + \frac{\theta^2}{2}\frac{1}{\pi \theta} \right\}\\
&\leq \frac{1}{2} + \left\{\log(2)+ \log(3/2)\right\} 2\frac{c'}{2n+3} +\frac{1}{2\pi}\\
&\leq 0.8 \label{boundFD2plusMSmodified}
\end{align}

Combining~\eqref{boundRealFD1plusMS},~\eqref{boundRealFD2plusMS},~\eqref{boundRealFD3plusMS} and~\eqref{boundFD2plusMSmodified} and using $c' = \frac{1}{4}\left(\frac{1}{2}+ \frac{1}{\pi}\right)$, we get the bound on the real part,
\begin{align}
&\sum_{\lambda=1}^3\left|\int_{s\in D_\lambda^+} \frac{p(e^{2\pi i s})}{|1 - e^{2\pi i\theta}|}  \left|\Rp \left(\int_{0}^{-\theta}e^{2\pi i (n+1)t} \tilde{D}(e^{-2\pi i (s+t)})\; dt\right)\right|\; ds\right| \leq 5.2
 \label{boundSmallStripesplusRealP}
%
\end{align}

\subsubsection{\label{imaginaryPartFDplusSmallStripesMS}Imaginary part}

We follow the same approach as for the real part. Using~\eqref{recallFD1plusImag} to~\eqref{recallFD3plusImag}, we get 
For $F_{D_1^+}$, we have 
\begin{align}
\frac{1}{|e^{2\pi i \theta}-1|}\int_{D_1^+} F_{D_1^+,I}(s)\; ds \leq \frac{1}{\pi \theta} \int_{0}^{\frac{1}{2n+3}} \frac{(n+1)\pi}{2}\theta \; ds \leq \frac{(n+1)\pi}{2} \frac{1}{2n+3}\leq \frac{\pi}{4} \label{boundImaginaryFD1plusMS}
\end{align}
For $D_3^+$, we similarly write 
\begin{align}
\frac{1}{|1-e^{2\pi i \theta}|}\int_{D_3^+} F_{D_3,I}\; ds &\leq \frac{1}{|1-e^{2\pi i \theta}|} \int_{0}^{\frac{1}{2n+3}}\frac{1}{4(2n+3)} \left(\frac{1}{\pi}+ \frac{1}{2}\right) \left(\frac{1}{\theta-s} + \frac{1}{2/n-s}\right) \\
&+ \frac{1}{|1-e^{2\pi i \theta}|} \int_{0}^{\frac{1}{2n+3}}\frac{1}{4} \log(\frac{\theta-s}{\frac{2}{n}-s}) + \frac{\pi(n+1)}{n}\; ds\label{tmp00061}
\end{align}
First note that we have 
\begin{align}
\log\left(\frac{\theta-s}{\frac{2}{n}-s}\right) \leq \left\{\log(1+ \frac{\theta-\frac{2}{n}}{\frac{2}{n}-s})\vee \log(1+ \frac{\frac{2}{n} - \theta}{\theta-s})\right\}\\
\end{align}
In particular, we can then write 
\begin{align}
\frac{1}{|e^{2\pi i \theta}-1|} \frac{1}{4} \int_{0}^{\frac{1}{2n+3}} \left|\log(\frac{\theta-s}{\frac{2}{n}-s})\right| &\leq \frac{1}{|e^{2\pi i \theta} - 1|} \frac{1}{4} \int_{0}^{\frac{1}{2n+3}} \log(1+ \frac{\theta - \frac{2}{n}}{\frac{2}{n} - s})\vee \log(1+ \frac{\frac{2}{n} - \theta}{\theta-s}) \; ds\\
&\leq \frac{1}{4\pi \theta} |\theta - \frac{2}{n}| \left\|\log(\frac{\frac{2}{n} - \frac{1}{2n+3}}{\frac{2}{n}})\vee \log(\frac{\theta - \frac{1}{2n+3}}{\theta})\right|\\
&\leq \frac{1}{4\pi}\log(2)
\end{align}
In the last line, we assume $\theta\geq \frac{2}{2n+3}$ (we treat the case $\theta \in [\frac{1}{2n+3}, \frac{2}{2n+3}]$ later)

Substituting this in~\eqref{tmp00061}, we get 
\begin{align}
\frac{1}{|1-e^{2\pi i \theta}|}\int_{D_3^+} F_{D^+_3,I}(s)\; ds &\leq \frac{1}{\pi \theta}\frac{1}{4(2n+3)}\left(\frac{1}{\pi}+ \frac{1}{2}\right) \left[\log(1+ \frac{1}{2n+3}\frac{1}{\theta - \frac{1}{2n+3}}) + \log(1+ \frac{1}{2n+3} \frac{1}{\frac{2}{n} - \frac{1}{2n+3}})\right]\\
&+ \frac{1}{4\pi} \log(2)+ \frac{\pi (n+1)}{n} \frac{1}{\pi \theta} \frac{1}{2n+3}\\
&\leq \frac{2n+3}{\pi} \frac{1}{4(2n+3)} \left(\frac{1}{\pi}+ \frac{1}{2}\right) 2\log(2) + \frac{\log(2)}{4\pi} +2\\
&\leq \left(\frac{1}{\pi}+ \frac{1}{2}\right) \log(2) \frac{1}{2\pi} + \frac{\log(2)}{4\pi}+2 \\
& \leq 2.2\label{boundFD3plusMS}
\end{align}
in the case $\theta\geq \frac{2}{2n+3}$. When $\theta \in [\frac{1}{2n+3}, \frac{2}{2n+3}]$, we integrate the bound $F_{D_3^+,I}$ on $s \in [0,\theta - \frac{1}{2n+3}]$ as shown in Fig.~\ref{domainDecomposition1},

\begin{align}
&\frac{1}{|1-e^{2\pi i \theta}|} \int_{0}^{\theta - \frac{1}{2n+3}} \frac{1}{4(2n+3)}\left(\frac{1}{\pi} + \frac{1}{2}\right) \left(\frac{1}{\theta-s} + \frac{1}{\frac{2}{n} - s}\right) \; ds\\
&+ \frac{1}{|e^{2\pi i \theta}-1|} \int_{0}^{\theta - \frac{1}{2n+3}} \frac{1}{4} \log(\frac{\theta-s}{\frac{2}{n} - s}) + \frac{\pi (n+1)}{n} \; ds\\
&\leq \frac{1}{\pi \theta} \frac{1}{4(2n+3)} \left(\frac{1}{\pi} + \frac{1}{2}\right) \left\{\log(\frac{\frac{1}{2n+3}}{\theta}) + \log(\frac{\frac{2}{n} - \theta + \frac{1}{2n+3}}{\frac{2}{n}})\right\}\\
&+ \frac{1}{\pi \theta} \frac{1}{4} \int_{0}^{\theta - \frac{1}{2n+3}} \log(\frac{\frac{2}{n}-s}{\theta-s}) \; ds + \frac{1}{\pi \theta} 2\pi \left(\theta - \frac{1}{2n+3}\right)\\
&\leq \frac{1}{4\pi}\left(\frac{1}{\pi}+ \frac{1}{2}\right) \left\{\log(2)+ \log(1+ \frac{\theta - \frac{1}{2n+3}}{\frac{2}{n} - \frac{3}{2n+3}})\right\}\\
&+ \frac{1}{\pi \theta} \frac{1}{4} \int_{0}^{\theta - \frac{1}{2n+3}} \log(1+ \frac{-\theta + \frac{2}{n}}{\theta-s})\; ds\\
&+2\\
&\leq \frac{1}{4\pi} \left(\frac{1}{\pi} + \frac{1}{2}\right) \left\{\log(2)+ \log(1+ \frac{\frac{1}{2n+3}}{\frac{1}{2n}})\right\}\\
&+ \frac{1}{4\pi \theta} \left\{\left|-\theta + \frac{2}{n}\right| \log(\frac{\frac{1}{2n+3}}{\theta})\right\} +2\\
&\leq \frac{1}{4\pi} \left(\frac{1}{\pi} + \frac{1}{2}\right) 2\log(2) + \frac{1}{4\pi} \log(2) + 2\\
&\leq 2.2 \label{boundFD3plusMSbis}
\end{align}

on the triangle $s \in [\theta - \frac{1}{2n+3}, \frac{1}{2n+3}]$, we use a bound similar to $D_1^+$. The argument of the sine at the denominator always obeys $(s+t)\in [s, s-\theta]$ and we can thus write 

\begin{align}
F_{D_3^+\cap D_2^+,I}\leq \left|\int_{-\theta}^{0}  \frac{\sin((n+1)\pi(s+t))\sin((n+2)\pi(s+t))}{\sin(\pi(s+t))}\; dt\right| \leq \frac{(n+1)\pi \theta}{2}
\end{align}
which gives 
\begin{align}
\frac{1}{|e^{2\pi i \theta} - 1|}\int_{\theta - \frac{1}{2n+3}}^{\frac{1}{2n+3}} F_{D_3^+\cap D_2^+, I} \; ds&\leq \frac{1}{\pi \theta} \left|\frac{1}{2n+3} - \theta\right| \frac{(n+1)\pi \theta}{2}\\
&\leq \left(\frac{1}{2n+3} - \theta\right)\frac{n+1}{2}\leq \frac{1}{2} \label{intersectionD2D3Imag}\\
\end{align}
In the last line we used $\theta \in [\frac{1}{2n+3}, \frac{2}{2n+3}]$. 

Finally for $F_{I,D_2}^+$, whenever $\theta \geq \frac{2}{2n+3}$, we have 

\begin{align}
\frac{1}{|1-e^{2\pi i \theta}|}\int_{D_{2}^+} F_{I,D_2^+}(s)\; ds&\leq \frac{1}{\pi \theta} \int_{\theta - \frac{1}{2n+3}}^{\theta+\frac{1}{2n+3}} \frac{n+1}{2n}\pi + \frac{1}{4(2n+3)} \left(\frac{1}{\pi}+ \frac{1}{2}\right) \left(\frac{1}{s} + \frac{1}{s-\theta + \frac{2}{n}}\right)\; ds\\
& + \frac{1}{\pi \theta} \int_{\theta-\frac{1}{2n+3}}^{\theta + \frac{1}{2n+3}}\frac{1}{4}\left(-\log((s-\theta) + \frac{2}{n}) + \log(s)\right)\; ds\\
&\leq \frac{1}{\pi \theta}\left(\frac{n+1}{2n}\right)\pi \frac{2}{2n+3} \\
&+ \frac{1}{4(2n+3)}\left(\frac{1}{\pi}+ \frac{1}{2}\right) \left|\log(1+ \frac{2}{2n+3}\frac{1}{\theta - \frac{1}{2n+3}}) + \log(1+ \frac{2}{2n+3} \frac{1}{\frac{2}{n} - \frac{1}{2n+3}})\right|\\
&+ \frac{1}{4} \int_{\theta - \frac{1}{2n+3}}^{\theta + \frac{1}{2n+3}} \log(1+ \frac{\theta - \frac{2}{n}}{s-\theta + \frac{2}{n}})\\
&\leq 1 + \frac{1}{8}\left(\frac{1}{\pi}+ \frac{1}{2}\right) \left[\log(3)+ \log(2)\right]+ \frac{1}{4}\frac{1}{\pi \theta} |\theta - \frac{2}{n}| \log(\frac{\frac{1}{2n+3} + \frac{2}{n}}{\frac{2}{n} - \frac{1}{2n+3}})\\
&\leq 1+ \frac{1}{8}\left(\frac{1}{\pi}+ \frac{1}{2}\right) \left(\log(3)+ \log(2)\right)\\
&+ \frac{1}{4\pi} \log(1+ \frac{2}{2n+3} \frac{1}{\frac{2}{n} - \frac{1}{2n+3}})\\
&\leq 1+ \frac{1}{8}\left(\frac{1}{\pi}+ \frac{1}{2}\right) \left[\log(3)+ \log(2)\right] + \frac{1}{4\pi} \log(2)\\
&\leq  1.3\label{boundImaginaryFD2plusMSa}
\end{align}

When $\theta \leq \frac{2}{2n+3}$, as we have acccounted for the intersection $D_3^+\cap D_2^+$ in~\eqref{intersectionD2D3Imag}, we can just integrate on $[\frac{1}{2n+3}, \theta + \frac{1}{2n+3}]$. 

\begin{align}
\frac{1}{|1-e^{2\pi i \theta}|}\int_{D_{2}^+} F_{I,D_2^+}\; ds&\leq \frac{1}{\pi \theta} \int_{\frac{1}{2n+3}}^{\theta+\frac{1}{2n+3}} \frac{n+1}{2n}\pi + \frac{1}{4(2n+3)} \left(\frac{1}{\pi}+ \frac{1}{2}\right) \left(\frac{1}{s} + \frac{1}{s-\theta + \frac{2}{n}}\right)\; ds\\
& + \frac{1}{\pi \theta} \int_{\frac{1}{2n+3}}^{\theta + \frac{1}{2n+3}}\frac{1}{4}\left(-\log((s-\theta) + \frac{2}{n}) + \log(s)\right)\; ds\\
&\leq \frac{1}{\pi \theta}\left(\frac{n+1}{2n}\right)\pi \theta\\
&+ \frac{1}{4(2n+3)}\left(\frac{1}{\pi}+ \frac{1}{2}\right) \left|\log(1+(2n+3)\theta) + \log(1+ \frac{2}{2n+3} \frac{1}{\frac{2}{n} + \frac{1}{2n+3} - \theta})\right|\\
&+ \frac{1}{4}\frac{1}{\pi \theta} \int_{\frac{1}{2n+3}}^{\theta + \frac{1}{2n+3}} \log( 1+ \frac{\frac{2}{n}-\theta}{s})\\
&\leq 1 + \frac{1}{8}\left(\frac{1}{\pi}+ \frac{1}{2}\right) \left[\log(3)+ \log(2)\right]\\
&+ \frac{1}{4}\frac{1}{\pi \theta} \theta \log(6)\\
&\leq 1+ \frac{1}{8}\left(\frac{1}{\pi}+ \frac{1}{2}\right) \left(\log(3)+ \log(2)\right)\\
&+ \frac{1}{4\pi} \log(6)\\
&\leq 1+ \frac{1}{8}\left(\frac{1}{\pi}+ \frac{1}{2}\right) \left[\log(3)+ \log(2)\right] + \frac{1}{4\pi} \log(6)\\
&\leq 1.4\label{boundImaginaryFD2plusMSb}
\end{align}

Combining~\eqref{boundImaginaryFD1plusMS}, ~\eqref{intersectionD2D3Imag}, ~\eqref{boundFD3plusMS} and~\eqref{boundFD3plusMSbis} as well as~\eqref{boundImaginaryFD2plusMSa} and~\eqref{boundImaginaryFD2plusMSb}, we get 
\begin{align}
&\sum_{\lambda=1}^3\left|\int_{s\in D_\lambda^+} \frac{p(e^{2\pi i s})}{|1 - e^{2\pi i\theta}|}  \left|\Ip \left(\int_{0}^{-\theta}e^{2\pi i (n+1)t} \tilde{D}(e^{-2\pi i (s+t)})\; dt\right)\right|\; ds\right| \leq 7.7\label{}
\end{align}

Combining this last bound with~\eqref{boundSmallStripesplusRealP} gives the result of lemma~\ref{lemmaTroncSmallDomainsPositiveCons}.


\subsection{\label{sectionProofLemmaD3minusLarge}Proof of lemma~\ref{lemmaTroncD3minus} ($D_3^-$ large $|\tau - \alpha|$)}

We start with the real part. Recall that we have 
\begin{align}
F_{R,D_3^-} &\leq \frac{1}{4(2n+3)} \left(\frac{1}{\pi} + \frac{1}{\pi}\right) \left(4 + \frac{1}{1+s-\theta} + \frac{1}{-s}\right) + \frac{\theta}{2}\\
&\leq \left(4c + \frac{\theta}{2}\right) + \frac{c}{1+s-\theta}  + \frac{c}{-s} 
\end{align}

We first consider the case $\tau \geq 0$ (that is the unique atom is on the right of $D_3^-$). In this case we have 
\begin{align}
&\frac{1}{\pi \theta} \int_{-1/2}^{-1/2+\theta} \left(4c + \frac{\theta}{2}\right) \frac{C_1}{1+n(\tau-s)}\; ds + \frac{1}{\pi \theta} \int_{-1/2}^{-1/2+\theta} \left(\frac{c}{1+s-\theta} + \frac{c}{-s}\right)\frac{C_1}{1+ n (\tau-s)}\; ds\\
&\leq \frac{1}{\pi \theta} \left(4c + \frac{\theta}{2}\right) \frac{C_1}{n} \log(\frac{1/n + 1/2 -\theta + \tau}{1/n + 1/2 + \tau}) + \frac{1}{\pi \theta} \left|\log(1+s-\theta)  - \log(1/n + \tau -s)\right|_{-1/2}^{-1/2+\theta} \frac{1}{1+\tau - \theta+1/n} \frac{C_1}{n}\\
&+ \frac{1}{\pi \theta} \left|\log(-s)  - \log(1/n + \tau-s)\right|_{-1/2}^{-1/2+\theta} \frac{1}{\tau + 1/n}\frac{C_1}{n}\\
&\leq \frac{1}{\pi \theta} \left(4c + \frac{\theta}{2}\right) \frac{C_1}{n} \log(1+ \frac{\theta}{1/n + 1/2-\theta + \tau})\\
&+ \frac{c}{\pi \theta}\left|\log(\frac{1/2-\theta}{1/2})  - \log(\frac{1/n + \tau + 1/2 - \theta}{1/n + 1/2 + \tau})\right| \frac{C_1}{n} \frac{1}{\tau +1/n}\\
&+ \frac{c}{\pi \theta} \left|\log(\frac{1/2}{1/2-\theta}) - \log(\frac{1/n + \tau + 1/2 - \theta}{1/n + 1/2 + \tau})\right| \frac{1}{1+\tau - \theta + 1/n} \frac{C_1}{n}\\
&\stackrel{(a)}{\leq }\frac{1}{\pi } \left(4c + \frac{\theta}{2}\right) \frac{C_1}{n\tau} + \left(\frac{4c}{\pi \theta (1/2-\theta)}\right) \frac{C_1}{n\tau} + \frac{c}{\pi \theta \tau} \frac{C_1}{n} \frac{4}{(1/2-\theta)}\\
&\stackrel{(b)}{\leq } \frac{1}{\pi} \left(4c + \frac{1}{4}\right) \frac{C_1}{n\tau} + \left(\frac{16c'}{\pi} \frac{C_1}{n\tau}\right) + \frac{16c'}{\pi} \frac{C_1}{n\tau}\label{tmp01003}
\end{align}
In $(a)$ we use $\tau \leq 1/2$ as well as $1/n\leq 1$, $(b)$ follows from $(1/2 - \theta)\theta \geq \frac{1}{2n+3}(1/4)$ when $ \frac{1}{2n+3}\leq \theta \leq 1/2-\frac{1}{2n+3}$ and $n\geq 1$

Note that if $\tau>0$, we necessarily have $\tau> \frac{C_1-1}{n}$ because of the separation distance $\Delta$ being larger than $C_1/n$ and we therefore do not need to consider any correction accounting for intervals on which the bound on the eigenpolynonial would achieve the value $1$. We consider a maximum located at $\tau<-1/2$ which corresponds to taking into account the right tail of a bound on the eigenpolynonial with maximum located at $\tau>0$. 
\begin{align}
&\frac{1}{\pi \theta} \int_{-1/2}^{-1/2+\theta} \left(4c+ \frac{\theta}{2}\right) \frac{C_1}{1+n(s-\tau)}\; ds + \frac{1}{\pi \theta} \int_{-1/2}^{-1/2+\theta} \left(\frac{c}{1+s-\theta} + \frac{c}{-s}\right) \frac{C_1}{1+n(s-\tau)}\; ds\\
&\leq \frac{1}{\pi \theta} \left(4c + \frac{\theta}{2}\right) \frac{C_1}{n} \log(\frac{1/n - 1/2 + \theta - \tau}{1/n - 1/2 - \tau})\label{term1D3minusLemmaTroncStep1}\\
&+ \frac{c}{\pi \theta} \left|-\log(1+s-\theta) + \log(1/n + s-\tau)\right|_{-1/2}^{-1/2+\theta} \frac{C_1}{n} \frac{1}{1-\theta + \tau - \frac{1}{n}}\label{term2D3minusLemmaTroncStep1}\\
&+ \frac{c}{\pi \theta} \left|-\log(-s) + \log(1/n + s-\tau)\right|_{-1/2}^{-1/2+\theta} \frac{C_1}{n} \frac{1}{\tau - \frac{1}{n}}\label{term3D3minusLemmaTroncStep1}
\end{align}

Each of the terms above can be bounded as follows. Together with the bounds on $\tau<0$ we also extend those bounds from $|\tau|$ to $|1-\tau|$
\begin{align}
\eqref{term1D3minusLemmaTroncStep1}\leq \frac{1}{\pi \theta}\left(4c+ \frac{\theta}{2}\right) \frac{C_1}{n}\log(\frac{1/n - 1/2 + \theta - \tau}{1/n - 1/2 - \tau}) &\leq \frac{1}{\pi \theta} \left(4c + \frac{\theta}{2}\right)\frac{C_1}{n} \log(1+ \frac{\theta}{1/n - 1/2 - \tau})\\
&\stackrel{(a)}{\leq} \frac{1}{\pi} \left(4c' + 1/2\right) \frac{C_1}{n} \log(2n\tau)
\end{align}
$(a)$ follows from $|\tau|\geq 1/2\geq \theta$. To turn this bound into a bound on any tail arising from an eigenpolynomial with a peak centered on $\tau>1/4$, we replace $\tau$ with $1-\tau'$ and derive a bound in $\tau'$ (distance to the origin) 
\begin{align}
\eqref{term1D3minusLemmaTroncStep1} (1-\tau') &\leq \frac{1}{\pi} \left(4c' + 1/2\right)\frac{C_1}{n}  \log(2n(1-\tau))\\
&\leq \frac{1}{\pi} \left(4c' + 1/2\right) \frac{C_1}{n\tau} \log(2n(1-\tau')|\tau'|/\tau')\\
&\leq \frac{1}{\pi} \left(4c' + 1/2\right) \frac{C_1}{n\tau} \log(2n\tau') + \log(\frac{1-\tau'}{\tau'})\frac{C_1}{n}\frac{1}{\pi} \left(4c'  + 1/2\right)\\
&\stackrel{(a)}{\leq} \frac{1}{\pi} \left(4c' + 1/2\right) \frac{C_1}{n\tau'} \log(2n\tau') + \frac{C_1}{n\tau'} \frac{1}{\pi} \left(4c' + 1/2\right)\label{tmp01002}
\end{align}
$(a)$ follows from $1-\tau'\geq \tau'$.

For~\eqref{term2D3minusLemmaTroncStep1} and~\eqref{term3D3minusLemmaTroncStep1}, we have 
\begin{align}
\eqref{term2D3minusLemmaTroncStep1}+\eqref{term3D3minusLemmaTroncStep1}&\leq \frac{c}{\pi \theta} \left|-\log(\frac{1/2}{1/2 - \theta}) + \log(\frac{1/n - \frac{1}{2} + \theta - \tau}{1/n - 1/2 - \tau})\right|\frac{C_1}{n} \frac{1}{1-\theta + \tau - \frac{1}{n}}\\
&+ \frac{c}{\pi \theta} \left|\log(\frac{1/2 - \theta}{1/2}) + \log(\frac{1/n - 1/2 + \theta - \tau}{1/n - 1/2 - \tau})\right|\frac{C_1}{n} \frac{1}{\tau - 1/n}\\
&\leq \frac{c}{\pi \theta} \left|\log(1+ \frac{-1 + 1/n+\theta -\tau}{1/2}\vee \frac{1 - 1/n - \theta + \tau}{1/n - 1/2 + \theta - \tau})\right| \frac{C_1}{n} \frac{1}{1-\theta+\tau - \frac{1}{n}}\\
&+ \frac{c}{\pi \theta} \left|\log(1+ \frac{-1 + 1/n+\theta -\tau}{1/2-\theta}\vee \frac{1 - 1/n - \theta + \tau}{1/n - 1/2- \tau})\right| \frac{C_1}{n} \frac{1}{1-\theta+\tau - \frac{1}{n}}\\
&+ \frac{c}{\pi \theta} \frac{C_1}{n} \frac{2}{\tau} \left|\log(1+ \frac{\theta}{1/2- \theta}) + \log(1+ \frac{\theta}{1/n - 1/2 - \tau})\right|\\
&\leq \frac{C_1}{n} \left\{\left(\frac{2c'}{\pi} + \frac{c'}{\pi |\tau|}\right) +  \left( \frac{4c'}{\pi} + \frac{c'}{\pi |\tau|}\right)\right\}\\
&+ \frac{2}{\tau} \frac{C_1}{n} \frac{c}{\pi (1/2-\theta)} + \frac{2}{\tau} \frac{C_1}{n}\frac{c}{\pi} \log(n|\tau|)\\
&\leq \frac{C_1}{n\tau} \frac{c'}{\pi}12
\end{align}
Now to turn this bound into a bound on eigenpolynomials having their maximum centered after $1/4$, we write 
\begin{align}
(\eqref{term2D3minusLemmaTroncStep1}+\eqref{term3D3minusLemmaTroncStep1})(1-\tau') \leq \frac{C_1}{n(1-\tau')} \frac{c'}{\pi}12 \leq \frac{C_1}{n|\tau'|_{\mod 1}} \frac{12c'}{\pi}.\label{tmp01001}
\end{align}

The total bound for this configuration is thus given by 

\begin{align}
&\eqref{tmp01001} + \eqref{tmp01002}  + \eqref{tmp01003}\\
&\leq \frac{C_1}{n|\tau|} \left(\frac{4c}{\pi} + \frac{1}{4} + \frac{32c'}{\pi} + \log(2) + \frac{1}{4}(4c' + 1/2) + \frac{12c'}{\pi} \right) +\frac{C_1}{n|\tau|}\log(n|\tau|) \left(4c'+1/2\right)\frac{1}{\pi}\\
&\leq 4.2 \frac{C_1}{n|\tau|} + 0.5 \frac{C_1}{n|\tau|}\log(n|\tau|)\label{D3minusConfig1}
\end{align}

When $\tau + \frac{C_1-1}{n}\geq -1/2$, we control the integral as 
\begin{align}
&\frac{1}{\pi \theta} \int_{\tau + \frac{C_1-1}{n}}^{-1/2+\theta} (4c+ \frac{\theta}{2}) \frac{C_1}{1+n (s-\tau)}\; ds + \frac{1}{\pi \theta}\int_{\tau + \frac{C_1-1}{n}}^{-1/2+\theta} \left(\frac{c}{1+s-\theta} + \frac{c}{-s}\right) \frac{C_1}{1+n(s-\tau)}\; ds\label{tmpTauAbovelemmaTronc001}\\
&+ \frac{1}{\pi \theta} \int_{-1/2}^{\tau + \frac{C_1-1}{n}} \left(4c + \frac{\theta}{2}\right) + \left(\frac{c}{1+s-\theta} + \frac{c}{-s}\right)\; ds \label{tmpTauAbovelemmaTronc002}
\end{align}
We start by controling~\eqref{tmpTauAbovelemmaTronc001}. 
\begin{align}
\eqref{tmpTauAbovelemmaTronc001}&\leq \frac{1}{\pi \theta} \left(4c + \frac{\theta}{2}\right) \frac{C_1}{n} \log(\frac{1/n - 1/2 + \theta - \tau}{C_1/n})\\
&+ \frac{c}{\pi \theta} \left|-\log(1+s-\theta) + \log(1/n +s-\tau)\right|_{\tau + \frac{C_1-1}{n}}^{-1/2+\theta} \frac{1}{1-\theta + \tau - 1/n}\\
&+ \frac{c}{\pi \theta}\left|-\log(-s) + \log(1/n + s-\tau)\right|_{\tau + \frac{C_1-1}{n}}^{-1/2+\theta}\frac{1}{\tau - 1/n}\\
&\stackrel{(a)}{\leq }  \frac{1}{\pi}\left(4c' + 1/2\right)\frac{C_1}{n} \log(2\frac{|\tau|}{C_1})\label{A704}\\
&+ \frac{c}{\pi \theta} \left|-\log(\frac{1/2}{1+\tau + \frac{C_1-1}{n} - \theta}) + \log(\frac{1/n  - 1/2 + \theta - \tau}{C_1/n})\right| \frac{1}{1 - \theta + \tau - 1/n} \frac{C_1}{n}\label{term1D3minusaddOn1lemmaTronc}\\
&+ \frac{c}{\pi \theta} \left|-\log(\frac{1/2 - \theta}{-\tau - \frac{C_1-1}{n}}) + \log(\frac{1/n - 1/2 + \theta - \tau}{C_1/n})\right| \frac{2}{\tau} \frac{C_1}{n}\label{term2D3minusaddOn1lemmaTronc}
\end{align}
In $(a)$, we use $\theta \leq |\tau|$ which follows from $\tau \geq 1/2$. To control~\eqref{term2D3minusaddOn1lemmaTronc}, we simply write 
\begin{align}
\eqref{term2D3minusaddOn1lemmaTronc}  \stackrel{a}{\leq } \frac{c}{\pi \theta} \left(\log(\frac{1/2}{1/2 - \theta}) + \log(2\frac{|\tau|n}{C_1})\right)\frac{2}{\tau}\frac{C_1}{n}\label{boundA706}
\end{align}
In the line above, $(a)$ follows from $-1/2+\theta < 0$ and $|\tau|>2/n$. 

To control~\eqref{term1D3minusaddOn1lemmaTronc}, we make the distinction between the case $\theta \leq (1/2 - \frac{C_1}{n})\frac{1}{2}$ and $\theta \geq \frac{1}{2}(1/2 - \frac{C_1}{n})$. In the latter case, we cancel the prefactor $(\tau  - \theta -1/n +1)^{-1}$ by using $\log(1+x)\leq x$
\begin{align}
\eqref{term1D3minusaddOn1lemmaTronc} &\leq \frac{c}{\pi \theta} \left(\log(1 + \frac{-\tau + \theta + 1/n -1}{1+\tau + \frac{C_1-1}{n} - \theta}\vee \frac{\tau - \theta - 1/n +1}{C_1/n})\right) \frac{1}{1 - \theta + \tau - 1/n} \frac{C_1}{n} \\
&+ \frac{c}{\pi \theta}\left(\log(1+ \frac{1 - 1/n - \theta + \tau}{1/n - 1/2 + \theta - \tau}\vee \frac{-1+1/n + \theta - \tau}{1/2})\right) \frac{1}{1 - \theta + \tau - 1/n} \frac{C_1}{n}\\
&\stackrel{(a)}{\leq } \frac{c}{\pi \theta (1/2-\theta)}\frac{C_1}{n} + \frac{c}{\pi (1/2)(1/2 - \frac{C_1}{n}) } + \frac{n}{C_1} \frac{c}{\pi (1/2)(1/2 - \frac{C_1}{n})} \frac{C_1}{n} + 2\frac{c}{\pi \theta}\frac{C_1}{n}\\
&\stackrel{(b)}{\leq }  \frac{4c'}{\pi}\frac{C_1}{n} + \frac{16c}{\pi} \frac{C_1}{n}+ 2\frac{c'}{\pi}\frac{C_1}{n}\label{A705b}
\end{align}
In $(a)$, we use $\tau+ \frac{C_1-1}{n}\geq -1/2$. In $(b)$, we use $\inf_{\frac{1}{2n+3}\leq \theta \leq 1/2-\frac{1}{2n+3}} \theta (1/2-\theta) \geq (1/2 - 1/(2n+3))\frac{1}{2n+3} \geq \frac{1}{4}(2n+3)^{-1}$ as well as $n\geq 4C_1$. 

When $\theta \leq (1/2 - C_1/n)(1/2)$, we use 
\begin{align}
\frac{c}{\pi \theta}\frac{1}{1- \theta + \tau -1/n} \leq \frac{c'}{\pi} \frac{1}{1/2 - \theta - \frac{C_1}{n}} \leq \frac{c'}{\pi} \frac{2}{1/2 - \frac{C_1}{n}} \leq \frac{4c'}{\pi}, \quad \text{as soon as $n\geq 4C_1$. }
\end{align}
Then we have 
\begin{align}
\eqref{term1D3minusaddOn1lemmaTronc} \leq  \frac{8c'}{\pi} \log(\frac{2n|\tau|}{C_1})  \frac{C_1}{n}\label{A705a}
\end{align}

To control~\eqref{tmpTauAbovelemmaTronc002}, we write 
\begin{align}
\eqref{tmpTauAbovelemmaTronc002}&\leq \frac{1}{\pi \theta} \left(4c + \frac{\theta}{2}\right) \frac{C_1-1}{n} + \frac{c}{\pi \theta} \log(1+ \frac{C_1-1}{n}\frac{1}{1/2-\theta})+ \frac{c}{\pi \theta} \log(1+ \frac{C_1-1}{n} \frac{1}{1/2 - \frac{C_1-1}{n}})\\
&\leq \left(\frac{4c'}{\pi} + \frac{1}{2\pi}\right) \frac{C_1-1}{n} +\frac{c'}{\pi} \frac{C_1-1}{n} \label{boundA700}
\end{align}
The last line holds as soon as $n\geq 4(C_1-1)$. 

Grouping the bounds above, we get 
\begin{align}
&\eqref{boundA700} + \left(\eqref{A705a}\vee\eqref{A705b} \right) + \eqref{boundA706} + \eqref{A704} \\
&\leq \left(5\frac{c'}{\pi} + \frac{1}{2\pi}\right) \frac{C_1}{n|\tau|} + \left(\frac{6c'}{\pi} + \frac{16c}{\pi}\right)\frac{C_1}{n|\tau|}+ \left(\frac{8c'}{\pi} + \frac{c'}{\pi} + \frac{4c' + 1/2}{\pi}\right)\frac{C_1}{n|\tau|}\log(2\frac{|\tau|}{C_1})\\
&\leq 1.1 \left(\frac{C_1}{n|\tau|} + \frac{C_1}{n|\tau|}\log(\frac{2n|\tau|}{C_1})\right).\label{bound1TowardsMax}
\end{align}

We now control the integral on $D_3^-$ when the eigenpolynomial has a peak located on that precise subdomain. In this case, we split the integral as 
\begin{align}
&\frac{1}{\pi \theta} \int_{-1/2}^{\tau - \frac{C_1-1}{n}} \left((4c+ \frac{\theta}{2}) + (\frac{c}{1+s-\theta} + \frac{c}{-s})\right)\frac{C_1}{1+n(\tau-s)}\; ds\label{term1LemmaTroncCentralSpikeD3minus}\\
&+ \frac{1}{\pi \theta} \int_{\tau - \frac{C_1-1}{n}}^{\tau + \frac{C_1-1}{n}} \left((4c+ \frac{\theta}{2}) + \left(\frac{c}{1+s-\theta} + \frac{c}{-s}\right)\right) \; ds\label{term2LemmaTroncCentralSpikeD3minus}\\
&+ \frac{1}{\pi \theta} \int_{\tau + \frac{C_1-1}{n}}^{-1/2+\theta} \left((4c+ \frac{\theta}{2}) + \left(\frac{c}{1+s-\theta} + \frac{c}{-s}\right)\right)\frac{C_1}{1+n(s-\tau)}\; ds\label{term3LemmaTroncCentralSpikeD3minus}
\end{align}

we successively bound each of the three terms above. Starting with~\eqref{term1LemmaTroncCentralSpikeD3minus}, when $\theta \leq 1/4$ we have $1+ \tau + 1/n - \theta \geq 1/4$ and we can write 
\begin{align}
\eqref{term1LemmaTroncCentralSpikeD3minus}&\stackrel{a}{\leq}\frac{C_1}{n} \log(\frac{C_1/n}{1/n + \tau +1/2})\frac{1}{\pi \theta} \left(4c+ \frac{\theta}{2}\right)\\
&+ \frac{c}{\pi \theta}\left|\log(1+s-\theta)  - \log(1/n + \tau -s)\right|_{-1/2}^{\tau - \frac{C_1-1}{n}} \frac{1}{1+\tau+1/n - \theta} \frac{C_1}{n}\label{tmpBound0008}\\
&+ \frac{c}{\pi \theta} \left|\log(-s) - \log(1/n + \tau - s)\right|_{-1/2}^{\tau - \frac{C_1-1}{n}} \frac{1}{-\tau - 1/n}\\
&\stackrel{b}{\leq} \frac{C_1}{n} \log(\frac{n}{C_1}) \frac{1}{\pi} (4c' + 1/2)\\
&+ \frac{c}{\pi \theta} \left[\log(\frac{1+\tau - \theta - \frac{C_1-1}{n}}{1/2 - \theta}) - \log(\frac{C_1/n}{1/n + \tau +1/2})\right] \frac{1}{1+\tau + 1/n - \theta} \frac{C_1}{n}\\
&+ \frac{c}{\pi \theta} \left|\log(\frac{-\tau + \frac{C_1-1}{n}}{1/2})  - \log(\frac{C_1/n}{1/n + \tau + 1/2})\right| \frac{1}{|\tau - 1/n|} \frac{C_1}{n}\\
&\stackrel{c}{\leq} \left(\frac{C_1}{n|\tau|} + \frac{C_1}{n|\tau|} \log(\frac{n|\tau|}{C_1})\right)\frac{1}{\pi} \left(4c' + 1/2\right)\\
&+ \frac{C_1}{n} \frac{4c}{\pi \theta (1/2-\theta)} + \frac{4c}{\pi \theta } \log(\frac{n}{C_1}) \frac{C_1}{n}\\
&+ \frac{2c'}{\pi} \frac{C_1}{n|\tau|} \left\{\log(1+ \frac{-\tau - \frac{1}{n}}{1/n + \tau + 1/2}) + \log(1+ \frac{-\tau - 1/n}{C_1/n})\right\}\\
&\stackrel{d}{\leq} \left(\frac{C_1}{n|\tau|} + \frac{C_1}{n|\tau|} \log(\frac{n|\tau|}{C_1})\right) \frac{1}{\pi} \left(4c' + 1/2\right)\label{A733a}\\
&+ \frac{C_1}{n}  \frac{16c'}{\pi} +  \left(\frac{4c'}{\pi}\log(\frac{n|\tau|}{C_1}) \frac{C_1}{n|\tau|} + \frac{4c'}{\pi} \frac{C_1}{n|\tau|}\right)\label{tmpBound0007}\\
&+ \frac{C_1}{n|\tau|} \frac{2c'}{\pi} \left(\log(3n|\tau|) + \log(\frac{3n|\tau|}{C_1})\right).\label{A733}
\end{align}
$(b)$ follows from $\tau - \frac{C_1-1}{n}\geq -1/2$ and $\tau - \frac{C_1-1}{n}\leq -1/2+\theta$. $d$ follows from $\inf_{\theta\in [\frac{1}{2n+3}, \frac{1}{2}-\frac{1}{2n+3}]}\frac{1}{\theta(1/2 - \theta)} \geq 4(2n+3)$ as soon as $n\geq 1$.

When $\theta \geq 1/4$, we use the log to cancel the prefator in~\eqref{tmpBound0008} and replace~\eqref{tmpBound0007} by the following bound  
\begin{align}
&\frac{c}{\pi \theta} \left[\log(\frac{1+\tau - \theta - \frac{C_1-1}{n}}{1/2 - \theta}) - \log(\frac{C_1/n}{1/n + \tau +1/2})\right] \frac{1}{1+\tau + 1/n - \theta} \frac{C_1}{n}\\
&\leq \frac{4c}{\pi} \left(3C_1+1\right)\\
&\leq \frac{6C_1 c'}{\pi n} + \frac{4c'}{\pi (2n+3)}.\label{A736}
\end{align}

For the last term, we get 
\begin{align}
\eqref{term3LemmaTroncCentralSpikeD3minus}&\leq \frac{1}{\pi \theta}  \left(4c + \frac{\theta}{2}\right) \frac{C_1}{n} \log(\frac{1/n - 1/2 + \theta - \tau}{C_1/n})\\
&+ \frac{c}{\pi \theta} \frac{C_1}{n} \left|-\log(1+s-\theta) + \log(1/n + s-\tau)\right|_{\tau + \frac{C_1-1}{n}}^{-1/2 + \theta} \frac{1}{\tau - \frac{1}{n} - \theta + 1}\\
&+ \frac{c}{\pi \theta} \frac{C_1}{n} \left|-\log(-s) + \log(1/n + s-\tau)\right|_{\tau + \frac{C_1-1}{n}}^{-1/2+\theta} \frac{1}{|-\tau + 1/n|}\\
&\stackrel{a}{\leq } \frac{1}{\pi} \left(4c' + 1/2\right) \frac{C_1}{n} \log(\frac{n}{C_1})\\
&+ \frac{c}{\pi \theta} \frac{C_1}{n} \left|\left(-\log(\frac{1/2}{1+\tau+\frac{C_1-1}{n} - \theta}) + \log(\frac{1/n - 1/2+\theta - \tau}{C_1/n})\right)\right| \frac{1}{\tau - \frac{1}{n} - \theta +1}\\
&+ \frac{c}{\pi \theta} \frac{C_1}{n} \left|-\log(\frac{1/2 - \theta}{-\tau - \frac{C_1-1}{n}}) + \log(\frac{1/n - 1/2 + \theta - \tau}{C_1/n})\right| \frac{1}{\tau - 1/n}\\
&\stackrel{b}{\leq } \frac{1}{\pi} \left(4c' + 1/2\right) \left(\frac{C_1}{n|\tau|} + \frac{C_1}{n|\tau|} \log(\frac{n|\tau|}{C_1})\right)\label{743a}\\
&+ \frac{c}{\pi \theta} \frac{C_1}{n} \left|-\log(\frac{1/2}{1+\tau + \frac{C_1-1}{n} - \theta}) + \log(\frac{1/n - 1/2 + \theta - \tau}{C_1/n})\right| \frac{1}{\tau - 1/n - \theta + 1}\label{tmpLemmaTroncD3minus0020}\\
&+ \frac{c}{\pi \theta} \frac{C_1}{n} \left(\log(2|\tau| n) + \log(\frac{2n|\tau|}{C_1})\right) \frac{1}{|\tau|}\label{743c}
\end{align}

$(b)$ follows from $\tau <0$. To bound~\eqref{tmpLemmaTroncD3minus0020}, we distinguish between $\theta \geq 1/4$ and $\theta\leq 1/4$. In the first case, we have 
\begin{align}
\eqref{tmpLemmaTroncD3minus0020}& \leq \frac{4c}{\pi} \frac{C_1}{n} \left(\log(1+ \frac{1/n - 1 + \theta - \tau}{1/2}\vee \frac{1/2 - 1/n - \theta + \tau}{1/n - 1/2 + \theta - \tau})\right) \frac{1}{|\tau - \frac{1}{n} - \theta +1|}\\
&+ \frac{4c}{\pi} \frac{C_1}{n} \left(\log(1+ \frac{1+\tau - 1/n - \theta}{C_1/n}\vee \frac{-1 + \theta - \tau + 1/n}{1+\tau + \frac{C_1-1}{n} - \theta})\right) \frac{1}{|\tau - \frac{1}{n} - \theta +1|}\\
&\stackrel{a}{\leq } \frac{4c}{\pi}\frac{C_1}{n} \left(2+ \frac{n}{C_1}\right) + \frac{4c}{\pi} \frac{C_1}{n} \left(\frac{n}{C_1} + \frac{1}{1/2-\theta}\right)\\
&\leq \frac{4c'}{\pi (2n+3)}3 + \frac{4c'}{\pi (2n+3)} + \frac{4c'}{\pi}\frac{C_1}{n}\label{743bFirst}
\end{align}
In the inequalities above, $(a)$ relies on $-1/2\leq \tau + \frac{C_1-1}{n} \leq -1/2+\theta$. When $\theta \leq 1/4$, we have 
\begin{align}
\eqref{tmpLemmaTroncD3minus0020}& \leq \frac{4c'}{\pi} \frac{C_1}{n} \left[\log(\frac{1}{2}\frac{1}{1/2 - \theta + \frac{C_1-1}{n}}) + \log(\frac{2n|\tau|}{C_1})\right]\frac{1}{1/4-1/n}\\
&\leq \frac{4c'}{\pi}\frac{C_1}{n} \left[\log(2) + \log(\frac{2n|\tau|}{C_1})\right]8\label{743bSecond}
\end{align}
The last line holds as soon as $n\geq 8$.

Finally the central integral~\eqref{term2LemmaTroncCentralSpikeD3minus} can be bounded as 
\begin{align}
\frac{1}{\pi \theta} \int_{\tau - \frac{C_1-1}{n}\vee -1/2}^{\tau + \frac{C_1-1}{n}\wedge -1/2+\theta} \left((4c+ \frac{\theta}{2}) + \frac{c}{1+s-\theta} + \frac{c}{-s}\right)\; ds&\leq \left(\frac{4c'}{\pi} + \frac{1}{2\pi}\right) \frac{2(C_1-1)}{n} + \frac{2c}{\pi \theta} \log(1+ 2\frac{C_1-1}{n}\frac{1}{1/2-\theta})\\
&\leq \left(\frac{4c'}{\pi} + \frac{1}{2\pi}\right) \frac{2C_1}{n} + \frac{8c'}{\pi} \frac{C_1}{n}\label{753}
\end{align}

Combining the bounds above, we get 
\begin{align}
&\left(\eqref{A733a} + \eqref{tmpBound0007}+\eqref{A733} \vee \eqref{A736}\right) + \left( \eqref{743a}+\eqref{743c} + \eqref{743bFirst}\vee \eqref{743bSecond} \right)  + \eqref{753}\\
&\leq \left(\frac{66c'}{\pi} \log(2) + \frac{2}{\pi} + \frac{44c'}{\pi} + \frac{4c'}{\pi}\log(3)\right)\frac{C_1}{n|\tau|} + \frac{C_1}{n|\tau|} \log(\frac{n|\tau|}{C_1}) \left(\frac{42c'}{\pi} + \frac{1}{\pi}\right) + \frac{8c'}{\pi} \frac{C_1}{n|\tau|} \log(n|\tau|)\\
&\leq 7\frac{C_1}{n|\tau|} + 3.1\frac{C_1}{n|\tau|}\log(\frac{n|\tau|}{C_1}) + \frac{C_1}{n|\tau|}\log(n|\tau|). \label{bound2towardsMax}
\end{align}

Taking the maximum of~\eqref{bound1TowardsMax} and~\eqref{bound2towardsMax}, when $\tau< -1/2+\theta$, we thus get the bound
\begin{align}
\eqref{bound1TowardsMax}\vee \eqref{bound2towardsMax}  =  \eqref{bound2towardsMax}
\end{align}

To this bound we add the symmetric contribution, 
\begin{align}
\eqref{bound2towardsMax} + \eqref{tmp01003}(1-\tau) &\leq 7\frac{C_1}{n|\tau|} + 3.1\frac{C_1}{n|\tau|}\log(\frac{n|\tau|}{C_1}) + \frac{C_1}{n|\tau|}\log(n|\tau|) + 2.3\frac{C_1}{n|\tau|}\\
&\leq 10\frac{C_1}{n|\tau|} + 3.1\frac{C_1}{n|\tau|}\log(\frac{n|\tau|}{C_1}) + \frac{C_1}{n|\tau|}\log(n|\tau|) \label{D3minusConfig2}
\end{align}
since $1-|\tau| \geq |\tau|$ and $\log(x)/x$ is a decreasing function for $x\geq 10$.

Before controling the imaginary part, we consider the case $-1/2+\theta<\tau+\frac{C_1-1}{n}<0$. 
\begin{align}
&\frac{1}{\pi \theta} \int_{-1/2}^{-1/2+\theta} \left(\left(4c+ \frac{\theta}{2}\right) + \frac{c}{1+s-\theta}+ \frac{c}{-s}\right) \frac{C_1}{1+n(\tau-s)}\; ds\\
&\stackrel{a}{\leq } \left(4c' + 1/2\right) \frac{1}{\pi} \frac{C_1}{n} \log(\frac{1/n + \tau + 1/2 - \theta}{1/n + \tau + 1/2})\\
&+ \frac{c}{\pi \theta} \left|\log(1+s-\theta)  - \log(1/n + \tau-s)\right|_{-1/2}^{-1/2+\theta} \frac{C_1}{n} \frac{1}{1+\tau - \theta+1/n}\\
&+ \frac{c}{\pi \theta} \left|\log(-s) - \log(1/n + \tau-s)\right|_{-1/2}^{-1/2+\theta} \frac{C_1}{n}\frac{1}{-\tau - 1/n}\\
&\stackrel{b}{\leq } \left(4c' + \frac{1}{2}\right) \frac{1}{\pi} \frac{C_1}{n} \log(n)\\
&+ \frac{c}{\pi \theta} \left|\log(\frac{1/2}{1/2-\theta}) + \log(\frac{1/n + \tau + 1/2 - \theta}{1/n + \tau + 1/2})\right| \frac{2C_1}{n}\\
&+ \frac{c}{\pi \theta} \left|\log(\frac{1/2}{1/2-\theta}) + \log(\frac{1/n + \tau + \frac{1}{2} - \theta}{1/n + \tau + 1/2})\right| \frac{C_1}{n}\frac{1}{|\tau|}\\
&\stackrel{c}{\leq } (4c' + 1/2) \frac{1}{\pi}\frac{C_1}{n}\log(n)\\
&+ \frac{4c'}{\pi} \log(n) \frac{C_1}{n}\\
&+ \frac{c}{\pi \theta} \frac{C_1}{n|\tau|} \left|\log(1+ \frac{-1/n - \tau}{1/n + \tau + 1/2 - \theta}) + \log(1 + \frac{-\tau - 1/n}{1/n + \tau + 1/2})\right|\\
&\leq \left(4c'+1/2\right)\frac{1}{\pi} \frac{C_1}{n} \log(n)\\
&+ \frac{4c'}{\pi} \log(n) \frac{C_1}{n} \\
&+ \frac{c'}{\pi} \frac{2C_1}{n|\tau|} \log(3n|\tau|).
\end{align}
In $(b)$, we use $\tau \geq -1/2 + \theta$.

When $\tau - \frac{C_1-1}{n} \leq -1/2 + \theta$, the integral turns into
\begin{align}
&\frac{1}{\pi \theta} \int_{-1/2}^{\tau - \frac{C_1-1}{n}} \left\{(4c+ \frac{\theta}{2}) + \frac{c}{1+s-\theta} + \frac{c}{-s}\right\} \frac{C_1}{1+n(\tau-s)}\; ds\label{term1tmpD3minus}\\
&+ \frac{1}{\pi \theta} \int_{\tau - \frac{C_1-1}{n}}^{-1/2+\theta}\left\{(4c+ \frac{\theta}{2}) + \frac{c}{1+s-\theta} + \frac{c}{-s}\right\}\; ds\label{term1tmpD3minusb}
\end{align}
We control each of those terms below. 
\begin{align}
\eqref{term1tmpD3minus}&\stackrel{(a)}{\leq } \left(4c' + \frac{1}{2}\right) \frac{1}{\pi} \frac{C_1}{n}\log(\frac{C_1/n}{1/n + \tau + 1/2})\\
&+ \frac{1}{\pi \theta} \frac{C_1}{n} \left|\log(\frac{1+\tau - \frac{C_1-1}{n} - \theta}{1/2 - \theta})  - \log(\frac{C_1/n}{1/n + \tau +1/2})\right| \frac{1}{1+1/n + \tau - \theta}\\
&+ \frac{c}{\pi \theta} \frac{C_1}{n} \left|\log(\frac{-\tau + \frac{C_1-1}{n}}{1/2}) - \log(\frac{C_1/n}{1/n + \tau + 1/2})\right| \frac{1}{-\tau  - 1/n}\\
&\stackrel{(b)}{\leq } \left(4c' + 1/2\right) \frac{1}{\pi} \frac{C_1}{n} \log(\frac{n}{C_1})\\
&+ \frac{c'}{\pi} 2\frac{C_1}{n} \left(\log(n)+ \log(\frac{n}{C_1})\right) \\
&+ \frac{c'}{\pi}\frac{C_1}{n} \left(\log(1+ \frac{-\tau - 1/n}{1/2+ \tau + 1/n}) + \log(1+ \frac{-\tau - 1/n}{C_1/n})\right)\\
&\stackrel{(c)}{\leq } (4c' + 1/2)\frac{1}{\pi} \frac{C_1}{n} \log(\frac{n}{C_1})\\
&+ \frac{c'}{\pi} \frac{4C_1}{n} \left(\log(n)+ \log(\frac{n}{C_1})\right)\\
&+ \frac{c'}{\pi} \frac{C_1}{n}\left( \log(3n|\tau|) + \log(\frac{3n|\tau|}{C_1})\right)\label{firstTermbetween1/2and0} 
\end{align}
\begin{align}
\eqref{term1tmpD3minusb}&\leq \left(4c' + 1/2\right)\frac{1}{\pi} \frac{C_1-1}{n} + \frac{2c}{\pi \theta} \log(1+ \frac{C_1-1}{n} \frac{1}{1/2-\theta})\\
&\leq (4c' + 1/2) \frac{1}{\pi} \frac{C_1-1}{n} + \frac{8c'}{\pi}\frac{C_1-1}{n}. \label{secondTermbetween1/2and0} 
\end{align}

Grouping~\eqref{firstTermbetween1/2and0} and~\eqref{secondTermbetween1/2and0} and adding at $1-\tau$ as in the case $\tau>0$ gives 

\begin{align}
&\eqref{firstTermbetween1/2and0} + \eqref{secondTermbetween1/2and0} +\eqref{tmp01001} +  \eqref{tmp01002}\\
& \leq  \frac{C_1}{n|\tau'|_{\mod 1}} \frac{12c'}{\pi} + \frac{1}{\pi} \left(4c' + 1/2\right) \frac{C_1}{n\tau'} \log(2n\tau') + \frac{C_1}{n\tau'} \frac{1}{\pi} \left(4c' + 1/2\right)\\
&+ \frac{C_1}{n|\tau|}\log(\frac{n|\tau|}{C_1}) \left((4c' +1/2)(5\pi) + c'\pi\right) + \frac{C_1}{n|\tau|}\log(n|\tau|)\frac{5c'}{\pi} + \frac{C_1}{n|\tau|} \left(\frac{c'}{\pi}2\log(3) + (12c' + 1/2)/pi\right)\\
&\leq 2.2 \frac{C_1}{n|\tau|}\log(\frac{n|\tau}{C_1}) + \frac{C_1}{n|\tau|}\log(n|\tau|)0.8 + \frac{C_1}{n|\tau|} 2.6.\label{D3minusConfig3}
\end{align}

Taking the maximum over the configurations gives 
\begin{align}
\eqref{D3minusConfig1}\vee\eqref{D3minusConfig2} \vee \eqref{D3minusConfig3} \leq 10\frac{C_1}{n|\tau|} + 3.1\frac{C_1}{n|\tau|}\log(\frac{n|\tau|}{C_1}) + \frac{C_1}{n|\tau|}\log(n|\tau|)\label{boundD3minusRealPartAllFW}
\end{align}

We now deal with the imaginary part of $D_3^-$.  Recall that we have 
\begin{align}
F_{D_3^-,I}\leq F_{D_3^-,R} + 2\log(2) + \frac{1}{4} \log(\frac{1}{-s}) + \frac{1}{4} \log(\frac{1}{1+s-\theta})
\end{align}
We will also use the tighter bound 
\begin{align}
F_{D_3^-,I}\leq F_{D_3^-,R} + \frac{1}{4} \log(\frac{1/2}{-s}) + \frac{1}{4} \log(\frac{1/2}{1+s-\theta})
\end{align}

We start by assuming $\tau>0$ , we get 
\begin{align}
&\frac{1}{\pi \theta} \int_{-1/2}^{-1/2+\theta} \left(2\log(2) + \frac{1}{4}\log(\frac{1}{-s}) + \frac{1}{4}\log(\frac{1}{1+s-\theta})\right) \frac{C_1}{1+n(\tau-s)}\; ds\label{boundImaginaryPartD3minusaa}\\
&\leq \frac{C_1}{n} \log(\frac{1/n + \tau + 1/2 - \theta}{1/n + \tau + 1/2}) \frac{1}{\pi \theta}\\
&+ \frac{C_1}{n} \frac{1}{4\pi \theta} \left|\log(-s)  - \log(1/n + \tau - s)\right|_{-1/2}^{-1/2+\theta} \frac{1}{|-\tau - 1/n|}\\
&+ \frac{C_1}{n} \frac{1}{4\pi \theta} \left|\log(1+s-\theta) - \log(1/n + \tau-s)\right|_{-1/2}^{-1/2+\theta} \frac{1}{1-\theta + \tau + 1/n}\\
&\leq \frac{C_1}{n|\tau|\pi} + \frac{C_1}{n}\frac{1}{4\pi \theta} \left(\log(\frac{1/2-\theta}{1/2})  - \log(\frac{1/n + \tau + 1/2 - \theta}{1/n + \tau + 1/2})\right) \frac{1}{|\tau|}\\
&+ \frac{C_1}{n} \frac{1}{4\pi \theta} \left(\log(\frac{1/2}{1/2-\theta}) - \log(\frac{1/n + \tau + 1/2 - \theta}{1/n + \tau + 1/2})\right) \frac{1}{1-\theta +\tau + 1/n}
\end{align}
To control the lines above, we consider two cases. If $\theta \leq 1/4$ we have 
\begin{align}
\eqref{boundImaginaryPartD3minusaa}&\leq \frac{C_1}{n \pi |\tau|} + \frac{C_1}{n}\frac{4}{\pi} \left(\frac{1}{|\tau|} + \frac{1}{1-\theta + |\tau|}\right)  \label{imagArgmax1F1}.
\end{align}
When $\theta \geq 1/4$, we write 
\begin{align}
\eqref{boundImaginaryPartD3minusaa} &\leq \frac{C_1}{n|\tau|\pi} + \frac{C_1}{n\pi} \left(\log(1+ \frac{\tau + 1/n}{1/2-\theta}) + \log(1+ \frac{\tau + 1/n}{1/2})\right) \frac{1}{|\tau|}\\
& + \frac{C_1}{n}\frac{1}{\pi} 2 \left(\log(\frac{1/n+\tau + 1/2}{1/2-\theta}) + \log(\frac{1/2}{1/n + \tau + 1/2 - \theta})\right) \\
&\leq \frac{C_1}{n}\frac{1}{\pi |\tau|} + \frac{C_1}{n\pi |\tau|} \left(\log(3n|\tau|) + \log(2)\right)\\
&+ \frac{C_1}{n\pi}2\left(\log(3n|\tau|) + \log(\frac{1}{|\tau|}) + \log(\frac{n}{C_1}|\tau|) + \log(|\tau|^{-1})\right)\label{imagArgmax2F1}.
\end{align}

The resulting bound is thus given by 
\begin{align}
\eqref{imagArgmax1F1} \vee \eqref{imagArgmax2F1}&\leq \frac{11C_1}{n\pi |\tau|} + \frac{C_1}{n|\tau|}\log(n|\tau|) \frac{3}{\pi} + \frac{2}{\pi}\frac{C_1}{n|\tau|}\log(\frac{n|\tau|}{C_1}). \label{tauPositive1/2}
\end{align}

For $\tau<0$, we go back to the bound~\eqref{tightBoundDecreaseLemmaTronc}. From this earlier result, we can derive the tighter bound
\begin{align}
F_{D_3^-, I} \leq F_{D_3^-, R}+ \frac{1}{4} \left(\log(1/2) - \log(-s)\right) + \frac{1}{4} \left(\log(1/2) - \log(1+s-\theta)\right)
\end{align}

When $\tau<0$, for the first term, we write 
\begin{align}
\frac{1}{\pi \theta} \int_{-1/2}^{-1/2 + \theta} \log(\frac{1/2}{-s}) \frac{C_1}{1+n |\tau-s|}\; ds &\leq \frac{1}{\pi \theta} \log(\frac{1/2}{1/2-\theta}) \frac{C_1}{1+n|\tau-s|}\; ds\\
&\leq \frac{1}{\pi \theta} \log(1+ \frac{\theta}{1/2-\theta}) \int_{-1/2}^{-1/2+\theta} \frac{C_1}{1+n(\tau - s)}\; ds\\
&\leq \frac{1}{\pi \theta} \log(1+ \frac{\theta}{1/2-\theta}) \frac{C_1}{n} \log(\frac{1/n + \tau + 1/2 - \theta}{1/n + \tau + \frac{1}{2}})\label{star1}
\end{align}
We make the distinction between $\theta\geq 1/4$ and $\theta < 1/4$
\begin{itemize}
\item When $\theta \geq 1/4$, we write 
\begin{align}
\eqref{star1}&\leq \frac{4}{\pi} \log(n\theta) \frac{C_1}{n} \log(1+ \frac{\theta}{1/n + \tau +1/2 - \theta})\\
&\leq \frac{4}{\pi}\log(n\theta) \frac{C_1}{n} \log(1+ \frac{\theta}{1/n + \tau + 1/2 - \theta})\\
&\leq \frac{4}{\pi} \log(n\theta) \frac{C_1}{n} \log(1+ \frac{\theta}{1/n + \tau + 1/4}).
\end{align}
Now we make the distinction 
\begin{itemize}
\item $|\tau|\geq 1/8$. In this case, we have 
\begin{align}
\eqref{star1}&\leq \frac{4}{\pi}\log(n) \frac{C_1}{n} \log(n\theta)\\
&\leq \frac{4}{\pi} \left(\log(n|\tau|) + \log(\frac{1}{|\tau|}) \frac{C_1}{n} \left(\log(n|\tau|) + \log(\frac{1}{|\tau|})\right)\right)\\
&\leq \frac{4}{\pi} \log^2(n|\tau|) \frac{C_1}{n}+ 2\frac{C_1}{n|\tau|} \log(n|\tau|) + \frac{C_1}{n|\tau|}
\end{align}
\item When $|\tau|\leq \frac{1}{8}$, we have 
\begin{align}
\eqref{star1}&\leq \frac{4}{\pi} \log(n\theta) \frac{C_1}{n} \log(1+ 8 \theta)\\
&\leq \frac{4}{\pi} \log(n) \frac{C_1}{n}\log(5)\\ 
\end{align}
\end{itemize}
\item Finally when $\theta \leq 1/4$
\begin{align}
\eqref{star1}&\leq \frac{4}{\pi} \frac{C_1}{n} \log(1+ \frac{\theta}{1/n + \tau + 1/2})\\
&\leq \frac{4}{\pi} \frac{C_1}{n} \log(n)\\
&\leq \frac{4}{\pi} \frac{C_1}{n|\tau|} + \frac{4}{\pi} \frac{C_1}{n}\log(n|\tau|) 
\end{align}
\end{itemize}

For the second term $\log(\frac{1/2}{1+s-\theta})$, note that we have 
\begin{align}
\frac{1}{\pi \theta} \int_{-1/2}^{-1/2 + \theta} \log(\frac{1/2}{1+s-\theta}) \frac{C_1}{1+n(\tau-s)}\; ds
&\leq \frac{1}{\pi \theta} \log(\frac{1/2}{1/2-\theta}) \frac{C_1}{n}\log( \frac{1/n + \tau + 1/2 - \theta}{1/n + \tau + 1/2}) 
\end{align}
on which we can apply the exact same reasoning. Hence we get 

\begin{align}
&\frac{1}{\pi \theta} \int_{-1/2}^{-1/2 + \theta}\left( \log(\frac{1/2}{-s}) + \log(\frac{1/2}{1+s-\theta})\right) \frac{C_1}{1+n |\tau-s|}\; ds\\
 &\leq 2\left(\frac{C_1}{n|\tau|}\frac{4\log(5)}{\pi} +2\frac{C_1}{n|\tau|}\log(n|\tau|) + \frac{4}{\pi}\frac{C_1}{n|\tau|}\log^2(n|\tau|)\right)\label{tauNegiativeButLargerThanMinus1/2}
\end{align}

We now treat the case $\tau < -1/2$, which is needed to account for the effect of the modulo $1$ part of the two cases treated above. In this case, we have 
\begin{align}
&\frac{1}{\pi \theta} \int_{-1/2}^{-1/2 + \theta} \frac{1}{4} \left(\log(\frac{1/2}{1+s-\theta}) + \log(\frac{1/2}{-s})\right) \frac{C_1}{1+n(s-\tau)}\; ds\\
&\leq \frac{1}{2}\frac{1}{\pi \theta} \log(\frac{1/2}{1/2-\theta})  \frac{C_1}{n} \log(\frac{1/n - \frac{1}{2} + \theta - \tau}{1/n - 1/2 - \tau})\label{D3minusTauNegativeLemmaTronc}
\end{align}
To control~\eqref{D3minusTauNegativeLemmaTronc}, we consider two cases, 
\begin{itemize}
\item $\theta \geq 1/4$
In this case we keep the $\log$ inside the integral and write 
\begin{align}
\eqref{D3minusTauNegativeLemmaTronc}&\leq \frac{4}{\pi} \log(n) \frac{C_1}{n} \frac{1}{2} \log(1+ \frac{\theta}{1/n - 1/2 - \tau})\\
&\leq \frac{2}{\pi}\frac{C_1}{n|\tau|}\left(2\log(2)\log(n|\tau|) + \log^2(2) +\log^2(n|\tau|)\right)
\end{align}
\item $\theta \leq 1/4$. In this case, we use the log to cancel the prefactor. we consider two cases
\begin{itemize}
\item if $|\tau|_{\mod 1} \geq  \frac{1}{4}$, we write 
\begin{align}
\eqref{D3minusTauNegativeLemmaTronc} &\leq \frac{4}{\pi} \log(n) \frac{C_1}{n} \frac{1}{2} \log(n\theta)\\
&\leq \frac{4}{\pi} \left(\log(n|\tau|) + \frac{1}{|\tau|}\right) \frac{C_1}{n} \frac{1}{2} \left(\log(n|\tau|) + \frac{1}{|\tau|}\right) \\
&\leq \left(\frac{C_1}{n|\tau|^2} + \frac{2C_1}{n|\tau|}\log(n|\tau|) + \frac{C_1}{n} \log^2(n|\tau|)\right)\frac{2}{\pi}\\
&\leq \frac{C_1}{n}\frac{4}{|\tau|} + \frac{2C_1}{n|\tau|} \log(n|\tau|) + \frac{C_1}{n|\tau|} \log^2(n|\tau|)\frac{2}{\pi}
\end{align}
\item When $|\tau|_{\mod 1}\leq \frac{1}{4}$ we have $-1/2-\tau \geq 1/4$ (recall that $\tau$is located on the left of $-1/2$) hence
\end{itemize}
\begin{align}
\eqref{D3minusTauNegativeLemmaTronc} & \leq \frac{4}{\pi} \log(n) \frac{C_1}{n} \frac{1}{2} \log(1+4\theta)\\
&\leq \left(\frac{4}{\pi} \log(n|\tau|)\frac{C_1}{n|\tau|} + \frac{C_1}{n|\tau|} \frac{4}{\pi}\right)\frac{1}{2}\log(3) 
\end{align}
\end{itemize}
Together those bounds give, for $\tau<-1/2$
\begin{align}
&\frac{1}{\pi \theta} \int_{-1/2}^{-1/2 + \theta}\left( \log(\frac{1/2}{-s}) + \log(\frac{1/2}{1+s-\theta})\right) \frac{C_1}{1+n |\tau-s|}\; ds\\
&\leq 4\frac{C_1}{n|\tau|} + \frac{2}{\pi} \frac{C_1}{n|\tau|}\log^2(n|\tau|) + 2\frac{C_1}{n|\tau|}\log(n|\tau|)\label{boundSymmetricForModuloTauLess1/2}
\end{align}

replacing $\tau$ by $1-|\tau'|$, with $|\tau'|\leq 1-|\tau'|$ and adding the resulting bound to~\eqref{tauNegiativeButLargerThanMinus1/2} and~\eqref{tauPositive1/2} respectively gives 

\begin{align}
\eqref{tauNegiativeButLargerThanMinus1/2} + \eqref{boundSymmetricForModuloTauLess1/2}(1-|\tau|)&\leq 9\frac{C_1}{n|\tau|} + 6\frac{C_1}{n|\tau|} \log(n|\tau|) +4\frac{C_1}{n|\tau|}\log^2(n|\tau|)  \label{imaginaryBoundD3minusFW1}\\
\eqref{tauPositive1/2} + \eqref{boundSymmetricForModuloTauLess1/2}(1-|\tau|)&\leq  8\frac{C_1}{n|\tau|} + 3\frac{C_1}{n|\tau|} \log(n|\tau|) + 0.7\frac{C_1}{n|\tau|}\log^2(n|\tau|) + 0.7 \frac{C_1}{n|\tau|} \log(\frac{n|\tau|}{C_1})\label{imaginaryBoundD3minusFW2}
\end{align}


When $\left[\tau-\frac{C_1}{n}, \tau + \frac{C_1}{n}\right]\cap  [-1/2, -1/2+\theta]\neq \varnothing$, we have the decomposition
\begin{align}
&\frac{1}{\pi \theta} \int_{-1/2}^{\tau - \frac{C_1-1}{n}\vee -1/2} \left(\frac{1}{4}\log(\frac{1/2}{-s}) + \frac{1}{4} \log(\frac{1/2}{1+s-\theta})\right)\frac{C_1}{1+s-\theta}\; ds\label{tmpLemmaTroncleftD3minus}\\
&+ \frac{1}{\pi \theta} \int_{\tau - \frac{C_1-1}{n}}^{\tau + \frac{C_1-1}{n}} \left\{\frac{1}{4}\log(\frac{1/2}{-s}) + \frac{1}{4}\log(\frac{1/2}{1+s-\theta})\right\} \; ds\label{tmpLemmaTroncCentralD3minus}\\
&+ \frac{1}{\pi \theta} \int_{\tau + \frac{C_1-1}{n}\wedge -1/2+\theta}^{-1/2 + \theta} \left\{\frac{1}{4}\log(\frac{1/2}{-s}) + \frac{1}{4} \log(\frac{1/2}{1+s-\theta})\right\} \frac{C_1}{1+n(s-\tau)}\; ds\label{tmpLemmaTroncrightD3minus}
\end{align}
For~\eqref{tmpLemmaTroncCentralD3minus}, noting that we always integrate on a length $2C_1$ interval, we can simply write
\begin{align}
\eqref{tmpLemmaTroncCentralD3minus}&\leq 2\frac{C_1-1}{n} \frac{1}{\pi \theta} \log(\frac{1/2}{1/2-\theta}) \leq \frac{C_1}{n|\tau|} \left(1+ \log(n|\tau|)\right).\label{841tmpD3imag}
\end{align}
For the first term in~\eqref{tmpLemmaTroncleftD3minus} and~\eqref{tmpLemmaTroncrightD3minus}, we handle the cases $\theta \geq 1/4$ and $\theta<1/4$ separately.
\begin{itemize}
\item When $\theta \geq 1/4$, we use $\log(\frac{1/2}{-s}) \leq \frac{1/2}{-s}$, from which we have  
\begin{align}
\frac{1}{\pi \theta}\int_{-1/2}^{\tau - \frac{C_1-1}{n}\vee -1/2} \frac{1}{4}\log(\frac{1/2}{-s}) \frac{C_1}{1+n(\tau-s)}\;ds &\leq \frac{4}{\pi}\frac{C_1}{n} \left| \log(-s) - \log(1/n + \tau-s)\right|_{-1/2}^{\tau - \frac{C_1-1}{n}} \frac{1}{-\tau - \frac{1}{n}}\\
&\leq \frac{4}{\pi}\frac{C_1}{n|\tau|} \left|\log(\frac{-\tau + \frac{C_1-1}{n}}{1/2}) - \log(\frac{C_1/n}{1/n + \tau + 1/2})\right|\\
&\leq \frac{4}{\pi} \frac{C_1}{n|\tau|} \left|\log(1+ \frac{-\tau - 1/n}{1/n + \tau + 1/2}) + \log(\frac{n (-\tau + \frac{C_1-1}{n})}{C_1})\right|\\
&\leq \frac{4}{\pi}\frac{C_1}{n|\tau|} \left(\log(3n|\tau|) + \log(\frac{2n|\tau|}{C_1})\right)\label{845}
\end{align}
as well as
\begin{align}
&\frac{1}{\pi \theta} \int_{\tau + \frac{C_1-1}{n}}^{-1/2+\theta} \frac{1}{4} \log(\frac{1/2}{-s}) \frac{C_1}{1+n(s-\tau)}\; ds \leq \frac{4}{\pi}\frac{1}{8}\frac{C_1}{n} \left|-\log(-s) + \log(1/n + s-\tau)\right|_{\tau + \frac{C_1-1}{n}}^{-1/2+\theta} \\
&\leq \frac{4}{\pi} \frac{C_1}{n} \frac{1}{|\tau|} \left|-\log(\frac{1/2-\theta}{-\tau - \frac{C_1-1}{n}}) + \log(\frac{1/n - 1/2 + \theta - \tau}{C_1/n})\right|\\
& \stackrel{(a)}{\leq } \frac{4}{\pi} \frac{C_1}{n|\tau|}\left(\log(\frac{2|\tau|n}{C}) + \log(2|\tau|n)\right)\label{848}
\end{align}
in $(a)$ above we use $-1/2+\theta<0$. 
\item When $\theta \leq 1/4$, we cancel the prefator with the log, 
\begin{align}
&\frac{1}{\pi \theta} \int_{-1/2}^{\tau - \frac{C_1-1}{n}} \log(\frac{1/2}{-s})\frac{C_1}{1+n(\tau-s)}\; ds \leq \frac{C_1}{n}\frac{4}{\pi} \log(\frac{C_1/n}{1/n + \tau + 1/2}) \leq \frac{4}{\pi}\log(\frac{n}{C_1}) \frac{C_1}{n} \leq \frac{4}{\pi}\left(\frac{C_1}{n|\tau|} + \frac{C_1}{n|\tau|} \log(\frac{n|\tau|}{C_1})\right)\label{849}\\
& \frac{1}{\pi \theta} \int_{\tau + \frac{C_1-1}{n}}^{-1/2+\theta} \log(\frac{1/2}{-s}) \frac{C_1}{1+n(s-\tau)}\; ds \leq \frac{4}{\pi}  \frac{C_1}{n}\log(\frac{1/n - \tau - 1/2+\theta}{C_1/n}) \leq \frac{4}{\pi} \frac{C_1}{n} \log(\frac{n2|\tau|}{C_1}) \leq \frac{4}{\pi} \frac{C_1}{n|\tau|} \log(\frac{2n|\tau|}{C_1}).\label{850}
\end{align}
\end{itemize}

For the remaining term, the first integral reads as
\begin{align}
\frac{1}{\pi \theta} \int_{-1/2}^{\tau - \frac{C_1-1}{n}} \frac{1}{4}\log(\frac{1/2}{1+s-\theta})\frac{C_1}{1+n(s-\tau)}\; ds
\end{align}
We first consider the case $\theta\geq 1/4$. If $\tau<-1/4$, we have $|1/2 + \tau|\leq |\tau|$ and we can write 
\begin{align}
&\frac{1}{\pi \theta} \int_{-1/2}^{\tau - \frac{C_1-1}{n}} \frac{1}{4}\log(\frac{1/2}{1+s-\theta})\frac{C_1}{1+n(s-\tau)}\; ds\\
&\leq \frac{1}{\pi \theta} \log(\frac{1/2}{1/2-\theta}) \log(\frac{C_1/n}{1/n + \tau + 1/2})\frac{C_1}{n}\\
&\leq \frac{1}{\pi \theta} \log(\frac{1/2}{1/2-\theta}) \log(\frac{|\tau|n}{C_1}) \frac{C_1}{n}\\
&\leq \frac{4}{\pi}\left( \log(n|\tau|) + \frac{1}{|\tau|}\right) \log(n|\tau|/C_1)\frac{C_1}{n}.\label{tmp855}
\end{align}
If $\tau > -1/4$, we have  
\begin{align}
&\frac{1}{\pi \theta} \int_{-1/2}^{-3/8} \log(\frac{1/2}{1+s-\theta})\frac{C_1}{1+n(\tau-s)}\; ds \\
& + \frac{1}{\pi \theta} \int_{-3/8}^{\tau - \frac{C_1-1}{n}} \log(\frac{1/2}{1+s-\theta})\frac{C_1}{1+n(\tau-s)}\; ds \\
&\leq \frac{1}{\pi \theta} \frac{C_1}{n} \log(\frac{1/2}{1/2-\theta}) \log(\frac{1/n + \tau +3/8}{1/n + \tau + 1/2})\\
&+  \frac{1}{\pi \theta} \frac{C_1}{n} \log(\frac{1/2}{5/8-\theta}) \log(\frac{C_1/n}{1/n + \tau + 3/8})\\
&\stackrel{(a)}{\leq } \frac{4}{\pi} \left\{\frac{C_1}{n} \log(4) \log(\frac{n}{C_1}) + \frac{C_1}{n}\log(n) \log(4)\right\}\label{tmp860}
\end{align}

$(a)$ follows from $\theta \leq 1/2$ as well as $\tau \geq -1/4$

For the second term, when $\theta\geq 1/4$, we write 
\begin{align}
&\frac{1}{\pi \theta}\int_{\tau + \frac{C_1-1}{n}}^{-1/2+\theta} \log(\frac{1/2}{1+s-\theta}) \frac{C_1}{1+n(s-\tau)}\; ds\\
& \leq \frac{4}{\pi} \log(\frac{n}{2}) \log(\frac{n|\tau|}{C_1})\frac{C_1}{n}\label{tmp861}
\end{align}

When $\theta \leq 1/4$, note that we have 
\begin{align}
&\frac{1}{\pi \theta} \log(\frac{1/2}{1/2-\theta}) \int_{-1/2}^{\tau -\frac{C_1-1}{n} } \frac{C_1}{1+n(\tau-s)}\; ds\\
&\leq \frac{4}{\pi} \frac{C_1}{n}\log(\frac{C_1/n}{1/n + \tau +1/2}) \leq \log(\frac{n}{C_1})\frac{C_1}{n}\frac{4}{\pi}\label{tmp862}
\end{align}
as well as
\begin{align}
&\frac{1}{\pi \theta} \log(\frac{1/2}{1/2-\theta}) \int_{\tau + \frac{C_1-1}{n}}^{-1/2+\theta} \frac{C_1}{1+n(\tau-s)}\; ds\\
&\leq \frac{4}{\pi} \frac{C_1}{n}\log(\frac{1/n + \tau + 1/2-\theta}{C_1/n}) \leq \log(\frac{n}{C_1})\frac{C_1}{n}\frac{4}{\pi}\label{tmp863}
\end{align}

Combining~\eqref{849} and~\eqref{850} with~\eqref{845} and~\eqref{848} as well as~\eqref{tmp855},~\eqref{tmp860},~\eqref{tmp861}, ~\eqref{tmp862} and~\eqref{tmp863}, as well as~\eqref{841tmpD3imag}, we get 
\begin{align}
\eqref{tmpLemmaTroncleftD3minus}+\eqref{tmpLemmaTroncCentralD3minus} + \eqref{tmpLemmaTroncrightD3minus} &\leq \frac{C_1}{n|\tau|} \left(\log(n|\tau|) + \log(\frac{n|\tau|}{C_1})\right)\frac{8}{\pi} + (\frac{4}{\pi} +( \log(2)+\log(3))\frac{8}{\pi})\frac{C_1}{n|\tau|} \\
&+ \frac{C_1}{n|\tau|} (1+ \log(n|\tau|))\\
&+ \log(n|\tau|) \frac{C_1}{n|\tau|} \log(\frac{n|\tau|}{C_1}) \frac{8}{\pi} + \log(\frac{n|\tau|}{C_1}) \frac{C_1}{n|\tau|} \frac{8\log(4)}{\pi} \\
&+ \frac{C_1}{n|\tau|} \frac{8+4\log(4)}{\pi}\\
&\leq 12\frac{C_1}{n|\tau|}  + 4\frac{C_1}{n|\tau|} \log(n|\tau|) + 7\frac{C_1}{n|\tau|}\log(\frac{n|\tau|}{C_1}) + 4\frac{C_1}{n|\tau|}\log(n|\tau|)\log(\frac{n|\tau|}{C_1}). \label{imaginaryPartFW3}
\end{align}

We can then mulitply the resulting bound by $2$ to account for the effect of the modulo $1$. 

Taking the maximum over the three frameworks gives the final bound on the imaginary part

\begin{align}
\eqref{imaginaryPartFW3}\vee \eqref{imaginaryBoundD3minusFW1} \vee \eqref{imaginaryBoundD3minusFW2} \leq 24\frac{C_1}{n|\tau|}  + 8\frac{C_1}{n|\tau|} \log(n|\tau|) + 14\frac{C_1}{n|\tau|}\log(\frac{n|\tau|}{C_1}) + 8\frac{C_1}{n|\tau|}\log^2(n|\tau|)
\end{align}

Adding this bound to twice the bound~\eqref{boundD3minusRealPartAllFW} on the real part gives the result of the lemma. 

\begin{align}
&20\frac{C_1}{n|\tau|} + 7\frac{C_1}{n|\tau|}\log(\frac{n|\tau|}{C_1}) +2 \frac{C_1}{n|\tau|}\log(n|\tau|)\\
&+24\frac{C_1}{n|\tau|}  + 8\frac{C_1}{n|\tau|} \log(n|\tau|) + 14\frac{C_1}{n|\tau|}\log(\frac{n|\tau|}{C_1}) + 8\frac{C_1}{n|\tau|}\log^2(n|\tau|)\\
&\leq 44\frac{C_1}{n|\tau|}  + 21 \frac{C_1}{n|\tau|} \log(\frac{n|\tau|}{C_1}) + 10\frac{C_1}{n|\tau|} \log(n|\tau|) + 8 \frac{C_1}{n|\tau|} \log^2(n|\tau|). 
\end{align}

\subsection{\label{proofD0minusLarge}Proof of lemma~\ref{lemmaTroncD0minus} ($D_0^-$ large $|\tau - \alpha|$)}

We start with $D_0^-$. Recall that we have 
\begin{align}
F_{R,D_0^-}& \leq c\left(\frac{1}{-s} + \frac{1}{\theta-s}\right) + \frac{\theta}{2}\\
&\leq \frac{2c}{-s} + \frac{\theta}{2}
\end{align}
as well as 
\begin{align}
F_{I,D_0^-} &\leq F_{R,D_0^-} + \frac{1}{4} \log(\frac{-s}{\theta-s})
\end{align}

We consider two distinct cases: Either $[\tau - \frac{C_1}{n}, \tau + \frac{C_1}{n}] \cap [-1/2+\theta, -\frac{1}{2n+3}]\neq \emptyset$ or $\tau$ does not belong to that interval. In the latter case, when $\tau<-1/2+\theta$, we have

\begin{align}
&\frac{1}{\pi \theta} \int_{-1/2+ \theta}^{-\frac{1}{2n+3}} \frac{1}{4(2n+3)} \left(\frac{1}{\pi}+ \frac{1}{2}\right) \frac{2}{-s} \frac{C_1}{1+n(s-\tau)}\; ds + \frac{1}{\pi \theta} \int_{-1/2 + \theta}^{-\frac{1}{2n+3}} \frac{\theta}{2} \frac{C_1}{1+n(s-\tau)}\; ds\\
&\leq \frac{1}{\pi |\theta|}\frac{1}{4(2n+3)} \left(\frac{1}{\pi}+ \frac{1}{2}\right)\int_{-1/2+\theta}^{-\frac{1}{2n+3}} \left(\frac{1}{-s} + \frac{1}{1/n + (s-\tau)}\right) \frac{C_1}{n}\frac{1}{-\tau + \frac{1}{n}}\\
&+ \frac{1}{\pi |\theta|} \frac{\theta}{2} \frac{C_1}{n} \log(\frac{1/n - \tau - \frac{1}{2n+3}}{1/n - \tau - 1/2+\theta})\\
&\leq \frac{1}{\pi |\theta|} \frac{1}{4(2n+3)} \left(\frac{1}{\pi} + \frac{1}{2}\right) \left[-\log(\frac{\frac{1}{2n+3}}{1/2-\theta}) + \log(\frac{1/n - \tau - \frac{1}{2n+3}}{1/n - \tau -1/2+\theta})\right]\frac{C_1}{n}\frac{1}{|-\tau + \frac{1}{n}|}\\
&+ \frac{1}{\pi |\theta|}\frac{\theta}{2} \frac{C_1}{n}  \log(\frac{1/n - \tau - \frac{1}{2n+3}}{1/n - \tau -1/2+\theta})\\
&\leq \frac{1}{\pi|\theta|}c \frac{2C_1}{n|\tau|} \left(2\log(3n|\tau|)\right)\\
&+\frac{1}{2\pi} \frac{C_1}{n}\log(3n|\tau|)\\
&\leq \frac{c'}{\pi}\frac{4C_1}{n|\tau|} \log(3n|\tau|) + \frac{1}{2\pi} \frac{C_1}{n}\log(3n|\tau|).\label{domainD0minusTaulessthen1.2minustheta}
\end{align}

In the equations above, we used $1/2-\theta \leq |\tau|$. When $\tau>0$, a similar reasoning gives 

\begin{align}
&\frac{1}{\pi \theta } \int_{-1/2+\theta}^{-\frac{1}{2n+3}} \frac{1}{4(2n+3)} \left(\frac{1}{\pi} + \frac{1}{2}\right) \frac{2}{-s} \frac{C_1}{1+n(\tau-s)}\; ds + \frac{1}{\pi |\theta|} \int_{-1/2+\theta}^{-\frac{1}{2n+3}} \frac{\theta}{2} \frac{C_1}{1+n(\tau-s)}\; ds\\
&\leq \frac{1}{\pi |\theta|} \frac{1}{4(2n+3)} \left(\frac{1}{\pi} + \frac{1}{2}\right) \int \left(-\frac{1}{-s} + \frac{1}{1/n +\tau-s}\right) \frac{1}{|\tau + \frac{1}{n}|}\frac{2C_1}{n}\; ds\\
&+ \frac{1}{\pi \theta} \frac{\theta}{2} \frac{C_1}{n} \log(\frac{1/n + \tau + \frac{1}{2n+3}}{1/n + \tau + 1/2-\theta})\\
&\leq \frac{1}{\pi |\theta|}\frac{1}{4(2n+3)}\left(\frac{1}{\pi} + \frac{1}{2}\right) \left[\log(\frac{1/2-\theta}{\frac{1}{2n+3}}) + \log(\frac{1/n + \tau + \frac{1}{2n+3}}{1/n + \tau + 1/2 - \theta})\right] \frac{C_1/n}{|\tau + \frac{1}{n}|}\\
&+ \frac{1}{\pi |\theta|} \frac{\theta}{2} \frac{C_1}{n} \log(\frac{1/n + \tau + \frac{1}{2n+3}}{1/n + \tau + 1/2 - \theta})\\
&\leq \frac{1}{\pi|\theta|}c \frac{2C_1}{n|\tau|} \left(2\log(3n|\tau|)\right)\\
&+\frac{1}{2\pi} \frac{C_1}{n}\log(3n|\tau|)\\
&\leq \frac{c'}{\pi}\frac{4C_1}{n|\tau|} \log(3n|\tau|) + \frac{1}{2\pi} \frac{C_1}{n}\log(3n|\tau|).\label{DomainD0minusTauPositive}
\end{align}
Note that as soon as $\tau \geq \Delta$, we have $\tau - \frac{1}{n}\geq \tau/2$. 

When $\tau \in  [-1/2+\theta, -\frac{1}{2n+3}]$, we write 

\begin{align}
&\frac{1}{\pi \theta} \int_{-1/2+\theta}^{\tau - \frac{C_1-1}{n}} \frac{1}{4(2n+3)} \left(\frac{1}{\pi} + \frac{1}{2}\right) \frac{2}{-s} \frac{C_1}{1+n(\tau - s)}\; ds\\
& + \frac{1}{\pi \theta} \int_{-1/2+\theta}^{\tau - \frac{C_1-1}{n}} \frac{C_1}{1+n(\tau-s)}\frac{\theta}{2} \; ds\\
&+ \frac{1}{\pi \theta} \int_{\tau + \frac{C_1-1}{n}}^{-\frac{1}{2n+3}} \frac{1}{4(2n+3)} \left(\frac{1}{\pi} + \frac{1}{2}\right) \frac{2}{-s} \frac{C_1}{1+n(s-\tau)} \; ds\\
&+ \frac{1}{\pi \theta} \int_{\tau + \frac{C_1-1}{n}}^{-\frac{1}{2n+3}} \frac{\theta}{2} \frac{C_1}{1+n(s-\tau)}\; ds\\
&\leq \frac{1}{\pi \theta} \frac{1}{4(2n+3)}\left(\frac{1}{\pi}+ \frac{1}{2}\right) \frac{2C_1}{n}\left|\log(\frac{-\tau + \frac{C_1-1}{n}}{1/2-\theta}) - \log(\frac{C_1/n}{1/n+\tau+1/2-\theta})\right| \frac{2C_1}{n}\frac{1}{|-\tau - 1/n|}\\
&+ \frac{1}{2\pi} \frac{C_1}{n} \log(\frac{C_1/n}{1/n+1/2-\theta - \tau})\\
&+ \frac{1}{\pi \theta} \frac{1}{4(2n+3)}\left(\frac{1}{\pi} + \frac{1}{2}\right)\frac{2C_1}{n} \left|-\log(\frac{\frac{1}{2n+3}}{-\tau - \frac{C_1-1}{n}}) + \log(\frac{1/n - \frac{1}{2n+3} - \tau}{C_1/n})\right| \frac{1}{|1/n - \tau|}\\
&+ \frac{1}{2\pi} \frac{C_1}{n} \log(\frac{1/n - \frac{1}{2n+3} - \tau}{C_1/n})\\
&\leq \frac{c}{\pi \theta} \frac{2C_1}{n} \left|\log(1+ \frac{-\tau - \frac{1}{n}}{1/n + \tau + 1/2 - \theta}) + \log(1+ \frac{-\tau - \frac{1}{n}}{C_1/n})\right|\frac{1}{|1/n - \tau|}\\
&+ \frac{1}{2\pi} \frac{C_1}{n}\log(\frac{1}{2}\frac{n}{C_1})\\
&+\frac{c}{\pi \theta} \frac{2C_1}{n}\left[\log((-\tau - \frac{C_1-1}{n})(2n+3)) + \log((-\tau - \frac{1}{2n+3}+\frac{1}{n})\frac{n}{C_1})\right]\frac{1}{|1/n - \tau|}\\
&+ \frac{1}{2\pi}\frac{C_1}{n}\log((1/n - \frac{1}{2n+3} - \tau)\frac{n}{C_1})\\
&\leq \frac{c}{\pi \theta} \frac{4C_1}{n} \left( \log(4\frac{|\tau|n}{C_1})\right)\frac{1}{|\tau|}\label{note1LemmeTronc}\\
&+ \frac{1}{2\pi}\left(\frac{C_1}{n|\tau|} \log(\frac{n|\tau|}{C_1}) + \frac{C_1}{n|\tau|}\right)\label{note2LemmeTronc}\\
&+ \frac{c}{\pi \theta} \frac{4C_1}{n|\tau|} \left(\log(3n|\tau|) + \log(\frac{2|\tau|n}{C_1})\right)\label{note3LemmeTronc}\\
&+ \frac{1}{2\pi} \frac{C_1}{n} \log(2\frac{|\tau|n}{C_1})\\
&\leq \frac{C_1}{n|\tau|} \log(\frac{4|\tau|n}{C_1}) \left(\frac{8c'}{\pi} + \frac{1}{\pi} \right) +\frac{1}{2\pi} \frac{C_1}{n|\tau|} + \frac{4c'}{\pi} \frac{C_1}{n|\tau|}\log(3n|\tau|)\label{LeftRightTauInPartRealD0minus}
\end{align}

In~\eqref{note1LemmeTronc}, we use $\tau\geq -1/2+\theta+\frac{C_1-1}{n}$ (otherwise the integral vanishes) as well as $|\tau|\geq 2/n$. In~\eqref{note2LemmeTronc}, we use 
\begin{align}
\frac{C_1}{n}\log(\frac{n}{C_1}\frac{|\tau|}{|\tau|})  &= \frac{C_1}{n}\log(\frac{n|\tau|}{C_1})+ \frac{C_1}{n}\log(\frac{1}{|\tau|})\\
&\leq \frac{C_1}{n|\tau|} \log(\frac{n|\tau|}{C_1}) + \frac{C_1}{n|\tau|}
\end{align}
In~\eqref{note3LemmeTronc}, we use $\tau \geq \frac{C_1-1}{n}$ and hence $1/(|\tau - 1/n|)\leq 2/|\tau|$. 

Finally we need to add the central contribution $[\tau - \frac{C_1-1}{n}, \tau+\frac{C_1-1}{n}]$. For this central contribution, we get 
\begin{align}
\int_{\tau - \frac{C_1-1}{n}}^{\tau + \frac{C_1-1}{n}} \frac{1}{2\pi} + \frac{c}{\pi \theta} \frac{2}{-s}\; ds
&\leq \frac{1}{\pi}\frac{C_1-1}{n} + \frac{c}{\pi \theta} \log(1+ 2\frac{C_1-1}{n}\frac{1}{(-\tau - \frac{C_1-1}{n})})\\
&\leq \frac{1}{\pi}\frac{C_1-1}{n|\tau|} + \frac{c'}{\pi} \frac{C_1-1}{n}\frac{4}{|\tau|}. \label{centralTauInPartRealD0minus}
\end{align}
The last line holds as soon as $\tau\geq 2\frac{C_1-1}{n}$

To get the final bound on each framework, we use 

\begin{align}
2\left(\eqref{centralTauInPartRealD0minus} + \eqref{LeftRightTauInPartRealD0minus}\right)\leq 5\frac{C_1}{n|\tau|} + \frac{C_1}{n|\tau|}\log(n|\tau|) + 2\frac{C_1}{n|\tau|}\log(\frac{n|\tau|}{C_1}) , \quad \tau+\frac{C_1-1}{n}\in[-1/2+\theta,-\frac{1}{2n+3}]\label{boundD0minusLargethetastrong}
\end{align}
as well as
\begin{align}
\eqref{domainD0minusTaulessthen1.2minustheta}(1-|\tau|) + \eqref{DomainD0minusTauPositive} \leq \eqref{domainD0minusTaulessthen1.2minustheta} + \eqref{DomainD0minusTauPositive} \\
\eqref{domainD0minusTaulessthen1.2minustheta} + \eqref{DomainD0minusTauPositive}(1-|\tau|) \leq \eqref{domainD0minusTaulessthen1.2minustheta} + \eqref{DomainD0minusTauPositive}
\end{align}
For 
\begin{align}
\eqref{domainD0minusTaulessthen1.2minustheta} + \eqref{DomainD0minusTauPositive} \leq \frac{C_1}{n|\tau|} + 0.6\frac{C_1}{n|\tau|} \log(n|\tau|)\label{boundD0minusLargethetaweak}
\end{align}
The bound is then obtained by taking the maximum of~\eqref{boundD0minusLargethetastrong} and~\eqref{boundD0minusLargethetaweak} which in this case is given by~\eqref{boundD0minusLargethetastrong}.

When $\theta \leq \frac{1}{2n+3}$, we use the ``small $\theta$" bound
\begin{align}
F_{R,D_0}^- \leq \frac{3}{2} \frac{\theta}{\theta-s} + \frac{\theta}{8(2n+3)s(s-\theta)} + \frac{\theta}{2}.\label{reminderD0minusSmallLemmaTroncLargeTauminusAlpha}
\end{align}
As before we consider three frameworks following from $[\tau - \frac{C_1}{n}, \tau+\frac{C_1}{n}]\cap [-1/2+ \theta, \frac{1}{2n+3}]$ empty or not.
\begin{itemize}
\item When $\tau+\frac{C_1}{n} < -1/2+\theta$, we have 
\begin{align}
&\frac{1}{\pi \theta} \int_{-1/2+\theta}^{-\frac{1}{2n+3}} \left(\frac{3}{2} + \frac{1}{8}\right) \frac{\theta}{\theta-s} \frac{C_1}{1+n(s-\tau)}\; ds + \frac{1}{\pi \theta} \int_{-1/2+ \theta}^{-\frac{1}{2n+3}} \frac{\theta}{2} \frac{C_1}{1+n(s-\tau)}\; ds\\
&\leq \frac{1}{\pi}\left(\frac{3}{2} + \frac{1}{8}\right) \frac{C_1}{n} \left|-\log(\theta-s) + \log(1/n + s-\tau)\right|_{-1/2 + \theta}^{-\frac{1}{2n+3}} \frac{1}{1/n + \theta - \tau} + \frac{1}{2\pi} \frac{C_1}{n} \log(\frac{1/n-\tau - \frac{1}{2n+3}}{1/n - \frac{1}{2} + \theta - \tau})\\
&\leq \frac{1}{\pi}\left(\frac{3}{2} + \frac{1}{8}\right)\frac{C_1}{n} \left(\log(\frac{\theta + \frac{1}{2n+3}}{1/2}) + \log(\frac{1/n - \frac{1}{2n+3} - \tau}{1/n - 1/2 + \theta - \tau})\right) \frac{1}{1/n + \theta - \tau} + \frac{1}{2\pi}\frac{C_1}{n} \log(n|\tau|)\\
&\leq \frac{1}{\pi} \left(\frac{3}{2} + \frac{1}{8}\right) \frac{C_1}{n} \left(2 + \frac{\log(n|\tau|)}{1/n + \theta - \tau}\right) + \frac{1}{2\pi} \frac{C_1}{n} \log(n|\tau|)\\
&\leq \frac{1}{\pi}\left(\frac{3}{2} + \frac{1}{8}\right) \frac{C_1}{n|\tau|}  + \frac{2}{\pi} \left(\frac{3}{2} + \frac{1}{8}\right) \frac{C_1}{n|\tau|} + \frac{1}{2\pi}\frac{C_1}{n|\tau|}\log(n|\tau|). \label{tauNegativeSmallD0minusReal}
\end{align}
When $\tau + \frac{C_1-1}{n}>-1/2 + \theta$, we have 
\begin{align}
&\frac{1}{\pi \theta} \int_{-1/2 + \theta}^{-1/2 + \theta+ \frac{C_1-1}{n}} \left(\frac{3}{2} + \frac{1}{8}\right)\frac{\theta}{\theta - s}\; ds + \frac{1}{\pi \theta} \int_{\tau + \frac{C_1-1}{n}}^{-\frac{1}{2n+3}} \left(\frac{3}{2} + \frac{1}{8}\right) \frac{\theta}{\theta-s} \frac{C_1}{1+n(s-\tau)}\; ds\\
&+ \frac{1}{2\pi } \int_{-1/2+\theta}^{-1/2+\theta + \frac{C_1-1}{n}}\; ds + \frac{1}{2\pi} \int_{-1/2+\theta}^{-1/2 + \theta + \frac{C_1-1}{n}} \frac{C_1}{1+n(s-\tau)}\; ds\\
&\stackrel{(a)}{\leq } \frac{1}{\pi} \left(\frac{3}{2} + \frac{1}{8}\right) \log(\frac{1/2 - \frac{C_1-1}{n}}{1/2}) + \frac{1}{\pi} \left(\frac{3}{2} + \frac{1}{8}\right) \left|-\log(\theta-s) + \log(\frac{1}{n}+s-\tau)\right|_{\tau + \frac{C_1-1}{n}}^{-\frac{1}{2n+3}}\frac{C_1/n}{\theta - \tau + 1/n}\\
&+ \frac{1}{2\pi}\frac{C_1-1}{n} + \frac{1}{2\pi} \frac{C_1}{n} \log(\frac{ - 1/2 + \theta + \frac{C_1}{n}}{1/n - 1/2 + \theta - \tau})\\
&\stackrel{(b)}{\leq }\frac{1}{\pi}\left(\frac{3}{2} + \frac{1}{8}\right) 2\frac{C_1-1}{n} + \frac{1}{\pi} \left(\frac{3}{2}+ \frac{1}{8}\right) \left(-\log(\frac{\theta + \frac{1}{2n+3}}{\theta - \tau - \frac{C_1-1}{n}}) + \log(\frac{1/n - \frac{1}{2n+3} - \tau}{1/n + \frac{C_1-1}{n} - \tau})\right) \frac{C_1/n}{\theta - \tau + 1/n}\\
&+ \frac{1}{2\pi} \frac{C_1-1}{n} + \frac{1}{2\pi} \frac{C_1}{n} \log(n|\tau|)\\
&\stackrel{(c)}{\leq } \frac{1}{\pi}\left(\frac{3}{2} + \frac{1}{8}\right) 2\frac{C_1-1}{n}  + \frac{C_1}{n|\tau|} \frac{1}{\pi} \left(\frac{3}{2}+ \frac{1}{8}\right)\log(1+ \frac{-\tau - \frac{C_1-1}{n} - \frac{1}{2n+3}}{\theta + \frac{1}{2n+3}}) \\
&+ \frac{C_1}{n|\tau|}\frac{1}{\pi} \left(\frac{3}{2}+ \frac{1}{8}\right) \log(1+ \frac{C_1}{n|\tau|}) + \frac{1}{2\pi} \frac{C_1-1}{n|\tau|} + \frac{1}{2\pi} \frac{C_1}{n|\tau|} \log(n|\tau|)\\
&\stackrel{(d)}{\leq }\frac{1}{\pi}\left(\frac{3}{2} + \frac{1}{8}\right) 2\frac{C_1-1}{n}  +  \frac{C_1}{n|\tau|}\frac{1}{\pi} \left(\frac{3}{2}+ \frac{1}{8}\right) \log(1+3n|\tau|)\\
& + \frac{C_1}{n|\tau|}\frac{1}{\pi} \left(\frac{3}{2}+ \frac{1}{8}\right)\log(2) + \frac{1}{2\pi} \frac{C_1}{n|\tau|} + \frac{1}{2\pi}\frac{C_1}{n|\tau|} \log(n|\tau|)\\
&\stackrel{(e)}{\leq }\frac{1}{\pi}\left(\frac{3}{2} + \frac{1}{8}\right) 2\frac{C_1-1}{n} +  \frac{C_1}{n|\tau|} \frac{1}{\pi} \left(\frac{3}{2}+ \frac{1}{8}\right)\log(4n|\tau|) + \frac{C_1}{n|\tau|}\frac{1}{\pi} \left(\frac{3}{2}+ \frac{1}{8}\right) \log(2)\\
&+ \frac{1}{2\pi} \frac{C_1}{n|\tau|} + \frac{1}{2\pi}\frac{C_1}{n|\tau|} \log(n|\tau|)
\end{align}
In the sequence of inequalities above, $(b)$ follows from $\tau + \frac{C_1-1}{n}\geq -1/2+\theta$ which implies $1/2-\theta\geq -\tau - \frac{C_1-1}{n}\geq \frac{C_1-1}{n}$ as soon as $|\tau|\geq 2\frac{C_1-1}{n}$. Both $(c)$ and $(d)$ follow from assuming $|\tau|\geq \frac{2C_1}{n}$. 
\item For $\tau>0$, the case $\tau> \frac{C_1-1}{n}$ as we have $\Delta>C_1/n$. We have 
\begin{align}
&\frac{1}{\pi \theta} \int_{-1/2+ \theta}^{-\frac{1}{2n+3}} \left(\frac{3}{2}+ \frac{1}{8}\right) \frac{\theta}{\theta-s} \frac{C_1}{1+n(\tau-s)}\; ds + \frac{1}{\pi \theta} \int_{-1/2+\theta}^{-\frac{1}{2n+3}} \frac{\theta}{2} \frac{C_1}{1+n(\tau-s)}\; ds\\
&\stackrel{(a)}{\leq } \frac{1}{\pi} \left(\frac{3}{2}+ \frac{1}{8}\right)\left|-\log(\theta-s) + \log(\frac{1}{n} + \tau-s)\right|_{-1/2+\theta}^{-\frac{1}{2n+3}} \frac{1}{|\tau-\theta + \frac{1}{n}|}\\
&+ \frac{1}{2\pi} \frac{C_1}{n} \log(\frac{1/n +\tau + \frac{1}{2n+3}}{1/n + \tau + 1/2-\theta})\\
&\stackrel{(b)}{\leq }\frac{1}{\pi} \left(\frac{3}{2} + \frac{1}{8}\right) \left(-\log(\frac{\theta + \frac{1}{2n+3}}{1/2}) + \log(\frac{1/n + \tau + \frac{1}{2n+3}}{1/n + \tau + 1/2-\theta})\right) \frac{C_1/n}{|\tau - \theta +1/n|}\\
&+ \frac{1}{2\pi} \frac{C_1}{n} \log(2|\tau|n)\\
&\stackrel{(c)}{\leq } \frac{1}{\pi} \left(\frac{3}{2} + \frac{1}{8}\right) \left(\log(1+ \frac{1/n + \tau - \theta}{1/2}\vee \frac{-1/n - \tau + \theta}{1/n + \tau +1/2 - \theta})\right) \frac{C_1/n}{|\tau - \theta + 1/n|}\\
&+ \frac{1}{\pi}\left(\frac{3}{2} + \frac{1}{8}\right) \log(1+ \frac{\theta - \tau + 1/n}{1/n + \tau + \frac{1}{2n+3}}\vee \frac{\tau - \theta + 1/n}{\theta + \frac{1}{2n+3}}) \frac{C_1/n}{|\tau - \theta + 1/n|}\\
&+ \frac{1}{2\pi} \frac{C_1}{n} \log(2|\tau|n)\\
&\stackrel{(d)}{\leq } \frac{1}{\pi} \left(\frac{3}{2} + \frac{1}{8}\right) \left(\log(2)+ \log(n|\tau|)\right)\frac{C_1}{n|\tau|} + \frac{1}{\pi}\left(\frac{3}{2} + \frac{1}{8}\right) \left(\frac{C_1}{n|\tau|}\vee \log(2n|\tau|) \frac{2C_1}{n|\tau|}\right) \\
&+ \frac{1}{2\pi} \frac{C_1}{n} \log(2n|\tau|)\label{tauPostiveSmallD0minusReal}
\end{align}
In $(d)$, we use $\tau - \theta + 1/n \geq \theta + \frac{1}{2n+3}$ $\Rightarrow$ $\tau/2 \geq \theta + (\frac{1}{2n+3} - \frac{1}{n})(1/2)$ and hence $\frac{1}{(\tau - \theta + 1/n)} \leq \frac{2}{|\tau|}$. 
\item Finally, when $[\tau - \frac{C_1}{n}, \tau+\frac{C_1}{n}]\cap [-1/2+\theta, -\frac{1}{2n+3}]\neq \emptyset$, we write 
\begin{align}
&\frac{1}{\pi \theta} \int_{-1/2 + \theta}^{\tau - \frac{C_1-1}{n}} \left(\frac{3}{2} + \frac{1}{8}\right) \left(\frac{\theta}{\theta-s} + \frac{\theta}{2}\right) \frac{C_1}{1+n(\tau-s)}\; ds + \frac{1}{\pi \theta} \int_{\tau - \frac{C_1-1}{n}}^{\tau + \frac{C_1-1}{n}} \left(\frac{3}{2} + \frac{1}{8}\right) \frac{\theta}{\theta-s} + \frac{\theta}{2}\; ds\\
&+ \frac{1}{\pi \theta} \int_{\tau + \frac{C_1-1}{n}}^{-\frac{1}{2n+3}} \left(\frac{3}{2} + \frac{1}{8}\right) \frac{\theta}{\theta-s} \frac{C_1}{1+n(s-\tau)} + \frac{\theta}{2} \frac{C_1}{1+n(s-\tau)}\; ds\\
&\stackrel{(a)}{\leq } \frac{1}{\pi}\left(\frac{3}{2} + \frac{1}{8}\right) \left|\log(\theta - s) - \log(1/n + \tau-s)\right|_{-1/2+\theta}^{\tau - \frac{C_1-1}{n}}\frac{C_1/n}{(-1/n - \tau + \theta)}\\
&+ \frac{1}{2\pi} \frac{C_1}{n} \log(\frac{C_1/n}{1/n + \tau + 1/2-\theta})+ \frac{1}{\pi} \left(\frac{3}{2} + \frac{1}{8}\right) \log(\frac{-\tau - \frac{C_1-1}{n} + \theta}{-\tau + \frac{C_1-1}{n} + \theta}) + \frac{1}{\pi} \frac{C_1-1}{n}\\
&+ \frac{1}{\pi} \left(\frac{3}{2} + \frac{1}{8}\right) \left|-\log(\theta-s) + \log(1/n + s-\tau)\right|_{\tau + \frac{C_1-1}{n}}^{-\frac{1}{2n+3}}\frac{C_1/n}{|-\tau + 1/n + \theta|}\\
&+ \frac{1}{2\pi} \frac{C_1}{n} \log(\frac{1/n - \tau - \frac{1}{2n+3}}{C_1/n})\\
&\stackrel{(b)}{\leq } \frac{1}{\pi} \left(\frac{3}{2} + \frac{1}{8}\right) \left(\log(\frac{\theta - \tau + \frac{C_1-1}{n}}{1/2})  - \log(\frac{C_1/n}{1/n + \tau + 1/2 - \theta})\right) \frac{C_1/n}{\theta - \tau - \frac{1}{n}}\\
&+ \frac{1}{2\pi} \frac{C_1}{n} \log(C_1)+ \frac{1}{\pi} \left(\frac{3}{2} + \frac{1}{8}\right) \log(1+ 2\frac{C_1-1}{n} \frac{1}{-\tau - \frac{C_1-1}{n} + \theta}) + \frac{1}{\pi} 2\frac{C_1-1}{n}\\
&+ \frac{1}{\pi} \left(\frac{3}{2} + \frac{1}{8}\right) \left(-\log(\frac{\theta + \frac{1}{2n+3}}{\theta - \tau - \frac{C_1-1}{n}}) + \log(\frac{1/n - \frac{1}{2n+3} - \tau}{C_1/n})\right)\frac{C_1/n}{-\tau + \theta + 1/n}\\
&+ \frac{1}{2\pi} \frac{C_1}{n} \log(\frac{1/n - \tau - \frac{1}{2n+3}}{C_1/n})\\
&\stackrel{(c)}{\leq }\frac{1}{\pi} \left(\frac{3}{2} + \frac{1}{8}\right) \left(\log(1+ \frac{\theta - \tau - 1/n}{C_1/n}\vee \frac{-\theta + \tau + 1/n}{\theta - \tau + \frac{C_1-1}{n}})\right) \frac{C_1/n}{\theta - \tau - 1/n}\\
&+ \frac{1}{\pi}\left(\frac{3}{2} + \frac{1}{8}\right) \left(\log(1+ \frac{1/n + \tau - \theta}{1/2}\vee \frac{-1/n - \tau + \theta}{1/n + \tau + 1/2-\theta})\right) \frac{C_1/n}{|\theta - \tau - 1/n|}\\
&+ \frac{1}{2\pi} \frac{C_1}{n|\tau|} \log(C_1)+ \frac{1}{\pi} \left(\frac{3}{2} + \frac{1}{8}\right) 4\frac{C_1-1}{n|\tau|} + \frac{1}{\pi}2\frac{C_1-1}{n|\tau|}\\
&+ \frac{1}{\pi} \left(\frac{3}{2}+ \frac{1}{8}\right)\left(\log(1+ \frac{-\tau - \frac{C_1-1}{n} - \frac{1}{2n+3}}{\theta + \frac{1}{2n+3}}) + \log(\frac{2|\tau|n}{C_1})\right)\frac{C_1}{n|\tau|}+ \frac{1}{2\pi} \frac{C_1}{n} \log(\frac{2|\tau|n}{C_1})\\
&\stackrel{(d)}{\leq } \frac{1}{\pi} \left(\frac{3}{2}+ \frac{1}{8}\right) \left(\log(2)+ \log(\frac{n|\tau|}{C_1}) + \log(2)\right) \frac{2C_1}{n|\tau|}\label{tmpD0minusSmallTheta0001}\\
&+ \frac{1}{\pi}\left(\frac{3}{2}+ \frac{1}{8}\right) \frac{C_1}{n|\tau|} + \frac{1}{\pi}\left(\frac{3}{2}+ \frac{1}{8}\right) \left(2\frac{C_1}{n} + \frac{C_1}{n|\tau|}\right)\label{tmpD0minusSmallTheta0002}\\
& + \frac{1}{2\pi} \frac{C_1}{n|\tau|} \log(C_1) + \frac{1}{\pi}\left(\frac{3}{2} + \frac{1}{8}\right) \frac{4(C_1-1)}{n|\tau|} + \frac{1}{\pi} 2\frac{C_1-1}{n}\label{tmpD0minusSmallTheta0003}\\
&+ \frac{1}{\pi}\left(\frac{3}{2} + \frac{1}{8}\right) \left[\log(4n|\tau|) + \log(\frac{2n|\tau|}{C_1})\right] \frac{C_1}{n|\tau|}+ \frac{1}{2\pi}\frac{C_1}{n} \log(\frac{2|\tau|n}{C_1}). \label{tmpD0minusSmallTheta0004}
\end{align}
Both $(c)$ and $(d)$ rely on $|\tau|\geq 2\frac{C_1}{n}$. 
\end{itemize}

The bound on each framework then follows as in the case $\theta\geq \frac{1}{2n+3}$ by computing the sums,  
\begin{align}
&2(\eqref{tmpD0minusSmallTheta0001} + \eqref{tmpD0minusSmallTheta0002}+\eqref{tmpD0minusSmallTheta0003} + \eqref{tmpD0minusSmallTheta0004})\\
&\leq 2\left(5.3 + 0.2\log(C_1)\right)\frac{C_1}{n|\tau|} + 2\log(\frac{n|\tau|}{C_1})\frac{C_1}{n|\tau|} + 2\log(n|\tau|)\frac{C_1}{n|\tau|}. \label{strongerBoundD0minusSmallTheta}
\end{align}
as well as
\begin{align}
\eqref{tauNegativeSmallD0minusReal} + \eqref{tauPostiveSmallD0minusReal} &\leq 2\frac{C_1}{n|\tau|} + 1.2 \frac{C_1}{n|\tau|} \log(n|\tau|) + 1.6\frac{C_1}{n|\tau|} + 0.2 \frac{C_1}{n|\tau|}\log(n|\tau|)\\
&\leq 4\frac{C_1}{n|\tau|} + 1.5\frac{C_1}{n|\tau|}\log(n|\tau|). 
\end{align}

Taking the maximum (~\eqref{strongerBoundD0minusSmallTheta} in this case) of those two bounds gives a bound on the real part when $\theta<\frac{1}{2n+3}$. The total bound on the real part can thus be written as 
\begin{align}
\eqref{strongerBoundD0minusSmallTheta} \vee \eqref{boundD0minusLargethetastrong}\leq 2\log(n|\tau|)\frac{C_1}{n|\tau|} + 2\log(\frac{n|\tau|}{C_1})\frac{C_1}{n|\tau|} + 12\frac{C_1}{n|\tau|}. \label{totalRealD0minus}
\end{align}

To conclude, we control the imaginary part. Recall that we have 
\begin{align}
\log(\frac{\theta-s}{-s}) \leq \log(1+ \frac{\theta}{-s}) \leq \frac{\theta}{-s}
\end{align}
When $\tau>0$, we have 
\begin{align}
\frac{1}{\pi \theta} \int_{-1/2 + \theta}^{-\frac{1}{2n+3}} \frac{\theta}{-s} \frac{C_1}{1+n(\tau-s)}\; ds &\leq \frac{1}{\pi} \frac{C_1}{n} \left|\log(-s)  - \log(1/n + \tau - s)\right|_{-1/2 + \theta}^{-\frac{1}{2n+3}} \frac{1}{|\tau+1/n|}\\
&\leq \frac{1}{\pi}\frac{C_1}{n} \left|\log(\frac{\frac{1}{2n+3}}{1/2-\theta}) - \log(\frac{1/n + \tau + \frac{1}{2n+3}}{1/n + \tau + 1/2 - \theta})\right| \frac{1}{|\tau + 1/n|}\\
&\leq \frac{1}{\pi} \frac{C_1}{n} \left|\log(1+ \frac{1/n + \tau}{\frac{1}{2n+3}})+ \log(1+ \frac{1/n+\tau}{1/2-\theta})\right| \frac{1}{|\tau|}\\
&\leq \frac{1}{\pi} \frac{C_1}{n|\tau|} 2\log(4n|\tau|)\label{tauPositiveImag}
\end{align}
When $\tau<0$, we have 
\begin{align}
\frac{1}{\pi \theta} \int_{-1/2 + \theta}^{-\frac{1}{2n+3}} \frac{\theta}{-s} \frac{C_1}{1+n(s-\tau)}\; ds &\stackrel{(a)}{\leq } \frac{1}{\pi} \left(-\log(\frac{\frac{1}{2n+3}}{1/2-\theta}) + \log(\frac{1/n - \tau - \frac{1}{2n+3}}{1/n - 1/2 + \theta - \tau})\right) \frac{1}{|\tau + 1/n|} \frac{C_1}{n}\\
&\stackrel{(b)}{\leq } \frac{4}{\pi} \log(3n|\tau|) \frac{C_1}{n|\tau|}\label{tauNegativeImag}
\end{align}
In $(b)$ we have used $|\tau|\geq 1/2-\theta$. When $\tau + \frac{C_1-1}{n}\geq -1/2+\theta$, we have 
\begin{align}
&\frac{1}{\pi \theta} \int_{\tau + \frac{C_1-1}{n}}^{-\frac{1}{2n+3}} \frac{\theta}{-s} \frac{C_1}{1+n(s-\tau)}\; ds + \frac{1}{\pi \theta} \int_{-1/2+\theta}^{\tau + \frac{C_1-1}{n}} \frac{\theta}{s} \; ds\\
& \frac{1}{\pi} \left(-\log(\frac{\frac{1}{2n+3}}{\tau + \frac{C_1-1}{n}}) + \log(\frac{1/n - \tau - \frac{1}{2n+3}}{C_1/n})\right)\frac{C_1}{n}\frac{1}{|-\tau + 1/n|}\\
& + \frac{1}{\pi} \log(1+ \frac{C_1-1}{n}\frac{1}{\tau + \frac{C_1-1}{n}}) \\
&\leq \frac{4}{\pi} \left(\log(3n|\tau|) \frac{C_1}{n|\tau|} + \frac{C_1-1}{n|\tau|}\right)
\end{align}

Finally when $-1/2 + \theta \leq \tau \leq -\frac{1}{2n+3}$, we write 
\begin{align}
&\frac{1}{\pi \theta} \int_{-1/2+\theta}^{\tau - \frac{C_1-1}{n}} \frac{\theta}{-s} \frac{C_1}{1+n(\tau-s)}\; ds\\
&+ \frac{1}{\pi \theta} \int_{\tau - \frac{C_1-1}{n}}^{\tau + \frac{C_1-1}{n}} \frac{\theta}{-s}\; ds\\
&+ \frac{1}{\pi \theta} \int_{\tau + \frac{C_1-1}{n}}^{-\frac{1}{2n+3}} \frac{\theta}{-s} \frac{C_1}{1+n(s-\tau)}\; ds\\
&\leq \frac{1}{\pi} \left|\log(-s) - \log(1/n + \tau - s)\right|_{-1/2+\theta}^{\tau - \frac{C_1-1}{n}} \frac{C_1/n}{\tau + 1/n}\\
&+ \frac{1}{\pi} \left|-\log(-s) + \log(1/n + s-\tau)\right|_{\tau + \frac{C_1-1}{n}}^{-\frac{1}{2n+3}} \frac{C_1/n}{|\tau - 1/n|}\\
&+ \frac{1}{\pi} \frac{4}{|\tau|} \frac{C_1-1}{n}\\
&\leq \frac{1}{\pi} \left|\log(\frac{-\tau + \frac{C_1-1}{n}}{1/2 - \theta}) - \log(\frac{C_1/n}{1/n + \tau + 1/2 - \theta})\right| \frac{C_1/n}{|\tau + 1/n|}\\
&+ \frac{1}{\pi} \left(-\log(\frac{\frac{1}{2n+3}}{-\tau - \frac{C_1-1}{n}}) + \log(\frac{C_1/n}{1/n - \frac{1}{2n+3} - \tau})\right) \frac{2C_1}{n|\tau|}\\
&+ \frac{4}{\pi |\tau|} \frac{C_1-1}{n}\\
&\leq \frac{4}{\pi} \left(\log(3n|\tau|) + \log(2\frac{|\tau|n}{C_1})\right) \frac{C_1}{n|\tau|} + \frac{1}{\pi}\frac{4C_1}{n|\tau|}\label{boundCentralD0minusImag}
\end{align}

As for the real part, the final bound on the imaginary part is then derived as 
\begin{align}
(\eqref{tauPositiveImag} + \eqref{tauNegativeImag} )\vee 2\eqref{boundCentralD0minusImag} \leq 7\frac{C_1}{n|\tau|} + 6\frac{C_1}{n|\tau|} \log(n|\tau|)\label{boundD0minusImag}
\end{align}

summing twice the bound~\eqref{totalRealD0minus} to~\eqref{boundD0minusImag} gives the reresult of the lemma. 

\begin{align}
&2\left(2\log(n|\tau|)\frac{C_1}{n|\tau|} + 2\log(\frac{n|\tau|}{C_1})\frac{C_1}{n|\tau|} + 12\frac{C_1}{n|\tau|}\right)\\
&+7\frac{C_1}{n|\tau|} + \frac{16}{\pi}\frac{C_1}{n|\tau|} \log(n|\tau|)\\
&\leq \frac{19C_1}{n|\tau|} + 14\frac{C_1}{n|\tau|}\log(n|\tau|).
\end{align}

\subsection{\label{sectionproofD0plusLarge}Proof of lemma~\ref{lemmaTroncD0plus} ($D_0^+$ large $|\tau-\alpha|$)}

We now treat $D_0^+$. On this domain we have 
\begin{align}
F_{R,D_0}^+&\leq \frac{\pi}{2} + \frac{1}{4(2n+3)}\left(\frac{1}{2}+ \frac{1}{\pi}\right) \left(2(2n+3) + \frac{1}{s} + \frac{1}{\theta-s}\right) + \frac{\theta}{2}\\
F_{I,D_0}^+&\leq \frac{1}{4(2n+3)} \left(\frac{1}{\pi} + \frac{1}{2}\right) \left(\frac{1}{s} + \frac{1}{\theta-s}\right) + \log(\frac{s}{\theta-s}\vee \frac{\theta-s}{s})
\end{align}
The total bound is thus given by 
\begin{align}
\left( \frac{\pi}{2} + 2c(2n+3) + \frac{\theta}{2}\right) + 2 c \left(\frac{1}{s} + \frac{1}{\theta-s}\right) + \log(\frac{s}{\theta-s}\vee \frac{\theta-s}{s})
\end{align}

So that on $D_0^+$, we have the following bound on the interior integral 

We introduce the constant $d$ defined as 
\begin{align}
d\equiv \left(\frac{\pi}{2} + \frac{\theta}{2} + \frac{1}{2}\left(\frac{1}{2} + \frac{1}{\pi}\right)\right) 
\end{align}
We start with the constant term. Depending on whether $\tau < \frac{1}{2n+3}$, $\tau > \theta -\frac{1}{2n+3}$, we get 
\begin{itemize}
\item $\tau < \frac{1}{2n+3}$. In this case, as $|\tau|>\frac{C_1}{n}$, we must have $\tau<0$. hence 
\begin{align}
&\frac{1}{\pi \theta} \int_{\frac{1}{2n+3}}^{\theta - \frac{1}{2n+3}} \left(\frac{\pi}{2} + \frac{\theta}{2} + \frac{1}{2}\left(\frac{1}{2}+ \frac{1}{\pi}\right)\right) \frac{C_1}{1+n(s-\tau)}\; ds\\
&\leq \frac{1}{\pi \theta} \left(\frac{\pi}{2} + \frac{\theta}{2} + \frac{1}{2}\left(\frac{1}{2}+ \frac{1}{\pi}\right)\right) \log(\frac{1/n + \theta - \frac{1}{2n+3} - \tau}{\frac{1}{n}+ \frac{1}{2n+3} - \tau})\\
&\leq \frac{1}{\pi}\left(\frac{\pi}{2} + \frac{\theta}{2} + \frac{1}{2}\left(\frac{1}{2}+ \frac{1}{\pi}\right)\right)\log(1+ \frac{\theta - \frac{2}{2n+3}}{1/n+\frac{1}{2n+3} - \tau})\\
&\leq \frac{1}{\pi}\left(\frac{\pi}{2} + \frac{\theta}{2} + \frac{1}{2}\left(\frac{1}{2}+ \frac{1}{\pi}\right)\right)\frac{C_1}{n|\tau|} \label{constantNegativeTau}
\end{align}

\item $\tau > \theta - \frac{1}{2n+3}$. In this case, we write 
\begin{align}
&\frac{1}{\pi \theta} \int_{\frac{1}{2n+3}}^{\theta - \frac{1}{2n+3} \wedge \tau - \frac{C_1-1}{n}} \left(\frac{\pi}{2} + \frac{\theta}{2} + \frac{1}{2}\left(\frac{1}{2} + \frac{1}{\pi}\right)\right)\frac{C-1}{1+n(\tau -s)}\; ds\\
&\leq \frac{1}{\pi \theta} \left(\frac{\pi}{2} + \frac{\theta}{2} + \frac{1}{2}\left(\frac{1}{2} + \frac{1}{\pi}\right)\right) \frac{C_1}{n}\log(1+ \frac{\theta - \frac{1}{2n+3}}{\frac{1}{n}+\tau - \theta + \frac{1}{2n+3}})\label{tmpDomainD0plus1tronc}
\end{align}
To control the last line, we consider two cases. Either $\theta -\frac{1}{2n+3}<\tau <2\theta$, in this case, we have
\begin{align}
\eqref{tmpDomainD0plus1tronc}\leq \frac{2}{\pi |\tau|}\left(\frac{\pi}{2} + \frac{\theta}{2} + \frac{1}{2}\left(\frac{1}{2} + \frac{1}{\pi}\right)\right)\frac{C_1}{n}\log(1+ \frac{\tau n}{C_1}) \leq \frac{2}{\pi |\tau|}\frac{C_1}{n}\left(\frac{\pi}{2} + \frac{\theta}{2} + \frac{1}{2}\left(\frac{1}{2} + \frac{1}{\pi}\right)\right) \left\{\log(2)\vee \log(\frac{n|\tau|}{C_1})\right\}
\end{align}
Or $\tau>2\theta$. In this case, we write 
\begin{align}
\eqref{tmpDomainD0plus1tronc}\leq \frac{1}{\pi \theta} \frac{C_1}{n}\left(\frac{\pi}{2} + \frac{\theta}{2} + \frac{1}{2}\left(\frac{1}{2} + \frac{1}{\pi}\right)\right) \log(1+ \frac{\theta - \frac{1}{2n+3}}{\tau/2}) \leq \frac{1}{\pi } \frac{C_1}{n}\left(\frac{\pi}{2} + \frac{\theta}{2} + \frac{1}{2}\left(\frac{1}{2} + \frac{1}{\pi}\right)\right)\frac{2}{|\tau|}. 
\end{align}

\item Finally, when $\frac{1}{2n+3}<\tau\pm \frac{C_1}{n}< \theta - \frac{1}{2n+3}$, we consider three contributions
\begin{align}
&\frac{1}{\pi \theta} \int_{\frac{1}{2n+3}}^{\tau - \frac{C_1-1}{n}} \left(\frac{\pi}{2} + \frac{\theta}{2} + \frac{1}{2}\left(\frac{1}{2} + \frac{1}{\pi}\right)\right) \frac{C_1}{1+n|\tau-s|}\; ds\label{firstTermD0plustronc}\\
&+ \frac{1}{\pi \theta} \int_{\tau - \frac{C_1-1}{n}\vee \frac{1}{2n+3}}^{\tau + \frac{C_1-1}{n}\wedge \theta - \frac{1}{2n+3}} \left(\frac{\pi}{2} + \frac{\theta}{2} + \frac{1}{2}\left(\frac{1}{2} + \frac{1}{\pi}\right)\right)\; ds\label{secondTermD0plustronc}\\
&+ \frac{1}{\pi \theta} \int_{\tau+ \frac{C_1-1}{n}}^{\theta - \frac{1}{2n+3}} \left(\frac{\pi}{2} + \frac{\theta}{2} + \frac{1}{2}\left(\frac{1}{2} + \frac{1}{\pi}\right)\right) \frac{C_1}{1+n|s-\tau|}\; ds\label{thirdTermD0plustronc}
\end{align}
We will treat each of the terms above separately. For the first one, we have 
\begin{align}
\eqref{firstTermD0plustronc} \leq \frac{C_1}{n} \left(\frac{\pi}{2} + \frac{\theta}{2} + \frac{1}{2}\left(\frac{1}{2} + \frac{1}{\pi}\right)\right) \log(\frac{C_1/n}{1/n + \tau - \frac{1}{2n+3}}) \frac{1}{\pi \theta} \leq \frac{C_1}{n}\frac{2}{\pi|\tau|} \left(\frac{\pi}{2} + \frac{\theta}{2} + \frac{1}{2}\left(\frac{1}{2} + \frac{1}{\pi}\right)\right)  \log(\frac{n|\tau|}{C_1})\label{secondTermD0plusConstant1}
\end{align}
In the last line, we use $|\tau| \geq \frac{2C_1}{n}$ as well as $\theta>\tau  - \frac{C_1}{n}>|\tau|/2$ . For the second term, we have
\begin{align}
\eqref{secondTermD0plustronc}&\leq \frac{2(C_1-1)}{n}\frac{1}{\pi \theta} \left(\frac{\pi}{2} + \frac{\theta}{2} + \frac{1}{2}\left(\frac{1}{2} + \frac{1}{\pi}\right)\right) \leq \frac{2(C_1-1)}{n}\frac{1}{\pi \tau} \left(\frac{\pi}{2} + \frac{\theta}{2} + \frac{1}{2}\left(\frac{1}{2} + \frac{1}{\pi}\right)\right)\label{secondTermD0plusConstant2}
\end{align}

And for the last term~\eqref{thirdTermD0plustronc}, we get
\begin{align}
\eqref{thirdTermD0plustronc}&\leq \frac{1}{\pi \theta}\left(\theta - \frac{1}{2n+3} - \left(\tau+ \frac{C_1-1}{n}\right)\right) d \frac{C_1}{n}\log(\frac{1/n + \theta - \frac{1}{2n+3} - \tau}{C_1/n})\\
&\leq \frac{1}{\pi} d \frac{C_1}{n}\log(\frac{n}{2C_1})\\
&\leq \frac{d}{\pi} \left(\frac{C_1}{n|\tau|}\log(\frac{n|\tau|}{C_1}) + \frac{C_1}{n|\tau|}\right)\label{intermediatePart3D0plustronc2}
\end{align}
In~\eqref{intermediatePart3D0plustronc2}, we use 
\begin{align}
\frac{C_1}{n}\log(\frac{n}{C_1}) \leq \frac{C_1}{n}\log(\frac{n|\tau|}{C_1|\tau|}) \leq \frac{C_1}{n|\tau|} + \frac{C_1}{n|\tau|} \log(\frac{n|\tau|}{C_1}).
\end{align}
which follows from $|\tau|\leq 1$. 

Combining~\eqref{intermediatePart3D0plustronc2}, ~\eqref{secondTermD0plusConstant1} and~\eqref{secondTermD0plusConstant2} gives 
\begin{align}
\eqref{intermediatePart3D0plustronc2}+\eqref{secondTermD0plusConstant1}+\eqref{secondTermD0plusConstant2}\leq \frac{3d}{\pi} \frac{C_1}{n|\tau|}\log(\frac{n|\tau|}{C_1}) + \frac{3d}{\pi}\frac{C_1}{n|\tau|}\label{boundConstantD0plusCentral}
\end{align}

To get the final contribution from the constant $d$ term accounting for the modulo $1$, we multiply this bound by $2$ and combine~\eqref{constantNegativeTau} and~\eqref{tmpDomainD0plus1tronc}, then take the maximum over the two frameworks,

\begin{align}
2\eqref{boundConstantD0plusCentral}\vee (\eqref{constantNegativeTau} + \eqref{tmpDomainD0plus1tronc})&\leq \frac{3d}{\pi }\frac{C_1}{n|\tau|} + \frac{2d}{\pi}\frac{C_1}{n|\tau|}\log(\frac{n|\tau|}{C_1}) \vee 2\left\{\frac{3d}{\pi} \frac{C_1}{n|\tau|}\log(\frac{n|\tau|}{C_1}) + \frac{3d}{\pi}\frac{C_1}{n|\tau|}\right\}\\
&\leq  \frac{6d}{\pi} \frac{C_1}{n|\tau|}\log(\frac{n|\tau|}{C_1}) + \frac{6d}{\pi}\frac{C_1}{n|\tau|}\\
&\leq 4.5 \frac{C_1}{n|\tau|}\log(\frac{n|\tau|}{C_1}) + 4.5\frac{C_1}{n|\tau|}\label{boundDomainD0plusFFF1}
\end{align}

We now derive the bound on the remaining $1/s$ and $1/(\theta-s)$ terms. When $\tau < \frac{1}{2n+3}$, we respectively write 
\begin{align}
\frac{c}{\pi \theta} \int_{\frac{1}{2n+3}}^{\theta - \frac{1}{2n+3}} \frac{1}{s} \frac{C_1}{1+n|s-\tau|} \; ds &\leq \frac{c}{\pi \theta}  \int_{\frac{1}{2n+3}}^{\theta - \frac{1}{2n+3}} \left(-\frac{1}{s} + \frac{1}{1/n + (s-\tau)}\right) \frac{C_1/n}{-\tau + \frac{1}{n}}\\
&\leq \frac{c}{\pi \theta} \left|-\log(\frac{\theta - \frac{1}{2n+3}}{\frac{1}{2n+3}}) + \log(\frac{1/n + \theta - \frac{1}{2n+3} - \tau}{1/n + \frac{1}{2n+3} - \tau})\right| \frac{C_1/n}{-\tau + \frac{1}{n}}\\
&\leq \frac{c}{\pi \theta} \left|\log(1+ \frac{\tau - 1/n}{1/n + \theta - \tau - \frac{1}{2n+3}}) + \log(1+ \frac{\tau - \frac{1}{n}}{1/n + \frac{1}{2n+3} - \tau})\right|\frac{C_1}{n}\frac{2}{|\tau|}
\end{align}
as well as 
\begin{align}
\frac{c}{\pi \theta} \int_{\frac{1}{2n+3}}^{\theta - \frac{1}{2n+3}} \frac{1}{\theta-s} \frac{C_1}{1+n(s-\tau)}\; ds&\leq 
\frac{c}{\pi \theta}\int_{\frac{1}{2n+3}}^{\theta - \frac{1}{2n+3}} \left(\frac{1}{\theta-s} + \frac{1}{1/n + (s-\tau)}\right) \frac{C_1/n}{\theta - \tau +1/n}\; ds\\
&\leq \frac{c}{\pi \theta} \left|\log(\frac{\frac{1}{2n+3}}{\theta - \frac{1}{2n+3}}) + \log(\frac{1/n + \theta - \frac{1}{2n+3} - \tau}{1/n + \frac{1}{2n+3} - \tau})\right|\frac{C_1/n}{|\theta - \tau + 1/n|}
\end{align}
both terms can thus be bounded as
\begin{align}
\frac{c}{\pi \theta} \int_{\frac{1}{2n+3}}^{\theta - \frac{1}{2n+3}} \left(\frac{1}{s} + \frac{1}{\theta-s}\right) \frac{C_1}{1+n|s-\tau|} \; ds \leq \frac{4c}{\pi \theta} \log(n|\tau|) \frac{2C_1}{n|\tau|}. \label{boundTauNegativeD0plus}
\end{align}

When $\tau > \theta - \frac{1}{2n+3}$, a similar reasoning gives 
\begin{align}
\frac{c}{\pi \theta} \int_{\frac{1}{2n+3}}^{\theta - \frac{1}{2n+3}} \frac{1}{s} \frac{C_1}{1+n(\tau-s)}\; ds& \leq \frac{c}{\pi \theta} \int_{\frac{1}{2n+3}}^{\theta - \frac{1}{2n+3}} \left(\frac{1}{s} + \frac{1}{1/n + (\tau -s)}\right) \frac{C_1/n}{\tau + 1/n}\\
&\leq \frac{c}{\pi \theta} \left|\log(\frac{\theta - \frac{1}{2n+3}}{\frac{1}{2n+3}}) - \log(\frac{1/n - \theta + \frac{1}{2n+3}+\tau}{1/n + \tau - \frac{1}{2n+3}})\right| \frac{C_1/n}{\tau + 1/n}\\
&\leq \frac{c}{\pi \theta}\frac{C_1}{n|\tau|} \left(\log(3n|\tau|) + \log(2|\tau|n)\right)\\
&\leq \frac{2c}{\pi \theta}\frac{C_1}{n|\tau|} \log(3n|\tau|)\label{boundTauLargerThanThetaDomainD0plus1onS}
\end{align}
Whenever $\tau - \frac{C_1-1}{n} \geq \theta - \frac{1}{2n+3}$, we have 
\begin{align}
\frac{c}{\pi \theta} \int_{\frac{1}{2n+3}}^{\tau - \frac{C_1-1}{n}} \left(\frac{1}{s} + \frac{1}{1/n + (\tau-s)}\right)\frac{C_1/n}{\tau + 1/n}\; ds&\leq \frac{c}{\pi \theta} \left|\log(\frac{\tau - \frac{C_1-1}{n}}{\frac{1}{2n+3}}) + \log(\frac{C_1/n}{1/n + \tau - \frac{1}{2n+3}})\right|\\
&\leq \frac{c}{\pi \theta} \frac{C_1}{n|\tau|} \left(\log(3n|\tau|) + \log(\frac{2|\tau|n}{C_1})\right)
\end{align}
Similarly, the $\frac{1}{\theta-s}$ term gives
\begin{align}
&\frac{c}{\pi \theta} \int_{\frac{1}{2n+3}}^{\theta - \frac{1}{2n+3}} \frac{1}{\theta-s} \frac{C_1}{1+n(\tau-s)}\; ds\\
&\leq \frac{c}{\pi \theta} \frac{C_1}{n}\int_{\frac{1}{2n+3}}^{\theta - \frac{1}{2n+3}} \left(-\frac{1}{\theta-s} + \frac{1}{1/n+\tau -s}\right)\frac{1}{\tau - \theta + 1/n}\\
&\leq \frac{C_1}{n}\frac{c}{\pi \theta} \left|-\log(\frac{1}{2n+3}\frac{1}{\theta - \frac{1}{2n+3}}) + \log(\frac{1/n + \tau - \theta + \frac{1}{2n+3}}{1/n + \tau - \frac{1}{2n+3}})\right|\frac{1}{\tau - \theta + 1/n}\label{tmpDomainD0pluslemmatronc}
\end{align}
To bound the last line, we make the distinction between the case $\tau>2\theta$ and $\theta<\tau< 2\theta$. In the former case, we simply write 
\begin{align}
\eqref{tmpDomainD0pluslemmatronc}\leq \frac{c}{\pi \theta} \frac{C_1}{n} \frac{2}{|\tau|}2\log(3n|\tau|)
\end{align}
In the latter, we have 
\begin{align}
\eqref{tmpDomainD0pluslemmatronc}&\leq \frac{c}{\pi \theta} \frac{1}{\tau - \theta + 1/n} \left( \log(1+ \frac{\tau - \theta + 1/n}{\frac{1}{2n+3}}) + \log(1+ \frac{\tau - \theta + 1/n}{\theta - \frac{1}{2n+3}})\right)\frac{C_1}{n}\\
&\leq \frac{4}{\pi|\tau|}c'\frac{C_1}{n} 
\end{align}
The last line follows from $\theta \geq \tau/2$ and $\theta>\frac{2}{2n+3}$. 
Finally when $\tau - \frac{C_1-1}{n}\leq \theta - \frac{1}{2n+3}$, just as for the $1/s$ term, we write 
\begin{align}
\frac{c}{\pi \theta} \int_{\frac{1}{2n+3}}^{\tau - \frac{C_1-1}{n}}\frac{1}{\theta-s} \frac{C_1}{1+n(\tau - s)}\; ds&\leq \frac{c}{\pi \theta} \left|-\log(\frac{\theta - \tau + \frac{C_1-1}{n}}{\theta - \frac{1}{2n+3}}) + \log(\frac{C_1/n}{1/n+\tau - \frac{1}{2n+3}})\right| \frac{1}{\tau - \theta + 1/n} \frac{C_1}{n}\label{tmpDomainD0pluslemmatronc2}
\end{align}
Again, we make the distinction between the case $\theta<\tau<2\theta$ for which we have 
\begin{align}
\eqref{tmpDomainD0pluslemmatronc2}&\leq \frac{c}{\pi \theta} \frac{C_1}{n} \left(\log(1+ \frac{\tau - \theta + 1/n}{\theta - \tau + \frac{C_1-1}{n}}) + \log(1+ \frac{\tau - \theta + 1/n}{\theta - \frac{1}{2n+3}})\right)\frac{1}{\tau - \theta + 1/n} \\
&\leq \frac{c'}{\pi}\frac{2}{\tau} \frac{C_1}{n}.
\end{align}

In any of the two frameworks $\tau < 0 $ and $\tau-\frac{C_1}{n}>\theta-\frac{1}{2n+3}$, the total bound can thus be obtained as before, by summing~\eqref{tmpDomainD0pluslemmatronc} and~\eqref{boundTauNegativeD0plus}. 
\begin{align}
(\eqref{tmpDomainD0pluslemmatronc}+\eqref{boundTauLargerThanThetaDomainD0plus1onS}) +  2\eqref{boundTauNegativeD0plus} &\leq \left(\frac{2c'}{\pi} \frac{C_1}{n|\tau|}\log(3n|\tau|) + \frac{4c'}{\pi} \frac{C_1}{n|\tau|} (1+\log(3n|\tau|))\right) \\
&+ 2\left(\frac{4c'}{\pi} \log(n|\tau|) \frac{2C_1}{n|\tau|}\right)\\
&\leq \frac{C_1}{n|\tau|}\log(n|\tau|) + \frac{C_1}{n|\tau|}. \label{TotalBoundFramewordOutsideDomainDomainD0plusReal}
\end{align}
We now treat the case $\tau \pm \frac{C_1}{n} \in [\frac{1}{2n+3}, \theta - \frac{1}{2n+3}]$. Starting with the $1/s$ term, we decompose the integral into the following three terms
\begin{align}
&\frac{c}{\pi \theta} \int_{\tau + \frac{C_1-1}{n}}^{\theta - \frac{1}{2n+3}} \frac{1}{s} \frac{C_1}{1+n(s-\tau)}\; ds\label{term1oneOversDomaineD0plusRealdecomp}\\ 
&+ \frac{c}{\pi \theta} \int_{\tau - \frac{C_1-1}{n}\vee \frac{1}{2n+3}}^{\tau + \frac{C_1-1}{n}\wedge \theta - \frac{1}{2n+3}} \frac{1}{s} \; ds\label{centralIntegraltronclemma1D0plus}\\
&+ \frac{c}{\pi \theta} \int_{\frac{1}{2n+3}}^{\tau - \frac{C_1-1}{n}} \frac{1}{s} \frac{C_1}{1+n(\tau-s)}\; ds\label{term3oneOversDomaineD0plusRealdecomp}
\end{align}
For the first and last integrals, we respectively have 
\begin{align}
&\frac{c}{\pi \theta} \left|-\log(s) + \log(1/n +s-\tau)\right|_{\tau + \frac{C_1-1}{n}}^{\theta - \frac{1}{2n+3}} \frac{C_1}{n}\frac{1}{\tau - \frac{1}{n}}\\
&\leq \frac{c}{\pi \theta}\frac{2C_1}{n|\tau|} \left|-\log(\frac{\theta - \frac{1}{2n+3}}{\tau + \frac{C_1-1}{n}}) + \log(\frac{1/n + \theta - \frac{1}{2n+3} - \tau}{C_1/n})\right|\\
&\leq \frac{c}{\pi \theta} \frac{2C_1}{n|\tau|}\left|\log(1+ \frac{\tau - 1/n}{1/n+\theta -\frac{1}{2n+3} - \tau}) + \log(1+\frac{\tau - 1/n}{C_1/n})\right|\\
&\leq \frac{c}{\pi \theta} \frac{2C_1}{n|\tau|} \left|2\log(2)\vee 2\log(|\tau|n)\right|\label{domainD0plusFirstIntegralOneOverS1}
\end{align}
as well as
\begin{align}
&\frac{c}{\pi \theta} \left|\log(s) - \log(1/n + \tau-s)\right|_{\frac{1}{2n+3}}^{\tau - \frac{C_1-1}{n}}\frac{C_1}{n}\frac{1}{1/n +\tau}\\
&\leq \frac{c}{\pi \theta} \frac{C_1}{n|\tau|} \left|\log(\frac{\tau - \frac{C_1-1}{n}}{\frac{1}{2n+3}}) - \log(\frac{C_1}{1/n+\tau - \frac{1}{2n+3}})\right|\\
&\leq \frac{c}{\pi \theta} \frac{C_1}{n|\tau|} \left(\log(3n|\tau|) + \log(\frac{n|\tau|}{C_1})\right)
\label{domainD0plusSecondIntegralOneOverS2}
\end{align}
For the central (remaining) integral, we consider four possible frameworks. Either $\tau + \frac{C_1-1}{n}\leq \theta - \frac{1}{2n+3}$, in this case we have 
\begin{align}
\eqref{centralIntegraltronclemma1D0plus}&\leq \frac{c}{\pi \theta} \log(1+ 2\frac{C_1-1}{n}\frac{1}{\tau - \frac{C_1-1}{n}})\\
&\leq \frac{c}{\pi \theta} 2\frac{C_1-1}{n}\frac{2}{|\tau|}
\end{align}
Or $\tau+ \frac{C_1-1}{n}\geq \theta - \frac{1}{2n+3}$. In this case, we write
\begin{align}
\eqref{centralIntegraltronclemma1D0plus}&\leq \frac{c}{\pi \theta} \log(1+ \left(\tau + \frac{C_1-1}{n} - \frac{1}{2n+3}\right)\frac{1}{1/(2n+3)})\\
&\stackrel{(a)}{\leq} \frac{c}{\pi \theta} 2\frac{C_1-1}{n}(2n+3)\\
 &\stackrel{(b)}{\leq} \frac{c'}{\pi \tau} 2\frac{C_1-1}{n}
\end{align}
In $(a)$, we use $\tau \leq \frac{1}{2n+3} + \frac{C_1-1}{n}$. In $(b)$, we use $\theta \geq \tau + \frac{C_1-1}{n} + \frac{1}{2n+3}$. 

Then we consider the case where $\tau + \frac{C_1-1}{n}\geq \theta - \frac{1}{2n+3}$, if $\theta - \frac{C_1-1}{n}\geq \frac{1}{2n+3}$, we get 
\begin{align}
\eqref{centralIntegraltronclemma1D0plus}&\leq \frac{c}{\pi \theta}\log(1+ \left(\theta - \frac{1}{2n+3} - \tau + \frac{C_1-1}{n}\right)\frac{1}{\tau - \frac{C_1-1}{n}})\\
&\leq \frac{c}{\pi} \frac{2}{\tau} \leq \frac{2c'}{\pi(2n+3)|\tau|}\label{boundCentralntegralDomainD0plusa}
\end{align}
else, when $\tau - \frac{C_1-1}{n}\leq \frac{1}{2n+3}$, we get
\begin{align}
\eqref{centralIntegraltronclemma1D0plus}& =  \frac{c}{\pi \theta} \int_{1/(2n+3)}^{\theta - \frac{1}{2n+3}} \frac{1}{s}\; ds\\
&\leq \frac{c}{\pi \theta} \log(1+ \frac{\theta - \frac{2}{2n+3}}{\frac{1}{2n+3}})\\
&\leq \frac{c}{\pi \theta} (\theta -\frac{2}{2n+3}) \frac{1}{\tau - \frac{C_1-1}{n}}\\
&\leq \frac{c'}{\pi}\frac{2}{|\tau|(2n+3)}\label{boundCentralntegralDomainD0plusb}
\end{align}
The last lines follow from $\tau - \frac{C_1-1}{n} \leq \frac{1}{2n+3}$. 

Grouping~\eqref{domainD0plusFirstIntegralOneOverS1},~\eqref{domainD0plusSecondIntegralOneOverS2} as well as~\eqref{boundCentralntegralDomainD0plusa} and~\eqref{boundCentralntegralDomainD0plusb}, we get

\begin{align}
\eqref{term1oneOversDomaineD0plusRealdecomp}+\eqref{centralIntegraltronclemma1D0plus} + \eqref{term3oneOversDomaineD0plusRealdecomp}& \leq \frac{4c'}{\pi|\tau|}\frac{C_1}{n} + \frac{2c'}{\pi}\frac{C_1}{n|\tau|}\left(2\log(2)+ 2\log(n|\tau|)\right) \\
&+ \frac{c'}{\pi}\frac{C_1}{n|\tau|}\left(\log(3) + \log(n|\tau|) + \log(\frac{n|\tau|}{C_1})\right)\\
&\leq 0.6 \frac{C_1}{n|\tau|} + 0.4 \frac{C_1}{n|\tau|}\log(n|\tau|). \label{boundCentralOneSDomainD0plusFF}
\end{align}

For the second term, a similar derivation holds 
\begin{align}
\frac{c}{\pi \theta} \int_{\frac{1}{2n+3}}^{\theta - \frac{1}{2n+3}} \frac{1}{\theta-s} \frac{C_1}{1+n(\tau-s)}\; ds & = \frac{c}{\pi \theta} \int_{\frac{1}{2n+3}}^{\tau - \frac{C_1-1}{n}} \frac{1}{\theta-s} \frac{C_1}{1+n(\tau-s)}\; ds\label{D0plusTerm1lemmatronc1}\\
&+ \frac{c}{\pi \theta} \int_{\tau - \frac{C_1-1}{n}}^{\tau + \frac{C_1-1}{n}} \frac{1}{\theta-s}\; ds\label{D0plusTerm1lemmatronc2}\\
&+ \frac{c}{\pi \theta} \int_{\tau + \frac{C_1-1}{n}}^{\theta - \frac{1}{2n+3}} \frac{1}{\theta-s} \frac{C_1}{1+n(s-\tau)}\; ds\label{D0plusTerm1lemmatronc3}
\end{align}
For the first and last integrals, we can write 
\begin{align}
\eqref{D0plusTerm1lemmatronc1}&\leq \frac{c}{\pi \theta} \left|-\log(\theta-s) + \log(1/n + \tau-s)\right|_{\frac{1}{2n+3}}^{\tau - \frac{C_1-1}{n}}\frac{1}{\tau - \theta + \frac{1}{n}}\\
&\leq \frac{c}{\pi \theta} \left|-\log(\frac{\theta - \tau + \frac{C_1-1}{n}}{\theta - \frac{1}{2n+3}}) + \log(\frac{C_1/n}{1/n+\tau - \frac{1}{2n+3}})\right| \frac{1}{\tau - \theta + \frac{1}{n}}\\
&\leq \frac{c}{\pi \theta} \left(\log(1+ \frac{\theta - \tau - \frac{1}{n}}{\tau + \frac{1}{n} - \frac{1}{2n+3}}) + \log(1+ \frac{\theta - \tau - \frac{1}{n}}{C_1/n})\right) \frac{1}{\tau - \theta + 1/n}\\
&\leq \frac{c}{\pi \theta} \frac{C_1}{|\tau|n} + \frac{c}{\pi \theta} \frac{C_1}{n}\frac{n}{C_1}\\
&\leq \frac{c}{\theta \pi} \frac{C_1}{|\tau|n} + \frac{2c'}{|\tau|\pi (2n+3)}.\label{boundOneOverThetaMinusSDOmainD0plusIntegral1}
\end{align}
 
For the last integral, we get 
\begin{align}
\eqref{D0plusTerm1lemmatronc3}&\leq \frac{c}{\pi \theta} \int_{\tau + \frac{C_1-1}{n}}^{\theta - \frac{1}{2n+3}} \frac{1}{\theta -s} \frac{C_1}{1+n(\tau-s)}\; ds\\
&\leq \frac{c}{\pi \theta} \left|-\log(\theta-s) + \log(\frac{1}{n}+s-\theta)\right|_{\tau + \frac{C_1-1}{n}}^{\theta - \frac{1}{2n+3}} \frac{C_1}{n}\frac{1}{|\theta - \tau + 1/n|}\\
&\leq \frac{c}{\pi \theta} \left(\log(\frac{\theta - \tau - \frac{C_1-1}{n}}{\frac{1}{2n+3}}) + \log(\frac{\theta - \tau + \frac{1}{n}-\frac{1}{2n+3}}{C_1/n})\right) \frac{C_1}{n}\frac{1}{|\theta - \tau + 1/n|}\\
&\leq \frac{c}{\pi \theta} (2n+3) \frac{C_1}{n} + \frac{c}{\pi \theta}\\
&\leq \frac{c'}{\pi |\tau|} \frac{C_1}{n} + \frac{c'}{\pi |\tau| (2n+3)}. \label{boundOneOverThetaMinusSDOmainD0plusIntegral3}
\end{align}
In the last line, we use $\theta \geq \frac{1}{2n+3}+ \tau + \frac{C_1-1}{n}\geq \tau$. Finally to control~\eqref{D0plusTerm1lemmatronc2}, we consider four frameworks. Either $\tau + \frac{C_1-1}{n} \leq \theta - \frac{1}{2n+3}$, in this case we write 
\begin{align}
\frac{c}{\pi \theta} \int_{\tau - \frac{C_1-1}{n}}^{\tau +\frac{C_1-1}{n}} \frac{1}{\theta-s}\; ds &\leq \frac{c}{\pi \theta} \log(1+ 2\frac{C_1-1}{n} \frac{1}{\theta - \tau + \frac{C_1-1}{n}})\\
&\leq \frac{c}{\pi \theta}\\
&\leq \frac{c'}{\pi (2n+3)|\tau|}\label{boundOneOverThetaMinusSDOmainD0plusIntegral2a}
\end{align}
In the last line, we used the assumption $\tau +\frac{C_1-1}{n} \leq \theta - \frac{1}{2n+3}$. Alternatively, when $\tau - \frac{C_1-1}{n}\leq \frac{1}{2n+3}$, we have
\begin{align}
\frac{c}{\pi \theta} \int_{\frac{1}{2n+3}}^{\tau+\frac{C_1-1}{n}} \frac{1}{\theta-s} \; ds &\leq \log(\frac{\theta - \tau  - \frac{C_1-1}{n}}{\theta - \frac{1}{2n+3}})\\
&\leq \frac{c}{\pi |\tau + \frac{C_1-1}{n}|}\label{additionalNotelemmatroncD0plusCentral1} \\
&\leq \frac{c'}{\pi (2n+3)|\tau|}\label{boundOneOverThetaMinusSDOmainD0plusIntegral2b}
\end{align}
In~\eqref{additionalNotelemmatroncD0plusCentral1}, we use the assumption $\tau+\frac{C_1-1}{n}\leq \theta - \frac{1}{2n+3}$.

Combining~\eqref{boundOneOverThetaMinusSDOmainD0plusIntegral1}~\eqref{boundOneOverThetaMinusSDOmainD0plusIntegral3} as well as~\eqref{boundOneOverThetaMinusSDOmainD0plusIntegral2a} and~\eqref{boundOneOverThetaMinusSDOmainD0plusIntegral2b}, we have 
\begin{align}
\eqref{D0plusTerm1lemmatronc1}+ \eqref{D0plusTerm1lemmatronc3} + \eqref{D0plusTerm1lemmatronc2} \leq 2(C_1+1) \frac{c'}{\pi n|\tau|}. \label{boundCentralOneOverThetaMinusSDomainD0plus}
\end{align}
Adding this result to~\eqref{boundCentralOneSDomainD0plusFF} gives the final bound for the case $\tau\pm \frac{C_1}{n} \in [\frac{1}{2n+3}, \theta - \frac{1}{2n+3}]$,
\begin{align}
\eqref{boundCentralOneOverThetaMinusSDomainD0plus} + \eqref{boundCentralOneSDomainD0plusFF}\leq 2 \frac{C_1}{n|\tau|} + 0.4\frac{C_1}{n|\tau|}\log(n|\tau|).\label{TotalBoundCentralFrameworkDomainD0plusReal111}
\end{align}
as soon as we can assume $C_1\geq1$.

When $\tau+ \frac{C_1-1}{n}\geq \theta - \frac{1}{2n+3}$, we again have two options. Either $\tau-\frac{C_1-1}{n}\geq \frac{1}{2n+3}$
\begin{align}
\frac{c}{\pi \theta} \int_{\tau - \frac{C_1-1}{n}}^{\theta - \frac{1}{2n+3}} \frac{1}{\theta-s}\; ds &\leq \frac{c}{\pi \theta} \log(\frac{\frac{1}{2n+3}}{\theta -\tau + \frac{C_1-1}{n}})\\
&\leq \frac{c}{\pi \theta} \log(\frac{\theta - \tau + \frac{C_1-1}{n}}{\frac{1}{2n+3}})\\
&\stackrel{(a)}{\leq }   \frac{c}{\pi \theta} \log(3n \frac{2C_1}{n})\\
&\stackrel{(b)}{\leq } \frac{c'}{\pi (2n+3)} \frac{2}{|\tau|}\log(6C_1)
\end{align}
In $(a)$, we use $\theta \leq \tau + \frac{C_1-1}{n}+\frac{1}{2n+3}$. In $(b)$, we use $\theta - \frac{1}{2n+3} \geq \tau - \frac{C_1-1}{n}\geq \tau/2$.

Or $\tau - \frac{C_1-1}{n}\leq \frac{1}{2n+3}$. In this last case, we have $\tau+\frac{C_1-1}{n}\geq \theta - \frac{1}{2n+3}$ and $\tau - \frac{C_1-1}{n} \leq \frac{1}{2n+3}$. We can write 
\begin{align}
\frac{c}{\pi \theta} \int_{\frac{1}{2n+3}}^{\theta - \frac{1}{2n+3}} \frac{1}{\theta-s} &\leq \frac{c}{\pi \theta} \log(\frac{\theta - \frac{1}{2n+3}}{\frac{1}{2n+3}})\stackrel{(a)}{\leq }   \frac{c'}{\pi(2n+3)} \frac{2}{|\tau|}
\end{align}
In $(a)$, we used $\tau - \frac{C_1-1}{n} \leq \frac{1}{2n+3}$ which implies $\tau/2 \leq \frac{1}{2n+3}$.

\end{itemize}

The total bound arising from the sum $1/s + 1/(\theta - s)$ follows as before by multiplying~\eqref{TotalBoundCentralFrameworkDomainD0plusReal111} by 2 and taking the maximum between the resulting upper bound and~\eqref{TotalBoundFramewordOutsideDomainDomainD0plusReal}, i.e.,
\begin{align}
2\eqref{TotalBoundCentralFrameworkDomainD0plusReal111}\vee \eqref{TotalBoundFramewordOutsideDomainDomainD0plusReal} \leq \frac{C_1}{n|\tau|}\log(n|\tau|) + 2\frac{C_1}{n|\tau|}\label{boundDomainD0plusFFF2}
\end{align}

We conclude with the last term $\log(\frac{s}{\theta-s} \vee \frac{\theta-s}{s})$ (arising from the imaginary part). For this last term, we thus have the bound 
\begin{align}
\log(\frac{s}{\theta-s} \vee \frac{\theta-s}{s}) \leq \frac{s}{\theta-s} \leq \frac{\theta}{\theta-s}, \quad \text{when $\theta/2 \leq s \leq \theta$}\\
\log(\frac{s}{\theta-s} \vee \frac{\theta-s}{s}) \leq \frac{\theta}{s}, \quad \text{when $s\leq \theta/2 $}
\end{align}

When $\tau < 0$, we have 
\begin{align}
&\frac{1}{\pi} \frac{C_1}{n} \int_{\frac{1}{2n+3}}^{\theta/2}  \frac{1}{s} \frac{1}{1+n (s-\tau)}\; ds + \frac{C_1}{n} \frac{1}{\pi} \int_{\theta/2}^{\theta - \frac{1}{2n+3}} \frac{1}{\theta-s} \frac{C_1}{1+n(s-\tau)}\; ds\\
&\leq \frac{C_1}{n\pi} \left|-\log(s) + \log(\frac{1}{n} + s-\tau)\right|_{\frac{1}{2n+3}}^{\theta/2} \frac{1}{|\tau - \frac{1}{n}|}\label{IntegralDomainD0plusImagFramework1}\\
&+ \frac{C_1}{n\pi} \left|-\log(\theta-s) + \log(\frac{1}{n} + s-\tau)\right|_{\theta/2}^{\theta - \frac{1}{2n+3}} \frac{1}{|\tau - \frac{1}{n} - \theta|}\\
&\leq \frac{C_1}{n\pi} \left(-\log(\frac{\theta/2}{\frac{1}{2n+3}}) + \log(\frac{1/n + \theta/2 - \tau}{1/n + \frac{1}{2n+3} - \tau})\right) \frac{1}{|\tau - \frac{1}{n}|}\\
&+ \frac{C_1}{n\pi} \left(- \log(\frac{\theta/2}{\frac{1}{2n+3}}) + \log(\frac{1/n + \theta/2 - \frac{1}{2n+3} - \tau}{1/n + \frac{1}{2n+3} - \tau})\right) \frac{1}{|\tau - \frac{1}{n}|} \\
&+ \frac{C_1}{n\pi} \left(-\log(\frac{\frac{1}{2n+3}}{\theta/2}) + \log(\frac{1/n + \theta - \frac{1}{2n+3} - \tau}{1/n + \theta/2 - \tau})\right) \frac{1}{|\tau - \frac{1}{n} - \theta|}\\
&\leq \frac{C_1}{n\pi} \frac{2}{|\tau|} \left(\log(1+ \frac{-\tau + 1/n}{\theta/2}) + \log(1+ \frac{-\tau + 1/n}{\frac{1}{2n+3}})\right) \frac{1}{|\tau + 1/n|}\\
&+ \frac{C_1}{n\pi} \left(\log(\frac{3n\theta}{2}) + \frac{1/n + \theta/2 - \tau}{1/n + \theta - \frac{1}{2n+3} - \tau}\right) \frac{1}{|\theta - \tau + 1/n|}\\
&\leq \frac{C_1}{n\pi} \frac{4}{|\tau|} \log(7n|\tau|) + \frac{C_1}{n\pi} \left(\left(\log(\frac{3\theta}{2|\tau|}) + \log(n|\tau|)\right) \frac{1}{|\theta| + |\tau|} + \frac{3}{|\tau|}\right)\\
&\leq \frac{C_1}{n\pi} \frac{4}{|\tau|} \left(\log(7) + \log(n|\tau|)\right) + \frac{3C_1}{n \pi |\tau|} + \frac{3C_1}{2n\pi|\tau|} + \frac{C_1}{n\pi |\tau|} \log(n|\tau|)\label{boundDomainD0plusImagFramework1}
\end{align}

When $\tau>\theta - \frac{1}{2n+3}$, we have 
\begin{align}
&\frac{1}{\pi \theta} \frac{C_1}{n} \int_{\frac{1}{2n+3}}^{\theta/2} \log(\frac{\theta-s}{s}) \frac{C_1}{1+n(\tau-s)}\; ds + \frac{1}{\pi \theta} \frac{C_1}{n} \int_{\theta/2}^{\theta - \frac{1}{2n+3}} \log(\frac{s}{\theta-s}) \frac{C_1}{1+n(\tau-s)}\; ds\label{IntegralDomainD0plusImagFramework2}\\
&\leq \frac{1}{\pi} \frac{C_1}{n} \left|\log(s) + \log(\frac{1}{n}+ \tau-s)\right|_{\frac{1}{2n+3}}^{\theta/2} \frac{1}{|\tau + 1/n|}\\
&+ \frac{1}{\pi} \frac{C_1}{n} \left|\log(\theta-s) - \log(\frac{1}{n} + \tau-s)\right|_{\theta/2}^{\theta - \frac{1}{2n+3}} \frac{1}{|-\tau - \frac{1}{n} + \theta|}\\
&\leq \frac{1}{\pi} \frac{C_1}{n} \left|\log(\frac{\theta/2}{\frac{1}{2n+3}}) + \log(\frac{1/n + \tau - \theta/2}{1/n + \tau - \frac{1}{2n+3}})\right| \frac{1}{|\tau + 1/n|}\\
&+ \frac{1}{\pi}\frac{C_1}{n} \left|\log(\frac{\frac{1}{2n+3}}{\theta/2})  - \log(\frac{1/n + \tau - \theta + \frac{1}{2n+3}}{1/n + \tau - \theta/2})\right|\frac{1}{|\tau + 1/n - \theta|}
\end{align}
When $\theta \leq |\tau + 1/n|\frac{1}{2}$, we can control the last two lines above as 
\begin{align}
&\frac{1}{\pi \theta} \frac{C_1}{n} \int_{\frac{1}{2n+3}}^{\theta/2} \log(\frac{\theta-s}{s}) \frac{C_1}{1+n(\tau-s)}\; ds + \frac{1}{\pi \theta} \frac{C_1}{n} \int_{\theta/2}^{\theta - \frac{1}{2n+3}} \log(\frac{s}{\theta-s}) \frac{C_1}{1+n(\tau-s)}\; ds\\
&\leq \frac{1}{\pi} \frac{C_1}{n} \left|\log(\frac{3n|\tau|}{2}) + \log(n|\tau|)\right| \frac{1}{|\tau|}\\
&+ \frac{1}{\pi}\frac{C_1}{n} \left|\log(\frac{3n|\tau|}{2}) + \log(\frac{3n|\tau|}{2})\right| \frac{2}{|\tau|}\label{boundDomainD0plusImagFramework2a}
\end{align}
When $\theta \geq |\tau + 1/n|/2$, we have 
\begin{align}
&\frac{1}{\pi \theta} \frac{C_1}{n} \int_{\frac{1}{2n+3}}^{\theta/2} \log(\frac{\theta-s}{s}) \frac{C_1}{1+n(\tau-s)}\; ds + \frac{1}{\pi \theta} \frac{C_1}{n} \int_{\theta/2}^{\theta - \frac{1}{2n+3}} \log(\frac{s}{\theta-s}) \frac{C_1}{1+n(\tau-s)}\; ds\\
&\leq \frac{1}{\pi} \frac{C_1}{n} \left|\log(\frac{3n|\tau|}{2})+ \log(n|\tau|)\right| \frac{1}{|\tau|}\\
&+ \frac{1}{\pi}\frac{C_1}{n\theta} \left(\log(\frac{3n\theta}{2}) \log(\frac{1/n + \tau - \theta + \frac{1}{2n+3}}{1/n + \tau - \theta/2})\right)\\
&\leq \frac{1}{\pi}\frac{C_1}{n} \left|\log(\frac{3n|\tau|}{2}) + \log(n|\tau|)\right| \frac{1}{|\tau|}\\
&+ \frac{1}{|\tau|} \frac{C_1}{n\theta} \left|\log(\frac{3\theta}{|\tau|}) + \log(n|\tau|)\right|\log(n|\tau|)\\
&\leq \frac{1}{\pi}\frac{C_1}{n} \left(\log(\frac{3n|\tau|}{2}) + \log(n|\tau|)\right) \frac{\log(n|\tau|)}{|\tau|}\\
&+ \frac{1}{\pi}\frac{3C_1}{n|\tau|} + \frac{1}{\pi} \frac{C_1}{n} \frac{4}{|\tau|} \log(n|\tau|) \label{boundDomainD0plusImagFramework2b}
\end{align}

Combining~\eqref{boundDomainD0plusImagFramework1}, ~\eqref{boundDomainD0plusImagFramework2a} and~\eqref{boundDomainD0plusImagFramework2b}, we get 
\begin{align}
\eqref{IntegralDomainD0plusImagFramework1} + \eqref{IntegralDomainD0plusImagFramework2}& \leq \frac{C_1}{n|\tau|} \left[\frac{2\log(3/2)}{\pi} + \frac{3}{\pi} \right] + \frac{C_1}{n|\tau|} \log(n|\tau|) \left[\frac{2}{\pi} + \frac{4}{\pi}+ \frac{1}{\pi}\log(3/2) + \frac{4}{\pi}\right]\\
&+ \frac{C_1}{n|\tau|} \log^2(n|\tau|) \frac{2}{\pi}\\
&\leq 1.3 \frac{C_1}{n|\tau|} + 3.4 \frac{C_1}{n|\tau|} \log(n|\tau|)+ \frac{C_1}{n|\tau|}\log^2(n|\tau|) 0.7. \label{FInalBoundImagDomainD0plusTaunotInDOmainImag}
\end{align}
Whenever $\tau \pm \frac{C_1}{n}\notin [\frac{1}{2n+3}, \theta - \frac{1}{2n+3}]$. 

%

When $\tau$ overlaps with either subdomain, we proceed as follows. We first treat the case $\tau\pm \frac{C_1}{n} \cap [\frac{1}{2n+3}, \theta/2]$. We then treat the case $\tau\pm \frac{C_1}{n} \cap [\theta/2, \theta - \frac{1}{2n+3}]$

\begin{align}
&\frac{1}{\pi \theta} \int_{\frac{1}{2n+3}}^{\tau - \frac{C_1-1}{n}} \log(\frac{\theta-s}{s}) \frac{C_1}{1+n(\tau-s)}\; ds \\
&+ \frac{1}{\pi \theta}  \log(1+ \frac{2(C_1-1)}{n} \frac{1}{\tau - \frac{C_1-1}{n}})\label{explanationAdditionalLemmaTronc0050}\\
&+ \frac{1}{\pi \theta} \int_{\tau + \frac{C_1-1}{n}}^{\theta/2} \log(\frac{\theta-s}{s}) \frac{C_1}{1+n(s-\tau)}\;ds\\
&\leq \frac{1}{\pi} \left|\log(s)+ \log(\frac{1}{n}+\tau-s)\right|_{\frac{1}{2n+3}}^{\tau - \frac{C_1-1}{n}} \frac{1}{\tau + 1/n}\\
&+ \frac{2C_1}{n\pi} \log(6C_1)\\
&+ \frac{1}{\pi} \left|-\log(s) + \log(1/n + s - \tau)\right|_{\tau + \frac{C_1-1}{n}}^{\theta/2}\\
&\leq \frac{1}{\pi} \left(\log(\frac{\tau - \frac{C_1-1}{n}}{\frac{1}{2n+3}}) + \log(\frac{C_1/n}{1/n + \tau - \frac{1}{2n+3}})\right) \frac{1}{|\tau + 1/n|}\\
&+ \frac{2C_1}{n \pi} \log(6C_1)\\
&+ \frac{1}{\pi} \left(-\log( \frac{\theta/2}{\tau + \frac{C_1-1}{n}}) + \log(\frac{1/n + \theta/2 - \tau}{C_1/n})\right)\frac{1}{|\tau - 1/n|}\\
&\leq \frac{C_1}{n\pi}\left(\log(3n|\tau|) + \log(\frac{2n|\tau|}{C_1})\right) \frac{1}{|\tau|}\\
&+ \frac{2C_1}{n\pi |\tau|} \log(6C_1)\\
&+ \frac{1}{\pi|\tau - 1/n|} \left(\log(1+ \frac{\tau - 1/n}{C_1/n}) + \log(1+ \frac{\tau - 1/n}{1/n + \theta/2 - \tau})\right)\frac{C_1}{n}\\
&\leq \frac{C_1}{n}\frac{1}{\pi} \left(\log(3n|\tau|) + \log(\frac{2n|\tau|}{C_1})\right)\frac{1}{|\tau|}\\
&+ \frac{2C_1}{n\pi |\tau|} \log(6C_1)\\
&+ \frac{2}{\pi |\tau|} \left(\log(2)\vee \log(2\frac{\tau n}{C_1}) +\log(2)\vee\log(\frac{n\tau}{C_1}) \right)\frac{C_1}{n}
\end{align}
In~\eqref{explanationAdditionalLemmaTronc0050}, we use $\tau \geq 2\frac{C_1-1}{n}$ which implies $\tau - \frac{C_1-1}{n}\geq \frac{1}{2n+3}$. 

Similarly when $\tau \in [\theta/2, \theta-\frac{1}{2n+3}]$, the integral expands as 
\begin{align}
&\frac{1}{\pi \theta} \int_{\theta/2}^{\tau - \frac{C_1-1}{n}} \log(\frac{s}{\theta-s})\frac{C_1}{1+n(\tau-s)}\; ds\label{integral1BoundImagFramework2Ccentral1}\\
& + \frac{1}{\pi \theta} \int_{\tau - \frac{C_1-1}{n}\vee \theta/2}^{\tau + \frac{C_1-1}{n}\wedge \theta - \frac{1}{2n+3}} \log(\frac{s}{\theta-s})\; ds\label{integral1BoundImagFramework2Ccentral2}\\
&+ \frac{1}{\pi \theta} \int_{\tau + \frac{C_1-1}{n}}^{\theta - \frac{1}{2n+3}} \log(\frac{s}{\theta-s}) \frac{C_1}{1+n(s-\tau)}\; ds\label{integral1BoundImagFramework2Ccentral3}
\end{align}

The first integral above is bounded differently depending on $\theta \geq \tau/2$ or $\theta < \tau/2$. In the second case, we write 
\begin{align}
\frac{1}{\pi \theta} \int_{\theta/2}^{\tau - \frac{C_1-1}{n}} \log(\frac{s}{\theta-s}) \frac{C_1}{1+n|\tau-s|}\; ds &\leq \frac{1}{\pi} \frac{C_1}{n} \left[-\log(\frac{\theta - \tau + \frac{C_1-1}{n}}{\theta/2}) + \log(\frac{C_1/n}{1/n + \tau - \theta/2})\right] \frac{1}{|a/n + \tau - \theta|} \\
&\leq \frac{C_1}{n} \frac{2}{|\tau|}  \left|\log(n|\tau|)\vee \log(2) + \log(\frac{2n|\tau|}{C_1})\right|
\end{align}
When $\theta \geq \tau/2$, we write
\begin{align}
\frac{1}{\pi \theta} \int_{\theta/2}^{\tau - \frac{C_1-1}{n}} \log(\frac{s}{\theta-s}) \frac{C_1}{1+n|\tau-s|}\; ds &\leq \frac{1}{\pi}\frac{2}{|\tau|} \log(3n|\tau|) \log(\frac{C_1/n}{1/n + \tau - \theta/2}) \\
&\leq \frac{1}{\pi} \frac{2}{|\tau|} \frac{C_1}{n} \log(3n|\tau|) \log(\frac{n|\tau|}{C_1}).
\end{align}
The second integral is bounded as
\begin{align}
\frac{1}{\pi \theta} \int_{\tau - \frac{C_1-1}{n}\vee \theta/2}^{\tau + \frac{C_1-1}{n}\wedge \theta - \frac{1}{2n+3}} \log(\frac{s}{\theta-s})\; ds &\leq \frac{1}{\pi} \log(1+ 2\frac{C_1-1}{n} \frac{1}{\tau - \frac{C_1-1}{n}})\\
&\leq \frac{1}{\pi} \frac{C_1}{n} \frac{1}{|\tau|}
\end{align}
Finally for the third integral, we write
\begin{align}
\frac{1}{\pi \theta} \int_{\tau + \frac{C_1-1}{n}}^{\theta - \frac{1}{2n+3}} \log(\frac{s}{\theta-s}) \frac{C_1}{1+n(s-\tau)}\; ds&\leq \frac{1}{\pi}\frac{C_1}{n} \left|-\log(\frac{\frac{1}{2n+3}}{\theta - \tau - \frac{C_1-1}{n}}) + \log(\frac{1/n + \theta - \frac{1}{2n+3} - \tau}{C_1/n})\right| \frac{1}{|1/n+\tau + \theta|}\\
&\leq \frac{1}{\pi} \frac{C_1}{n|\tau|} \left(\log(3n\tau) + \log(\frac{2n |\tau|}{C_1})\right) .
\end{align}
The sum $\eqref{integral1BoundImagFramework2Ccentral1}+\eqref{integral1BoundImagFramework2Ccentral2} +\eqref{integral1BoundImagFramework2Ccentral3}$ is thus bounded as
\begin{align}
\eqref{integral1BoundImagFramework2Ccentral1}+\eqref{integral1BoundImagFramework2Ccentral2} +\eqref{integral1BoundImagFramework2Ccentral3}&\leq \frac{C_1}{n|\tau|} \left(2\log(2) + \frac{1}{\pi}(\log(3) + \log(2))+\frac{1}{\pi}\right)\\
& + \frac{C_1}{n|\tau|} \log(n|\tau|) \left(4 + \frac{2}{\pi}\log(3)\right) \\
&+ \frac{C_1}{n|\tau|} \log^2(n|\tau|) \frac{2}{\pi}\\
&\leq  2.3 \frac{C_1}{n|\tau|} + 5\frac{C_1}{n|\tau|} \log(n|\tau|) + \frac{C_1}{n|\tau|}\log^2(n|\tau|)\label{FinalBoundImagCentralDomainD0plusImag}
\end{align}
Taking the maximum between $2\eqref{FinalBoundImagCentralDomainD0plusImag}$ and~\eqref{FInalBoundImagDomainD0plusTaunotInDOmainImag} gives the bound on the imaginary part 
\begin{align}
2\eqref{FinalBoundImagCentralDomainD0plusImag} \vee \eqref{FInalBoundImagDomainD0plusTaunotInDOmainImag} \leq  5\frac{C_1}{n|\tau|} + 10 \frac{C_1}{n|\tau|} \log(n|\tau|) + \frac{C_1}{n|\tau|} \log^2(n|\tau|). \label{boundDomainD0plusFFF3}
\end{align}

The final bound on the $D_0^+$ domain follows from the sum $\eqref{boundDomainD0plusFFF1}+\eqref{boundDomainD0plusFFF2}+\eqref{boundDomainD0plusFFF3}$, 
\begin{align}
2\left(\eqref{boundDomainD0plusFFF1}+\eqref{boundDomainD0plusFFF2}\right)+\eqref{boundDomainD0plusFFF3} \leq 25 \frac{C_1}{n|\tau|} + 27\frac{C_1}{n|\tau|}\log(n|\tau|) + \frac{C_1}{n|\tau|}\log^2(n|\tau|)
\end{align}
which concludes the proof of the lemma.

\subsection{\label{sectionProofD4plusLarge}Proof of lemma~\ref{lemmaTroncD4plus} ($D_4^+$ large $|\tau - \alpha|$)}

We now bound the integral on $D_4^+$. We first consider the large $\theta$ regime () and start with the framework $\tau + \frac{C_1}{n} \leq \theta + \frac{1}{2n+3}$ (inlcuding $\tau<0$). 

Recall that we have 
\begin{align}
F_{R,D_4^+} \leq \frac{\theta}{2} + c\left(\frac{1}{s} + \frac{1}{s-\theta}\right) \leq \frac{\theta}{2} + \frac{2c}{s-\theta}\\
F_{I,D_4^+} \leq \frac{\theta}{2} + c\left(\frac{1}{s} + \frac{1}{s-\theta}\right)+ \frac{1}{4}\log(\frac{s}{s-\theta}) \leq \frac{\theta}{2} + \frac{2c}{s-\theta}+ \frac{1}{4}\log(\frac{s}{s-\theta}) \\
\end{align}

We treat the real part first. 


For $\theta - \frac{1}{2n+3}>\tau>0$, we have 
\begin{align}
&\frac{1}{\pi \theta} \int_{\theta + \frac{1}{2n+3}}^{1/2} \left(\frac{\theta}{2} + 2c\frac{1}{s-\theta}\right) \frac{C_1}{1+n(s-\tau)}\; ds \\
&\leq \frac{1}{\pi \theta}  \frac{\theta}{2} \frac{C_1}{n}\log(\frac{1/n + 1/2 - \tau}{1/n + \theta + \frac{1}{2n+3} - \tau}) + \frac{2c}{\pi \theta}\frac{C_1}{n} \int_{\theta + \frac{1}{2n+3}}^{1/2} \left(\frac{1}{s-\theta} \frac{1}{1/n + (s-\tau)}\right)\; ds\\
&\leq \frac{1}{2\pi} \frac{C_1}{n} \log(\frac{n}{C_1}) +\frac{2c}{\pi \theta} \left|-\log(\frac{1/2 - \theta}{\frac{1}{2n+3}})+\log(\frac{1/n + 1/2 - \tau}{1/n + \theta + \frac{1}{2n+3} - \tau})\right|\frac{C_1}{n} \frac{1}{|\tau - \theta - \frac{1}{n}|}\\
&\leq \frac{1}{2\pi} \frac{C_1}{n} \log(\frac{n|\tau|}{C_1|\tau|}) + \frac{2c}{\pi \theta} \left|\log(1+ \frac{\theta - \tau+1/n}{1/2 - \theta}) + \log(1+ \frac{1/n + \theta - \tau}{\frac{1}{2n+3}})\right|\frac{1}{|\tau - \theta - \frac{1}{n}|}\\
&\leq \frac{1}{2\pi} \frac{C_1}{n} \left\{\log(\frac{n|\tau|}{C_1}) + \log(\frac{1}{|\tau|})\right\} + \frac{2c}{\pi \theta (1/2-\theta)} \frac{C_1}{n} + \frac{2c}{\pi \theta} (2n+3)\frac{C_1}{n}\\
&\stackrel{(a)}{\leq } \frac{1}{2\pi} \frac{C_1}{n|\tau|} \log(\frac{n|\tau|}{C_1}) + \frac{C_1}{n|\tau|} \frac{1}{2\pi} +\frac{2c'}{\pi (1/2-\frac{1}{2n+3})} \frac{C_1}{n|\tau|} + \frac{2c'}{\pi |\tau|} \frac{C_1}{n}\label{DomainD4plusBoundFrameworkTauPositiveOutRealP}
\end{align}
In $(a)$, we use $\theta \geq \tau + \frac{C_1-1}{n} - \frac{1}{2n+3}$. 

When $\tau<0$, we consider two cases

\begin{itemize}
\item When $\theta \geq \tau + 1/n$, we write 
\begin{align}
&\frac{1}{\pi \theta}\int_{\theta + \frac{1}{2n+3}}^{1/2} \left(\frac{\theta}{2} + 2c\frac{1}{s-\theta}\right) \frac{C_1}{1+n(s-\tau)}\; ds\\
 &\leq \frac{1}{2\pi} \frac{C_1}{n|\tau|} + \frac{2c}{\pi \theta} \left|\log(1+ \frac{\theta - \tau + 1/n}{1/2-\theta}) + \log(1+ \frac{1/n + \theta - \tau}{\frac{1}{2n+3}})\right| \frac{C_1}{n|\tau + 1/n - \theta|}\\
&\leq \frac{1}{2\pi} \frac{C_1}{n|\tau|} + (\frac{8c'}{\pi} + \frac{4c'}{\pi |\tau + 1/n|} ) \frac{C_1}{n}\label{DomainD4plusRealLargeTauFullyNegativeA}
\end{align}
\item When $\theta \leq (\tau + 1/n)/2$, we write 
\begin{align}
&\frac{1}{\pi \theta}\int_{\theta + \frac{1}{2n+3}}^{1/2} \left(\frac{\theta}{2} + 2c\frac{1}{s-\theta}\right) \frac{C_1}{1+n(s-\tau)}\; ds\\
&\leq \frac{1}{2\pi}\frac{C_1}{n} \log(\frac{1/n + 1/2 - \tau}{1/n + \theta + \frac{1}{2n+3} - \tau}) \\
&+\frac{1}{2\pi} \frac{C_1}{n|\tau|} + \frac{c'}{\pi} \left\{-\log(\frac{1/2 - \theta}{\frac{1}{2n+3}}) + \log(\frac{1/n + 1/2 - \tau}{1/n + \theta + \frac{1}{2n+3} - \tau})\right\} \frac{C_1}{n}\frac{2}{|1/n + \tau|}\\
&\leq \frac{1}{2\pi} \frac{C_1}{n|\tau|} + \frac{c'}{\pi} \left\{\log(1+ \frac{1/n + \theta - \tau}{1/2 - \theta}\vee \frac{-1/n + \tau - \theta}{1/n + 1/2 - \tau})\right\}\frac{2C_1}{n|\tau|}\\
&+ \frac{c'}{\pi} \left\{\log(1+ \frac{1/n + \theta - \tau}{\frac{1}{2n+3}}\vee \frac{-1/n - \theta + \tau}{1/n + \theta + \frac{1}{2n+3} - \tau})\right\}\frac{2C_1}{n|\tau|}\\
&\leq \frac{1}{2\pi} \frac{C_1}{n|\tau|} + \frac{8c'}{\pi} \frac{C_1}{n|\tau|} \left\{\log(6) + \log(n|\tau|)\right\}\label{DomainD4plusRealLargeTauFullyNegativeB}
\end{align}
\end{itemize}

To be complete, we consider the situation corresponding to an eigenpolynomial located on the right of $s=1/2$ which is needed in order to account for the reflection of the eigenpolynomial. In this case, we have 
\begin{align}
\frac{1}{\pi \theta} \int_{\theta + \frac{1}{2n+3}}^{1/2} \left(\frac{\theta}{2} + 2c\frac{1}{s-\theta}\right) \frac{C_1}{1+n(\tau'-s)}\; ds &\leq \frac{1}{2\pi} \frac{C_1}{n}\log(\frac{1/n + \tau' - \theta - \frac{1}{2n+3}}{1/n + \tau' - 1/2}) + \frac{1}{\pi \theta} \int_{\theta + \frac{1}{2n+3}}^{1/2}2c\frac{1}{s-\theta} \frac{C_1}{1+n(\tau'-s)}\; ds\\
&\leq \frac{1}{2\pi} \frac{C_1}{n}\log(2|\tau'|n)+ \frac{1}{\pi \theta} \int_{\theta + \frac{1}{2n+3}}^{1/2}2c\frac{1}{s-\theta} \frac{C_1}{1+n(\tau'-s)}\; ds\label{boundFirstTermtauRightD4plusLemmaTronc}
\end{align}

For the second term, note that we have 

\begin{align}
\frac{1}{\pi \theta} \int_{\theta + \frac{1}{2n+3}}^{1/2}2c\frac{1}{s-\theta} \frac{C_1}{1+n(\tau'-s)}\; ds \leq \frac{C_1}{n} \frac{2c}{\pi \theta} \frac{1}{|\tau'-\theta + \frac{1}{n}|} \left|\log(\frac{1/2-\theta}{\frac{1}{2n+3}}) - \log(\frac{1/n + \tau' - 1/2}{1/n + \tau - \theta - \frac{1}{2n+3}})\right|
\end{align}

To control the second term, we distinguish the two cases: $\theta \geq 1/4$ and $\theta < 1/4$. In the former we write 
\begin{align}
\frac{1}{\pi \theta} \int_{\theta + \frac{1}{2n+3}}^{1/2}2c\frac{1}{s-\theta} \frac{C_1}{1+n(\tau'-s)}\; ds &\leq \left[(1/2-\theta )(2n+3) + \frac{1/n + \tau' - \theta - \frac{1}{2n+3}}{1/n + \tau' -1/2} \right]\frac{C_1}{n} \frac{2c}{\pi \theta} \frac{1}{|\tau' - \theta + 1/n|}\\
&\leq \frac{4C_1 c'}{n \pi} 2  \leq \frac{4C_1 c'}{n \pi |\tau|} 2 \label{boundSecondTermtauRightD4plusLemmaTronc}
\end{align}

When $\theta< 1/4$, we keep the $o(n)$ terms inside the logs, use $c/\theta \leq c'$ and write
\begin{align}
\frac{1}{\pi \theta} \int_{\theta + \frac{1}{2n+3}}^{1/2}2c\frac{1}{s-\theta} \frac{C_1}{1+n(\tau'-s)}\; ds &\leq \frac{2c'}{\pi}\frac{C_1}{n} \frac{1}{|\tau' - 1/4 + 1/n|} \left(\log((2n+3)/4) + \log(\frac{\tau' - 1/4 + 1/n}{1/n})\right)\\
&\leq \frac{2c'}{\pi} \frac{C_1}{n}4 \left[\log(3n) + \log(2n|\tau'|)\right]\label{boundSecondTermtauRightD4plusLemmaTroncb}
\end{align}

The total bound for any of the frameworks $\tau<0$ or $0<\tau<\theta - \frac{1}{2n+3}$ discussed above is thus give by the sum
\begin{align}
&(\eqref{DomainD4plusBoundFrameworkTauPositiveOutRealP} \vee (\eqref{DomainD4plusRealLargeTauFullyNegativeA}\vee\eqref{DomainD4plusRealLargeTauFullyNegativeB}) )+ (\frac{1}{2\pi} \frac{C_1}{n}\log(2(1-|\tau|)n) + \eqref{boundSecondTermtauRightD4plusLemmaTronc}\vee \eqref{boundSecondTermtauRightD4plusLemmaTroncb})\\
&\leq \left(\frac{C_1}{n|\tau|} \log(n|\tau|)\frac{1}{2\pi} + \frac{C_1}{n|\tau|} \left(\frac{1}{2\pi} + \frac{10c'}{\pi}\right)\right) \vee \left(\frac{C_1}{n|\tau|}(\frac{1}{2\pi} + \frac{16c'}{\pi}) + \frac{C_1}{n|\tau|} \log(n|\tau|) \frac{8c'}{\pi}\right)\\
&+ \frac{C_1}{n|\tau|}\log(n|\tau|) \left(\frac{1}{2\pi} + \frac{16c'}{\pi}\right) + \frac{C_1}{n|\tau|} \left(\log(2)+ \frac{8c'}{\pi}\left(\log(3)+ \log(2)\right) + \frac{8c'}{\pi}\right)\\
&\leq 2\frac{C_1}{n|\tau|} \log(n|\tau|) + 3.5\frac{C_1}{n|\tau|}.\label{boundD4plusRealLargeTauLeftRightLargeTheta}
\end{align}

To conclude on the real part, we now treat the case $\tau\pm \frac{C_1}{n} \in [\theta + \frac{1}{2n+3}, 1/2]$. The integral can be decomposed as 
\begin{align}
&\frac{1}{\pi \theta} \int_{\theta + \frac{1}{2n+3}}^{1/2} \left(\frac{\theta}{2} + 2c\frac{1}{s-\theta}\right) \; \frac{C_1}{1+n|\tau-s|}\; ds \\
&\leq \frac{1}{\pi \theta} \int_{\theta + \frac{1}{2n+3}}^{\tau - \frac{C_1-1}{n}} \left(\frac{\theta}{2}+ 2c \frac{1}{s-\theta}\right) \frac{C_1}{1+n|\tau-s|}\; ds \label{D4plusTerm1IntegralCentralLemmatronc}\\
& + \frac{1}{\pi \theta} \int_{\tau - \frac{C_1-1}{n}}^{\tau + \frac{C_1-1}{n}} \left(\frac{\theta}{2} + 2c \frac{1}{s-\theta}\right)\; ds\label{D4plusTerm1IntegralCentralLemmatronc22222}\\
&+ \frac{1}{\pi \theta} \int_{\tau + \frac{C_1-1}{n}}^{1/2} \left(\frac{\theta}{2} + \frac{2c}{s-\theta}\right) \frac{C_1}{1+n|s-\tau|}\; ds\label{D4plusTerm1IntegralCentralLemmatronc33333}
\end{align}

For the first term, we write 
\begin{align}
\eqref{D4plusTerm1IntegralCentralLemmatronc}&\leq 	\frac{1}{2\pi }\frac{C_1}{n} \log(\frac{C_1/n}{1/n + \tau - \theta - \frac{1}{2n+3}}) + \frac{2c}{\pi \theta} \left|\log(s-\theta) - \log(1/n + \tau-s)\right|_{\theta + \frac{1}{2n+3}}^{\tau - \frac{C_1-1}{n}} \frac{C_1}{n} \frac{1}{\tau - \theta + 1/n}\\
&\leq \frac{1}{2\pi} \frac{C_1}{n} \log(\frac{2n\tau}{C_1}) + \frac{2c}{\pi \theta}\left|\log(\frac{\tau - \frac{C_1-1}{n} - \theta}{\frac{1}{2n+3}}) - \log(\frac{C_1/n}{1/n + \tau - \theta - \frac{1}{2n+3}})\right| \frac{C_1}{n} \frac{1}{\tau - \theta + \frac{1}{n}}\label{intermediateLemmaTroncD4plus001}
\end{align}
To control the second term, we make the distinction between two cases: $\theta > (\tau + \frac{1}{n})/2$ and $\theta< (\tau + 1/n)/2$. In the first case $\theta>(\tau + 1/n)/2$, we use $\log(x)\leq x$ and write 
\begin{align}
\eqref{intermediateLemmaTroncD4plus001}&\leq \frac{1}{2\pi}\frac{C_1}{n} \log(\frac{2n\tau}{C_1}) + \frac{4c}{(\tau + 1/n)} \left((2n+3) + \frac{n}{C_1}\right) \frac{C_1}{n}\\
&\leq \frac{1}{2\pi} \frac{C_1}{n} \log(\frac{2n\tau}{C_1}) + \frac{4c'}{(\tau + 1/n)} \frac{C_1}{n}  + \frac{4c'}{(2n+3)(\tau + 1/n)}
\end{align}
When $\theta < (\tau + 1/n)/2$, we keep the $o(1/n)$ terms inside the logs and write 
\begin{align}
\eqref{intermediateLemmaTroncD4plus001}&\leq \frac{1}{2\pi}\frac{C_1}{n} \log(\frac{2n\tau}{C_1}) + \frac{C_1}{n}\frac{2c'}{\pi} \frac{2}{(\tau + 1/n)} \left(\log(3\tau n) + \log(\frac{2n\tau}{C_1})\right)
\end{align}
For the last integral, we have 
\begin{align}
&\frac{1}{\pi \theta} \int_{\tau + \frac{C_1-1}{n}}^{1/2} \left(\frac{\theta}{2} + 2c \frac{1}{\theta-s}\right) \frac{C_1}{1+n|s-\tau|}\; ds\\
& \leq \frac{1}{2\pi} \log(\frac{1/n + 1/2 - \tau}{C_1/n}) \frac{C_1}{n} + \frac{2c}{\pi \theta} \left|-\log(s-\theta) + \log(\frac{1}{n} + s-\tau)\right|_{\tau + \frac{C_1-1}{n}}^{1/2} \frac{C_1}{n} \frac{1}{|\tau - 1/n - \theta|}\\
&\leq \frac{1}{2\pi} \log(\frac{n}{C_1}) \frac{C_1}{n} + \frac{2c}{\pi \theta} \left(-\log(\frac{1/2-\theta}{\tau + \frac{C_1-1}{n} - \theta}) + \log(\frac{1/n + 1/2 - \tau}{C_1/n})\right) \frac{C_1}{n}\frac{1}{\tau - 1/n - \theta}\\
&\leq \frac{1}{2\pi} \log(\frac{n}{C_1}) \frac{C_1}{n} + \frac{2c}{\pi \theta} \left(\log(1+ \frac{\tau - 1/n - \theta}{C_1/n}\vee \frac{-\tau + \theta + 1/n}{\tau - \theta - 1/n + C_1/n})\right)\frac{C_1}{n}\frac{1}{\tau -1/n - \theta}\\
&+ \frac{2c}{\pi \theta} \left(\log(1+ \frac{-\theta + \tau - 1/n}{1/n + 1/2 - \tau} \vee \frac{\theta - \tau + 1/n}{1/2-\theta})\right) \frac{C_1}{n} \frac{1}{\tau - \frac{1}{n}  -\theta}\label{lastBoundIntegralRightD4plustmp001}
\end{align}
As before, we consider the two cases $\theta \leq (\tau - \frac{1}{n})(1/2)$ and $\theta \geq \frac{1}{2}(\tau - \frac{1}{n})$. In the first case, we keep the logs and use the constant $c$ to cancel the prefactor $\theta^{-1}$. This gives 
\begin{align}
\eqref{lastBoundIntegralRightD4plustmp001}&\leq \frac{1}{2\pi}\log(\frac{n|\tau|}{C_1})\frac{C_1}{n} + \frac{1}{2\pi}\frac{C_1}{n|\tau|} + \frac{2c'}{\pi} \frac{C_1}{n}\frac{4}{|\tau|}\log(\frac{2\tau n}{C_1})+ \frac{2c'}{\pi} \log(n\tau)  \frac{C_1}{n} \frac{4}{|\tau|}\\
\end{align}
When $\theta > (\tau - \frac{1}{n})$, we use $\log(1+x)\leq x$ to cancel the denominator in $(\tau - \frac{1}{n} - \theta)^{-1}$. This gives 
\begin{align}
\eqref{lastBoundIntegralRightD4plustmp001} &\leq \frac{1}{2\pi} \log(\frac{n}{C_1}) \frac{C_1}{n} + \frac{2c}{\pi \theta} \left(\frac{n}{C_1} + n +  \frac{1}{1/2 - \tau + 1/n} \vee \frac{1}{1/2-\theta}\right) \frac{C_1}{n}\\
&\leq \frac{1}{2\pi} \log(\frac{n}{C_1}) \frac{C_1}{n} + \frac{2c}{\pi (\tau - 1/n)} \left(\frac{n}{C_1} + 5n \right) \frac{C_1}{n}\\
&\leq \frac{1}{2\pi}\log(\frac{n}{C_1}) \frac{C_1}{n} + \frac{4c'}{\pi (2n+3)\tau} +  \frac{4c'}{\pi\tau} 5\frac{C_1}{n}.
\end{align}

For the central integral, if we assume $\tau + \frac{C_1-1}{n} \leq 1/2$, as soon as $\tau - \frac{C_1-1}{n}\geq \theta + \frac{1}{2n+3}$, we have 
\begin{align}
\frac{1}{\pi \theta} \int_{\tau - \frac{C_1-1}{n}}^{\tau + \frac{C_1-1}{n}} \left(\frac{\theta}{2} + 2c\frac{1}{s-\theta}\right)\; ds &\leq \frac{1}{2\pi} \log(1+ \frac{2(C_1-1)}{n} \frac{1}{\tau - \frac{C_1-1}{n}}) + \frac{2c}{\pi \theta} \log(1+ 2\frac{C_1-1}{n} \frac{1}{\tau - \frac{C_1-1}{n} - \theta})\label{lastBoundIntegralRightD4plustmp002}\\
&\stackrel{a}{\leq } \frac{1}{2\pi} \frac{2(C_1-1)}{n}\frac{2}{\tau} + \left\{\frac{2c'}{\pi}\frac{C_1-1}{n}\vee \frac{2c'}{(2n+3)\pi}\log(4C_1)\right\}\frac{4}{\tau}. 
\end{align}

$a$ follows from making the distinction between the cases $\frac{1}{2}(\tau - \frac{C_1-1}{n})\geq \theta$ and $\frac{1}{2}(\tau - \frac{C_1-1}{n})<\theta$. depending on each case, we use $\log(1+x)\leq x$ or keep the logs which gives
\begin{align}
\frac{1}{2}(\tau - \frac{C_1-1}{n})\geq \theta &\Rightarrow \eqref{lastBoundIntegralRightD4plustmp002}\leq \frac{1}{2\pi}\frac{2(C_1-1)}{n} \frac{2}{\tau} + \frac{2c'}{\pi}\frac{2(C_1-1)}{n}\frac{4}{\tau}\\
\frac{1}{2}(\tau - \frac{C_1-1}{n})< \theta &\Rightarrow \eqref{lastBoundIntegralRightD4plustmp002}\leq \frac{1}{2\pi}\frac{2(C_1-1)}{n} \frac{2}{\tau} + \frac{2c}{\pi} \frac{4}{\tau} \log(2C_1). 
\end{align}

Combining the bounds above, we get 
\begin{align}
\eqref{D4plusTerm1IntegralCentralLemmatronc} + \eqref{D4plusTerm1IntegralCentralLemmatronc22222} + \eqref{D4plusTerm1IntegralCentralLemmatronc33333}&\leq \frac{C_1}{n|\tau|}\left[\frac{\log(2)}{2\pi} +\frac{\log(2)}{2\pi} + 8c'\right] + \frac{C_1}{n|\tau|}\log(n|\tau|) \left[\frac{1}{2\pi} + \frac{8c'}{\pi}\right]\\
&\leq 2\frac{C_1}{n|\tau|} + \frac{C_1}{n|\tau|}\log(n|\tau|)\label{boundD4plusRealLargeTauCentralLargeTheta}
\end{align}

The total bound on real part for $D_4^+$ is thus given by taking the maximum of twice this bound (to account for the reflexion) and~\eqref{boundD4plusRealLargeTauLeftRightLargeTheta}, which gives
\begin{align}
2\eqref{boundD4plusRealLargeTauCentralLargeTheta} \vee \eqref{boundD4plusRealLargeTauLeftRightLargeTheta} \leq 4 \frac{C_1}{n|\tau|} + 2\frac{C_1}{n|\tau|}\log(n|\tau|). \label{boundDomainD4plusRealLargeThetaLargeTau}
\end{align}

When $\theta\leq \frac{1}{2n+3}$, the bound $F_{D_4}^+$ turns into
\begin{align}
F_{D_4}^+ &\leq \left(\frac{3}{2} + \frac{1}{8}\right) \frac{\theta}{s - \theta} + \frac{\theta}{2}
\end{align}

From this we consider the following frameworks: either $\tau>1/2$, $\tau<\theta + \frac{1}{2n+3}$, or $\tau\pm \frac{C_1}{n}\in [\theta + \frac{1}{2n+3}, 1/2]$.

\begin{itemize}
\item 
 In the first framework, we write 
\begin{align}
&\frac{1}{\pi \theta} \int_{\theta + \frac{1}{2n+3}}^{1/2} \left(\left(\frac{3}{2}+ \frac{1}{8}\right)\frac{\theta}{s-\theta} + \frac{\theta}{2}\right) \frac{C_1}{1+n(\tau-s)}\; ds\\
&\stackrel{(a)}{\leq } \left(\frac{3}{2} + \frac{1}{8}\right) \frac{1}{\pi} \left|\log(s-\theta) - \log(1/n + \tau - s)\right|_{\theta + \frac{1}{2n+3}}^{1/2}\frac{C_1/n}{1/n + \tau - \theta}\\
&+ \frac{1}{2\pi} \frac{C_1}{n} \log(\frac{1/n + \tau - 1/2}{1/n + \tau - (\theta + \frac{1}{2n+3})})\\
&\stackrel{(b)}{\leq } \left(\frac{3}{2} + \frac{1}{8}\right) \frac{1}{\pi} \left(\log(\frac{1/2 - \theta}{\frac{1}{2n+3}}) - \log(\frac{1/n + \tau - 1/2}{1/n + \tau - \theta - \frac{1}{2n+3}})\right) \frac{2C_1}{n}\\
&\stackrel{(c)}{\leq } \left(\frac{3}{2} + \frac{1}{8}\right) \frac{1}{\pi} \left(\log(\frac{3}{2}n|\tau|) + \log(\frac{1}{|\tau|}) + \log(2n|\tau|)\right) \frac{2C_1}{n}\\
&+ \frac{1}{2\pi} \frac{C_1}{n} \log(2n|\tau|)\\
&\stackrel{(d)}{\leq } \left(\frac{3}{2}+ \frac{1}{8}\right) \frac{4}{\pi} \log(2n|\tau|) \frac{C_1}{n} + \left(\frac{3}{2} + \frac{1}{8}\right) \frac{2}{\pi} \frac{C_1}{n|\tau|} \\
&+ \frac{1}{2\pi} \frac{C_1}{n} \log(2n|\tau|)\\
&\leq 3\frac{C_1}{n|\tau|} + 3\frac{C_1}{n|\tau|}\log(n|\tau|)\label{DomainD4plusSmallThetaReal111}
\end{align}
In the sequence of inequalities above, $(b)$ relies on $\theta \leq \frac{1}{2n+3}$ and $(c)$ uses $\tau \geq 1/2$. 
\item When $\tau<0$ (the case $\tau+ \frac{C_1-1}{n} \geq \theta + \frac{1}{2n+3}$ never arises as we have $\tau>\frac{C_1}{n}$ and $\theta<\frac{1}{2n+3}$), we have 
\begin{align}
&\frac{1}{\pi \theta} \int_{\theta + \frac{1}{2n+3}}^{1/2} \left(\left(\frac{3}{2}+ \frac{1}{8}\right) \frac{\theta}{s-\theta}+ \frac{\theta}{2}\right) \frac{C_1}{1+n(s-\tau)}\; ds\\
&\stackrel{(a)}{\leq } \frac{1}{\pi} \left(\frac{3}{2}+ \frac{1}{8}\right) \left|-\log(s-\theta) + \log(1/n + s - \tau)\right|_{\theta + \frac{1}{2n+3}}^{1/2} \frac{C_1/n}{\tau - 1/n - \theta}\\
&+ \frac{1}{2\pi} \frac{C_1}{n} \log(\frac{1/n + 1/2 - \tau}{1/n + \theta + \frac{1}{2n+3} - \tau})\\
&\stackrel{(b)}{\leq }\frac{1}{\pi}\left(\frac{3}{2} + \frac{1}{8}\right) \left|-\log(\frac{1/2 - \theta}{\frac{1}{2n+3}}) + \log(\frac{1/n + \frac{1}{2} - \tau}{1/n + \theta + \frac{1}{2n+3} - \tau})\right| \frac{C_1/n}{|\theta - \tau + \frac{1}{n}|}\\
&+ \frac{1}{2\pi} \frac{C_1}{n} \log(n)\\
&\stackrel{(c)}{\leq } \frac{1}{\pi}\left(\frac{3}{2} + \frac{1}{8}\right) \left(\log(\frac{1/n + 1/2 - \tau}{1/2 - \theta}) + \log(\frac{1/n + \frac{1}{2n+3} + \theta - \tau}{\frac{1}{2n+3}})\right) \frac{C_1}{n|\tau|}\\
&\stackrel{(d)}{\leq } \frac{1}{\pi}\left(\frac{3}{2}+ \frac{1}{8}\right) \left(\log(1+ \frac{1/n - \tau + \theta}{1/2 - \theta}) + \log(6n|\tau|)\right) \frac{C_1}{n|\tau|}\\
&+ \frac{1}{2\pi} \frac{C_1}{n} \log(n)\\
&\stackrel{(e)}{\leq }\frac{1}{\pi} \left(\frac{3}{2} + \frac{1}{8}\right) \left(\log(5)+ \log(6)+ \log(n|\tau|)\right)\frac{C_1}{n|\tau|}\\
&+ \frac{1}{2\pi} \frac{C_1}{n} \log(n)\\
&\leq 2\frac{C_1}{n|\tau|} + \frac{C_1}{n|\tau|}\log(n|\tau|) \label{DomainD4plusSmallThetaReal222}
\end{align}
\item Finally, we consider the case $\tau \pm \frac{C_1}{n} \in [\theta + \frac{1}{2n+3}, 1/2]$. In this case we write 
\begin{align}
&\frac{1}{\pi \theta} \int_{\theta + \frac{1}{2n+3}}^{\tau - \frac{C_1-1}{n}} \left\{\left(\frac{3}{2} + \frac{1}{8}\right) \frac{\theta}{s-\theta} + \frac{\theta}{2}\right\} \; ds\\
&+ \frac{1}{\pi \theta} \int_{\tau - \frac{C_1-1}{n}}^{\tau + \frac{C_1-1}{n}} \left(\left(\frac{3}{2} + \frac{1}{8}\right) \frac{\theta}{s- \theta} + \frac{\theta}{s}\right)\; ds\\
& + \frac{1}{\pi \theta} \int_{\tau + \frac{C_1-1}{n}}^{1/2} \left(\left(\frac{3}{2}+ \frac{1}{8}\right)\frac{\theta}{s-\theta} + \frac{\theta}{2}\right) \frac{C_1}{1+n(s-\tau)}\; ds\\
&\stackrel{(a)}{\leq } \frac{1}{\pi}\left(\frac{3}{2} + \frac{1}{8}\right) \left|\log(s- \theta) - \log(\frac{1}{n} + \tau-s)\right|_{\theta + \frac{1}{2n+3}}^{\tau - \frac{C_1-1}{n}}\frac{C_1/n}{1/n + \tau - \theta}\\
&+ \frac{1}{2\pi} \frac{C_1}{n} \log(\frac{C_1/n}{1/n + \tau - \theta - \frac{1}{2n+3}})\\
&+ \frac{1}{\pi} \left(\frac{3}{2} + \frac{1}{8}\right) \log(\frac{\tau+ \frac{C_1-1}{n} - \theta}{\tau - \frac{C_1-1}{n} - \theta}) + \frac{1}{\pi} \frac{C_1-1}{n}\\
&+ \frac{1}{\pi} \left(\frac{3}{2} + \frac{1}{8}\right)  \left|-\log(s-\theta) + \log(1/n + s-\tau)\right|_{\tau + \frac{C_1-1}{n}}^{1/2}  \frac{C_1/n}{-1/n + \tau - \theta}\\
&+ \frac{1}{2\pi} \frac{C_1}{n}\log(\frac{1/n + 1/2 - \tau}{C_1/n})\\
&\stackrel{(b)}{\leq }  \frac{1}{\pi} \left(\frac{3}{2} + \frac{1}{8}\right) \left\{\log(\frac{\tau - \frac{C_1-1}{n} - \theta}{\frac{1}{2n+3}}) - \log(\frac{C_1/n}{1/n+ \tau - \theta - \frac{1}{2n+3}})\right\} \frac{C_1/n}{1/n + \tau - \theta}\\
&+ \frac{1}{2\pi} \frac{C_1}{n} \log(\frac{2n|\tau|}{C_1})\\
&+ \frac{1}{\pi} \left(\frac{3}{2} + \frac{1}{8}\right) \log(1+ 2\frac{C_1-1}{n}\frac{1}{\tau - \frac{C_1-1}{n} - \theta}) + \frac{1}{\pi} \frac{C_1-1}{n}\\
&+ \frac{1}{\pi} \left(\frac{3}{2}+ \frac{1}{8}\right) \left(-\log(\frac{1/2-\theta}{\tau + \frac{C_1-1}{n} - \theta}) + \log(\frac{1/n + 1/2 - \tau}{C_1/n})\right) \frac{C_1/n}{-1/n + \tau - \theta}\\
&+ \frac{1}{2\pi} \frac{C_1}{n} \log(\frac{1/n + 1/2 - \tau}{C_1/n})\\
&\stackrel{(c)}{\leq }  \frac{1}{\pi}\left(\frac{3}{2}+ \frac{1}{8}\right) \left(\log(3n|\tau|) + \log(\frac{2|\tau|n}{C_1})\right) \frac{2C_1}{n|\tau|}\\
&+ \frac{1}{2\pi} \frac{C_1}{n} \log(\frac{2n|\tau|}{C_1})\\
&+ \frac{1}{\pi} \left(\frac{3}{2}+ \frac{1}{8}\right) \frac{2(C_1-1)}{n} \frac{2}{\tau} + \frac{1}{\pi} \frac{C_1-1}{n}\\
&+ \frac{1}{\pi} \left(\frac{3}{2} + \frac{1}{8}\right) \frac{2C_1}{n|\tau|} \log(\frac{2n|\tau|}{C_1})\\
&+ \frac{1}{\pi} \left(\frac{3}{2}+ \frac{1}{8}\right) \frac{8C_1}{n}\\
&+ \frac{1}{2\pi} \frac{C_1}{n} \left(\log(\frac{n|\tau|}{C_1}) + \frac{1}{|\tau|}\right)\\
&\leq 10\frac{C_1}{n|\tau|} +3.5 \frac{C_1}{n|\tau|}\log(n|\tau|)\label{DomainD4plusSmallThetaReal333}
\end{align}
\end{itemize}

Combining the bounds~\eqref{DomainD4plusSmallThetaReal111}, ~\eqref{DomainD4plusSmallThetaReal222} and~\eqref{DomainD4plusSmallThetaReal333} as before, we can control the integral on $D_4^+$, whenever $\theta \leq \frac{1}{2n+3}$ as 
\begin{align}
(\eqref{DomainD4plusSmallThetaReal111}+\eqref{DomainD4plusSmallThetaReal222})\vee 2\eqref{DomainD4plusSmallThetaReal333}\leq 20\frac{C_1}{n|\tau|} +7 \frac{C_1}{n|\tau|}\log(n|\tau|)\label{boundDomainD4plusRealSmallThetaLargeTau}
\end{align}

The bound on the real part follows from taking the maximum of~\eqref{boundDomainD4plusRealSmallThetaLargeTau} and~\eqref{boundDomainD4plusRealLargeThetaLargeTau} which is always upper bounded by $20\frac{C_1}{n|\tau|} +7 \frac{C_1}{n|\tau|}\log(n|\tau|)$.

We conclude with the imaginary part. When $|\tau| >1/2$, we make the distinction between $\theta \geq (\tau + 1/n)/2$ and $\theta \leq (\tau + 1/n)/2$. In this last case, we have 
\begin{align}
\frac{1}{\pi\theta} \int_{\theta + \frac{1}{2n+3}}^{1/2} \frac{1}{4}\log(\frac{s}{s-\theta}) \frac{C_1}{1+n(\tau-s)}\; ds &\leq \frac{1}{4\pi } \left|\log(\frac{1/2- \theta}{\frac{1}{2n+3}}) - \log(\frac{1/n + \tau - 1/2}{1/n + \tau - \theta - \frac{1}{2n+3}})\right| \frac{1}{|\tau+ \frac{1}{n} - \theta|} \frac{C_1}{n}\\
&\leq \frac{1}{4\pi} \frac{C_1}{n} \frac{2}{|\tau|} \left(\log(3n|\tau|) + \log(2n|\tau|)\right)\label{ImaginaryPartLargeTauDomainD4plusbound11}
\end{align}
When $\theta \geq (\tau + 1/n)/2$, we keep the log and write 
\begin{align}
\frac{1}{\pi\theta} \int_{\theta + \frac{1}{2n+3}}^{1/2} \frac{1}{4}\log(\frac{s}{s-\theta}) \frac{C_1}{1+n(\tau-s)}\; ds &\leq \frac{2}{\pi |\tau|}\frac{C_1}{n} \log(n|\tau|) \log(\frac{1/n + \tau - 1/2}{1/n + \tau - \theta - \frac{1}{2n+3}})\\
&\leq \frac{2}{4\pi |\tau|}\log^2(n|\tau|) \frac{C_1}{n}\label{ImaginaryPartLargeTauDomainD4plusbound22}
\end{align}

When $\tau < \theta - \frac{1}{2n+3}$, we again make the distinction between $\theta < |\tau - 1/n|/2 $ and $\theta \geq |\tau - 1/n|/2$. In the first case, we have 
\begin{align}
&\frac{1}{\pi \theta} \int_{\theta + \frac{1}{2n+3}}^{1/2} \log(\frac{s}{s-\theta}) \frac{C_1}{1+n(s-\tau)} \; ds\\
&\leq \frac{1}{4\pi} \left[-\log(\frac{1/2 - \theta}{\frac{1}{2n+3}}) + \log(\frac{1/n + 1/2 - \tau}{1/n + \theta - \frac{1}{2n+3} - \tau})\right] \frac{C_1}{|\tau - 1/n - \theta|}\\
&\leq \frac{1}{4\pi}\frac{2C_1}{n|\tau|} \left\{ \log(1+ \frac{\theta - \tau + 1/n}{1/2-\theta}\vee \frac{\tau- \theta - 1/n}{1/n + 1/2 - \tau})\right\} \\
&+ \frac{1}{4\pi} \frac{2C_1}{n|\tau|} \log(1+ \frac{\theta - \tau + 1/n - \frac{2}{2n+3}}{1/(2n+3)}\vee \frac{-1/n -\theta + \frac{2}{2n+3}+\tau}{1/n + \theta - \frac{1}{2n+3} - \tau}) \\
&\leq \frac{C_1}{\pi n|\tau|} \log(3n|\tau|). 
\label{ImaginaryPartLargeTauDomainD4plusbound33}
\end{align}
When $\theta \geq |\tau - 1/n|(1/2)$, we split the integral as follows

\begin{align}
&\frac{1}{\pi \theta} \int_{\theta + \frac{1}{2n+3}}^{1/2} \log(\frac{s}{s-\theta}) \frac{C_1}{1+n|s-\tau|}\; ds\label{originalIntegral000}\\
&\leq \frac{1}{\pi \theta} \int_{\theta + \frac{1}{2n+3}}^{2\theta} \log(\frac{s}{s - \theta}) \frac{C_1}{1+n|s-\tau|}\; ds\\
&+ \frac{1}{\pi \theta} \int_{2\theta}^{1/2} \theta \left(\frac{1}{s-\theta} - \frac{n}{1+(s-\tau)n}\right) \frac{C_1}{n(\theta - \tau)}\; ds\\
&\leq \frac{1}{\pi \theta} \frac{C_1}{n} \log(6n\theta )\log(\frac{1/n + 2\theta - \tau}{1/n + \theta - \tau + \frac{1}{2n+3}})\\
&+ \frac{1}{\pi} \frac{C_1}{n(\theta - \tau)} \left(\log(\frac{1/2 - \theta}{\theta}) - \log(\frac{1/n + 1/2 - \tau}{1/n + 2\theta - \tau + \frac{1}{2n+3}})\right)\\
&\leq \frac{1}{\pi} \frac{C_1}{n\theta} \log(6n\theta) \log(1+ \frac{\theta}{1/n + \theta - \tau + \frac{1}{2n+3}})\\
&+ \frac{1}{\pi} \frac{C_1}{n(\theta - \tau)} \left(\log(1+\frac{\theta - \tau}{\theta}) + \log(1+ \frac{\theta - \tau}{1/2 - \theta})\right)\label{term2TobeDetailedDomainD4plusImag}
\end{align}
we denote each of the two terms above as $H_1$ and $H_2$. We control each of them below. Starting with $H_1$, note that depending on $|\theta - \tau|>\theta/2$ or $<\theta/2$, we either have 
\begin{align}
\frac{1}{\pi} \frac{C_1}{n\theta}\left| \log(6n\theta) \log(1+ \frac{\theta}{1/n + \theta - \tau + \frac{1}{2n+3}})\right|&\leq \frac{1}{\pi} \frac{C_1}{n\theta} \left|\log(6n\theta) + \log(4)\right|\\
&\leq \frac{C_1}{n\theta \pi}\log(\frac{2\theta}{|\tau|})+ \log(n|\tau|) + \log(4) + \log(6) )\\
&\leq \frac{2C_1}{n|\tau| \pi} + (\log(4) + \log(6))\frac{C_1}{n|\tau|\pi} + 2\log(n|\tau|)\frac{C_1}{n|\tau|\pi}. \label{firstBoundDomainD4longseriesofBound001}
\end{align}
The second line follows from the assumption $\theta\geq \tau/2$. When $|\theta - \tau|<\theta/2$, we necessarily have $\theta/2<\tau<3\theta/2$ which implies $\theta>2\tau/3$ and hence
\begin{align}
\frac{1}{\pi} \frac{C_1}{n\theta}\left| \log(6n\theta) \log(1+ \frac{\theta}{1/n + \theta - \tau + \frac{1}{2n+3}})\right|&\leq \frac{1}{\pi} \frac{C_1}{n\theta} \left|\log(6n\theta) + \log(1+ 3\tau n)\right|\\
& \leq \frac{1}{\pi} \frac{C_1}{n\theta} \left|\log(6n\theta) (\log(2) + \log(3\tau n))\right|\\
&\leq \frac{C_1}{n\pi \theta}( \log(\frac{6\theta}{2|\tau|/3})+ \log(2n|\tau|/3)) (\log(3n|\tau|) + \log(2))\\
&\leq \left(\frac{9C_1}{n|\tau|\pi} + \frac{3C_1}{n\pi 2|\tau|} \log(n|\tau|) \right) \left(\log(3n|\tau|) + \log(2)\right).\label{secondBoundDomainD4longseriesofBound001}
\end{align}
Taking the maximum of those two bounds gives 
\begin{align}
H_1 \leq \eqref{firstBoundDomainD4longseriesofBound001}\vee \eqref{secondBoundDomainD4longseriesofBound001} &\leq \left(2+\log(4) + \log(6)\right)\frac{C_1}{n|\tau|} + \frac{2}{\pi} \log(n|\tau|) \frac{C_1}{n|\tau|}\\
&\vee \frac{9}{\pi}\left(\log(2)+\log(3)\right)\frac{C_1}{n|\tau|} + \frac{3}{2\pi} \left(\log(2)+\log(3) + 6\right) \frac{C_1}{n|\tau|}\log(n|\tau|) + \frac{3}{2\pi}\frac{C_1}{n|\tau|}\log^2(n|\tau|)\\
&\leq 5.2\frac{C_1}{n|\tau|} + \frac{C_1}{n|\tau|}\log(n|\tau|) + 0.5\frac{C_1}{n|\tau|}\log^2(n|\tau|). \label{boundH1aaaa}
\end{align}
For~\eqref{term2TobeDetailedDomainD4plusImag}, we first note that  
\begin{align}
&\frac{1}{\pi} \frac{C_1}{n(\theta - \tau)} \left(\log(1+\frac{\theta - \tau}{\theta}) + \log(1+ \frac{\theta - \tau}{1/2 - \theta})\right)\\
& \leq \frac{1}{\pi }\frac{C_1}{n\theta} + \left[\frac{1}{\pi} \frac{C_1}{n (1/2-\theta)}\delta(|1/2-\theta|>|\tau|/2)  +  \frac{C_1}{n(\theta - \tau)} \log(1+ \frac{\theta - \tau}{1/2 - \theta})\delta(|1/2 - \theta|<|\tau|/2)\right]\label{DomainD4plusImagSeriesOfIntermediateSteps1}
\end{align}
where the result of the maximum depends on whether $1/2 - \theta$ is small or not. Then we note that if we simultaneously have $|\theta - \tau|<\tau/2$ and $|1/2 - \theta|<|\tau/2|$, then the whole integral in~\eqref{originalIntegral000} can be reduced by noting that $1/2 - |\tau|/2<\theta<1/2 + |\tau|/2$ as well as $|\tau|/2<\theta<3|\tau|/2$ and hence $1/2 < \theta + |\tau|/2 < |\tau|$which gives
\begin{align}
\eqref{originalIntegral000}&\leq  \frac{1}{\pi \theta} \int_{|\tau|/2 + \frac{1}{2n+3}}^{|\tau|}  \log(\frac{s}{s-\theta}) \frac{C_1}{1+n|s- \tau|}\; ds\\
&\leq \frac{2}{\pi |\tau|}\frac{C_1}{n} \log(\frac{1/n + |\tau - |\tau||}{1/n + |\tau - |\tau|/2|}) \\
&\leq \frac{2}{\pi |\tau|}\frac{C_1}{n} \left[\log(\frac{1/n + 2|\tau|}{1/n + |\tau|/2})\vee \log(\frac{3|\tau|/2}{1/n})\right]\\
&\leq \frac{2}{\pi}\frac{C_1}{n|\tau|} (\log(2)+\log(n|\tau|))\label{boundH2secondcaseDomainD4plus1}
\end{align}
The last line follows from $\theta \geq \tau$.

Following the discussion above, we can thus focus on deriving a bound in~\eqref{DomainD4plusImagSeriesOfIntermediateSteps1} in the case where either $|1/2 - \theta|<|\tau|/2$ or $|\tau - \theta|<|\tau|/2$. Substituting this in~\eqref{DomainD4plusImagSeriesOfIntermediateSteps1}, we get 
\begin{align}
\frac{1}{\pi} \frac{C_1}{n(\theta - \tau)} \left(\log(1+\frac{\theta - \tau}{\theta}) + \log(1+ \frac{\theta - \tau}{1/2 - \theta})\right)& \leq \frac{4}{\pi} \frac{C_1}{n|\tau|} + \frac{C_1}{\theta - \tau} \left[\log(2) + \log(\frac{|\theta - \tau|n}{2})\right]\\
&\leq \frac{4}{\pi}\frac{C_1}{n|\tau|} + \frac{C_1}{n|\theta - \tau|}\left[\log(2) + \frac{2|\theta - \tau|}{|\tau|} + \log(n|\tau|)\right]\\
&\leq \frac{4}{\pi}\frac{C_1}{n|\tau|} + \frac{2C_1}{n|\tau|}\log(2) + \frac{2C_1}{n|\tau|} + \frac{2C_1}{n|\tau|} \log(n|\tau|). \label{boundH2firstcaseDomainD4plus1}
\end{align}
The last line uses $\theta - \tau \geq |\tau|/2$

Combining~\eqref{boundH2firstcaseDomainD4plus1} and~\eqref{boundH2firstcaseDomainD4plus1}, we thus get 
\begin{align}
H_2 &\leq \eqref{boundH2secondcaseDomainD4plus1}\vee \eqref{boundH2firstcaseDomainD4plus1}\\
&\leq \frac{C_1}{n|\tau|}\left(\frac{4}{\pi} + 2\log(2)+2\right) + 2\frac{C_1}{n|\tau|}\log(n|\tau|)\\
&\leq 5\frac{C_1}{n|\tau|} + 2\frac{C_1}{n|\tau|}\log(n|\tau|). \label{boundH2aaaa}
\end{align}

Combining~\eqref{boundH1aaaa} and~\eqref{boundH2aaaa}, we get
\begin{align}
H_1+H_2 \leq 11\frac{C_1}{n|\tau|} + 3\frac{C_1}{n|\tau|}\log(n|\tau|) + \frac{C_1}{n|\tau|}\log^2(n|\tau|). \label{boundDomainD4plusTaulessThanThetaCase2}
\end{align}

Combining~\eqref{boundDomainD4plusTaulessThanThetaCase2} with~\eqref{ImaginaryPartLargeTauDomainD4plusbound33} and taking the maximum, then adding $(\eqref{ImaginaryPartLargeTauDomainD4plusbound11}\vee 
\eqref{ImaginaryPartLargeTauDomainD4plusbound22})$ gives a bound on the case $\tau<\theta$,  
\begin{align}
&(\eqref{boundDomainD4plusTaulessThanThetaCase2}\vee \eqref{ImaginaryPartLargeTauDomainD4plusbound33}) + (\eqref{ImaginaryPartLargeTauDomainD4plusbound11}\vee 
\eqref{ImaginaryPartLargeTauDomainD4plusbound22})(|\tau| \leftarrow 1-|\tau|)  \\
&\leq  11\frac{C_1}{n|\tau|} + 3\frac{C_1}{n|\tau|}\log(n|\tau|) + \frac{C_1}{n|\tau|}\log^2(n|\tau|) \\
&+ \frac{1}{2\pi} \left(\log(3)+\log(2)\right) \frac{C_1}{n|\tau|} + \frac{1}{\pi}\log(n|\tau|)\frac{C_1}{n|\tau|} + \frac{1}{2\pi} \frac{C_1}{n|\tau|}\log^2(n|\tau|)\\
&\leq 12\frac{C_1}{n|\tau|} + 4\frac{C_1}{n|\tau|}\log(n|\tau|) + 2\frac{C_1}{n|\tau|}\log^2(n|\tau|)\label{boundCaseTauLessTheta}
\end{align}

To conclude, we now control the integral when $\theta + \frac{1}{2n+3}\leq \tau \pm \frac{C_1}{n} \leq 1/2$, we have 
\begin{align}
&\frac{1}{\pi \theta} \int_{\theta+\frac{1}{2n+3}}^{\tau - \frac{C_1-1}{n}}\log(\frac{s}{s-\theta}) \frac{C_1}{1+n(\tau-s)}\; ds\label{term1LemmaTroncCentralD4plus}\\
&+ \frac{1}{\pi \theta} \int_{\tau - \frac{C_1-1}{n}}^{\tau + \frac{C_1-1}{n}} \log(\frac{s}{s-\theta})\; ds\label{term2LemmaTroncCentralD4plus}\\
&+ \frac{1}{\pi \theta} \int_{\tau + \frac{C_1-1}{n}}^{1/2} \log(\frac{s}{s-\theta}) \frac{C_1}{1+n(s-\tau)}\; ds\label{term3LemmaTroncCentralD4bbplus}
\end{align}

We control the first term by making the distinction between the two cases $\theta < (\tau + 1/n)/2$ and $(\tau + 1/n)/2<\theta$ as before. We have 
\begin{align}
&\frac{1}{\pi \theta} \int_{\theta+\frac{1}{2n+3}}^{\tau - \frac{C_1-1}{n}}\log(\frac{s}{s-\theta}) \frac{C_1}{1+n(\tau-s)}\; ds \leq \left\{\begin{array}{l}
\frac{1}{\pi} \left\{\log(3n|\tau|) + \log(\frac{n|\tau|}{C_1}) \vee \log(C_1)\right\}\frac{2C_1/n}{|1/n +\tau|}\\
\frac{2}{\pi |\tau|} \frac{C_1}{n} \log(3n|\tau|) \log(\frac{n|\tau|}{C_1}).  
\end{array}\right.\\
&\leq \frac{2}{\pi}\log(3)\frac{C_1}{n|\tau|} + 4\frac{C_1}{n|\tau|}\log(n|\tau|) + \frac{1}{\pi}\frac{C_1}{n|\tau|}\log^2(n|\tau|)\label{finalBoundLeftDomainD4plusImag001}
\end{align}

For~\eqref{term3LemmaTroncCentralD4bbplus}, one uses a similar decomposition. When $\theta \leq (\tau + 1/n)/2$, we have

\begin{align}
\frac{1}{\pi \theta} \int_{\tau + \frac{C_1-1}{n}}^{1/2} \log(\frac{s}{s-\theta})\frac{C_1}{1+n(s-\tau)}\; ds &\leq \frac{1}{\pi} \left|-\log(\frac{1/2 - \theta}{\tau + \frac{C_1-1}{n} - \theta}) + \log(\frac{1/n + 1/2 - \tau}{C_1/n})\right| \frac{2C_1}{n|\tau|}\\
&\leq \frac{1}{\pi} \left(\log(3n|\tau|) + \log(\frac{3n|\tau|}{C_1})\right) \frac{2C_1 }{n|\tau|}\label{boundDomainD4plusImagCentralTau22b}
\end{align}
When $\theta \geq (\tau + 1/n)/2$, we have 
\begin{align}
&\frac{1}{\pi \theta} \int_{\tau + \frac{C_1-1}{n}}^{1/2} \log(\frac{s}{s-\theta})\frac{C_1}{1+n(s-\tau)}\; ds \leq \frac{2}{\pi|\tau|}  \log(1+ \frac{\theta}{1/2-\theta}) \log(\frac{1/n + 1/2-\theta}{C_1/n})\\
&\stackrel{(a)}{\leq} \frac{1}{\pi \theta} \log(1 + \frac{\theta}{1/2- \theta}) \log(\frac{1/n + \frac{1}{2} - \theta}{C_1/n}) \frac{C_1}{n} 
\end{align}
To control the last line, we use 
\begin{align}
&\frac{1}{\pi \theta} \log(1+ \frac{\theta}{\frac{1}{2} - \theta})  \log(\frac{1/n + 1/2 - \theta}{C_1/n}) \frac{C_1}{n} \\
&\leq \left\{\begin{array}{l}
\displaystyle \frac{2}{\pi(1/2 - \theta)} \left|\log(\frac{n(1/2 - \theta)}{C_1})\vee \log(C_1)\right|\frac{C_1}{n}\quad \text{if $(1/2-\theta) \geq \theta$}\\
\displaystyle \frac{2}{\theta} \log(3n\theta) \log(\frac{2n\theta}{C_1}) \leq \frac{2}{\theta} \frac{C_1}{n} \log(3n\theta) \log(\frac{2n|\tau|}{C_1}), \quad \text{when $(1/2-\theta)\leq \theta \leq \tau$}. 
\end{array}\right.\\
&\leq  \left\{\begin{array}{l}
\displaystyle \left|\log(\frac{n|\tau|}{C_1}) + \log(\frac{1/2-\theta}{|\tau|}) + \log(2) + \log(C_1)\right|\frac{C_1}{n\pi(1/2 - \theta)}\quad \text{if $(1/2-\theta) \geq \theta$}\\
\displaystyle  \frac{2C_1}{n}(\log(3) + \log(n)) \log(2n|\tau|/C_1), \quad \text{when $(1/2-\theta)\leq \theta \leq \tau$}. 
\end{array}\right.\\
&\leq  \left\{\begin{array}{l}
\displaystyle (\log(2)+\log(C_1))\frac{2C_1}{n\pi |\tau|} + \frac{2C_1}{n\pi|\tau|} \log(n|\tau|) + \frac{C_1}{n\pi |\tau|},\quad \text{if $(1/2-\theta) \geq \theta$}\\
\displaystyle  \frac{2C_1}{n|\tau|}(\log(3) + \log(n|\tau|)) \log(2n|\tau|/C_1), \quad \text{when $(1/2-\theta)\leq \theta \leq \tau$}. 
\end{array}\right.\\
&\leq 2\frac{C_1}{n|\tau|} + 4\frac{C_1}{n|\tau|}\log(n|\tau|) + 2\frac{C_1}{n|\tau|}\log^2(n|\tau|)\label{boundCase2DomainD4plusCentralCaseIntegral2}
\end{align}

Combining~\eqref{boundDomainD4plusImagCentralTau22b} and~\eqref{boundCase2DomainD4plusCentralCaseIntegral2}, we get 
\begin{align}
\frac{1}{\pi \theta} \int_{\tau + \frac{C_1-1}{n}}^{1/2} \log(\frac{s}{s-\theta})\frac{C_1}{1+n(s-\tau)}\; ds &\leq 2\frac{C_1}{n|\tau|} + 4\frac{C_1}{n|\tau|}\log(n|\tau|) + 2\frac{C_1}{n|\tau|}\log^2(n|\tau|)\label{FinalBoundRightDomainD4plusImag}
\end{align}

Finally for~\eqref{term2LemmaTroncCentralD4plus}, we have 
\begin{align}
\frac{1}{\pi \theta} \int_{\tau - \frac{C_1-1}{n}}^{\tau + \frac{C_1-1}{n}} \log(\frac{s}{s-\theta})\; ds& \leq \left\{\begin{array}{l}
\displaystyle\frac{1}{\pi} \log(1+ 2\frac{C_1-1}{n} \frac{1}{\tau - \frac{C_1-1}{n} - \theta}), \quad \text{when $\theta \leq (\tau - \frac{C_1-1}{n})/2$}\\
\displaystyle \frac{1}{\pi \theta} \log(1+ \frac{\theta}{\tau - \frac{C_1-1}{n} - \theta}) \frac{2(C_1-1)}{n}, \quad \text{when $\theta \geq(\tau - \frac{C_1-1}{n})/2$}\end{array}
\right. \\
&\stackrel{(a)}{\leq} \left\{\begin{array}{l}
\displaystyle \frac{1}{\pi} \frac{2C_1}{n}\frac{2}{|\tau|} \\
\displaystyle  \frac{1}{\pi \theta} \log(4\theta n) \frac{2C_1}{n} \leq \frac{2C_1}{n|\tau| \pi} + \left(\frac{4}{\pi \tau} \log(4n|\tau|) \frac{2C_1}{n}\right)  
\end{array}\right.\\
&\leq 6\frac{C_1}{n|\tau|} + 4\frac{C_1}{n|\tau|}\log(n|\tau|). \label{finalBoundCentralDomainD4plusImag}
\end{align}
The last line holds as soon as $\tau \geq 2\frac{C_1}{n}$.

Summing~\eqref{finalBoundCentralDomainD4plusImag} ~\eqref{FinalBoundRightDomainD4plusImag} and~\eqref{finalBoundLeftDomainD4plusImag001} we get 
\begin{align}
~\eqref{finalBoundCentralDomainD4plusImag}+~\eqref{FinalBoundRightDomainD4plusImag}+~\eqref{finalBoundLeftDomainD4plusImag001} 
&\leq 9\frac{C_1}{n|\tau|} + 12\log(n|\tau|)\frac{C_1}{n|\tau|} + 3\log^2(n|\tau|)\frac{C_1}{n|\tau|}\label{FInalBoundCentralImagD4plus02020}
%
\end{align}

The final bound on the imaginary part is then given by taking the maximum of~\eqref{boundCaseTauLessTheta} and 2\eqref{FInalBoundCentralImagD4plus02020}. I.e.,
\begin{align}
2\eqref{FInalBoundCentralImagD4plus02020}\vee \eqref{boundCaseTauLessTheta} \leq 18 \frac{C_1}{n|\tau|} + 24\frac{C_1}{n|\tau|}\log(n|\tau|) +  6\frac{C_1}{n|\tau|}\log^2(n|\tau|). \label{BoundImagD4plusFinal000101}
\end{align}

Combining~\eqref{BoundImagD4plusFinal000101} with the bound on the real part gives the result of the lemma
\begin{align}
 &38 \frac{C_1}{n|\tau|} + 31\frac{C_1}{n|\tau|}\log(n|\tau|) +  6\frac{C_1}{n|\tau|}\log^2(n|\tau|)
\end{align}

\subsection{\label{proofSmallerDomainsLarge}Proof of lemma~\ref{lemmaTroncSmallerDomains} ($D_5^-,D_4^-, D_2^-, D_1^-$ as well as $D_1^+, D_3^+$, large $|\tau - \alpha|$)}

The bounds on $D_5^-$ and $D_4^-$ can be derived almost directly given that those stripes correspond to values of $\theta$ that are always larger than $1/2 - \frac{1}{2n+3}$. On those stripes, the bounds on the real part of the interior integral read as
\begin{align}
F_{R,D_4^-} \equiv  c(4+ \frac{1}{1+s-\theta} + \frac{1}{\frac{1}{2n+3} - s}) + \frac{\theta}{2} + \frac{\pi}{4}\\
F_{R,D_5^-} \equiv \frac{\pi}{4} + c(4+ \frac{1}{1+s-\theta + \frac{1}{2n+3}} + \frac{1}{-s}) + \frac{\theta}{2}\label{reminderBoundD5minusLemmaTronc}
\end{align}

For $D_4^-$, we can therefore write 
\begin{align}
\frac{1}{\pi \theta} \int_{-\frac{1}{2n+3}}^{-1/2+\theta} F_{R, D_4^-} \; ds &\leq \frac{4}{\pi} \left(\frac{\pi}{4}+ \frac{1}{4} + 4c\right) \frac{1}{2n+3} + \frac{4c}{\pi} \log(1+ \frac{1}{2n+3} \frac{1}{\frac{1}{2n+3}}) + \frac{4 c}{\pi} \log(1+ \frac{2}{2n+3} ) \\
&\leq \frac{4}{\pi} \left(\frac{\pi}{4}+ \frac{1}{4} + 4c\right) \frac{1}{2n+3}  + \frac{8c}{\pi} \log(2) \\
&\leq \frac{4}{\pi} \left(\frac{\pi}{4}+ \frac{1}{4} + 4c\right) \frac{1}{(2n+3)|\tau|}  +  \frac{8c'}{\pi} \log(2) \frac{1}{2n+3 |\tau|} \\
&\leq \frac{1}{n|\tau|}\label{FinalboundD4minusLargeTauReal}
\end{align}

For $D_5^-$, using~\eqref{reminderBoundD5minusLemmaTronc}, we can control the integral of the real part as
\begin{align}
&\int_{-1/2}^{-1+\theta + \frac{1}{2n+3}} \left(\frac{\pi}{4}+ \frac{\theta}{2} + 4c\right) \; ds + \int_{-1/2}^{-1+\theta + \frac{1}{2n+3}} \frac{c}{1+s-\theta + \frac{1}{2n+3}} + \frac{c}{-s}\; ds\\
&\stackrel{(a)}{\leq } \left(\frac{\pi}{4} + \frac{\theta}{2} + 4c\right)\frac{4}{\pi} \left(-\frac{1}{2} + \theta + \frac{1}{2n+3}\right) + \frac{4c}{\pi}\log(\frac{\frac{2}{2n+3}}{1/2-\theta + \frac{1}{2n+3}}) + \frac{4 c}{\pi } \log(\frac{1 - \theta - \frac{1}{2n+3}}{1/2})\\
&\stackrel{(b)}{\leq } \left(\frac{\pi}{4}+ \frac{\theta}{2} + 4c\right) \frac{4}{\pi} \frac{1}{2n+3} + \frac{4c'}{\pi (2n+3)} \log(2) + \frac{4c'}{\pi (2n+3)} \log(\frac{1/2}{1/2 - \frac{1}{2n+3}}) \\
&\stackrel{(c)}{\leq } \left(\frac{\pi}{4} + \frac{\theta}{2} + 4c\right)\frac{4}{\pi} \frac{1}{(2n+3)|\tau|} + \frac{4c'}{(2n+3)|\tau|}\log(2) + \frac{4c'}{\pi (2n+3)|\tau|} \log(1+ \frac{4}{2n+3})\\
&\leq \left(\frac{\pi}{4} + \frac{\theta}{2} + 4c\right)\frac{4}{\pi} \frac{1}{(2n+3)|\tau|} + \frac{8c'}{\pi (2n+3)|\tau|}\log(2)\\
&\leq \frac{1}{n|\tau|}\label{FinalboundD5minusLargeTauReal}
\end{align}
$(a)$, $(b)$ and $(c)$ both follow from $1/2 \geq \theta \geq \frac{1}{2} - \frac{1}{2} - \frac{1}{2n+3}\geq \frac{1}{4}$. 

For $D_1^-$ and $D_2^-$ as $\tau \geq \frac{C_1-1}{n}$, we only need to consider the case where the eigenpolynonial achieves its maximum on the left or on the right of the interval. Recall that we have 
\begin{align}
F_{R,D_2^-}& \leq \frac{(2n+3)\pi \theta}{4}+\frac{\theta}{2}\\
F_{R,D_1^-}&\leq \frac{\pi}{2} + c\left(\frac{1}{-s+\frac{1}{2n+3}} + \frac{1}{\theta-s}\right) + \frac{\theta}{2}
\end{align}
Starting with $D_1^-$, when $\tau>0$, we have 
\begin{align}
&\frac{1}{\pi \theta} \int_{-\frac{1}{2n+3}}^0 \left(\frac{\pi}{4}+ \frac{\theta}{2}\right) \frac{C_1}{1+n(\tau-s)}\; ds + \frac{c}{\pi \theta} \int_{-\frac{1}{2n+3}}^{0} \left(\frac{1}{-s+ \frac{1}{2n+3}} + \frac{1}{\theta-s}\right) \frac{C_1}{1+n(\tau-s)}\; ds\\
&\stackrel{(a)}{\leq } \frac{1}{\pi \theta} \left(\frac{\pi}{4} + \frac{\theta}{2}\right) \frac{C_1}{n} \log(\frac{1/n + \tau}{1/n + \tau + \frac{1}{2n+3}}) \\
&+ \frac{c}{\pi \theta} \frac{C_1}{n} \left|\log(-s  + \frac{1}{2n+3}) - \log(\frac{1}{n} + \tau-s)\right|_{-\frac{1}{2n+3}}^0 \frac{1}{|-\tau - \frac{1}{n} + \frac{1}{2n+3}|}\\
&+ \frac{c}{\pi \theta} \frac{C_1}{n} \left|\log(\theta - s) - \log(1/n + \tau-s)\right|_{-\frac{1}{2n+3}}^0 \frac{1}{|-\tau - \frac{1}{n} + \theta|}\\
&\stackrel{(b)}{\leq }  \frac{1}{\pi \theta} \left(\frac{\pi}{4}+ \frac{\theta}{2}\right) \frac{C_1}{n} \log(1+ \frac{1}{2n+3} \frac{1}{\tau + 1/n})\\
&+ \frac{c}{\pi \theta} \frac{C_1}{n} \left(\log(\frac{\frac{1}{2n+3}}{\frac{2}{2n+3}}) - \log(\frac{1/n + \tau}{1/n +\tau +\frac{1}{2n+3}})\right) \frac{1}{\tau + 1/n - \frac{1}{2n+3}}\\
&+ \frac{c}{\pi \theta} \frac{C_1}{n} \left(\log(\frac{\theta}{\theta + \frac{1}{2n+3}}) - \log(\frac{1/n + \tau}{1/n + \tau + \frac{1}{2n+3}})\right) \frac{1}{|\tau + 1/n - \theta|}\\
&\stackrel{(c)}{\leq } \frac{1}{\pi} \left(\frac{\pi}{4}+ \frac{\theta}{2}\right) \frac{C_1}{n|\tau|}+ \frac{c'}{\pi} \frac{C_1}{n|\tau|} 2\log(2) + 4\frac{C_1}{n} \frac{c'}{\pi (\tau + 1/n)}\label{boundD1minusRealLargeTauTaupositive}
\end{align}
$(a)$ and $(b)$ follow from direct integration. In order to get $(c)$, we consider two frameworks.  Depending on whether $\theta < (\tau + 1/n) \frac{1}{2}$ or $\theta> (\tau +1/n)\frac{1}{2}$, we either keep the log or use it to cancel the prefactor $|\tau+1/n - \theta|^{-1}$. When $\theta \leq  (\tau + 1/n)\frac{1}{2}$, we have  
\begin{align}
\frac{c}{\pi \theta} \frac{C_1}{n} \left(\log(\frac{\theta}{\theta + \frac{1}{2n+3}})  - \log(\frac{1/n + \tau}{1/n + \tau + \frac{1}{2n+3}})\right) \frac{1}{|\tau+1/n - \theta|} \leq \frac{c'}{\pi} \frac{C_1}{n} \frac{2}{|\tau + \frac{1}{n}|} 2\log(2) \leq \frac{4\log(2)c'}{\pi} \frac{C_1}{n|\tau|}.  
\end{align}
When $\theta \geq  (\tau + 1/n)\frac{1}{2}$ and 
\begin{align}
&\frac{c}{\pi \theta} \frac{C_1}{n} \left(\log(\frac{\theta}{\theta + \frac{1}{2n+3}})  - \log(\frac{1/n + \tau}{1/n + \tau = \frac{1}{2n+3}})\right) \frac{1}{|\tau+1/n - \theta|} \\
&\leq \frac{C_1}{n} \frac{c}{\pi} \frac{2}{\tau + 1/n} \left|\log(1+ \frac{\tau + 1/n - \theta}{\theta} \vee \frac{\theta - 1/n  - \tau}{1/n + \tau})\right|\frac{1}{|\tau + 1/n - \theta|}\\
&+ \frac{C_1}{n} \frac{c}{\pi} \frac{2}{\tau + 1/n} \left|\log(1+ \frac{\theta - 1/n - \tau}{1/n +\tau + \frac{1}{2n+3}}\vee \frac{1/n = \tau - \theta}{\theta + \frac{1}{2n+3}})\right|\\
&\leq \frac{C_1}{n} \frac{2c'}{\pi (\tau + 1/n)} + \frac{C_1}{n}\frac{c'}{\pi} \frac{2}{\tau + 1/n} \leq \frac{4C_1}{n |\tau|} \frac{c'}{\pi}
\end{align}
The last line follows from $\theta \geq \frac{1}{2n+3}$.

When $\tau<0$, we write 
\begin{align}
&\frac{1}{\pi \theta} \int_{-\frac{1}{2n+3}}^0 \left(\frac{\pi}{4}+ \frac{\theta}{2}\right) \frac{C_1}{1+n(s-\tau)} \; ds+ \frac{c}{\pi \theta} \int_{-\frac{1}{2n+3}}^0 \left(\frac{1}{-s+\frac{1}{2n+3}} + \frac{1}{\theta-s}\right) \frac{C_1}{1+n (s-\tau)}\; ds\\
&\leq \frac{1}{\pi \theta} \left(\frac{\pi}{4} + \frac{\theta}{2}\right) \frac{C_1}{n} \log(\frac{1/n - \tau}{1/n - \tau - \frac{1}{2n+3}})\\
&+ \frac{1}{\pi \theta} \left| - \log(-s + \frac{1}{2n+3}) + \log(\frac{1}{n} + s-\tau)\right|_{-\frac{1}{2n+3}}^0 \frac{1}{|-\tau + \frac{1}{n} + \frac{1}{2n+3}|}\\
&+ \frac{1}{\pi \theta} \left|-\log(\theta-s) + \log(\frac{1}{n}+s-\tau)\right|_{-\frac{1}{2n+3}}^0 \frac{1}{|\theta - \tau + \frac{1}{n}|}\\
&\leq \frac{1}{\pi \theta}\left(\frac{\pi}{4} + \frac{\theta}{2}\right) \frac{C_1}{n} \log(1+ \frac{1}{2n+3}\frac{1}{-\tau + \frac{1}{n} - \frac{1}{2n+3}})\label{D1minusStart}\\
&+ \frac{c}{\pi \theta} \left(-\log(\frac{\frac{1}{2n+3}}{\frac{2}{2n+3}}) + \log(\frac{1/n - \tau}{1/n - \tau - \frac{1}{2n+3}})\right) \frac{1}{|\tau|} \frac{C_1}{n}\\
&+ \frac{c}{\pi \theta} \left(-\log(\frac{\theta}{\theta + \frac{1}{2n+3}}) + \log(\frac{1/n - \tau}{1/n - \tau - \frac{1}{2n+3}})\right) \frac{1}{|\theta - \tau + 1/n|}\label{stinkyTermLemmaTronc1}
\end{align}
To control~\eqref{stinkyTermLemmaTronc1}, we consider the cases $\theta<(\tau - \frac{1}{n}) \frac{1}{2}$ and $\theta \geq (\tau - \frac{1}{n})\frac{1}{2}$. In the second case, we use the log to cancel the prefactor, and in the first framework, we use  the constant $c$ to cancel the $\theta$ that appears in the denominator. When $\theta>(\tau - \frac{1}{n})\frac{1}{2}$, this gives 
\begin{align}
\eqref{stinkyTermLemmaTronc1}&\leq \frac{c}{\pi \theta} \frac{C_1}{n} \left(\log(1+ \frac{-\theta + \frac{1}{n} - \tau}{\theta} \vee \frac{-\frac{1}{n} + \tau + \theta}{1/n  - \tau})\right) \frac{1}{|\theta - \tau + 1/n|}\\
&+ \frac{c}{\pi \theta} \frac{C_1}{n} \left(\log(1+ \frac{\theta - \frac{1}{n} + \tau}{1/n  - \tau - \frac{1}{2n+3}}\vee \frac{1/n - \tau - \theta}{\theta + \frac{1}{2n+3}})\right)\frac{1}{\theta - \tau + \frac{1}{n}}\\
&\leq \frac{4c'}{\pi}\frac{C_1}{n |\tau - \frac{1}{n}|} \leq \frac{8c'}{\pi} \frac{C_1}{n|\tau|}
\end{align}
When $\theta\leq (\tau - \frac{1}{n})\frac{1}{2}$, we write 
\begin{align}
\eqref{stinkyTermLemmaTronc1}&\leq \frac{c'}{\pi}\frac{C_1}{n} \left|\log(1+ \frac{1}{2n+3} \frac{1}{\theta}) + \log(1+ \frac{1}{2n+3} \frac{1}{1/n - \tau})\right) \frac{2}{|\tau - 1/n|}\\
&\leq \frac{c'}{\pi}\frac{C_1}{n} \log(2) \frac{8C_1}{n|\tau|}
\end{align}

From this, we can write~\eqref{D1minusStart} to~\eqref{stinkyTermLemmaTronc1} as 
\begin{align}
&\frac{1}{\pi \theta} \int_{-\frac{1}{2n+3}}^0 \left(\frac{\pi}{4}+ \frac{\theta}{2}\right) \frac{C_1}{1+n(s-\tau)} \; ds+ \frac{c}{\pi \theta} \int_{-\frac{1}{2n+3}}^0 \left(\frac{1}{-s+\frac{1}{2n+3}} + \frac{1}{\theta-s}\right) \frac{C_1}{1+n (s-\tau)}\; ds\\
&\leq \frac{1}{\pi}\left(\frac{\pi}{4}+ \frac{1}{4}\right) \frac{2C_1}{n|\tau|}\\
&+ \frac{c'}{\pi} \frac{C_1}{n|\tau|} 2\log(2) + \frac{8c'}{\pi}\frac{C_1}{n|\tau|}\log(2)\label{TauNegativeDomainD1minusLargeTau}
\end{align}

Grouping~\eqref{boundD1minusRealLargeTauTaupositive} and~\eqref{TauNegativeDomainD1minusLargeTau}, we get
\begin{align}
\eqref{TauNegativeDomainD1minusLargeTau} + \eqref{boundD1minusRealLargeTauTaupositive} &\leq 2\frac{C_1}{n|\tau|}. \label{FinalboundD1minusLargeTauReal}
\end{align}

For $D_2^-$, depending on whether $\tau>0$ or $\tau<0$, we have 
\begin{align}
\frac{1}{\pi \theta} \int_{-\frac{1}{2n+3}}^0 \left(\frac{(2n+3)\pi \theta}{4} + \frac{\theta}{2}\right) \frac{C_1}{1+n(\tau-s)}\; ds&\leq \left\{\begin{array}{l}
\left(\frac{(2n+3)\pi}{4} + \frac{1}{2}\right) \frac{1}{\pi} \frac{C_1}{n} \log(\frac{1/n + \tau}{1/n + \tau + \frac{1}{2n+3}}), \quad \tau>0\\
\frac{1}{\pi} \left(\frac{(2n+3)\pi}{4} + \frac{1}{2}\right) \frac{C_1}{n} \log(\frac{1/n - \tau}{1/n - \tau - \frac{1}{2n+3}}) ,\quad \tau<0
\end{array}\right.\\
&\leq \left(\frac{\pi}{4}+ \frac{1}{2(2n+3)}\right)\frac{1}{\pi} \frac{C_1}{n|\tau|}.\label{FinalboundD2minusLargeTauReal}
\end{align}

Similar reasonings can be used to derive the bounds on $D_1^+$ and $D_3^+$. We recall the bounds that were derived on the interior integral earlier
\begin{align}
F_{R,D_1^+}&\leq \frac{\pi(2n+3)\theta}{4} + \frac{\theta}{2}\\
F_{R,D_3^+}& \leq \frac{(2n+3)\pi}{4} \left(s + \frac{1}{2n+3}\right) + c\left((2n+3) + \frac{1}{(\theta-s)}\right) + \frac{\theta}{2} \label{boundFD3plusreminderLemmaTronc}
\end{align}
For $D_1^+$ for maximas of the eigenpolynomial located left and right, we respectively have 
\begin{align}
\frac{1}{\pi} \int_{0}^{\frac{1}{2n+3}} \left(\frac{\pi (2n+3)}{4} + \frac{1}{2}\right) \frac{C_1}{1+n(s-\tau)}\; ds &\leq \frac{C_1}{n\pi} \left(\frac{\pi (2n+3)}{4}+ \frac{1}{2}\right) \log(\frac{1/n - \tau + \frac{1}{2n+3}}{1/n - \tau})\\
&\leq \frac{C_1}{n\pi} \left(\frac{\pi (2n+3)}{4} + \frac{1}{2}\right) \frac{1}{2n+3} \frac{1}{|\tau|}
\end{align}
\begin{align}
\frac{1}{\pi} \int_{0}^{\frac{1}{2n+3}} \left(\frac{\pi (2n+3)}{4} + \frac{1}{2}\right)\frac{C_1}{1+n(\tau-s)} \; ds &\leq \frac{C_1}{n\pi} \left(\frac{\pi (2n+3)}{4} + \frac{1}{2}\right) \log(\frac{1/n + \tau - \frac{1}{2n+3}}{1/n + \tau})\\
&\leq \frac{C_1}{n\pi} \left(\frac{(2n+3)\pi}{4} + \frac{1}{2}\right) \frac{1}{2n+3} \frac{1}{|\tau|}. 
\end{align}
For $D_1^+$, we thus get a total bound of 
\begin{align}
\frac{1}{\pi \theta} \int_{0}^{\frac{1}{2n+3}} F_{R,D_1^+}\frac{C_1}{1+n(s-\tau)}\; ds \leq 2\frac{C_1}{n\pi |\tau|} \left(\frac{\pi}{4}+ \frac{1}{2(2n+3)}\right) \label{FinalboundD1plusLargeTauReal}
\end{align}

For $D_3^+$, first note that~\eqref{boundFD3plusreminderLemmaTronc} simplifies into
\begin{align}
F_{D_3^+} & = \frac{(2n+3)\pi}{4} \left(s + \frac{1}{2n+3}\right) + c((2n+3) + \frac{1}{\theta-s}) + \frac{\theta}{2}\\
&\leq \frac{(2n+3)\pi}{4} \left(\frac{2}{2n+3}\right) + c\left((2n+3) + \frac{1}{\theta-s}\right) + \frac{\theta}{2}\\
&\leq \frac{\pi}{2} + c' + \frac{c}{\theta-s} + \frac{\theta}{2} 
\end{align}
Using this expression, we get
\begin{align}
&\frac{1}{\pi \theta} \int_{0}^{\frac{1}{2n+3}} \left(\frac{\pi}{2} + c' + \frac{\theta}{2}\right) \frac{C_1}{1+n(s-\tau)}\; ds + \frac{c}{\pi \theta} \int_{0}^{\frac{1}{2n+3}} \frac{1}{\theta -s} \frac{C_1}{1+n(s-\tau)}\; ds\\
&\leq \frac{1}{\pi \theta} \left(\frac{\pi}{2} + c' + \frac{\theta}{2}\right) \frac{C_1}{n} \log(\frac{1/n + \frac{1}{2n+3} - \tau}{1/n - \tau})\\
&+ \frac{c'}{\pi} \left|-\log(\theta -s) + \log(\frac{1}{n} +s-\tau)\right|_0^{\frac{1}{2n+3}} \frac{1}{|-\tau + \frac{1}{n} + \theta|} \frac{C_1}{n}\\
&\leq \frac{1}{\pi \theta} \left(\frac{\pi}{2} + c' + \frac{\theta}{2}\right) \frac{C_1}{n} \frac{1}{2n+3} \frac{1}{|1/n - \tau|}\\
&+ \frac{c}{\pi \theta} \left( - \log(\frac{\theta - \frac{1}{2n+3}}{\theta}) + \log(\frac{1/n + \frac{1}{2n+3} - \tau}{1/n - \tau})\right) \frac{C_1}{n|\tau|}\\
&\leq \frac{1}{\pi}\frac{C_1}{n|\tau|} \left(\frac{\pi}{2} + c' + \frac{1}{4}\right) + \frac{2C_1 c'}{n\pi |\tau|} \log(2)\label{boundCaseTauOnLeftDOmainD3plus}
\end{align}
which holds whenever $\tau$ is located on the left of $D_3^+$. 
For any polynomial with its maximum located on the right of $D_3^+$, we get 
\begin{align}
&\frac{1}{\pi \theta} \int_{0}^{\frac{1}{2n+3}} \left(\frac{\pi}{2} + c' + \frac{\theta}{2}\right) \frac{C_1}{1+n(\tau - s)}\; ds + \frac{c}{\pi \theta} \int_{0}^{\frac{1}{2n+3}} \frac{1}{\theta-s} \frac{C_1}{1+n(\tau-s)}\; ds\label{tmpStepD3plusa}\\
&\leq  \frac{1}{\pi \theta} \left(\frac{\pi}{2} + c' + \frac{\theta}{2}\right) \frac{C_1}{n} \log(\frac{1/n + \tau - \frac{1}{2n+3}}{ 1/n + \tau})\\
&+ \frac{c}{\pi \theta} \frac{C_1}{n} \left|\log(\theta-s) - \log(1/n + \tau -s)\right|_0^{\frac{1}{2n+3}} \frac{1}{|-\tau - \frac{1}{n} - \theta|}\\
&\leq \frac{1}{\pi \theta} \left(\frac{\pi}{2} + c' + \frac{\theta}{2}\right) \frac{C_1}{n} \frac{1}{2n+3} \frac{1}{|\tau + \frac{1}{n}|}\\
&+ \frac{c}{\pi \theta} \frac{C_1}{n} \left|\log( \frac{\theta - \frac{1}{2n+3}}{\theta}) - \log(\frac{1/n + \tau - \frac{1}{2n+3}}{1/n + \tau})\right| \frac{1}{|\tau + 1/n - \theta|}\label{stinkyTermLemmaTronc2}
\end{align}
To bound~\eqref{stinkyTermLemmaTronc2}, as before we make the distinction between $\theta \leq (\tau + 1/n)\frac{1}{2}$ and $\theta > (\tau + 1/n)\frac{1}{2}$. In the former case, we have 
\begin{align}
\eqref{stinkyTermLemmaTronc2} &\leq \frac{1}{\pi}\left(\frac{\pi}{2} + c' + \frac{\theta}{2}\right) \frac{C_1}{n}\frac{1}{|\tau|} + \frac{c}{\pi \theta} \frac{C_1}{n} \frac{2}{|\tau + 1/n|} 2\log(2)
\end{align}
In the latter case, we write
\begin{align}
\eqref{stinkyTermLemmaTronc2} &\leq \frac{1}{\pi} \left(\frac{\pi}{2} + c' + \frac{\theta}{2}\right)\frac{C_1}{n} \frac{1}{|\tau|} + \frac{c}{\pi \theta} \frac{C_1}{n} \left|\log(1+ \frac{-1/n - \tau + \theta}{1/n + \tau}\vee \frac{-\theta + 1/n + \tau}{\theta})\right| \frac{1}{|\tau + 1/n - \theta|}\\
&+ \frac{c}{\pi \theta} \frac{C_1}{n} \left|\log(1+ \frac{\theta - 1/n - \tau}{1/n + \tau - \frac{1}{2n+3}}\vee \frac{\tau + 1/n - \theta}{\theta - \frac{1}{2n+3}})\right| \frac{1}{|\tau +\frac{1}{n} - \theta|}\\
&\leq \frac{1}{\pi}\left(\frac{\pi}{2} + c' + \frac{\theta}{2}\right) \frac{C_1}{n|\tau|}\\
&+ \frac{4c'}{\pi |\tau + 1/n|} \frac{C_1}{n} 
\end{align}

Generally, we can thus control~\eqref{tmpStepD3plusa} as 
\begin{align}
\eqref{tmpStepD3plusa} \leq   \frac{1}{\pi}\left(\frac{\pi}{2} + c' + \frac{\theta}{2}\right) \frac{C_1}{n|\tau|}+ \frac{4c'}{\pi |\tau + 1/n|} \frac{C_1}{n} \log(2).\label{boundCaseTauOnRightDOmainD3plus}
\end{align}

Combining~\eqref{boundCaseTauOnRightDOmainD3plus} with~\eqref{boundCaseTauOnLeftDOmainD3plus} gives the bound on the real part of $D_3^+$
\begin{align}
~\eqref{boundCaseTauOnRightDOmainD3plus} +~\eqref{boundCaseTauOnLeftDOmainD3plus}\leq 7.5\frac{C_1}{n|\tau|}. \label{FinalboundD3plusLargeTauReal}
\end{align}

We now show how to control the imaginary parts of all those subdomains. Starting with $D_1^+$ and $D_3^+$. On $D_1^+$, we have 
\begin{align}
\frac{1}{\pi \theta} \int_{0}^{\frac{1}{2n+3}} \frac{(n+1)\pi \theta}{2} \frac{C_1}{1+n(\tau-s)}\; ds &\leq \frac{C_1}{n\pi} \frac{(n+1)\pi}{2} \log(\frac{1/n + \tau - \frac{1}{2n+3}}{1/n +\tau})\\
&\leq \frac{C_1}{2n+3} \frac{1}{1/n + \tau - \frac{1}{2n+3}}\\
&\leq \frac{C_1}{2n+3}\frac{2}{|\tau|}  
\end{align}
The last line holds as soon as $\tau \geq \frac{2}{n}$. Similarly, when $\tau<0$, we have 
\begin{align}
\frac{1}{\pi \theta}\int_{0}^{\frac{1}{2n+3}} \frac{(n+1)\pi \theta}{2} \frac{C_1}{1+n(s-\tau)}\; ds &\leq \frac{C_1}{n}(n+1) \log(\frac{1/n  + \frac{1}{2n+3} - \tau}{1/n - \tau})\\
&\leq \frac{2C_1}{(2n+3)} \frac{1}{1/n - \tau}\\
&\leq \frac{2C_1}{(2n+3) |\tau|}
\end{align}

Hence the contribution is simply controled as 
\begin{align}
\frac{1}{\pi \theta} \int_{0}^{\frac{1}{2n+3}} \frac{(n+1)\pi \theta}{2} \frac{C_1}{1+n|\tau-s|}\; ds &\leq4\frac{C_1}{n|\tau|}\label{FinalboundD1plusLargeTauImag}
\end{align}

For $D_3^+$, recall that we have 
\begin{align}
F_{I,D_3}^+& \leq c\left(\frac{1}{\theta-s} + \frac{1}{\frac{2}{n}-s}\right) + \frac{1}{4} \log(\frac{\theta-s}{\frac{2}{n}-s}) + \frac{\pi (n+1)}{n}
\end{align}
For this bound, we thus write 
\begin{align}
&\frac{1}{\pi \theta} \int_{0}^{\frac{1}{2n+3}} \left[c\left(\frac{1}{\theta-s} + \frac{1}{\frac{2}{n}-s}\right) + \frac{1}{4}\log(\frac{\theta-s}{\frac{2}{n} - s}) + \frac{\pi (n+1)}{n}\right] \frac{C_1}{1+n(\tau-s)}\; ds\label{tmpImagD3plusLemmaTronc}\\
&\leq \frac{2}{\theta} \frac{C_1}{n} \log(1+ \frac{1}{2n+3} \frac{1}{1/n + \tau - \frac{1}{2n+3}})\\
&+ \frac{c}{\pi \theta} \frac{C_1}{n} \left|\log(\theta-s) - \log(\frac{1}{n}+\tau-s)\right|_{0}^{\frac{1}{2n+3}}\frac{1}{-\tau - \frac{1}{n} + \theta}\\
&+ \frac{c}{\pi \theta} \frac{C_1}{n} \left|\log(\frac{2}{n}-s) - \log(\frac{1}{n} + \tau-s)\right|_{0}^{\frac{1}{2n+3}} \frac{1}{|-\tau - \frac{1}{n} + \frac{2}{n}|}\\
&+ \frac{1}{4\pi} \frac{C_1}{n} \left|\log(\frac{2}{n}-s) - \log(1/n + \tau-s)\right|_{0}^{\frac{1}{2n+3}} \frac{1}{|-\tau - \frac{1}{n}+ 2/n|}\\
&\leq \frac{C_1}{n}\frac{2}{|\tau|} + \frac{c}{\pi \theta} \frac{C_1}{n} \left[\log(\frac{\theta - \frac{1}{2n+3}}{\theta}) - \log(\frac{1/n + \tau - \frac{1}{2n+3}}{1/n + \tau})\right] \frac{1}{|-\tau - 1/n + \theta|}\label{term1LemmaTroncD3plusRight}\\
&+ \frac{c}{\pi \theta} \frac{C_1}{n} \left[\log(\frac{2/n - \frac{1}{2n+3}}{2/n}) - \log(\frac{1/n + \tau - \frac{1}{2n+3}}{1/n + \tau})\right] \frac{1}{|-\tau  - 1/n + 2/n|}\label{term2LemmaTroncD3plusRight}\\
&+ \frac{1}{4\pi}\frac{C_1}{n} \left[\log(\frac{2/n - \frac{1}{2n+3}}{2/n})  - \log(\frac{1/n + \tau - \frac{1}{2n+3}}{1/n + \tau})\right] \frac{1}{|-\tau - 1/n + 2/n|}\label{term3LemmaTroncD3plusRight}.
\end{align}

We control each of the three terms~\eqref{term1LemmaTroncD3plusRight},~\eqref{term2LemmaTroncD3plusRight} and~\eqref{term3LemmaTroncD3plusRight} below. For the first two terms, depending on whether $\theta \geq \tau/2$ or $<\tau/2$, we consider two different bounds. When $\theta \geq \tau/2$, we write  
\begin{align}
\eqref{term1LemmaTroncD3plusRight}&\leq \frac{C_1}{n}\frac{2}{|\tau|} + \frac{c'}{\pi} \frac{C_1}{n} \left|\log(1+ \frac{\theta - 1/n - \tau}{1/n + \tau - \frac{1}{2n+3}}\vee \frac{-\theta + 1/n + \tau}{\theta - \frac{1}{2n+3}})\right| \frac{1}{|-\tau - 1/n + \theta|}\\
&+ \frac{c'}{\pi}\frac{C_1}{n} \left|\log(1+ \frac{1/n+\tau - \theta}{\theta}\vee \frac{-1/n - \tau + \theta}{1/n + \tau})\right| \frac{1}{|-\tau - 1/n + \theta|}\\
&\leq \frac{c'}{\pi}\frac{2C_1}{n|\tau|}
\end{align}
and when $\theta \leq \tau/2$
\begin{align}
\eqref{term1LemmaTroncD3plusRight}&\leq \frac{c'}{\pi}\frac{C_1}{n}\frac{4}{|\tau|} \log(2)
\end{align}
whenever $\theta < \tau/2$. For the other two terms, we write 
\begin{align}
\eqref{term2LemmaTroncD3plusRight}&\leq \frac{4c}{\pi \theta} \frac{C_1}{n |\tau|} \log(2) \leq \frac{4c'}{\pi}\frac{C_1}{n}\log(2)
\end{align}
as well as
\begin{align}
\eqref{term3LemmaTroncD3plusRight}\leq \frac{1}{4\pi} \frac{C_1}{n} \frac{2}{|\tau|} 2\log(2)
\end{align}

Grouping those three bounds, when $\tau>0$, we can thus write
\begin{align}
\eqref{tmpImagD3plusLemmaTronc} \leq \frac{12c'}{\pi}2\log(2) \frac{C_1}{n|\tau|}\label{boundIMaginaryPartD3plusTauPositive}
\end{align}

When $\tau<0$, the reasoning is similar and we can write 
\begin{align}
&\frac{1}{\pi \theta} \int_{0}^{\frac{1}{2n+3}} \left(c\left(\frac{1}{\theta-s} + \frac{1}{2/n - s}\right) + \frac{1}{4}\log(\frac{\theta-s}{2/n - s}) + \frac{\pi (n+1)}{n}\right) \frac{C_1}{1+n|s-\tau|}\; ds\\
&\leq \frac{2}{\theta} \frac{C_1}{n} \log(\frac{1/n + \frac{1}{2n+3} - \tau}{1/n - \tau})\\
&+ \frac{c}{\pi \theta} \left|-\log(\theta-s) + \log(\frac{1}{n} + s-\tau)\right|_{0}^{\frac{1}{2n+3}} \frac{1}{|1/n - \tau + \theta|}\\
&+ \left(\frac{c}{\pi \theta} + \frac{1}{4\pi}\right) \left|-\log(\frac{2}{n}-s) + \log(1/n + s-\tau)\right|_{0}^{\frac{1}{2n+3}} \frac{1}{1/n - \tau + 2/n}\\
& \stackrel{(a)}{\leq } \frac{2C_1}{n} \frac{2}{|\tau|} + \frac{c'}{\pi} \left|-\log(\frac{\theta - \frac{1}{2n+3}}{\theta}) + \log(\frac{1/n + \frac{1}{2n+3} - \tau}{1/n - \tau})\right| \frac{C_1/n}{1/n - \tau + \theta}\\
&+ \left(\frac{c'}{\pi} + \frac{1}{4\pi}\right) \left(-\log(\frac{2/n - \frac{1}{2n+3}}{2/n}) + \log(\frac{1/n + \frac{1}{2n+3} - \tau}{1/n - \tau})\right) \frac{C_1/n}{1/n - \tau +2/n}\\
&\stackrel{(b)}{\leq } \frac{4C_1}{n|\tau|} + \frac{c'}{\pi} \left(2\log(2)\right)\frac{C_1}{n|\tau|} + \left(\frac{c'}{\pi} + \frac{1}{4\pi}\right) 2\log(2) \frac{C_1}{n|\tau|}\label{boundTauNegativeImaginaryD3plus}
\end{align}
$(a)$ follows from $|\tau| \geq 2/n$ and $\theta \geq \frac{1}{2n+3}$, $(b)$ follows from $\theta\geq \frac{2}{2n+3}$ as well as $\tau<0$. 

Combining~\eqref{boundTauNegativeImaginaryD3plus} and~\eqref{boundIMaginaryPartD3plusTauPositive} we get the bound on the imaginary part of $D_3^+$
\begin{align}
\eqref{boundTauNegativeImaginaryD3plus} + \eqref{boundIMaginaryPartD3plusTauPositive}\leq 5.5 \frac{C_1}{n|\tau|}. \label{FinalboundD3plusLargeTauImag}
\end{align}

For $D_5^-$ and $D_4^-$, as $\theta \geq \frac{1}{2} - \frac{1}{2n+3}$, as for the real part, we consider a constant (i.e loose) bound on the value of the eigenpolynomial $p$. We start by recalling the bounds derived earlier on those domains,
\begin{align}
F_{D_5^-} & = \frac{2\pi}{4} + \frac{1}{4} \log(\frac{1/2}{-s}) + \frac{1}{4} \log(\frac{1/2}{1+s-\theta + 1/n}) + c \left(2 + \frac{1}{1+s-\theta +1/n}\right) + c\left(2 + \frac{1}{-s}\right)
\end{align}
From this we write 
\begin{align}
F_{D_4^-} &\leq \frac{(n+1)\pi}{4n} + c\left(\frac{1}{1+s-\theta} + 2\right) + \log(\frac{1/2}{1+s-\theta}) + c\left(\frac{1}{-s+1/n} +  2\right) + \log(\frac{1/2}{-s+1/n})
\end{align}

For $D_5^-$, we thus write 
\begin{align}
&\frac{1}{\pi \theta} \int_{-1/2}^{-1+\theta + \frac{1}{2n+3}} \frac{2\pi}{4} + \frac{1}{4} \log(\frac{1/2}{-s}) + \frac{1}{4} \log(\frac{1/2}{1+ s - \theta + 1/n}) + c\left(2+ \frac{1}{1+s-\theta + 1/n}\right) + c\left(2+ \frac{1}{-s}\right) \; ds\\
&\leq \frac{4}{\pi |\tau|} \left(4c + \frac{2\pi}{4}\right) \frac{1}{2n+3} + \frac{1}{\pi \theta} c\log(\frac{1- \theta - \frac{1}{2n+3}}{1/2}) + \frac{c}{\pi \theta} \log(\frac{1/n + \frac{1}{2n+3}}{1/2 - \theta + 1/n}) + \frac{1}{4}\frac{1}{\pi \theta} \log(\frac{1/2}{1/n + \frac{1}{2n+3}}) \frac{1}{2n+3}\\
&\leq \frac{4}{\pi |\tau|} \left(4c + \frac{2\pi}{4}\right) \frac{1}{2n+3} + \frac{4c}{\pi} \log(\frac{1/2}{1/2 - \frac{1}{2n+3}}) + \frac{4c}{\pi} \log(\frac{1/n + \frac{2}{2n+3}}{1/n}) + \frac{1}{4\pi \theta} \log(\frac{n}{2}) \frac{1}{2n+3}\\
& \stackrel{(a)}{\leq} \frac{4}{\pi |\tau|} \left(4c+ \frac{2\pi}{4}\right)\frac{1}{2n+3} +  \frac{1}{2n+3}\frac{4c}{|\tau| \pi} \log(2) + \frac{4c'}{\pi 2n+3} \log(2) + \frac{1}{\pi} \log(\frac{n|\tau|}{2}) \frac{1}{(2n+3)|\tau|} + \frac{1}{\pi}\frac{1}{(2n+3)|\tau|}\\
&\leq \frac{4}{\pi} \left(4c+ \frac{\pi}{2} + c\log(2) +c'\log(2) + 1 \right) \frac{1}{2n+3 |\tau|} + \frac{1}{\pi}\log(n|\tau|) \frac{1}{(2n+3)|\tau|}\\
&\leq \frac{2}{n|\tau|} + \frac{0.2}{n|\tau|}\log(n|\tau|). \label{FinalboundD5minusLargeTauImag}
\end{align}
$(a)$ follows from $|\tau|<1$ as well as $\theta \geq 1/4$. 

We can use a similar reasoning for $D_4^-$ and we write 
\begin{align}
 & \frac{1}{\pi \theta} \int_{-\frac{1}{2n+3}}^{-1/2+\theta} \left[\frac{(n+1)\pi}{4n} + c\left(\frac{1}{1+s-\theta} + 2\right) + \log(\frac{1/2}{1+s-\theta}) + c\left(\frac{1}{-s+1/n} + 2\right) + \log(\frac{1/2}{-s+1/n})\right]\; ds\\
&\stackrel{(a)}{\leq} \frac{4}{\pi}\frac{1}{|\tau|} \left(\frac{\pi}{2} + 4c\right) \frac{1}{2n+3} + \frac{(c+1)}{\pi \theta} \log(\frac{1/2}{1-\frac{1}{2n+3} - \theta}) + \frac{c}{\pi \theta} \log(\frac{1/2 - \theta + 1/n}{\frac{1}{2n+3} + 1/n})\\
& + \frac{1}{\pi \theta} \log(\frac{1/2}{\frac{1}{2n+3}}) \frac{1}{2n+3}\\
&\stackrel{(b)}{\leq} \frac{1}{\pi}\frac{2}{|\tau|} \left(\frac{\pi}{2} + 4c\right)\frac{1}{2n+3} + \frac{(c+1)}{\pi}\frac{4}{|\tau|} \log(\frac{1/2}{1/2 - \frac{1}{2n+3}}) + \frac{4c}{\pi} \log(\frac{\frac{1}{2n+3} + 1/n}{1/n}) \\
&+ \frac{4}{ \pi} \log(\frac{3n}{2}) \frac{1}{2n+3}\\
&\stackrel{(c)}{\leq} \frac{1}{\pi} \frac{2}{|\tau|} \left(\frac{\pi}{2} + 4c\right)\frac{1}{2n+3} + \frac{4(c+1)}{\pi}\frac{1}{|\tau|}\frac{4}{(2n+3)} + \frac{4c}{\pi |\tau|} \log(2) + \frac{4}{\pi |\tau|} \log(\frac{3n|\tau|}{2})\frac{1}{(2n+3)|\tau|} + \frac{4}{\pi |\tau|} \frac{1}{(2n+3)}\\
&\leq \frac{1}{(2n+3) |\tau|} \frac{4}{\pi}\left((\frac{\pi}{2}+ 4c) + 4(c+1) + c\log(2) + 1\right) + \frac{4}{\pi |\tau | (2n+3)} \log(\frac{3n|\tau|}{2})\\
&\leq \frac{5}{n|\tau|} + 0.7\frac{1}{n|\tau|}\log(n|\tau|). \label{FinalboundD4minusLargeTauImag}
\end{align}
In the lines above, $(a)$ follows from $\tau \geq 2/n$ and $\theta \geq \frac{1}{2n+3}$. $(b)$ uses $\theta \geq \frac{1}{2} - \frac{1}{2n+3} \geq 1/4 \geq \tau/2$ as well as $1 - \frac{1}{2n+3} - \theta \geq 1/2 - \frac{1}{2n+3}$ and $(c)$ follows from $\theta \geq \frac{1}{4}\geq \tau/2$.

We now deal with the imaginary parts on $D_1^- $ and $D_2^-$.  Recall that we have 
\begin{align}
F_{I, D_1^-} \leq \left\{\begin{array}{l}
\frac{\theta-s}{1/n - s} + c\left(\frac{1}{\theta-s} + \frac{1}{1/n -s}\right) + \frac{\pi (n+1)}{4n}, \quad \theta \geq 1/n\\
\frac{1/n - s}{\theta-s} + c\left(\frac{1}{\theta-s} + \frac{1}{1/n -s}\right) + \frac{\pi (n+1)}{4n}, \quad \theta \leq 1/n
\end{array}\right.
\end{align}
In particular, as $\theta\geq \frac{1}{2n+3}$, note that we always have 
\begin{align}
F_{I,D_1^-} \leq (3\theta+c) \frac{1}{1/n -s} + (3\theta+c)\frac{1}{\theta-s} + \frac{\pi}{2}. \label{totalBoundD1minusLemmaTronc}
\end{align}
as well as 
\begin{align}
F_{I,D_2^-} \leq \frac{\pi (n+1)\theta}{4}
\end{align}
Starting with $D_2^-$, 
\begin{align}
\frac{1}{\pi \theta} \int_{-\frac{1}{2n+3}}^0 \frac{\pi (n+1)\theta}{4} \frac{C_1}{1+n(s-\tau)}\; ds &\leq \frac{1}{4}(n+1)\frac{C_1}{n} \log(\frac{1/n - \tau}{1/n - \tau - \frac{1}{2n+3}})\\
&\leq \frac{2C_1}{4n|\tau|} \frac{2(n+1)}{2n+3} \leq \frac{C_1}{2n|\tau|} \label{boundImaginaryPartD2plusLargeTauTauNegative}
\end{align}
when $\tau<0$ and 
\begin{align}
\frac{1}{\pi \theta} \int_{-\frac{1}{2n+3}}^{0} \frac{\pi (n+1)\theta}{4} \frac{C_1}{1+n(\tau-s)}\; ds&\leq \frac{1}{4} \frac{C_1}{n} \frac{(n+1)}{2n+3}\frac{2}{|\tau|} \label{boundImaginaryPartD2plusLargeTauTauPositive}
\end{align}
when $\tau>0$. Thus giving a total bound on the imaginary part of 
\begin{align}
\eqref{boundImaginaryPartD2plusLargeTauTauNegative} + \eqref{boundImaginaryPartD2plusLargeTauTauPositive} \leq \frac{C_1}{n|\tau|}. \label{FinalboundD2minusLargeTauImag}
\end{align}

For $D_1^-$, using~\eqref{totalBoundD1minusLemmaTronc}, when $\tau<0$, we have 
\begin{align}
&\frac{1}{\pi \theta} \int_{-\frac{1}{2n+3}}^{0} \left\{(3\theta+c) \frac{1}{\theta-s} + (3\theta+c)\left(\frac{1}{1/n - s}\right) + \frac{\pi}{2}\right\} \frac{C_1}{1+n(s-\tau)}\; ds\\
&\stackrel{(a)}{\leq} \frac{1}{\pi} (3+c') \left|-\log(\theta-s) + \log(1/n + s-\tau)\right|_{-\frac{1}{2n+3}}^0\frac{C_1}{|-\tau + 1/n + \theta|}\\
&+ \frac{1}{2\theta} \frac{C_1}{n} \log(\frac{1/n - \tau}{1/n - \frac{1}{2n+3} - \tau})\\
&+ \frac{(3+c')}{\pi} \frac{C_1}{n} \left| - \log(\frac{1}{n} - s) + \log(1/n + s-\tau)\right|_{-\frac{1}{2n+3}}^0 \frac{C_1/n}{2/n - \tau}\\
&\stackrel{(b)}{\leq} \frac{1}{\pi} \left(1+c'\right) \left(\log(\frac{\theta}{\theta + \frac{1}{2n+3}}) + \log(\frac{1/n - \tau}{1/n - \frac{1}{2n+3} - \tau})\right) \frac{C_1/n}{|\tau|}+ \frac{C_1}{n|\tau|}\\
&+ \frac{(3+c')}{\pi} \left(\log(\frac{1/n}{1/n + \frac{1}{2n+3}}) + \log(\frac{1/n-\tau}{1/n - \frac{1}{2n+3} - \tau})\right) \frac{C_1/n}{|\tau|}\\
&\stackrel{(c)}{\leq} \frac{1}{\pi} \left(3+c'\right)\frac{C_1}{n|\tau|} 2\log(2) + \frac{C_1}{n|\tau|} + \frac{C_1}{n|\tau|} \frac{(3+c')}{\pi} 2\log(2). \label{boundTauNegativeImaginaryPartDomainD1plusLargeTau}
\end{align}
In $(b)$, we use $\tau \geq 2/\tau$. When $\tau>0$, we have 
\begin{align}
&\frac{1}{\pi \theta} \int_{-\frac{1}{2n+3}}^{0} \left\{(3\theta +c) \frac{1}{\theta-s} + (3\theta +c)\left(\frac{1}{\frac{1}{n} - s}\right)+ \frac{\pi}{2}\right\} \frac{C_1}{1+ n (\tau-s)}\; ds\\
&\leq \frac{1}{\pi} (3+c') \left|\log(\theta-s) - \log(\frac{1}{n} + \tau-s)\right|_{-\frac{1}{2n+3}}^0 \frac{C_1/n}{|-1/n - \tau+ \theta|}\label{FirstLineDomainD1minusImagLargeTau}\\
&+ \frac{1}{2\theta} \frac{C_1}{n} \log(\frac{\tau + 1/n}{\tau + 1/n + \frac{1}{2n+3}})\\
&+ \frac{(3+c')}{\pi} \frac{C_1}{n} \left|\log(1/n -s)  - \log(1/n + \tau-s)\right|_{-\frac{1}{2n+3}}^0 \frac{C_1/n}{|\tau|}
\end{align}
To control~\eqref{FirstLineDomainD1minusImagLargeTau}, we first consider the case $\theta \leq (\tau + 1/n)/2$
\begin{align}
\frac{1}{\pi}(3+c') \left\{\log(\frac{\theta}{\theta + \frac{1}{2n+3}}) + \log(\frac{1/n + \tau}{1/n + \tau + \frac{1}{2n+3}})\right\} \frac{C_1}{n}\frac{2}{|\tau|}&\leq \frac{1}{\pi} \frac{C_1}{n} \frac{2}{|\tau|} 2\log(2) (c'+3).\label{firstCaseTauPositiveDomainD1plusImagLargeTau00101}
\end{align}
When $\theta \geq \tau/2$, we use 
\begin{align}
&\frac{1}{\pi} (1+c') \left(\log(1+ \frac{\theta - \frac{1}{n} - \tau}{1/n + \tau + \frac{1}{2n+3}}\vee \frac{\tau + \frac{1}{n} - \theta}{\theta + \frac{1}{2n+3}})\right) \frac{C_1/n}{|-\frac{1}{n}-\tau + \theta|}\\
&+ \frac{1}{\pi}(1+c') \log(1+ \frac{\theta - \frac{1}{n} - \tau}{1/n + \tau}\vee \frac{-\theta + 1/n + \tau}{\theta}) \frac{C_1/n}{|-\frac{1}{n}-\tau + \theta|}\\
&\leq \frac{4}{\pi}(3+c') \frac{1}{|\tau|} \frac{C_1}{n} \label{firstCaseTauPositiveDomainD1plusImagLargeTau00102}
\end{align}
For the remaining two lines, we write 
\begin{align}
& \frac{1}{2\theta} \frac{C_1}{n} \log(\frac{\tau + 1/n}{\tau + 1/n - \frac{1}{2n+3}})\\
&+ \frac{(3+c')}{\pi} \frac{C_1}{n} \left|\log(1/n -s)  - \log(1/n + \tau-s)\right|_{-\frac{1}{2n+3}}^0 \frac{C_1/n}{|\tau|}\\
&\leq \frac{1}{2}\frac{C_1}{n|\tau|} + \frac{(3+c')}{\pi} \left(\log(\frac{1/n}{1/n + \frac{1}{2n+3}}) - \log(\frac{1/n + \tau}{1/n + \tau + \frac{1}{2n+3}})\right)\frac{C_1/n}{|\tau|}\\
&\leq \frac{C_1}{2n|\tau|} + \frac{(3+c')}{\pi}\frac{C_1}{n}2\log(2). \label{firstCaseTauPositiveDomainD1plusImagLargeTauRemainingTwoLines}
\end{align}

Combining~\eqref{boundTauNegativeImaginaryPartDomainD1plusLargeTau}, together with~\eqref{firstCaseTauPositiveDomainD1plusImagLargeTau00102},~\eqref{firstCaseTauPositiveDomainD1plusImagLargeTau00101} and~\eqref{firstCaseTauPositiveDomainD1plusImagLargeTauRemainingTwoLines}, we get 
\begin{align}
\eqref{boundTauNegativeImaginaryPartDomainD1plusLargeTau} \vee \left((\eqref{firstCaseTauPositiveDomainD1plusImagLargeTau00102} \vee \eqref{firstCaseTauPositiveDomainD1plusImagLargeTau00101} )+  \eqref{firstCaseTauPositiveDomainD1plusImagLargeTauRemainingTwoLines}\right)&\leq \left(\frac{1}{\pi}(3+c')4\log(2) + 1+ \frac{3+c'}{\pi} 2\log(2)\right)\frac{C_1}{n|\tau|}\\
&\leq 4\frac{C_1}{n|\tau|}. \label{boundD1minusLargeTauImag}
\end{align}

The contribution arising from all the stripes $D_\ell^{+}$, $\ell = $ and $D_\ell^{-}$ can thus be bounded as 

\begin{align}
 \eqref{FinalboundD1plusLargeTauImag} + \eqref{FinalboundD3plusLargeTauImag} + \eqref{FinalboundD5minusLargeTauImag}+ \eqref{FinalboundD4minusLargeTauImag}+\eqref{FinalboundD2minusLargeTauImag}+\eqref{boundD1minusLargeTauImag}& \leq 14.5 \frac{C_1}{n|\tau|} + \frac{7}{n|\tau|} + \frac{1}{n|\tau|}\log(n|\tau|)
\end{align}
for the imaginary parts and 
\begin{align}
\eqref{FinalboundD4minusLargeTauReal} + \eqref{FinalboundD5minusLargeTauReal} + \eqref{FinalboundD1minusLargeTauReal} + \eqref{FinalboundD2minusLargeTauReal} + \eqref{FinalboundD1plusLargeTauReal}+ \eqref{FinalboundD3plusLargeTauReal}\leq 11\frac{C_1}{n|\tau|} + \frac{2}{n|\tau|}
\end{align}
for the real part. Multiplying the bound on the real part by two and adding the bound on the imaginary gives the result of the lemma.

\subsection{\label{proofD2plusLarge}Proof of lemma~\ref{lemmaTroncD2plus} ($D_2^+$, large $|\tau- \alpha|$)}

To conclude, we bound the integral on $D_2^+$. Recall that we have
\begin{align}
F_{R,D_2^+} &\leq \frac{\pi}{2} + \left(\frac{1}{s} + \frac{1}{s- \theta + \frac{2}{2n+3}}\right) c + \frac{\theta}{2}\\
F_{I,D_2^+}&\leq F_{R,D_2^+}  + \pi + \frac{1}{4}\left|\log(\frac{s}{(s-\theta) + 2/n})\right|
\end{align}

From this, we consider three cases: $\tau > \theta +2\frac{C_1}{n}$, $\tau < \theta - 2\frac{2C_1}{n}$ and $|\tau - \theta| < 2\frac{C_1}{n}$. We treat the real and imaginary parts simulataneously

\begin{itemize}
\item In the case $|\theta - \tau|< \frac{2C_1}{n}$, note that we have $\theta \geq \tau - \frac{2C_1}{n}$. As soon as $\tau \geq 4\frac{C_1}{n}$, we can thus write,
\begin{align}
&\frac{1}{\pi \theta} \int_{\theta - \frac{1}{2n+3}}^{\theta + \frac{1}{2n+3}} \left(\frac{3\pi}{2} + \frac{\theta}{2}\right) + \frac{c}{s} + \frac{c}{s-\theta + \frac{2}{2n+3}} + \frac{1}{4}\left|\log(\frac{s}{s-\theta + 2/n})\right|\; ds\\
&\leq \left(\frac{\pi}{2} + \frac{1}{2} + \pi\right) \frac{4}{\pi |\tau|} \frac{2}{2n+3} + \frac{2}{\pi \tau} 2\log(2) \frac{c'}{2n+3}\\
&+ \frac{2}{2n+3} \frac{1}{4\pi \theta} \left(\log(2n\theta) + \log(3)\right) \\
&\leq  \left(\frac{\pi}{2} + \frac{1}{2} + \pi\right) \frac{4}{\pi |\tau|} \frac{2}{2n+3} + \frac{2}{\pi \tau} 2\log(2) \frac{c'}{2n+3} \\
&+ \frac{2}{2n+3} \frac{1}{\pi |\tau|} \left(1+ 2\log(2n|\tau|)\right) + \frac{\log(3)}{\pi |\tau|} \frac{2}{2n+3}\\
&\leq \frac{16}{n|\tau|} + \frac{1.3}{n|\tau|}\log(n|\tau|). 
\end{align}
The total contribution can thus be bounded as 
\begin{align}
\frac{1}{\pi \theta} \int_{\theta - \frac{1}{2n+3}}^{\theta + \frac{1}{2n+3}} \left(\frac{3\pi}{2} + \frac{\theta}{2}\right) + \frac{c}{s} + \frac{c}{s-\theta + \frac{2}{2n+3}} + \frac{1}{4}\left|\log(\frac{s}{s-\theta + 2/n})\right|\; ds \leq \frac{32}{n|\tau|} + \frac{2.6}{n|\tau|}\log(n|\tau|). \label{boundWhenTauisIn}
\end{align}
In the lines above, we use 
\begin{align}
\left|\log(\frac{s}{s-\theta+2/n})\right|& \leq \log(\frac{s}{s-\theta+2/n}\vee \frac{s-\theta+2/n}{s}) \\
&\leq  \log(1+ \frac{\theta-2/n}{s-\theta + 2/n}\vee \frac{-\theta + 2/n}{s})\\
&\leq \log(1+n\theta) \vee \log(3).
\end{align}
\item When $\tau > \theta + \frac{C_1-1}{n}$, we let $T_1$ and $T_2$ respectively denote the contribution of the real and imaginary parts
\begin{align}
T_1& \equiv \frac{1}{\pi \theta}\int_{\theta - \frac{1}{2n+3}}^{\theta + \frac{1}{2n+3}} \left(\frac{3\pi}{2} + \frac{\theta}{2}\right) + c\left(\frac{1}{s} + \frac{1}{s-\theta + \frac{2}{2n+3}}\right) \frac{C_1}{1+n(\tau-s)}\; ds\\
T_2 &\equiv \frac{1}{\pi \theta} \int_{\theta - \frac{1}{2n+3}}^{\theta + \frac{1}{2n+3}} \frac{1}{4} \log(\frac{s}{s-\theta + \frac{2}{n}}\vee \frac{s-\theta + 2/n}{s}) \frac{C_1}{1+n(\tau-s)}\; ds\\
\end{align}
The total contribution on the subdomain can be controled as $2T_1 + T_2$. We have the following two bounds
\begin{itemize}
\item Either $\theta < (\tau + 1/n)/2$. 
We have 
\begin{align}
T_1&\leq \frac{1}{\pi \theta} \left(\frac{3\pi}{2} + \frac{\theta}{2}\right) \frac{C_1}{n} \log(\frac{1/n + \tau - \theta - \frac{1}{2n+3}}{1/n + \tau-\theta})\label{term1D2pluscas1a}\\
&+ \frac{c}{\pi \theta} \left|\log(s) - \log(\frac{1}{n}+ \tau-s)\right|_{\theta - \frac{1}{2n+3}}^{\theta + \frac{1}{2n+3}}\frac{C_1}{n} \frac{1}{1/n + \tau}\label{term1D2pluscas1b}\\
&+ \frac{c}{\pi \theta} \left|\log(s - \theta + \frac{2}{n}) - \log(\frac{1}{n} + \tau-s)\right|_{\theta - \frac{1}{2n+3}}^{\theta + \frac{1}{2n+3}} \frac{C_1}{n} \frac{1}{|\frac{1}{n} + \tau - \theta|}\label{term1D2pluscas1c}
\end{align}
We bound each of those terms below. For the first term, we have 
\begin{align}
\eqref{term1D2pluscas1a}&\leq \frac{1}{\pi \theta} \left(\frac{3\pi}{2} + \frac{\theta}{2}\right) \frac{C_1}{n}  \log(1+ \frac{2}{2n+3} \frac{1}{1/n + \tau - \theta - \frac{1}{2n+3}})\\
&\leq \frac{2}{\pi} \left(\frac{3\pi}{2} + \frac{\theta}{2}\right) \frac{C_1}{n} \frac{2}{|\tau|}\label{D2plusLemmaTroncTerm1caseb}
\end{align}
For the second and third terms, we have 
\begin{align}
\eqref{term1D2pluscas1b} + \eqref{term1D2pluscas1c} &\leq \frac{c'}{\pi} \left(\log(1+ \frac{1}{2n+3} \frac{1}{\theta - \frac{1}{2n+3}}) + \log(n2|\tau|)\right) \frac{C_1}{n|\tau|}\\
&+ \frac{c'}{\pi}\left(\log(2) + \log(2n|\tau|)\right) \frac{C_1}{n} \frac{2}{|\tau|}\\
&\leq \frac{3c'}{\pi}\left(\log(2) + \log(2n|\tau|)\right) \frac{C_1}{n} \frac{2}{|\tau|}\label{D2plusLemmaTroncTerm23caseb}
\end{align}
Combining~\eqref{D2plusLemmaTroncTerm1caseb} and~\eqref{D2plusLemmaTroncTerm23caseb} we get 
\begin{align}
T_1 \leq \left(6.4C_1 + 0.6 C_1 +  0.4C_1\log(n|\tau|)\right)\frac{1}{n|\tau|}
\end{align}
\item When $\theta > (1/n + \tau)/2$, we use the constant $c$ to cancel the $\theta^{-1}$
\begin{align}
T_1& = \frac{1}{\pi \theta} \int_{\theta - \frac{1}{2n+3}}^{\theta + \frac{1}{2n+3}} \left(\frac{3\pi}{2} + \frac{\theta}{2}\right) + c\left(\frac{1}{s} + \frac{1}{s-\theta + \frac{2}{2n+3}}\right) \frac{C_1}{1+n(\tau-s)}\;ds\\
&\leq \frac{1}{\pi }\frac{2}{|\tau|} \frac{C_1}{n} \left(\frac{3\pi}{2} + \frac{\theta}{2}\right) \log(3)
+ \frac{c'}{\pi} \left(2\log(3) \right) \frac{C_1}{n|\tau|} 
+ \frac{3c' C_1}{n|\tau|\pi}\\
&\leq \left(\frac{2}{\pi} \left(\frac{3\pi}{2} + \frac{1}{4}\right) \log(3) + \frac{c'}{\pi}2\log(3) + \frac{3c'}{\pi}\right) \frac{C_1}{n}\\
&\leq  3.5 \frac{C_1}{n|\tau|}
\end{align}
We can thus write $T_1\leq 7\frac{C_1}{n|\tau|} + 0.5\log(n|\tau|)\frac{C_1}{n|\tau|}$. For the last term, we used 
\begin{align}
&+ \frac{2C_1 c'}{\pi|\tau|}\left|\log(\frac{\frac{1}{2n+3}}{\frac{2}{2n+3}}) - \log(\frac{1/n + \tau - \theta - \frac{1}{2n+3}}{1/n + \tau - \theta + \frac{1}{2n+3}})\right| \frac{1}{|1/n + \tau - \theta|}\\
&\leq ((2n+3) + \frac{(2n+3)}{2}) \frac{c'}{2n+3} \frac{2C_1}{n|\tau|\pi}\\
&\leq \frac{3c' C_1}{n|\tau|\pi}
\end{align}
\end{itemize}
Finally $T_2$ can be bounded as 
\begin{align}
T_2 &\leq  \frac{1}{\pi \theta} \frac{1}{4} \log(\frac{\theta+\frac{1}{2n+3}}{\frac{1}{2n+3}}\vee \frac{\frac{3}{2n+3}}{\theta - \frac{1}{2n+3}})\frac{C_1}{n} \log(\frac{1/n+\tau - \theta - \frac{1}{2n+3}}{1/n + \tau - \theta + \frac{1}{2n+3}})\\
&\leq \frac{2}{\pi \theta} \frac{1}{4}\log(6n\theta \vee \log(3)) \frac{C_1}{n}\log(2)\\
&\leq \frac{1}{\pi |\tau|} \frac{C_1}{n}\log(2) + \frac{1}{\pi |\tau|} \log(6n|\tau|)\frac{C_1}{n} \log(2)\\
&\leq 0.7 \frac{C_1}{n|\tau|} + 0.3\log(n|\tau|)\frac{C_1}{n|\tau|}.
\end{align}
when $\theta \geq |\tau|/2$, and 
\begin{align}
T_2 &\leq  \frac{1}{\pi \theta} \frac{1}{4} \log(\frac{\theta+\frac{1}{2n+3}}{\frac{1}{2n+3}}\vee \frac{\frac{3}{2n+3}}{\theta - \frac{1}{2n+3}})\frac{C_1}{n} \log(\frac{1/n+\tau - \theta - \frac{1}{2n+3}}{1/n + \tau - \theta + \frac{1}{2n+3}})\\
&\leq \frac{1}{4\pi}\frac{2C_1}{n|\tau|}\left( \log((2n+3)|\tau|)  + \log(3)\right)\\
&\leq 0.4 \frac{C_1}{n|\tau|} + \frac{1}{2\pi} \log(n|\tau|)\frac{C_1}{n|\tau|}.
\end{align}
when $\theta \leq |\tau|/2$

Combining the bounds on $T_1$ and $T_2$, we get 
\begin{align}
T_1 + T_2 \leq 8\frac{C_1}{n|\tau|}\log(n|\tau|). 
\end{align}

\item When $\tau < \theta - \frac{1}{2n+3}$, using the same decomposition $T_1(\theta)+T_2(\theta)$ as before, we get 
\begin{align}
T_1;&\equiv \frac{1}{\pi \theta} \int_{\theta - \frac{1}{2n+3}}^{\theta + \frac{1}{2n+3}} \left(\frac{3\pi}{2} + \frac{\theta}{2}\right) \frac{C_1}{1+n(s-\tau)}\; ds\\
&\leq \frac{1}{\pi \theta} \left(\frac{3\pi}{2} + \frac{\theta}{2}\right) \log(1+ \frac{2}{2n+3} \frac{1}{1/n + \theta - \frac{1}{2n+3} - \tau})\\
&\leq \left\{\begin{array}{l}
\frac{4C_1}{n|\tau|} \left(\frac{3\pi}{2} + \frac{\theta}{2}\right) \frac{1}{\pi}, \quad \text{when $\tau<0$}\\
\frac{1}{\pi |\tau|} \left(\frac{3\pi}{2} + \frac{\theta}{2}\right) \frac{C_1}{n}\log(4), \quad \text{when $\tau>0$}.
\end{array}\right.\\
&\leq \frac{4}{\pi}\left(\frac{3\pi}{2} + \frac{1}{4}\right)\frac{1}{\pi} \frac{C_1}{n|\tau|}. 
\end{align}
\begin{align}
T_2'&\equiv \frac{c}{\pi \theta} \int_{\theta - \frac{1}{2n+3}}^{\theta + \frac{1}{2n+3}} \left(\frac{1}{s} + \frac{1}{s - \theta + \frac{2}{2n+3}}\right) \frac{C_1}{1+n(s-\tau)} \; ds\\
&\leq \frac{c}{\pi \theta} \left(\log(\frac{\theta - \frac{1}{2n+3}}{\theta + \frac{1}{2n+3}}) + \log(\frac{1/n + \theta + \frac{1}{2n+3} - \tau}{1/n + \theta - \frac{1}{2n+3} - \tau})\right) \frac{C_1}{n}\\
&+ \frac{c}{\pi \theta} \left(\log(\frac{\frac{1}{2n+3}}{\frac{2}{2n+3}}) + \log(\frac{1/n + \theta + \frac{1}{2n+3} - \tau}{1/n + \theta - \frac{1}{2n+3} - \tau})\right) \frac{C_1}{n}\\
&\leq \frac{c}{\pi \theta} \left(\log(1+ \frac{2}{2n+3} \frac{1}{\theta - \frac{1}{2n+3}}) + \log(1+ \frac{2}{2n+3} \frac{1}{1/n + \theta - \frac{1}{2n+3}-\tau})\right) \frac{C_1}{n}\\
&+ \frac{c}{\pi \theta} \frac{C_1}{n} \left(\log(2)+ \log(1+ \frac{2}{2n+3} \frac{1}{1/n + \theta - \frac{1}{2n+3} - \tau})\right) \frac{C_1}{n}\\
&\leq \frac{c'}{\pi} 4\log(4) \frac{C_1}{n}. 
\end{align}
From this, we can write 
\begin{align}
T_1'+T_2' \leq 3\frac{C_1}{n|\tau|}. 
\end{align}
\end{itemize}
Finally for the imaginary part, we consider two cases. Either $\theta < |\tau|$ or $\theta > |\tau|$. In the last case, we have 
\begin{align}
T_3&\equiv  \frac{1}{4\pi \theta} \int_{\theta - \frac{1}{2n+3}}^{\theta + \frac{1}{2n+3}} \log(\frac{s}{s-\theta + 2/n }\vee \frac{s-\theta + 2/n}{s}) \frac{C_1}{1+n(s-\tau)}\; ds\\
&\leq \frac{1}{4\pi \theta}\int_{\theta - \frac{1}{2n+3}}^{\theta + \frac{1}{2n+3}} \log(1+ \frac{\theta - 2/n}{s-\theta + 2/n}\vee \frac{2/n - \theta}{s})\frac{C_1}{1+n(s-\tau)}\; ds\\
&\leq \frac{1}{4\pi \theta} \left\{\log(1+3n\theta) + \log(6)\right\} \frac{C_1}{n} \log(\frac{1/n + \theta + \frac{1}{2n+3} - \tau}{1/n + \theta - \frac{1}{2n+3} - \tau}) \\
&\leq \left(\frac{1}{4\pi |\tau|}  \log(n|\tau|) + \frac{C_1}{4\pi n|\tau|} + \frac{2\log(2)+2\log(3)}{4\pi |\tau|} \right)\frac{C_1}{n}\log(3) 
\end{align}
When $\theta<|\tau|$, we proceed as follows
\begin{align}
&\frac{1}{4\pi \theta} \int_{\theta - \frac{1}{2n+3}}^{\theta + \frac{1}{2n+3}} \log(\frac{s}{s-\theta + \frac{2}{n}}) \frac{C_1}{1+n|\tau-s|}\; ds\\
&\leq \left\{\begin{array}{l}
\displaystyle \frac{1}{4\pi} \int_{\theta - \frac{1}{2n+3}}^{\theta + \frac{1}{2n+3}} \left[\frac{1}{s-\theta + 2/n} + \frac{1}{1/n + \tau-s}\right]\frac{C_1}{n(\tau-\theta)}\; ds, \quad \text{if $\theta>\tau$}\\
\frac{1}{4\pi}\int_{\theta - \frac{1}{2n+3}}^{\theta + \frac{1}{2n+3}} \left[\frac{1}{s-\theta + 2/n}  - \frac{1}{1/n + s-\tau}\right]\frac{C_1}{n(\tau-\theta)}, \quad \text{if $\theta<-\tau$}
\end{array}\right. \\
&\leq \frac{1}{4\pi}\left[\log(\frac{\frac{1}{2n+3} + 2/n}{1/n}) - \log(\frac{1/n + |\theta - \tau| + \frac{1}{2n+3}}{1/n + |\theta - \tau| - \frac{1}{2n+3}})\right] \frac{C_1}{n|\tau-\theta|}\label{tmp01010101}
\end{align}
To control~\eqref{tmp01010101}, we treat separately the case where $|\tau - \theta|>|\tau|/2$ and $|\tau - \theta|<|\tau|/2$. In the first scenario, we simply write 
\begin{align}
\eqref{tmp01010101}\leq \frac{1}{4\pi} \frac{C_1}{n|\tau|} 4\log(3). 
\end{align}
In the second scenario, the bound $|\theta - \tau|<|\tau|/2$ implies $|\tau|/2\leq \theta \leq 3|\tau|/2$ from which we can simply bound the integral as 
\begin{align}
&\frac{1}{4\pi \theta} \int_{\theta - \frac{1}{2n+3}}^{\theta + \frac{1}{2n+3}} \log(\frac{s}{s - \theta + 2/n}) \frac{C_1}{1+n|\tau-s|}\; ds\\
&\leq \frac{8}{\pi }\frac{C_1}{n|\tau|} \log(\frac{3}{2}|\tau|n) \log(\frac{1/n + |\tau - \theta| + \frac{1}{2n+3}}{1/n + |\tau - \theta| - \frac{1}{2n+3}})\\
&\leq \frac{C_1}{n|\tau|} 8 (\log(3/2) + \log(n|\tau|))\log(2).  
\end{align}

Combining all the scenarios gives the bound
\begin{align}
T_3 + T_3' &\leq 3.3 \frac{C_1}{n|\tau|} + 3.3 \frac{C_1}{n|\tau|}\log(n|\tau|). 
\end{align}

Adding the real part gives 
\begin{align}
T_1'+T_2' + T_1+T_2  + T_3+T_3' \leq  6.3\frac{C_1}{n|\tau|} + 12\frac{C_1}{n|\tau|}\log(n|\tau|).\label{boundWhenTauisOut}
\end{align}

Taking the maximum of~\eqref{boundWhenTauisOut} and~\eqref{boundWhenTauisIn} gives the result of the lemma.

}

{\color{black}


}

\bibliography{biblio}
\bibliographystyle{abbrv}


\end{document}